\documentclass{article}

\usepackage{amsthm, amsmath, epsfig}

\usepackage{amsfonts, amssymb, latexsym}
\usepackage{pgf}
\usepackage{tikz}
\usepackage{pgfplots}
\usetikzlibrary{arrows.meta}

\oddsidemargin  0.5truein
\evensidemargin 0.5truein
\textwidth 5.5in

\def\Uhatn{\widehat{U}^{(n)}}
\def\Vhatn{\widehat{V}^{(n)}}
\def\Whatn{\widehat{W}^{(n)}}
\def\Xhatn{\widehat{X}^{(n)}}
\def\Yhatn{\widehat{Y}^{(n)}}
\def\Zhatn{\widehat{Z}^{(n)}}
\def\ve{\varepsilon}
\def\R{\mathbb{R}} 
\def\ind{\mathbb{I}}
\def\sE{{\cal E}}
\def\sB{{\cal B}}
\def\P{{\mathbb P}}
\def\E{{\mathbb E}}
\def\sD{{\cal D}}
\def\sF{{\cal F}}
\def\W{\mathbb{W}}
\def\sWn{{\cal W}^n}
\def\sXn{{\cal X}^n}
\def\sV{{\cal V}}
\def\sW{{\cal W}}
\def\sX{{\cal X}}
\def\sY{{\cal Y}}
\def\sU{{\cal U}}
\def\sZ{{\cal Z}}
\def\sA{{\cal A}}
\def\sAhatn{\widehat{\cal A}^n}
\def\sUhatn{\widehat{\sU}^n}
\def\sZhatn{\widehat{\sZ}^n}
\def\Ghatn{\widehat{G}^n}
\def\Hhatn{\widehat{H}^n}
\def\sVhatn{\widehat{\sV}^n}
\def\sVhat{\widehat{\sV}}
\def\sWhatn{\widehat{\sW}^n}
\def\sXhatn{\widehat{\sX}^n}
\def\sYhatn{\widehat{\sY}^n}
\def\Mhatn{\widehat{M}^n}
\def\Pbarn{\overline{P}^n}
\def\Pbar{\overline{P}}
\def\Phatn{\widehat{P}^n}
\def\F{{\mathbb F}}
\def\sF{{\cal F}}
\def\Psihatn{\widehat{\Psi}^n}
\def\Phihatn{\widehat{\Phi}^n}
\def\Thetahatn{\widehat{\Theta}^n}
\def\sR{{\cal R}}
\def\ArrowJ1{\stackrel{J_1}{\Longrightarrow}}
\def\ArrowM_1{\stackrel{M_1}{\Longrightarrow}}
\def\Pbar{\overline{P}}
\def\sS{{\cal S}}
\def\ve{\varepsilon}

\def\Ctilde{\widetilde{C}}
\def\Ehat{\widehat{E}}
\def\Dhat{\widehat{D}}

\def\sL{{\cal L}}
\def\Shatn{\widehat{S}^n}

\newtheorem{theorem}{Theorem}[section]
\newtheorem{definition}[theorem]{Definition}
\newtheorem{corollary}[theorem]{Corollary}
\newtheorem{proposition}[theorem]{Proposition}
\newtheorem{lemma}[theorem]{Lemma}
\newtheorem{assumption}[theorem]{Assumption}
\newtheorem{remark}[theorem]{Remark}

\newcommand{\ba}{\begin{array}}
\newcommand{\ea}{\end{array}}
\newcommand{\be}{\begin{equation}}
\newcommand{\ee}{\end{equation}}
\newcommand{\bi}{\begin{itemize}}
\newcommand{\ei}{\end{itemize}}

\newcommand{\noi}{\noindent}
\newcommand{\bt}{\begin{theorem}}
\newcommand{\et}{\end{theorem}}
\newcommand{\bc}{\begin{center}}
\newcommand{\ec}{\end{center}}
\newcommand{\bass}{\begin{assumption}}
\newcommand{\eass}{\end{assumption}}
\newcommand{\ben}{\begin{enumerate}}
\newcommand{\een}{\end{enumerate}}

\newcommand{\vsp}{\vspace{\baselineskip}}

\DeclareMathOperator{\id}{id}

\begin{document}

\begin{center}
{\bf\Huge Diffusion Limit of Poisson}\\

\vspace{8pt}
{\bf\Huge Limit-Order Book Models}\\
\mbox{}\\
\today
\end{center}
\begin{center}
Christopher Almost\footnote{Partially supported by
the Natural Sciences and Engineering Research Council of Canada
PGS D3 under Grant No.\ 358495.  
This paper includes personal views
and opinions of Dr.\ Almost, and they have not been
reviewed or endorsed by Waterfront International Ltd.}\\
Waterfront International, Ltd.\\
Toronto, ON\\
Canada\\
\texttt{cdalmost@gmail.com}\\
\mbox{}\\
John Lehoczky\\
Department of Statistics and Data Science\\
Carnegie Mellon University\\
Pittsburgh, PA 15213\\
USA\\
\texttt{jpl@stat.cmu.edu}\\
\mbox{}\\
Steven Shreve\footnote{Partially supported 
by the National Science Foundation under 
Grant No. DMS-0903475.}
\\
Department of Mathematical Sciences\\
Carnegie Mellon University\\
Pittsburgh, PA  15213\\
USA\\
\texttt{shreve@andrew.cmu.edu}\\
\mbox{}\\
Xiaofeng Yu\footnote{The views represented here are those of the 
author and not those of Millennium International Management LP.}\\
Millenium International Management LP\\
New York, NY\\
USA\\
\texttt{yuxiaofeng1987@gmail.com}
\end{center}

\begin{flushleft}
{\bf Short title:} Diffusion Limit\\
{\bf Keywords:}
limit-order book,
zero-intelligence Poisson model,
diffusion limit,
skew Brownian motion,
two-speed Brownian motion,\\
{\bf AMS Subject classification:}
Primary 60F17, 91G80;
Secondary 60J55, 60G55
\end{flushleft}

\newpage
\begin{abstract}
This ia a companion paper to Almost, Lehoczky,
Shreve \& Yu \cite{ALSY}, where the rationale
for studying the diffusion limit
of Poisson limit-order book models
is explained and the results of a particular
``representative'' model are detailed. This 
paper contains the proofs and
technical details cited in that work.
\end{abstract}

\section{Introduction and Main Results}
\label{SecIntro}

\setcounter{equation}{0}
\setcounter{theorem}{0}

We consider a limit-order book in which
buy and sell orders arrive at various
price ticks according to Poisson processes.
Orders cancel when an exponentially
distributed amount of time
is accumulated too far from the bid
or ask prices.  We scale the rate of
order arrival by a parameter $n$,
the number of orders queued at
the price ticks by $\sqrt{n}$,
and pass to the limit.  The limit
is shown to exist and its statistics
are identified.    
An informal description of
the nature of the limit appears in
\cite{ALSY}, which also surveys
the relevant literature and provides
the practical context for this work.  
Precise statements
of the results with proofs are
contained in this paper.
This paper is based on the PhD
dissertations of Almost \cite{Almost}
and Yu \cite{Yu}.

Consider six adjacent
price ticks in a doubly-infinite
limit-order book as shown in Figure 1.1.
We show queued limit buy orders as
positive histograms and queued limit
sell orders as negative histograms.
The bid price is the largest price
at which at least one buy order is queued,
and the ask price is the smallest
price at which at least one sell
order is queued.
We assume that limit buy orders,
all of size one, arrive according
to Poisson processes with rates
$\lambda_1>0$ and $\lambda_2>0$
one and two ticks below the ask,
respectively.  Similarly,
limit sell orders, all of
size one, arrive according to
Poisson processes one and two ticks
above the bid at rate $\mu_1$
and $\mu_2$, respectively.
Market buy orders, all of size one, arrive at
the ask according to a Poisson
process with rate $\lambda_0$;
market sell orders, all of size one, arrive at the
bid according to a Poisson process
with rate $\mu_0$.
Finally, we assume that buy orders two
or more ticks below the bid price
are subject to cancellation at rate
$\theta_b/\sqrt{n}$ (per order).
Analogously, sell orders two or more ticks above
the ask price are subject to cancellation at rate
$\theta_s/\sqrt{n}$ (per order).
We denote the order queue lengths at
the six price ticks by $U^n$, $V^n$,
$W^n$, $X^n$, $Y^n$ and $Z^n$
as indicated in Figure 1.1.

\begin{figure}[h]
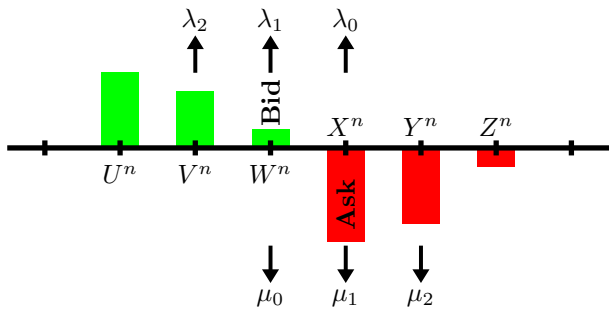
\label{F1.2}
\begin{pgfpicture}{-0cm}{-1.0cm}{14cm}{3.0cm}
%
%
\begin{pgfscope}
\pgfsetlinewidth{2pt}
\color{green}
\pgfmoveto{\pgfxy(4.25,1.00)}
\pgflineto{\pgfxy(4.25,2.00)}
\pgflineto{\pgfxy(4.75,2.00)}
\pgflineto{\pgfxy(4.75,1.00)}
\pgffill
\pgfclosestroke
\pgfmoveto{\pgfxy(5.25,1.00)}
\pgflineto{\pgfxy(5.25,1.75)}
\pgflineto{\pgfxy(5.75,1.75)}
\pgflineto{\pgfxy(5.75,1.00)}
\pgffill
\pgfclosestroke
\pgfmoveto{\pgfxy(6.25,1.00)}
\pgflineto{\pgfxy(6.25,1.25)}
\pgflineto{\pgfxy(6.75,1.25)}
\pgflineto{\pgfxy(6.75,1.00)}
\pgffill
\pgfclosestroke
\end{pgfscope}
\begin{pgfscope}
\pgfsetlinewidth{2pt}
\color{red}
\pgfmoveto{\pgfxy(7.25,1.00)}
\pgflineto{\pgfxy(7.25,-0.25)}
\pgflineto{\pgfxy(7.75,-0.25)}
\pgflineto{\pgfxy(7.75,1.00)}
\pgffill
\pgfclosestroke
\pgfmoveto{\pgfxy(8.25,1.00)}
\pgflineto{\pgfxy(8.25,0.00)}
\pgflineto{\pgfxy(8.75,0.00)}
\pgflineto{\pgfxy(8.75,1.00)}
\pgffill
\pgfclosestroke
\pgfmoveto{\pgfxy(9.25,1.00)}
\pgflineto{\pgfxy(9.25,0.75)}
\pgflineto{\pgfxy(9.75,0.75)}
\pgflineto{\pgfxy(9.75,1.00)}
\pgffill
\pgfclosestroke
\end{pgfscope}
\begin{pgfscope}
\pgfsetlinewidth{2pt}
\pgfxyline(3,1)(11,1)
\pgfxyline(3.5,0.9)(3.5,1.1)
\pgfxyline(4.5,0.9)(4.5,1.1)
\pgfxyline(5.5,0.9)(5.5,1.1)
\pgfxyline(6.5,0.9)(6.5,1.1)
\pgfxyline(7.5,0.9)(7.5,1.1)
\pgfxyline(8.5,0.9)(8.5,1.1)
\pgfxyline(9.5,0.9)(9.5,1.1)
\pgfxyline(10.5,0.9)(10.5,1.1)
\end{pgfscope}
\pgfputlabelrotated{0.40}{\pgfxy(6.50,1.2)}%
{\pgfxy(6.50,2.2)}{0pt}{\pgfbox[center,center]{{\bf Bid}}}
\pgfputlabelrotated{0.40}{\pgfxy(7.50,-0.19)}%
{\pgfxy(7.50,0.90)}{0pt}{\pgfbox[center,center]{{\bf Ask}}}
\pgfputat{\pgfxy(4.5,0.65)}%
{\pgfbox[center,center]{$U^n$}}
\pgfputat{\pgfxy(5.5,0.65)}%
{\pgfbox[center,center]{$V^n$}}
\pgfputat{\pgfxy(6.5,0.65)}%
{\pgfbox[center,center]{$W^n$}}
\pgfputat{\pgfxy(7.5,1.28)}%
{\pgfbox[center,center]{$X^n$}}
\pgfputat{\pgfxy(8.5,1.28)}%
{\pgfbox[center,center]{$Y^n$}}
\pgfputat{\pgfxy(9.5,1.28)}%
{\pgfbox[center,center]{$Z^n$}}
\begin{pgfscope}
\pgfsetlinewidth{1.5pt}
\pgfsetarrowsend{Triangle[scale=0.75pt]}
\pgfxyline(6.5,-0.3)(6.5,-0.8)
\pgfxyline(7.5,-0.3)(7.5,-0.8)
\pgfxyline(8.5,-0.3)(8.5,-0.8)
\pgfxyline(7.5,2.0)(7.5,2.5)
\pgfxyline(6.5,2.0)(6.5,2.5)
\pgfxyline(5.5,2.0)(5.5,2.5)
\pgfputat{\pgfxy(6.5,-1.0)}%
{\pgfbox[center,center]{$\mu_0$}}
\pgfputat{\pgfxy(7.5,-1.0)}%
{\pgfbox[center,center]{$\mu_1$}}
\pgfputat{\pgfxy(8.5,-1.0)}%
{\pgfbox[center,center]{$\mu_2$}}
\pgfputat{\pgfxy(5.5,2.7)}%
{\pgfbox[center,center]{$\lambda_2$}}
\pgfputat{\pgfxy(6.5,2.7)}%
{\pgfbox[center,center]{$\lambda_1$}}
\pgfputat{\pgfxy(7.5,2.7)}%
{\pgfbox[center,center]{$\lambda_0$}}
\end{pgfscope}
\end{pgfpicture}
\caption{Typical limit-order book}
\end{figure}

We define the diffusion scaled queue
lengths at time $t$ to be
$\widehat{Q}^{(n)}(t)=\frac{1}{\sqrt{n}}Q^n(t)$,
where $Q^n$ stands in for $U^n$,
$V^n$, $W^n$, $X^n$, $Y^n$ and $Z^n$.
We study the limit
$(U^*,V^*,W^*,X^*,Y^*,Z^*)$ 
of $(\Uhatn,\Vhatn,\Whatn,\Xhatn,\Yhatn,\Zhatn)$.  
In Figure 1.2,
we call the highest price tick at which
buy orders are queued the {\em essential bid};
it might fail to be the limit of the bid prices
in Figure 1.1.  Similarly, we call the lowest
price tick at which sell orders are queued
the {\em essential ask}.
Of course, as the essential bid and essential
ask prices move,
we will need to shift the window of six price
ticks whose queue lengths we study.

\begin{figure}[h]
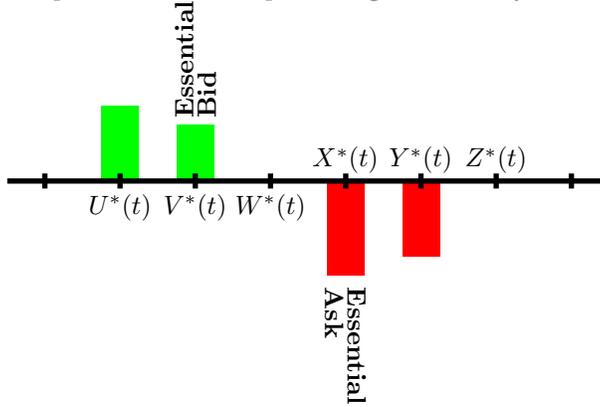
\label{F1.3}
\begin{pgfpicture}{0cm}{-1.7cm}{14cm}{3cm}
%
%
\begin{pgfscope}
\pgfsetlinewidth{2pt}
\color{green}
\pgfmoveto{\pgfxy(4.25,1.00)}
\pgflineto{\pgfxy(4.25,2.00)}
\pgflineto{\pgfxy(4.75,2.00)}
\pgflineto{\pgfxy(4.75,1.00)}
\pgffill
\pgfclosestroke
\pgfmoveto{\pgfxy(5.25,1.00)}
\pgflineto{\pgfxy(5.25,1.75)}
\pgflineto{\pgfxy(5.75,1.75)}
\pgflineto{\pgfxy(5.75,1.00)}
\pgffill
\pgfclosestroke
\end{pgfscope}
\begin{pgfscope}
\pgfsetlinewidth{2pt}
\color{red}
\pgfmoveto{\pgfxy(7.25,1.00)}
\pgflineto{\pgfxy(7.25,-0.25)}
\pgflineto{\pgfxy(7.75,-0.25)}
\pgflineto{\pgfxy(7.75,1.00)}
\pgffill
\pgfclosestroke
\pgfmoveto{\pgfxy(8.25,1.00)}
\pgflineto{\pgfxy(8.25,0.00)}
\pgflineto{\pgfxy(8.75,0.00)}
\pgflineto{\pgfxy(8.75,1.00)}
\pgffill
\pgfclosestroke
\end{pgfscope}
\begin{pgfscope}
\pgfsetlinewidth{2pt}
\pgfxyline(3,1)(11,1)
\pgfxyline(3.5,0.9)(3.5,1.1)
\pgfxyline(4.5,0.9)(4.5,1.1)
\pgfxyline(5.5,0.9)(5.5,1.1)
\pgfxyline(6.5,0.9)(6.5,1.1)
\pgfxyline(7.5,0.9)(7.5,1.1)
\pgfxyline(8.5,0.9)(8.5,1.1)
\pgfxyline(9.5,0.9)(9.5,1.1)
\pgfxyline(10.5,0.9)(10.5,1.1)
\end{pgfscope}
\pgfputlabelrotated{0.40}{\pgfxy(5.35,1.4)}%
{\pgfxy(5.35,2.5)}{0pt}{\pgfbox[left,center]{{\bf Essential}}}
\pgfputlabelrotated{0.40}{\pgfxy(5.65,1.4)}%
{\pgfxy(5.65,2.5)}{0pt}{\pgfbox[left,center]{{\bf Bid}}}
\pgfputlabelrotated{0.40}{\pgfxy(7.35,0.45)}%
{\pgfxy(7.35,-1.7)}{0pt}{\pgfbox[left,center]{{\bf Ask}}}
\pgfputlabelrotated{0.40}{\pgfxy(7.65,0.45)}%
{\pgfxy(7.65,-1.7)}{0pt}{\pgfbox[left,center]{{\bf Essential}}}
\pgfputat{\pgfxy(4.5,0.65)}%
{\pgfbox[center,center]{$U^*(t)$}}
\pgfputat{\pgfxy(5.5,0.65)}%
{\pgfbox[center,center]{$V^*(t)$}}
\pgfputat{\pgfxy(6.5,0.65)}%
{\pgfbox[center,center]{$W^*(t)$}}
\pgfputat{\pgfxy(7.5,1.28)}%
{\pgfbox[center,center]{$X^*(t)$}}
\pgfputat{\pgfxy(8.5,1.28)}%
{\pgfbox[center,center]{$Y^*(t)$}}
\pgfputat{\pgfxy(9.5,1.28)}%
{\pgfbox[center,center]{$Z^*(t)$}}
\end{pgfpicture}
\caption{Limiting system}
\end{figure}

Here is a brief summary of the results
of this paper.
For the moment, let us consider
the limit processes during a time interval
in which, as in Figure 1.2, $V^*>0$
and $Y^*<0$, so that we do not need to
shift the window.  We call $V^*$
and $Y^*$ the {\em bracketing queues}
and $W^*$ and $X^*$ the {\em interior queues}.
During such a time interval,
the pair $(W^*,X^*)$ is given by
$$
(W^*,X^*)=\big(\max\{B_{w,x},0\},
\min\{B_{w,x},0\}\big),
$$
where $B_{w,x}$ is
a {\em two-speed Brownian motion},
a concept defined and analyzed in
Appendix \ref{TwoSpeedBrMot}.
When $B_{w,x}<0$. so that $W^*=0$, then $V^*>0$
and $X^*<0$
behave as correlated Brownian motions with
standard deviations
\begin{align}
\sigma_+
&=
\sqrt{2(\lambda_0+b\lambda_1)},
\label{sigmaplus}\\
\sigma_-
&=
\sqrt{2(\mu
_0+a\mu_1)},\label{sigmaminus}
\end{align}
respectively, and correlation
\be\label{rho}
-\rho=\frac{\lambda_1+\mu_1}{\sqrt{(\lambda_0+b\lambda_1)
(\mu_0+a\mu_1)}}=\frac{2(\lambda_1+\mu_1)}{\sigma_+
\sigma_-}
\ee
On the other hand, when $B_{w,x}>0$,
so that $X^*=0$, then $W^*>0$
and $Y^*<0$ behave this way.
When $B_{w,x}=0$, then there is a three-tick
spread between the essential bid
$V^*$ and the essential ask $Y^*$.
We call this a renewal state.
  
During a time interal in which $V^*$
is positive and $B_{w,x}$ is negative,
so that $W^*=0$ and $X^*<0$, $V^*$ 
can reach zero.  When this happens, the system
has entered a different renewal state,
one tick to the left of the renewal
state in which $W^*=X^*=B_{w,x}=0$.
At such a moment, $U^*$ will
be positive, $X^*$ will be negative,
and we can shift the window left, declaring
$U^*$ and $X^*$ to be the bracketing
queues and $V^*$ and $W^*$ to be
the interior queues.
From a given
state with $W^*=0$
and negative $X^*$, we compute the probability
that this occurs.  From a renewal
state, we also compute the probability that
the next renewal state is to the left
or to the right, and we determine the characteristic
function of the time to the next renewal state,
conditioned on the direction of the price change.   

\section{Section-by-section summary}\label{Summary}

We begin with a short section, Section \ref{Notation},
on notation.
In Section \ref{Interior queues}
we identify the limit of the scaled interior
queues $W^n$ and $X^n$.  Section \ref{BracketingQueues} 
does the same for the bracketing queues $V^n$ and $Y^n$.
In both Sections \ref{Interior queues}
and \ref{BracketingQueues}, we simplify the analysis
by considering processes that are governed  
by the same dynamics before and after
one of the bracketing queues vanishes.  These
processes agree with $V^n$, $W^n$, $X^n$
and $Y^n$ until one of the bracketing queues
vanishes.  In Section \ref{Vanish},
we elaborate on the relationship among
these processes.  Section \ref{Statistics}
computes the
distribution of the time until price
change and the direction of price change.
Appenidx
\ref{TwoSpeedBrMot} defines and develops the
properties of two-speed Brownian motion.  

\section{Notation and Assumption}\label{Notation}

\setcounter{theorem}{0}
\setcounter{equation}{0}
\setcounter{figure}{0}

This work uses weak convergence of probability
measures on the space $D[0,\infty)$
of c\`adl\`ag functions from $[0,\infty)$
to $\R$
with the $J_1$ topology;
see \cite{EthierKurtz} and \cite{Whitt}.
We denote this convergence, which we call
{\em weak-$J_1$ convergence}, by
$\stackrel{J_1}{\Longrightarrow}$.
When we write $X^n\ArrowJ1 X$ for a sequence
of c\`adl\`ag processes $\{X^n\}_{n=1}^{\infty}$
and a limiting c\`adl\`ag process $X$, we mean that
the probability measures induced by
$X^n$ on $D[0,\infty)$ converge weakly-$J_1$
to the probability measure induced on
$D[0,\infty)$ by $X$.
We will need to sometimes
assign a ``left-limit at zero.''
To do this, we consider 
the space $D_0[-1,\infty)$ of c\`adl\`ag
functions from $[-1,\infty)$ to $\R$ that are
constant on $[-1,0)$.  These may be identified
with $D[0-,\infty):=\R\times D[0,\infty)$.  
An element
$x\in D[0-,\infty)$
is a c\`adl\`ag function from
$[0,\infty)$ to $\R$ together with a real
number denoted $x(0-)$.  The $M_1$ topology on 
$D_0[-1,\infty)$ provides a topology
on $D[0-,\infty)$.  Weak convergence
of probability measures on $D[0-,\infty)$
with the $M_1$ topology is called
{\em weak-$M_1$ convergence}.
The convergence we establish in Theorem \ref{T.VWXY}
below is joint weak convergence of four processes,
two in the $M_1$-topology and two in the $J_1$-topology.
It is characterized by the condition (\ref{5.88}).

\begin{definition}\label{D3.1a}
{\rm We say that a sequence of c\`adl\`ag processes
$\{X^n\}_{n=1}^\infty$ on $[0,\infty)$
is {\em bounded above in probability
on compact time intervals}
if, for every $T>0$ and $\ve>0$, there exists
$K>0$ and $N\geq 1$ such that
$\P\{\sup_{0\leq t\leq T}X^n(t)>K\}<\ve$
for all $n\geq N$.  We say that
$\{X^n\}_{n=1}^{\infty}$ is {\em bounded
below in probability on compact time
intervals} if $\{-X^n\}_{n=1}^{\infty}$
is bounded above in probability on compact
time intervals.
We say that $\{X^n\}_{n=1}^{\infty}$
is {\em bounded in probability on compact time intervals} if
it is both bounded above and bounded below
in probability
on compact time intervals, in which
case we write $X^n=O(1)$.}
\end{definition}

\begin{definition}\label{D3.2a}
{\rm A sequence of c\`adl\`ag processes
$\{X^n\}_{n=1}^{\infty}$ is said to
be $o(1)$ (written $X^n=o(1)$) if
$X_n\stackrel{J_1}{\Longrightarrow}0$.}
\end{definition}

\begin{remark}\label{R3.2a}
{\rm
The sequence $\{X^n\}_{n=1}^{\infty}$ is $o(1)$ if and only
if $\sup_{0\leq t\leq T}|X^n(t)|$ converges
in probability to zero, which
we write as $\sup_{0\leq t\leq T}|X^n(t)|
\stackrel{\P}{\rightarrow}0$, for every $T>0$.}
\end{remark}

We denote by $\id$ the identity
mapping on $[0,\infty]$.  We use
the term {\em standard Brownian motion}
to refer to a Brownian motion with initial
value zero that accumulates quadratic
variation at rate one per unit time.

We make the following assumption throughout.

\begin{assumption}\label{Assumption1}
There are two numbers $a>1$ and $b>1$ satisfying $a+b>ab$
such that
\begin{align*}
a\lambda_0&=
b\mu_0,\\
\lambda_1&=
(a-1)\lambda_0,\\
\lambda_2&=
(a+b-ab)\lambda_0,\\
\mu_1&=
(b-1)\mu_0,\\
\mu_2&=(a+b-ab)\mu_0.
\end{align*}
\end{assumption}

\begin{remark}\label{R2.1}
{\rm An immediate consequence of Assumption 
\ref{Assumption1} is that
\be\label{a6}
c:=\mu_0-\lambda_1
=(a+b-ab)\frac{\lambda_0}{b}
=(a+b-ab)\frac{\mu_0}{a}
=\lambda_0-\mu_1>0.
\ee
}
\end{remark}

For future reference, we define
\be\label{kappa}
\kappa_L:=
\frac{\lambda_2\mu_1}{\theta_b\lambda_1},\quad
\kappa_R:=
-\frac{\mu_2\lambda_1}{\theta_s\mu_1}.
\ee

\section{Interior queues}\label{Interior queues}

We begin the analysis of
the limit-order book with a study
of the interior queues.

\setcounter{theorem}{0}
\setcounter{equation}{0}
\setcounter{figure}{0}

\subsection{Initial condition}\label{InitialCondition}

To avoid a lengthy analysis
of an initial transient period, 
we make the following assumption
about the initial conditions in the sequence of 
limit-order book models.
\begin{assumption}\label{Assumption2}
There exist six adjacent price
ticks $p_u<p_v<p_w<p_x<p_y<p_z$ such that the initial
values of the corresponding queues satisfy
\begin{align*}
\lefteqn{
\left(\frac{1}{\sqrt{n}}U^n(0),\frac{1}{\sqrt{n}}V^n(0),
\frac{1}{\sqrt{n}}W^n(0),
\frac{1}{\sqrt{n}}X^n(0),
\frac{1}{\sqrt{n}}Y^n(0),\frac{1}{\sqrt{n}}Z^n(0)\right)}
\hspace{2in}
\\
&
\Longrightarrow
\big(U^*(0),V^*(0),0,0,Y^*(0),Z^*(0)\big),
\end{align*}
where $U^n(0)\geq 0$ for every $n$
so that $U^*(0)$ is a nonnegative
constant, $V^*(0)$ is a positive constant,
$Y^*(0)$ is a negative constant, and
$Z^n(0)\leq0$ for every $n$ so that
$Z^*(0)$ is a nonpositive constant.
The convergence is weak convergence
of probability measures
on the space $\R^6$.
\end{assumption}

Given the initial condition of
Assumption \ref{Assumption2},
we have $V^n(0)>0$ and $Y^n(0)<0$
for all sufficiently large $n$.
We designate $V^n$ and $Y^n$
the {\em bracketing queues}
and call $W^n$ and $X^n$ the {\em interior queues},
at least until either $V^n$ or $Y^n$ vanishes.
Until this occurs, at a stopping time we denote
by
\be\label{3.1} 
S^n:=\inf\{t\geq 0: V^n(t)=0\mbox{ or }
Y^n(t)=0\},
\ee
we consider the dynamics
of the interior queues.

\subsection{Interior queue dynamics}\label{Interior}

Given that $V^n$ is positive and $Y^n$
is negative, there are eight possible configurations
of the interior queues, depending
on whether $W^n$ is positive, zero or negative
and whether $X^n$ is positive, zero or negative.
(It cannot happen that $W^n$ is negative
and $X^n$ is positive; this would correspond
to a sell order queued at the price below
the price of a queued buy order.)
These eight configurations are shown in Figure 4.1.
This figure indicates the locations and
rates of arriving orders using the conventions
adopted for Figure 1.1.  In four of these
configurations, one of the bracketing queues
is subject to cancellations, and this queue
is indicated by the letter $C$ above
or below its histogram.

\begin{figure}[h]
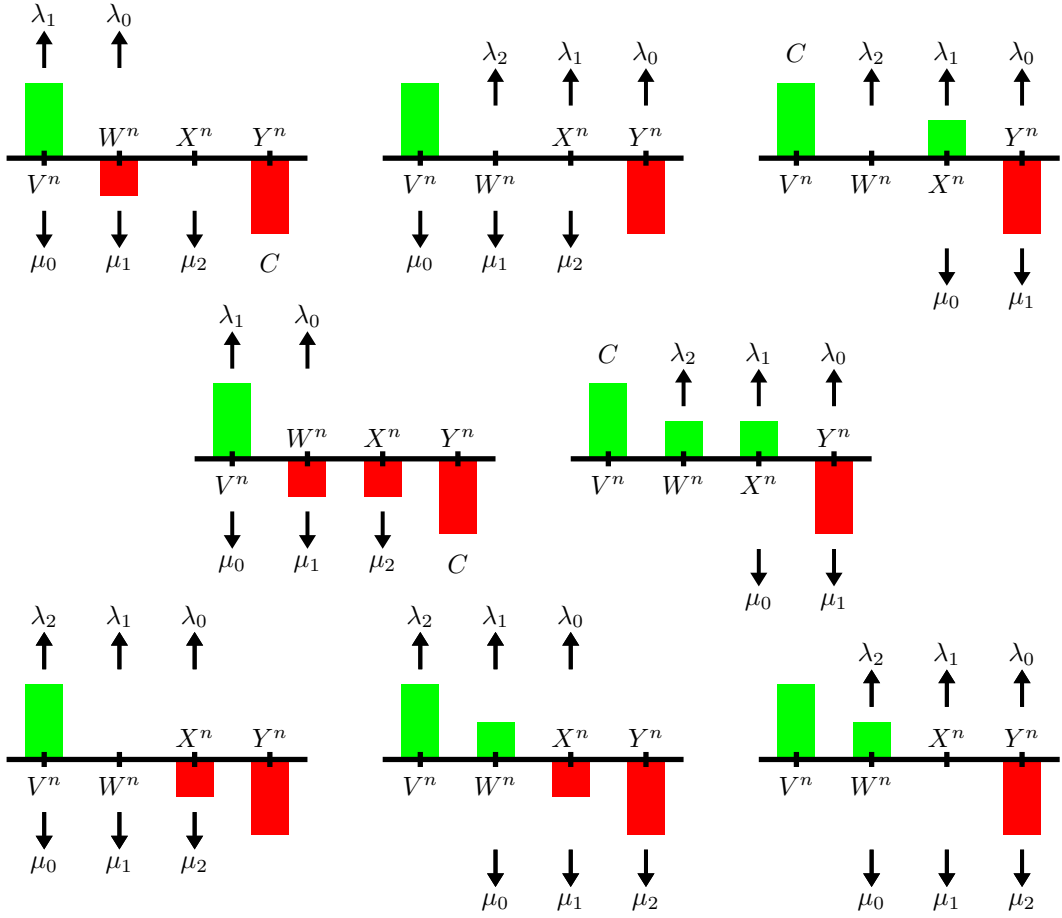
\label{F3.1}
\begin{pgfpicture}{0cm}{-1cm}{14cm}{11cm}
%
%
\begin{pgfscope}
\color{green}
\pgfmoveto{\pgfxy(0.25,1.00)}
\pgflineto{\pgfxy(0.25,2.00)}
\pgflineto{\pgfxy(0.75,2.00)}
\pgflineto{\pgfxy(0.75,1.00)}
\pgffill
\pgfclosestroke
\pgfmoveto{\pgfxy(5.25,1.00)}
\pgflineto{\pgfxy(5.25,2.00)}
\pgflineto{\pgfxy(5.75,2.00)}
\pgflineto{\pgfxy(5.75,1.00)}
\pgffill
\pgfclosestroke
\pgfmoveto{\pgfxy(10.25,1.00)}
\pgflineto{\pgfxy(10.25,2.00)}
\pgflineto{\pgfxy(10.75,2.00)}
\pgflineto{\pgfxy(10.75,1.00)}
\pgffill
\pgfclosestroke
\end{pgfscope}
\begin{pgfscope}
\color{green}
\pgfmoveto{\pgfxy(6.25,1.00)}
\pgflineto{\pgfxy(6.25,1.50)}
\pgflineto{\pgfxy(6.75,1.50)}
\pgflineto{\pgfxy(6.75,1.00)}
\pgffill
\pgfclosestroke
\pgfmoveto{\pgfxy(11.25,1.00)}
\pgflineto{\pgfxy(11.25,1.50)}
\pgflineto{\pgfxy(11.75,1.50)}
\pgflineto{\pgfxy(11.75,1.00)}
\pgffill
\pgfclosestroke
\end{pgfscope}
\begin{pgfscope}
\color{red}
\pgfmoveto{\pgfxy(3.25,1.00)}
\pgflineto{\pgfxy(3.25,0.00)}
\pgflineto{\pgfxy(3.75,0.00)}
\pgflineto{\pgfxy(3.75,1.00)}
\pgffill
\pgfclosestroke
\pgfmoveto{\pgfxy(8.25,1.00)}
\pgflineto{\pgfxy(8.25,0.00)}
\pgflineto{\pgfxy(8.75,0.00)}
\pgflineto{\pgfxy(8.75,1.00)}
\pgffill
\pgfclosestroke
\pgfmoveto{\pgfxy(13.25,1.00)}
\pgflineto{\pgfxy(13.25,0.00)}
\pgflineto{\pgfxy(13.75,0.00)}
\pgflineto{\pgfxy(13.75,1.00)}
\pgffill
\pgfclosestroke
\end{pgfscope}
\begin{pgfscope}
\color{red}
\pgfmoveto{\pgfxy(2.25,1.00)}
\pgflineto{\pgfxy(2.25,0.50)}
\pgflineto{\pgfxy(2.75,0.50)}
\pgflineto{\pgfxy(2.75,1.00)}
\pgffill
\pgfclosestroke
\pgfmoveto{\pgfxy(7.25,1.00)}
\pgflineto{\pgfxy(7.25,0.50)}
\pgflineto{\pgfxy(7.75,0.50)}
\pgflineto{\pgfxy(7.75,1.00)}
\pgffill
\pgfclosestroke
\end{pgfscope}
\begin{pgfscope}
\pgfsetlinewidth{1.5pt}
\pgfsetarrowsend{Triangle[scale=0.75pt]}
\pgfxyline(0.5,2.2)(0.5,2.7)
\pgfxyline(1.5,2.2)(1.5,2.7)
\pgfxyline(2.5,2.2)(2.5,2.7)
\pgfxyline(0.5,0.3)(0.5,-0.2)
\pgfxyline(1.5,0.3)(1.5,-0.2)
\pgfxyline(2.5,0.3)(2.5,-0.2)
\pgfxyline(5.5,2.2)(5.5,2.7)
\pgfxyline(6.5,2.2)(6.5,2.7)
\pgfxyline(7.5,2.2)(7.5,2.7)
\pgfxyline(6.5,-0.2)(6.5,-0.7)
\pgfxyline(7.5,-0.2)(7.5,-0.7)
\pgfxyline(8.5,-0.2)(8.5,-0.7)
\pgfxyline(11.5,1.7)(11.5,2.2)
\pgfxyline(12.5,1.7)(12.5,2.2)
\pgfxyline(13.5,1.7)(13.5,2.2)
\pgfxyline(11.5,-0.2)(11.5,-0.7)
\pgfxyline(12.5,-0.2)(12.5,-0.7)
\pgfxyline(13.5,-0.2)(13.5,-0.7)
\end{pgfscope}
\begin{pgfscope}
\pgfsetlinewidth{2pt}
\pgfxyline(0,1)(4,1)
\pgfxyline(5,1)(9,1)
\pgfxyline(10,1)(14,1)
\pgfxyline(0.5,0.9)(0.5,1.1)
\pgfxyline(1.5,0.9)(1.5,1.1)
\pgfxyline(2.5,0.9)(2.5,1.1)
\pgfxyline(3.5,0.9)(3.5,1.1)
\pgfxyline(5.5,0.9)(5.5,1.1)
\pgfxyline(6.5,0.9)(6.5,1.1)
\pgfxyline(7.5,0.9)(7.5,1.1)
\pgfxyline(8.5,0.9)(8.5,1.1)
\pgfxyline(10.5,0.9)(10.5,1.1)
\pgfxyline(11.5,0.9)(11.5,1.1)
\pgfxyline(12.5,0.9)(12.5,1.1)
\pgfxyline(13.5,0.9)(13.5,1.1)
\end{pgfscope}
\pgfputat{\pgfxy(0.5,2.9)}%
{\pgfbox[center,center]{$\lambda_2$}}
\pgfputat{\pgfxy(1.5,2.9)}%
{\pgfbox[center,center]{$\lambda_1$}}
\pgfputat{\pgfxy(2.5,2.9)}%
{\pgfbox[center,center]{$\lambda_0$}}
\pgfputat{\pgfxy(5.5,2.9)}%
{\pgfbox[center,center]{$\lambda_2$}}
\pgfputat{\pgfxy(6.5,2.9)}%
{\pgfbox[center,center]{$\lambda_1$}}
\pgfputat{\pgfxy(7.5,2.9)}%
{\pgfbox[center,center]{$\lambda_0$}}
\pgfputat{\pgfxy(11.5,2.4)}%
{\pgfbox[center,center]{$\lambda_2$}}
\pgfputat{\pgfxy(12.5,2.4)}%
{\pgfbox[center,center]{$\lambda_1$}}
\pgfputat{\pgfxy(13.5,2.4)}%
{\pgfbox[center,center]{$\lambda_0$}}
\pgfputat{\pgfxy(0.5,-0.4)}%
{\pgfbox[center,center]{$\mu_0$}}
\pgfputat{\pgfxy(1.5,-0.4)}%
{\pgfbox[center,center]{$\mu_1$}}
\pgfputat{\pgfxy(2.5,-0.4)}%
{\pgfbox[center,center]{$\mu_2$}}
\pgfputat{\pgfxy(6.5,-0.9)}%
{\pgfbox[center,center]{$\mu_0$}}
\pgfputat{\pgfxy(7.5,-0.9)}%
{\pgfbox[center,center]{$\mu_1$}}
\pgfputat{\pgfxy(8.5,-0.9)}%
{\pgfbox[center,center]{$\mu_2$}}
\pgfputat{\pgfxy(11.5,-0.9)}%
{\pgfbox[center,center]{$\mu_0$}}
\pgfputat{\pgfxy(12.5,-0.9)}%
{\pgfbox[center,center]{$\mu_1$}}
\pgfputat{\pgfxy(13.5,-0.9)}%
{\pgfbox[center,center]{$\mu_2$}}
\pgfputat{\pgfxy(0.5,0.65)}%
{\pgfbox[center,center]{$V^n$}}
\pgfputat{\pgfxy(1.5,0.65)}%
{\pgfbox[center,center]{$W^n$}}
\pgfputat{\pgfxy(2.5,1.28)}%
{\pgfbox[center,center]{$X^n$}}
\pgfputat{\pgfxy(3.5,1.28)}%
{\pgfbox[center,center]{$Y^n$}}
\pgfputat{\pgfxy(5.5,0.65)}%
{\pgfbox[center,center]{$V^n$}}
\pgfputat{\pgfxy(6.5,0.65)}%
{\pgfbox[center,center]{$W^n$}}
\pgfputat{\pgfxy(7.5,1.28)}%
{\pgfbox[center,center]{$X^n$}}
\pgfputat{\pgfxy(8.5,1.28)}%
{\pgfbox[center,center]{$Y^n$}}
\pgfputat{\pgfxy(10.5,0.65)}%
{\pgfbox[center,center]{$V^n$}}
\pgfputat{\pgfxy(11.5,0.65)}%
{\pgfbox[center,center]{$W^n$}}
\pgfputat{\pgfxy(12.5,1.28)}%
{\pgfbox[center,center]{$X^n$}}
\pgfputat{\pgfxy(13.5,1.28)}%
{\pgfbox[center,center]{$Y^n$}}
%
\begin{pgfscope}
\color{green}
\pgfmoveto{\pgfxy(2.75,5.00)}
\pgflineto{\pgfxy(2.75,6.00)}
\pgflineto{\pgfxy(3.25,6.00)}
\pgflineto{\pgfxy(3.25,5.00)}
\pgffill
\pgfclosestroke
\pgfmoveto{\pgfxy(7.75,5.00)}
\pgflineto{\pgfxy(7.75,6.00)}
\pgflineto{\pgfxy(8.25,6.00)}
\pgflineto{\pgfxy(8.25,5.00)}
\pgffill
\pgfclosestroke
\end{pgfscope}
\begin{pgfscope}
\color{green}
\pgfmoveto{\pgfxy(8.75,5.00)}
\pgflineto{\pgfxy(8.75,5.50)}
\pgflineto{\pgfxy(9.25,5.50)}
\pgflineto{\pgfxy(9.25,5.00)}
\pgffill
\pgfclosestroke
\pgfmoveto{\pgfxy(9.75,5.00)}
\pgflineto{\pgfxy(9.75,5.50)}
\pgflineto{\pgfxy(10.25,5.50)}
\pgflineto{\pgfxy(10.25,5.00)}
\pgffill
\pgfclosestroke
\end{pgfscope}
\begin{pgfscope}
\color{red}
\pgfmoveto{\pgfxy(5.75,5.00)}
\pgflineto{\pgfxy(5.75,4.00)}
\pgflineto{\pgfxy(6.25,4.00)}
\pgflineto{\pgfxy(6.25,5.00)}
\pgffill
\pgfclosestroke
\pgfmoveto{\pgfxy(10.75,5.00)}
\pgflineto{\pgfxy(10.75,4.00)}
\pgflineto{\pgfxy(11.25,4.00)}
\pgflineto{\pgfxy(11.25,5.00)}
\pgffill
\pgfclosestroke
\end{pgfscope}
\begin{pgfscope}
\color{red}
\pgfmoveto{\pgfxy(3.75,5.00)}
\pgflineto{\pgfxy(3.75,4.50)}
\pgflineto{\pgfxy(4.25,4.50)}
\pgflineto{\pgfxy(4.25,5.00)}
\pgffill
\pgfclosestroke
\pgfmoveto{\pgfxy(4.75,5.00)}
\pgflineto{\pgfxy(4.75,4.50)}
\pgflineto{\pgfxy(5.25,4.50)}
\pgflineto{\pgfxy(5.25,5.00)}
\pgffill
\pgfclosestroke
\end{pgfscope}
\begin{pgfscope}
\pgfsetlinewidth{2pt}
\pgfxyline(2.5,5)(6.5,5)
\pgfxyline(7.5,5)(11.5,5)
\pgfxyline(3.0,4.9)(3.0,5.1)
\pgfxyline(4.0,4.9)(4.0,5.1)
\pgfxyline(5.0,4.9)(5.0,5.1)
\pgfxyline(6.0,4.9)(6.0,5.1)
\pgfxyline(8.0,4.9)(8.0,5.1)
\pgfxyline(9.0,4.9)(9.0,5.1)
\pgfxyline(10.0,4.9)(10.0,5.1)
\pgfxyline(11.0,4.9)(11.0,5.1)
\end{pgfscope}
\begin{pgfscope}
\pgfsetlinewidth{1.5pt}
\pgfsetarrowsend{Triangle[scale=0.75pt]}
\pgfxyline(0.5,2.2)(0.5,2.7)
\pgfxyline(1.5,2.2)(1.5,2.7)
\pgfxyline(2.5,2.2)(2.5,2.7)
\pgfxyline(0.5,0.3)(0.5,-0.2)
\pgfxyline(1.5,0.3)(1.5,-0.2)
\pgfxyline(2.5,0.3)(2.5,-0.2)
\pgfxyline(5.5,2.2)(5.5,2.7)
\pgfxyline(6.5,2.2)(6.5,2.7)
\pgfxyline(7.5,2.2)(7.5,2.7)
\pgfxyline(6.5,-0.2)(6.5,-0.7)
\pgfxyline(7.5,-0.2)(7.5,-0.7)
\pgfxyline(8.5,-0.2)(8.5,-0.7)
\pgfxyline(11.5,1.7)(11.5,2.2)
\pgfxyline(12.5,1.7)(12.5,2.2)
\pgfxyline(13.5,1.7)(13.5,2.2)
\pgfxyline(11.5,-0.2)(11.5,-0.7)
\pgfxyline(12.5,-0.2)(12.5,-0.7)
\pgfxyline(13.5,-0.2)(13.5,-0.7)
\end{pgfscope}
\begin{pgfscope}
\pgfsetlinewidth{1.5pt}
\pgfsetarrowsend{Triangle[scale=0.75pt]}
\pgfxyline(3.0,6.2)(3.0,6.7)
\pgfxyline(4.0,6.2)(4.0,6.7)
\pgfxyline(3.0,4.3)(3.0,3.8)
\pgfxyline(4.0,4.3)(4.0,3.8)
\pgfxyline(5.0,4.3)(5.0,3.8)
\pgfxyline(9.0,5.7)(9.0,6.2)
\pgfxyline(10.0,5.7)(10.0,6.2)
\pgfxyline(11.0,5.7)(11.0,6.2)
\pgfxyline(10.0,3.8)(10.0,3.3)
\pgfxyline(11.0,3.8)(11.0,3.3)
\end{pgfscope}
\pgfputat{\pgfxy(3.0,6.9)}%
{\pgfbox[center,center]{$\lambda_1$}}
\pgfputat{\pgfxy(4.0,6.9)}%
{\pgfbox[center,center]{$\lambda_0$}}
\pgfputat{\pgfxy(9.0,6.4)}%
{\pgfbox[center,center]{$\lambda_2$}}
\pgfputat{\pgfxy(10.0,6.4)}%
{\pgfbox[center,center]{$\lambda_1$}}
\pgfputat{\pgfxy(11.0,6.4)}%
{\pgfbox[center,center]{$\lambda_0$}}
\pgfputat{\pgfxy(3.0,3.6)}%
{\pgfbox[center,center]{$\mu_0$}}
\pgfputat{\pgfxy(4.0,3.6)}%
{\pgfbox[center,center]{$\mu_1$}}
\pgfputat{\pgfxy(5.0,3.6)}%
{\pgfbox[center,center]{$\mu_2$}}
\pgfputat{\pgfxy(10.0,3.1)}%
{\pgfbox[center,center]{$\mu_0$}}
\pgfputat{\pgfxy(11.0,3.1)}%
{\pgfbox[center,center]{$\mu_1$}}
\pgfputat{\pgfxy(3.0,4.65)}%
{\pgfbox[center,center]{$V^n$}}
\pgfputat{\pgfxy(4.0,5.28)}%
{\pgfbox[center,center]{$W^n$}}
\pgfputat{\pgfxy(5.0,5.28)}%
{\pgfbox[center,center]{$X^n$}}
\pgfputat{\pgfxy(6.0,5.28)}%
{\pgfbox[center,center]{$Y^n$}}
\pgfputat{\pgfxy(8.0,4.65)}%
{\pgfbox[center,center]{$V^n$}}
\pgfputat{\pgfxy(9.0,4.65)}%
{\pgfbox[center,center]{$W^n$}}
\pgfputat{\pgfxy(10.0,4.65)}%
{\pgfbox[center,center]{$X^n$}}
\pgfputat{\pgfxy(11.0,5.28)}%
{\pgfbox[center,center]{$Y^n$}}
\begin{pgfscope}
\color{green}
\pgfmoveto{\pgfxy(0.25,9.00)}
\pgflineto{\pgfxy(0.25,10.00)}
\pgflineto{\pgfxy(0.75,10.00)}
\pgflineto{\pgfxy(0.75,9.00)}
\pgffill
\pgfclosestroke
\pgfmoveto{\pgfxy(5.25,9.00)}
\pgflineto{\pgfxy(5.25,10.00)}
\pgflineto{\pgfxy(5.75,10.00)}
\pgflineto{\pgfxy(5.75,9.00)}
\pgffill
\pgfclosestroke
\pgfmoveto{\pgfxy(10.25,9.00)}
\pgflineto{\pgfxy(10.25,10.00)}
\pgflineto{\pgfxy(10.75,10.00)}
\pgflineto{\pgfxy(10.75,9.00)}
\pgffill
\pgfclosestroke
\end{pgfscope}
\begin{pgfscope}
\color{green}
\pgfmoveto{\pgfxy(12.25,9.00)}
\pgflineto{\pgfxy(12.25,9.50)}
\pgflineto{\pgfxy(12.75,9.50)}
\pgflineto{\pgfxy(12.75,9.00)}
\pgffill
\pgfclosestroke
\end{pgfscope}
\begin{pgfscope}
\color{red}
\pgfmoveto{\pgfxy(3.25,9.00)}
\pgflineto{\pgfxy(3.25,8.00)}
\pgflineto{\pgfxy(3.75,8.00)}
\pgflineto{\pgfxy(3.75,9.00)}
\pgffill
\pgfclosestroke
\pgfmoveto{\pgfxy(8.25,9.00)}
\pgflineto{\pgfxy(8.25,8.00)}
\pgflineto{\pgfxy(8.75,8.00)}
\pgflineto{\pgfxy(8.75,9.00)}
\pgffill
\pgfclosestroke
\pgfmoveto{\pgfxy(13.25,9.00)}
\pgflineto{\pgfxy(13.25,8.00)}
\pgflineto{\pgfxy(13.75,8.00)}
\pgflineto{\pgfxy(13.75,9.00)}
\pgffill
\pgfclosestroke
\end{pgfscope}
\begin{pgfscope}
\color{red}
\pgfmoveto{\pgfxy(1.25,9.00)}
\pgflineto{\pgfxy(1.25,8.50)}
\pgflineto{\pgfxy(1.75,8.50)}
\pgflineto{\pgfxy(1.75,9.00)}
\pgffill
\pgfclosestroke
\end{pgfscope}
\begin{pgfscope}
\pgfsetlinewidth{1.5pt}
\pgfsetarrowsend{Triangle[scale=0.75pt]}
\pgfxyline(0.5,10.2)(0.5,10.7)
\pgfxyline(1.5,10.2)(1.5,10.7)
\pgfxyline(0.5,8.3)(0.5,7.8)
\pgfxyline(1.5,8.3)(1.5,7.8)
\pgfxyline(2.5,8.3)(2.5,7.8)
\pgfxyline(6.5,9.7)(6.5,10.2)
\pgfxyline(7.5,9.7)(7.5,10.2)
\pgfxyline(8.5,9.7)(8.5,10.2)
\pgfxyline(5.5,8.3)(5.5,7.8)
\pgfxyline(6.5,8.3)(6.5,7.8)
\pgfxyline(7.5,8.3)(7.5,7.8)
\pgfxyline(11.5,9.7)(11.5,10.2)
\pgfxyline(12.5,9.7)(12.5,10.2)
\pgfxyline(13.5,9.7)(13.5,10.2)
\pgfxyline(12.5,7.8)(12.5,7.3)
\pgfxyline(13.5,7.8)(13.5,7.3)
\end{pgfscope}
\begin{pgfscope}
\pgfsetlinewidth{2pt}
\pgfxyline(0,9)(4,9)
\pgfxyline(5,9)(9,9)
\pgfxyline(10,9)(14,9)
\pgfxyline(0.5,8.9)(0.5,9.1)
\pgfxyline(1.5,8.9)(1.5,9.1)
\pgfxyline(2.5,8.9)(2.5,9.1)
\pgfxyline(3.5,8.9)(3.5,9.1)
\pgfxyline(5.5,8.9)(5.5,9.1)
\pgfxyline(6.5,8.9)(6.5,9.1)
\pgfxyline(7.5,8.9)(7.5,9.1)
\pgfxyline(8.5,8.9)(8.5,9.1)
\pgfxyline(10.5,8.9)(10.5,9.1)
\pgfxyline(11.5,8.9)(11.5,9.1)
\pgfxyline(12.5,8.9)(12.5,9.1)
\pgfxyline(13.5,8.9)(13.5,9.1)
\end{pgfscope}
\pgfputat{\pgfxy(0.5,10.9)}%
{\pgfbox[center,center]{$\lambda_1$}}
\pgfputat{\pgfxy(1.5,10.9)}%
{\pgfbox[center,center]{$\lambda_0$}}
\pgfputat{\pgfxy(6.5,10.4)}%
{\pgfbox[center,center]{$\lambda_2$}}
\pgfputat{\pgfxy(7.5,10.4)}%
{\pgfbox[center,center]{$\lambda_1$}}
\pgfputat{\pgfxy(8.5,10.4)}%
{\pgfbox[center,center]{$\lambda_0$}}
\pgfputat{\pgfxy(11.5,10.4)}%
{\pgfbox[center,center]{$\lambda_2$}}
\pgfputat{\pgfxy(12.5,10.4)}%
{\pgfbox[center,center]{$\lambda_1$}}
\pgfputat{\pgfxy(13.5,10.4)}%
{\pgfbox[center,center]{$\lambda_0$}}
\pgfputat{\pgfxy(0.5,7.6)}%
{\pgfbox[center,center]{$\mu_0$}}
\pgfputat{\pgfxy(1.5,7.6)}%
{\pgfbox[center,center]{$\mu_1$}}
\pgfputat{\pgfxy(2.5,7.6)}%
{\pgfbox[center,center]{$\mu_2$}}
\pgfputat{\pgfxy(5.5,7.6)}%
{\pgfbox[center,center]{$\mu_0$}}
\pgfputat{\pgfxy(6.5,7.6)}%
{\pgfbox[center,center]{$\mu_1$}}
\pgfputat{\pgfxy(7.5,7.6)}%
{\pgfbox[center,center]{$\mu_2$}}
\pgfputat{\pgfxy(12.5,7.1)}%
{\pgfbox[center,center]{$\mu_0$}}
\pgfputat{\pgfxy(13.5,7.1)}%
{\pgfbox[center,center]{$\mu_1$}}
\pgfputat{\pgfxy(0.5,8.65)}%
{\pgfbox[center,center]{$V^n$}}
\pgfputat{\pgfxy(1.5,9.28)}%
{\pgfbox[center,center]{$W^n$}}
\pgfputat{\pgfxy(2.5,9.28)}%
{\pgfbox[center,center]{$X^n$}}
\pgfputat{\pgfxy(3.5,9.28)}%
{\pgfbox[center,center]{$Y^n$}}
\pgfputat{\pgfxy(5.5,8.65)}%
{\pgfbox[center,center]{$V^n$}}
\pgfputat{\pgfxy(6.5,8.65)}%
{\pgfbox[center,center]{$W^n$}}
\pgfputat{\pgfxy(7.5,9.28)}%
{\pgfbox[center,center]{$X^n$}}
\pgfputat{\pgfxy(8.5,9.28)}%
{\pgfbox[center,center]{$Y^n$}}
\pgfputat{\pgfxy(10.5,8.65)}%
{\pgfbox[center,center]{$V^n$}}
\pgfputat{\pgfxy(11.5,8.65)}%
{\pgfbox[center,center]{$W^n$}}
\pgfputat{\pgfxy(12.5,8.65)}%
{\pgfbox[center,center]{$X^n$}}
\pgfputat{\pgfxy(13.5,9.28)}%
{\pgfbox[center,center]{$Y^n$}}
\pgfputat{\pgfxy(3.5,7.6)}%
{\pgfbox[center,center]{$C$}}
\pgfputat{\pgfxy(10.5,10.4)}%
{\pgfbox[center,center]{$C$}}
\pgfputat{\pgfxy(6,3.6)}%
{\pgfbox[center,center]{$C$}}
\pgfputat{\pgfxy(8,6.4)}%
{\pgfbox[center,center]{$C$}}
\end{pgfpicture}
\caption{Interior queue configurations}
\end{figure}


We may represent the eight configurations
in Figure 4.1 as eight distinct regions in
the $(W^n,X^n)$ plane.  A representative
point in each of these  regions
is shown in Figure 4.2.  The arrows attached to
the point indicate the forces acting on
the pair $(W^n,X^n)$ in each of the eight
regions.  These eight regions correspond
to the signs of $W^n$ and $X^n$, but it is
convenient for us to group the two regions
corresponding to $X^n>0$, noting
that when $X^n>0$,
the forces acting on
the pair $(W^n,X^n)$ are the same
regardless of whether
$W^n=0$ or $W^n>0$.  We thus define the
{\em northeast region}
\be\label{3.2}
NE:=\big\{(w,x):w\geq 0,x>0\}.
\ee
Similarly, we group two regions to create
the {\em southwest region}
\be\label{3.3}
SW:=\big\{(w,x):w<0,x\leq 0\}.
\ee
We next define three regions
corresponding to three configurations
in Figure 4.1, namely, the {\em origin},
the {\em eastern region}, and the
{\em southern region},
\be\label{3.4}
O:=
\big\{(0,0)\big\},\quad
E:=
\big\{(w,0):w>0\big\},\quad
S:=
\big\{(0,x):x<0\big\}.
\ee
There remains the configuration in which 
$W^n$ is positive and $X^n$ is negative.
We partition this configuration into
three {\em southeastern regions},
\begin{align}
SE_+
&:=
\big\{(w,x):x<0,w>0,x+w>0\big\},\label{3.7}\\
SE
&:=
\big\{(w,x):x<0,w<0,x+w=0\big\},\label{3.8}\\
SE_-
&:=
\big\{(w,x):x<0,w>0,x+w<0\big\}.\label{3.9}
\end{align}
The eight regions (\ref{3.2})--(\ref{3.9})
are shown in Figure 4.2.  We denote
this collection of regions by
$\sR:=\{NE,E,SE_+,SE,SE_-,S,SW,O\}$.

\begin{figure}[h]
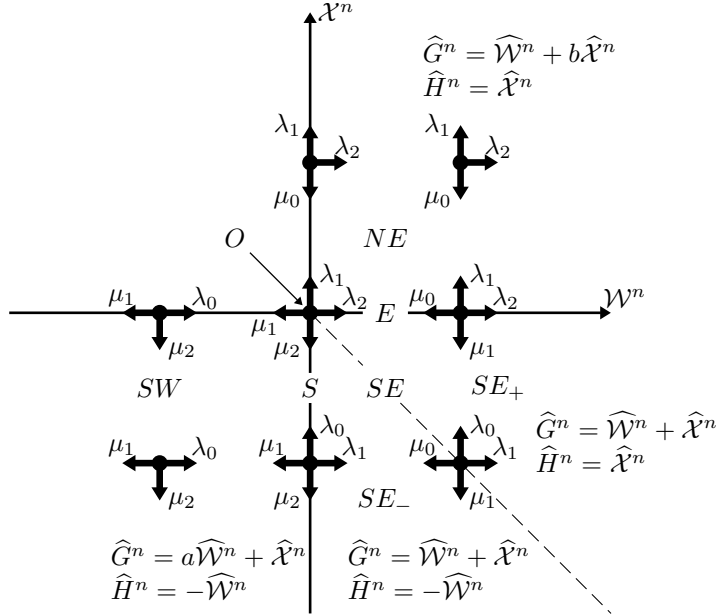
\label{F3.2}
\begin{pgfpicture}{0cm}{0cm}{14cm}{8cm}
%
%
\begin{pgfscope}
\pgfsetlinewidth{1.0pt}
\pgfsetarrowsend{Triangle[scale=0.75pt]}
\pgfxyline(8.3,4)(11,4)
\pgfxyline(7,3.2)(7,8)
\end{pgfscope}
\begin{pgfscope}
\pgfsetlinewidth{1.0pt}
\pgfxyline(3,4)(7.7,4)
\pgfxyline(7,0)(7,2.8)
\end{pgfscope}
\pgfcircle[fill]{\pgfxy(5,4)}{3pt}
\pgfcircle[fill]{\pgfxy(7,4)}{3pt}
\pgfcircle[fill]{\pgfxy(9,4)}{3pt}
\pgfcircle[fill]{\pgfxy(7,6)}{3pt}
\pgfcircle[fill]{\pgfxy(7,2)}{3pt}
\pgfcircle[fill]{\pgfxy(9,6)}{3pt}
\pgfcircle[fill]{\pgfxy(9,2)}{3pt}
\pgfcircle[fill]{\pgfxy(5,2)}{3pt}
\begin{pgfscope}
\pgfsetlinewidth{2.5pt}
\pgfsetarrowsend{Triangle[scale=0.5pt]}
\pgfxyline(5,4)(5.5,4)
\pgfxyline(5,4)(4.5,4)
\pgfxyline(5,4)(5,3.5)
\pgfxyline(7,4)(7,4.5)
\pgfxyline(7,4)(7,3.5)
\pgfxyline(7,4)(6.5,4)
\pgfxyline(7,4)(7.5,4)
\pgfxyline(9,4)(9,4.5)
\pgfxyline(9,4)(9,3.5)
\pgfxyline(9,4)(8.5,4)
\pgfxyline(9,4)(9.5,4)
\pgfxyline(7,6)(7,6.5)
\pgfxyline(7,6)(7.5,6)
\pgfxyline(7,6)(7,5.5)
\pgfxyline(9,6)(9,6.5)
\pgfxyline(9,6)(9.5,6)
\pgfxyline(9,6)(9,5.5)
\pgfxyline(5,2)(5.5,2)
\pgfxyline(5,2)(5,1.5)
\pgfxyline(5,2)(4.5,2)
\pgfxyline(7,2)(7.5,2)
\pgfxyline(7,2)(7,1.5)
\pgfxyline(7,2)(6.5,2)
\pgfxyline(7,2)(7,2.5)
\pgfxyline(9,2)(9.5,2)
\pgfxyline(9,2)(9,1.5)
\pgfxyline(9,2)(8.5,2)
\pgfxyline(9,2)(9,2.5)
\end{pgfscope}
\begin{pgfscope}
\pgfsetdash{{0.2cm}{0.1cm}}{0cm}
\pgfxyline(7,4)(7.7,3.3)
\pgfxyline(8.2,2.8)(11,0)
\end{pgfscope}
\pgfputat{\pgfxy(8,3)}{\pgfbox[center,center]%
{$SE$}}
\pgfputat{\pgfxy(8,1.5)}{\pgfbox[center,center]%
{$SE_-$}}
\pgfputat{\pgfxy(9.5,3)}{\pgfbox[center,center]%
{$SE_+$}}
\pgfputat{\pgfxy(5,3)}{\pgfbox[center,center]%
{$SW$}}
\pgfputat{\pgfxy(8,5)}{\pgfbox[center,center]%
{$NE$}}
\pgfputat{\pgfxy(8,4)}{\pgfbox[center,center]%
{$E$}}
\pgfputat{\pgfxy(7,3)}{\pgfbox[center,center]%
{$S$}}
\pgfputat{\pgfxy(6,5)}{\pgfbox[center,center]%
{$O$}}
\begin{pgfscope}
\pgfsetlinewidth{0.5pt}
\pgfsetarrowsend{Triangle[scale=0.75pt]}
\pgfxyline(6.2,4.8)(6.9,4.1)
\end{pgfscope}
\pgfputat{\pgfxy(4.5,4.2)}{\pgfbox[center,center]%
{$\mu_1$}}
\pgfputat{\pgfxy(5.6,4.2)}{\pgfbox[center,center]%
{$\lambda_0$}}
\pgfputat{\pgfxy(5.3,3.5)}{\pgfbox[center,center]%
{$\mu_2$}}
\pgfputat{\pgfxy(4.5,2.2)}{\pgfbox[center,center]%
{$\mu_1$}}
\pgfputat{\pgfxy(5.6,2.2)}{\pgfbox[center,center]%
{$\lambda_0$}}
\pgfputat{\pgfxy(5.3,1.5)}{\pgfbox[center,center]%
{$\mu_2$}}
\pgfputat{\pgfxy(7.3,4.5)}{\pgfbox[center,center]%
{$\lambda_1$}}
\pgfputat{\pgfxy(6.4,3.8)}{\pgfbox[center,center]%
{$\mu_1$}}
\pgfputat{\pgfxy(7.6,4.2)}{\pgfbox[center,center]%
{$\lambda_2$}}
\pgfputat{\pgfxy(6.7,3.5)}{\pgfbox[center,center]%
{$\mu_2$}}
\pgfputat{\pgfxy(6.7,6.5)}{\pgfbox[center,center]%
{$\lambda_1$}}
\pgfputat{\pgfxy(6.7,5.5)}{\pgfbox[center,center]%
{$\mu_0$}}
\pgfputat{\pgfxy(7.5,6.2)}{\pgfbox[center,center]%
{$\lambda_2$}}
\pgfputat{\pgfxy(8.7,6.5)}{\pgfbox[center,center]%
{$\lambda_1$}}
\pgfputat{\pgfxy(8.7,5.5)}{\pgfbox[center,center]%
{$\mu_0$}}
\pgfputat{\pgfxy(9.5,6.2)}{\pgfbox[center,center]%
{$\lambda_2$}}
\pgfputat{\pgfxy(7.3,2.5)}{\pgfbox[center,center]%
{$\lambda_0$}}
\pgfputat{\pgfxy(6.5,2.2)}{\pgfbox[center,center]%
{$\mu_1$}}
\pgfputat{\pgfxy(7.6,2.2)}{\pgfbox[center,center]%
{$\lambda_1$}}
\pgfputat{\pgfxy(6.7,1.5)}{\pgfbox[center,center]%
{$\mu_2$}}
\pgfputat{\pgfxy(9.3,2.5)}{\pgfbox[center,center]%
{$\lambda_0$}}
\pgfputat{\pgfxy(8.5,2.2)}{\pgfbox[center,center]%
{$\mu_0$}}
\pgfputat{\pgfxy(9.6,2.2)}{\pgfbox[center,center]%
{$\lambda_1$}}
\pgfputat{\pgfxy(9.3,1.5)}{\pgfbox[center,center]%
{$\mu_1$}}
\pgfputat{\pgfxy(9.3,4.5)}{\pgfbox[center,center]%
{$\lambda_1$}}
\pgfputat{\pgfxy(8.5,4.2)}{\pgfbox[center,center]%
{$\mu_0$}}
\pgfputat{\pgfxy(9.6,4.2)}{\pgfbox[center,center]%
{$\lambda_2$}}
\pgfputat{\pgfxy(9.3,3.5)}{\pgfbox[center,center]%
{$\mu_1$}}
\pgfputat{\pgfxy(8.5,7.5)}{\pgfbox[left,center]%
{$\Ghatn=\sWhatn+b\sXhatn$}}
\pgfputat{\pgfxy(8.5,7.05)}{\pgfbox[left,center]%
{$\Hhatn=\sXhatn$}}
\pgfputat{\pgfxy(10,2.5)}{\pgfbox[left,center]%
{$\Ghatn=\sWhatn+\sXhatn$}}
\pgfputat{\pgfxy(10,2.05)}{\pgfbox[left,center]%
{$\Hhatn=\sXhatn$}}
\pgfputat{\pgfxy(7.5,0.8)}{\pgfbox[left,center]%
{$\Ghatn=\sWhatn+\sXhatn$}}
\pgfputat{\pgfxy(7.5,0.35)}{\pgfbox[left,center]%
{$\Hhatn=-\sWhatn$}}
\pgfputat{\pgfxy(4.35,0.8)}{\pgfbox[left,center]%
{$\Ghatn=a\sWhatn+\sXhatn$}}
\pgfputat{\pgfxy(4.35,0.35)}{\pgfbox[left,center]%
{$\Hhatn=-\sWhatn$}}
\pgfputat{\pgfxy(10.9,4.2)}{\pgfbox[left,center]%
{$\sWn$}}
\pgfputat{\pgfxy(7.1,8)}{\pgfbox[left,center]%
{$\sXn$}}
\end{pgfpicture}
\caption{Dynamics of the interior queues}
\end{figure}

To postpone consideration of
the time $S^n$ when one of the bracketing
queues vanishes, we temporarily replace $(W^n,X^n)$
in the analysis by the pair of processes
$(\sWn,\sXn)$ whose dynamics is described by
Figure 4.2 without regard to the bracketing
queues.  The process pair of interest
$(W^n,X^n)$ agrees with $(\sW^n,\sX^n)$
up to time $S^n$.

One way to describe $(\sW^n,\sX^n)$
is to begin at an initial state,
say $(\sW^n(0),\sX^n(0))=(1,1)$, and
consider three independent exponential
random variables (times) with means $1/\lambda_1$,
$1/\lambda_2$ and $1/\mu_0$.  The first transition
of $(\sW^n,\sX^n)$ occurs at the minimum
of these three random times, and the minimizing
random time determines the direction
of the transition.  If the random time
with mean $1/\mu_0$ is the minimum,
then the transition is to $(1,0)$.
The time of the next transition is
the minimum of four exponential
random variables (times), independent
of one another and of the three previous random
variables, with means
$1/\lambda_1$, $1/\lambda_2$, $1/\mu_1$
and $1/\mu_0$.  Continuing in this way,
we construct the Markov process
$(\sW^n,\sX^n)$.

An equivalent construction that does not make
the Markov property so obvious but is
more convenient for our purposes uses Poisson
processes rather than exponentially distributed
random variables.
For each of the eight regions we create
independent Poisson process 
$N_{\times,*,\pm}$, where the symbol $\times$
indicates the region in which the Poisson process
acts, the symbol
$*$ is either $W$ or $X$,
depending on which component of $(\sW^n,\sX^n)$
is affected by the Poisson process,
and the sign $+$ or $-$ indicates whether
the Poisson process increases or decreases
this component. There are thirty
of these Poisson processes.
Given the thirty independent Poisson
processes, we can construct
by forward recursion the unique pair
of processes $(\sWn,\sXn)$ satisfying the equations
\begin{align}
P_\times^n(t)
&=
\int_0^t\ind_{\{(\sWn,\sXn)
\in\times\}}ds,\quad \times\in\sR,
\label{3.10}\\
\sWn(t)
&=
W^n(0)
+N_{NE,W,+}\circ\lambda_2P^n_{NE}(t)
+N_{E,W,+}\circ\lambda_2P^n_{E}(t)
-N_{E,W,-}\circ\mu_0P^n_{E}(t)\nonumber\\
&\quad
+N_{SE_+,W,+}\circ\lambda_1P^n_{SE_+}(t)
-N_{SE_+,W,-}\circ\mu_0P^n_{SE_+}(t)
+N_{SE,W,+}\circ\lambda_1P^n_{SE}(t)\nonumber\\
&\quad
-N_{SE,W,-}\circ\mu_0P^n_{SE}(t)
+N_{SE_-,W,+}\circ\lambda_1P^n_{SE_-}(t)
-N_{SE_-,W,-}\circ\mu_0P^n_{SE_-}(t)\nonumber\\
&\quad
+N_{S,W,+}\circ\lambda_1P^n_{S}(t)
-N_{S,W,-}\circ\mu_1P^n_{S}(t)
+N_{SW,W,+}\circ\lambda_0P^n_{SW}(t)\nonumber\\
&\quad
-N_{SW,W,-}\circ\mu_1P^n_{SW}(t)
+N_{O,W,+}\circ\lambda_2P^n_O(t)
-N_{O,W,-}\circ\mu_1P^n_O(t),\label{3.11}\\
\sXn(t)
&=
X^n(0)
-N_{SW,X,-}\circ\mu_2P_{SW}^n(t)
-N_{S,X,-}\circ\mu_2P_S^n(t)
+N_{S,X,+}\circ\lambda_0P_S^n(t)\nonumber\\
&\quad
-N_{SE_-,X,-}\circ\mu_1P_{SE_-}^n(t)
+N_{SE_-,X,+}\circ\lambda_0P_{SE_-}^n(t)
-N_{SE,X,-}\circ\mu_1P_{SE}^n(t)\nonumber\\
&\quad
+N_{SE,X,+}\circ\lambda_0P_{SE}^n(t)
-N_{SE_+,X,-}\circ\mu_1P_{SE_+}^n(t)
+N_{SE_+,X,+}\circ\lambda_0P_{SE_+}^n(t)\nonumber\\
&\quad
-N_{E,X,-}\circ\mu_1P_E^n(t)
+N_{E,X,+}\circ\lambda_1P_E^n(t)
-N_{NE,X,-}\circ\mu_0P_{NE}^n(t)\nonumber\\
&\quad
+N_{NE,X,+}\circ\lambda_1P_{NE}^n(t)
-N_{O,X,-}\circ\mu_2P_O(t)
+N_{O,X,+}\circ\lambda_1P_O(t).\label{3.12}
\end{align}

\subsection{Transformation of variables}\label{TV}

Recalling the positive constants $a$ and $b$
from Assumption \ref{Assumption1},
we define $(G^n,H^n)$ via a continuous piecewise
linear transformation of $(\sW^n,\sX^n)$ by
\begin{align}
G^n(t)
&:=
\left\{\begin{array}{ll}
\sW^n(t)+b\sX^n(t)&\mbox{if }
\big(\sW^n(t),\sX^n(t)\big)\in NE\cup E,\\
\sW^n(t)+\sX^n(t)&\mbox{if }
\big(\sW^n(t),\sX^n(t)\big)\in 
SE_+\cup SE\cup SE_-\cup O,\\
a\sW^n(t)+\sX^n(t)&\mbox{if }
\big(\sW^n(t),\sX^n(t)\big)\in SW\cup S,
\end{array}\right.\label{4.14c}
\end{align}
\begin{align}
H^n(t)
&:=
\left\{\begin{array}{ll}
\sX^n(t)&\mbox{if }\big(\sW^n(t),\sX^n(t)\big)
\in NE\cup E\cup SE_+,\\
\sX^n(t)=-\sW^n(t)&\mbox{if }
\big(\sW^n(t),\sX^n(t)\big)\in SE\cup O,\\
-\sW^n(t)&\mbox{if }
\big(\sW^n(t),\sX^n(t)\big)\in SW\cup S\cup SE_-.
\end{array}\right.\label{4.15c}
\end{align}
This transformation is invertible with
continuous piecewise linear inverse
\begin{align}
\sW^n(t)
&=
\left\{\begin{array}{ll}
G^n(t)-bH^n(t)&\mbox{if }
G^n(t)\geq 0, 0\leq H^n(t)\leq G^n(t)/b,\\
G^n(t)-H^n(t)&\mbox{if }
G^n(t)\geq 0,H^n(t)<0,\\
-H^n(t)&\mbox{if }
G^n(t)< 0, H^n(t)\leq-G^n(t)/a,
\end{array}\right.\label{4.16d}\\
\sX^n(t)
&=
\left\{\begin{array}{ll}
H^n(t)&\mbox{if }
G^n(t)\geq 0, H^n(t)\leq G^n(t)/b,\\
G^n(t)+H^n(t)&\mbox{if }
G^n(t)<0,H^n(t)<0,\\
G^n(t)+aH^n(t)&\mbox{if }
G^n(t)<0,0\leq H^n(t)\leq - G^n(t)/a.
\end{array}\right.\label{4.17d}
\end{align}
Therefore, like $(\sW^n,\sX^n)$,
the pair $(G^n,H^n)$ is Markov.

We show in Proposition \ref{P4.3} below that
the diffusion-scaled version of $G^n$
is a martingale.
One can see intuitively in Figure 4.2 
that $G^n$ is a martingale by computing
the net drift at each of eight points with arrows.
For example, in the region $E$,
$\Ghatn$ moves up $b$ 
at rate $\lambda_1$ (due to an increase
in $\sX^n$),
moves down $1$ at rate
$\mu_1$ (due to a decrease in
$\sX^n$), moves up $1$
at rate $\lambda_2$ (due to an increase
in $\sW^n$), and moves down
$1$ at rate $\mu_0$
(due to a decrease in $\sWn$).
The net drift rate is
$b\lambda_1-\mu_1+\lambda_2-\mu_0$,
which is zero because of Assumption \ref{Assumption1}.

To reduce the amount of notation, we write
\begin{align}
G^n
&=
G^n(0)
+b\Phi_1^n+\Phi_2^n+\Phi_3^n+\Phi_4^n
+\Phi_5^n+\Phi_6^n+\Phi_7^n+a\Phi_8^n,
\label{4.16c}\\
\big|G^n\big|
&=
\big|G^n(0)\big|
+b\Phi_1^n+\Phi_2^n+\Phi_3^n+\Phi_4^n-\Phi^n_5-\Phi_6^n
-\Phi^n_7-a\Phi_8^n,
\label{4.17c}\\
H^n
&=
H^n(0)+\Phi_1^n+\Phi_2^n-\Phi_7^n-\Phi_8^n
+\Phi_9^n+\Phi_{10}^n,
\label{4.18c}\\
\big|H^n\big|
&=
\big|H^n(0)\big|+
\Phi_1^n-\Phi_2^n+\Phi_7^n-\Phi_8^n
-\Phi_9^n+\Phi_{10}^n,
\label{4.19c}
\end{align}
where
\begin{align*}
\Phi^n_1
&:=
N_{NE,X,+}\circ\lambda_1P^n_{NE}
-N_{NE,X,-}\circ\mu_0P^n_{NE}
+N_{E,X,+}\circ\lambda_1P^n_E,\\
\Phi^n_2
&:=
-N_{E,X,-}\circ\mu_1P^n_E
+N_{SE_+,X,+}\circ\lambda_0P^n_{SE_+}
-N_{SE_+,X,-}\circ\mu_1P^n_{SE_+},\\
\Phi^n_3
&:=
N_{NE,W,+}\circ\lambda_2P^n_{NE}
+N_{E,W,+}\circ\lambda_2P^n_E
-N_{E,W,-}\circ\mu_0P^n_E\nonumber\\
&\qquad
+N_{SE_+,W,+}\circ\lambda_1P^n_{SE_+}
-N_{SE_+,W,-}\circ\mu_0P^n_{SE_+},\\
\Phi^n_4
&:=
N_{SE,X,+}\circ\lambda_0P^n_{SE}
+N_{SE,W,+}\circ\lambda_1P^n_{SE}
+N_{O,W,+}\circ\lambda_2 P^n_O
+bN_{O,X,+}\circ\lambda_1P^n_O,\\
\Phi^n_5
&:=
-N_{SE,W,-}\circ\mu_0P^n_{SE}
-N_{SE,X,-}\circ\mu_1P^n_{SE}
-N_{O,X,-}\circ\mu_2P^n_O
-aN_{O,W,-}\circ\mu_1P^n_O,\\
\Phi^n_6
&:=
-N_{SW,X,-}\circ\mu_2P^n_{SW}
-N_{S,X,-}\circ\mu_2P^n_S
+N_{S,X,+}\circ\lambda_0P^n_S\nonumber\\
&\qquad
-N_{SE_-,X,-}\circ\mu_1P^n_{SE_-}
+N_{SE_-,X,+}\circ\lambda_0P^n_{SE_-},\\
\Phi^n_7
&:=
N_{S,W,+}\circ\lambda_1P^n_S
-N_{SE_-,W,-}\circ\mu_0P^n_{SE_-}
+N_{SE_-,W,+}\circ\lambda_1P^n_{SE_-},\\
\Phi^n_8
&:=
-N_{SW,W,-}\circ\mu_1P^n_{SW}
+N_{SW,W,+}\circ\lambda_0P^n_{SW}
-N_{S,W,-}\circ\mu_1P^n_S,\\
\Phi^n_9
&:=
N_{SE,X,+}\circ\lambda_0P^n_{SE}
+N_{SE,W,-}\circ\mu_0P^n_{SE},\\
\Phi^n_{10}
&:=
N_{O,X,+}\circ\lambda_1P^n_O
+N_{O,W,-}\circ\mu_1P^n_O.\\
\end{align*}

\subsection{Diffusion scaling}\label{DiffusionScaling}

We introduce the diffusion scaled processes,
defined for $t\geq 0$,
\be\label{DS}
\sXhatn(t):=\frac{1}{\sqrt{n}}\sX^n(nt),\,\,
\sWhatn(t):=\frac{1}{\sqrt{n}}\sW^n(nt),\,\,
\Ghatn(t):=\frac{1}{\sqrt{n}}G^n(nt),\,\,
\Hhatn(t):=\frac{1}{\sqrt{n}}H^n(nt).
\ee
The transformation
(\ref{4.14c}) and (\ref{4.15c}) is preserved
by diffusion scaling, i.e.,
\begin{align}
\Ghatn(t)
&:=
\left\{\begin{array}{ll}
\sWhatn(t)+b\sXhatn(t)&\mbox{if }
\big(\sWhatn(t),\sXhatn(t)\big)
\in NE\cup E,\\
\sWhatn(t)+\sXhatn(t)&\mbox{if }
\big(\sWhatn(t),\sXhatn(t)\big)
\in SE_+\cup SE \cup SE_-\cup O,\\
a\sWhatn(t)+\sXhatn(t)&\mbox{if }
\big(\sWhatn(t),\sXhatn(t)\big)
\in SW\cup S,
\end{array}\right.\label{3.17a}\\
\Hhatn(t)
&:=
\left\{\begin{array}{ll}
\sXhatn(t)&\mbox{if }\big(\sWhatn(t),\sXhatn(t)\big)
\in NE\cup E\cup SE_+,\\
\sXhatn(t)=-\sWhatn(t)&\mbox{if }
\big(\sWhatn(t),\sXhatn(t)\big)
\in SE \cup O,\\
-\sWhatn(t)&\mbox{if }
\big(\sWhatn(t),\sXhatn(t)\big)
\in SW\cup S\cup SE_-.
\end{array}\right.\label{3.18a}
\end{align}
This transformation has continuous piecewise
linear inverse (cf.\ (\ref{4.16d}), (\ref{4.17d}))
\begin{align}
\sWhatn(t)
&=
\left\{\begin{array}{ll}
\Ghatn(t)-b\Hhatn(t)&\mbox{ if }
\Ghatn(t)\geq 0, 0\leq \Hhatn(t)\leq \Ghatn(t)/b,\\
\Ghatn(t)-\Hhatn(t)&\mbox{ if }
\Ghatn(t)\geq 0,\Hhatn(t)<0,\\
-\Hhatn(t)&\mbox{ if }
\Ghatn(t)< 0, \Hhatn(t)\leq-\Ghatn(t)/a,
\end{array}\right.\label{4.34d}\\
\sXhatn(t)
&=
\left\{\begin{array}{ll}
\Hhatn(t)&\mbox{ if }
\Ghatn(t)\geq 0, \Hhatn(t)\leq\Ghatn(t)/b,\\
\Ghatn(t)+\Hhatn(t)&\mbox{ if }
\Ghatn(t)<0,\Hhatn(t)<0,\\
\Ghatn(t)+a\Hhatn(t)&\mbox{ if }
\Ghatn(t)<0,0\leq\Hhatn(t)\leq -\Ghatn(t)/a.
\end{array}\right.\label{4.35d}
\end{align}

The Continuous Mapping Theorem
implies we can determine the $J_1$-weak limit
of $(\sWhatn,\sXhatn)$
by determining the limit of $(\Ghatn,\Hhatn)$.
We show in Section \ref{Crushing}
that $\Hhatn\ArrowJ1 0$ and in
Section \ref{Ghatn} that $\Ghatn$ converges
to a two-speed Brownian motion.

We also define $\Phihatn_i(t)=\Phi_i(nt)/\sqrt{n}$,
so that (\ref{4.16c})--(\ref{4.19c}) becomes
\begin{align}
\Ghatn
&=
\Ghatn(0)+b\Phihatn_1+\Phihatn_2+\Phihatn_3
+\Phihatn_4+\Phihatn_5+\Phihatn_6+\Phihatn_7
+a\Phihatn_8,\label{4.32c}\\
\big|\Ghatn\big|
&=
\big|\Ghatn(0)\big|+b\Phihatn_1+\Phihatn_2+\Phihatn_3
+\Phihatn_4-\Phihatn_5-\Phihatn_6-\Phihatn_7
-a\Phihatn_8,\label{4.33c}\\
\Hhatn
&=
\Hhatn(0)
+\Phihatn_1+\Phihatn_2-\Phihatn_7-\Phihatn_8
+\Phihatn_9+\Phihatn_{10},\label{4.34c}\\
\big|\Hhatn(0)\big|
&=
\big|\Hhatn(0)\big|
+\Phihatn_1-\Phihatn_2+\Phihatn_7-\Phihatn_8
-\Phihatn_9+\Phihatn_{10}.\label{4.35c}
\end{align}
It will be useful to replace the independent 
diffusion-scaled Poisson processes appearing
in $\Phihatn_i$ by the {\em centered}
diffusion-scaled Poisson processes
\be
\Mhatn_{\times,*,\pm}(t):=
\frac{1}{\sqrt{n}}
\big(N_{\times,*,\pm}(nt)-nt\big),\quad
\times\in\sR, *\in\{W,X\},\label{3.13}\
\ee
which converge $J_1$-weakly 
to independent standard Brownian motions
$B_{\times,*,\pm}$, i.e.,
\be\label{4.41e}
\big(\Mhatn_{\times,*,\pm}\big)_{\times\in\sR, *\in\{W,X\}}
\ArrowJ1
\big(B_{\times,*,\pm}\big)_{\times\in\sR,*\in\{W,X\}}.
\ee\
We also define the {\em fluid-scaled
occupation times}
\be\label{3.14}
\Pbarn_{\times}(t):=
\frac{1}{n}P_{\times}(nt)
=\int_0^t\ind_{\{(\sWhatn(s),\sXhatn(s))
\in\times\}}ds,
\quad
\times\in\sR.
\ee
We denote by $\Thetahatn_i$ the
centered versions of $\Phihatn_i$, i.e.,
\begin{align}
\Thetahatn_1
&:=
\Mhatn_{NE,X,+}\circ\lambda_1\Pbarn_{NE}
-\Mhatn_{NE,X,-}\circ\mu_0\Pbarn_{NE}
+\Mhatn_{E,X,+}\circ\lambda_1\Pbarn_E\nonumber\\
&=
\Phihatn_1
-\sqrt{n}(\lambda_1-\mu_0)\Pbarn_{NE}
-\sqrt{n}\lambda_1\Pbarn_E,\label{4.39c}\\
\Thetahatn_2
&:=
-\Mhatn_{E,X,-}\circ\mu_1\Pbarn_E
+\Mhatn_{SE_+,X,+}\circ\lambda_0\Pbarn_{SE_+}
-\Mhatn_{SE_+,X,-}\circ\mu_1\Pbarn_{SE_+}\nonumber\\
&=
\Phihatn_2
+\sqrt{n}\mu_1\Pbarn_E
-\sqrt{n}(\lambda_0-\mu_1)\Pbarn_{SE_+},\label{4.40c}\\
\Thetahatn_3
&:=
\Mhatn_{NE,W,+}\circ\lambda_2\Pbarn_{NE}
+\Mhatn_{E,W,+}\circ\lambda_2\Pbarn_E
-\Mhatn_{E,W,-}\circ\mu_0\Pbarn_E\nonumber\\
&\qquad
+\Mhatn_{SE_+,W,+}\circ\lambda_1\Pbarn_{SE_+}
-\Mhatn_{SE_+,W,-}\circ\mu_0\Pbarn_{SE_+}\nonumber\\
&=
\Phihatn_3-\sqrt{n}\lambda_2\Pbarn_{NE}
-\sqrt{n}(\lambda_2-\mu_0)\Pbarn_E
-\sqrt{n}(\lambda_1-\mu_0)\Pbarn_{SE_+},\label{4.41c}\\
\Thetahatn_4
&:=
\Mhatn_{SE,X,+}\circ\lambda_0\Pbarn_{SE}
+\Mhatn_{SE,W,+}\circ\lambda_1\Pbarn_{SE}
+\Mhatn_{O,W,+}\circ\lambda_2\Pbarn_O
+b\Mhatn_{O,X,+}\circ\lambda_1\Pbarn_O\nonumber\\
&=
\Phihatn_4
-\sqrt{n}(\lambda_0+\lambda_1)\Pbarn_{SE}
-\sqrt{n}(\lambda_2+b\lambda_1)\Pbarn_O,\label{4.42c}\\
\Thetahatn_5
&:=
-\Mhatn_{SE,W,-}\circ\mu_0\Pbarn_{SE}
-\Mhatn_{SE,X,-}\circ\mu_1\Pbarn_{SE}
-\Mhatn_{O,X,-}\circ\mu_2\Pbarn_O
-a\Mhatn_{O,W,-}\circ\mu_1\Pbarn_O\nonumber\\
&=
\Phihatn_5
+\sqrt{n}(\mu_0+\mu_1)\Pbarn_{SE}
+\sqrt{n}(\mu_2+a\mu_1)\Pbarn_O,\label{4.43c}\\
\Thetahatn_6
&:=
-\Mhatn_{SW,X,-}\circ\mu_2\Pbarn_{SW}
-\Mhatn_{S,X,-}\circ\mu_2\Pbarn_S
+\Mhatn_{S,X,+}\circ\lambda_0\Pbarn_S\nonumber\\
&\qquad
-\Mhatn_{SE_-,X,-}\circ\mu_1\Pbarn_{SE_-}
+\Mhatn_{SE_-,X,+}\circ\lambda_0\Pbarn_{SE_-}\nonumber\\
&=
\Phihatn_6+\sqrt{n}\mu_2\Pbarn_{SW}
+\sqrt{n}(\mu_2-\lambda_0)\Pbarn_S
+\sqrt{n}(\mu_1-\lambda_0)\Pbarn_{SE_-},\label{4.44c}\\
\Thetahatn_7
&:=
\Mhatn_{S,W,+}\circ\lambda_1\Pbarn_S
-\Mhatn_{SE_-,W,-}\circ\mu_0\Pbarn_{SE_-}
+\Mhatn_{SE_-,W,+}\circ\lambda_1\Pbarn_{SE_-}\nonumber\\
&=
\Phihatn_7-\sqrt{n}\lambda_1\Pbarn_S
+\sqrt{n}(\mu_0-\lambda_1)\Pbarn_{SE_-},\label{4.45c}\\
\Thetahatn_8
&:=
-\Mhatn_{SW,W,-}\circ\mu_1\Pbarn_{SW}
+\Mhatn_{SW,W,+}\circ\lambda_0\Pbarn_{SW}
-\Mhatn_{S,W,-}\circ\mu_1\Pbarn_S\nonumber\\
&=
\Phihatn_8+\sqrt{n}(\mu_1-\lambda_0)\Pbarn_{SW}
+\sqrt{n}\mu_1\Pbarn_S,\label{4.46c}\\
\Thetahatn_9
&:=
\Mhatn_{SE,X,+}\circ\lambda_0\Pbarn_{SE}
+\Mhatn_{SE,W,-}\circ\mu_0\Pbarn_{SE}=
\Phihatn_9
-\sqrt{n}(\lambda_0+\mu_0)\Pbarn_{SE},\label{4.47c}\\
\Thetahatn_{10}
&:=
\Mhatn_{O,X,+}\circ\lambda_1\Pbarn_O
+\Mhatn_{O,W,-}\circ\mu_1\Pbarn_O=
\Phihatn_{10}
-\sqrt{n}(\lambda_1+\mu_1)\Pbarn_O.\label{4.48c}
\end{align}
Using Assumption \ref{Assumption1} to simplify, we
rewrite (\ref{4.32c})--(\ref{4.35c}) as
\begin{align}
\Ghatn
&=
\Ghatn(0)+b\Thetahatn_1+\Thetahatn_2+\Thetahatn_3
+\Thetahatn_4+\Thetahatn_5+\Thetahatn_6
+\Thetahatn_7+a\Thetahatn_8,\label{4.49c}\\
\big|\Ghatn\big|
&=\big|\Ghatn(0)\big|
+b\Thetahatn_1+\Thetahatn_2+\Thetahatn_3
+\Thetahatn_4-\Thetahatn_5-\Thetahatn_6
-\Thetahatn_7-a\Thetahatn_8\nonumber\\
&\qquad
+\sqrt{n}(a\lambda_0+b\mu_0)(\Pbarn_{SE}+\Pbarn_O),
\label{4.50c}\\
\Hhatn
&=
\Hhatn(0)+\Thetahatn_1+\Thetahatn_2-\Thetahatn_7
-\Thetahatn_8+\Thetahatn_9+\Thetahatn_{10}
+\sqrt{n}c\big(-\Pbarn_{NE}+\Pbarn_{SE_+}\!+\Pbarn_{SE_-}
\!-\Pbarn_{SW}\big)
\nonumber\\
&\qquad
+\sqrt{n}(\mu_1-\lambda_1)\big(\Pbarn_S-\Pbarn_E)
+\sqrt{n}(\lambda_0+\mu_0)\Pbarn_{SE}
+\sqrt{n}(\lambda_1+\mu_1)\Pbarn_O,
\label{4.51c}\end{align}
\begin{align}
\big|\Hhatn\big|
&=
\big|\Hhatn(0)\big|
+\Thetahatn_1-\Thetahatn_2+\Thetahatn_7
-\Thetahatn_8-\Thetahatn_9+\Thetahatn_{10}
-\sqrt{n}\,c\big(\Pbarn_{NE}+\Pbarn_{SE_+}\!
+\Pbarn_{SE_-}\!+\Pbarn_{SW}\big)\nonumber\\
&\qquad
+\sqrt{n}(\lambda_1+\mu_1)
\big(\Pbarn_E+\Pbarn_S+\Pbarn_O\big)
-\sqrt{n}(\lambda_0+\mu_0)\Pbarn_{SE}.
\label{4.52c}
\end{align}

\begin{remark}\label{R4.2}
{\rm Let $\F^n=\{\sF^n(t)\}_{t\geq 0}$ denote
the filtration generated by $(\sWhatn,\sXhatn)$,
or equivalently, by $(\Ghatn,\Hhatn)$.
The second equality in (\ref{3.14}) shows
that the time changes $\Pbarn_{\times}$
are adapted to $\F^n$.
}
\end{remark}

\begin{proposition}\label{P4.3}
The process $\Ghatn$ is martingale
relative $\F^n$.
\end{proposition}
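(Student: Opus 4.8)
The plan is to read the martingale property directly off the representation~(\ref{4.49c}) of $\Ghatn$, after recognizing each summand on its right-hand side as a time-changed centered Poisson martingale.

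First I would observe that the only substance in passing from the raw identity~(\ref{4.32c}), written in terms of the $\Phihatn_i$, to~(\ref{4.49c}), written in terms of the centered $\Thetahatn_i$, is the cancellation of all of the would-be drift terms $\sqrt n(\cdots)\Pbarn_\times$; this is the rigorous counterpart of the zero-net-drift computation sketched just before the statement (for instance $b\lambda_1-\mu_1+\lambda_2-\mu_0=0$ in region $E$, and analogously in each of the remaining seven regions), and it is exactly where Assumption~\ref{Assumption1} is used. Granting~(\ref{4.49c}), we have $\Ghatn-\Ghatn(0)=b\Thetahatn_1+\Thetahatn_2+\Thetahatn_3+\Thetahatn_4+\Thetahatn_5+\Thetahatn_6+\Thetahatn_7+a\Thetahatn_8$, and by the definitions of the $\Thetahatn_i$ the right-hand side is a finite linear combination, with constant coefficients, of processes of the form $\Mhatn_{\times,*,\pm}\circ(c\,\Pbarn_\times)$ with $c>0$ one of the arrival rates. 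Since a finite linear combination of $\F^n$-martingales is again an $\F^n$-martingale, and since adjoining the $\sF^n(0)$-measurable, integrable process $\Ghatn(0)$ does not affect the martingale property, it suffices to show that every process $\Mhatn_{\times,*,\pm}\circ(c\,\Pbarn_\times)$ is an $\F^n$-martingale.

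For this last step I would argue as follows. By the second equality in~(\ref{3.14}) together with Remark~\ref{R4.2}, the clock $c\,\Pbarn_\times$ is continuous, nondecreasing, $\F^n$-adapted, vanishes at the origin, and is dominated by $ct$; and undoing the diffusion scaling gives $\sqrt n\,\Mhatn_{\times,*,\pm}(c\,\Pbarn_\times(t))=N_{\times,*,\pm}(c\,P^n_\times(nt))-c\,P^n_\times(nt)$, in which the counting process $t\mapsto N_{\times,*,\pm}(c\,P^n_\times(nt))$ is $\sF^n(t)$-measurable (it counts, say, the $X$-upticks that have occurred while $(\sWhatn,\sXhatn)$ is in region $\times$, and these are recoverable from the path). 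The content is the assertion that this counting process has $\F^n$-compensator $c\,P^n_\times(nt)$, equivalently that its $\F^n$-intensity at time $t$ equals $nc\,\ind_{\{(\sWhatn(t),\sXhatn(t))\in\times\}}$, i.e.\ the density of $n c\,\Pbarn_\times(t)$. This uses the construction of $(\sW^n,\sX^n)$ by forward recursion from the thirty a~priori independent unit-rate Poisson processes: by original time $nt$ the process $N_{\times,*,\pm}$ has been consumed only up to its current argument $c\,P^n_\times(nt)$, so conditionally on $\sF^n(t)$ its remaining part is a fresh unit-rate Poisson process independent of $\sF^n(t)$, whence the claimed intensity; alternatively one may quote the standard random-time-change lemma for Poisson processes (cf.~\cite{EthierKurtz}). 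Integrability is automatic, since $N_{\times,*,\pm}(c\,P^n_\times(nt))\le N_{\times,*,\pm}(cnt)$ has finite mean.

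The only genuinely substantive point is this random time change: that composing the centered Poisson martingale $\Mhatn_{\times,*,\pm}$ with the path-dependent clock $c\,\Pbarn_\times$ — which is a functional of all thirty Poisson processes, including $N_{\times,*,\pm}$ itself, so that the na\"\i ve ``independent time change plus optional stopping'' route is unavailable — still produces an $\F^n$-martingale. The drift cancellation, the linearity and the integrability are bookkeeping already encoded in~(\ref{4.49c}) and~(\ref{3.14}).
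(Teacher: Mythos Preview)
Your proof is correct and takes a genuinely different route from the paper's. The paper does not argue via the global decomposition~(\ref{4.49c}); instead it first uses the Markov property of $(\Ghatn,\Hhatn)$ to reduce the $\F^n$-martingale property to the single-time statement $\E^{(g,h)}[\Ghatn(t)]=g$, then introduces the successive region-exit times $0=\tau_0<\tau_1<\cdots$, observes that on each interval $[\tau_{k-1},\tau_k)$ the process $\Ghatn$ evolves as a finite linear combination of centered Poisson processes run on the \emph{deterministic} clock $\id$ (so the drift cancellation is checked one region at a time and $\E^{(g,h)}[\Ghatn(t\wedge\tau_1)]=g$ is immediate), and finally iterates via the strong Markov property to obtain $\E^{(g,h)}[\Ghatn(t\wedge\tau_k)]=g$ for all $k$ before letting $k\to\infty$. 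Your approach instead packs all of this into a single invocation of the compensator/random-time-change lemma for the Kurtz-style Poisson construction, showing directly that each $\Mhatn_{\times,*,\pm}\circ(c\,\Pbarn_\times)$ is an $\F^n$-martingale and then summing. The paper's route is more elementary---it needs only that a centered Poisson process on a linear clock has mean zero, plus the strong Markov property---but pays with the $\tau_k$-scaffolding and the induction; your route is shorter and arguably more natural given that~(\ref{4.49c}) is already available, but it leans on a nontrivial named result that you correctly identify as the one substantive step.
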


\begin{proof}
The pair $(G^n,H^n)$ is a time-homogeneous
Markov process, and consequently
so is $(\Ghatn,\Hhatn)$.  Thus the martingale
property relative to $\F^n$, the filtration
generated by $(\Ghatn,\Hhatn)$, reduces to showing
that for every initial condition
$(\Ghatn(0),\Hhatn(0))$ and for every $t\geq 0$,
\be\label{4.57d}
\E^{(\Ghatn(0),\Hhatn(0))}[\Ghatn(t)]
=\Ghatn(0).
\ee

Let us begin the evolution of
$(\Ghatn,\Hhatn)$ at an initial state 
$(\Ghatn(0),\Hhatn(0))=(g,h)$
in one of the eight regions in $\sR$.  
Set $\tau_0=0$.  Denote
by $\tau_1$ the time of exit of $(\Ghatn,\Hhatn)$
from the initial region, and by $\tau_k$,
$k=2,3,\dots$, the ordered sequence
of times of successive exits
from regions in $\sR$.
In each of the regions in $\sR$, $\Ghatn$
evolves as a sum of diffusion-scaled centered
Poisson processes.  For example, when $(\Ghatn,\Hhatn)$
is in $NE$,
$$
d\Ghatn=d\big(b\Mhatn_{NE,X,+}\circ \lambda_1\id\big)
-d\big(b\Mhatn_{NE,X,-}\circ \mu_0\id\big)
+d\big(\Mhatn_{NE,W,+}\circ\lambda_2\id\big).
$$
This implies, in particular, that 
$\Ghatn(t\wedge\tau_1)$, $t\geq 0$, is
a martingale, and hence
\be\label{4.58d}
g=\E^{(g,h)}\big[\Ghatn(t\wedge\tau_0)\big]
=\E^{(g,h)}\big[\Ghatn(t\wedge\tau_1)\big].
\ee

We generalize this to show that
for all $k=0,1,\dots$,
\be\label{4.59d}
\E^{(g,h)}\big[\Ghatn(t\wedge\tau_k)\big]
=\E^{(g,h)}\big[\Ghatn(t\wedge\tau_{k+1})\big].
\ee
We first observe from the Markov property
and (\ref{4.58d}) that
\begin{align*}
\lefteqn{\E^{(g,h)}\big[\Ghatn(t\wedge\tau_{k+1})
\ind_{\{t>\tau_k\}}\big]}\\
&=
\sum_{(g',h')}\int_0^t\E^{(g',h')}
\big[\Ghatn\big((t-s)\wedge\tau_1\big)\big]
\P^{(g,h)}\big\{\big(\Ghatn(\tau_k),\Hhatn(\tau_k)\big)
=(g',h'),\tau_k\in ds\big\}\\
&=
\sum_{(g',h')}\int_0^tg'\,
\P^{(g,h)}\big\{\big(\Ghatn(\tau_k),\Hhatn(\tau_k)\big)
=(g',h'),\tau_k\in ds\big\}\\
&=
\E^{(g,h)}\big[\Ghatn(\tau_k)\ind_{\{t>\tau_k\}}\big].
\end{align*}
Consequently,
\begin{align*}
\E^{(g,h)}\big[\Ghatn(t\wedge\tau_{k+1})\big]
&=
\E^{(g,h)}\big[\Ghatn(t)\ind_{\{t\leq \tau_k\}}\big]
+\E^{(g,h)}\big[\Ghatn(t\wedge\tau_{k+1})
\ind_{\{t>\tau_k\}}\big]\\
&=
\E^{(g,h)}\big[\Ghatn(t)
\ind_{\{t\leq \tau_k\}}\big]
+\E^{(g,h)}\big[\Ghatn(\tau_k)
\ind_{\{t>\tau_k\}}\big]\\
&=
\E^{(g,h)}\big[\Ghatn(t\wedge\tau_k)\big],
\end{align*}
and (\ref{4.59d}) is established.
From (\ref{4.58d}) and (\ref{4.59d}) we have
$g=\E^{(g,h)}[\Ghatn(t\wedge\tau_k)]$ for
$k=1,2,\dots$
Observing the
$\E^{(g,h)}[\max_{0\leq s\leq t}|\Ghatn(s)|]<\infty$,
we let $k\rightarrow\infty$ to obtain
(\ref{4.57d}). 
\end{proof}

\subsection{Crushing $\Hhatn$}\label{Crushing}

\begin{theorem}\label{T3.2}
We have $\Hhatn
\stackrel{J_1}{\Longrightarrow} 0$.
\end{theorem}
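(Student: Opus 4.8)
The plan is to write $\Hhatn$ as a small martingale plus a drift that acts as a restoring force toward the origin, and then to run a short excursion argument. By Remark~\ref{R3.2a} it suffices to prove $\sup_{0\le t\le T}|\Hhatn(t)|\stackrel{\P}{\rightarrow}0$ for every $T>0$. Decompose (\ref{4.51c}) as $\Hhatn=\Hhatn(0)+\Mhatn_H+\Dhat^n$, where $\Mhatn_H:=\Thetahatn_1+\Thetahatn_2-\Thetahatn_7-\Thetahatn_8+\Thetahatn_9+\Thetahatn_{10}$ and $\Dhat^n$ collects the four $\sqrt n\,\Pbarn_\times$ groups of terms. By Assumption~\ref{Assumption2}, $\Hhatn(0)\to0$. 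Each $\Thetahatn_i$ is a finite sum of terms $\Mhatn_{\times,*,\pm}\circ(\text{rate})\Pbarn_\times$, with $\Mhatn_{\times,*,\pm}$ a centered diffusion-scaled Poisson process (\ref{3.13}) and $\Pbarn_\times$ an $\F^n$-adapted, continuous, nondecreasing time change (Remark~\ref{R4.2}); arguing region by region exactly as in the proof of Proposition~\ref{P4.3} (each region being drift-free once the Poisson processes are centered) shows that $\Mhatn_H$ is an $\F^n$-martingale. Since every $\Pbarn_\times(t)\le t$ and the rates are bounded, $\langle\Mhatn_H\rangle(t)\le Ct$, and the jumps of $\Mhatn_H$ have size $1/\sqrt n$. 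Hence $\{\Mhatn_H\}$ is tight with every subsequential limit continuous; in particular $\Mhatn_H=O(1)$, and writing $w(f,\eta,T):=\sup\{|f(s)-f(t)|:s,t\in[0,T],\ |s-t|\le\eta\}$ one has $\lim_{\eta\downarrow0}\limsup_n\P\{w(\Mhatn_H,\eta,T)\ge\ve\}=0$ for all $\ve,T>0$.

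The drift $\Dhat^n$ is a restoring force. Comparing (\ref{3.18a}) with the region definitions (\ref{3.2})--(\ref{3.9}), we see that $\Hhatn(t)>0$ exactly when $(\sWhatn(t),\sXhatn(t))\in NE\cup SW$ and $\Hhatn(t)<0$ exactly when $(\sWhatn(t),\sXhatn(t))\in SE_+\cup SE\cup SE_-$. Reading off the $\sqrt n\,\Pbarn_\times$ coefficients in (\ref{4.51c}): while $\Hhatn>0$, $\Dhat^n$ decreases at rate exactly $\sqrt n\,c$; while $\Hhatn<0$, $\Dhat^n$ increases at rate $\sqrt n\,c$ on $SE_+\cup SE_-$ and at rate $\sqrt n(\lambda_0+\mu_0)\ge\sqrt n\,c$ on $SE$; here $c>0$ by Remark~\ref{R2.1}. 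Thus whenever $\Hhatn\ne0$, $\Dhat^n$ pushes $\Hhatn$ back toward $0$ at rate at least $\sqrt n\,c$.

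Now fix $\delta>0$, $T>0$, $A>0$, and take $n$ so large that $1/\sqrt n<\delta/4$. On the event $\{\sup_{t\le T}\Hhatn(t)>\delta\}\cap\{\Hhatn(0)<\delta/2\}\cap\{\sup_{s\le T}|\Mhatn_H(s)|\le A\}$, let $\tau:=\inf\{t:\Hhatn(t)\ge\delta\}\le T$ and $\rho:=\sup\{t\le\tau:\Hhatn(t)\le\delta/2\}$ (well defined since $\Hhatn(0)<\delta/2$). Because $\Hhatn$ has jumps of size $1/\sqrt n$, $\Hhatn(\tau)<\delta+1/\sqrt n$ and $\Hhatn(\rho)\le\delta/2+1/\sqrt n$, while $\Hhatn>\delta/2>0$ throughout $(\rho,\tau]$, so by the previous paragraph $\Dhat^n(\tau)-\Dhat^n(\rho)=-\sqrt n\,c(\tau-\rho)$. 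Therefore $\Mhatn_H(\tau)-\Mhatn_H(\rho)=\big(\Hhatn(\tau)-\Hhatn(\rho)\big)+\sqrt n\,c(\tau-\rho)\ge\delta/2-1/\sqrt n\ge\delta/4$, and at the same time $\sqrt n\,c(\tau-\rho)\le\Mhatn_H(\tau)-\Mhatn_H(\rho)\le2A$, so $\tau-\rho\le\eta_n:=2A/(c\sqrt n)$. Hence on this event $w(\Mhatn_H,\eta_n,T)\ge\delta/4$, and so
\be
\P\{\sup_{t\le T}\Hhatn(t)>\delta\}\le\P\{\Hhatn(0)\ge\delta/2\}+\P\{\sup_{s\le T}|\Mhatn_H(s)|>A\}+\P\{w(\Mhatn_H,\eta_n,T)\ge\delta/4\}.
\ee
Since $\eta_n\to0$, letting $n\to\infty$ and then $A\to\infty$ sends the right side to $0$; thus $\sup_{t\le T}\Hhatn(t)\stackrel{\P}{\rightarrow}0$.

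The mirror-image argument, using that $\Dhat^n$ increases at rate $\ge\sqrt n\,c$ while $\Hhatn<0$, gives $\inf_{t\le T}\Hhatn(t)\stackrel{\P}{\rightarrow}0$; combining the two yields $\sup_{t\le T}|\Hhatn(t)|\stackrel{\P}{\rightarrow}0$ for every $T$, i.e.\ $\Hhatn\ArrowJ1 0$. The only genuinely delicate step is the first paragraph: one must justify that $\Mhatn_H$ is a true $\F^n$-martingale whose laws are tight with continuous limit points (the uniform modulus-of-continuity bound is exactly what the excursion step consumes), which rests on the time-changed Poisson representation set up in Section~\ref{Interior} — either by invoking the standard martingale results for such representations or by imitating the region-by-region optional-stopping argument already carried out for $\Ghatn$ in Proposition~\ref{P4.3}. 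Everything else is pathwise bookkeeping; in particular the use of the last-exit time $\rho$, which is not a stopping time, is harmless, since the estimate only uses the pathwise monotonicity of $\Dhat^n$ on the excursion $(\rho,\tau]$ and the smallness of the jumps of $\Mhatn_H$.
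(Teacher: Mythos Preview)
Your proof is correct, and the underlying mechanism is the same as the paper's: on $\{\Hhatn\neq0\}=NE\cup SW\cup SE_+\cup SE\cup SE_-$ the drift in (\ref{4.51c}) restores $\Hhatn$ toward zero at rate at least $\sqrt{n}\min(c,\lambda_0+\mu_0)$, and this overwhelms an $O(1)$ fluctuation term. Where you diverge from the paper is in how you exploit this. The paper works with $|\Hhatn|$ via (\ref{4.52c}) and the last-zero time $\tau^n(t)=\sup\{s\le t:\Hhatn(s)=0\}$, and only needs that the centered terms $\Thetahatn_i$ are $O(1)$ (no martingale structure or modulus-of-continuity control); it then bootstraps, first obtaining $\tau^n\ArrowJ1\id$, and then using this to upgrade the $O(1)$ remainder to $o(1)$. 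You instead isolate the martingale $\Mhatn_H$, invoke $C$-tightness (bounded predictable quadratic variation plus jumps of size $1/\sqrt n$) to get a uniform modulus-of-continuity bound, and run a one-shot excursion argument from the last exit of $[0,\delta/2]$. Your route demands a bit more up front---the $\F^n$-martingale property of time-changed centered Poisson processes and the $C$-tightness statement---but avoids the two-step bootstrap; the paper's route (adapted from \cite{Petersen}) is lighter on prerequisites and extracts a quantitative byproduct, $\sqrt n(\id-\tau^n)=o(1)$, that is not needed here but is in the spirit of the later diffusion-scaled occupation-time analysis.
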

\begin{proof}
First observe that, having continuous weak-$J_1$ limits,
the processes $\Mhatn_{\times,*,\pm}$
are $O(1)$.  The processes $\Pbarn_{\times}$
are dominated by the identity, so 
$\Thetahatn_i$ are $O(1)$ for i=1,\dots,10.

We now adapt a proof from \cite{Petersen}.
For $t\geq 0$, define
$$
\tau^n(t):=
\left\{\begin{array}{ll}
\sup\big\{s\in[0,t]:\Hhatn(s)=0\big\}&
\mbox{if }\big\{s\in[0,t]:\Hhatn(s)=0\big\}
\neq \emptyset,\\
0&\mbox{otherwise}.
\end{array}\right.
$$
Because $\Hhatn\neq 0$ on $(\tau^n(t),t]$,
\begin{align*}
\lefteqn{\Pbarn_E(t)+\Pbarn_S(t)+\Pbarn_O(t)}
\qquad\\
&=
\Pbarn_E\big(\tau^n(t)\big)
+\Pbarn_S\big(\tau^n(t)\big)
+\Pbarn_O\big(\tau^n(t)\big),\\
\lefteqn{\Pbarn_{NE}(t)+\Pbarn_{SE_+}(t)
+\Pbarn_{SE}(t)+\Pbarn_{SE_-}(t)
+\Pbarn_{SW}(t)}\qquad\\
&=
\Pbarn_{NE}\big(\tau^n(t)\big)
+\Pbarn_{SE_+}\big(\tau^n(t)\big)
+\Pbarn_{SE}\big(\tau^n(t)\big)
+\Pbarn_{SE_-}\big(\tau^n(t)\big)
+\Pbarn_{SW}\big(\tau^n(t)\big)
+t-\tau^n(t).
\end{align*}
Substitution into (\ref{4.52c}) yields
\begin{align}
0
&\leq
\big|\Hhatn(t)\big|\nonumber\\
&=
\big|\Hhatn\big(\tau^n(t)\big)\big| +O(1)\nonumber\\
&\quad
-\sqrt{n}\,c\big[\Pbarn_{NE}(t)
+\Pbarn_{SE_+}(t)+\Pbarn_{SE_-}(t)+\Pbarn_{SW}(t)
\nonumber\\
&\qquad\qquad\quad
-\Pbarn_{NE}\big(\tau^n(t)\big)
-\Pbarn_{SE_+}\big(\tau^n(t)\big)
-\Pbarn_{SE_-}\big(\tau^n(t)\big)
-\Pbarn_{SW}\big(\tau^n(t)\big)\big]\nonumber\\
&\quad
-\sqrt{n}\,(\lambda_0+\mu_0)
\big[\Pbarn_{SE}(t)
-\Pbarn_{SE}\big(\tau^n(t)\big)\big]\nonumber\\
&\leq
\Hhatn\big(\tau^n(t)\big)
+O(1)-\min(c,\lambda_0+\mu_0)\sqrt{n}
\big(t-\tau^n(t)\big).\label{4.34}
\end{align}
Assumption \ref{Assumption2} implies
that $\Hhatn(0)\ArrowJ1 0$,
so $\Hhatn(\tau^n(t))\ArrowJ1 0$
if $\tau^n(t)=0$.  Otherwise, 
$\widehat{H}(\tau^n(t))=0$, and because jumps
in $\Hhatn$ are of size $1/\sqrt{n}$,
we have $|\Hhatn(\tau^n(t))|=1/\sqrt{n}
\ArrowJ1 0$.
Thus, (\ref{4.34}) implies
$\tau^n\ArrowJ1\id$.
But this implies 
$\Pbarn_{\times}-\Pbarn_{\times}\circ\tau^n
=o(1)$ and hence
$\Mhatn_{\times,*,\pm}\circ\alpha\Pbarn_{\times}
-\Mhatn_{\times,*,\pm}\circ\alpha\Pbarn_{\times}
\circ\tau^n=o(1)$ for all positive $\alpha$.
Using this we can upgrade (\ref{4.34}) to
$$
0\leq \big|\Hhatn(t)\big|
=o(1)-\min(c,\lambda_0+\mu_0)\sqrt{n}
\big(t-\tau^n(t)\big),
$$
which implies 
$\sqrt{n}(\id-\tau^n)=o(1)$.
In particular, $|\Hhatn|=o(1)$.
\end{proof}

\begin{remark}\label{R4.5}
{\rm Dividing (\ref{4.51c}) and (\ref{4.52c})
by $\sqrt{n}$ and passing to the limit,
we obtain
\begin{align}
c\big(\Pbarn_{SE_+}+\Pbarn_{SE_-}-\Pbarn_{NE}
-\Pbarn_{SW}\big)
+(\mu_1-\lambda_1)(\Pbarn_S-\Pbarn_E)
+(\lambda_0+\mu_0)\Pbarn_{SE}\nonumber\\
+(\lambda_1+\mu_1)\Pbarn_O
&\ArrowJ1
0,\label{4.36}\\
-c\big(\Pbarn_{NE}
+\Pbarn_{SE_+}+\Pbarn_{SE_-}+\Pbarn_{SW}\big)
+(\lambda_1+\mu_1)\big(\Pbarn_E+\Pbarn_S+\Pbarn_O\big)
-(\lambda_0+\mu_0)\Pbarn_{SE}
&\ArrowJ1
0.\label{4.37}
\end{align}
}
\end{remark}

\subsection{Convergence of $\Ghatn$}\label{Ghatn}

In this section we show that $\Ghatn$
converges to a two-speed Brownian motion.
The proof proceeds through several steps.
Along the way we identify the limits
of the processes $\Pbarn_{\times}$.

\begin{lemma}\label{L4.6}
For k=1,2,\dots, let 
$\varphi_k:\R\rightarrow[0,\infty)$ be defined by
$\varphi_k(\xi)=\max(-k|\xi|+1,0)$,
$\xi\in\R$.  For $1<\alpha<\infty$, define
$F_k:D[0,\infty)\rightarrow\R$ by
$$
F_0(x)=\int_0^{\infty}e^{-\alpha s}\ind_{\{0\}}
\big(x(s)\big)ds,\quad
F_k(x)=\int_0^{\infty}e^{-\alpha s}
\varphi_k\big(x(s)\big)ds,
\quad k=1,2,\dots.
$$
Then in the $J_1$ topology,
$F_k$ is continuous for $k=1,2,\dots$,
and $F_0$ is upper semicontinuous.
\end{lemma}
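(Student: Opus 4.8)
The plan is to reduce everything to two classical convergence theorems. Observe first that each $F_k$, $k\ge 0$, is a well-defined map from $D[0,\infty)$ into $[0,1/\alpha]$: the integrands $s\mapsto\ind_{\{0\}}(x(s))$ and $s\mapsto\varphi_k(x(s))$ are Borel measurable because $x$ is c\`adl\`ag, they are nonnegative, and they are bounded by $e^{-\alpha s}$, which is integrable on $[0,\infty)$ since $\alpha>1$. Since $D[0,\infty)$ with the $J_1$ topology is metrizable (see \cite{EthierKurtz} and \cite{Whitt}), it suffices to check sequential continuity of $F_k$ for $k\ge 1$ and sequential upper semicontinuity of $F_0$. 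The one external ingredient I would invoke is the standard fact that if $x^n\to x$ in the $J_1$ topology on $D[0,\infty)$ and $s$ is a continuity point of $x$, then $x^n(s)\to x(s)$; since a c\`adl\`ag path has at most countably many discontinuities, this gives $x^n(s)\to x(s)$ for Lebesgue-almost every $s\ge 0$.

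Granting that, continuity of $F_k$ for $k\ge 1$ is immediate: if $x^n\to x$ in $J_1$, then $x^n(s)\to x(s)$ for a.e.\ $s$, and, $\varphi_k$ being continuous, $\varphi_k\big(x^n(s)\big)\to\varphi_k\big(x(s)\big)$ for a.e.\ $s$; the integrands $e^{-\alpha s}\varphi_k(x^n(s))$ are dominated by the fixed integrable function $e^{-\alpha s}$, so the dominated convergence theorem yields $F_k(x^n)\to F_k(x)$. For $F_0$ I would use the elementary pointwise facts that $\ind_{\{0\}}(\xi)\le\varphi_k(\xi)$ for every $k$ and every $\xi\in\R$, and that $\varphi_k(\xi)=\max(1-k|\xi|,0)$ decreases to $\ind_{\{0\}}(\xi)$ as $k\to\infty$. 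Monotone convergence then gives $F_k(x)\downarrow F_0(x)$ for every $x$, so $F_0=\inf_{k\ge 1}F_k$ pointwise; being an infimum of finite-valued continuous functions, $F_0$ is upper semicontinuous. Equivalently, for $x^n\to x$ in $J_1$ and any fixed $k\ge 1$ one has $\limsup_n F_0(x^n)\le\limsup_n F_k(x^n)=F_k(x)$, and letting $k\to\infty$ yields $\limsup_n F_0(x^n)\le F_0(x)$.

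There is no deep obstacle here; the only point requiring care is the first step --- extracting almost-everywhere pointwise convergence of the sample paths from $J_1$-convergence, with the usual caveat that the approximating time changes in $D[0,\infty)$ may depend on the compact interval under consideration --- together with the observation that $F_0$ is precisely the decreasing limit (hence pointwise infimum) of the continuous functionals $F_k$. It is worth noting that $F_0$ genuinely fails to be continuous: the constant paths equal to $1/n$ converge in $J_1$ to the constant path $0$, along which $F_0$ drops from $1/\alpha$ to $0$, so upper semicontinuity is the most one can assert.
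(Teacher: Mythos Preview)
Your proof is correct and takes a genuinely different route from the paper's. Both arguments reduce the upper semicontinuity of $F_0$ to the continuity of the $F_k$ via $F_0=\inf_{k\ge 1}F_k$, so the only real difference is in how continuity of $F_k$ is established. You invoke the standard fact that $J_1$-convergence $x^n\to x$ in $D[0,\infty)$ implies $x^n(s)\to x(s)$ at every continuity point $s$ of $x$, hence Lebesgue-a.e., and then apply dominated convergence with majorant $e^{-\alpha s}$. The paper instead works directly with the Ethier--Kurtz metric: given time changes $\lambda_n$ realizing $x_n\to x$, it splits $|F_k(x_n)-F_k(x)|$ by inserting $\varphi_k(x(\lambda_n(s)))$, bounds one piece using the Lipschitz constant $k$ of $\varphi_k$ together with the uniform bound encoded in $d(x,x_n,\lambda_n,u)$, and handles the other piece by a change of variables $s\mapsto\lambda_n(s)$ and the fact that $\lambda_n'\to 1$. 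Your argument is shorter and conceptually cleaner; the paper's is more self-contained (it never cites the pointwise-convergence fact) and makes explicit use of the Lipschitz structure of $\varphi_k$, which is otherwise irrelevant to your approach beyond continuity. Two minor remarks: $e^{-\alpha s}$ is integrable for any $\alpha>0$, not just $\alpha>1$; and in your closing example the values are $F_0(x^n)=0$ while $F_0(0)=1/\alpha$, so $F_0$ jumps \emph{up} at the limit, confirming the failure of lower (not upper) semicontinuity.
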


\begin{proof}
Since $F_0=\inf_{k\geq 1}F_k$, it suffices
to show that each $F_k$ is continuous.

Let $k\geq 1$ be given.  Recall from 
\cite{EthierKurtz}, Section 3.5, that a metric
for the $J_1$ topology is
$$
d(x,y):=\inf_{\lambda\in\Lambda}
\left[\gamma(\lambda)\vee\int_0^{\infty}
e^{-u}d(x,y,\lambda,u)du\right],
$$
where
\begin{align*}
d(x,y,\lambda,u)
&:=
1\wedge\sup_{t\geq 0}\big|x(t\wedge u)
-y\big(\lambda(t)\wedge u\big)\big|,\\
\gamma(\lambda)
&:=
\mbox{ess sup}_{t\geq 0}\big|\log\lambda'(t)\big|
=\sup_{s>t\geq 0}
\left[\log\frac{\lambda(s)-\lambda(t)}{s-t}\right],
\end{align*}
and $\Lambda$ is the set of all strictly
increasing Lipschitz continuous functions
$\lambda$ mapping $[0,\infty)$ onto $[0,\infty)$
with $\gamma(\lambda)<\infty$.
Let $x_n\rightarrow x$ in the $J_1$ topology
on $D[0,\infty)$.  Then there exists a sequence
$\{\lambda_n\}_{n=1}^{\infty}$ in $\Lambda$
such that
$\lim_{n\rightarrow\infty}(\gamma(\lambda_n)
\vee\int_0^{\infty}e^{-u}d(x,x_n,\lambda_n,u)du)
=0$.
We compute
\begin{align}
\big|F_k(x_n)-F_k(x)\big|
&=
\left|\int_0^{\infty}e^{-\alpha s}
\big[\varphi_k\big(x_n(s)\big)
-\varphi_k\big(x(s)\big)\big]ds\right|\nonumber\\
&\leq
\left|\int_0^{\infty}e^{-\alpha s}
\big[\varphi_k\big(x_n(s)\big)
-\varphi_k\big(x\big(\lambda_n(s)\big)\big)\big]ds
\right|\nonumber\\
&\qquad
+\left|\int_0^{\infty}e^{-\alpha s}
\big[\varphi_k\big(x\big(\lambda_n(s)\big)\big)
-\varphi_k\big(x(s)\big)\big]ds\right|.\label{4.38}
\end{align}
We consider each of the last two terms.

Because $\varphi_k$ is bounded by $1$
and is Lipschitz with constant $k$,
for $n$ large enough that $\lambda_n(s)\leq \alpha s$,
$s\geq 0$, we have
\begin{align*}
\lefteqn{\left|\int_0^{\infty}e^{-\alpha s}
\big[\varphi_k\big(x_n(s)\big)
-\varphi_k\big(x\big(\lambda_n(s)\big)\big)\big]ds
\right|}\qquad\nonumber\\
&\leq
\int_0^{\infty}e^{-\alpha s}
\Big(2\wedge k\big|x_n(s)-x\big(\lambda_n(s)\big)\big|
\Big)ds\nonumber\\
&\leq
2k\int_0^{\infty}e^{-\alpha s}
\Big(1\wedge\sup_{t\geq 0}
\big|x_n\big(t\wedge(\alpha s)\big)
-x\big(\lambda_n(t)\wedge(\alpha s)\big)\big|\Big)ds
\nonumber\\
&\leq
\frac{2k}{\alpha}
\int_0^{\infty}e^{-u}d(x_n,x,\lambda_n,u)du,
\end{align*}
which has limit zero as $n\rightarrow\infty$.
For the last term in (\ref{4.38}), we observe
first that each $\lambda_n$ is absolutely
continuous, $\lambda_n(0)=0$,
$\lambda_n'$ is defined almost everywhere,
and $|\lambda_n'-1|$ is uniformly bounded
by a constant that goes to zero as 
$n\rightarrow\infty$.  Therefore,
\begin{align*}
\lefteqn{\left|\int_0^{\infty}
e^{-\alpha s}
\big[\varphi_k\big(x\big(\lambda_n(s)\big)\big)
-\varphi_k\big(x(s)\big)\big]ds\right|}\qquad
\nonumber\\
&=
\left|\int_0^{\infty}e^{-\alpha s}
\varphi_k\big(x\big(\lambda_n(s)\big)\big)ds
-\int_0^{\infty}e^{-\alpha \lambda_n(t)}
\varphi_k\big(x\big(\lambda_n(t)\big)\big)
\lambda_n'(t)dt\right|\nonumber\\
&\leq
\int_0^{\infty}e^{-\alpha s}
\big|1-e^{\alpha (s-\lambda_n(s))}\big|
\big|\lambda_n'(s)\big|ds,
\end{align*}
which also has limit zero as $n\rightarrow\infty$
by the Dominated Convergence Theorem.
\end{proof}

We reorganize the terms in (\ref{4.39c})--(\ref{4.46c})
appearing in (\ref{4.49c}),
grouping them by region.  More precisely,
we write $\Ghatn$ as
$$
\Ghatn=\Ghatn(0)+\sum_{\times\in\sR }
\Psihatn_{\times}\circ\Pbarn_{\times},
$$
where
\begin{align*}
\Psihatn_{NE}
&:=
b\Mhatn_{NE,X,+}\circ\lambda_1\id
-b\Mhatn_{NE,X,-}\circ\mu_0\id
+\Mhatn_{NE,W,+}\circ\lambda_2\id,\\
\Psihatn_{E}
&:=
b\Mhatn_{E,X,+}\circ\lambda_1\id
-\Mhatn_{E,X,-}\circ\mu_1\id
+\Mhatn_{E,W,+}\circ\lambda_2\id
-\Mhatn_{E,W,-}\circ\mu_0\id,\\
\Psihatn_{SE_+}
&:=
\Mhatn_{SE_+,X,+}\circ\lambda_0\id
-\Mhatn_{SE_+,X,-}\circ\mu_1\id
+\Mhatn_{SE_+,W,+}\circ\lambda_1\id
-\Mhatn_{SE_+,W,-}\circ\mu_0\id,\\
\Psihatn_{SE}
&:=
\Mhatn_{SE,X,+}\circ\lambda_0\id
+\Mhatn_{SE,W,+}\circ\lambda_1\id
-\Mhatn_{SE,W,-}\circ\mu_0\id
-\Mhatn_{SE,X,-}\circ\mu_1\id,\\
\Psihatn_{SE_-}
&:=
-\Mhatn_{SE_-,W,-}\circ\mu_0\id
+\Mhatn_{SE_-,W,+}\circ\lambda_1\id
-\Mhatn_{SE_-,X,-}\circ\mu_1\id
+\Mhatn_{SE_-,X,+}\circ\lambda_0\id,\\
\Psihatn_{S}
&:=
-a\Mhatn_{S,W,-}\circ\mu_1\id
+\Mhatn_{S,W,+}\circ\lambda_1\id
-\Mhatn_{S,X,-}\circ\mu_2\id
+\Mhatn_{S,X,+}\circ\lambda_0\id,\\
\Psihatn_{SW}
&:=
-a\Mhatn_{SW,W,-}\circ\mu_1\id
+a\Mhatn_{SW,W,+}\circ\lambda_0\id
-\Mhatn_{SW,X,-}\circ\mu_2\id,\\
\Psihatn_{O}
&:=
\Mhatn_{O,W,+}\circ\lambda_2\id
+b\Mhatn_{O,X,+}\circ\lambda_1\id
-\Mhatn_{O,X,-}\circ\mu_2\id
-a\Mhatn_{O,W,-}\circ\mu_1\id.\\
\end{align*}

\begin{proposition}\label{P4.8}
The sequence $\{\Ghatn\}_{n=1}^{\infty}$
is tight in the $J_1$ topology,
the limit of
every convergent subsequence is continuous, 
and the limit spends zero Lebesgue
time at the origin.
\end{proposition}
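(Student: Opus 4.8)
The plan is to work from the representation $\Ghatn=\Ghatn(0)+\sum_{\times\in\sR}\Psihatn_{\times}\circ\Pbarn_{\times}$ recorded above, handling tightness and continuity first and the zero-occupation-time claim last.

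\emph{Tightness and continuity of subsequential limits.} By (\ref{4.41e}) and the Continuous Mapping Theorem (composition with the deterministic linear clocks $\lambda_i\id$, $\mu_i\id$, and finite linear combinations, are continuous at limit points with continuous coordinates), $(\Psihatn_{\times})_{\times\in\sR}$ converges in the $J_1$ topology to a vector $(\Psi_{\times})_{\times\in\sR}$ of independent Brownian motions, where $\Psi_{\times}$ has variance parameter $\sigma_{\times}^2$ equal to the sum over the constituent Poisson processes of (rate)$\times$(squared jump size); each $\sigma_{\times}^2$ is strictly positive, since every region contributes at least one term and $\lambda_0,\lambda_1,\lambda_2,\mu_0,\mu_1,\mu_2$ are all positive under Assumption \ref{Assumption1}. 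Each $\Pbarn_{\times}$ is nonnegative, nondecreasing, $1$-Lipschitz, vanishes at $0$, and $\sum_{\times}\Pbarn_{\times}=\id$, so $\{(\Pbarn_{\times})_{\times}\}_n$ is relatively compact in $C([0,\infty);\R^{8})$ by Arzel\`a--Ascoli; hence $\{(\Psihatn_{\times},\Pbarn_{\times})_{\times}\}_n$ is tight. Along any convergent subsequence the limit $(\Psi_{\times},P_{\times})_{\times}$ has all coordinates continuous, with the $P_{\times}$ nondecreasing, $1$-Lipschitz and summing to $\id$; since $(x_{\times},\tau_{\times})_{\times}\mapsto\sum_{\times}x_{\times}\circ\tau_{\times}$ is continuous at such points and $\Ghatn(0)\to0$ by Assumption \ref{Assumption2}, $\Ghatn$ converges along that subsequence to the continuous process $G^*:=\sum_{\times}\Psi_{\times}\circ P_{\times}$. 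Thus every subsequence of $\{\Ghatn\}$ has a further subsequence with a continuous weak limit, which gives tightness and continuity of every subsequential limit. (Continuity of the limit is also immediate from the bound $\max\{1,a,b\}/\sqrt n$ on the jumps of $\Ghatn$.)

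\emph{The limit is a continuous martingale with a nondegenerate clock.} Fix a subsequential limit $G^*$. The processes $\Psihatn_{\times}\circ\Pbarn_{\times}$ are martingales ($\Psihatn_{\times}$ time-changed by the adapted clock $\Pbarn_{\times}$ of Remark \ref{R4.2}), and for $\times\neq\times'$ no two of them ever jump at the same instant, so they are mutually orthogonal and $\langle\Ghatn\rangle_t=\sum_{\times}\sigma_{\times}^2\Pbarn_{\times}(t)$. Using $\sum_{\times}\Pbarn_{\times}=\id$ we obtain $\epsilon_0(t-s)\le\langle\Ghatn\rangle_t-\langle\Ghatn\rangle_s\le C_0(t-s)$ for $s\le t$, with $\epsilon_0:=\min_{\times}\sigma_{\times}^2>0$ and $C_0:=\max_{\times}\sigma_{\times}^2$. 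The upper bound, together with the Burkholder--Davis--Gundy inequality and the fact that the number of transitions of $(\sWhatn,\sXhatn)$ on $[0,t]$ has all moments bounded by $C_{p,t}\,n^p$, gives $\sup_n\E[\sup_{s\le t}|\Ghatn(s)|^{2+\delta}]<\infty$; hence $\{\Ghatn(t)\}_n$ and $\{\Ghatn(t)^2\}_n$ are uniformly integrable, the martingale property of $\Ghatn$ (Proposition \ref{P4.3}) passes to $G^*$, and so does the submartingale property of $\Ghatn(\cdot)^2-\langle\Ghatn\rangle$. Therefore $(G^*)^2-A$ is a martingale, where $A$ is any subsequential limit of the equi-Lipschitz (hence tight) sequence $\{\langle\Ghatn\rangle\}_n$; by uniqueness of the Doob--Meyer decomposition $A=\langle G^*\rangle$, and $A$ inherits $A_t-A_s\ge\epsilon_0(t-s)$, so $d\langle G^*\rangle_s\ge\epsilon_0\,ds$ as measures.

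\emph{Zero Lebesgue time at the origin, and the main obstacle.} Being a continuous martingale, $G^*$ admits a jointly measurable family of local times $\{L^a_t\}$, and the occupation-time formula yields, for every $t\ge0$,
\[
\epsilon_0\int_0^t\ind_{\{0\}}\big(G^*(s)\big)\,ds\;\le\;\int_0^t\ind_{\{0\}}\big(G^*(s)\big)\,d\langle G^*\rangle_s\;=\;\int_{\R}\ind_{\{0\}}(a)\,L^a_t\,da\;=\;0,
\]
so $G^*$ spends zero Lebesgue time at $0$ on each $[0,t]$, hence almost surely on $[0,\infty)$. The routine parts are the bookkeeping for tightness and continuity; the real work is the middle step --- securing uniform integrability to transfer the martingale and Doob--Meyer relations, and, above all, pinning down the strictly positive quadratic-variation rate $\epsilon_0$, which is exactly what makes the occupation-time argument bite and rests on the region-by-region evaluation of the $\sigma_{\times}^2$ together with the identity $\sum_{\times}\Pbarn_{\times}=\id$. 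An alternative route replaces the last display by combining the continuity of the functionals $F_k$ of Lemma \ref{L4.6} with an anti-concentration bound of the form $\limsup_n\P\{|\Ghatn(s)|<\delta\}\le C\delta/\sqrt{s}$, integrated against $e^{-\alpha s}\,ds$ and then letting $\delta=1/k\to0$; proving that bound uniformly in $n$ is the corresponding difficulty.
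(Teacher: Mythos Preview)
Your argument is correct and complete for the proposition (with one harmless slip: $(\Ghatn)^2-\langle\Ghatn\rangle$ is a genuine \emph{martingale}, not merely a submartingale; you need the martingale property to invoke Doob--Meyer uniqueness and get $A=\langle G^*\rangle$, though even the weaker inequality $d\langle G^*\rangle\ge dA\ge\epsilon_0\,dt$ would suffice for the occupation-time step).

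The paper takes a different route. Rather than passing the martingale/quadratic-variation identities to the limit and invoking local time, it time-changes: it sets $A^n:=\sum_{\times}\sigma_{\times}^2\Pbarn_{\times}$ (which is exactly your $\langle\Ghatn\rangle$), lets $I^n:=(A^n)^{-1}$, and shows $[\Ghatn\circ I^n,\Ghatn\circ I^n]\ArrowJ1\id$, so $\Ghatn\circ I^n\ArrowJ1 B$ by the martingale FCLT (Ethier--Kurtz, Theorem~7.1.4). Tightness and continuity of limits for $\Ghatn=\Ghatn\circ I^n\circ A^n$ follow because $A^n$ has slope in $[m,M]$ with $m=\min_{\times}\sigma_{\times}^2>0$. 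The zero-time-at-origin claim is then obtained by pulling back the corresponding property of Brownian motion through the bounded-slope time change, using the upper-semicontinuity functional $F_0$ of Lemma~\ref{L4.6} to control $\int e^{-\alpha u}\ind_{\{G^*(u)=0\}}du$. Both proofs hinge on the same strictly positive lower bound $m=\epsilon_0=\min_{\times}\sigma_{\times}^2$, which is the substantive point. Your approach is cleaner for this proposition in isolation and sidesteps Lemma~\ref{L4.6} here; the paper's approach has the advantage that it simultaneously delivers the representation $\Ghatn\circ I^n\ArrowJ1 B$, which is reused in Corollary~\ref{C4.11} to write $G^*=B\circ(\sigma_+^2P_+^{G^*}+\sigma_-^2P_-^{G^*})$ and identify $G^*$ as a two-speed Brownian motion, and Lemma~\ref{L4.6} is needed again in Corollary~\ref{C4.9}.
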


\begin{proof}
Because $[\Mhatn_{\times,*,\pm},\Mhatn_{\times,*,\pm}]
\stackrel{J_1}\Longrightarrow\id$, using
Assumption \ref{Assumption1} we have
\begin{align}
[\Psihatn_{NE},\Psihatn_{NE}]
&\stackrel{J_1}{\Longrightarrow}
(b^2\lambda_1+b^2\mu_0+\lambda_2)\id
=:A_{NE},\label{4.39}\\
[\Psihatn_{E},\Psihatn_{E}]
&\stackrel{J_1}{\Longrightarrow}
(b^2\lambda_1+b\mu_0+\lambda_2)\id
=:A_{E},\label{4.40}\\
[\Psihatn_{SE_+},\Psihatn_{SE_+}]
&\stackrel{J_1}{\Longrightarrow}
(a\lambda_0+b\mu_0)\id
=:A_{SE_+},\label{4.41}\\
[\Psihatn_{SE},\Psihatn_{SE}]
&\stackrel{J_1}{\Longrightarrow}
(a\lambda_0+b\mu_0)\id
=:A_{SE},\label{4.42}\\
[\Psihatn_{SE_-},\Psihatn_{SE_-}]
&\stackrel{J_1}{\Longrightarrow}
(b\mu_0+a\lambda_0)\id
=:A_{SE_-},\label{4.43}\\
[\Psihatn_{S},\Psihatn_{S}]
&\stackrel{J_1}{\Longrightarrow}
(a^2\mu_1+a\lambda_0+\mu_2)\id
=:A_{S},\label{4.44}
\end{align}
\begin{align}
[\Psihatn_{SW},\Psihatn_{SW}]
&\stackrel{J_1}{\Longrightarrow}
(a^2\mu_1+a^2\lambda_0+\mu_2)\id
=:A_{SW},\label{4.45}\\
[\Psihatn_{O},\Psihatn_{O}]
&\stackrel{J_1}{\Longrightarrow}
(\lambda_2+b^2\lambda_1+\mu_2+a^2\mu_1)\id
=:A_{O}.\label{4.46}
\end{align}
Then
\be\label{4.48a}
A^n:=\sum_{\times\in\sR}A_{x}\circ\Pbarn_{\times}
\ee
is strictly increasing and piecewise linear
with slope bounded 
between $m:=\min\{A'_{\times}:\mathnormal{\times}\in\sR\}$
and $M:=\max\{A'_{\times}:\mathnormal{\times}\in\sR\}$.
Let $I^n$ be the inverse of $A^n$, which
is also strictly increasing and piecewise
linear with slope bounded between $1/M$
and $1/m$. For each $s$, $I^n(s)$ is a stopping
time for $\{\sF(t)\}_{t\geq 0}$ (see
Remark \ref{R4.2}).  We have  
\begin{align*}
[\Ghatn\circ I^n,\Ghatn\circ I^n]
&=
\sum_{\times\in\sR}[\Psihatn_{\times},
\Psihatn_{\times}]\circ\Pbarn_{\times}
\circ I^n\\
&=
\sum_{\times\in\sR}
\big([\Psihatn_{\times},\Psihatn_{\times}]
-A_{\times}\big)\circ\Pbarn_{\times}
\circ I^n
+\sum_{\times\in\R}A_{\times}\circ\Pbarn_{\times}
\circ I^n\\
&=
\sum_{\times\in\sR}
\big([\Psihatn_{\times},\Psihatn_{\times}]
-A_{\times}\big)\circ\Pbarn_{\times}
\circ I^n+\id
\stackrel{J_1}{\Longrightarrow}
\id,
\end{align*}
by (\ref{4.39})--(\ref{4.46}).
We now apply 
\cite[Theorem~1.4,~Section~7.1]{EthierKurtz}
to conclude that
\be\label{4.50a}
\Ghatn\circ I^n\ArrowJ1 B,
\ee
where $B$ is a standard Brownian
motion.  Since the time changes
$A^n$ have bounded slopes uniformly
in $n$, $\{\Ghatn\}_{n=1}^{\infty}
=\{\Ghatn\circ I^n\circ A^n\}_{n=1}^{\infty}$
is tight.

We next argue that the limit of every
convergent subsequence of $\{\Ghatn\}_{n=1}^{\infty}$
is continuous.  
Let $\{\widehat{G}^{n_j}\}_{j=1}^{\infty}$
be a convergent subsequence with limit
$G^*$.  Recall from \cite[Theorem~10.2,
Section~3.10]{EthierKurtz}
that $G^*$ is continuous if and only if 
$J(\widehat{G}^{n_j})\ArrowJ1 0$, where
$$
J(x)(t):=\sum_{0\leq u\leq t}
\big|x(u)-x(u-)\big|,\quad x\in D[0,\infty).
$$
But $\widehat{G}^{n_j}\circ I^{n_j}$
converges weakly to a Brownian motion, which
is continuous, and 
$$
J\big(\widehat{G}^{n_j}\big)(t)
=J\big(\widehat{G}^{n_j}\circ I^{n_j}
\circ A^{n_j}\big)(t)
= J\big(\widehat{G}^{n_j}\circ I^{n_j}\big)
\big(A^{n_j}(t)\big).
$$
Because $A^{n_j}(t)\leq Mt$ and 
$ J\big(\widehat{G}^{n_j}\circ I^{n_j}\big)
\ArrowJ1 0$, we have $J(\widehat{G}^{n_j})\ArrowJ1 0$
and $G^*$ is continuous.

Finally, we must show that $G^*$
spends zero Lebesgue time at the origin.
Given $\ve>0$, there exists
a finite random integer $k_0$ such that $F_k(B)<\ve$
for all $k\geq k_0$, where we use the notation
of Lemma \ref{L4.6} with $\alpha:=2(M\vee 1)/M$.
Consequently, there exists $j_0$
such that $F_k(\widehat{G}^{n_j}\circ I^{n_j})
<\ve$ for all $k\geq k_0$ and $j\geq j_0$.
Making the change of variables $s=A^{n_j}(u)$,
we see that
\be\label{4.49}
m\int_0^{\infty}e^{-2(M\vee 1)u}\varphi_{k}
\big(\widehat{G}^{n_j}(u)\big)du
\leq\int_0^{\infty}e^{-\alpha s}
\varphi_k\big(\widehat{G}^{n_j}\circ I^{n_j}(s)\big)
ds=F_k(\widehat{G}^{n_j}\circ I^n)<\ve.
\ee
According to Lemma \ref{L4.6},
\be\label{4.50}
\int_0^{\infty}e^{-2(M\vee 1)u}\varphi_k
\big(\widehat{G}^{n_j}(u)\big)du
\ArrowJ1 \int_0^{\infty}e^{-2(M\vee 1)u}
\varphi_k\big(G^*(u)\big)du.
\ee
Combining (\ref{4.49}) and (\ref{4.50}), we obtain
$$
\E\int_0^{\infty}e^{-2(M\vee 1)u}
\ind_{\{G^*(u)=0\}}du
\leq
\E\int_0^{\infty}e^{-2(M\vee 1)u}
\varphi_k\big(G^*(u)\big)du
\leq \frac{\ve}{m}.
$$
But $\ve>0$ is arbitrary,
so $\int_0^{\infty}e^{-2(M\vee 1)u}
\ind_{\{G^*(u)=0\}}du=0$ almost surely.
\end{proof}

\begin{corollary}\label{C4.9}
We have $\Pbarn_{SE}\ArrowJ1 0$
and $\Pbarn_{O}\ArrowJ1 0$.
\end{corollary}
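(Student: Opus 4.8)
The plan is to recognize $\Pbarn_{SE}+\Pbarn_O$ as the occupation time of $\Ghatn$ at the origin and then invoke the qualitative fact, already established in Proposition \ref{P4.8}, that any subsequential limit of $\{\Ghatn\}$ spends zero Lebesgue time there. First I would read off from the defining formula (\ref{3.17a}) that $\Ghatn(s)=0$ precisely when $\big(\sWhatn(s),\sXhatn(s)\big)\in SE\cup O$: on $NE\cup E$ one has $\Ghatn=\sWhatn+b\sXhatn>0$; on $SE_+\cup SE\cup SE_-\cup O$ one has $\Ghatn=\sWhatn+\sXhatn$, which is positive on $SE_+$, zero on $SE\cup O$, and negative on $SE_-$; and on $SW\cup S$ one has $\Ghatn=a\sWhatn+\sXhatn<0$. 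Combining this with (\ref{3.14}) gives $\Pbarn_{SE}(t)+\Pbarn_O(t)=\int_0^t\ind_{\{\Ghatn(s)=0\}}\,ds$ for every $t\geq0$. Since both $\Pbarn_{SE}$ and $\Pbarn_O$ are nonnegative and nondecreasing, it then suffices to show $\Pbarn_{SE}(T)+\Pbarn_O(T)\stackrel{\P}{\rightarrow}0$ for each fixed $T>0$; monotonicity together with Remark \ref{R3.2a} upgrades this to $\Pbarn_{SE}\ArrowJ1 0$ and $\Pbarn_O\ArrowJ1 0$.

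The difficulty is that the occupation-time-at-zero functional is not $J_1$-continuous, so the continuous mapping theorem does not apply directly; I would instead sandwich $\ind_{\{0\}}$ by the continuous approximations from Lemma \ref{L4.6}. Taking $\alpha=2$ there, the inequality $\ind_{\{0\}}\leq\varphi_k$ yields $0\leq\Pbarn_{SE}(T)+\Pbarn_O(T)\leq e^{2T}F_0(\Ghatn)\leq e^{2T}F_k(\Ghatn)$ for every $k\geq1$, with $F_k$ $J_1$-continuous. Then I would run a subsequence argument: given any subsequence, Proposition \ref{P4.8} (tightness) produces a further subsequence along which $\widehat{G}^{n_j}\ArrowJ1 G^*$, with $G^*$ continuous and of zero Lebesgue occupation at $0$. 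Continuity of $F_k$ gives $F_k(\widehat{G}^{n_j})\Rightarrow F_k(G^*)$, so for $\ve>0$ the portmanteau theorem (closed sets) gives $\limsup_j\P\{\Pbar^{n_j}_{SE}(T)+\Pbar^{n_j}_O(T)>\ve\}\leq\P\{F_k(G^*)\geq\ve e^{-2T}\}$ for every $k$. Since $\varphi_k\downarrow\ind_{\{0\}}$ pointwise and is nonincreasing in $k$, $F_k(G^*)$ decreases to $\int_0^\infty e^{-2s}\ind_{\{G^*(s)=0\}}\,ds=0$ almost surely, so the events $\{F_k(G^*)\geq\ve e^{-2T}\}$ shrink to $\emptyset$ and their probabilities tend to $0$ as $k\to\infty$. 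Letting $k\to\infty$ forces $\limsup_j\P\{\Pbar^{n_j}_{SE}(T)+\Pbar^{n_j}_O(T)>\ve\}=0$; hence the occupation time tends to $0$ in probability along the sub-subsequence, and the ``every subsequence has a sub-subsequence'' principle finishes the argument.

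I expect the only genuinely delicate ingredient to be the one already supplied by Proposition \ref{P4.8} — that a subsequential limit of $\Ghatn$ spends zero Lebesgue time at the origin — which is exactly why the proof should be routed through that proposition rather than attempted directly; the remaining pieces (the identification of $\{\Ghatn=0\}$ with $SE\cup O$, the elementary sandwich inequality, and the subsequence passage) are routine. If preferred, the same conclusion follows by an even shorter route using upper semicontinuity of $F_0$ from Lemma \ref{L4.6}: since $F_0$ is bounded and upper semicontinuous, $\limsup_j\E F_0(\widehat{G}^{n_j})\leq\E F_0(G^*)=\int_0^\infty e^{-2s}\P\{G^*(s)=0\}\,ds=0$ by Fubini and the zero-occupation property, so $\E[\Pbar^{n_j}_{SE}(T)+\Pbar^{n_j}_O(T)]\leq e^{2T}\E F_0(\widehat{G}^{n_j})\to0$, giving $L^1$ and hence probabilistic convergence along the sub-subsequence.
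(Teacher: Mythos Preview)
Your proposal is correct. The identification $\{\Ghatn=0\}=\{(\sWhatn,\sXhatn)\in SE\cup O\}$ is accurate (the paper's definition of $SE$ has a typo---it should read $w>0$, as you implicitly assume), the sandwich $\Pbarn_{SE}(T)+\Pbarn_O(T)\le e^{2T}F_0(\Ghatn)\le e^{2T}F_k(\Ghatn)$ is valid, and both the portmanteau/closed-set route and the upper-semicontinuity/$L^1$ route are sound.

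The paper takes a slightly different path: rather than working with $\Ghatn$ directly and invoking the subsequential conclusion of Proposition~\ref{P4.8}, it routes through the time-changed process $\Ghatn\circ I^n$, for which the \emph{full-sequence} convergence $\Ghatn\circ I^n\ArrowJ1 B$ was established in (\ref{4.50a}). A change of variable $s=A^n(u)$ converts $\int_0^\infty e^{-2Mu}\,d(\Pbarn_{SE}+\Pbarn_O)(u)$ into $\int_0^\infty e^{-2s}\ind_{\{\Ghatn\circ I^n(s)=0\}}\,ds=F_0(\Ghatn\circ I^n)$, and then the upper-semicontinuity inequality (\ref{4.51}) for $F_0$ together with the fact that Brownian motion spends zero Lebesgue time at the origin finishes the proof. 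The paper's approach buys freedom from the sub-subsequence bookkeeping, since the time-changed sequence already converges; your approach buys freedom from manipulating $A^n$ and $I^n$, at the cost of the (routine) subsequence argument. The underlying ingredients---Lemma~\ref{L4.6} and the zero-occupation property from Proposition~\ref{P4.8}---are the same.
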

\begin{proof}
Again we use the notation of Lemma \ref{L4.6},
this time with $\alpha=2$.
Suppose $\{\P^n\}_{n=1}^{\infty}$
is a sequence of probability measures
on $D[0,\infty)$ converging weakly to
$\P$.   For $k=1,2,\dots$,
$$
\limsup_{n\rightarrow\infty}\int_{D[0,\infty)}
F_0d\P^n\leq\lim_{n\rightarrow\infty}
\int_{D[0,\infty)}F_kd\P^n
=\int_{D[0,\infty)}F_kd\P.
$$ 
Letting $k\rightarrow\infty$, we conclude that
\be\label{4.51}
\limsup_{n\rightarrow\infty}
\int_{D[0,\infty)}F_0d\P^n
\leq \int_{D[0,\infty)}F_0d\P.
\ee
Let $\P^n$ be the probability measure
induced on $D[0,\infty)$ by $\Ghatn\circ I^n$.
In the proof of Proposition \ref{P4.8},
we show that this sequence converges
weakly to Wiener measure.  We have
\begin{align*}
m\E\int_0^{\infty}e^{-2Mu}
\big(d\Pbarn_{SE}(u)+\Pbarn_O(u)\big)
&\leq
\E\int_0^{\infty}e^{-2A^n(u)}
\big(A_{SE}'(u)d\Pbarn_{SE}(u)
+A_O'(u)d\Pbarn_O(u)\big)\\
&=
\E\int_0^{\infty}e^{-2A^n(u)}
\ind_{\{\Ghatn(u)=0\}}dA^n(u)\\
&=
\E\int_0^{\infty}e^{-2s}
\ind_{\{G^n\circ I^n(s)=0\}}ds\\
&=
\int_{D[0,\infty)}F_0d\P^n.
\end{align*}
By (\ref{4.51}), the limit as $n\rightarrow\infty$ of
this last expression
is zero because Brownian motion spends
zero Lebesgue time at the origin.
We conclude that $\Pbarn_{SE}+\Pbarn_{O}$
converges in probability to zero uniformly on compact
time intervals, which is equivalent to
the convergences stated in the corollary.
\end{proof}

We have shown in (\ref{4.50a}) that
$\Ghatn\circ I^n$ converges to a standard
Brownian motion.  We want to identify
the limit of $\Ghatn=\Ghatn\circ I^n\circ A^n$.
Thus we need to determine the limit of $A^n$.
To do this, we determine the limits
of the processes
$\Pbarn_{\times}$, $\times\in\sR$,
appearing in (\ref{4.48a}).
We have just done that for
$\Pbarn_{SE}$ and $\Pbarn_O$.
For the other processes, we have the following
result.

\begin{proposition}\label{P4.10}
Consider a convergent subsequence 
$\{\widehat{G}^{n_j}\}_{j=1}^{\infty}$ with
limit $G^*$.  Define
$$
P_+^{G^*}(t)
:=\int_0^t\ind_{\{G^*(s)>0\}}ds,\quad
P_-^{G^*}(t)
:=\int_0^t\ind_{\{G^*(s)<0\}}ds,\quad t\geq 0.
$$
Then
\be\label{4.55}
\big(\widehat{G}^{n_j},\widehat{H}^{n_j},
(\Pbar^{n_j}_{\times})_{\times\in\R}\big)
\ArrowJ1
\big(G^*,0,(\Pbar_{\times})_{\times\in\R}\big),
\ee
where $\Pbar_{SE}=\Pbar_O=0$ and
\begin{eqnarray}
\Pbar_{NE}=\frac{\lambda_1}{\lambda_0+\lambda_1}
P^{G^*}_+,
&
\Pbar_E=\displaystyle
\frac{c}{\lambda_0+\lambda_1}
P^{G^*}_+,
&
\Pbar_{SE_+}=\frac{\mu_1}{\lambda_0+\lambda_1}
P^{G^*}_+,\label{4.56}\\
\Pbar_{SE_-}=\frac{\lambda_1}{\mu_0+\mu_1}
P^{G^*}_-,
&
\Pbar_{S}=\displaystyle
\frac{c}{\mu_0+\mu_1}
P^{G^*}_-,
&
\Pbar_{SW}=\frac{\mu_1}{\mu_0+\mu_1}
P^{G^*}_-.\label{4.57}
\end{eqnarray}
\end{proposition}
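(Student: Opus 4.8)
\subsection*{Proof plan}

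The plan is to establish tightness of the family on the left of (\ref{4.55}), to derive enough linear relations among the limiting occupation times to pin them down in terms of $G^*$, and to conclude since the identification depends only on $G^*$. First I would note that each $\Pbarn_\times$ is nondecreasing, $1$-Lipschitz and vanishes at $t=0$, so $\{(\Pbarn_\times)_{\times\in\sR}\}$ is $C$-tight; together with Proposition~\ref{P4.8} and Theorem~\ref{T3.2} the whole family in (\ref{4.55}) is tight along $\{n_j\}$. Fixing an arbitrary further subsequence along which it converges jointly, with limit $(G^*,0,(\Pbar_\times)_{\times\in\sR})$ (the first coordinate is the given $G^*$, and each $\Pbar_\times$ is nondecreasing and $1$-Lipschitz), it suffices to show this limit is the one asserted in (\ref{4.56})--(\ref{4.57}). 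By Corollary~\ref{C4.9}, $\Pbar_{SE}=\Pbar_O=0$.

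Next I would assemble the linear relations. From (\ref{3.17a}) and its inverse (\ref{4.34d})--(\ref{4.35d}) one checks that, as subsets of the state space, $\{\Ghatn>0\}=NE\cup E\cup SE_+$ and $\{\Ghatn<0\}=SW\cup S\cup SE_-$ (with $\{\Ghatn=0\}=O\cup SE$), so $\Pbarn_{NE}+\Pbarn_E+\Pbarn_{SE_+}=\int_0^{\cdot}\ind_{\{\Ghatn(s)>0\}}\,ds$ and $\Pbarn_{SE_-}+\Pbarn_S+\Pbarn_{SW}=\int_0^{\cdot}\ind_{\{\Ghatn(s)<0\}}\,ds$ pathwise for every $n$; since $G^*$ is continuous and (Proposition~\ref{P4.8}) spends zero Lebesgue time at $0$, the functionals $x\mapsto\int_0^{\cdot}\ind_{\{x(s)\gtrless0\}}\,ds$ are continuous at $G^*$, and passing to the limit yields
\[
\Pbar_{NE}+\Pbar_E+\Pbar_{SE_+}=P^{G^*}_+,\qquad
\Pbar_{SE_-}+\Pbar_S+\Pbar_{SW}=P^{G^*}_-.
\]
Remark~\ref{R4.5} supplies two further relations among $\Pbar_{NE},\dots,\Pbar_{SW}$ (the limits in (\ref{4.36}) and (\ref{4.37}), with $\Pbar_{SE}=\Pbar_O=0$). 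I would then obtain two more relations by applying the same device to $\sXhatn$ and $\sWhatn$: both are continuous piecewise‑linear images of $(\Ghatn,\Hhatn)$ by (\ref{4.34d})--(\ref{4.35d}), hence converge along the subsequence; writing $\sXhatn=\sXhatn(0)+\widehat M^n+\sqrt n\sum_{\times\in\sR}d_{\times}\Pbarn_{\times}$, where $\widehat M^n$ is a sum of centered diffusion‑scaled Poisson processes and the drift coefficients $d_\times$ are read off (\ref{3.12}), the left side is $O(1)$, $\sXhatn(0)\to0$ and $\widehat M^n=O(1)$, whence $\sum_\times d_\times\Pbarn_\times=o(1)$ and in the limit $\sum_{\times}d_{\times}\Pbar_{\times}=0$; likewise for $\sWhatn$ using (\ref{3.11}).

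It remains to solve. Combining the $\sXhatn$-relation and the $\sWhatn$-relation each with (\ref{4.36}) and simplifying by Assumption~\ref{Assumption1} and Remark~\ref{R2.1} (using $\mu_0-\lambda_1=\lambda_0-\mu_1=c$ and $\lambda_0+\lambda_1=\mu_0+\mu_1$) collapses to $c\,\Pbar_{NE}=\lambda_1\Pbar_E$ and $c\,\Pbar_{SW}=\mu_1\Pbar_S$; feeding these into the two partition sums expresses $\Pbar_{SE_+}$ through $\Pbar_E$ and $\Pbar_{SE_-}$ through $\Pbar_S$, and substituting into (\ref{4.36}) (at which point (\ref{4.37}) becomes the same identity) leaves the single equation $\Pbar_E+\Pbar_S=\tfrac{c}{\lambda_0+\lambda_1}(P^{G^*}_++P^{G^*}_-)$. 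To separate $\Pbar_E$ and $\Pbar_S$ I would use the trivial dominations $0\le\Pbar_E\le P^{G^*}_+$ and $0\le\Pbar_S\le P^{G^*}_-$ (because $E\subseteq\{\Ghatn>0\}$ and $S\subseteq\{\Ghatn<0\}$, and differences of $1$-Lipschitz nondecreasing limits stay nondecreasing): differentiating, $\Pbar_E'=0$ a.e.\ on $\{G^*\le0\}$ and $\Pbar_S'=0$ a.e.\ on $\{G^*\ge0\}$, while $(P^{G^*}_+)'+(P^{G^*}_-)'=1$ a.e.\ since $G^*$ spends zero time at $0$; hence $\Pbar_E'=\tfrac{c}{\lambda_0+\lambda_1}\ind_{\{G^*>0\}}$, i.e.\ $\Pbar_E=\tfrac{c}{\lambda_0+\lambda_1}P^{G^*}_+$, and symmetrically $\Pbar_S=\tfrac{c}{\lambda_0+\lambda_1}P^{G^*}_-$. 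Back‑substitution gives exactly (\ref{4.56})--(\ref{4.57}), which establishes (\ref{4.55}). (As a sanity check, the proportions $c:\lambda_1:\mu_1$ are the stationary masses of the birth--death chain that the ``fast'' coordinate runs inside an excursion of $\Ghatn$ of fixed sign.)

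Most of the work here is routine: tightness, the continuous‑mapping limit $\int\ind_{\{\Ghatn>0\}}\to P^{G^*}_+$, and the drift‑coefficient bookkeeping for $\sXhatn$ and $\sWhatn$. I expect the two genuinely load‑bearing points to be (a) verifying that $\sXhatn$ and $\sWhatn$ converge and correctly identifying their limits, which relies on the maps (\ref{4.34d})--(\ref{4.35d}) being globally continuous and on $G^*$ spending zero Lebesgue time at the origin, both already in hand; and (b) recognizing that the five ``algebraic'' relations leave a one‑parameter family and that it is the elementary domination $\Pbar_E\le P^{G^*}_+$, $\Pbar_S\le P^{G^*}_-$ (together with absolute continuity of these Lipschitz limits) that closes the system, rather than any additional martingale identity.
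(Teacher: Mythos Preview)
Your argument is correct. The route differs from the paper's in two respects worth noting.

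First, the paper invokes the Skorohod representation and argues pathwise: at any $t$ with $G^*(t)>0$, the prelimit $\widehat G^{n_j}$ is eventually positive in a neighborhood of $t$, so $(\sWhat^{n_j},\sXhat^{n_j})\in NE\cup E\cup SE_+$ there and $\Pbar^{n_j}_{SE_-},\Pbar^{n_j}_S,\Pbar^{n_j}_{SW}$ are locally constant; hence so are their limits. Together with (\ref{4.36}), (\ref{4.37}) and $\sum_\times\Pbar_\times=\id$, this pins down all derivatives on $\{G^*>0\}$ (and symmetrically on $\{G^*<0\}$). You instead obtain the two refined partition identities $\Pbar_{NE}+\Pbar_E+\Pbar_{SE_+}=P^{G^*}_+$ and $\Pbar_{SE_-}+\Pbar_S+\Pbar_{SW}=P^{G^*}_-$ directly by continuous mapping, and close the system with a domination/absolute-continuity argument for $\Pbar_E$ and $\Pbar_S$. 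This avoids Skorohod representation altogether, which is a clean simplification.

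Second, you bring in two additional linear relations from the drifts of $\sXhatn$ and $\sWhatn$ (via (\ref{3.11})--(\ref{3.12})), yielding $c\,\Pbar_{NE}=\lambda_1\Pbar_E$ and $c\,\Pbar_{SW}=\mu_1\Pbar_S$ after combination with (\ref{4.36}). This is valid, and the algebra checks out, but it is not needed: your own partition identities, differentiated and combined with nonnegativity of each $\Pbar_\times'$, already force $\Pbar_{NE}'=\Pbar_E'=\Pbar_{SE_+}'=0$ a.e.\ on $\{G^*<0\}$ (and symmetrically on $\{G^*>0\}$), which is exactly the paper's (\ref{4.65}) and (\ref{4.69}). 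From there (\ref{4.36}), (\ref{4.37}) and the partition relations solve uniquely on each sign of $G^*$, so the $\sXhatn/\sWhatn$ drift relations are redundant. Your route is therefore a little longer than necessary, but the extra relations are a nice consistency check and make the birth--death heuristic you mention at the end explicit.
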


\begin{remark}\label{R4.11}
{\rm Equations (\ref{4.56}) and (\ref{4.57})
have been written to emphasize symmetry
in the formulas.
However, (\ref{a6})
implies that 
$\lambda_0+\lambda_1=a\lambda_0=b\mu_0=\mu_0+\mu_1$,
so all the denominators are the same.
}
\end{remark}

\begin{remark}\label{R4.12a}
{\rm The pre-limit model has a one-tick spread
if and only if $(G^n,H^n)$ is in
$NE\cup SE_+\cup SE\cup SE_-\cup SW$.
Summing the corresponding terms in
(\ref{4.56}) and (\ref{4.57}), we see
that this is the case for the
fraction $2-(a+b)/(ab)$ of the time,
as reported in \cite[equation~(2.4)]{ALSY}.
Summing $\Pbar_E$ and $\Pbar_S$, we see
that the two-tick spread prevails for
the remaining fraction $(a+b)/(ab)-1$
of the time.  The convergences
in Proposition \ref{P4.10} are along
a subsequence of $\{(\Ghatn,\Hhatn)\}_{n=1}^{\infty}$,
but we will see in Corollary \ref{C4.15}
below that in fact the convergence
takes place along the full sequence.
}
\end{remark}

\noi
{\em Proof of Proposition \ref{P4.10}}.
Each $\Pbarn_{\times}$ is nondecreasing
and Lipschitz continuous with Lipschitz constant
$1$.  Moreover, $\Pbarn_{\times}(0)=0$.
This together with Theorem \ref{T3.2}
and Proposition \ref{P4.8} implies that
the sequence 
$\{(\widehat{G}^{n},\widehat{H}^{n},
(\Pbarn_{\times})_{\times\in\sR})\}_{n=1}^{\infty}$
is tight.  Given any subsequence 
$\{(\widehat{G}^{n_j},\widehat{H}^{n_j},
(\Pbar^{n_j}_{\times})_{\times\in\sR})\}_{j=1}^{\infty}$
of this
sequence, there exists a sub-subsequence
that converges weakly-$J_1$ to some limit
$(G^*,0,(\Pbar_{\times})_{\times\in\sR}))$.
We know from
Proposition \ref{P4.8} and Corollary
\ref{C4.9} that
$G^*$ is continuous and $\Pbar_{SE}=\Pbar_O=0$.
Furthermore, each $\Pbar_{\times}$
is Lipschitz continuous with Lipschitz constant
$1$.  We show that
$\Pbar_{NW}$, $\Pbar_E$,
$\Pbar_{SE_+}$, $\Pbar_{SE_-}$,
$\Pbar_S$ and $\Pbar_{SW}$ satisfy 
(\ref{4.56})
and $(\ref{4.57})$.

Substituting $\Pbar_{SE}=\Pbar_O=0$
into the full-limit convergences
(\ref{4.36}) and (\ref{4.37}), we obtain
\begin{align}
c(\Pbar_{NE}+\Pbar_{SW}-\Pbar_{SE_+}-\Pbar_{SE_-})
+(\mu_1-\lambda_1)(\Pbar_E-\Pbar_S)
&=
0,\label{4.58}\\
-c(\Pbar_{NE}+\Pbar_{SE_+}+\Pbar_{SE_-}
+\Pbar_{SW})
+(\lambda_1+\mu_1)(\Pbar_E+\Pbar_S)
&=
0.\label{4.59}
\end{align}
We also have $\sum_{\times\in\sR}\Pbar_{\times}=\id$,
which implies
\be\label{4.60}
\Pbar_{NE}+\Pbar_E+\Pbar_{SE_+}+
\Pbar_{SE_-}+\Pbar_S+\Pbar_{SW}=\id.
\ee
From (\ref{a6}), (\ref{4.59})
and (\ref{4.60}), we obtain
\begin{align}
\Pbar_{NE}+\Pbar_{SE_+}+\Pbar_{SE_-}+\Pbar_{SW}
&=
\frac{\lambda_1+\mu_1}{\lambda_0+\lambda_1}\id,
\nonumber\\
\Pbar_E+\Pbar_S
&=
\frac{c}{\lambda_0+\lambda_1}.
\label{4.62}
\end{align}
Subtracting (\ref{4.59}) from (\ref{4.58}),
we see that
\be\label{4.63}
\Pbar_{NE}+\Pbar_{SW}
=\frac{\lambda_1}{c}\Pbar_E
+\frac{\mu_1}{c}\Pbar_S.
\ee
Adding (\ref{4.59}) to (\ref{4.58}) yields
$$
\Pbar_{SE_+}+\Pbar_{SE_-}
=\frac{\mu_1}{c}\Pbar_E+\frac{\lambda_1}{c}\Pbar_S.
$$

We use the Skorohod Representation Theorem
to put the pre-limit and limit processes
on a common probability space so that the convergence
of $\{\widehat{G}^{n_j},\widehat{H}^{n_j},
(\Pbar^{n_j}_{\times})_{\times\in\sR})\}_{j=1}^{\infty}$
to the continuous process
$(G^*,0,(\Pbar_{\times})_{\times\in\sR})$
is uniform on compact time intervals almost surely.
Because each $\Pbar_{\times}$ is Lipschitz,
to identify $\Pbar_{\times}$ it suffices to
identify $\Pbar_{\times}'$ for Lebesgue almost
every $t\geq 0$.  We identify $\Pbar_{\times}'(t)$
for all $t$ such that $G^*(t)\neq 0$, a set
of full Lebesgue measure by Proposition
\ref{P4.8}.

Assume first that $G^*(t)>0$.  Then for sufficiently
large $j$, $\widehat{G}^{n_j}$ is strictly
positive in a neighborhood of $t$.  From
(\ref{3.17a}), we see that
$(\widehat{W}^{n_j},\widehat{X}^{n_j})$
must be in $NE\cup E\cup SE_+$, which
implies that $\Pbar_{SE_-}^{n_j}$,
$\Pbar_{S}^{n_j}$ and $\Pbar_{SW}^{n_j}$
are constant in this neighborhood
of $t$ for sufficiently large $j$.
Consequently, $\Pbar_{SE_-}$,
$\Pbar_S$ and $\Pbar_{SW}$ are constant in this
neighborhood. We conclude that
\be\label{4.65}
\Pbar_{SE_-}'(t)=\Pbar_S'(t)=\Pbar_{SW}'(t)=0
\mbox{ if }G^*(t)>0.
\ee  
Equation (\ref{4.62}) now implies
\be\label{4.66}
\Pbar_E'(t)=\frac{c}{\lambda_0+\lambda_1}
\mbox{ if }G^*(t)>0.
\ee
Substitution of this into (\ref{4.63}) yields
\be\label{4.67}
\Pbar_{NE}'(t)=\frac{\lambda_1}{\lambda_0+\lambda_1}
\mbox{ if }G^*(t)>0.
\ee 
Equation (\ref{4.60}) allows us to now conclude that
\be\label{4.68}
\Pbar_{SE_+}'(t)=\frac{\mu_1}{\lambda_0+\lambda_1}
\mbox{ if }G^*(t)>0.
\ee
An analogous argument for $t$ such that $G^*(t)<0$
yields
\be\label{4.69}
\Pbar_{NE}'(t)=\Pbar_E'(t)=\Pbar_{SE_+}'(t)=0
\mbox{ if }G^*(t)<0,
\ee
and
\be\label{4.70}
\Pbar_{SE_-}'(t)=\frac{\lambda_1}{\mu_0+\mu_1},\quad
\Pbar_S'(t)=\frac{c}{\mu_0+\mu_1},\quad
\Pbar_{SW}'(t)=\frac{\mu_1}{\mu_0+\mu_1}
\mbox{ if }G^*(t)<0.
\ee
Integrating (\ref{4.65})--(\ref{4.70}),
we obtain (\ref{4.56}) and (\ref{4.57}).
\hfill$\Box$

\begin{corollary}\label{C4.11}
Under the assumptions of Proposition \ref{P4.10},
\be\label{4.71}
A^{n_j}\ArrowJ1 \sigma_+^2P^{G^*}_+
+\sigma_-^2P^{G^*}_-,
\ee
where $A^{n_j}$
is defined by (\ref{4.48a}) and 
$\sigma_{\pm}$ are defined by
(\ref{sigmaplus}) and (\ref{sigmaminus}).  Furthermore,
\be\label{4.72}
G^*=B\circ\big(\sigma_+^2P^{G^*}_+
+\sigma_-^2P^{G^*}_-\big),
\ee
where $B$ is the Brownian motion in (\ref{4.50a}).
\end{corollary}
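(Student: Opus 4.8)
\noindent\emph{Proof plan.}
The plan is to deduce (\ref{4.71}) from the occupation-time limits of Proposition~\ref{P4.10}, and then to obtain (\ref{4.72}) by inverting the time change $A^{n_j}$ and invoking the convergence $\Ghatn\circ I^n\ArrowJ1 B$ established in the proof of Proposition~\ref{P4.8}.

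First I would prove (\ref{4.71}). By (\ref{4.48a}) together with the identification of the constants in (\ref{4.39})--(\ref{4.46}) we have $A^n=\sum_{\times\in\sR}A'_{\times}\,\Pbarn_{\times}$, a fixed linear combination of the occupation times. Along the subsequence $\{n_j\}$, (\ref{4.55}) gives the joint convergence $(\Pbarn_{\times})_{\times\in\sR}\ArrowJ1(\Pbar_{\times})_{\times\in\sR}$ to continuous (indeed Lipschitz) limits; passing to Skorohod representations so that this convergence is uniform on compact time intervals, we obtain $A^{n_j}\to A^*:=\sum_{\times\in\sR}A'_{\times}\,\Pbar_{\times}$ uniformly on compacts, jointly with $\widehat G^{n_j}\to G^*$. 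Inserting $\Pbar_{SE}=\Pbar_O=0$ together with (\ref{4.56})--(\ref{4.57}) and collecting the terms proportional to $P^{G^*}_+$ and those proportional to $P^{G^*}_-$ gives
\[
A^*=\frac{\lambda_1 A'_{NE}+c\,A'_E+\mu_1 A'_{SE_+}}{\lambda_0+\lambda_1}\,P^{G^*}_+
+\frac{\lambda_1 A'_{SE_-}+c\,A'_S+\mu_1 A'_{SW}}{\mu_0+\mu_1}\,P^{G^*}_-,
\]
where I used Remark~\ref{R4.11} to write both denominators as the common value $\lambda_0+\lambda_1=\mu_0+\mu_1$. A direct computation with Assumption~\ref{Assumption1} and (\ref{a6}) shows that the first coefficient equals $2(\lambda_0+b\lambda_1)=\sigma_+^2$ and the second equals $2(\mu_0+a\mu_1)=\sigma_-^2$, which is (\ref{4.71}).

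For (\ref{4.72}) I would first note that $A^*=\sigma_+^2P^{G^*}_++\sigma_-^2P^{G^*}_-$ is a homeomorphism of $[0,\infty)$ onto itself. Since $G^*$ spends zero Lebesgue time at the origin by Proposition~\ref{P4.8}, we have $P^{G^*}_++P^{G^*}_-=\id$, so for $0\le s<t$, $\min(\sigma_+^2,\sigma_-^2)(t-s)\le A^*(t)-A^*(s)\le\max(\sigma_+^2,\sigma_-^2)(t-s)$; hence $A^*$ is continuous, strictly increasing and onto $[0,\infty)$, and its inverse $I^*$ has the same properties. On the Skorohod space above, $A^{n_j}\to A^*$ uniformly on compacts; since $A^{n_j}$ and $A^*$ are homeomorphisms of $[0,\infty)$ whose inverses are Lipschitz with the common bound $1/m$ (recall $m\le\sigma_\pm^2$ because each $\sigma_\pm^2$ is a convex combination of the $A'_{\times}$), a standard argument gives $I^{n_j}\to I^*$ uniformly on compacts. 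Composing the uniformly-on-compacts convergent sequence $\widehat G^{n_j}\to G^*$, whose limit is continuous, with $I^{n_j}\to I^*$, we obtain $\widehat G^{n_j}\circ I^{n_j}\to G^*\circ I^*$ uniformly on compacts. But $\widehat G^{n_j}\circ I^{n_j}\ArrowJ1 B$ by (\ref{4.50a}), so $G^*\circ I^*$ is a standard Brownian motion; identifying it with the limit $B$ of (\ref{4.50a}) and composing with $A^*$ yields $B\circ A^*=G^*\circ I^*\circ A^*=G^*$, which is (\ref{4.72}).

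The main obstacle is the time-change bookkeeping: one must verify that $A^*$ really is a homeomorphism of $[0,\infty)$ — this is precisely where the fact that $G^*$ spends zero Lebesgue time at the origin is used — so that $I^{n_j}\to I^*$ and hence $\widehat G^{n_j}\circ I^{n_j}\to G^*\circ I^*$ are legitimate, and one must run the whole argument on a common probability space so that the \emph{pathwise} identity (\ref{4.72}), not merely equality in law, is obtained. The algebraic verification that the two displayed coefficients equal $\sigma_\pm^2$ is routine but uses the full force of Assumption~\ref{Assumption1}.
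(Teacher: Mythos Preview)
Your proof of (\ref{4.71}) is exactly the paper's argument: pass to the limit in the linear combination $A^{n_j}=\sum_{\times}A'_{\times}\Pbarn_{\times}$ using Proposition~\ref{P4.10} and verify algebraically that the resulting coefficients on $P_+^{G^*}$ and $P_-^{G^*}$ are $\sigma_+^2$ and $\sigma_-^2$. Your observation that these coefficients are convex combinations of the $A'_{\times}$ (and hence lie in $[m,M]$) is a nice touch not made explicit in the paper.

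For (\ref{4.72}) you take a mildly different route. The paper argues forward: from the joint convergence $(\widehat G^{n_j}\circ I^{n_j},A^{n_j})\ArrowJ1(B,A^*)$ and continuity of $B$, it invokes the time-change lemma of \cite[Section~14]{Billingsley} to get $\widehat G^{n_j}=\widehat G^{n_j}\circ I^{n_j}\circ A^{n_j}\ArrowJ1 B\circ A^*$, and then identifies this limit with $G^*$. You argue backward: you first establish $I^{n_j}\to I^*$ (using that $A^*$ is a bi-Lipschitz homeomorphism, which in turn uses that $G^*$ spends zero time at the origin), compose $\widehat G^{n_j}\to G^*$ with $I^{n_j}\to I^*$ to get $\widehat G^{n_j}\circ I^{n_j}\to G^*\circ I^*$, identify the latter with $B$, and invert. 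Both directions are correct; the paper's is slightly shorter because the Billingsley lemma packages the composition step, whereas you prove the convergence of inverses by hand. Your version has the virtue of making explicit the role of the zero-occupation-time property of $G^*$ at the origin, which is implicit in the paper's citation.
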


\begin{proof}
For (\ref{4.71}), it suffices to verify that
$A_{NE}\circ\Pbar_{NE}+A_{E}\circ \Pbar_E
+A_{SE_+}\circ\Pbar_{SE_+}=\sigma_+^2P^{G^*}_+$
and $A_{SE_-}\circ\Pbar_{SE_-}
+A_S\circ\Pbar_S+A_{SW}\circ\Pbar_{SW}=\sigma_-^2P^{G^*}_-$.
The first of these equations
can be verified by a lengthy computation using
(\ref{4.39})--(\ref{4.42}), (\ref{4.56}) and
(\ref{a6}).  It helps in this computation to note
from (\ref{a6}) that $\lambda_0+\lambda_1=a\lambda_0$
and $\lambda_2=a\lambda_0-b\lambda_1$.
The second equation is obtained by a similar computation.

Because $(\widehat{G}^{n_j}\circ I^{n_j},
A^{n_j})\ArrowJ1
(B,\sigma_+^2P^{G^*}_++\sigma_-^2P^{G^*}_-)$
and $B$ is continuous,
we can use the time-change lemma in 
\cite[Section~14]{Billingsley} to obtain
(\ref{4.72}).  That lemma is stated for $D[0,1]$,
but the modification of the proof to extend the
result to $D[0,\infty)$ is straightforward.
\end{proof}

\begin{corollary}\label{C4.13}
Every weakly convergent subsequence of
$\{\Ghatn\}_{n=1}^{\infty}$ converges to
the same limit, i.e., all limits induce the
same probability measure on $C[0,\infty)$.  This
limit is the two-speed Brownian motion
of Definition \ref{TS.D.1}.
\end{corollary}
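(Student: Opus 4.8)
\noi\textbf{Proof proposal.}
The plan is to combine the tightness already established with a uniqueness statement for the limiting object, so that every subsequential limit of $\{\Ghatn\}$ has the same law; together with tightness this forces convergence of the full sequence.

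First I would fix an arbitrary weakly convergent subsequence $\{\widehat G^{n_j}\}$, with limit $G^*$. By Proposition \ref{P4.8}, $G^*$ lives on $C[0,\infty)$ and spends zero Lebesgue time at the origin, and by Corollary \ref{C4.11} there is a standard Brownian motion $B$ (the limit of $\widehat G^{n_j}\circ I^{n_j}$ along the subsequence) such that
\[
G^*=B\circ\big(\sigma_+^2 P_+^{G^*}+\sigma_-^2 P_-^{G^*}\big),\qquad P_\pm^{G^*}(t):=\int_0^t\ind_{\{\pm G^*(s)>0\}}\,ds .
\]
Next I would note that Assumption \ref{Assumption2} forces $\Ghatn(0)\Rightarrow 0$: each branch of the continuous piecewise-linear map defining $G^n(0)$ vanishes at the origin, and $(\widehat W^n(0),\widehat X^n(0))\Rightarrow(0,0)$, so $G^*(0)=0$ almost surely. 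Thus $G^*$ enjoys exactly the defining properties of the two-speed Brownian motion of Definition \ref{TS.D.1} with speeds $\sigma_+^2,\sigma_-^2$ started at $0$.

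The core step is then to show the displayed implicit time-change equation, together with $G^*$ (hence $B$) spending zero Lebesgue time at $0$, pins down the law of $G^*$ uniquely. Writing $T:=\sigma_+^2 P_+^{G^*}+\sigma_-^2 P_-^{G^*}$, one has (since $\sigma_\pm>0$ and $G^*$ spends no time at the origin) that $T$ is continuous, strictly increasing, $T(0)=0$, $T(\infty)=\infty$, so its inverse $A:=T^{-1}$ is Lipschitz with $A'(u)=1/T'(A(u))=\sigma_+^{-2}\ind_{\{B(u)>0\}}+\sigma_-^{-2}\ind_{\{B(u)<0\}}$ for a.e.\ $u$, using $G^*(A(u))=B(u)$ and that the zero set of $B$ has Lebesgue measure zero. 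Hence $A(u)=\int_0^u(\sigma_+^{-2}\ind_{\{B(r)>0\}}+\sigma_-^{-2}\ind_{\{B(r)<0\}})\,dr$ is a measurable functional of the path of $B$, and $G^*=B\circ A^{-1}$ is determined pathwise by $B$. Since the law of $B$ does not depend on the chosen subsequence, neither does the law of $G^*$; this is precisely the existence-and-uniqueness-in-law statement for two-speed Brownian motion developed in Appendix \ref{TwoSpeedBrMot}. Having all subsequential limits agree, tightness (Proposition \ref{P4.8}) upgrades to weak-$J_1$ convergence of the full sequence to this common limit, which, being supported on $C[0,\infty)$, is convergence in $C[0,\infty)$; as a by-product the subsequential convergences in Corollary \ref{C4.9}, Proposition \ref{P4.10} and Corollary \ref{C4.11} then hold along the full sequence (this is Corollary \ref{C4.15}).

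The hard part will be the uniqueness/identification step: justifying rigorously that $T$ is strictly increasing with Lipschitz inverse, carrying out the change-of-variables computation for $A'$, invoking that $\{u:B(u)=0\}$ is Lebesgue-null, and checking that the resulting $G^*$ is adapted to a filtration making it a two-speed Brownian motion in the exact sense of Definition \ref{TS.D.1}, so that the appendix's uniqueness applies verbatim. If Definition \ref{TS.D.1} is instead phrased as a martingale problem (equivalently the SDE $dG^*=\sigma(G^*)\,dB$ with $\sigma(y)=\sigma_+$ on $\{y>0\}$, $\sigma_-$ on $\{y<0\}$), then I would additionally pass the martingale property of $\Ghatn$ (Proposition \ref{P4.3}) to the limit and appeal to the Engelbert--Schmidt-type weak uniqueness recorded in the appendix; the substance is the same.
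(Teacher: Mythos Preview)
Your argument is correct and is essentially the paper's own proof: the paper invokes Proposition~\ref{P4.8} together with the converse part of Proposition~\ref{TS} to conclude that $G^*$ is a two-speed Brownian motion, and then observes that Definition~\ref{TS.D.1} makes the law unique --- you have simply written out that converse argument (the inversion $T\mapsto A=\Theta$) inline rather than citing it. Your final paragraph about upgrading to full-sequence convergence is the content of the next result, Theorem~\ref{T4.14}, so it goes slightly beyond what Corollary~\ref{C4.13} asks.
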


\begin{proof}
Proposition \ref{P4.8}
and the converse part of Proposition \ref{TS}
imply that $G^*$ in (\ref{4.72})
is a two-speed Brownian
motion.  It is apparent from Definition
\ref{TS.D.1} that a two-speed Brownian
motion with given speeds $\sigma_+^2$
and $\sigma_-^2$ is unique in law,
and hence every convergent subsequence
of $\{\Ghatn\}_{n=1}^{\infty}$ has the same 
$J_1$-weak limit.
\end{proof}

\begin{theorem}\label{T4.14}
For the full sequence $\{\Ghatn\}_{n=1}^{\infty}$,
we have
$\Ghatn\ArrowJ1 G^*$,
where $G^*$ is a two-speed Brownian
motion satisfying (\ref{4.72}).
\end{theorem}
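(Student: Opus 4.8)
The plan is to assemble into a single statement about the full sequence the facts already proved along subsequences, using the standard principle that a tight sequence whose weak limit points all coincide converges to that common limit.

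First I would invoke Proposition~\ref{P4.8} for tightness of $\{\Ghatn\}_{n=1}^{\infty}$ in the $J_1$ topology on $D[0,\infty)$; by Prohorov's theorem every subsequence then has a weakly-$J_1$ convergent sub-subsequence, and by the same proposition every such limit is continuous and spends zero Lebesgue time at the origin. Next I would cite Corollary~\ref{C4.13}: all weakly convergent subsequences of $\{\Ghatn\}_{n=1}^{\infty}$ share the same limit law on $C[0,\infty)$, namely that of the two-speed Brownian motion of Definition~\ref{TS.D.1} with speeds $\sigma_+^2$ and $\sigma_-^2$ from (\ref{sigmaplus})--(\ref{sigmaminus}). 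Combining tightness with uniqueness of subsequential limits via the usual argument (any subsequence admits a further subsequence converging to this same $G^*$) gives $\Ghatn\ArrowJ1 G^*$ for the whole sequence.

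Finally I would observe that, the full sequence now being convergent, Corollary~\ref{C4.11} applies to it directly rather than only along a subsequence: the joint convergence $(\Ghatn\circ I^n, A^n)\ArrowJ1(B,\sigma_+^2 P^{G^*}_+ + \sigma_-^2 P^{G^*}_-)$ holds with $B$ the Brownian motion of (\ref{4.50a}), and the time-change representation (\ref{4.72}) follows as in the proof of that corollary. There is no substantive obstacle here; the only points needing care are the routine subsequence argument upgrading subsequential convergence to convergence of the full sequence, and the bookkeeping that the Brownian motion $B$ and the occupation-time processes $P^{G^*}_{\pm}$ appearing in (\ref{4.72}) are precisely those furnished by (\ref{4.50a}) and Proposition~\ref{P4.10}.
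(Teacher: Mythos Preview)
Your proposal is correct and matches the paper's own argument: the paper simply observes that if the full sequence failed to converge to $G^*$, some subsequence would have no sub-subsequence converging to $G^*$, contradicting tightness (Proposition~\ref{P4.8}) together with Corollary~\ref{C4.13}. Your added remark that (\ref{4.72}) then holds for the full sequence via Corollary~\ref{C4.11} is a reasonable elaboration the paper leaves implicit.
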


\begin{proof}
Failure of the full sequence to converge to $G^*$
would imply the existence of a subsequence
with no sub-subsequence converging to $G^*$.
This contradicts the tightness 
of $\{\Ghatn\}_{n=1}^{\infty}$ and Corollary
\ref{C4.13}.
\end{proof}

\begin{corollary}\label{C4.15}
The convergence (\ref{4.55})
in Proposition \ref{P4.10} is convergence
of the full sequence 
$\{(\Ghatn,\Hhatn,(\Pbarn_{\times})_{\times\in\R})
\}_{n=1}^{\infty}$.
\end{corollary}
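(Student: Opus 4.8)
The plan is to bootstrap the subsequential convergence of Proposition \ref{P4.10} to convergence of the full sequence, exactly as Theorem \ref{T4.14} does for $\Ghatn$ alone; the one extra observation needed is that the limit in (\ref{4.55}) is a fixed functional of $G^*$.

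First I would invoke the tightness of $\{(\Ghatn,\Hhatn,(\Pbarn_{\times})_{\times\in\sR})\}_{n=1}^{\infty}$ in the product $J_1$ topology, which is already recorded in the proof of Proposition \ref{P4.10}: the first two coordinates are $O(1)$ by Proposition \ref{P4.8} and Theorem \ref{T3.2}, and each occupation-time coordinate is nondecreasing, $1$-Lipschitz, and vanishes at $0$. Hence every subsequence of the joint sequence has a further subsequence that converges weakly-$J_1$, and by Proposition \ref{P4.10} any such sub-subsequential limit must be of the form $(G^*,0,(\Pbar_{\times})_{\times\in\sR})$ with $\Pbar_{SE}=\Pbar_O=0$ and the remaining six processes given by (\ref{4.56})--(\ref{4.57}) in terms of $P_+^{G^*}$ and $P_-^{G^*}$.

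Next I would check that the law of $(G^*,0,(\Pbar_{\times})_{\times\in\sR})$ is the same for every sub-subsequence. By Theorem \ref{T4.14} (equivalently Corollary \ref{C4.13}), the coordinate $G^*$ always has the law of the two-speed Brownian motion with speeds $\sigma_+^2$ and $\sigma_-^2$, so its law is subsequence-independent. The middle coordinate is identically zero, and by (\ref{4.56})--(\ref{4.57}) the remaining coordinates are obtained from $G^*$ through the single deterministic map defined by those formulas, with $P_{\pm}^{g}(t):=\int_0^t\ind_{\{\pm g(s)>0\}}\,ds$. This map is Borel measurable, and in fact continuous at every path that spends zero Lebesgue time at the origin; by Proposition \ref{P4.8} the limit $G^*$ has that property almost surely. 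Consequently the whole vector $(G^*,0,(\Pbar_{\times})_{\times\in\sR})$ is a deterministic functional of $G^*$, so its law is determined by the (subsequence-independent) law of $G^*$.

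Finally I would conclude with the standard subsequence criterion for weak convergence: the joint sequence is tight, and every sub-subsequential weak-$J_1$ limit carries the common law of $(G^*,0,(\Pbar_{\times})_{\times\in\sR})$; hence the full sequence converges weakly-$J_1$ to that limit, which is the assertion of the corollary. I do not anticipate a genuine obstacle; the only point deserving care is the measurability of $g\mapsto P_{\pm}^{g}$, handled by the zero-time-at-the-origin property of Proposition \ref{P4.8}, and otherwise this is precisely the ``tightness plus uniqueness of the subsequential limit'' pattern already used for Theorem \ref{T4.14}.
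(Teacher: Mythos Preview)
Your argument is correct and is precisely the intended one: the paper states this corollary without proof immediately after Theorem \ref{T4.14}, relying implicitly on the tightness established in the proof of Proposition \ref{P4.10} together with the fact that every sub-subsequential limit is a fixed deterministic functional of $G^*$, whose law is uniquely determined by Corollary \ref{C4.13}. Your explicit treatment of the measurability point for $g\mapsto P_{\pm}^g$ via Proposition \ref{P4.8} is a welcome clarification of a detail the paper leaves tacit.
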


\begin{corollary}\label{C4.16}
Jointly with the convergence in Corollary \ref{C4.15},
we have the joint convergences
\begin{align*}
\Thetahatn_i
&\ArrowJ1
\Theta^*_i\circ P^{G^*}_+,\quad i=1,2,3,\\
\Thetahatn_i
&\ArrowJ1
\Theta^*_i\circ P^{G^*}_-,\quad i=6,7,8,\\
\Thetahatn_i
&\ArrowJ1
0,\quad i=4,5,9,10,
\end{align*}
where
\begin{align}
\Theta_1^*
&:=
B_{NE,X,+}\frac{\lambda_1^2}{\lambda_0+\lambda_1}\id
-B_{NE,X,-}\circ\frac{\mu_0\lambda_1}{\lambda_0+\lambda_1}
\id
+B_{E,X,+}\circ\frac{\lambda_1c}
{\lambda_0+\lambda_1}\id,\label{t1}\\
\Theta_2^*
&:=
-B_{E,X,-}\circ\frac{\mu_1c}
{\lambda_0+\lambda_1}\id
+B_{SE_+,X,+}\circ
\frac{\lambda_0\mu_1}{\lambda_0+\lambda_1}\id
-B_{SE_+,X,-}\circ\frac{\mu_1^2}{\lambda_0+\lambda_1}\id,
\label{t2}\\
\Theta_3^*
&=
B_{NE,W,+}\circ\frac{\lambda_1\lambda_2}{\lambda_0+\lambda_1}
\id
+B_{E,W,+}\circ\frac{\lambda_2c}
{\lambda_0+\lambda_1}\id
-B_{E,W,-}\circ\frac{\mu_0c}
{\lambda_0+\lambda_1}\id\nonumber\\
&\qquad
+B_{SE_+,W,+}\circ
\frac{\lambda_1\mu_1}{\lambda_0+\lambda_1}\id
-B_{SE_+,W,-}
\circ\frac{\mu_0\mu_1}{\lambda_0+\lambda_1}\id,
\label{t3}
\end{align}
\begin{align}
\Theta_6^*
&:=
-B_{SW,X,-}\circ\frac{\mu_1\mu_2}{\mu_0+\mu_1}\id
-B_{S,X,-}\circ\frac{\mu_2c}{\mu_0+\mu_1}\id
+B_{S,X,+}\circ\frac{\lambda_0c}
{\mu_0+\mu_1}\id\nonumber\\
&\qquad
-B_{SE_-,X,-}\circ
\frac{\mu_1\lambda_1}{\mu_0+\mu_1}\id
+B_{SE_-,X,+}\circ
\frac{\lambda_0\lambda_1}{\mu_0+\mu_1}\id\label{t6}\\
\Theta_7^*
&:=
B_{S,W,+}\circ\frac{\lambda_1c}
{\mu_0+\mu_1}\id
-B_{SE_-,W,-}\circ\frac{\mu_0\lambda_1}{\mu_0+\mu_1}\id
+B_{SE_-,W,+}\circ\frac{\lambda_1^2}{\mu_0+\mu_1}\id,
\label{t.7}\\
\Theta_8^*
&:=
-B_{SW,W,_-}\circ\frac{\mu_1^2}{\mu_0+\mu_1}\id
+B_{SW,W,+}\circ\frac{\lambda_0\mu_1}{\mu_0+\mu_1}\id
-B_{S,W,-}\circ\frac{\mu_1c}{\mu_0+\lambda_1}\id.
\label{t8}
\end{align}
\end{corollary}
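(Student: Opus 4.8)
The plan is to pass to the limit in the region-grouped expressions \eqref{4.39c}--\eqref{4.48c} for the $\Thetahatn_i$, each of which is a finite sum of compositions of the form $\Mhatn_{\times,*,\pm}\circ\alpha\Pbarn_{\times}$ with $\alpha>0$ a constant rate. The main tool is the random-time-change theorem (e.g.\ \cite[Section~14]{Billingsley}), in the form of the elementary observation that if $f_n\to f$ and $g_n\to g$ uniformly on compact sets with $f$ continuous and the $g_n$ nondecreasing and uniformly Lipschitz, then $f_n\circ g_n\to f\circ g$ uniformly on compact sets.

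First I would upgrade the convergences already in hand to the single joint weak-$J_1$ convergence
\[
\big((\Mhatn_{\times,*,\pm})_{\times\in\sR,\,*\in\{W,X\}},\ (\Pbarn_{\times})_{\times\in\sR},\ \Ghatn,\ \Hhatn\big)
\ArrowJ1
\big((B_{\times,*,\pm})_{\times\in\sR,\,*\in\{W,X\}},\ (\Pbar_{\times})_{\times\in\sR},\ G^*,\ 0\big).
\]
Tightness of this family is immediate: the $\Mhatn_{\times,*,\pm}$ converge by \eqref{4.41e}, the $\Pbarn_{\times}$ are Lipschitz with constant $1$ and vanish at $0$, $\{\Ghatn\}_{n=1}^\infty$ is tight by Proposition~\ref{P4.8}, and $\Hhatn\ArrowJ1 0$ by Theorem~\ref{T3.2}. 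Along any subsequence I would extract a further convergent subsequence and invoke the Skorohod representation theorem to realize the convergence as a.s.\ uniform convergence on compact time intervals on one probability space. On that space the $B_{\times,*,\pm}$ are independent standard Brownian motions; by Corollary~\ref{C4.15} and Proposition~\ref{P4.10} the limit $(\Pbar_{\times})_{\times\in\sR}$ is the one in \eqref{4.56}--\eqref{4.57}, with $\Pbar_{SE}=\Pbar_O=0$; and $G^*$ is a two-speed Brownian motion.

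Next I would pass to the limit in each $\Thetahatn_i$. By the composition fact above, applied with the a.s.\ uniform convergences $\Mhatn_{\times,*,\pm}\to B_{\times,*,\pm}$ (continuous limit) and $\Pbarn_{\times}\to\Pbar_{\times}$ (Lipschitz with constant $1$, continuous limit), every summand $\Mhatn_{\times,*,\pm}\circ\alpha\Pbarn_{\times}$ converges a.s.\ uniformly on compacts to $B_{\times,*,\pm}\circ\alpha\Pbar_{\times}$. Summing the summands of \eqref{4.39c}--\eqref{4.41c} and substituting the explicit values $\lambda_1\Pbar_{NE}=\frac{\lambda_1^2}{\lambda_0+\lambda_1}P^{G^*}_+$, $\mu_0\Pbar_{NE}=\frac{\mu_0\lambda_1}{\lambda_0+\lambda_1}P^{G^*}_+$, $\lambda_1\Pbar_E=\frac{\lambda_1c}{\lambda_0+\lambda_1}P^{G^*}_+$, $\lambda_2\Pbar_{NE}=\frac{\lambda_1\lambda_2}{\lambda_0+\lambda_1}P^{G^*}_+$, and so on from \eqref{4.56}, one reads off $\Thetahatn_i\ArrowJ1\Theta^*_i\circ P^{G^*}_+$ for $i=1,2,3$, with $\Theta^*_i$ as in \eqref{t1}--\eqref{t3}. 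The symmetric computation, using \eqref{4.44c}--\eqref{4.46c} and \eqref{4.57}, gives $\Thetahatn_i\ArrowJ1\Theta^*_i\circ P^{G^*}_-$ for $i=6,7,8$, with $\Theta^*_i$ as in \eqref{t6}--\eqref{t8}. For $i=4,5,9,10$ every summand is composed with $\alpha\Pbarn_{SE}$ or $\alpha\Pbarn_O$, whose limits are the zero function, so $\Thetahatn_i\ArrowJ1 0$. These conclusions are the same along every subsequence, and $(\Ghatn,\Hhatn,(\Pbarn_{\times})_{\times\in\sR})$ converges along the full sequence by Corollary~\ref{C4.15}, so the standard subsequence argument promotes everything to convergence of the full sequence, jointly with the convergence in Corollary~\ref{C4.15}.

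The main obstacle is the first step: the driving noises $\Mhatn_{\times,*,\pm}$ must be brought in jointly with the limits of Corollary~\ref{C4.15}, even though $\Ghatn$, $\Hhatn$ and the occupation times $\Pbarn_{\times}$ are intricate functionals of those noises and the occupation-time functionals are not continuous on $D[0,\infty)$; this forces the argument through tightness and a Skorohod coupling rather than a direct continuous-mapping step, the Lipschitz bound on the $\Pbarn_{\times}$ being exactly what keeps the compositions well behaved under the available ($J_1$, or a.s.\ uniform) convergence. The one additional point to check is that the joint law of $\big((B_{\times,*,\pm}),G^*\big)$ is the same for every subsequence: passing to the limit in $\Ghatn=\Ghatn(0)+\sum_{\times\in\sR}\Psihatn_{\times}\circ\Pbarn_{\times}$ by the same composition argument yields $G^*=G^*(0)+\sum_{\times\in\sR}\Psi^*_{\times}\circ\Pbar_{\times}$, a closed equation for $G^*$ driven by the $B_{\times,*,\pm}$ which, combined with \eqref{4.56}--\eqref{4.57} and the two-speed structure of \eqref{4.72}, has a unique solution; hence $G^*$ is a fixed measurable functional of the noises and the joint limit law is determined.
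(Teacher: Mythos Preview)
Your proposal is correct and follows the same approach as the paper, which in one sentence cites the random time change lemma of \cite[Section~14]{Billingsley} applied to the convergences \eqref{4.41e} and Corollary~\ref{C4.15}. Your treatment is more careful than the paper's terse proof: you explicitly justify the \emph{joint} convergence of the driving martingales $\Mhatn_{\times,*,\pm}$ with $(\Pbarn_{\times},\Ghatn,\Hhatn)$ via tightness plus a Skorohod coupling, and your closing observation---that passing to the limit in $\Ghatn=\Ghatn(0)+\sum_{\times}\Psihatn_{\times}\circ\Pbarn_{\times}$ exhibits $G^*$ as the unique solution of a fixed-point equation driven by the limiting Brownian motions, hence pinning down the joint law along every subsequence---fills a step the paper leaves implicit.
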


\begin{proof}
Apply the random time change lemma
of \cite[Section~14]{Billingsley}
to the convergences in (\ref{4.41e}) and
Corollary \ref{C4.15}.
\end{proof}

\begin{remark}\label{R4.17}
{\rm The six Brownian motions in (\ref{t1})--(\ref{t8})
are independent.
For future reference, we compute their quadratic
variations, using Assumption
\ref{Assumption1} to simplify, obtaining
\begin{eqnarray*}
\langle\Theta_1^*,\Theta_1^*\rangle
=\frac{2\lambda_1}{b}\id,
&
\langle\Theta_2^*,\Theta_2^*\rangle
=\displaystyle\frac{2\mu_1}{a}\id,
&
\langle\Theta_3^*,\Theta_3^*\rangle
=\frac{2\lambda_0}{b}\id,\\
\langle\Theta_6^*,\Theta_6^*\rangle
=\frac{2\mu_0}{a}\id,
&
\langle\Theta_7^*,\Theta_7^*\rangle
=\displaystyle\frac{2\lambda_1}{b}\id,
&
\langle\Theta_8^*,\Theta_8^*\rangle
=\frac{2\mu_1}{a}\id.\\
\end{eqnarray*}
}
\end{remark}

\begin{proposition}\label{P5.4}
We have
\begin{align}
G^*
&
=(b\Theta_1^*+\Theta_2^*+\Theta_3^*)\circ P_+^{G^*}
+(\Theta_6^*+\Theta_7^*+a\Theta_8^*)\circ P_-^{G^*},
\label{4.116e}\\
\big|G^*\big|
&=
(b\Theta_1^*+\Theta_2^*
+\Theta_3^*)\circ P^{G^*}_+
-(\Theta_6^*+\Theta_7^*
+a\Theta_8^*)\circ P^{G^*}_-\nonumber\\
&\qquad
+\Gamma\big((b\Theta_1^*+\Theta_2^*
+\Theta_3^*)\circ P^{G^*}_+
-(\Theta_6^*+\Theta_7^*
+a\Theta_8^*)\circ P^{G^*}_-\big),\label{5.26}\\
\lefteqn{\sqrt{n}(a\lambda_0+b\mu_0)
\big(\Pbarn_{SE}+\Pbarn_O\big)}
\hspace{2cm}\nonumber\\
&\ArrowJ1
\Gamma\big((b\Theta_1^*+\Theta_2^*+\Theta_3^*)
\circ P^{G^*}_+
-(\Theta_6^*+\Theta_7^*+a\Theta_8^*)
\circ P^{G^*}_-\big),\label{4.93x}
\end{align}
where $\Gamma$ is the Skorohod map of
(\ref{2.22b}).
\end{proposition}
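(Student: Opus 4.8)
The plan is to pass to the limit in (\ref{4.49c}) and (\ref{4.50c}), which already express $\Ghatn$ and $|\Ghatn|$ through the centered, time‑changed pieces $\Thetahatn_i$ and the reflection term $\sqrt{n}(a\lambda_0+b\mu_0)(\Pbarn_{SE}+\Pbarn_O)$. For (\ref{4.116e}) this is essentially immediate. First I would observe that Assumption \ref{Assumption2} together with the continuity of the map (\ref{3.17a}) forces $\Ghatn(0)\ArrowJ1 0$, so in particular $G^*(0)=0$. Then, letting $n\to\infty$ in (\ref{4.49c}) and using the joint convergences of Corollary \ref{C4.16} (with $\Thetahatn_4,\Thetahatn_5\ArrowJ1 0$) along with Corollary \ref{C4.15}, and noting that every limiting summand is continuous so that addition is $J_1$‑continuous at the limit point, the right‑hand side of (\ref{4.49c}) converges to $(b\Theta_1^*+\Theta_2^*+\Theta_3^*)\circ P^{G^*}_+ +(\Theta_6^*+\Theta_7^*+a\Theta_8^*)\circ P^{G^*}_-$. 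Since by Corollary \ref{C4.15} the same expression is the limit of $\Ghatn$, namely $G^*$, this gives (\ref{4.116e}).

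For (\ref{5.26}) and (\ref{4.93x}) the key is to recognize (\ref{4.50c}) as an exact Skorohod reflection at the level of paths. Set $\psi^n:=|\Ghatn(0)|+b\Thetahatn_1+\Thetahatn_2+\Thetahatn_3+\Thetahatn_4-\Thetahatn_5-\Thetahatn_6-\Thetahatn_7-a\Thetahatn_8$ and $R^n:=\sqrt{n}(a\lambda_0+b\mu_0)(\Pbarn_{SE}+\Pbarn_O)$, so that (\ref{4.50c}) reads $|\Ghatn|=\psi^n+R^n$. I would then verify the defining conditions of the half‑line Skorohod problem pathwise: $\psi^n(0)=|\Ghatn(0)|\ge 0$; $R^n$ is nondecreasing with $R^n(0)=0$ (as $a\lambda_0+b\mu_0>0$ and the occupation times $\Pbarn_\times$ are nondecreasing and vanish at the origin); $\psi^n+R^n=|\Ghatn|\ge 0$; and $dR^n$ is carried by $\{\,|\Ghatn|=0\,\}$. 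The last point is where a small but genuine observation is required: running through the transformation (\ref{3.17a}) region by region shows $\Ghatn>0$ on $NE\cup E\cup SE_+$, $\Ghatn<0$ on $SE_-\cup S\cup SW$, and $\Ghatn=0$ precisely on $SE\cup O$; hence $R^n$ increases only while $|\Ghatn|=0$. By uniqueness of solutions to the half‑line Skorohod problem this identifies $R^n=\Gamma(\psi^n)$ and $|\Ghatn|=\psi^n+\Gamma(\psi^n)$ for every $n$, with $\Gamma$ the map of (\ref{2.22b}).

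Finally I would pass to the limit in those two identities. Corollary \ref{C4.16}, Corollary \ref{C4.15} and $\Ghatn(0)\ArrowJ1 0$ give the joint convergence $(\Ghatn,\psi^n)\ArrowJ1(G^*,M^*)$, where $M^*:=(b\Theta_1^*+\Theta_2^*+\Theta_3^*)\circ P^{G^*}_+-(\Theta_6^*+\Theta_7^*+a\Theta_8^*)\circ P^{G^*}_-$ is continuous. Because $\Gamma$ is Lipschitz in the uniform norm, hence continuous at continuous paths in the $J_1$ topology, the Continuous Mapping Theorem applied to $\Gamma$ yields $R^n=\Gamma(\psi^n)\ArrowJ1\Gamma(M^*)$, which is (\ref{4.93x}); applied to the (also Lipschitz in the uniform norm) map $(g,\psi)\mapsto(|g|,\psi+\Gamma(\psi))$ it yields $(|\Ghatn|,\psi^n+\Gamma(\psi^n))\ArrowJ1(|G^*|,M^*+\Gamma(M^*))$. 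Since $|\Ghatn|=\psi^n+\Gamma(\psi^n)$ for every $n$, the limiting pair is supported on the closed diagonal $\{(y,y)\}$, so $|G^*|=M^*+\Gamma(M^*)$ almost surely, which is (\ref{5.26}). The step I expect to be the real work is the region‑by‑region sign analysis that places $dR^n$ on $\{|\Ghatn|=0\}$ — this is exactly what legitimizes the Skorohod‑problem identification — while the rest is bookkeeping together with standard continuity properties of the Skorohod map and of addition against a continuous summand.
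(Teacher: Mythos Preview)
Your proposal is correct and follows essentially the same approach as the paper: obtain (\ref{4.116e}) by passing to the limit in (\ref{4.49c}) via Corollaries \ref{C4.15} and \ref{C4.16}, then recognize (\ref{4.50c}) as a pathwise Skorohod reflection (the paper phrases this as ``the nondecreasing process $\sqrt{n}(a\lambda_0+b\mu_0)(\Pbarn_{SE}+\Pbarn_O)$ starts at zero and is constant on intervals where $|\Ghatn|$ is strictly positive''), and finish by the continuity of $\Gamma$ in the $J_1$ topology together with the Continuous Mapping Theorem. Your region-by-region sign analysis is exactly the content behind the paper's one-line observation, and your diagonal argument for (\ref{5.26}) is a slightly more explicit version of the same limiting step.
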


\begin{proof}
Equation (\ref{4.116e}) is the result of
taking the limit in (\ref{4.49c}),
using Corollaries \ref{C4.15} and \ref{C4.16}. 
Referring to (\ref{4.50c}), we observe that the
nondecreasing process 
$\sqrt{n}(a\lambda_0+b\mu_0)(\Pbarn_{SE}+\Pbarn_O)$
starts at zero and
is constant on intervals where $|\Ghatn|$ 
is strictly positive.
It follows that
\begin{align*}
\sqrt{n}(a\lambda_0+b\mu_0)(\Pbarn_{SE}+\Pbarn_O)
&=
\Gamma\big(\Ghatn(0)+b\Thetahatn_1+\Thetahatn_2
+\Thetahatn_3+\Thetahatn_4-\Thetahatn_5
-\Thetahatn_6-\Thetahatn_7
-a\Thetahatn_8\big).
\end{align*}
Using Assumption \ref{Assumption2},
the convergences in Corollaries \ref{C4.15}
and \ref{C4.16},
and the continuity of $\Gamma$
in the $J_1$ topology, we obtain 
(\ref{5.26}) and (\ref{4.93x}) from
the Continuous Mapping Theorem.
\end{proof}

\subsection{Convergence of $(\sWhatn,\sXhatn)$}
\label{ConvWX}

To be consistent with the notation of 
\cite[equation~(2.7)]{ALSY}, we state the final
result of this section denoting the two-speed
Brownian motion $G^*$ in (\ref{4.72}) by $B_{w,x}$,
i.e.,
$$
B_{w,x}=B\circ\big(\sigma_+^2 P_+^{B_{w,x}}
+\sigma_-^2 P^{B_{w,x}}_-\big),
$$
where $B$ is a standard Brownian motion and
$P_{\pm}^{B_{w,x}}(\theta)=\int_0^{\theta}
\ind_{\{\pm B_{w,x}(\tau)>0\}}d\tau$,
or equivalently,
$B_{w,,x}=B\circ \Theta^{-1}$,
where
$\Theta$ is given by (\ref{TS.E.3a}) and
$P_{\pm}^B(t)=\int_0^t\ind_{\{\pm B(s)>0\}}ds$.

\begin{theorem}\label{T4.16}
The limit of $(\sWhatn,\sXhatn)$
is a split Brownian motion:
$$
(\sWhatn,\sXhatn)\ArrowJ1 
\big(\max\{B_{w,x},0\},\min\{B_{w,x},0\}\big).
$$
\end{theorem}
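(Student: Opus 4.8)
The plan is to obtain the statement directly from the two convergences already established, $\Ghatn\ArrowJ1 G^*=B_{w,x}$ (Theorem \ref{T4.14}) and $\Hhatn\ArrowJ1 0$ (Theorem \ref{T3.2}), by exploiting the explicit inverse transformation (\ref{4.34d})--(\ref{4.35d}) that recovers $(\sWhatn,\sXhatn)$ from $(\Ghatn,\Hhatn)$. The key observation is that $(\Ghatn,\Hhatn)$ always takes values in the image of the forward map (\ref{3.17a})--(\ref{3.18a}), which is the closed set $K:=\{(g,h):g\ge 0,\ h\le g/b\}\cup\{(g,h):g<0,\ h\le -g/a\}$; on $K$ only the three branches written in (\ref{4.34d})--(\ref{4.35d}) occur, and in each of them $\Hhatn$ enters through a single term of absolute value at most $(a\vee b)|\Hhatn|$. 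A short case check then gives, for every $t\ge 0$,
\[
\big|\sWhatn(t)-\max\{\Ghatn(t),0\}\big|\le b\,|\Hhatn(t)|,\qquad
\big|\sXhatn(t)-\min\{\Ghatn(t),0\}\big|\le a\,|\Hhatn(t)|.
\]
Indeed, when $\Ghatn(t)\ge 0$ one is in the first or second branch, so $\sWhatn(t)\in\{\Ghatn(t)-b\Hhatn(t),\Ghatn(t)-\Hhatn(t)\}$ and $\sXhatn(t)=\Hhatn(t)$, while $\max\{\Ghatn(t),0\}=\Ghatn(t)$ and $\min\{\Ghatn(t),0\}=0$; when $\Ghatn(t)<0$ one is in the third branch, so $\sWhatn(t)=-\Hhatn(t)$ and $\sXhatn(t)\in\{\Ghatn(t)+\Hhatn(t),\Ghatn(t)+a\Hhatn(t)\}$, while $\max\{\Ghatn(t),0\}=0$ and $\min\{\Ghatn(t),0\}=\Ghatn(t)$.

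From the pointwise estimates, Theorem \ref{T3.2} and Remark \ref{R3.2a} give, for every $T>0$, that $\sup_{0\le t\le T}|\sWhatn(t)-\max\{\Ghatn(t),0\}|\stackrel{\P}{\rightarrow}0$ and $\sup_{0\le t\le T}|\sXhatn(t)-\min\{\Ghatn(t),0\}|\stackrel{\P}{\rightarrow}0$; that is, $(\sWhatn,\sXhatn)=\big(\max\{\Ghatn,0\},\min\{\Ghatn,0\}\big)+Z^n$ with $Z^n=o(1)$ in the sense of Definition \ref{D3.2a}. Since $g\mapsto(\max\{g,0\},\min\{g,0\})$ is Lipschitz from $\R$ into $\R^2$, post-composition with it is continuous in the $J_1$ topology, so the Continuous Mapping Theorem and Theorem \ref{T4.14} yield $\big(\max\{\Ghatn,0\},\min\{\Ghatn,0\}\big)\ArrowJ1\big(\max\{B_{w,x},0\},\min\{B_{w,x},0\}\big)$. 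I would then conclude with a converging-together argument (cf.\ \cite{Billingsley}): with the identity time change, the $J_1$-distance between $\big(\max\{\Ghatn,0\},\min\{\Ghatn,0\}\big)+Z^n$ and $\big(\max\{\Ghatn,0\},\min\{\Ghatn,0\}\big)$ is at most $\int_0^\infty e^{-u}\big(1\wedge\sup_{0\le t\le u}|Z^n(t)|\big)du$, which tends to zero in probability because $Z^n=o(1)$; hence the perturbed sequence has the same weak-$J_1$ limit, the asserted split Brownian motion. (Alternatively, one may apply the Continuous Mapping Theorem once, to the joint convergence $(\Ghatn,\Hhatn)\ArrowJ1(B_{w,x},0)$ contained in Corollary \ref{C4.15} and to the map $K\to\R^2$ given by (\ref{4.34d})--(\ref{4.35d}), which is continuous on $K$ and sends $(g,0)$ to $(\max\{g,0\},\min\{g,0\})$; since $(B_{w,x},0)$ is continuous and $K$-valued, the induced path map is a.s.\ continuous at the limit.)

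The argument is essentially routine given the results of this section; the one step requiring care is the verification that $(\Ghatn,\Hhatn)$ never leaves $K$, so that the three branches of (\ref{4.34d})--(\ref{4.35d}) genuinely exhaust its range and the pointwise estimates hold without exception. Everything else reduces to the already-established convergences of $\Ghatn$ and $\Hhatn$ and a standard Slutsky-type step in $D([0,\infty),\R^2)$.
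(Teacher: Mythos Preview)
Your proposal is correct, and your parenthetical alternative is exactly the paper's proof: the paper simply invokes the joint convergence $(\Ghatn,\Hhatn)\ArrowJ1(B_{w,x},0)$ (Corollary \ref{C4.15}), the continuity of the inverse transformation (\ref{4.34d})--(\ref{4.35d}), and the Continuous Mapping Theorem. Your primary route---the explicit pointwise estimates $|\sWhatn-\max\{\Ghatn,0\}|\le b|\Hhatn|$, $|\sXhatn-\min\{\Ghatn,0\}|\le a|\Hhatn|$ followed by a converging-together step---is a more hands-on execution of the same idea, with the minor advantage that it uses only the marginal convergences of Theorems \ref{T3.2} and \ref{T4.14} (convergence of $\Hhatn$ to the constant $0$ being automatically joint with anything) rather than appealing to the joint statement in Corollary \ref{C4.15}.
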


\begin{proof}
The result follows from the convergence
$(\Ghatn,\Hhatn)\ArrowJ1 (B_{w,x},0)$,
the continuity of the transformation
(\ref{4.34d}), (\ref{4.35d}), 
and the Continuous Mapping Theorem.
\end{proof}

\begin{remark}\label{R4.20x}
{\rm To 
simplify the notation for later reference, we define
$$
\sW^*:=\max\{B_{w,x},0\}=\max\{G^*,0\},\quad
\sX^*:=\min\{B_{w,x},0\}=\min\{G^*,0\}.
$$
}
\end{remark}

\section{Bracketing queues}\label{BracketingQueues}

\setcounter{equation}{0}
\setcounter{theorem}{0}
\setcounter{figure}{0}

\subsection{Introduction}\label{BracketingIntro}

The processes $\sW^n$ and $\sX^n$
introduced at the end of Section \ref{Interior}
agree with the processes $W^n$ and $X^n$
in the limit-order book until the first
time one of the bracketing queues $V^n$
or $Y^n$ vanishes.  In this section
we study these bracketing
queues.  To postpone consideration
of the time $S^n$ when one of the bracketing
queues vanishes, we replace 
$(V^n,W^n,X^n,Y^n)$ in the analysis
by the quadruple of processes
$(\sV^n,\sW^n,\sX^n,\sY^n)$
whose dynamics are given by Figure 4.1,
regardless of the values of $\sV^n$
and $\sY^n$. We have already done this
for $(\sW^n,\sX^n)$, whose dynamics
we then described using thirty independent
Poisson processes in (\ref{3.10})--(\ref{3.12}).
For each of $\sV^n$ and $\sY^n$,
we need an additional nine independent
Poisson processes.  

\begin{remark}\label{R5.1x}
{\rm 
We have already defined $(\sWhatn,\sXhatn)$
by (\ref{DS}).
We apply the same diffusion scaling
to define
$\sVhatn(t):=\frac{1}{\sqrt{n}}\sV^n(nt)$
and $\sYhatn(t):=\frac{1}{\sqrt{n}}\sY^n(nt)$.
Our penultimate goal is to determine
the limit of the quadruple 
$(\sVhatn,\sWhatn,\sXhatn,\sYhatn)$,
the ultimate goal being the determination
of the limit of the diffusion scaled
version of $(V^n,W^n,X^n,Y^n)$
up to the stopping time $S^n$ of (\ref{3.1}).
For this we need to establish
{\em joint} convergence of $\sVhatn$,
$\sWhatn$, $\sXhatn$ and $\sYhatn$.
The convergence of $\sWhatn$ and $\sXhatn$
in Theorem \ref{T4.16} is joint, and
this convergence is joint with the other
processes appearing in Corollaries 
\ref{C4.15} and \ref{C4.16}.
The nine additional independent Poisson processes
introduced in (\ref{5.1}) below
to construct $\sVhatn$ are independent
of all the processes in Theorem \ref{T4.16}
and Corollaries \ref{C4.15} and \ref{C4.16}, as are the
the nine independent Poisson processes introduced
in (\ref{5.86}) to construct
$\sYhatn$.  Therefore, as we shall
see in the remainder of this section,
this permits us to establish joint
convergence of $\sVhatn$, $\sWhatn$, $\sXhatn$,
and $\sYhatn$.  See Remark \ref{R5.10x} in this regard.
}
\end{remark}

We provide the analysis for $\sVhat^n$
and state the analogous results for $\sYhatn$.
To set the goal for this analysis, 
we describe here the
limit of $\sVhatn$, which we denote $\sV^*$.
We shall discover that a strong
``mean reversion'' due to cancellation
results in
\be\label{5.1z}
\sV^*(t)=\kappa_L
:=\frac{\lambda_2\mu_1}{\theta_b\lambda_1}
\mbox{ if }G^*\mbox{ is on a positive excursion
at time }t.
\ee
On the other hand,
\begin{align}
\sVhat^*(t+\Lambda_{k,-})
&:=
\kappa_L+
C_{k,-}(t)+\alpha_-E_{k,-}\mbox{ if }G^*\mbox{ is on
its }k\mbox{-th negative excursion}\nonumber\\
&\qquad\mbox{with left endpoint }
\Lambda_{k,-}\mbox{ at time }\Lambda_{k,-}+t,\label{5.2z}
\end{align}
where $\{C_{k,-}\}_{k=1}^{\infty}$ 
is an independent
sequence of Brownian motions independent
of $G^*$, $E_{k,-}$ is the $k$-th negative
excursion of $G^*$, and $\alpha_-$
is defined by (\ref{5.44a}) below
To make this statement precise, we will
enumerate the negative
excursions of $G^*$.
Formulas (\ref{5.1z}) and
(\ref{5.2z}) show that $\sV^*$ will
have jumps, and indeed the convergence
of $\sVhatn$ to $\sV^*$ is in the weak-$M_1$
topology rather than the weak-$J_1$ topology
because the jumps in $\sV^*$ are not matched
by jumps in $\sV^n$.

\subsection{Governing equation for $\sV^n$}\label{Governing}

Let $N_{NE,V,-}$, $N_{SE_+,V,+}$,
$N_{SE,V,+}$, $N_{SE_-,V,+}$,
$N_{S,V,+}$, $N_{S,V,-}$, $N_{NE,V,+}$,
$N_{NE,V,-}$ and $N_{O,V,-}$
be processes independent of one another and
of the thirty Poisson processes introduced
in Section \ref{Interior}.
From Figure 4.1 we obtain the
formula (using the occupation time
processes defined by (\ref{3.10}))
\begin{align}
\sV^n(t)
&=
V^n(0)-N_{NE,V,-}\left(\int_0^t\frac{1}{\sqrt{n}}
\theta_b\big(\sV^n(s)\big)^+dP^n_{NE}(s)\right)
+N_{SE_+,V,+}\circ \lambda_2 P^n_{SE_+}(t)\nonumber\\
&\quad
+N_{SE,V,+}\circ \lambda_2P^n_{SE}(t)
+N_{SE_-,V,+}\circ \lambda_2P^n_{SE_-}(t)
+N_{S,V,+}\circ\lambda_2P^n_{S}(t)\nonumber\\
&\quad
-N_{S,V,-}\circ\mu_0P^n_S(t)
+N_{SW,V,+}\circ\lambda_1P^n_{SW}(t)
-N_{SW,V,-}\circ\mu_0P^n_{SW}(t)\nonumber\\
&\quad
-N_{O,V,-}\circ\mu_0P^n_O(t).\label{5.1}
\end{align}
The first term on the right-hand side of
(\ref{5.1}) accounts for the fact that the
rate of departures due to cancellations
is the cancellation rate $\theta_b/\sqrt{n}$
per order times the number of orders queued
at price $p_v$.
Again we scale and center these Poisson
processes, defining
$\Mhatn_{\times,V,\pm}(t):=\frac{1}{\sqrt{n}}
(N_{\times,V,\pm}(nt)-nt)$, 
and rewrite (\ref{5.1}) as
(using the fluid-scaled
occupation time processes defined by (\ref{3.14}))
\begin{align}
\sVhatn(t)
&=
\sVhatn(0)
-\Mhatn_{NE,V,-}\left(\int_0^t\theta_b
\big(\sVhatn(s)\big)^+d\Pbarn_{NE}(s)\right)
+\Mhatn_{SE_+,V,+}\circ\lambda_2\Pbarn_{SE_+}(t)
\nonumber\\
&\quad
+\Mhatn_{SE,V,+}\circ\lambda_2\Pbarn_{SE}(t)
+\Mhatn_{SE_-,V,+}\circ\lambda_2\Pbarn_{SE_-}(t)
+\Mhatn_{S,V,+}\circ\lambda_2\Pbarn_S(t)
\nonumber\\
&\quad
-\Mhatn_{S,V,-}\circ\mu_0\Pbarn_S(t)
+\Mhatn_{SW,V,+}\circ\lambda_1\Pbarn_{SW}(t)
-\Mhatn_{SW,V,-}\circ\mu_0\Pbarn_{SW}(t)
\nonumber\\
&\quad
-\Mhatn_{O,V,-}\circ\mu_0\Pbarn_O(t)
-\sqrt{n}\int_0^t\theta_b\big(\sVhatn(s)\big)^+
d\Pbarn_{NE}(s)
+\sqrt{n}\lambda_2\Pbarn_{SE_+}(t)\nonumber\\
&\quad
+\sqrt{n}\,\lambda_2\Pbarn_{SE}(t)
+\sqrt{n}\,\lambda_2\Pbarn_{SE_-}(t)
+\sqrt{n}(\lambda_2-\mu_0)\Pbarn_S(t)
\nonumber\\
&\quad
-\sqrt{n}\,c\Pbarn_{SW}(t)
-\sqrt{n}\,\mu_0\Pbarn_O(t).
\label{5.2}
\end{align}

\begin{remark}\label{R5.Brm}
{\rm
The scaled centered Poisson processes
$\Mhatn_{\times,V,\pm}$ appearing
in (\ref{5.2}) converge weakly
to Brownian motions $B_{\times,V,\pm}$.
This convergence is joint with the convergences
in (\ref{4.41e}), and all the resulting Brownian
motions are independent.
}
\end{remark}

\subsection{Diffusion-scaled occupation time
limits}\label{Occupation}

We obtained fluid-scaled occupation time
limits $\Pbarn_{\times}\ArrowJ1 \Pbar_{\times}$ 
in Proposition \ref{P4.10}.
The difficult part of showing that
$\sVhatn$ converges is to deal with
the occupation times
$\Pbarn_{\times}$ that are multiplied by
$\sqrt{n}$ in (\ref{5.2}).  For this
we need {\em diffusion-scaled occupation time}
limits, i.e., limits for the processes
$\Phatn_{\times}:=\sqrt{n}
\big(\Pbarn_{\times}-\Pbar_{\times}\big)$,
$\times\in\sR$.
We are unable to determine
the limits of all the $\Phatn_{\times}$ terms,
but we are able to determine the limits
of a set of
linear combinations of these terms
that is sufficient for our purposes.
In these linear combinations, the $\Pbar_{\times}$
terms cancel, and hence we can state the results
in terms limits of $\sqrt{n}$ times
linear combinations of $\Pbarn_{\times}$ terms

We begin
with a closer examination of the
process $\Hhatn$ of (\ref{3.18a}) and (\ref{4.34c})
and its absolute
value $|\Hhatn|$ given by (\ref{4.35c}).
These processes converge to zero (Theorem
\ref{T3.2}), and because they involve
$\sqrt{n}\,\Pbarn_{\times}$ terms, they
provide information about the limits
of these terms.  
In particular, summing (\ref{4.34c}) and (\ref{4.35c})
we obtain
\be\label{5.4}
\Phihatn_1-\Phihatn_8+\Phihatn_{10}
\ArrowJ1 0.
\ee 
Subtracting one of these equations from the
other, we also obtain
$\Phihatn_2-\Phihatn_7+\Phihatn_9
\ArrowJ1 0$.
In fact, we can separate (\ref{5.4}) into
two convergences.  Toward that end, we define
\begin{eqnarray*}
\Phi^n_{10X}:=N_{O,X,+}\circ
\lambda_1P^n_O,
&&
\Phi^n_{10W}:
=-N_{O,X,-}\circ\mu_1P^n_O,\\
\Phihatn_{10X}:=\frac{1}{\sqrt{n}}\Phi^n_{10X}(nt),
&&
\Phihatn_{10W}:=\frac{1}{\sqrt{n}}\Phi^n_{10W}(nt),
\end{eqnarray*}
so that $\Phihatn_{10}=\Phihatn_{10X}-\Phihatn_{10W}$
and 
\be\label{5.8a}
\Phihatn_1-\Phihatn_8+\Phihatn_{10}
=(\Phihatn_1+\Phihatn_{10X})
-(\Phihatn_8+\Phihatn_{10W}).
\ee

\begin{lemma}\label{L5.1}
We have\footnote{Recall that convergence to a non-random
process is joint with every other convergence.}
\be\label{5.6}
\Phihatn_1+\Phihatn_{10X}\ArrowJ1 0,\quad
\Phihatn_8+\Phihatn_{10W}\ArrowJ1 0.
\ee

\end{lemma}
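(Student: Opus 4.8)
The plan is to establish two \emph{pathwise} identities that refine (\ref{5.4}), namely
\[
\Phihatn_1+\Phihatn_{10X}
=\Hhatn\,\ind_{\{(\sWhatn,\sXhatn)\in NE\}}
-\Hhatn(0)\,\ind_{\{(\sWhatn(0),\sXhatn(0))\in NE\}},
\]
\[
-\Phihatn_8-\Phihatn_{10W}
=\Hhatn\,\ind_{\{(\sWhatn,\sXhatn)\in SW\}}
-\Hhatn(0)\,\ind_{\{(\sWhatn(0),\sXhatn(0))\in SW\}}.
\]
Granting these, (\ref{5.6}) is immediate: on $[0,T]$ each right-hand side is bounded in absolute value by $\sup_{0\le s\le T}|\Hhatn(s)|+|\Hhatn(0)|$, which tends to $0$ in probability for every $T$ by Theorem \ref{T3.2} and Assumption \ref{Assumption2} (and convergence to a constant is automatically joint with every other convergence). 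Summing the two identities and using $\Phihatn_{10}=\Phihatn_{10X}-\Phihatn_{10W}$ and $\ind_{\{\cdot\in NE\}}+\ind_{\{\cdot\in SW\}}=\ind_{\{\Hhatn>0\}}$ recovers $\Phihatn_1-\Phihatn_8+\Phihatn_{10}=\Hhatn^+-\Hhatn(0)^+$, so the content of the lemma is precisely the finer split of $\Hhatn^+$ according to the mechanism of the excursion.

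First I would record the region bookkeeping, all read off from (\ref{3.18a}) and the jump structure (\ref{3.11})--(\ref{3.12}). One has $\Hhatn(s)>0$ iff $(\sWhatn(s),\sXhatn(s))\in NE\cup SW$, with $\Hhatn=\sXhatn$ throughout $NE$ and $\Hhatn=-\sWhatn$ throughout $SW$, while $\Hhatn=0$ precisely on $E\cup S\cup O$. Since a jump changes one coordinate by $1/\sqrt{n}$ and the configuration $\{w<0,x>0\}$ is forbidden, the process can pass between $NE$ and $SW$ only through a state with $\Hhatn=0$; hence every positive excursion of $\Hhatn$ stays wholly in $NE$ or wholly in $SW$. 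Moreover $NE$ is entered only by an $N_{NE,X,+}$-type jump out of $E$ or out of $O$; inside $NE$ the coordinate $\sWhatn$ only increases and so never leaves $NE$, so $\Hhatn=\sXhatn$ there moves only by the jumps of $N_{NE,X,+}$ and $N_{NE,X,-}$, and $NE$ is exited only by an $N_{NE,X,-}$ jump from a state with $\sXhatn=1/\sqrt{n}$. The mirror statements hold for $SW$, with the roles of $NE$, its entering $X$-up jumps, and its interior $N_{NE,X,\pm}$ moves played by $SW$, its entering $W$-down jumps out of $S$ or $O$, and $N_{SW,W,\pm}$.

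Next I would match increments. By its definition, $\Phihatn_1+\Phihatn_{10X}$ moves only at jumps of $N_{NE,X,\pm}$ (inside $NE$), of $N_{E,X,+}$ (an $NE$-entry from $E$), and of $N_{O,X,+}$ (an $NE$-entry from $O$), and is constant on every other event. Comparing event by event with $\Hhatn\,\ind_{\{(\sWhatn,\sXhatn)\in NE\}}$ --- whose only jumps are a rise $0\mapsto 1/\sqrt{n}$ at an $NE$-entry, the same $\pm 1/\sqrt{n}$ moves as $\sXhatn$ while in $NE$, and a fall $1/\sqrt{n}\mapsto 0$ at the $N_{NE,X,-}$ jump that exits $NE$ --- shows the two processes have identical jumps. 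Since $\Phihatn_1(0)+\Phihatn_{10X}(0)=0$, this yields the first displayed identity. The second follows by the same argument applied to $-\Phihatn_8-\Phihatn_{10W}$ and $SW$; the only new point is the sign: because $\Hhatn=-\sWhatn$ on $SW$, a $W$-down jump there (or an $S\to SW$ or $O\to SW$ entry) raises $\Hhatn$, and the generators $N_{SW,W,-}$, $N_{S,W,-}$, $N_{O,W,-}$ enter $\Phihatn_8$ and $\Phihatn_{10W}$ with a minus sign, so $-\Phihatn_8-\Phihatn_{10W}$ does increase at exactly those events.

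The main obstacle is the exhaustive transition check underlying the two previous paragraphs: one must verify that no event other than those listed can move $\Phihatn_1+\Phihatn_{10X}$, that each listed event moves it by exactly the increment of $\Hhatn\,\ind_{\{(\sWhatn,\sXhatn)\in NE\}}$, and above all that $NE$ is exited only by an $X$-down jump from $\sXhatn=1/\sqrt{n}$ (so that $\Hhatn\,\ind_{\{(\sWhatn,\sXhatn)\in NE\}}$ returns cleanly to $0$), which rests on the forbidden-configuration fact that $\{w<0,x>0\}$ never occurs and on $\sWhatn$ being nondecreasing while in $NE$. All of this is elementary but must be carried out with care; once it is done, the conclusion is a two-line estimate.
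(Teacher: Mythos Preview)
Your argument is correct and in fact yields a sharper statement than the paper's proof. Both approaches rest on the same combinatorial observation---that $\Xi_+^n:=\Phi_1^n+\Phi_{10X}^n$ only moves at $X$-jumps entering, inside, or exiting $NE$, and those jumps coincide with the jumps of $H^n$ while in $NE$---but they package it differently. The paper tracks positive excursions of $G^n$, shows that $\Xi_+^n$ returns to its pre-excursion value at each right endpoint $\tau_i$ (equation (\ref{5.11})), and from this extracts the inequality (\ref{5.10}); you instead match jumps globally to obtain the exact identity $\Xi_+^n=H^n\ind_{NE}-H^n(0)\ind_{NE}(0)$, from which the convergence is immediate. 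Your route is shorter and gives more information (an equality rather than a bound), and your consistency check $\Phihatn_1-\Phihatn_8+\Phihatn_{10}=\Hhatn^+-\Hhatn(0)^+$ is a nice confirmation that the split is the right one. One small slip in wording: the jump that enters $NE$ from $E$ (respectively $O$) is an $N_{E,X,+}$ (respectively $N_{O,X,+}$) jump, not an ``$N_{NE,X,+}$-type'' jump; your formula already has this right, so it is only the prose that needs tidying.
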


\begin{proof}
We prove for every $T>0$ that
\be\label{5.10}
\max_{0\leq t\leq T}\big|\Phihatn_8(t)
+\Phihatn_{10W}\big|
\leq \big|\Hhatn(0)\big|+\max_{0\leq t\leq T}
\big|\Phihatn_1(t)-\Phihatn_8(t)+\Phihatn_{10}(t)\big|.
\ee
The right-hand
side of (\ref{5.10})
converges to zero in probability by
Theorem \ref{T3.2} and (\ref{5.4}).
This gives us the second part of (\ref{5.6}).
The first part follows from the decomposition
(\ref{5.8a}) and
another application of (\ref{5.4}).

To simplify notation, for this proof we set
\begin{align*}
\Xi_+^n
&:=
\Phi_1^n+\Phi_{10X}^n\\
&=
N_{NE,X,+}\circ\lambda_1P^n_{NE}
-N_{NE,X,-}\circ\mu_0P^n_{NE}
+N_{E,X,+}\circ\lambda_1P^n_E
+N_{O,X,+}\circ\lambda_1P^n_O,\\
\Xi_-^n
&:=
\Phi_8^n+\Phi_{10W}^n\\
&=
-N_{SW,W,-}\circ\mu_1P^n_{SW}
+N_{SW,W,+}\circ\lambda_0P^n_{SW}
-N_{S,W}\circ\mu_1 P^n_S
-N_{O,W,-}\circ\mu_1P^n_O,
\end{align*}
so
\be\label{5.8}
\Phihatn_1(t)+\Phihatn_{10X}=\frac{1}{\sqrt{n}}\Xi_+^n(nt),
\quad
\Phihatn_8(t)+\Phihatn_{10W}(t)
=\frac{1}{\sqrt{n}}\Xi_-^n(nt),\quad t\geq 0.
\ee
We first consider the
case that $(G^n(0),H^n(0))\notin NE$.
We set $\tau_0=0$ and define recursively
$$
\sigma_i:=\min\big\{t\geq \tau_{i-1}:
G^n(t)>0\big\},\quad
\tau_i:=\min\big\{t\geq\sigma_i:
G^n(t)=0\big\},\quad
i=1,2,\dots.
$$
On the intervals $[\sigma_i,\tau_i)$,
$G^n$ is on a positive
excursion, so $(G^n,H^n)$
is in $NE\cup E\cup SE_+$.
For $i\geq 2$, $G^n(\sigma_i-)=0$,
so $(G^n(\sigma_i-),H^n(\sigma_i-))\notin NE$.
Because $(G^n(0),H^n(0))\notin NE$,
we have $(G^n(\sigma_1-),H^n(\sigma_1-))
\notin NE$ as well, where we adopt
the convention that the value of a process
at time $0-$ is its value at time $0$.
Because $\Xi_+^n$ is constant until
$(G^n,H^n)$ jumps into $NE$, we have
\be\label{5.11a}
\Xi^n_+(\sigma_1-)=0.
\ee

On each of the intervals $[\sigma_i,\tau_i]$,
$(G^n,H^n)$ may have excursions
into and out of $NE$.  It must enter 
$NE$ from $E\cup O$ and exit by returning to
$E\cup O$.  Between these entrance and
exit times, including at the entrance and
exit times, $\Phi^n_2$,
$\Phi^n_7$ $\Phi^n_8$, $\Phi^n_9$
and $N_{O,W,-}\circ\mu_1P^n_O$
(part of $\Phi^n_{10}$) appearing
in (\ref{4.18c})
are constant, so the jumps in 
$\Xi^n_+=\Phi_1^n+N_{O,X,+}\circ\lambda_1P^n_O$
match the jumps in $H^n$.  Since
$H^n$ is constant (zero) on $E\cup O$, the
positive jumps of $\Xi^n_+$ 
in $[\sigma_i,\tau_i)$ cancel the negative jumps,
which implies
\be\label{5.11}
\Xi^n_+(\sigma_i-)=\Xi^n_+(\tau_i),\quad
i=1,2,\dots.
\ee
Observe that $\Xi^n_+$ is constant on each interval
$[\tau_{i-1},\sigma_i)$, $i=1,2,\dots$,
and hence
\be\label{5.12}
\Xi^n_+(\tau_{i-1})=\Xi^n_+(\sigma_i-),\quad
i=1,2\dots.
\ee
From (\ref{5.11a})--(\ref{5.12}) we have
$$
\Xi_+^n(\sigma_i-)=\Xi_+^n(\tau_i)=0,\quad
i=1,2,\dots.
$$

On the other hand, the constancy of
$P^n_{SW}$, $P^n_S$ and
$N_{O,W,-}\circ\mu_1 P_O^n$ on $[\sigma_i,\tau_i]$
and the fact that these processes do not
jump at time $\sigma_i$ implies that
$\Xi^n_-$ is constant on $[\sigma_i,\tau_i]$.
In particular,
\be\label{5.13}
\Xi^n_-(\sigma_i-)=\Xi^n_-(\sigma_i)
=\Xi^n_-(\tau_i),\quad
i=1,2,\dots.
\ee

Now consider the process
$$
\Xi_0^n(t):=\left\{\begin{array}{ll}
\Xi^n_+(\sigma_i-)+\Xi^n_-(\sigma_i-)
&\mbox{ if }t\in [\sigma_i,\tau_i)\mbox{ for some }
i\geq 1,\\
\Xi^n_+(t)+\Xi^n_-(t)&\mbox{ if }
t\in[\tau_{i-1},\sigma_i)\mbox{ for some }
i\geq 1,
\end{array}\right.
$$
which is obviously continuous at each $\sigma_i$.
From (\ref{5.11}) and (\ref{5.13}),
we see that $\Xi_0^n$ is also continuous 
at each $\tau_i$.
The constancy (at the value zero) of $\Xi^n_+$ on
the intervals $[\tau_{i-1},\sigma_i)$
and the constancy of $\Xi_-^n$
(at the value $\Xi^n_-(\sigma_i-)$)
on the intervals $[\sigma_i,\tau_i)$
implies $\Xi^n_0=\Xi^n_-$.
We conclude that for each $T>0$,
\be\label{5.16}
\max_{0\leq t\leq nT}
\big|\Xi^n_-(t)\big|
=\max_{0\leq t\leq nT}
\big|\Xi^n_0(t)\big|
\leq
\max_{0\leq t\leq nT}
\big|\Xi^n_+(t)+\Xi^n_-(t)\big|,
\ee
which, according to (\ref{5.8}), implies (\ref{5.10}).

Finally, we must consider the case
$(G^n(0),H^n(0))\in NE$.
If $(G^n,H^n)$ never exits $NE$,
then $\Xi^n_-$ is identically zero
and (\ref{5.10}) holds trivially.
If $(G^n(0),H^n(0))$ exits
$NE$, then at the time of this exit,
$\Xi_+^n$ is equal to $-H^n(0)$,
a negative quantity.  From this time forward
we can apply the preceding argument,
but because of this reduction in value
of $\Xi_+^n$, we must add
$|H^n(0)|$ to the right-hand side
of (\ref{5.16}),
thereby obtaining (\ref{5.10}) after scaling.
\end{proof}

Recall the thirty scaled centered Poisson process
$\Mhatn_{\times,*,\pm}$ defined by (\ref{3.13}),
a definition we extend to include
the nine additional Poisson processes
appearing in (\ref{5.1}).  The vector of 
thirty-nine
independent scaled centered Poisson processes
converges weakly-$J_1$ to a vector whose
thirty-nine independent components are
standard Brownian motions, which
we denote $B_{\times,*,\pm}$.
Also observe from (\ref{4.39c}) 
and (\ref{4.46c}) that
\begin{align}
\Phihatn_1+\Phihatn_{10X}
&=
\Thetahatn_1+\Mhatn_{O,X,+}\circ\lambda_1\Pbarn_O
+\sqrt{n}\big(-c\Pbarn_{NE}+\lambda_1\Pbarn_E
+\lambda_1\Pbarn_O\big),\label{5.17a}\\
\Phihatn_8+\Phihatn_{10W}
&=
\Thetahatn_8-\Mhatn_{O,X,-}\circ\mu_1\Pbarn_O
+\sqrt{n}\big(c\Pbarn_{SW}-\mu_1\Pbarn_S
-\mu_1\Pbarn_O).\label{5.18a}
\end{align}

\begin{lemma}\label{L5.2}
Jointly with the convergences in 
Theorem \ref{T4.16} and Corollaries
\ref{C4.15} and \ref{C4.16}, we
have the joint convergences
\begin{align}
\sqrt{n}\big(\lambda_1\Pbarn_E-c\Pbarn_{NE}+\lambda_1\Pbarn_O\big)
&\ArrowJ1
-\Theta_1^*\circ P^{G^*}_+,\label{5.20}\\
\sqrt{n}\big(\mu_1\Pbarn_S-c\Pbarn_{SW}+\mu_1\Pbarn_O\big)
&\ArrowJ1
\Theta^*_8\circ P^{G^*}_-.\label{5.21}
\end{align}
\end{lemma}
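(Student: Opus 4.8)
The plan is to read off (\ref{5.20}) and (\ref{5.21}) directly from the identities (\ref{5.17a}) and (\ref{5.18a}), which already express $\sqrt n$ times the two relevant linear combinations of occupation times in terms of quantities whose $J_1$-limits are known. Solving (\ref{5.17a}) for that combination gives
\[
\sqrt n\big(\lambda_1\Pbarn_E-c\Pbarn_{NE}+\lambda_1\Pbarn_O\big)
=\big(\Phihatn_1+\Phihatn_{10X}\big)-\Thetahatn_1
-\Mhatn_{O,X,+}\circ\lambda_1\Pbarn_O,
\]
and solving (\ref{5.18a}) and negating gives
\[
\sqrt n\big(\mu_1\Pbarn_S-c\Pbarn_{SW}+\mu_1\Pbarn_O\big)
=-\big(\Phihatn_8+\Phihatn_{10W}\big)+\Thetahatn_8
-\Mhatn_{O,X,-}\circ\mu_1\Pbarn_O.
\]
It therefore suffices to identify the $J_1$-limits of the three summands on each right-hand side, jointly with every convergence already established, and to add them.

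First I would dispose of the $O$-region terms. By Corollary~\ref{C4.9} we have $\Pbarn_O\ArrowJ1 0$, i.e.\ $\Pbarn_O\to0$ uniformly on compact time intervals in probability, while the scaled centered Poisson processes $\Mhatn_{O,X,\pm}$ are tight with continuous (Brownian) limit and satisfy $\Mhatn_{O,X,\pm}(0)=0$; applying the random time-change lemma of \cite[Section~14]{Billingsley} — or, directly, the uniform modulus-of-continuity estimate for a $C$-tight sequence evaluated at a time change collapsing to $0$ — yields $\Mhatn_{O,X,\pm}\circ c'\Pbarn_O\ArrowJ1 0$ for every constant $c'>0$. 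Next, Lemma~\ref{L5.1} gives $\Phihatn_1+\Phihatn_{10X}\ArrowJ1 0$ and $\Phihatn_8+\Phihatn_{10W}\ArrowJ1 0$, and Corollary~\ref{C4.16} gives $\Thetahatn_1\ArrowJ1\Theta_1^*\circ P^{G^*}_+$ and $\Thetahatn_8\ArrowJ1\Theta_8^*\circ P^{G^*}_-$. Passing to the limit termwise in the two displays then produces $-\Theta_1^*\circ P^{G^*}_+$ and $\Theta_8^*\circ P^{G^*}_-$, which are exactly (\ref{5.20}) and (\ref{5.21}).

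The point that requires care — and where the bulk of the write-up goes — is jointness. Each process on the right-hand sides above is a continuous functional (a finite sum, together with time changes by the uniformly Lipschitz processes $\Pbarn_\times$) of the thirty-nine scaled centered Poisson processes and of $\big(\Ghatn,\Hhatn,(\Pbarn_\times)_{\times\in\sR}\big)$. The remark preceding Lemma~\ref{L5.1}, together with Corollary~\ref{C4.15}, supplies the joint weak-$J_1$ convergence of this entire collection to $\big((B_{\times,*,\pm}),G^*,0,(\Pbar_\times)\big)$, where $\Pbar_O=\Pbar_{SE}=0$, the other $\Pbar_\times$ are as in (\ref{4.56})--(\ref{4.57}), and the $\Theta_i^*$ are fixed continuous functionals (time changes) of the independent Brownian motions $B_{\times,*,\pm}$; the convergences $\Thetahatn_i\ArrowJ1\Theta_i^*\circ P^{G^*}_\pm$ of Corollary~\ref{C4.16} are part of this joint convergence. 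Hence the two displayed right-hand sides converge jointly with all of Theorem~\ref{T4.16} and Corollaries~\ref{C4.15} and \ref{C4.16}, making the termwise passage to the limit legitimate. The main obstacle is thus organizational rather than analytic: verifying $\Mhatn_{O,X,\pm}\circ c'\Pbarn_O\ArrowJ1 0$ and tracking the joint convergence carefully enough to justify passing to the limit in the identities; the algebra behind those identities is already contained in (\ref{4.34c}), (\ref{4.35c}), (\ref{4.46c}) and the definitions of $\Phihatn_{10X}$ and $\Phihatn_{10W}$.
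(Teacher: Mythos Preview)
Your proof is correct and follows essentially the same route as the paper: rearrange the identities (\ref{5.17a}) and (\ref{5.18a}) to isolate the occupation-time combinations, then invoke Lemma~\ref{L5.1} for the $\Phihatn$ terms and Corollary~\ref{C4.16} for the $\Thetahatn$ terms. You supply more detail than the paper (which dispatches the lemma in one sentence), in particular by making explicit that the $\Mhatn_{O,X,\pm}\circ c'\Pbarn_O$ terms vanish via $\Pbarn_O\ArrowJ1 0$ and by spelling out the jointness argument, but the argument is the same.
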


\begin{proof}
Equations (\ref{5.20}) and (\ref{5.21}) are a consequence of
equations (\ref{5.17a}) and (\ref{5.18a}),
Corollary \ref{C4.16}, and Lemma \ref{L5.1}.
\end{proof}

\begin{lemma}\label{L5.3}
Jointly with the convergences in Theorem \ref{T4.16},
Corollaries \ref{C4.15} and \ref{C4.16},
and Lemma \ref{L5.2}, we have the convergence
\be\label{5.22}
\Pi^n\ArrowJ1 
(\Theta_1^*+b\Theta_2^*)\circ P^{G^*}_+
-(b\Theta_7^*\circ
+\Theta_8^*)\circ P^{G^*}_-,
\ee
where
\begin{align*}
\Pi^n
&:=
\sqrt{n}\big[c\Pbarn_{NE}+(b\mu_1-\lambda_1)\Pbarn_E
-\lambda_2\Pbarn_{SE_+}-b(\lambda_0+\mu_0)\Pbarn_{SE}
-\lambda_2\Pbarn_{SE_-}\nonumber\\
&\qquad\qquad
+(\mu_0-\lambda_2)\Pbarn_{S}
+c\Pbarn_{SW}-(\lambda_1+\mu_1)\Pbarn_O\big].
\end{align*}
\end{lemma}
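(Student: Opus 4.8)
The plan is to rewrite $\Pi^n$, up to an $o(1)$ term, as a linear combination of processes whose joint weak-$J_1$ limits are already available, and then read off the limit. Concretely, I aim to establish the identity
\begin{align}
\Pi^n
&=
-\sqrt{n}\big(\lambda_1\Pbarn_E-c\Pbarn_{NE}+\lambda_1\Pbarn_O\big)
-\sqrt{n}\big(\mu_1\Pbarn_S-c\Pbarn_{SW}+\mu_1\Pbarn_O\big)\nonumber\\
&\qquad
+b\Thetahatn_2-b\Thetahatn_7+o(1).
\label{PiDecomp}
\end{align}
Once (\ref{PiDecomp}) is proved the lemma follows immediately: by (\ref{5.20}) the first bracketed term converges weakly-$J_1$ to $\Theta_1^*\circ P^{G^*}_+$, by (\ref{5.21}) the second to $-\Theta_8^*\circ P^{G^*}_-$, and by Corollary \ref{C4.16} we have $b\Thetahatn_2\ArrowJ1 b\Theta_2^*\circ P^{G^*}_+$ and $-b\Thetahatn_7\ArrowJ1-b\Theta_7^*\circ P^{G^*}_-$. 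Since everything in sight is a continuous functional of the single family of scaled centered Poisson processes together with $G^*$, these convergences --- and the convergence of the $o(1)$ remainder to the non-random process $0$ --- are joint, so summing them yields $\Pi^n\ArrowJ1(\Theta_1^*+b\Theta_2^*)\circ P^{G^*}_+-(b\Theta_7^*+\Theta_8^*)\circ P^{G^*}_-$, jointly with all the convergences named in the statement.

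The first step toward (\ref{PiDecomp}) is to isolate the $o(1)$ remainder. Subtracting (\ref{4.35c}) from (\ref{4.34c}) and invoking Theorem \ref{T3.2} and Assumption \ref{Assumption2} gives $\Phihatn_2-\Phihatn_7+\Phihatn_9\ArrowJ1 0$, the convergence already recorded just after (\ref{5.4}). Since (\ref{4.47c}) reads $\Thetahatn_9=\Phihatn_9-\sqrt{n}(\lambda_0+\mu_0)\Pbarn_{SE}$ and $\Thetahatn_9\ArrowJ1 0$ by Corollary \ref{C4.16}, we obtain $\Phihatn_9=\sqrt{n}(\lambda_0+\mu_0)\Pbarn_{SE}+o(1)$, and therefore
\be\label{PiAux}
\Phihatn_2-\Phihatn_7=-\sqrt{n}(\lambda_0+\mu_0)\Pbarn_{SE}+o(1).
\ee

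The second step is substitution and simplification. Equations (\ref{4.40c}) and (\ref{4.45c}) express $\Phihatn_2$ and $\Phihatn_7$ in terms of $\Thetahatn_2$, $\Thetahatn_7$ and explicit $\sqrt{n}\,\Pbarn_\times$ drift terms; using these together with (\ref{PiAux}) to rewrite $b\Thetahatn_2-b\Thetahatn_7$, substituting into the right-hand side of (\ref{PiDecomp}), and collecting the coefficient of each $\sqrt{n}\,\Pbarn_\times$, one is left with a linear combination of the $\sqrt{n}\,\Pbarn_\times$ whose coefficients agree with those in the definition of $\Pi^n$ precisely when $b(\lambda_0-\mu_1)=\lambda_2$, $b(\mu_0-\lambda_1)=\lambda_2$ and $b\lambda_1-\mu_1=\mu_0-\lambda_2$. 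All three are routine consequences of Assumption \ref{Assumption1}: the first two are immediate from the relations $\lambda_2=bc$ and $c=\lambda_0-\mu_1=\mu_0-\lambda_1$ recorded in (\ref{a6}), and the third then amounts to $b\lambda_1+\lambda_2=\mu_0+\mu_1$, which holds since $\lambda_1+c=\mu_0$, $\lambda_2=bc$ and $\mu_0+\mu_1=b\mu_0$. This proves (\ref{PiDecomp}).

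I do not expect a genuine analytic obstacle here: all of the substantial work --- tightness of $\{\Ghatn\}$, the time change to a standard Brownian motion, the identification of the fluid occupation-time limits, and, above all, Lemma \ref{L5.1} and the diffusion-scaled occupation-time limits (\ref{5.20})--(\ref{5.21}) deduced from it --- is already in place. The only point that requires care is the bookkeeping: tracking signs through the identities (\ref{4.34c}), (\ref{4.35c}), (\ref{4.40c}), (\ref{4.45c}), (\ref{4.47c}) and applying the Assumption \ref{Assumption1} reductions consistently.
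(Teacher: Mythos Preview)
Your proof is correct, but it takes a different route from the paper's. The paper proceeds by passing to the limit directly in the representations (\ref{4.51c}) and (\ref{4.52c}) of $\Hhatn$ and $|\Hhatn|$ (using Theorem \ref{T3.2} and Corollary \ref{C4.16}) to obtain two convergences
\begin{align*}
\sqrt{n}\big[c(-\Pbarn_{NE}+\Pbarn_{SE_+}+\Pbarn_{SE_-}-\Pbarn_{SW})+\cdots\big]
&\ArrowJ1(-\Theta_1^*-\Theta_2^*)\circ P^{G^*}_++(\Theta_7^*+\Theta_8^*)\circ P^{G^*}_-,\\
\sqrt{n}\big[-c(\Pbarn_{NE}+\Pbarn_{SE_+}+\Pbarn_{SE_-}+\Pbarn_{SW})+\cdots\big]
&\ArrowJ1(-\Theta_1^*+\Theta_2^*)\circ P^{G^*}_++(-\Theta_7^*+\Theta_8^*)\circ P^{G^*}_-,
\end{align*}
and then forms the linear combination $-\tfrac12(b+1)$ times the first plus $\tfrac12(b-1)$ times the second, simplifying with Assumption \ref{Assumption1} to recover $\Pi^n$ and the claimed limit. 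Your argument instead builds $\Pi^n$ out of the two expressions from Lemma \ref{L5.2} together with $b\Thetahatn_2-b\Thetahatn_7$, using the auxiliary relation $\Phihatn_2-\Phihatn_7+\Phihatn_9\ArrowJ1 0$ to control the remainder. Both approaches ultimately rest on the same information --- the vanishing of $\Hhatn$ and $|\Hhatn|$ --- but the paper's version is slightly more self-contained (it never invokes Lemma \ref{L5.2}, despite that lemma being listed among the joint convergences in the statement), while yours makes the role of Lemma \ref{L5.2} explicit and avoids having to guess the right linear combination. The algebraic verifications are of comparable length.
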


\begin{proof}
We pass to the limit in (\ref{4.51c}) and
(\ref{4.52c}), using Theorem \ref{T3.2},
Proposition \ref{P4.10}
and Corollary \ref{C4.16}, to obtain
\begin{align}
\lefteqn{\sqrt{n}\big[c\big(-\Pbarn_{NE}+\Pbarn_{SE_+}+\Pbarn_{SE_-}
-\Pbarn_{SW}
\big)+(\mu_1-\lambda_1)\big(\Pbarn_S-\Pbarn_E)
+(\lambda_0+\mu_0)\Pbarn_{SE}}\hspace{5.5cm}\nonumber\\
+(\lambda_1+\mu_1)\Pbarn_O\big]
&\ArrowJ1
(-\Theta_1^*-\Theta_2^*)\circ P^{G^*}_+
+(\Theta_7^*
+\Theta_8^*)\circ P^{G^*}_-,\label{5.23}\\
\lefteqn{\sqrt{n}\big[-c\big(\Pbarn_{NE}+\Pbarn_{SE_+}+\Pbarn_{SE_-}
+\Pbarn_{SW}\big)
+(\lambda_1+\mu_1)\big(\Pbarn_E+\Pbarn_S
+\Pbarn_O\big)
-(\lambda_0+\mu_0)\Pbarn_{SE}\big]}\hspace{5.5cm}\nonumber\\
&\ArrowJ1
(-\Theta_1^*+\Theta_2^*)\circ P^{G^*}_+
+(-\Theta_7^*
+\Theta_8^*)\circ P^{G^*}_-.\label{5.24}
\end{align}
We multiply (\ref{5.23}) by
$-\frac12(b+1)$, multiply (\ref{5.24}) by
$\frac12(b-1)$, and sum the resulting equations,
using Assumption \ref{Assumption1} to simplify,
to obtain (\ref{5.22}).
\end{proof}

\subsection{Boundedness in probability of $\sVhatn$}\label{Bdd}

\begin{theorem}\label{T5.5}
The sequence of c\`adl\`ag processes
$\{\sVhatn\}_{n=1}^{\infty}$
is bounded in probability on compact time
intervals (Definition \ref{D3.1a}).
\end{theorem}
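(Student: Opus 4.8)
The idea is to rewrite the governing equation (\ref{5.2}) so that the cancellation term becomes an order-$\sqrt n$ restoring force toward the level $\kappa_L$ of (\ref{kappa}), with an $O(1)$ remainder, and then to localize and exploit that restoring force. As a first reduction of (\ref{5.2}), note that $\sqrt n\,\lambda_2\Pbarn_{SE}$ and $\sqrt n\,\mu_0\Pbarn_O$ are $O(1)$, since each of $\sqrt n\Pbarn_{SE},\sqrt n\Pbarn_O$ is nonnegative and their sum is $O(1)$ by (\ref{4.93x}) of Proposition \ref{P5.4}; and each term $\Mhatn_{\times,V,\pm}$ composed with a nondecreasing time change bounded by $\mathrm{const}\cdot\id$ is $O(1)$ because $\Mhatn_{\times,V,\pm}=O(1)$. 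After adding and subtracting $\sqrt n\,\theta_b\kappa_L\Pbarn_{NE}(t)$ in (\ref{5.2}) and collecting these $O(1)$ contributions into a process $A^n$, the only thing left to check is that the leftover combination $\sqrt n\big[\lambda_2\Pbarn_{SE_+}+\lambda_2\Pbarn_{SE_-}+(\lambda_2-\mu_0)\Pbarn_S-c\Pbarn_{SW}-\theta_b\kappa_L\Pbarn_{NE}\big]$ is $O(1)$. Using $bc=\lambda_2$ and $\theta_b\kappa_L=\lambda_2\mu_1/\lambda_1$ (from (\ref{a6}) and (\ref{kappa})), one verifies—after cancelling the $\Pbarn_E$ terms and discarding the $\Pbarn_{SE},\Pbarn_O$ contributions, which are already $O(1)$—that this leftover equals $\tfrac{b\mu_1-\lambda_1}{\lambda_1}\,\sqrt n\big(\lambda_1\Pbarn_E-c\Pbarn_{NE}\big)-\Pi^n$ up to an $O(1)$ error, hence is $O(1)$ by (\ref{5.20}) of Lemma \ref{L5.2} and Lemma \ref{L5.3}. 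Therefore (\ref{5.2}) may be written as
\[
\sVhatn(t)=A^n(t)-\Mhatn_{NE,V,-}\!\Big(\int_0^t\theta_b\big(\sVhatn(s)\big)^+\,d\Pbarn_{NE}(s)\Big)-\sqrt n\,\theta_b\int_0^t\big[\big(\sVhatn(s)\big)^+-\kappa_L\big]\,d\Pbarn_{NE}(s),
\]
where $\{A^n\}_{n=1}^\infty$ is bounded in probability on compact time intervals.

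\emph{Localization.} Fix $T>0$ and $\ve>0$, and for $K>0$ set $\tau^n_K:=\inf\{t\ge0:|\sVhatn(t)|\ge K\}$. On $[0,\tau^n_K\wedge T]$ we have $(\sVhatn(s))^+\le K$ and $d\Pbarn_{NE}\le ds$, so the argument of $\Mhatn_{NE,V,-}$ never exceeds $\theta_b KT$; hence on $[0,\tau^n_K\wedge T]$ the cancellation martingale is bounded in absolute value by $\Delta^n_K:=\sup_{0\le u\le\theta_b KT}\big|\Mhatn_{NE,V,-}(u)\big|$, and since $u\mapsto\Mhatn_{NE,V,-}(u)$ is a martingale with $\E[\Mhatn_{NE,V,-}(u)^2]=u$, Doob's $L^2$ inequality gives $\E[(\Delta^n_K)^2]\le 4\theta_b KT$ uniformly in $n$.

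\emph{Last-exit estimate.} On $\{\tau^n_K\le T\}$ with $\sVhatn(\tau^n_K)\ge K$, put $\sigma^n:=\sup\big(\{0\}\cup\{t\le\tau^n_K:\sVhatn(t)\le\kappa_L+1\}\big)$. For $s\in(\sigma^n,\tau^n_K]$ we have $\sVhatn(s)>\kappa_L+1>\kappa_L$, so $(\sVhatn(s))^+-\kappa_L=\sVhatn(s)-\kappa_L>1$ and the drift term in the displayed equation is nonincreasing on $(\sigma^n,\tau^n_K]$; thus $\sVhatn(\tau^n_K)-\sVhatn(\sigma^n)\le 2\sup_{0\le t\le T}|A^n(t)|+2\Delta^n_K$, and since $\sVhatn(\sigma^n)\le\max\big(\sVhatn(0),\kappa_L+2\big)$ for $n\ge1$ we obtain $K\le\max(\sVhatn(0),\kappa_L+2)+2\sup_{0\le t\le T}|A^n(t)|+2\Delta^n_K$. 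Symmetrically, if $\sVhatn(\tau^n_K)\le-K$, take $\sigma^n:=\sup\big(\{0\}\cup\{t\le\tau^n_K:\sVhatn(t)\ge-1\}\big)$; then on $(\sigma^n,\tau^n_K]$ we have $\sVhatn(s)<-1<0$, so $(\sVhatn(s))^+=0$ and the drift term equals $\sqrt n\,\theta_b\kappa_L(\Pbarn_{NE}(t)-\Pbarn_{NE}(\sigma^n))\ge0$, giving $K\le|\sVhatn(0)|+2+2\sup_{0\le t\le T}|A^n(t)|+2\Delta^n_K$. In either case $\{\tau^n_K\le T\}\subseteq\{2\sup_{0\le t\le T}|A^n(t)|+2\Delta^n_K\ge K-C\}$ for a constant $C$ depending only on $\kappa_L$. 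Because $\{A^n\}$ is $O(1)$ on $[0,T]$ and $\sVhatn(0)$ converges, while $\P\{2\Delta^n_K\ge(K-C)/2\}\le 64\theta_b KT/(K-C)^2\to0$ as $K\to\infty$, we may choose $K$ and then $N$ so that $\P\{\sup_{0\le t\le T}|\sVhatn(t)|\ge K\}=\P\{\tau^n_K\le T\}<\ve$ for all $n\ge N$, which is the assertion of Definition \ref{D3.1a}.

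\emph{Main obstacle.} The delicate point is the first reduction: showing that the $\sqrt n$-scaled occupation-time combination remaining after pairing the cancellation compensator with $\sqrt n\,\theta_b\kappa_L\Pbarn_{NE}$ is bounded in probability. This is exactly where the value $\kappa_L=\lambda_2\mu_1/(\theta_b\lambda_1)$ and the constraints of Assumption \ref{Assumption1} are used: $\kappa_L$ is the unique level at which the macroscopic growth produced by limit-buy arrivals at tick $p_v$ is exactly cancelled by the mean cancellation rate $\theta_b\kappa_L$ times the occupation density of $NE$, leaving only the $O(1)$ fluctuation captured by Lemmas \ref{L5.2} and \ref{L5.3}. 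A secondary subtlety is that the cancellation term is self-referential and its compensator is of order $\sqrt n$; the localization at $\tau^n_K$ together with the sign analysis on the final excursion is what converts ``$O(1)$ input plus strong restoring drift'' into an $O(1)$ bound on $\sVhatn$.
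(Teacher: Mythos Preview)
Your argument is correct. The algebraic reduction of (\ref{5.2}) to the displayed mean-reverting form is exactly the content of the paper's equation (\ref{5.31}) (and, in more polished form, Lemma \ref{L5.8}); your identification of the leftover drift with $\tfrac{b\mu_1-\lambda_1}{\lambda_1}\sqrt n(\lambda_1\Pbarn_E-c\Pbarn_{NE})-\Pi^n$ up to $O(1)$ is precisely the manipulation in (\ref{5.28})--(\ref{5.30}).

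Where you diverge from the paper is in the analytic step. The paper treats the two directions separately: for the upper bound (Lemma \ref{L5.6}) it uses the almost-sure fact that $-N(t)+t/2$ is bounded above for a unit Poisson process $N$, splitting the cancellation compensator in half so that one half neutralizes the martingale $Z_1^n$ and the other provides the restoring drift; only \emph{after} the upper bound is in hand does the paper know the argument of $\Mhatn_{NE,V,-}$ is bounded, and then Lemma \ref{L5.7} dispatches the lower bound. You instead localize at the exit time $\tau^n_K$, which bounds the martingale's time change by $\theta_bKT$ \emph{a priori} on $[0,\tau^n_K]$, and then run the last-exit argument for both signs at once, closing with Doob's $L^2$ inequality and Chebyshev. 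Your packaging is more symmetric and avoids the Poisson law-of-large-numbers trick; the paper's route avoids the quantitative $\E[(\Delta^n_K)^2]\le 4\theta_bKT$ estimate and the resulting $K$-dependence in the tail bound. Both are short and valid. One cosmetic point: your constant $C$ should absorb $|\sVhatn(0)|$ as well, which is fine since $\sVhatn(0)\to V^*(0)$, but strictly speaking $C$ then depends on $V^*(0)$ and not only on $\kappa_L$.
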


The theorem follows from Lemmas \ref{L5.6}
and \ref{L5.7} below.

\begin{lemma}\label{L5.6}
The sequence of c\`adl\`ag processes
$\{\sVhatn\}_{n=1}^{\infty}$
is bounded above in probability on compact time
intervals.
\end{lemma}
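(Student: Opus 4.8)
The plan is to extract from the governing equation~(\ref{5.2}) an \emph{implicit} one-sided bound of mean-reverting type, namely
\[
\sVhatn(t)\ \le\ \sVhatn(0)+Z^n(t)-\sqrt n\,\theta_b\!\int_0^t\!\Big(\big(\sVhatn(s)\big)^{+}-\kappa_L\Big)\,d\Pbarn_{NE}(s),\qquad Z^n=O(1),
\]
with $\kappa_L$ as in~(\ref{kappa}), and then read off boundedness above. First I would isolate the $O(1)$ part of~(\ref{5.2}): the nine scaled centered Poisson processes $\Mhatn_{\times,V,\pm}$ converge weakly-$J_1$ to Brownian motions (Remark~\ref{R5.Brm}) and so are $O(1)$ on compacts, and by Proposition~\ref{P5.4} the nonnegative processes $\sqrt n\,\Pbarn_{SE}$ and $\sqrt n\,\Pbarn_O$ are $O(1)$. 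Collecting these and discarding the nonpositive term $-\sqrt n\,\mu_0\Pbarn_O$, equation~(\ref{5.2}) becomes
\[
\sVhatn(t)\ \le\ \sVhatn(0)+R^n(t)-\sqrt n\,\theta_b\!\int_0^t\!\big(\sVhatn(s)\big)^{+}d\Pbarn_{NE}(s)+\sqrt n\,\lambda_2\Pbarn_{SE_+}(t)+N^n(t),
\]
where $R^n=O(1)$ and $N^n:=\sqrt n\big[\lambda_2\Pbarn_{SE_-}+(\lambda_2-\mu_0)\Pbarn_S-c\,\Pbarn_{SW}\big]$.

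The technical heart is to show that the non-reversion part of the $\sqrt n$-drift equals, up to $O(1)$, the reversion term evaluated at the constant $\kappa_L$; that is,
\[
Q^n\ :=\ \sqrt n\,\lambda_2\Pbarn_{SE_+}+N^n-\sqrt n\,\theta_b\kappa_L\,\Pbarn_{NE}\ =\ O(1).
\]
By~(\ref{kappa}), $\theta_b\kappa_L=\lambda_2\mu_1/\lambda_1$, and $\lambda_2=bc$ by Remark~\ref{R2.1}. A short computation shows that adding $\Pi^n$ of Lemma~\ref{L5.3} to $Q^n$ makes the $\Pbarn_{SE_+}$, $\Pbarn_{SE_-}$, $\Pbarn_S$, $\Pbarn_{SW}$ coefficients cancel, so $Q^n+\Pi^n$ is $\sqrt n$ times a linear combination of $\Pbarn_{NE}$, $\Pbarn_E$, $\Pbarn_{SE}$, $\Pbarn_O$ only; Proposition~\ref{P5.4} disposes of $\sqrt n\,\Pbarn_{SE}$ and $\sqrt n\,\Pbarn_O$, Lemma~\ref{L5.2} (with Proposition~\ref{P5.4}) trades $\sqrt n\,c\,\Pbarn_{NE}$ for $\sqrt n\,\lambda_1\Pbarn_E$ modulo $O(1)$, and the coefficient of the surviving term $\sqrt n\,\Pbarn_E$ is then $\mu_1(b-\lambda_2/c)=0$. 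Since $\Pi^n=O(1)$ by Lemma~\ref{L5.3}, this gives $Q^n=-\Pi^n+O(1)=O(1)$. Substituting $Q^n$ back and using $\int_0^t\kappa_L\,d\Pbarn_{NE}(s)=\kappa_L\Pbarn_{NE}(t)$ yields the displayed implicit bound with $Z^n:=R^n+Q^n=O(1)$.

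To conclude, fix $T>0$. The only constituent of $Z^n$ that is not $O(1)$ irrespective of $\sVhatn$ is the cancellation martingale $\Mhatn_{NE,V,-}\!\big(\int_0^{\cdot}\theta_b(\sVhatn)^{+}d\Pbarn_{NE}\big)$, whose time change depends on $\sVhatn$; I would localize at $\tau_K:=\inf\{t:\sVhatn(t)>K\}$, so that on $[0,T\wedge\tau_K]$ this time change is at most $\theta_b K\,T$ and the martingale is bounded there by $\sup_{0\le s\le\theta_b K T}|\Mhatn_{NE,V,-}(s)|$. Now apply the ``last time at the reversion level'' estimate to the implicit bound: with $u(t):=\sup\{s\le t:\sVhatn(s)\le\kappa_L\}$ (and $u(t)=0$ if that set is empty), the integral against $d\Pbarn_{NE}$ over $(u(t),t]$ is nonpositive because $\sVhatn>\kappa_L$ there, so restarting~(\ref{5.2}) at $u(t)$ gives $\sVhatn(t)\le\sVhatn(u(t))+2\max_{0\le s\le T}|Z^n(s)|$, and $\sVhatn(u(t))\le\max(\sVhatn(0),\kappa_L+1/\sqrt n)$ since the jumps of $\sVhatn$ have size $1/\sqrt n$. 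Hence
\[
\max_{0\le t\le T\wedge\tau_K}\sVhatn(t)\ \le\ \max\!\big(\sVhatn(0),\,\kappa_L+1/\sqrt n\big)+C_n+2\sup_{0\le s\le\theta_b K T}\big|\Mhatn_{NE,V,-}(s)\big|,
\]
where $C_n=O(1)$ collects the other, state-independent, parts of $Z^n$. The right-hand side is $O_P(\sqrt K)$, while $\tau_K\le T$ forces the left-hand side to be at least $K$; so $\P\{\tau_K\le T\}$ can be made $<\ve$ for $K$ large, uniformly in large $n$, and $\{\tau_K>T\}=\{\max_{0\le t\le T}\sVhatn(t)\le K\}$. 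This is boundedness above in probability on compact time intervals (Definition~\ref{D3.1a}).

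The step I expect to be the obstacle is the middle one: the order-$\sqrt n$ pieces of the occupation-time drift cancel only because of the algebraic relations in Assumption~\ref{Assumption1}, and turning those exact-limit cancellations into the genuine bound $Q^n=O(1)$ is exactly where the diffusion-scaled occupation-time estimates (Lemmas~\ref{L5.1}--\ref{L5.3}, Proposition~\ref{P5.4}) are used; everything else is the standard localization bookkeeping, resting on the fact that the reversion in region $NE$ acts at rate $\sqrt n$, which pins $\sVhatn$ near $\kappa_L$ while the combined non-reversion drift stays $O(1)$.
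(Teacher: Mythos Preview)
Your argument is correct and, in its algebraic core, coincides with the paper's: both identify the non-reversion $\sqrt{n}$-drift in~(\ref{5.2}) as $\theta_b\kappa_L\sqrt{n}\,\Pbarn_{NE}+O(1)$ via Lemmas~\ref{L5.2}--\ref{L5.3} and Proposition~\ref{P5.4}, and both finish with a ``last time at level'' estimate. (A cosmetic point: discarding $-\sqrt{n}\mu_0\Pbarn_O$ is unnecessary, since $\sqrt{n}\,\Pbarn_O=O(1)$ by Proposition~\ref{P5.4}; including it in $R^n$ lets you work with equalities throughout and makes the restart at $u(t)$ cleaner.)

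The one genuine difference is how the cancellation martingale $\Mhatn_{NE,V,-}\!\big(\int_0^\cdot\theta_b(\sVhatn)^+d\Pbarn_{NE}\big)$ is controlled. The paper does not localize; it uses the pathwise bound $-N(t)+t/2\le M^*<\infty$ (from $N(t)/t\to 1$) to dominate $Z_1^n+\tfrac12 Z_3^n$ by $M^*/\sqrt{n}$, thereby sacrificing half the mean-reversion drift to absorb the martingale outright (see (5.32)--(5.33)). This yields an $O(1)$ bound independent of the state, at the price of a short two-case argument afterwards. Your route---localize at $\tau_K$, bound the time change by $\theta_bKT$, and use Doob's inequality to get $\sup_{[0,\theta_bKT]}|\Mhatn_{NE,V,-}|=O_P(\sqrt K)$ uniformly in $n$, then close via $K\gg\sqrt K$---is the standard stopping-time alternative and is perfectly valid; it avoids the LLN trick but requires the extra uniformity step. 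Both approaches deliver the same conclusion with comparable effort.
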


\begin{proof}
We write (\ref{5.2}) as
\be\label{5.27}
\sVhatn(t)
=\sVhatn(0)+Z_1^n(t)+Z_2^n(t)+Z_3^n(t)+Z_4^n(t),
\ee
where
\begin{align*}
Z_1^n(t)
&:=
-\Mhatn_{NE,V,-}\left(\int_0^t\theta_b\big(\sVhatn(s)\big)^+
d\Pbarn_{NE}(s)\right),\nonumber\\
Z_2^n(t)
&:=
\Mhatn_{SE_+,V,+}\circ\lambda_2\Pbarn_{SE_+}(t)
+\Mhatn_{SE,V,+}\circ\lambda_2\Pbarn_{SE}(t)
+\Mhatn_{SE_-,V,+}\circ\lambda_2\Pbarn_{SE_-}(t)\nonumber\\
&\qquad
+\Mhatn_{S,V,+}\circ\lambda_2\Pbarn_S(t)
-\Mhatn_{S,V,-}\circ\mu_0\Pbarn_S(t)
+\Mhatn_{SW,V,+}\circ\lambda_1\Pbarn_{SW}(t)\nonumber\\
&\qquad
-\Mhatn_{SW,V,-}\circ\mu_0\Pbarn_{SW}(t)
-\Mhatn_{O,V,-}\circ\mu_0\Pbarn_O(t),\nonumber\\
Z_3^n(t)
&:=
-\sqrt{n}\int_0^t\theta_b\big(\sVhatn(s)\big)^+d\Pbarn_{NE}(s),
\nonumber\\
Z_4^n(t)
&:=
\sqrt{n}\,\lambda_2\Pbarn_{SE_+}(t)
+\sqrt{n}\,\lambda_2\Pbarn_{SE}(t)
+\sqrt{n}\,\lambda_2\Pbarn_{SE_-}(t)
+\sqrt{n}(\lambda_2-\mu_0)\Pbarn_S(t)\nonumber\\
&\qquad
-\sqrt{n}\,c\Pbarn_{SW}(t)
-\sqrt{n}\,\mu_0\Pbarn_O(t).
\end{align*}

By Assumption \ref{Assumption2},  $\sVhatn(0)=O(1)$.
The convergences (\ref{4.41e}), (\ref{4.56}) 
and (\ref{4.57}) and the random time change
lemma of Section 14 of \cite{Billingsley}
imply that $Z_2^n$ has 
a continuous limit,
and hence $Z_2^n=O(1)$.  We need to deal only with
$Z_1^n$, $Z_3^n$ and $Z_4^n$.

We rewrite $Z_4^n$ as
\be\label{5.28}
Z_4^n=\sqrt{n}\big[c\Pbarn_{NE}
+(b\mu_1-\lambda_1)\Pbarn_E\big]+Z_5^n,
\ee
where
\begin{align}
Z_5^n
&:=
-\sqrt{n}\big(b(\lambda_0+\mu_0)-\lambda_2\big)\Pbarn_{SE}
-\sqrt{n}(\mu_1+\lambda_1+\mu_0)\Pbarn_O\nonumber\\
&\qquad
-\sqrt{n}\big[c\Pbarn_{NE}+(b\mu_1-\lambda_1)\Pbarn_E
-\lambda_2\Pbarn_{SE_+}-b(\lambda_0+\mu_0)\Pbarn_{SE}
-\lambda_2\Pbarn_{SE_-}\nonumber\\
&\qquad\qquad+(\mu_0-\lambda_2)\Pbarn_S
+c\Pbarn_{SW}-(\lambda_1+\mu_1)\Pbarn_O\big],\label{5.29}
\end{align}
Proposition \ref{P5.4} implies that
$\sqrt{n}[\Pbarn_{SE}+\Pbarn_O]$ has a continuous limit,
and since both $\sqrt{n}\,\Pbarn_{SE}$ and $\sqrt{n}\,\Pbarn_O$
are nondecreasing, they are bounded in
probability on compact time intervals.
According to Lemma \ref{L5.3}, the last term in (\ref{5.29})
converges to a continuous process.  Therefore, $Z_5^n=O(1)$.

The boundedness of
$\sqrt{n}\,\Pbarn_O$ and (\ref{5.20}) imply that
$\sqrt{n}(\lambda_1\Pbarn_E-c\Pbarn_{NE})=O(1)$.
Thus,
\begin{align}
\sqrt{n}\big[c\Pbarn_{NE}
+(b\mu_1-\lambda_1)\Pbarn_E]
&=
\frac{cb\mu_1}{\lambda_1}\sqrt{n}\,\Pbarn_{NE}
+\frac{b\mu_1-\lambda_1}{\lambda_1}
\sqrt{n}\big(-c\Pbarn_{NE}
+\lambda_1\overline{P}^{n}_E\big)\nonumber\\
&=
\frac{cb\mu_1}{\lambda_1}\sqrt{n}\,\Pbarn_{NE}
+O(1).\label{5.30}
\end{align}
Substituting this into (\ref{5.28})
and deriving $cb=\lambda_2$
from Assumption \ref{Assumption1}, we obtain
\be\label{5.31}
Z_4^n=\frac{\lambda_2\mu_1}{\lambda_1}
\sqrt{n}\,\Pbarn_{NE}+O(1).
\ee

To conclude we use half of $Z_3^n$
to compensate the  leading term 
on the right-hand side of (\ref{5.31})
and the other half of $Z_3^n$ to control $Z_1^n$.
For this argument, we observe first that a unit intensity
Poisson process $N$ satisfies 
$\lim_{t\rightarrow\infty}N(t)/t=1$
almost surely (\cite{KaratzasShreve}, Remark 3.10, p.\ 15),
and hence $-N+\id/2$ is bounded above by
a finite random variable.  This implies that
\be\label{5.32}
Z_1^n+Z_3^n
=\frac{1}{\sqrt{n}}\left[-N_{NE,V,-}\circ(-\sqrt{n}\,Z_3^n)
-\frac12\sqrt{n}\,Z_3^n\right]+\frac12Z_3^n
\leq \frac12 Z_3^n+\frac{1}{\sqrt{n}}O(1).
\ee
It follows from (\ref{5.27}), (\ref{5.31}),
(\ref{5.32}), and the convergences of $\sVhatn(0)$
and $Z_2^n$ that
\begin{align}
\sVhatn(t)
&\leq
\frac12 Z_3^n(t)
+\frac{\lambda_2\mu_1}{\lambda_1}\sqrt{n}\,\Pbarn_{NE}(t)
+O(1)\nonumber\\
&=
\sqrt{n}\int_0^t\left(\frac{\lambda_2\mu_1}{\lambda_1}
-\frac12\theta_b\big(\sVhatn(s)\big)^+\right)
d\Pbarn_{NE}(s)+O(1).\label{5.33}
\end{align}

We show that $\sVhatn$
is bounded in probability on compact time
intervals.  For $0\leq t\leq T$,
\be\label{5.34}
\int_0^t\left(\frac{\lambda_2\mu_1}{\lambda_1}
-\frac12\theta_b\big(\sVhatn(s)\big)^+\right)
d\Pbarn_{NE}(s)\leq 0
\ee
or else
\be\label{5.35}
\int_0^t\theta_b\big(\sVhatn(s)\big)^+d\Pbarn_{NE}(s)
\leq \frac{2\lambda_2\mu_1}{\lambda_1}\Pbarn_{NE}(t)
\leq \frac{2\lambda_2\mu_1}{\lambda_1}T.
\ee
We define
$$
\tau^n(t):=\left\{\begin{array}{ll}
t&\mbox{if }(\ref{5.34})\mbox{ holds},\\
\sup\left\{s\in[0,t]:\theta_b\big(\sVhatn(s)\big)^+\leq
2\lambda_2\mu_1/\lambda_1\right\}&\mbox{if }(\ref{5.35})
\mbox{ holds}.
\end{array}\right.
$$
Note that under condition (\ref{5.35}),
$\{s\in[0,t]: \theta_b(\sVhatn(s))^+\leq 
2\lambda_2\mu_1/\lambda_1\}\neq\emptyset$.
If (\ref{5.34}) holds, then $\sVhatn(t)$ is bounded
by the $O(1)$ term in (\ref{5.33}).  On the other hand,
if (\ref{5.35}) holds, then 
\be\label{5.36}
\sVhatn(t)\leq\sVhatn\big(\tau^n(t)\big)
+\sum_{i=1}^4\big[Z_i^n(t)-Z_i^n\big(\tau^n(t)\big)\big].
\ee
We consider the right-hand side of (\ref{5.36}).
Since the jumps in $\sVhatn$ are of size $1/\sqrt{n}$,
we have
$$
\sVhatn\big(\tau^n(t)\big)
=\frac{2\lambda_2\mu_1}{\theta_b\lambda_1}
+\frac{1}{\sqrt{n}}=O(1).
$$
Because of the bound (\ref{5.35}) on the argument
of $\Mhatn_{NE,V,-}$ appearing in the formula for $Z_1^n$,
both $Z_1^n(t)$ and $Z_1^n(\tau^n(t))$ are $O(1)$.
We observed earlier that $Z_2^n$ is $O(1)$.  
It now follows from (\ref{5.31}) that
\begin{align*}
\sVhatn(t)
&\leq
Z_3^n(t)-Z_3^n\big(\tau^n(t)\big)
+Z_4^n(t)-Z_4^n\big(\tau^n(t)\big)+O(1)\\
&=
\sqrt{n}\int_{\tau^n(t)}^t
\left(\frac{\lambda_2\mu_1}{\lambda_1}
-\theta_b\big(\sVhatn(s)\big)^+\right)
d\Pbarn_{NE}(s)+O(1)\\
&\leq
-\frac{\sqrt{n}\,\lambda_2\mu_1}{\lambda_1}
\big(\Pbarn_{NE}(t)-\Pbarn_{NE}\big(\tau^n(t)\big)\big)+O(1),
\end{align*}
with the last inequality following from that fact that
$\theta_b(\sVhatn(s)\big)^+\geq 2\lambda_2\mu_1/\lambda_1$
for $s\in[\tau^n(t),t]$.
Again we have an upper bound on $\sVhatn(t)$.
The lemma is proved.
\end{proof}

\begin{lemma}\label{L5.7}
The sequence of c\`adl\`ag processes
$\{\sVhatn\}_{n=1}^{\infty}$
is bounded below in probability on compact time
intervals
\end{lemma}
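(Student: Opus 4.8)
The plan is to leverage the decomposition (\ref{5.27}) of $\sVhatn$ together with the estimates already extracted in the proof of Lemma \ref{L5.6}. Recall from there that $\sVhatn(0)=O(1)$, that $Z_2^n$ has a continuous weak limit and so $Z_2^n=O(1)$, and, crucially, that (\ref{5.31}) holds, namely $Z_4^n=\frac{\lambda_2\mu_1}{\lambda_1}\sqrt{n}\,\Pbarn_{NE}+R^n$ with $R^n=O(1)$. Since $\Pbarn_{NE}$ is nondecreasing, $Z_4^n$ can only push $\sVhatn$ upward, modulo an $O(1)$ error. The term $Z_3^n=-\sqrt{n}\int_0^{\cdot}\theta_b(\sVhatn(s))^+\,d\Pbarn_{NE}(s)$ is the only negative driving term, but it is \emph{frozen} on every time interval on which $\sVhatn\le 0$, because there $(\sVhatn)^+=0$. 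Finally, by Lemma \ref{L5.6} the sequence $\{\sVhatn\}$ is bounded above in probability, so the nondecreasing time change $T^n(t):=\int_0^t\theta_b(\sVhatn(s))^+\,d\Pbarn_{NE}(s)$ satisfies $T^n(T)=O(1)$ for each fixed $T$, whence $Z_1^n=-\Mhatn_{NE,V,-}\circ T^n=O(1)$ by the random time change lemma applied to the weakly convergent $\Mhatn_{NE,V,-}$ evaluated at a bounded time. The structural point is thus that the only downward force is switched off exactly when $\sVhatn$ is already below zero, so there is no mechanism driving $\sVhatn$ to $-\infty$.

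To make this rigorous I would fix $T>0$ and, for each $t\in[0,T]$ with $\sVhatn(t)<0$, introduce $\sigma^n(t):=\sup\{s\in[0,t]:\sVhatn(s)\ge 0\}$ when that set is nonempty, and $\sigma^n(t):=0$ otherwise. Since the jumps of $\sVhatn$ have size $1/\sqrt{n}$, in the first case $\sVhatn(\sigma^n(t))\ge -1/\sqrt{n}$, and in the second case $\sVhatn(\sigma^n(t))=\sVhatn(0)=O(1)$; either way $\sVhatn(\sigma^n(t))$ is bounded below by an $O(1)$ quantity. Moreover $\sVhatn<0$ on $(\sigma^n(t),t]$, so $T^n$, and hence $Z_3^n$, is constant on $[\sigma^n(t),t]$. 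Subtracting (\ref{5.27}) evaluated at $\sigma^n(t)$ from (\ref{5.27}) evaluated at $t$ and using (\ref{5.31}),
\begin{align*}
\sVhatn(t)&=\sVhatn\big(\sigma^n(t)\big)
+\big[Z_1^n(t)-Z_1^n\big(\sigma^n(t)\big)\big]
+\big[Z_2^n(t)-Z_2^n\big(\sigma^n(t)\big)\big]\\
&\quad+\frac{\lambda_2\mu_1}{\lambda_1}\sqrt{n}\,\big[\Pbarn_{NE}(t)-\Pbarn_{NE}\big(\sigma^n(t)\big)\big]
+\big[R^n(t)-R^n\big(\sigma^n(t)\big)\big].
\end{align*}
The bracketed increments of $Z_1^n$, $Z_2^n$ and $R^n$ are each bounded below by $-2\sup_{0\le s\le T}|Z_1^n(s)|$, $-2\sup_{0\le s\le T}|Z_2^n(s)|$ and $-2\sup_{0\le s\le T}|R^n(s)|$, all $O(1)$, while the $\Pbarn_{NE}$ increment is $\ge 0$. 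Hence $\sVhatn(t)\ge -K^n$ for an $n$-indexed random variable $K^n=O(1)$ independent of $t$; this bound holds trivially also when $\sVhatn(t)\ge 0$, so $\inf_{0\le t\le T}\sVhatn(t)\ge -K^n$, which is precisely the asserted boundedness below in probability (Definition \ref{D3.1a}).

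The bulk of the argument is bookkeeping once the structural observation is in place, and the substantive input—that the $\sqrt{n}$-scaled occupation-time bundle $Z_4^n$ collapses, up to $O(1)$, to the nondecreasing process $\frac{\lambda_2\mu_1}{\lambda_1}\sqrt{n}\,\Pbarn_{NE}$—is already supplied by (\ref{5.31}). The one step requiring genuine care is the isolation of the cancellation term $Z_3^n$ via the last-exit time $\sigma^n(t)$, together with the verification that every error term is $O(1)$ uniformly in $t\in[0,T]$ (which in turn is where the upper bound of Lemma \ref{L5.6} is needed, to keep the time change $T^n$ bounded). I expect this bookkeeping—rather than any new estimate—to be the only delicate part.
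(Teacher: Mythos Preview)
Your proposal is correct and follows essentially the same approach as the paper: both arguments use the upper bound from Lemma \ref{L5.6} to show that $Z_1^n=O(1)$, invoke (\ref{5.31}) to reduce $Z_4^n$ to a nondecreasing term plus $O(1)$, introduce the last time before $t$ at which $\sVhatn\ge 0$ (the paper calls it $\rho^n(t)$), and then observe that the only downward drift $Z_3^n$ is frozen on $(\sigma^n(t),t]$ because $(\sVhatn)^+=0$ there. The paper's write-up is slightly more compressed---it absorbs $Z_1^n$, $Z_2^n$, and the $O(1)$ part of $Z_4^n$ into a single $O(1)$ term via (\ref{5.37}) rather than tracking them separately---but the substance is identical.
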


\begin{proof}
We return to (\ref{5.2}) and note that because $\sVhatn$
is bounded above in probability on compact time intervals
and $d\Pbarn_{NE}\leq dt$, the sequence of processes
$\{\int_0^{\cdot}\theta_b(\sVhatn)^+d\Pbarn_{NE}\}_{n=1}^{\infty}$
is bounded in probability on compact time intervals.
Therefore $\Mhatn_{NE,V,-}\circ\int_0^{\cdot}\theta_b(\sVhatn)^+d\Pbarn_{NE}=O(1)$.
In addition, the other processes in (\ref{5.2}) involving
scaled centered Poisson processes are $O(1)$.  This 
and (\ref{5.31}) permit
us to write
\begin{align}
\sVhatn
&=
\Vhatn(0)
-\sqrt{n}\int_0^{\cdot}\theta_b(\sVhatn)^+d\Pbarn_{NE}
+Z_4^n\nonumber\\
&=
-\sqrt{n}\int_0^{\cdot}\theta_b(\sVhatn)^+\,d\Pbarn_{NE}
+\frac{\lambda_2\mu_1}{\lambda_1}\sqrt{n}\,\Pbarn_{NE}
+O(1).\label{5.37}
\end{align}
Let $t\geq 0$ be given.  Defining
$$
\rho^n(t):=0\vee\sup\big\{s\in[0,t]:\sVhatn(s)\geq 0\big\},
$$
we have
$\sVhatn\big(\rho^n(t)\big)
\geq\min\{\Vhatn(0),-1/\sqrt{n}\}$.
Because
$\sVhatn(s)<0$ for $\rho^n(t)\leq s< t$, (\ref{5.37}) implies
$$
\sVhatn(t)=\sVhatn\big(\rho^n(t)\big)
+\frac{\lambda_2\mu_1}{\lambda_1}\sqrt{n}\big[
\Pbarn_{NE}(t)-\Pbarn_{NE}\big(\rho^n(t)\big)\big]+O(1)
\geq O(1).
$$
This completes the proof.
\end{proof}

\begin{lemma}\label{L5.8}
For $n=1,2,\dots,$
\be\label{5.35y}
\sVhatn(t)=\sVhatn(0)+\sqrt{n}\,\theta_b
\int_0^t\left(\kappa_L-\big(\sVhatn(s)\big)^+
\right)d\Pbarn_{NE}(s)
+C_V^n(t),
\ee
where $\kappa_L$ is defined by (\ref{kappa}) and
$\{C_V^n\}_{n=1}^{\infty}$ is bounded in probability
on compact time intervals and has the property
that every subsequence has a sub-subsequence converging
weakly-$J_1$ to a continuous limit.
\end{lemma}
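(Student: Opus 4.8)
The plan is to read off (5.35y) from the decomposition (5.27) (equivalently (5.2)) by an algebraic regrouping, and then to verify the two asserted properties of the resulting remainder using only facts already established. Since $\theta_b\kappa_L=\lambda_2\mu_1/\lambda_1$ and $Z_3^n(t)=-\sqrt{n}\,\theta_b\int_0^t(\sVhatn(s))^+\,d\Pbarn_{NE}(s)$, the right-hand side of (5.35y) equals $\sVhatn(0)+\tfrac{\lambda_2\mu_1}{\lambda_1}\sqrt{n}\,\Pbarn_{NE}+Z_3^n+C_V^n$; comparing with (5.27) forces
\[
C_V^n \;=\; Z_1^n+Z_2^n+\Big(Z_4^n-\tfrac{\lambda_2\mu_1}{\lambda_1}\sqrt{n}\,\Pbarn_{NE}\Big)\;=:\;Z_1^n+Z_2^n+R^n .
\]
This is purely formal, so I take (5.35y) as the definition of $C_V^n$ and then study $Z_1^n$, $Z_2^n$, $R^n$ separately.

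Next I would show that $Z_2^n+R^n$ converges weakly-$J_1$ along the \emph{full} sequence to a continuous limit (in particular $Z_2^n+R^n=O(1)$). For $Z_2^n$ this was already observed in the proof of Lemma~\ref{L5.6}: by (4.41e), (4.56), (4.57) and the random time change lemma, $Z_2^n$ has a continuous weak limit. For $R^n$ I would use (5.28)–(5.30) to rewrite
\[
R^n \;=\; \tfrac{b\mu_1-\lambda_1}{\lambda_1}\,\sqrt{n}\big(\lambda_1\Pbarn_E-c\Pbarn_{NE}+\lambda_1\Pbarn_O\big)\;-\;ab\lambda_0\,\sqrt{n}\big(\Pbarn_{SE}+\Pbarn_O\big)\;-\;\Pi^n ,
\]
the point being that the two coefficients $b(\lambda_0+\mu_0)-\lambda_2$ and $(b+1)\mu_1+\mu_0$ of $\sqrt{n}\,\Pbarn_{SE}$ and $\sqrt{n}\,\Pbarn_O$ that arise in $Z_5^n$ coincide (both equal $ab\lambda_0$) by Assumption~\ref{Assumption1}, so these terms combine into $\sqrt{n}(\Pbarn_{SE}+\Pbarn_O)$; then (5.20), Proposition~\ref{P5.4} and Lemma~\ref{L5.3} give that $R^n$ converges weakly-$J_1$ along the full sequence to a continuous process. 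It remains to handle $Z_1^n=-\Mhatn_{NE,V,-}\circ T^n$ with $T^n:=\int_0^{\cdot}\theta_b(\sVhatn(s))^+\,d\Pbarn_{NE}(s)$: since $d\Pbarn_{NE}\le dt$ and $\{\sVhatn\}$ is bounded in probability on compact intervals (Theorem~\ref{T5.5}), for each $T,\varepsilon$ there is $K$ with $\P\{\sup_{0\le t\le T}|\sVhatn(t)|>K\}<\varepsilon$ for all $n$, and on that event $T^n(t)\le\theta_b Kt$ on $[0,T]$, so $\sup_{0\le t\le T}|Z_1^n(t)|\le\sup_{0\le s\le \theta_b KT}|\Mhatn_{NE,V,-}(s)|=O(1)$ because $\Mhatn_{NE,V,-}$ has a continuous weak limit. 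Hence $Z_1^n=O(1)$ and therefore $C_V^n=O(1)$.

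For the sub-subsequential statement it then suffices to treat $Z_1^n$, since $Z_2^n+R^n$ already converges to a continuous limit along the full sequence. On the high-probability events above, $T^n$ restricted to $[0,T]$ is nondecreasing, vanishes at $0$, and is Lipschitz with constant $\theta_b K$, so these restrictions lie in a fixed compact subset of $C[0,T]$ by Arzelà–Ascoli; hence $\{T^n\}$ is tight in $D[0,\infty)$ with every subsequential limit supported on $C[0,\infty)$. Given a subsequence I extract a sub-subsequence along which $T^n$ converges weakly-$J_1$ to a continuous $T^*$; this is automatically joint with the full-sequence convergences $\Mhatn_{NE,V,-}\ArrowJ1 B_{NE,V,-}$ (a standard Brownian motion, Remark~\ref{R5.Brm}) and $Z_2^n+R^n\ArrowJ1 L$ (continuous). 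The random time change lemma of \cite[Section~14]{Billingsley}—which needs only the continuity of $B_{NE,V,-}$, not any identification of $T^*$—then gives $Z_1^n=-\Mhatn_{NE,V,-}\circ T^n\ArrowJ1 -B_{NE,V,-}\circ T^*$, jointly with $Z_2^n+R^n\ArrowJ1 L$; since both limits are continuous, addition is a.s.\ continuous at that pair in the $J_1$ topology, so the Continuous Mapping Theorem yields $C_V^n=Z_1^n+(Z_2^n+R^n)\ArrowJ1 -B_{NE,V,-}\circ T^*+L$, a continuous process.

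The main obstacle is exactly the clock $T^n$ in $Z_1^n$: at this stage $\sVhatn$ is only known to be $O(1)$, not convergent, so $T^n$ has no identified limit and one cannot simply pass to the limit in $\Mhatn_{NE,V,-}\circ T^n$. The resolution is to extract from the bound $\sVhatn=O(1)$ only the equicontinuity of $T^n$ on high-probability events—enough for tightness with continuous limits—and to invoke the random time change lemma in the form that requires knowing only that the driving process $B_{NE,V,-}$ is continuous. Everything else ($Z_2^n$, $R^n$, and the Brownian motions) converges along the full sequence, so the only subsequencing in the argument is this extraction for $T^n$, which is precisely why the conclusion about $C_V^n$ is phrased subsequentially.
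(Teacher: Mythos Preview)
Your proof is correct, and it diverges from the paper's in two respects worth noting.

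First, for the occupation-time remainder $R^n=Z_4^n-\tfrac{\lambda_2\mu_1}{\lambda_1}\sqrt{n}\,\Pbarn_{NE}$, you exploit the algebraic coincidence that, under Assumption~\ref{Assumption1}, the coefficients of $\sqrt{n}\,\Pbarn_{SE}$ and $\sqrt{n}\,\Pbarn_O$ arising in $Z_5^n$ (after absorbing the extra $(b\mu_1-\lambda_1)\sqrt{n}\,\Pbarn_O$) both equal $ab\lambda_0$; this lets you invoke the full-sequence limit (\ref{4.93x}) for their \emph{sum} and conclude that $R^n$, hence $Z_2^n+R^n$, converges weakly-$J_1$ along the full sequence to a continuous process. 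The paper instead argues that $\sqrt{n}\,\Pbarn_{SE}$ and $\sqrt{n}\,\Pbarn_O$ are \emph{individually} tight in $C[0,\infty)$ (each has modulus of continuity dominated by that of their sum), obtaining only the sub-subsequence property for its version of $C_V^n$. Your route is sharper here and avoids that monotonicity argument; the paper's route, however, yields the separate tightness of $\sqrt{n}\,\Pbarn_{SE}$ and $\sqrt{n}\,\Pbarn_O$ as a by-product, which is reused in Remark~\ref{R5.10y}.

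Second, you explicitly isolate the sole obstruction to full-sequence convergence of $C_V^n$ as the clock $T^n=\int_0^{\cdot}\theta_b(\sVhatn)^+\,d\Pbarn_{NE}$ inside $Z_1^n$, and dispose of it by Arzel\`a--Ascoli tightness (uniform Lipschitz bound from $\sVhatn=O(1)$) plus the random time-change lemma applied with continuous outer process $B_{NE,V,-}$. The paper's proof does not treat $Z_1^n$ (or $Z_2^n$) inside the proof of Lemma~\ref{L5.8}; it appeals to the first line of (\ref{5.37}), which already suppresses $Z_1^n+Z_2^n$ as $O(1)$, and only later, in Remark~\ref{R5.10y}, asserts that these martingale terms have the sub-subsequence property. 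Your argument fills that gap directly and makes clear why the statement of the lemma must be phrased subsequentially: everything but $Z_1^n$ converges along the full sequence. One small point of phrasing: ``automatically joint'' should be read as joint tightness of the triple $(\Mhatn_{NE,V,-},\,Z_2^n+R^n,\,T^n)$ followed by extraction, with the first two marginals pinned by full-sequence convergence---which is exactly what you do.
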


\begin{proof}
According to Proposition \ref{P5.4},
$\sqrt{n}[\Pbarn_{SE}+\Pbarn_O]$
has a continuous limit.  Because both 
$\sqrt{n}\,\Pbarn_{SE}$
and $\sqrt{n}\,\Pbarn_O$ are nondecreasing, 
each has a modulus
of continuity bounded by the modulus of continuity
of $\sqrt{n}[\Pbarn_{SE}+\Pbarn_O]$, and the 
latter converges
to zero as $n\rightarrow\infty$.  These processes
also have zero initial condition.  This implies 
tightness
in $C[0,\infty)$ of 
$\{\sqrt{n}\,\Pbarn_{SE}\}_{n=1}^{\infty}$
and of $\{\sqrt{n}\,\Pbarn_O\}_{n=1}^{\infty}$.
We observe from (\ref{5.20}) and the tightness 
in $C[0,\infty)$ of
$\sqrt{n}\{\Pbarn_O\}_{n=1}^{\infty}$ that 
$\{\sqrt{n}\big(\lambda_1\Pbarn_E
-c\Pbarn_{NE}\big)\}_{n=1}^{\infty}$
has the sub-subsequence property specified 
in the statement
of the lemma. 
The sequence $\{Z_5^n\}_{n=1}^{\infty}$
defined by (\ref{5.29}) also has this 
property because
$\{\sqrt{n}\,\Pbarn_{SE}\}_{n=1}^{\infty}$ and
$\{\sqrt{n}\,\Pbarn_O\}_{n=1}^{\infty}$ 
have the property,
and by Lemma \ref{L5.3}, 
the last term on the right-hand side
of (\ref{5.29}) has a continuous limit.  We define
$$
C_V^n:=\sqrt{n}\,\frac{b\mu_1-\lambda_1}{\lambda_1}
\big(\lambda_1\Pbarn_E-c\Pbarn_{NE}\big)
+Z_5^n.
$$
Then $Z_4^n$ in (\ref{5.28}) 
can be written as (see the first equation
in (\ref{5.30}))
$Z_4^n=\frac{\lambda_2\mu_1}{\lambda_1}\sqrt{n}\,\Pbarn_{NE}
+C_V^n$.
The lemma follows from the first equality in (\ref{5.37}).
\end{proof}

\begin{remark}\label{R5.10y}
{\rm 
In Section \ref{SubsecT6.2}
we will need the following
observations.  First of all, because
$\{\sVhatn\}_{n=1}^{\infty}$
is bounded in probability,
the $\widehat{M}^n_{NE,V,-}$ term
in (\ref{5.2}) is $O(1)$, as are
the other $\widehat{M}^n_{\times,V,\pm}$ terms.
In fact, these terms have the sub-subsequence
property specified in Lemma \ref{L5.8}.
Subtracting (\ref{5.2})
from (\ref{5.35y}), we obtain
$$
\sqrt{n}\,\theta_b\kappa_L\Pbarn_{NE}
=\sqrt{n}\big(\lambda_2\Pbarn_{SE_+}
+\lambda_2\Pbarn_{SE}+\lambda_2\Pbarn_{SE_-}
+(\lambda_2-\mu_0)\Pbarn_S
+c\Pbarn_{SW}+\mu_0\Pbarn_O\big)+C_0^n,
$$
where $C_0^n$ has the sub-subsequence property.
We saw in the proof of Lemma \ref{L5.8}
that $\sqrt{n}\,\Pbarn_{SE}$ and $\sqrt{n}\,\Pbarn_O$
have the sub-subsequence property,
so this can be simplified to
$$
\sqrt{n}\,\Pbarn_{NE}
=\sqrt{n}\left(\frac{\lambda_1}{\mu_1}\Pbarn_{SE_+}
+\frac{\lambda_1}{\mu_1}\Pbarn_{SE_-}
+\frac{\lambda_2-\mu_0}{\lambda_2\mu_1}\Pbarn_S
+\frac{\lambda_1c}{\lambda_2\mu_1}\Pbarn_{SW}\right)
+C_0^n,
$$
where $C_0^n$ is a different
process from the earlier one
but still has the sub-subsequence property.
The symmetry in Figure 4.2 permits us
to write the analogous equality
obtained by replacing $\Pbarn_{NE}$,
$\Pbarn_{SE_+}$,
$\Pbarn_{SE_-}$, $\Pbarn_S$ and $\Pbarn_{SW}$
by $\Pbarn_{SW}$, $\Pbarn_{SE_-}$, $\Pbarn_{SE_+}$,
$\Pbarn_E$ and $\Pbarn_{NE}$ respectively
and swapping $\lambda_i$ with $\mu_i$, $i=1,2,3$.
This equality is
\be\label{5.37y}
\sqrt{n}\,\Pbarn_{SW}
=\sqrt{n}\left(\frac{\mu_1}{\lambda_1}\Pbarn_{SE_-}
+\frac{\mu_1}{\lambda_1}\Pbarn_{SE_+}
+\frac{\mu_2-\lambda_0}{\mu_2\lambda_1}\Pbarn_E
+\frac{\mu_1c}{\mu_2\lambda_1}\Pbarn_{NE}\right)+
C_0^n,
\ee
where $C_0^n$ has the sub-subsequence
property specified in Lemma \ref{L5.8}. 
}
\end{remark}

\subsection{$\sVhatn$ on negative excursions of $\Ghatn$}
\label{NegExc}

\begin{remark}\label{R5.10x}
{\rm 
In Sections \ref{NegExc}--\ref{ConvVWXY}
we assume that
the Skorohod Representation Theorem has
been used to place all processes on a common
probability space so that the joint weak-$J_1$
convergence in Theorem \ref{T4.16},
Corollaries \ref{C4.15} and \ref{C4.16},
and Lemmas \ref{L5.1}, \ref{L5.2} and \ref{L5.3}
becomes convergence almost surely.
In particular, convergence of a process
to a continuous limit is convergence
uniformly on compact time intervals almost
surely.  We use this device of choosing
a convenient probability space
to show that in addition to the assumed
almost sure convergence of the processes
mentioned above, $\sVhatn$ and $\sYhatn$
converge almost surely in the $M_1$
topology on $D[0-,\infty)$
to identifiable limits $V^*$ and $Y^*$.
The result is the joint convergence
of $(\sVhatn,\sWhatn,\sXhatn,\sYhatn)$
established in Theorem \ref{T.VWXY} below.
}
\end{remark}

In this section we identify the limit
of $\sVhatn$ when $\Ghatn$ is on a negative
excursion.  We begin by establishing
convergence of negative excursion intervals of $\Ghatn$.

Recall from Theorem \ref{T4.14}
that $G^*$ is a two-speed Brownian
motion with zero initial condition
(see Assumption \ref{Assumption2}). 
Almost surely, there is no excursion
away from zero of $G^*$ that begins
at time zero.
Let $\varepsilon>0$ and a positive
integer $k$ be given, and consider the $k$-th
negative excursion
of $G^*$ whose length exceeds $\varepsilon$.  
This excursion has a left endpoint
$\Lambda$ and a right endpoint $R$.
The excursion itself is
$$
E(t)=G^*\big((t+\Lambda)\wedge R\big),
\quad t\geq 0.
$$
In particular, $E(0)=0$, $E(t)<0$
for $0<t<R-\Lambda$, and $E(t)=0$ for 
$t\geq R-\Lambda>\varepsilon$.
Consider also the $k$-th negative 
excursion
of $\Ghatn$ whose length exceeds $\varepsilon$
and that does not begin at time zero.
Denote its left endpoint $\Lambda^n$
and its right endpoint $R^n$.  The excursion
itself is
$$
E^n(t)=\Ghatn\big((t+\Lambda^n)\wedge R^n\big),
\quad t\geq 0.
$$
In particular, $\Ghatn(\Lambda^n-)=0$,
$E^n(t)<0$ for $0\leq t<R^n-\Lambda^n$,
and $E^n(t)=0$ for $t>R^n-\Lambda^n>\varepsilon$.

\begin{lemma}\label{L5.9}
We have
$\Lambda^n\rightarrow\Lambda$,
$R^n\rightarrow R$,
and $\max_{t\geq 0}\big|E^n(t)-E(t)\big|\rightarrow 0$
almost surely.
\end{lemma}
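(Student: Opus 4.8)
\emph{The plan.} I would first invoke the Skorohod coupling of Remark \ref{R5.10x} to pass to a probability space on which $\Ghatn\to G^*$ almost surely, uniformly on compact time intervals, where $G^*$ is the two-speed Brownian motion of Theorem \ref{T4.14}. I would then fix a sample point in the full-measure set on which, besides this uniform convergence, $G^*$ has the following standard properties: it is continuous; its zero set has measure zero and no isolated points (so no two of its excursion intervals abut); no excursion of $G^*$ has length exactly $\varepsilon$; and, for each endpoint $z$ of an excursion of $G^*$, on the side of $z$ lying outside that excursion $G^*$ assumes strictly positive values in every neighbourhood of $z$, while $G^*$ also assumes a strictly positive value in the open interval separating any two negative excursions. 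This last property I would extract from It\^o excursion theory: by the representation $(\ref{4.72})$, $G^*$ is a deterministic sign-dependent time change of a standard Brownian motion, so the signs of its excursions are i.i.d.\ fair coins independent of everything else, and, because the zero set has no isolated points, each of the relevant intervals meets excursions at infinitely many distinct local-time levels and hence contains a positive one almost surely. (Recurrence of $G^*$ ensures that its $k$-th negative excursion of length exceeding $\varepsilon$ exists, so $[\Lambda,R]$ is well defined.)

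\emph{Locating the excursions.} I would enumerate the negative excursions of $G^*$ of length exceeding $\varepsilon$ that do not begin at time $0$ as $[\Lambda^{(1)},R^{(1)}],\dots,[\Lambda^{(k)},R^{(k)}]=[\Lambda,R]$, necessarily with strict gaps $g_i:=\Lambda^{(i+1)}-R^{(i)}>0$, and pick $\eta>0$ smaller than $\tfrac13\min_i g_i$, than $\tfrac13(\min_i(R^{(i)}-\Lambda^{(i)})-\varepsilon)$, and than $\min_i\Lambda^{(i)}$. On each compact interval $[\Lambda^{(i)}+\eta,\,R^{(i)}-\eta]$ the continuous function $G^*$ is bounded above by a negative constant, so uniform convergence gives $\Ghatn<0$ there for all large $n$, whence that interval lies inside a single negative excursion $[\Lambda^n_i,R^n_i]$ of $\Ghatn$ with $\Lambda^n_i\le\Lambda^{(i)}+\eta$ and $R^n_i\ge R^{(i)}-\eta$. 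Choosing, via the positivity property, points $s^-_i\in(\Lambda^{(i)}-\eta,\Lambda^{(i)})$ and $s^+_i\in(R^{(i)},R^{(i)}+\eta)$ with $G^*(s^\pm_i)>0$, uniform convergence forces $\Ghatn(s^\pm_i)>0$ for large $n$, hence $\Lambda^n_i\ge\Lambda^{(i)}-\eta>0$ and $R^n_i\le R^{(i)}+\eta$. Thus for large $n$ the intervals $[\Lambda^n_1,R^n_1],\dots,[\Lambda^n_k,R^n_k]$ are disjoint, in increasing order, each of length exceeding $\varepsilon$, and none begins at time $0$.

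\emph{No spurious excursions.} The heart of the argument would be to show that for large $n$ these are the only negative excursions of $\Ghatn$ of length exceeding $\varepsilon$, not beginning at $0$, whose left endpoint is at most $R-\eta$. If $[a_n,b_n]$ were another such, then along a subsequence $a_n\to a$ and $b_n\to b$ with $b-a\ge\varepsilon$, and from $\Ghatn\le0$ on $[a_n,b_n]$ together with uniform convergence one gets $G^*\le0$ on $[a,b]$. Since the zero set of $G^*$ has measure zero, $(a,b)$ meets a negative excursion, and it cannot meet two of them (that would force a positive value between them, contradicting $G^*\le0$ on $[a,b]$), so $(a,b)$ lies in a single negative excursion $[\Lambda',R']$ of $G^*$ of length $\ge b-a\ge\varepsilon$, hence, as no excursion has length exactly $\varepsilon$, of length $>\varepsilon$; thus $[\Lambda',R']=[\Lambda^{(i)},R^{(i)}]$ for some $i$, and $a_n\to a\le R-\eta$ forces $i\le k$. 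But then $[a_n,b_n]$ and $[\Lambda^n_i,R^n_i]$ both contain $[\Lambda^{(i)}+\eta,R^{(i)}-\eta]$ for large $n$, so they coincide — contradicting the choice of $[a_n,b_n]$.

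\emph{Conclusion and the obstacle.} This identifies the $k$-th long negative excursion of $\Ghatn$ (not beginning at $0$) as $[\Lambda^n_k,R^n_k]$ for all large $n$, so $|\Lambda^n-\Lambda|\le\eta$ and $|R^n-R|\le\eta$; letting $\eta\downarrow0$ yields $\Lambda^n\to\Lambda$ and $R^n\to R$. Then $\max_{t\ge0}|E^n(t)-E(t)|\to0$ follows by writing $E^n(t)-E(t)=\Ghatn((t+\Lambda^n)\wedge R^n)-G^*((t+\Lambda)\wedge R)$ and using the uniform convergence $\Ghatn\to G^*$ on $[0,R-\Lambda+1]$ and the uniform continuity of $G^*$ there. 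The main obstacle, I expect, is precisely the ``no spurious excursions'' step: uniform convergence of $\Ghatn$ reproduces the interior of each $G^*$-excursion for free but controls neither how far a $\Ghatn$-excursion may spill past the $G^*$-zeros bounding it nor the possible appearance of extra $\Ghatn$-excursions lingering near level $0$, and both are tamed only through the fine excursion structure of $G^*$ — positivity just outside each excursion endpoint and between consecutive negative excursions, together with the absence of a $G^*$-excursion of length exactly $\varepsilon$.
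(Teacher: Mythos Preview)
Your proof is correct and follows essentially the same approach as the paper: both arguments work pathwise after the Skorohod coupling, exploit the zero-crossing property of the two-speed Brownian motion $G^*$ to pin each $\Ghatn$-excursion near the corresponding $G^*$-excursion, and then use a subsequence compactness argument together with the fact that a.s.\ no $G^*$-excursion has length exactly $\varepsilon$ to rule out spurious long excursions. The paper packages the two halves as separate $\limsup$ and $\liminf$ inequalities on $\ell_k^n,r_k^n$ rather than via your explicit $\eta$-bijection plus ``no spurious excursions'' step, but the underlying ideas coincide.
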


\begin{proof}
Given a path $g$ of the two-speed Brownian
motion $G^*$
and a sequence of paths $\{g^n\}_{n=1}^{\infty}$
converging to $g$ uniformly on compact time
intervals, we let $\ell_i$ and $r_i$, $i=1,2,\dots,k$,
denote in increasing
order the respective left and right endpoints
of the first $k$ negative excursions of $g$
whose length exceeds $\varepsilon$,
and we let $\ell_i^n$ and $r_i^n$ denote
in increasing order the respective left and right endpoints
of the first $k$ negative of excursions of $g^n$
whose length exceeds $\varepsilon$ and that
do not begin at time zero.  Being the path
of a two-speed Brownian motion, $g$ crosses
zero at $\ell_i$ and $r_i$.  Because of
the uniform convergence on compact
time intervals of $g_n$ to $g$,
we can find $\widehat{\ell}_i^n$ and $\widehat{r}_i^n$
such that $\widehat{\ell}_i^n\rightarrow\ell_i$
and $\widehat{r}_i^n\rightarrow r_i$ and such
that $g_n(t)<0$ for 
$t\in(\widehat{\ell}_i^n,\widehat{r}_i^n)$
and $g_n(\widehat{\ell}_i^n)=g_n(\widehat{r}_i^n)=0$
for $i=1,\dots,k$.
Therefore, for sufficiently large $n$,
$\widehat{\ell}_k^n$ and $\widehat{r}_k^n$
are the respective left and right endpoints
of at least the $k$-th negative excursion
of $g_n$ whose length exceeds $\varepsilon$
and that does not begin at time zero, which
implies that $\ell_k^n\leq\widehat{\ell}_k^n$
and $r_k^n\leq \widehat{r}_k^n$
for $n$ sufficiently large. 
We conclude that
\be\label{5.39}
\limsup_{n\rightarrow\infty}\ell_k^n
\leq\lim_{n\rightarrow\infty}
\widehat{\ell}_k^n=\ell_k,\quad
\limsup_{n\rightarrow\infty}r_k^n
\leq\lim_{n\rightarrow\infty}
\widehat{r}_k^n=r_k.
\ee
In addition, for 
$N$ sufficiently large, the sequence
$\{(\ell_i^n,r_i^n)_{i=1,2,\dots,k}\}_{n=N}^{\infty}$
is bounded.

We may now
select a subsequence of
$\{(\ell_i^n,r_i^n)_{i=1,2,\dots,k}\}_{n=N}^{\infty}$
that converges to a limit
$(\widetilde{\ell}_i, \widetilde{r}_i)_{i=1,2,\dots,k}$.
We may choose this subsequence so that
\be\label{5.40}
\widetilde{\ell}_k=\liminf_{n\rightarrow\infty}\ell_k^n,
\ee
or we may choose it so that
\be\label{5.41}
\widetilde{r}_k=\liminf_{n\rightarrow\infty}r_k^n.
\ee  
To simplify
notation, we assume convergence of the full
sequence.
Because $g_n<0$ on $(\ell_i^n,r_i^n)$,
we have $g\leq 0$ on
$[\widetilde{\ell}_i,\widetilde{r}_i]$.
Being the path of a two-speed Brownian
motion, $g$ must be strictly negative
on $(\widetilde{\ell}_i,\widetilde{r}_i)$.
Since the probability that $g$ has a negative
excursion of length exactly $\varepsilon$
is zero, and $\widetilde{r}_i-\widetilde{\ell}_i
\geq\lim_{n\rightarrow\infty}
(r_i^n-\ell_i^n)\geq\varepsilon$,
we must have
$\widetilde{r}_i-\widetilde{\ell}_i>\varepsilon$.
We conclude that $g$ has
at least $k$ negative excursions whose
length exceeds $\varepsilon$ by time
$\widetilde{r}_k$, and hence,
$\ell_k\leq\widetilde{\ell}_k$,
and $r_k\leq\widetilde{r}_k$.
When we choose the subsequence so that
(\ref{5.40}) holds, we have
\be\label{5.42}
\ell_k\leq\liminf_{n\rightarrow\infty}\ell_k^n,
\ee
and when we choose it so that (\ref{5.41}) holds, we have
\be\label{5.43}
r_k\leq \liminf_{n\rightarrow\infty}r_k^n.
\ee
Combining (\ref{5.42}) and (\ref{5.43})
with (\ref{5.39}), we conclude that
$\Lambda^n\rightarrow\Lambda$
and $R_n\rightarrow R$
almost surely.  The assertion
$\max_{t\geq 0}|E^n(t)-E(t)|\rightarrow 0$
follows from the uniform convergence of $g_n$
to $g$ on compact time intervals.
\end{proof}

In order to proceed, we need additional notation.
Recalling the independent Brownian motions (\ref{t1})--(\ref{t8}),
we define two independent Brownian motions
$$
Z_+:=b\Theta_1^*+\Theta_2^*+\Theta_3^*,\quad
Z_-:=-\Theta_6^*-\Theta_7^*-a\Theta_8^*.
$$
Observe from Proposition \ref{P5.4} that
\begin{align}
G^*
&
=Z_+\circ P_+^{G^*}
-Z_-\circ P_-^{G^*},\label{5.44y}\\
\big|G^*\big|
&=
Z_+\circ P_+^{G^*}
+Z_-\circ P_-^{G^*}
+\Gamma\big(Z_+\circ P_+^{G^*}
+Z_-\circ P_-^{G^*}\big).\nonumber
\end{align}
Using Remark \ref{R4.17} and Assumption 
\ref{Assumption1}, one can verify that
$$
\langle Z_+,Z_+\rangle=\sigma_+^2\id,\quad
\langle Z_-,Z_-\rangle=\sigma_-^2\id,
$$
where $\sigma_{\pm}$ are given by (\ref{sigmaplus})
and (\ref{sigmaminus}).  We next define constants
\be\label{5.44a}
\alpha_+:=
\frac{b\langle\Theta_1^*,\Theta_1^*\rangle
+b\langle\Theta_2^*,\Theta_2^*\rangle}
{\langle Z_+, Z_+\rangle},\quad
\alpha_-:=
\frac{b\langle\Theta_7^*,\Theta_7^*\rangle
+a\langle\Theta_8^*,\Theta_8^*\rangle}
{\langle Z_-,Z_-\rangle}
=\frac{2(\lambda_1+\mu_1)}{\sigma_-^2}
=-\frac{\rho\sigma_+}{\sigma_-}
\ee
(see (\ref{rho}))
and Brownian motions
$$
\Delta_+:=\Theta_1^*+b\Theta_2^*-\alpha_+Z_+,\quad
\Delta_-:=-b\Theta_7^*-\Theta_8^*-\alpha_-Z_-.
$$

\begin{lemma}\label{L6.2}
The Brownian motions $Z_+$, $Z_-$, $\Delta_+$, $\Delta_-$
are independent.
\end{lemma}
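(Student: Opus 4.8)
The plan is to leverage Remark \ref{R4.17}, which tells us that the six Brownian motions $\Theta_1^*,\Theta_2^*,\Theta_3^*,\Theta_6^*,\Theta_7^*,\Theta_8^*$ are independent and gives their (deterministic) quadratic variations. Note that $Z_+$ and $\Delta_+$ are fixed linear combinations of $\Theta_1^*,\Theta_2^*,\Theta_3^*$ alone, while $Z_-$ and $\Delta_-$ are fixed linear combinations of $\Theta_6^*,\Theta_7^*,\Theta_8^*$ alone. Since these two triples are independent, the pair $(Z_+,\Delta_+)$ is independent of the pair $(Z_-,\Delta_-)$. It therefore remains only to establish the two within-pair independences $Z_+\perp\Delta_+$ and $Z_-\perp\Delta_-$; mutual independence of all four processes then follows from the elementary fact that if $(Z_+,\Delta_+)$ and $(Z_-,\Delta_-)$ are independent, $Z_+\perp\Delta_+$, and $Z_-\perp\Delta_-$, then the joint law of $(Z_+,Z_-,\Delta_+,\Delta_-)$ factors completely.

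To get $Z_+\perp\Delta_+$, I would argue that $(Z_+,\Delta_+)$ is a jointly Gaussian continuous martingale vanishing at time $0$ (each component is a fixed linear combination of the time-changed independent Brownian motions $B_{\times,*,\pm}$ appearing in (\ref{t1})--(\ref{t3}), so the pair is Gaussian, and it inherits the martingale property), and for such a pair independence is equivalent to the vanishing of the deterministic cross-variation $\langle Z_+,\Delta_+\rangle$, via $\E[Z_+(s)\Delta_+(t)]=\langle Z_+,\Delta_+\rangle_{s\wedge t}$. The cross-variation is then computed directly: independent continuous martingales have zero cross-variation, so $\langle Z_+,\Theta_1^*\rangle=b\langle\Theta_1^*,\Theta_1^*\rangle$ and $\langle Z_+,\Theta_2^*\rangle=\langle\Theta_2^*,\Theta_2^*\rangle$, whence from $\Delta_+=\Theta_1^*+b\Theta_2^*-\alpha_+Z_+$,
$$
\langle Z_+,\Delta_+\rangle
=b\langle\Theta_1^*,\Theta_1^*\rangle+b\langle\Theta_2^*,\Theta_2^*\rangle-\alpha_+\langle Z_+,Z_+\rangle,
$$
which is exactly $0$ by the definition of $\alpha_+$ in (\ref{5.44a}). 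The argument for $Z_-\perp\Delta_-$ is the mirror image: $\langle Z_-,\Theta_7^*\rangle=-\langle\Theta_7^*,\Theta_7^*\rangle$, $\langle Z_-,\Theta_8^*\rangle=-a\langle\Theta_8^*,\Theta_8^*\rangle$, so $\langle Z_-,\Delta_-\rangle=b\langle\Theta_7^*,\Theta_7^*\rangle+a\langle\Theta_8^*,\Theta_8^*\rangle-\alpha_-\langle Z_-,Z_-\rangle=0$ by the definition of $\alpha_-$.

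There is no real obstacle here; the only thing needing a little care is the implication ``zero cross-variation $\Rightarrow$ independent,'' which rests on the joint Gaussianity of each pair together with the identity $\E[M(s)N(t)]=\langle M,N\rangle_{s\wedge t}$ for continuous $L^2$-martingales $M,N$ vanishing at $0$ with deterministic cross-variation. Equivalently, one can phrase the whole proof as: the four-dimensional Gaussian process $(Z_+,Z_-,\Delta_+,\Delta_-)$ has covariance function $\E[X_i(s)X_j(t)]=\langle X_i,X_j\rangle_{s\wedge t}$, and all six off-diagonal cross-variations vanish — the four linking a ``$+$'' process to a ``$-$'' process because they are built from disjoint families of the $B_{\times,*,\pm}$ (Remark \ref{R4.17}), and the two within-pair ones by the $\alpha_\pm$ computation above — so the covariance is diagonal and, by Gaussianity, the four coordinate processes are mutually independent.
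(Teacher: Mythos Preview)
Your proposal is correct and follows exactly the approach indicated in the paper, whose proof consists of the single sentence ``The claimed independence is established by computing cross variations.'' You have simply filled in the details of that cross-variation computation and made explicit the Gaussian reasoning that turns vanishing cross-variation into independence.
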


\begin{proof}
The claimed independence is established
by computing cross variations.
\end{proof}

\begin{proposition}\label{P5.10}
With $\Psi$ defined by
(\ref{2.24b}), we have
$G^*=\Psi(Z_+,Z_-)$
and the pair of Brownian motions
$(\Delta_+,\Delta_-)$ is independent
of $G^*$.
\end{proposition}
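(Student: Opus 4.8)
The plan is to treat the two assertions in turn. The identity $G^*=\Psi(Z_+,Z_-)$ will follow by recognizing that $G^*$ solves the implicit equation that defines $\Psi$ and appealing to uniqueness; the independence of $(\Delta_+,\Delta_-)$ from $G^*$ is then immediate once one observes that $G^*$ is a measurable functional of $(Z_+,Z_-)$ alone.

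For the first assertion, recall from Proposition \ref{P5.4}, as recorded in (\ref{5.44y}), the implicit relation $G^*=Z_+\circ P_+^{G^*}-Z_-\circ P_-^{G^*}$, where $P_\pm^{G^*}(t)=\int_0^t\ind_{\{\pm G^*(s)>0\}}\,ds$, and recall that $G^*$ is continuous and spends zero Lebesgue time at the origin (Proposition \ref{P4.8}, Corollary \ref{C4.13}). The map $\Psi$ of (\ref{2.24b}) is defined in Appendix \ref{TwoSpeedBrMot} precisely so that, for continuous $z_+,z_-$ vanishing at $0$, $\Psi(z_+,z_-)$ is the unique continuous path $g$ with $g(0)=0$, spending no Lebesgue time at the origin, that satisfies $g=z_+\circ P_+^g-z_-\circ P_-^g$. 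Since $G^*$ is such a path with $z_\pm=Z_\pm$, uniqueness yields $G^*=\Psi(Z_+,Z_-)$. (The identities $\langle Z_+,Z_+\rangle=\sigma_+^2\id$, $\langle Z_-,Z_-\rangle=\sigma_-^2\id$ noted before Lemma \ref{L6.2}, together with the independence of $Z_+$ and $Z_-$, confirm via the characterization in Appendix \ref{TwoSpeedBrMot} that this path is a two-speed Brownian motion with speeds $\sigma_\pm^2$, in agreement with Corollary \ref{C4.13}; but they are not used in the identification itself.)

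For the second assertion, observe that $\Psi$ is a measurable functional $C[0,\infty)^2\to C[0,\infty)$ (continuous in the $J_1$ topology, by Appendix \ref{TwoSpeedBrMot}), so $G^*=\Psi(Z_+,Z_-)$ is $\sigma(Z_+,Z_-)$-measurable. By Lemma \ref{L6.2} the processes $Z_+,Z_-,\Delta_+,\Delta_-$ are mutually independent: they are linear combinations of the independent Brownian motions $\Theta_1^*,\Theta_2^*,\Theta_3^*,\Theta_6^*,\Theta_7^*,\Theta_8^*$ of Remark \ref{R4.17}, hence jointly Gaussian, and Lemma \ref{L6.2} checks that all their mutual cross variations vanish, so their independence is genuine mutual independence, not merely pairwise. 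In particular the pair $(Z_+,Z_-)$ is independent of the pair $(\Delta_+,\Delta_-)$, and therefore $G^*$, being a measurable function of $(Z_+,Z_-)$, is independent of $(\Delta_+,\Delta_-)$.

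The one point requiring care is the first step: one must be certain that the relation in (\ref{5.44y}), together with the vanishing-occupation-time property, pins down $G^*$ uniquely among continuous paths built from $(Z_+,Z_-)$, i.e., that (\ref{2.24b}) is set up so that $\Psi(Z_+,Z_-)$ is exactly that path. This is the existence-and-uniqueness content of the two-speed Brownian motion construction in Appendix \ref{TwoSpeedBrMot}, so the argument here reduces to invoking it correctly; the second step is then routine.
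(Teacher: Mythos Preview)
Your proof is correct and follows essentially the same route as the paper: invoke the appendix result (Lemma~\ref{TS.T.7}) on the implicit relation~(\ref{5.44y}) to get $G^*=\Psi(Z_+,Z_-)$, then use Lemma~\ref{L6.2} and measurability of $\Psi$ to conclude independence. Two small inaccuracies worth correcting: first, $\Psi$ is not \emph{defined} as the unique path solving $g=z_+\circ P_+^g-z_-\circ P_-^g$; it is defined by the explicit pathwise formulas~(\ref{2.24})--(\ref{2.24b}), and the identification of any solution of~(\ref{2.38x}) with $\Psi(Z_+,Z_-)$ is the content of Lemma~\ref{TS.T.7}, whose proof genuinely uses that $Z_+,Z_-$ are independent Brownian motions (the ``no simultaneous flat spots'' argument), so the uniqueness is not purely pathwise for arbitrary continuous inputs as you suggest. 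Second, the appendix asserts only measurability of $\Psi$ (citing \cite{Yu}), not $J_1$-continuity; measurability is all you need for the independence argument.
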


\begin{proof}
We invoke Lemma \ref{TS.T.7},
using (\ref{5.44y}) in place of (\ref{2.38x}),
to conclude $G^*=\Psi(Z_+,Z_-)$.
Being independent of $(Z_+,Z_-)$,
the pair $(\Delta_+,\Delta_-)$ is independent
of $\Psi(Z_+,Z_-)$.
\end{proof}

We may now rewrite (\ref{5.22}) as
\be\label{5.47}
\Pi^n\ArrowJ1
\big(\alpha_+Z_+\circ P^{G^*}_+
+\alpha_-Z_-\circ P^{G^*}_-\big)
+\big(\Delta_+\circ P_+^{G^*}
+\Delta_-\circ P_-^{G^*}\big),
\ee
and this convergence is joint with the convergences in
Corollaries \ref{C4.15} and \ref{C4.16}
and Lemma \ref{L5.2}.
Finally, using the Brownian motions
in Remark \ref{R5.Brm}, we define the Brownian motion
\begin{align*}
\Theta_0^*
&:=
B_{SE_-,V,+}\circ\frac{\lambda_1\lambda_2}
{\mu_0+\mu_1}\id
+B_{S,V,+}\circ\frac{c\lambda_2}{\mu_0+\mu_1}\id
-B_{S,V,-}\circ\frac{c\mu_0}{\mu_0+\mu_1}\id
\nonumber\\
&\qquad
+B_{SW,V,+}\circ\frac{\lambda_1\mu_1}{\mu_0+\mu_1}\id
-B_{SW,V,-}\circ\frac{\mu_0\mu_1}{\mu_0+\mu_1}\id.
\end{align*}
Because of Proposition \ref{P4.10}, 
the limit of
\begin{align*}
\Thetahatn_0
&:=
\Mhatn_{SE_-,V,+}\circ\lambda_2\Pbarn_{SE_-}
+\Mhatn_{S,V,+}\circ\lambda_2\Pbarn_S
-\Mhatn_{S,V,-}\circ\mu_0\Pbarn_S\nonumber\\
&\qquad
+\Mhatn_{SW,V,+}\circ\lambda_1\Pbarn_{SW}
-\Mhatn_{SW,V,-}\circ\mu_0\Pbarn_{SW}
\end{align*}
is $\Theta_0^*\circ P_-^{G^*}$,
i.e.,
$\Thetahatn_0\ArrowJ1\Theta^*_0\circ P_-^{G^*}$.
This convergence is joint with the convergences
in Corollaries \ref{C4.15} and \ref{C4.16},
Lemma \ref{L5.2}, and (\ref{5.47}).
The Brownian motion $\Theta_0^*$
is independent of the independent
Brownian motions $\Theta_i^*$, $i=1,\dots,8$,
in Corollary \ref{C4.16}.  Using Assumption
\ref{Assumption1}, we compute the quadratic
variation
$\langle\Theta^*_0,\Theta^*_0\rangle
=2\mu_0\id/a$.

We use the Skorohod Representation Theorem to
put all these processes on a common probability
space so the convergences become almost sure
uniformly on compact time intervals.
When $G^n$ is on a negative excursion,
$\Pbarn_{NE}$, $\Pbarn_{SE_+}$,
$\Pbarn_{SE}$ and $\Pbarn_O$
in formula (\ref{5.2}) are constant.  On
such an excursion, (\ref{5.2}) implies
\be\label{5.44}
d\sVhatn=d\Thetahatn_0
+\sqrt{n}\big[\lambda_2 d\Pbarn_{SE_-}
+(\lambda_2-\mu_0)\Pbarn_S
-c\Pbarn_{SW}\big].
\ee
The scaled centered Poisson processes
$\Mhatn_{\times,V,\pm}$ appearing
in $\Thetahatn_0$ are independent
of $\Ghatn$, and hence independent of the
beginning and ending times of the excursion
and the excursion itself.
Therefore, $\Thetahatn_0$ is independent
of these quantities, and we know its
limit is $\Theta^*_0$.
The remaining term in $d\sVhatn$,
$\sqrt{n}\big[\lambda_2 d\Pbarn_{SE_-}
+(\lambda_2-\mu_0)\Pbarn_S
-c\Pbarn_{SW}\big]$,
is more difficult.  It is not independent
of $\Ghatn$, and hence depends on the fact
that we are observing it during a negative
excursion of $\Ghatn$.  

\begin{proposition}\label{P5.12}
Let $\varepsilon>0$ and a positive integer
$k$ be given, and let $\Lambda^n$,
$R^n$, $E^n$, $\Lambda$, $R$ and $E$
be as in Lemma \ref{L5.9}.  Then
\begin{align}
\Pi^n\big((\Lambda^n+\cdot\,)\wedge R^n\big)-\Pi^n(\Lambda^n)
&\rightarrow
\alpha_-\Big[Z_-\circ
P^{G^*}_-\big((\Lambda+\cdot\,)\wedge R\big)
-Z_-\circ P^{G^*}_-(\Lambda)\Big]\nonumber\\
&\qquad
+\big[\Delta_-\circ
P^{G^*}_-\big((\Lambda+\cdot\,)\wedge R\big)
-\Delta_-\circ P^{G^*}_-(\Lambda)\big],\label{5.50}\\
\sVhatn\big((\Lambda^n+\cdot\,)\wedge R^n\big)
-\sVhatn(\Lambda^n)
&\rightarrow
\Big[\Theta^*_0\circ P^{G^*}_-
\big((\Lambda+\cdot\,)\wedge R\big)
-\Theta^*_0\circ P^{G^*}_-(\Lambda)\Big]\nonumber\\
&\qquad
-\Big[\Delta_-\circ P^{G^*}_-
\big((\Lambda+\cdot\,)\wedge R\big)
-\Delta_-\circ P^{G^*}_-(\Lambda)\Big]
+\alpha_- E
\label{5.51}
\end{align}
almost surely on compact time intervals.
\end{proposition}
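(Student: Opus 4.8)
The plan is to establish (\ref{5.50}) first and then deduce (\ref{5.51}) from it together with the evolution equation (\ref{5.44}). Throughout I work on the common probability space of Remark \ref{R5.10x}, so that the weak-$J_1$ limits recorded in Corollaries \ref{C4.15} and \ref{C4.16}, in Lemma \ref{L5.2}, in (\ref{5.47}), and in the convergence $\Thetahatn_0\ArrowJ1\Theta^*_0\circ P^{G^*}_-$ become almost sure convergence, uniform on compact time intervals, to continuous limits. Lemma \ref{L5.9} supplies $\Lambda^n\to\Lambda$, $R^n\to R$, and $\max_{t\geq 0}|E^n(t)-E(t)|\to 0$, whence $(\Lambda^n+t)\wedge R^n\to(\Lambda+t)\wedge R$ uniformly for $t$ in a compact set, the limiting times lying in $[\Lambda,R]$.

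For (\ref{5.50}), by (\ref{5.47}) the process $\Pi^n$ converges uniformly on compacts to the continuous process $\Pi^*:=\alpha_+Z_+\circ P^{G^*}_++\alpha_-Z_-\circ P^{G^*}_-+\Delta_+\circ P^{G^*}_++\Delta_-\circ P^{G^*}_-$. Evaluating at the convergent, eventually bounded times $\Lambda^n$ and $(\Lambda^n+t)\wedge R^n$ and using the continuity of $\Pi^*$ gives $\Pi^n((\Lambda^n+t)\wedge R^n)-\Pi^n(\Lambda^n)\to\Pi^*((\Lambda+t)\wedge R)-\Pi^*(\Lambda)$ uniformly for $t$ in compacts. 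Since $G^*<0$ on $(\Lambda,R)$, the occupation time $P^{G^*}_+$ is constant on $[\Lambda,R]$, hence so are $Z_+\circ P^{G^*}_+$ and $\Delta_+\circ P^{G^*}_+$; those two terms of $\Pi^*$ contribute nothing to the increment, which is therefore exactly the right-hand side of (\ref{5.50}).

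For (\ref{5.51}), the crux is that on a negative excursion of $\Ghatn$ the drift term in (\ref{5.44}) is precisely the negative of the increment of $\Pi^n$. Inspecting (\ref{3.17a}) region by region shows that $\Ghatn(s)<0$ exactly when $(\sWhatn(s),\sXhatn(s))\in SW\cup S\cup SE_-$, so on $(\Lambda^n,R^n)$ the occupation times $\Pbarn_{NE}$, $\Pbarn_E$, $\Pbarn_{SE_+}$, $\Pbarn_{SE}$, $\Pbarn_O$ are constant; hence the corresponding terms in $\Pi^n$ drop out, and on that interval $d\Pi^n=\sqrt{n}\big[-\lambda_2\,d\Pbarn_{SE_-}+(\mu_0-\lambda_2)\,d\Pbarn_S+c\,d\Pbarn_{SW}\big]$, which is the negative of the $\sqrt{n}$-bracket $\sqrt{n}\big[\lambda_2\,d\Pbarn_{SE_-}+(\lambda_2-\mu_0)\,d\Pbarn_S-c\,d\Pbarn_{SW}\big]$ in (\ref{5.44}). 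Integrating (\ref{5.44}) from $\Lambda^n$ therefore gives
$$
\sVhatn\big((\Lambda^n+t)\wedge R^n\big)-\sVhatn(\Lambda^n)
=\big[\Thetahatn_0\big((\Lambda^n+t)\wedge R^n\big)-\Thetahatn_0(\Lambda^n)\big]
-\big[\Pi^n\big((\Lambda^n+t)\wedge R^n\big)-\Pi^n(\Lambda^n)\big].
$$
Letting $n\to\infty$, the $\Thetahatn_0$-bracket tends to $\Theta^*_0\circ P^{G^*}_-((\Lambda+t)\wedge R)-\Theta^*_0\circ P^{G^*}_-(\Lambda)$ and, by the preceding paragraph, the $\Pi^n$-bracket tends to the right-hand side of (\ref{5.50}). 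It remains to identify $-\alpha_-\big[Z_-\circ P^{G^*}_-((\Lambda+t)\wedge R)-Z_-\circ P^{G^*}_-(\Lambda)\big]$ with $\alpha_-E$: from $G^*=Z_+\circ P^{G^*}_+-Z_-\circ P^{G^*}_-$ in (\ref{5.44y}), the constancy of $P^{G^*}_+$ on $[\Lambda,R]$, and $G^*(\Lambda)=0$ ($\Lambda$ being the left endpoint of an excursion of the two-speed Brownian motion $G^*$), one gets $E(t)=G^*((\Lambda+t)\wedge R)=-\big[Z_-\circ P^{G^*}_-((\Lambda+t)\wedge R)-Z_-\circ P^{G^*}_-(\Lambda)\big]$, and multiplying by $\alpha_-$ yields (\ref{5.51}).

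The main obstacle is this identification on the negative excursion of the apparently divergent drift with $-d\Pi^n$; without it one has no control over the individual $\sqrt{n}$-scaled occupation-time increments, whose limits were never separately determined. One must also remember that $\sVhatn$ does not converge uniformly (its limit $V^*$ has jumps), so the argument has to work with increments over a single excursion, where (\ref{5.44}) holds, rather than attempting to pass to the limit in $\sVhatn$ directly. The behaviour at the endpoints $\Lambda^n$, $R^n$, where $\Ghatn$ crosses zero, is harmless because the processes $\Pbarn_{\times}$ are Lipschitz (so their increments are unaffected by what happens on a Lebesgue-null set) and the jumps of $\Ghatn$ and of $\sVhatn$ are $O(1/\sqrt{n})$.
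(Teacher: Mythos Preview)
Your proof is correct and follows essentially the same approach as the paper: you obtain (\ref{5.50}) from (\ref{5.47}) together with Lemma \ref{L5.9} and the constancy of $P^{G^*}_+$ on $[\Lambda,R]$, and then derive (\ref{5.51}) by recognizing that on a negative excursion of $\Ghatn$ the $\sqrt{n}$-drift in (\ref{5.44}) equals $-d\Pi^n$, passing to the limit, and identifying the $Z_-$-increment with $-E$ via (\ref{5.44y}). If anything, you have spelled out the region identification and the computation $d\Pi^n=-\sqrt{n}[\lambda_2\,d\Pbarn_{SE_-}+(\lambda_2-\mu_0)\,d\Pbarn_S-c\,d\Pbarn_{SW}]$ more explicitly than the paper does.
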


\begin{proof}
The convergence (\ref{5.50}) is a consequence
of (\ref{5.47}), the Skorohod Representation
Theorem, Lemma \ref{L5.9},
the continuity of the processes,
and constancy of $P^{G^*}_+$
on $[\Lambda,R]$.
The first term on the right-hand side
of (\ref{5.51}) arises from the term $d\Thetahatn_0$\
on the right-hand side of (\ref{5.44}).
For the second and third terms on
the right-hand side of (\ref{5.51}), we observe
that on a negative excursion of $G^*$,
the occupation times $\Phatn_{NE}$,
$\Phatn_E$, $\Phatn_{SE_+}$,
$\Phatn_{SE}$, and $\Phatn_O$ are constant,
and so
\begin{align*}
\lefteqn{\sqrt{n}\Big[\lambda_2\Pbarn_{SE_-}
\big((\Lambda^n+\cdot\,)\wedge R^n\big)
+(\lambda_2-\mu_0)\Pbarn_S
\big((\Lambda^n+\cdot\,)\wedge R^n\big)
-c\Pbarn_{SW}\big((\Lambda^n+\cdot\,)\wedge R^n\big)\Big]}
\hspace{3.0in}\\
\lefteqn{-\sqrt{n}\Big[\lambda_2\Pbarn_{SE_-}(\Lambda^n)
+(\lambda_2-\mu_0)\Pbarn_S(\Lambda^n)
-c\Pbarn_{SW}(\Lambda^n)\Big]}\hspace{2.0in}\\
&=
-\Big[\Pi^n\big((\Lambda^n+\cdot\,)\wedge R^n\big)
-\Pi^n(\Lambda^n)\Big].
\end{align*}
The limit of this sequence of processes
is provided by (\ref{5.50}).
But according to (\ref{5.44y}),
$$
-\Big[Z_-\circ P^{G^*}_-\big((\Lambda+\cdot\,)\wedge R\big)
-Z_-\circ P^{G^*}_-(\Lambda)\Big]
=G^*\big((\Lambda+\cdot\,)\wedge R\big)
-G^*(\Lambda)
=E.
$$
Substitution of this into (\ref{5.50})
leads us to the right-hand side of (\ref{5.51}).
\end{proof}

\begin{remark}\label{R5.12}
{\rm 
In addition to the negative excursion $E$
of $G^*$ appearing
on the right-hand side of (\ref{5.51}), there
are the processes 
\begin{align*}
\widetilde{\Theta}_0
&:=\Theta^*_0\circ\big( P^{G^*}_-
(\Lambda)+\cdot\,\big)
-\Theta^*_0\circ P^{G^*}_-(\Lambda),\\
\widetilde{\Delta}_-
&:=
\Delta_-\circ\big( P^{G^*}_-
(\Lambda)+\cdot\,\big)
-\Delta_-\circ P^{G^*}_-(\Lambda).
\end{align*}
Because $\Theta_0^*$ is independent
of $\Theta_i^*$, $i=1,\dots,8$,
and hence independent of $G^*$,  
$\widetilde{\Theta}_0$
is the Brownian motion
$\Theta_0^*$ shifted by 
$P_-^{G^*}(\Lambda)$, a random time that is
independent of $\Theta_0^*$.
As such, $\widetilde{\Theta}_0$ 
has the same law as
$\Theta_0^*$, regardless of
the value of $P^{G^*}_-(\Lambda)$.
Because of Proposition \ref{P5.10},
the same reasoning can be applied to
$\widetilde{\Delta}_-$, which has
the same law as $\Delta_-$.
Finally, because $\Delta_-$
involves only the Brownian
motions $\Theta_6^*$, $\Theta_7^*$
and $\Theta_8^*$, it is independent
of $\Theta_0^*$, and likewise,
$\widetilde{\Delta}_-$ is independent
of $\widetilde{\Theta}_0$.  It follows that
$\widetilde{\Theta}_0-\widetilde{\Delta}_-$
is a Brownian motion with quadratic variation
$$
\langle\widetilde{\Theta}_0-\widetilde{\Delta}_-,
\widetilde{\Theta}_0-\widetilde{\Delta}_-\rangle=
\langle\widetilde{\Theta}_0,\widetilde{\Theta}_0
\rangle
+\langle\widetilde{\Delta}_-,\widetilde{\Delta}_-
\rangle=
\langle\Theta_0^*,\Theta_0^*\rangle
+\langle\Delta_-,\Delta_-\rangle.
$$
During the negative excursion $E$ of $G^*$,
we have
$dG^*=dZ_-$.
Therefore, $\alpha_- E$ contributes
quadratic variation $\alpha_-^2\langle Z_-,Z_-\rangle$
to the right-hand side of (\ref{5.51}), where
we compute this quadratic variation pathwise.
We use the independence of $\Delta_-$
and $Z_-$ and the equality 
$\langle -b\Theta_7^*-\Theta_8^*,Z_-\rangle
=\alpha_-\langle Z_-,Z_-\rangle$ below to compute
the rate of accumulation of pathwise quadratic
variation of the limit of $\sVhatn$
during a negative of excursion of $G^*$ to be
(denoting by $\mbox{}^{\prime}$ derivatives with
respect to time)
\begin{align*}
\lefteqn{\langle\Theta_0^*,\Theta_0^*\rangle'
+\langle\Delta_-,\Delta_-\rangle'
+\alpha_-^2\langle Z_-,Z_-\rangle'}\\
&=
\langle\Theta_0^*,\Theta_0^*\rangle'
+\langle -b\Theta_7^*-\Theta_8^*-\alpha_-Z_-,
-b\Theta_7^*-\Theta_8^*-\alpha_-Z_-\rangle'
+\alpha_-^2\langle Z_-,Z_-\rangle'\\
&=
\langle\Theta_0^*,\Theta_0^*\rangle'
+b^2\langle\Theta_7^*\Theta_7^*\rangle'
+\langle\Theta_8^*\Theta_8^*\rangle'
-2\alpha_-\langle -b\Theta_7^*-\Theta_8^*,Z_-\rangle'
+2\alpha_-^2\langle Z_-,Z_-\rangle'\\
&=
\frac{2\mu_0}{a}+2b\lambda_1+\frac{2\mu_1}{a}\\
&=
\sigma_+^2.
\end{align*}
The rate of accumulation of
cross variation between the limit
of $\sVhatn$ and $G^*$, computed pathwise,  is
$$
\alpha_-\langle E,E\rangle'
=\alpha_-\langle Z_-,Z_-\rangle'=2(\lambda_1+\mu_1).
$$
This results in the correlation (\ref{rho})
}
\end{remark}

\subsection{Enumerating excursions}\label{Enumerating}

Before considering $\sVhatn$
on positive excursions of $\Ghatn$,
a consideration that leads to (\ref{5.1z}),
we need to
introduce additional notation for the excursions,
both positive and negative,
of $\Ghatn$ and
$G^*$ away from zero.  
We begin with an enumeration of the
countably many positive excursions of
$G^*$.  
Let $\{\alpha_m\}_{m=1}^{\infty}$
be a sequence of positive numbers
converging down to zero.
We first list in order of left endpoints
all positive excursions whose length exceeds
$\alpha_1$.  We next list in order of left endpoints
all positive excursions whose length lies in
$[\alpha_2,\alpha_1)$.  Continuing this way, at
the $m$-th step we list in order of left endpoints
all positive excursions whose length lies
in $[\alpha_m,\alpha_{m-1})$.
From this sequence of lists, we form a single
sequence by the usual diagonalization argument.
We denote by $\{E_{k,+}\}_{k=1}^{\infty}$
this sequence of positive excursions of
$G^*$ (respectively,
by $\{E_{k,-}\}_{k=1}^{\infty}$ this sequence of negative
excursions of $G^*$), and by $\Lambda_{k,\pm}$
and $R_{k,\pm}$ the left and right endpoints of 
$E_{k,\pm}$,
respectively.  Similarly, we create two sequences
$\{E_{k,\pm}^n\}_{k=1}^{\infty}$ of excursions of
$\Ghatn$ that do not begin at zero.  We construct
this sequence so it is ordered in the same way
as the sequence $\{E_{k,\pm}\}_{k=1}^{\infty}$.
This guarantees by an argument like that of
Lemma \ref{L5.9} that
\be\label{5.54}
\Lambda_{k,\pm}^n\rightarrow\Lambda_{k,\pm},
\quad
R_{k,\pm}^n\rightarrow R_{k,\pm},\quad
\max_{t\geq 0}
\big|E_{k,\pm}^n(t)-E_{k,\pm}(t)\big|\rightarrow 0,
\quad k=1,2,\dots
\ee
almost surely; see \cite[Theorem~4.3.3]{Almost}
for a similar construction.  Because Brownian paths have
no isolated zeros, a time point cannot be both
the right endpoint and the left endpoint of
an excursion of Brownian motion.  This is also true
for two-speed Brownian motion by virtue of
(\ref{TS.E.2}) and (\ref{TS.E.3}).

Using this notation, we recap
the result of Proposition \ref{P5.12}
about the convergence of $\sVhatn$ on negative
excursions of $\Ghatn$.  For
each $k\geq 0$, we define
\begin{align}
V_{k,-}^n
&:=
\sVhatn\big((\Lambda^n_{k,-}+\cdot)
\wedge R^n_{k,-}\big)-\sVhatn(\Lambda^n_{k,-}),
\label{5.55}\\
\widetilde{C}_{k,-}
&:=
\Big[\Theta_0^*\circ \big(P^{G^*}_-(\Lambda_{k,-})
+\cdot\,\big)
-\Theta_0^*\circ P^{G^*}_-(\Lambda_{k,-})\Big]
\nonumber\\
&\qquad
-\Big[\Delta_-\circ \big(P^{G^*}_-(\Lambda_{k,-})
+\cdot\,\big)
-\Delta_-\circ P^{G^*}_{-}(\Lambda_{k,-})\Big],
\nonumber\\
C_{k,-}
&:=
\widetilde{C}_{k,-}
\big(\cdot\wedge (R_{k,-}-\Lambda_{k,-})\big).
\label{5.57}
\end{align}
Under the Skorohod representation assumption
of Proposition \ref{P5.12}, we have
\be\label{5.58}
V_{k,-}^n\rightarrow C_{k,-}
+\alpha_- E_{k,-},\quad k=1,2,\dots,
\ee 
almost surely as $n\rightarrow\infty$.
Each $C_{k,-}$ is a Brownian motion
stopped at a time independent of the
Brownian motion, and the sequence
of stopped processes
$\{C_{k,-}\}_{k=1}^{\infty}$
is independent because they are increments
of $\Theta_0^*-\Delta_-$ over non-overlapping
intervals.  Furthermore, each unstopped process
$\widetilde{C}_{k,-}$ is a Brownian
motion independent of $G^*$.
According to Remark \ref{R5.12}, the quadratic variation
of $\widetilde{C}_{k,-}$ is
\be\label{5.59}
\langle \widetilde{C}_k,\widetilde{C}_k\rangle
=\sigma_+^2\id-\alpha_-^2\langle\Delta_-,\Delta_-\rangle
=\left(\sigma_+^2-\frac{4(\lambda_1+\mu_1)^2}{\sigma_-^2}
\right)\id
=(1-\rho^2)\sigma_+^2\id.
\ee

It remains to examine $\sVhatn$ on positive
excursions of $\Ghatn$.  For each $k\geq 0$,
we define
\be\label{5.60}
V_{k,+}^n:=\sVhatn\big((\Lambda_{k,+}^n+\cdot)
\wedge R_{k,+}^n\big).
\ee
In contrast to (\ref{5.55}), 
the definition of $V_{k,+}^n$
does not subtract out the initial condition
of the excursion associated with it.
This difference between
(\ref{5.55}) and (\ref{5.60})
is consistent with the difference between (\ref{5.2z})
and (\ref{5.1z}). 

To identify the limit of $\sVhatn$,
we first determine the limit of $V_{k,+}^n$
for each $k$.  We must then combine
the limits of these fragments
of $\sVhatn$ with the limits of fragments
in (\ref{5.58}) to construct
the limit of the full process $\sVhatn$.

\subsection{$\sVhatn$ on positive excursions
of $\Ghatn$}\label{Positive}

\subsubsection{Controlling the oscillations of $\sVhatn$}

We begin with a pathwise result that
we later apply to the representation
of $\sVhatn$ obtained in Lemma \ref{L5.8}
to control the oscillations of $\sVhatn$. 
To state the result, we define, for
$x,y,z\in\R$ with $y<z$, the distance
from $x$ to the interval $[y,z]$.
For this definition, we adopt the convention
that $[z,y]:=[y,z]$.  Thus, we define
$$
\big|x-[y,z]\big|:=
\big|x-[z,y]\big|:=
\left\{\begin{array}{ll}
0&\mbox{if }x\in[y,z],\\
y-x&\mbox{if }x<y,\\
x-z&\mbox{if }x>z.
\end{array}\right.
$$
The {\em modulus of continuity} over
$[0,T]$ for $f\in D[0,\infty)$ and $\delta>0$ is
$$
w_T(f,\delta):=
\sup_{
\stackrel
{\scriptstyle{0\leq s\leq t\leq T}}
{\scriptstyle{t-s\leq\delta}}
}
\big|f(t)-f(s)\big|.
$$

\begin{proposition}\label{P8.4}
Assume $p$, $c$ and $v$ are paths in
$D[0,\infty)$ such that
$p$ is nondecreasing and continuous,
and
\be\label{8.13}
v(t)=v(0)+K\int_0^t
\big(\kappa_L-v^+(s)\big)dp(s)+c(t),
\quad t\geq 0.
\ee
Let $T>0$ and $\delta>0$ be given.
For all $0\leq t_1\leq t_2\leq t_3\leq T$
satisfying $t_3-t_1\leq\delta$, we have
\be\label{8.14}
\big|v(t_2)-\big[v(t_1),v(t_3)\big]\big|
\leq 4w_T(c,\delta).
\ee
Furthermore, if $|\kappa_L-v(t_1)|\leq\varepsilon$,
then
\be\label{8.15}
\big|\kappa_L-v(t_2)\big|\leq\varepsilon
+2w_T(c,\delta).
\ee
\end{proposition}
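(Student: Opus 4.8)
The plan is to distil (\ref{8.13}) into two one-sided estimates that capture the mean-reversion of the drift toward the level $\kappa_L$, and then to read off both (\ref{8.14}) and (\ref{8.15}) from these by elementary case analysis. Write $w:=w_T(c,\delta)$. Two preliminary observations set things up. First, since $p$ is continuous the map $t\mapsto K\int_0^t(\kappa_L-v^+)\,dp$ is continuous, so every jump of $v$ is a jump of $c$; in particular $|v(t)-v(t-)|\le w$ for all $t\in(0,T]$. Second, because $K>0$ and $\kappa_L>0$ (see (\ref{kappa}) and Assumption \ref{Assumption1}), the drift $K(\kappa_L-v^+(s))$ is strictly negative wherever $v(s)>\kappa_L$ and strictly positive wherever $v(s)<\kappa_L$; consequently, on any subinterval of $[0,T]$ on which $v$ stays above $\kappa_L$ the Stieltjes integral $\int(\kappa_L-v^+)\,dp$ is nonincreasing, and on any subinterval on which $v$ stays below $\kappa_L$ it is nondecreasing.

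The heart of the matter is the pair of bounds, valid for every $a,b$ with $0\le a\le b\le T$ and $b-a\le\delta$:
\begin{equation*}
v(b)\le\max\{v(a),\kappa_L\}+2w,\qquad v(b)\ge\min\{v(a),\kappa_L\}-2w.
\end{equation*}
I would prove the first by a ``last crossing of $\kappa_L$'' argument. If $v>\kappa_L$ throughout $[a,b]$, then $\int_a^b(\kappa_L-v^+)\,dp\le 0$, so $v(b)\le v(a)+\bigl(c(b)-c(a)\bigr)\le v(a)+w$. Otherwise put $\tau:=\sup\{s\in[a,b]:v(s)\le\kappa_L\}$; then $v>\kappa_L$ on $(\tau,b]$, so $\int_\tau^b(\kappa_L-v^+)\,dp\le 0$ (the endpoint $\tau$ being $dp$-negligible by continuity of $p$), whence $v(b)\le v(\tau)+w$; and $v(\tau)\le\kappa_L+w$, since either $v(\tau)\le\kappa_L$ or else $v(\tau-)\le\kappa_L$ together with the jump bound $|v(\tau)-v(\tau-)|\le w$. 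In both cases $v(b)\le\max\{v(a),\kappa_L\}+2w$. The second bound follows from the mirror-image argument with $\tau:=\sup\{s\in[a,b]:v(s)\ge\kappa_L\}$.

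Granting these two bounds, (\ref{8.15}) is immediate: applying both at $(a,b)=(t_1,t_2)$ and using $\kappa_L-\varepsilon\le v(t_1)\le\kappa_L+\varepsilon$ gives $\kappa_L-\varepsilon-2w\le v(t_2)\le\kappa_L+\varepsilon+2w$. For (\ref{8.14}) I would prove $v(t_2)\le\max\{v(t_1),v(t_3)\}+4w$ and $v(t_2)\ge\min\{v(t_1),v(t_3)\}-4w$ separately; with the definition of the distance from a point to an interval these two inequalities yield (\ref{8.14}). For the first, the upper bound at $(t_1,t_2)$ gives $v(t_2)\le\max\{v(t_1),\kappa_L\}+2w$ and the lower bound at $(t_2,t_3)$ gives $\min\{v(t_2),\kappa_L\}\le v(t_3)+2w$; if $v(t_2)\le\kappa_L$ the latter already gives $v(t_2)\le v(t_3)+2w$, while if $v(t_2)>\kappa_L$ it gives $\kappa_L\le v(t_3)+2w$, which fed into the former yields $v(t_2)\le\max\{v(t_1),v(t_3)+2w\}+2w\le\max\{v(t_1),v(t_3)\}+4w$. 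The inequality $v(t_2)\ge\min\{v(t_1),v(t_3)\}-4w$ is the mirror image, using the lower bound at $(t_1,t_2)$ and the upper bound at $(t_2,t_3)$.

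The argument is elementary; the only delicate points --- and hence the main obstacle --- are the c\`adl\`ag technicalities: identifying the jump of $v$ at the last-crossing time $\tau$ with a jump of $c$ so as to bound it by $w$, and taking care that the Lebesgue--Stieltjes drift integral against the continuous integrator $p$ is insensitive to the single point $\tau$ and to the values of $v$ on $dp$-null sets. Everything else is bookkeeping.
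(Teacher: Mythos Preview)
Your proof is correct. Both arguments rest on the same two observations---that the drift $K(\kappa_L-v^+)$ is mean-reverting toward $\kappa_L$ and that jumps of $v$ coincide with jumps of $c$ and are therefore bounded by $w$---and both deploy a last-crossing-of-$\kappa_L$ argument to control $v$. Your organization is more modular: you first isolate the clean two-point estimate $v(b)\le\max\{v(a),\kappa_L\}+2w$ (and its mirror image), then derive (\ref{8.14}) by combining the estimate on $[t_1,t_2]$ with the one on $[t_2,t_3]$ through a short dichotomy on whether $v(t_2)\lessgtr\kappa_L$. The paper instead works directly on the full interval $[t_1,t_3]$ through four main cases (according to whether $v$ stays on one side of $\kappa_L$ and, if it crosses, which side $v(t_1)$ is on), introducing a first-crossing time $\sigma_0$ and last-crossing times $\sigma_1(s),\sigma_2(s)$ inside the crossing cases. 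The paper's direct approach recovers sharper constants in the non-crossing cases (e.g.\ $w$ rather than $4w$ in Case I), but since the proposition only claims $4w$ this is immaterial; your factoring buys a shorter and cleaner argument at no real cost.
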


\begin{proof}
Let $0\leq t_1\leq t_2\leq t_3\leq T$ be given with
$t_3-t_1\leq\delta$.  We consider four cases,
according to whether $v$ is below or above
$\kappa_L$ at time $t_1$ and whether $v$
crosses the level $\kappa_L$ by time $t_3$.

\noindent
\underline{Case I}:
$\sup_{t_1\leq u<t_3}v(u)\leq\kappa_L$.

In this case the integrand in (\ref{8.13})
is nonnegative for all $u\in[t_1,t_3)$, hence
\be\label{8.16}
v(s')-v(s)\geq c(s')-c(s)\geq -w_T(c,\delta),\quad
t_1\leq s\leq s' \leq t_3.
\ee
This implies both 
$v(t_2)\geq v(t_1)-w_T(c,\delta)$ and 
$v(t_3)\geq v(t_2)-w_T(c,\delta)$,
and therefore,
$$
\big|v(t_2)-\big[v(t_1),v(t_3)\big]\big|
\leq w_T(c,\delta).
$$

If, in addition, $|\kappa_L-v(t_1)|\leq\varepsilon$,
then $v(t_1)\geq \kappa_L-\varepsilon$,
so $v(t_2)\geq\kappa_L-\varepsilon-w_T(c,\delta)$.
According to the case assumption, $v(t_2-)\leq\kappa_L$.
Since $p$ is continuous, the jumps of $v$
agree with those of $c$, and the magnitude of these
is bounded by $w_T(c,\delta)$.  Therefore,
$v(t_2)\leq \kappa_L+w_T(c,\delta)$, and
a slightly stronger version of (\ref{8.15}) holds.

\noi
\underline{Case II}: $v(t_1)<\kappa_L$
but $\sup_{t_1\leq u<t_3}v(u)>\kappa_L$.

In this case we define
$\sigma_0:=\min\big\{t\geq t_1:v(t)\geq\kappa_L\big\}$,
and note that $t_1\leq\sigma_0<t_3$.  In place
of (\ref{8.16}) we have
$$
v(s')-v(s)\geq c(s')-c(s)\geq -w_T(c,\delta),
\quad t_1\leq s\leq s'\leq\sigma_0.
$$
In particular,
$$
v(s')\geq v(t_1) -w_T(c,\delta),
\quad
t_1\leq s'\leq \sigma_0.
$$
Furthermore, $v(s')<\kappa_L$ for $t_1\leq s<\sigma_0$,
so over the time interval $[t_1,\sigma_0)$,
$v$ is confined to the interval
$[v(t_1)-w_T(c,\delta),\kappa_L]$.

For $\sigma_0\leq s\leq t_3$, we set
\be\label{8.18}
\sigma_1(s)
:=
\sup\big\{t\leq s:v(t)\geq\kappa_L\big\},\quad
\sigma_2(s)
:=
\sup\big\{t\leq s:v(t)\leq\kappa_L\big\}. 
\ee
Observe that $\sigma_0\leq\sigma_1(s)\leq s$
and $t_1\leq \sigma_2(s)\leq s$.  In fact,
either $\sigma_1(s)=s$ or $\sigma_2(s)=s$
(and both equations might hold).  There are two
subcases.

\noi
\underline{Case II.A}: $v(s)\leq\kappa_L$.

In this case the integrand in (\ref{8.13})
is nonnegative for $\sigma_1(s)\leq u\leq s$, hence
\be\label{8.20}
\kappa_L\geq v(s)
\geq v\big(\sigma_1(s)\big)+c(s)-c\big(\sigma_1(s)\big)
\geq v\big(\sigma_1(s)\big)-w_T(c,\delta)
\geq\kappa_L-2w_T(c,\delta).
\ee
The last inequality follows from the fact that
at time $\sigma_1(s)$, either $v(\sigma_1(s))=\kappa_L$,
because $v$ crossed the level $\kappa_L$
``continuously'' (including the case
when $v(s)=\kappa_L$, so that $\sigma_1(s)=s$
and $v(\sigma_1(s))=\kappa_L)$, or $v$
jumped across the level $\kappa_L$.
But the jumps in $v$ are bounded by $w_T(c,\delta)$.

\noi
\underline{Case II.B}: $v(s)\geq\kappa_L$.

In this case, the integrand in (\ref{8.13})
is nonpositive for $\sigma_2(s)\leq u\leq s$,
hence
$$
\kappa_L\leq v(s)
\leq v\big(\sigma_2(s)\big)+c(s)-c\big(\sigma_2(s)\big)
\leq v\big(\sigma_2(s)\big)+w_T(c,\delta)
\leq\kappa_L+2w_T(c,\delta).
$$
The last inequality follows by an analogue of
the argument justifying the last inequality
in (\ref{8.20}).

\vsp
In both Case II.A and Case II.B,
$v$ takes values only in
$[\kappa_L-2w_T(c,\delta),\kappa_L+2w_T(c,\delta)]$
on the interval $[\sigma_0,t_3]$.
Since $v$ is confined to
$[v(t_1)-w_T(c,\delta),\kappa_L]$
over the interval $[t_1,\sigma_0]$, it follows that
(\ref{8.14}) holds.
If, in addition, $|\kappa_L-v(t_1)|\leq\ve$,
then $\kappa_L\geq v(t_1)\geq \kappa_L-\ve$,
and it follows from the above considerations
(whether $t_2\in[t_1,\sigma_0)$ or
$t_2\in[\sigma_0,t_3)$) that 
(\ref{8.15}) holds.

\noi
\underline{Case III}: 
$\inf_{t_1\leq u< t_3}v(u)\geq\kappa_L$.

In this case the integrand in (\ref{8.13})
is nonpositive for all $u\in[t_1,t_3)$, hence
\be\label{8.23}
v(s')-v(s)\leq c(s')-c(s)\leq w_T(c,\delta),\quad
t_1\leq s\leq s'\leq t_3.
\ee
This implies both
$v(t_2)\leq v(t_1)+w_T(c,\delta)$ and 
$v(t_3)\leq v(t_2)+w_T(c,\delta)$,
and again, (\ref{8.14}) holds.  If, in addition,
$|\kappa_L-v(t_1)|\leq\ve$, then
$\kappa_L+\ve\geq v(t_1)$,
so $\kappa_L+\ve+w_T(c,\delta)\geq v(t_2)\geq \kappa_L-w_T(c,\delta)$,
and a slightly stronger version of (\ref{8.15}) holds.

\noi
\underline{Case IV:}
$v(t_1)>\kappa_L$ but $\inf_{t_1\leq u<t_3}v(u)\leq
\kappa_L$.

In this case, we define
$\sigma_0:=\min\big\{t\geq t_1:v(t)\leq\kappa_L\big\}$,
and note that $t_1\leq\sigma_0<t_3$.  In place of
(\ref{8.23}) we have
$$
v(s')-v(s)\leq c(s')-c(s)\leq w_T(c,\delta),\quad
t_1\leq s\leq s'\leq \sigma_0.
$$
In particular,
$$
v(s')\leq v(t_1)+w_T(c,\delta),\quad
t_1\leq s'\leq\sigma_0.
$$
Furthermore, $v(s')>\kappa_L$ for $t_1\leq s<\sigma_0$,
so over the time interval $[t_1,\sigma_0)$,
$v$ is confined to the interval 
$[\kappa_L,v(t_1)+w_T(c,\delta)]$.

For $\sigma_0\leq s\leq t_3$, we again define
$\sigma_1(s)$ and $\sigma_2(s)$ by
(\ref{8.18}), and observe that
in this case
$t_1\leq \sigma_1(s)\leq s$
and $\sigma_0\leq\sigma_2(s)\leq s$.
We repeat the arguments of the subcases of
Case II to see that $v$ is again confined
to the interval 
$[\kappa_L-2w_T(c,\delta),\kappa_L+2w_T(c,\delta)]$
on the interval $[\sigma_0,t_3]$, and hence
(\ref{8.14}) holds.  If in addition
$|v(t_1)-\kappa_L|\leq\ve$, then by the same
reasoning as in Case II we obtain (\ref{8.15}).
\end{proof}

\subsubsection{Convergence after the excursion begins}

\begin{proposition}\label{P5.14}
For every $k\geq 1$ and every $\varepsilon>0$
with $R_{k,+}-\Lambda_{k,+}>\varepsilon$,
we have
$\sup_{t\geq\varepsilon}|V^n_{k,+}(t)-\kappa_L|
\stackrel{\P}{\rightarrow} 0$.
\end{proposition}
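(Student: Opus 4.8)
The plan is to restrict the representation of Lemma~\ref{L5.8} to the $k$-th positive excursion interval $[\Lambda^n_{k,+},R^n_{k,+}]$ of $\Ghatn$ and to run a ``crushing'' argument that exploits the factor $\sqrt n$ multiplying the mean-reversion term. I would work on the Skorohod-representation space of Remark~\ref{R5.10x}, write $\ell^n:=R^n_{k,+}-\Lambda^n_{k,+}$, $\ell:=R_{k,+}-\Lambda_{k,+}>\varepsilon$, and set $v^n(t):=\sVhatn(\Lambda^n_{k,+}+t)$ for $0\le t\le\ell^n$; since $V^n_{k,+}(t)=\sVhatn(R^n_{k,+})=v^n(\ell^n)$ for $t\ge\ell^n$ and $\ell^n\to\ell>\varepsilon$, it suffices to show $\sup_{\varepsilon\le t\le\ell^n}|v^n(t)-\kappa_L|\stackrel{\P}{\rightarrow}0$. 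Restricting (\ref{5.35y}) to $[\Lambda^n_{k,+},\Lambda^n_{k,+}+t]$ gives
\be
v^n(t)=v^n(0)+\sqrt n\,\theta_b\int_0^t\big(\kappa_L-(v^n(u))^+\big)\,dp^n(u)+c^n(t),\qquad 0\le t\le\ell^n,
\ee
with $p^n(t):=\Pbarn_{NE}(\Lambda^n_{k,+}+t)-\Pbarn_{NE}(\Lambda^n_{k,+})$ and $c^n(t):=C^n_V(\Lambda^n_{k,+}+t)-C^n_V(\Lambda^n_{k,+})$, which is precisely of the form (\ref{8.13}). The process $p^n$ is continuous and nondecreasing; by Proposition~\ref{P4.10}, using $G^*>0$ on $(\Lambda_{k,+},R_{k,+})$ and (\ref{5.54}), it converges uniformly on $[0,\ell]$ to $t\mapsto\tfrac{\lambda_1}{\lambda_0+\lambda_1}t$, and since $\lambda_1=(a-1)\lambda_0>0$ there are a fixed $\gamma>0$ and $\delta_n\to0$ with $p^n(t)-p^n(s)\ge\gamma(t-s)-\delta_n$ for $0\le s\le t\le\ell^n$ and $n$ large. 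Finally $v^n(0)=\sVhatn(\Lambda^n_{k,+})=O(1)$ by Theorem~\ref{T5.5}, and $C^n_V$ is continuous (by its definition in the proof of Lemma~\ref{L5.8}, being a $\sqrt n$-multiple of continuous occupation-time combinations), is $O(1)$, and is tight with continuous subsequential limits, hence equicontinuous in probability; the same then holds for $c^n$.

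First I would show $v^n$ comes close to $\kappa_L$ before time $\varepsilon/2$. Fix $\eta\in(0,2\kappa_L)$. If $v^n\ge\kappa_L+\eta/2$ on all of $[0,\varepsilon/2]$, then $\kappa_L-(v^n)^+\le-\eta/2$ there (recall $\kappa_L>0$), so the displayed identity forces $v^n(\varepsilon/2)\le v^n(0)-\tfrac\eta2\sqrt n\,\theta_b\,p^n(\varepsilon/2)+c^n(\varepsilon/2)$, which tends to $-\infty$ in probability because $p^n(\varepsilon/2)$ converges to a positive constant while $v^n(0),c^n(\varepsilon/2)=O(1)$; this contradicts the boundedness below of $\{\sVhatn\}$ (Theorem~\ref{T5.5}). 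Symmetrically, $v^n$ cannot stay $\le\kappa_L-\eta/2$ on $[0,\varepsilon/2]$ without contradicting boundedness above. Since jumps of $\sVhatn$ have size $1/\sqrt n$, with probability tending to one there is $\tau^n\le\varepsilon/2$ with $|v^n(\tau^n)-\kappa_L|\le\eta$.

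Next I would show $v^n$ stays within $2\eta$ of $\kappa_L$ on $[\varepsilon,\ell^n]$. On the event of the previous step intersected with $\{v^n(t^*)>\kappa_L+2\eta\text{ for some }t^*\in[\varepsilon,\ell^n]\}$, put $s^n:=\sup\{s\le t^*:v^n(s)\le\kappa_L+\eta\}$; this set contains $\tau^n$, so $s^n$ is well defined, $v^n(s^n)\le\kappa_L+\eta+1/\sqrt n$, and $v^n>\kappa_L+\eta$ on $(s^n,t^*]$, whence $\kappa_L-(v^n)^+<-\eta$ there. The identity then gives $\eta-1/\sqrt n\le v^n(t^*)-v^n(s^n)<-\eta\sqrt n\,\theta_b(p^n(t^*)-p^n(s^n))+(c^n(t^*)-c^n(s^n))$, so $\sqrt n\,(p^n(t^*)-p^n(s^n))\le\eta^{-1}\theta_b^{-1}(c^n(t^*)-c^n(s^n))=O(1)$; combined with $p^n(t^*)-p^n(s^n)\ge\gamma(t^*-s^n)-\delta_n$ this yields $t^*-s^n\le\Delta_n$ with $\Delta_n\to0$ in probability. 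Since the drift is nonpositive on $(s^n,t^*]$ (Case~III of Proposition~\ref{P8.4}), $v^n(t^*)-v^n(s^n)\le c^n(t^*)-c^n(s^n)\le w_T(c^n,\Delta_n)$, so $v^n(t^*)\le\kappa_L+\eta+1/\sqrt n+w_T(c^n,\Delta_n)=\kappa_L+\eta+o_\P(1)$, contradicting $v^n(t^*)>\kappa_L+2\eta$ for large $n$. The mirror-image argument — using the level $\kappa_L-\eta$, the bound $(v^n)^+\le\max(v^n,0)\le\kappa_L-\eta$ below the last exit time, boundedness below of $\{\sVhatn\}$, and Case~I of Proposition~\ref{P8.4} — controls $\inf_{[\varepsilon,\ell^n]}v^n$. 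Letting $\eta\downarrow0$ finishes the proof.

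I expect the main obstacle to be the bookkeeping that couples the three scales: the $\sqrt n$ mean-reversion drift (which both pins $v^n$ to $\kappa_L$ and, when $v^n$ strays, makes the accumulated displacement diverge), the lower bound $p^n(t)-p^n(s)\gtrsim\gamma(t-s)$ — which is exactly where the ``positive excursion'' hypothesis and Proposition~\ref{P4.10} enter, since $\Pbarn_{NE}$ advances only while $(\sWhatn,\sXhatn)\in NE$ — and the equicontinuity of $C^n_V$, needed to bound the residual oscillation of $v^n$ over the shrinking interval $[s^n,t^*]$. The key device is the last-exit-time decomposition in Step~2, and the only genuinely delicate point beyond that is upgrading tightness of $\{C^n_V\}$ with continuous limits to equicontinuity in probability, which is harmless here because $C^n_V$ is itself continuous.
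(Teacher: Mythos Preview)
Your argument is correct and follows essentially the same route as the paper. Both proofs restrict the representation of Lemma~\ref{L5.8} to the $k$-th positive excursion, exploit the linear growth of $\Pbarn_{NE}$ there (via Proposition~\ref{P4.10}) together with the $\sqrt n$ mean-reversion, and control the remainder through the tightness/equicontinuity of $C_V^n$ by a last-exit-time device. The paper packages this device as a function $\tau^n(t)$, shows $\tau^n\ArrowJ1\id$, and then passes to a sub-subsequence along which $C_V^{n_j}$ has a continuous limit; you instead split explicitly into ``reach $\kappa_L$ by time $\varepsilon/2$'' and ``stay near $\kappa_L$ thereafter'' and invoke equicontinuity in probability of $\{C_V^n\}$ directly (legitimate, since the $C_V^n$ are continuous and tight with continuous limits), but the underlying mechanism is identical.
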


\begin{proof}
We begin with Lemma \ref{L5.8},
which together with (\ref{5.60}) implies
\be\label{5.63}
V^n_{k,+}(t)=V^n_{k,+}(0)
+\sqrt{n}\,\theta_b
\int_0^t\Big(\kappa_L-\big(V_{k,+}^n(s)\big)^+\Big)
d\Pbarn_{NE}(\Lambda_{k,+}^n+s)
+C_V^n(\Lambda_{k,+}^n+t)
-C_V^n(\Lambda_{k,+}^n)
\ee
for $0\leq t\leq R_{k,+}^n-\Lambda_{k,+}^n$.
However, because $V_{k,+}^n(t)=V_{k,+}^n(R_{k,+}^n-\Lambda_{k,+}^n)$
for $t\geq R_{k,+}^n-\Lambda_{k,+}^n$,
for purposes of this proof we may and do
assume that (\ref{5.63}) is valid
for all $t\geq 0$.

We choose $\delta\in(0,1)$ and define
$$
\tau^n(t):=0\vee\sup\big\{s\in[0,t]:
\big|\kappa_L-\big(V^n_{k,+}(s)\big)^+\big|<\delta
\big\}.
$$
Henceforth we consider only $n\geq 1/\delta^2$,
so that $V_{k,n}^+$, whose jumps are of
size $1/\sqrt{n}$, cannot jump across
the interval $[-\delta,\delta]$.
We show first that
\be\label{5.64}
\tau^n\ArrowJ1\id.
\ee

For a fixed $t\geq\ve$, 
there are three cases.
Either $\tau^n(t)=t$ or 
\be\label{5.65}
\kappa_L-(V^n_{k,+})^+\leq -\delta
\mbox{ on }[\tau^n(t),t],
\ee
or
\be\label{5.66}
\kappa_L-(V^n_{k,+})^+\geq \delta
\mbox{ on }[\tau^n(t),t].
\ee
In the case of (\ref{5.65}), (\ref{5.63}) 
and the fact that $C_V^n=O(1)$ imply
$$
V_{k,+}^n(t)\leq V_{k,+}^n\big(\tau^n(t)\big)
-\sqrt{n}\,\theta_b\delta
\Big(\Pbarn_{NE}\big(\Lambda_{k,+}^n+t\big)
-\Pbarn_{NE}\big(\Lambda_{k,+}^n+\tau^n(t)\big)\Big)
+O(1),
$$
whereas in the case of (\ref{5.66}),
(\ref{5.63}) and the fact that $C_V^n=O(1)$ imply
$$
V_{k,+}^n(t)\geq V_{k,+}^n\big(\tau^n(t)\big)
+\sqrt{n}\,\theta_b\delta
\Big(\Pbarn\big(\Lambda_{k,+}^n+t\big)
-\Pbarn\big(\Lambda_{k,+}^n+\tau^n(t)\big)\Big)
+O(1).
$$
Because $V_{k,+}^n$ is $O(1)$ (Theorem \ref{T5.5}),
in either case we have
\be\label{5.67}
\Pbarn_{NE}\big(\Lambda_{k,+}^n+t\big)
-\Pbarn_{NE}\big(\Lambda_{k,+}^n+\tau^n(t)\big)
\leq\frac{1}{\sqrt{n}\,\theta_b\delta}O(1)=o(1).
\ee
Relation (\ref{5.67}) also holds if $\tau^n(t)=t$.
Consequently, 
$Q^n-Q^n\circ\tau^n\ArrowJ1 0$,
where
$$
Q^n:=\Pbarn_{NE}(\Lambda_{k,+}^n+\cdot)
-\Pbarn_{NE}(\Lambda_{k,+}^n).
$$
But Proposition \ref{P4.10}
implies $Q^n\ArrowJ1 
Q:=\lambda_1\id/(\lambda_0+\lambda_1)$,
which is strictly increasing.
In particular, for every $T\geq 0$,
\be\label{5.68}
\max_{0\leq t\leq T}
\big|Q\big(\tau^n(t)\big)-Q^n\big(\tau^n(t)\big)\big|
\leq\max_{0\leq t\leq T}
\big|Q(t)-Q^n(t)\big|\ArrowJ1 0.
\ee
In addition,
\be\label{5.69}
Q^n\circ\tau^n=Q^n-(Q^n-Q^n\circ\tau^n)
\ArrowJ1 Q.
\ee
We put (\ref{5.68}) and (\ref{5.69}) together
and use the linearity of $Q^{-1}$ to establish
(\ref{5.64}):
$$
\tau^n=Q^{-1}\circ Q\circ\tau^n
=Q^{-1}\circ(Q\circ\tau^n-Q^n\circ \tau^n)
+Q^{-1}\circ Q^n\circ \tau^n
\ArrowJ1 Q^{-1}\circ 0+Q^{-1}\circ Q=\id.
$$

To continue, we choose an arbitrary subsequence
of $\{V_{k,+}^n\}_{n=1}^{\infty}$ from which,
according to Lemma \ref{L5.8}, we may select
a sub-subsequence $\{V_{k,+}^{n_j}\}_{j=1}^{\infty}$,
along which $C_V^{n_j}$ converges weakly-$J_1$
to a continuous limit $C_V$.
Then (\ref{5.64}) implies
$$ 
C_V^{n_j}\big((\Lambda_{k,-}^{n_j}+\id)\wedge R_{k,+}^{n_j}
\big)-C_V\big((\Lambda_{k,-}^{n_j}+\tau^{n_j})
\wedge R_{k,+}^{n_j}\big)=o(1).
$$
From (\ref{5.63}), we see that
$$
V_{k,+}^{n_j}(t)=V_{k,+}^{n_j}\big(\tau^{n_j}(t)\big)
+\sqrt{n_j}\,\theta_b\int_{\tau^{n_j}(t)}^t
\Big(\kappa_L-\big(V_{k,+}^{n_j}(s)\big)^+\Big)
d\Pbar^{n_j}_{NE}(\Lambda_{k,+}^{n_j}+s)+o(1).
$$
We again consider the two cases (\ref{5.65})
and (\ref{5.66}).  However, we need to
consider subcases depending on whether $\tau^{n_j}(t)=0$
or $0<\tau^{n_j}(t)<t$ or $\tau^{n_j}=t$.

\noindent
\underline{Case I.A}: 
(\ref{5.65}) holds and $\tau^{n_j}=0$.

In this case,
\begin{align*}
\kappa_L+\delta
&\leq 
V^{n_j}_{k,+}(t)\\
&\leq
 V_{k,+}^{n_j}(0)
-\sqrt{n_j}\,\theta_b\delta
\big[\Pbar^{n_j}_{NE}(\Lambda_{k,+}^{n_j}+t)
-\Pbar^{n_j}_{NE}(\Lambda_{k,+}^{n_j})\big]
+o(1)\\
&\leq 
V^{n_j}_{k,+}(0)
-\sqrt{n_j}\,\theta_b\delta
\big[\Pbar^{n_j}_{NE}(\Lambda_{k,+}^{n_j}+\ve)
-\Pbar^{n_j}_{NE}(\Lambda_{k,+}^{n_j})\big]
+o(1).
\end{align*}
Because
$V^{n_j}_{k,+}(0)=O(1)$
and the limit of $\Pbar^{n_j}_{NE}(\varepsilon)$
is strictly positive (Proposition \ref{P4.10}),
for sufficiently large $j$ this case does not occur.
We disregard it.

\noindent
\underline{Case I.B}: (\ref{5.65}) holds
and $0<\tau^{n_j}<t$.

In this case, (\ref{5.65}) implies
\begin{align*}
\kappa_L+\delta
&\leq
V_{k,+}^{n_j}(t)\\
&\leq 
V^{n_j}_{k,+}\big(\tau^{n_j}(t)\big)
-\sqrt{n_j}\,\theta_b\delta
\big[\Pbar^{n_j}_{NE}(\Lambda_{k,+}^{n_j}+t)
-\Pbar^{n_j}_{NE}
\big(\Lambda_{k,+}^{n_j}+\tau^{n_j}(t)\big)\big]+o(1)\\
&\leq 
V^{n_j}\big(\tau^{n_j}(t)\big)
+o(1)\\
&\leq
\kappa_L+\delta+\frac{1}{\sqrt{n_j}}+o(1),
\end{align*}
the last inequality following from the fact
that $V_{k,+}^{n_j}(\tau^{n_j}(t)-)<\kappa_L+\delta$
and the jumps in $V_{k,+}^{n_j}$
are of size $1/\sqrt{n_j}$. 
 
\noindent
\underline{Case I.C}:
(\ref{5.65}) holds and $\tau^{n_j}(t)=t$.

We have $V_{k,+}^{n_j}(t-)<\kappa_L+\delta$
and $V_{k,+}^{n_j}(t)\geq \kappa_L+\delta$.
Because of jumps in $V_{k,+}^{n_j}$ are of
size $1/\sqrt{n_j}$, these inequalities imply
$$
\kappa_L+\delta\leq V_{k,+}^{n_j}(t)
\leq\kappa_L+\delta+\frac{1}{\sqrt{n_j}}.
$$

\noi
\underline{Case II}: (\ref{5.66}) holds.

Like Case I.A, we may disregard the
case that $(\ref{5.66})$ holds and $\tau^{n_j}(t)=0$.
Reversing the inequalities in Case I.B,
we can show that if
$(\ref{5.66})$
holds and $0<\tau^{n_j}<t$, then
$$
\kappa_L+\delta\geq
V_{k,+}^{n_j}(t)\geq
\kappa_L-\delta-\frac{1}{\sqrt{n_j}}+o(1).
$$
Finally, if (\ref{5.66}) holds and 
$\tau^{n_j}(t)=t$, we have
$$
\kappa_L+\delta\geq V_{k,+}^{n_j}(t)
\geq\kappa_L-\delta-\frac{1}{\sqrt{n_j}}.
$$

In every case,
$|V_{k,+}^{n_j}(t)-\kappa_L|\leq\delta+o(1)$.
Because $\delta\in(0,1)$ is arbitrary,
we conclude
$\sup_{t\geq\varepsilon}\big|V_{k,+}^{n_j}(t)-\kappa_L
\big|\stackrel{\P}{\rightarrow} 0$
almost surely.  Because every subsequence
of $\{V_{k,+}^n\}_{n=1}^{\infty}$
has a sub-subsequence with this property,
the full sequence has this property.
\end{proof}

\subsubsection{Convergence at the beginning
of the excursion}\label{Beginning}

Because a Brownian motion, and hence
a two-speed Brownian motion, has infinitely
many positive (and negative) excursions
immediately to the left of the left endpoint
of every excursion, we can use Proposition \ref{P5.14}
to prove the following lemma.

\begin{lemma}\label{L8.4a}
For $k\geq 1$, every subsequence of 
$\{\sVhatn\}_{n=1}^{\infty}$ has a sub-subsequence
$\{\sVhat^{n_j}\}_{j=1}^{\infty}$ for which
$\limsup_{n_j\rightarrow\infty}
\sup_{s\in[(\Lambda_{k,\pm}-\delta)^+,
\Lambda_{k,\pm}+\delta]}
\big|\sVhat^{n_j}(s)-\kappa_L\big|
\rightarrow0$ almost surely as
$\delta\downarrow 0$.
\end{lemma}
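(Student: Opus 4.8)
The plan is to combine the ``interior'' control of Proposition \ref{P5.14} with the pathwise oscillation estimate of Proposition \ref{P8.4}, applied to the representation of $\sVhatn$ in Lemma \ref{L5.8}, using a positive excursion of $G^*$ lying just to the left of $\Lambda_{k,\pm}$ as a launching pad. Because a Brownian motion --- and hence a two-speed Brownian motion --- has positive excursions accumulating immediately to the left of the left endpoint of every excursion (the fact recorded just before the statement), for a.e.\ path we may pick positive excursions $E_{k(m),+}$ of $G^*$ whose left endpoints $\Lambda_{k(m),+}$ increase to $\Lambda_{k,\pm}$. Each such excursion lies entirely in $[0,\Lambda_{k,\pm})$ --- a positive excursion cannot straddle the zero $\Lambda_{k,\pm}$, nor end exactly there, since no time point is simultaneously a left and a right endpoint of excursions --- so its length $L_m:=R_{k(m),+}-\Lambda_{k(m),+}$ tends to $0$. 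Put $t_m:=\Lambda_{k(m),+}+L_m/2$, the midpoint of $E_{k(m),+}$, and $\delta_m:=\Lambda_{k,\pm}-t_m$; then $t_m\to\Lambda_{k,\pm}$, $\delta_m\to 0$, and $(\Lambda_{k,\pm}-\delta_m)^+=t_m$. Since $g(\delta):=\limsup_j\sup_{s\in[(\Lambda_{k,\pm}-\delta)^+,\,\Lambda_{k,\pm}+\delta]}\big|\sVhat^{n_j}(s)-\kappa_L\big|$ is nondecreasing in $\delta$ and $\delta_m\to 0$, it suffices to produce a sub-subsequence along which $g(\delta_m)\to 0$ almost surely.

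Fix $m$. Proposition \ref{P5.14}, applied to the positive excursion $E_{k(m),+}$, gives $\big|\sVhatn(t_m)-\kappa_L\big|\stackrel{\P}{\rightarrow}0$: by (\ref{5.54}) the approximating endpoints $\Lambda^n_{k(m),+}$ and $R^n_{k(m),+}$ converge to $\Lambda_{k(m),+}$ and $R_{k(m),+}$, so for large $n$ the time $t_m$ lies strictly between them, at distance eventually exceeding $L_m/4$ from $\Lambda^n_{k(m),+}$; hence $\sVhatn(t_m)=V^n_{k(m),+}\big(t_m-\Lambda^n_{k(m),+}\big)$ and $\big|\sVhatn(t_m)-\kappa_L\big|\le\sup_{t\ge L_m/4}\big|V^n_{k(m),+}(t)-\kappa_L\big|\to 0$ in probability. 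Starting from an arbitrary subsequence of $\{\sVhatn\}_{n=1}^{\infty}$, I would first pass to a sub-subsequence along which $C_V^n$ converges weakly-$J_1$ to a continuous limit $C_V$ (possible by Lemma \ref{L5.8}), and then --- in the Skorohod-representation setting of Remark \ref{R5.10x}, after a diagonalization over $m$ --- to a further sub-subsequence $\{n_j\}$ along which, almost surely, the convergences (\ref{5.54}) hold, $C_V^{n_j}\to C_V$ uniformly on compacts, and $\sVhat^{n_j}(t_m)\to\kappa_L$ for every $m$.

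It remains to propagate this control forward across the window. Fix a (random) $T>\Lambda_{k,\pm}+1$ and apply Proposition \ref{P8.4} to the identity in Lemma \ref{L5.8} with $v=\sVhat^{n_j}$, $p=\Pbarn_{NE}$ (nondecreasing and Lipschitz, hence continuous), $c=C_V^{n_j}$, $K=\sqrt{n_j}\,\theta_b$, $t_1=t_m$, and $t_2=t_3=s$ running over $[t_m,\Lambda_{k,\pm}+\delta_m]=\big[(\Lambda_{k,\pm}-\delta_m)^+,\Lambda_{k,\pm}+\delta_m\big]$. Since $s-t_m\le\Lambda_{k,\pm}+\delta_m-t_m=2\delta_m$, estimate (\ref{8.15}) with $\varepsilon=\big|\kappa_L-\sVhat^{n_j}(t_m)\big|$ yields
\[
\sup_{(\Lambda_{k,\pm}-\delta_m)^+\le s\le\Lambda_{k,\pm}+\delta_m}\big|\kappa_L-\sVhat^{n_j}(s)\big|\le\big|\kappa_L-\sVhat^{n_j}(t_m)\big|+2\,w_T\big(C_V^{n_j},2\delta_m\big).
\]
Letting $j\to\infty$ gives $g(\delta_m)\le 2\,w_T(C_V,2\delta_m)$, and since $\delta_m\to 0$ while $C_V$ is uniformly continuous on $[0,T]$, we conclude $g(\delta_m)\to 0$, which by the monotonicity of $g$ is the assertion. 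The same positive excursions serve whether $\Lambda_{k,\pm}$ is a left endpoint of a positive or of a negative excursion of $G^*$, so both cases are handled at once.

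The main obstacle --- and the reason the conclusion is phrased as a $\limsup$ over $\delta\downarrow 0$ rather than for a prescribed $\delta$ --- is that Proposition \ref{P8.4} propagates control only forward in time, so the anchor point must be placed strictly to the left of the entire window while the propagation length stays of order $\delta$; this forces $\delta_m$ to be tied to the chosen excursion $E_{k(m),+}$ (via $\delta_m=\Lambda_{k,\pm}-t_m$) and requires checking that the approximating excursion endpoints, the anchor value $\sVhatn(t_m)$, and the moduli $w_T(C_V^n,\cdot)$ all converge along one common sub-subsequence. A subsidiary point, which causes no trouble, is that $\sVhatn$ converges only in the $M_1$ topology: here $\sVhatn$ is evaluated only at the single path-determined time $t_m$, where Proposition \ref{P5.14} already supplies control that is uniform over $t\ge L_m/4$.
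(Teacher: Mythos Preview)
Your proof is correct and follows the same strategy as the paper: anchor $\sVhatn$ near $\kappa_L$ at the midpoint of a positive excursion of $G^*$ just to the left of $\Lambda_{k,\pm}$ via Proposition~\ref{P5.14}, then propagate forward across the window using (\ref{8.15}) from Proposition~\ref{P8.4} and Lemma~\ref{L5.8}, and conclude by monotonicity in $\delta$. The only differences are presentational---you work pathwise with random $\delta_m$ and $T$ and invoke almost-sure uniform convergence of $C_V^{n_j}$ (which, strictly speaking, requires enlarging the Skorohod representation of Remark~\ref{R5.10x} to cover the pieces $\sqrt{n}\,\Pbarn_{SE}$ and $\sqrt{n}\,\Pbarn_O$ of $C_V^n$, a routine step), whereas the paper uses deterministic $T$, $\varepsilon$ and $\eta$, handles $w_T(C_V^{n_j},\delta)$ through expectations and weak convergence, and upgrades to almost-sure convergence only at the end via monotonicity.
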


\begin{proof}
Fix $k\geq 1$ and let $\ve>0$ be given.
We choose $T$ so large that
$\P\{\Lambda_{k,\pm}<T\}>1-\ve/4$.  
The left endpoint $\Lambda_{k,\pm}$ of
an excursion of $G^*$ is necessarily
strictly positive, and there are infinitely
many positive excursions of $G^*$ immediately to
the left of $\Lambda_{k,\pm}$.  Let $[\lambda,\rho]$
denote the longest positive excursion interval
contained in $[(\Lambda_{k,\pm}-\ve)^+,\Lambda_{k,\pm})$.
Choose $\eta>0$ so small that
$\P\{\rho-\lambda>2\eta\}>1-\ve/4$.
There are at most $1+T/\eta$ intervals
of positive excursions
of $G^*$ that begin before time $T$ and have
length at least $\eta$.  Choose $M\geq k$
such that all of these excursion intervals
appear in the enumeration of positive
excursions of $G^*$ by step $M$.
According to Proposition \ref{P5.14},
we may choose $N_1$ so large that
$$
\P\Big\{\max_{1\leq m\leq M}
\sup_{t\geq\eta/3}\big|V_{m,+}^n(t)-\kappa_L\big|
\leq\ve\Big\}>1-\frac{\ve}{4}
$$
for all $n\geq N_1$. Because 
$\Lambda_{k,+}^n\rightarrow\Lambda_{k,+}$
and $R_{k,+}^n\rightarrow R_{k,+}$
we may  choose $N_2$ so that for $n\geq N_2$,
$$
\P\Big\{\Lambda_{m,+}^n+\frac{\eta}{3}
<\frac{\lambda+\rho}{2}<R_{m,+}^n\mbox{ for some }
m\leq M\Big\}\geq1-\frac{\ve}{4}.
$$
Thus, for $n\geq N_1\vee N_2$,
on a set with probability at least $1-\ve$ we have
$$
\left|\sVhatn\left(\frac{\lambda+\rho}{2}\right)
-\kappa_L\right|\leq\ve.
$$
We now combine Lemma \ref{L5.8} with
Proposition \ref{P8.4}, setting
$t_1=(\lambda+\rho)/2$ and 
$\delta=\eta+\Lambda_{k,+}-(\lambda+\rho)/2$
in inequality (\ref{8.15}) to conclude
\be\label{8.24g}
\big|\sVhatn(t_2)-\kappa_L\big|
\leq \ve + 2w_T(C_V^n,\delta)
\ee
for 
$(\lambda+\rho)/2\leq t_2
\leq \Lambda_{k,\pm}+\eta$.
But $(\lambda+\rho)/2\leq\Lambda_{k,\pm}-\eta$, 
so (\ref{8.24g})
holds for 
$t_2\in[\Lambda_{k,\pm}-\eta,\Lambda_{k,\pm}+\eta]$.
This implies
$$
\sup_{s\in[(\Lambda_{k,\pm}-\eta)^+,
\Lambda_{k,\pm}+\eta]}
\big|\sVhat^{n}(s)-\kappa_L\big|\wedge 1
\leq
\ve+2\big(w_T(C_V^{n},\delta)\wedge 1\big).
$$

Every subsequence of $\{C_V^n\}_{n=1}^{\infty}$
has a sub-subsequence 
$\{C_V^{n_j}\}_{j=1}^{\infty}$
converging weakly-$J_1$
to a continuous
limit, which we call $C^*_V$.  The modulus
of continuity $w_T(\cdot,\varepsilon)\wedge 1$
is continuous at continuous functions
in $D[0,\infty)$, hence
\be\label{8.24f}
\E\Big[\limsup_{n_j\rightarrow\infty}
\sup_{s\in[(\Lambda_{k,\pm}-\eta)^+,
\Lambda_{k,\pm}+\eta]}
\big|\widehat{{\cal V}}^{n_j}(s)-\kappa_L\big|
\wedge 1\Big]
\leq \ve + 2\E\left[w_T(C^*_V,\delta)\wedge 1\right].
\ee
Letting $\ve\downarrow 0$
forces $\lambda$ and $\rho$ to converge
to $\Lambda_{k,\pm}$ so $\eta$ and
$\delta$ converge to zero.
Continuity of $C_V^*$ implies then that
$w_T(C_V^*,\delta)$ converges to zero.  Thus,
letting $\ve\downarrow0$ and hence
$\eta\downarrow 0$ in (\ref{8.24f})
results in
\be\label{8.25}
\E\Big[\limsup_{n_j\rightarrow\infty}
\sup_{s\in[(\Lambda_{k,\pm}-\eta)^+,
\Lambda_{k,\pm}+\eta]}
\big|\widehat{{\cal V}}^{n_j}(s)-\kappa_L\big|
\wedge 1\Big]
\rightarrow0\mbox{ as }\eta\downarrow 0.
\ee
Relation (\ref{8.25}) implies
\be\label{5.71}
Y(\eta):=\limsup_{n_j\rightarrow\infty}
\sup_{s\in[(\Lambda_{k,\pm}-\eta)^+,
\Lambda_{k,\pm}+\eta]}
\big|\widehat{{\cal V}}^{n_j}(s)-\kappa_L\big|
\stackrel{\P}{\rightarrow}0
\ee
as $\eta\downarrow 0$, which implies
convergence almost surely along a sequence
$\eta_i\downarrow 0$.  But $Y(\eta)$ is 
monotone in $\eta$, so convergence
along a sequence implies almost sure convergence
as $\eta\downarrow 0$. 
\end{proof}

\begin{theorem}\label{T8.5}
For $k=1,2,\dots$, 
$\sup_{t\geq 0}|V_{k,+}^n(t)-\kappa_L|
\stackrel{\P}{\rightarrow}0$
as $n\rightarrow\infty$.
\end{theorem}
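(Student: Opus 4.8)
The plan is to obtain the conclusion by splicing together two facts already in hand. Proposition~\ref{P5.14} controls $V_{k,+}^n$ on $[\varepsilon,\infty)$ for any deterministic $\varepsilon$ lying below the excursion length, while Lemma~\ref{L8.4a} controls $\sVhatn$ on a shrinking window $[\Lambda_{k,+}-\delta,\Lambda_{k,+}+\delta]$ around the left endpoint and hence controls $V_{k,+}^n(t)=\sVhatn\big((\Lambda_{k,+}^n+t)\wedge R_{k,+}^n\big)$ for excursion-time $t$ near $0$. These two estimates overlap on the interval $t\in[\tfrac{\delta}{2},\delta]$, so I would patch them together and then let $\delta\downarrow 0$.

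Fix $k\ge1$. It suffices to show that every subsequence of $\{\sup_{t\ge0}|V_{k,+}^n(t)-\kappa_L|\}_{n=1}^\infty$ has a sub-subsequence converging to zero almost surely, which yields the asserted convergence in probability. Given a subsequence, I would first apply Lemma~\ref{L8.4a} to pass to a sub-subsequence $\{n_j\}$ along which, almost surely,
\[
Y(\delta):=\limsup_{j\to\infty}\ \sup_{s\in[(\Lambda_{k,+}-\delta)^+,\,\Lambda_{k,+}+\delta]}\big|\sVhat^{n_j}(s)-\kappa_L\big|\ \longrightarrow\ 0\quad\text{as }\delta\downarrow0,
\]
and then thin $\{n_j\}$ once more so that, in addition, the convergence in Proposition~\ref{P5.14} holds almost surely along $\{n_j\}$ for every rational $\varepsilon$, and the endpoint convergences $\Lambda_{k,+}^{n_j}\to\Lambda_{k,+}$, $R_{k,+}^{n_j}\to R_{k,+}$ of (\ref{5.54}) hold. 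Working now on the resulting full-measure event and fixing $\omega$ in it, I note that $R_{k,+}>\Lambda_{k,+}$ almost surely, so for any $\eta>0$ I may choose a rational $\delta>0$ with $2\delta<R_{k,+}-\Lambda_{k,+}$ and $Y(\delta)<\eta$.

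For such a $\delta$, for all large $j$ one has $|\Lambda_{k,+}^{n_j}-\Lambda_{k,+}|<\tfrac{\delta}{2}$ and $R_{k,+}^{n_j}>\Lambda_{k,+}+\delta$; hence for $t\in[0,\tfrac{\delta}{2}]$ the time $(\Lambda_{k,+}^{n_j}+t)\wedge R_{k,+}^{n_j}$ equals $\Lambda_{k,+}^{n_j}+t$ and lies in $[(\Lambda_{k,+}-\delta)^+,\Lambda_{k,+}+\delta]$, so $\sup_{0\le t\le\delta/2}|V_{k,+}^{n_j}(t)-\kappa_L|<\eta$ for all large $j$, using $Y(\delta)<\eta$. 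For $t\ge\tfrac{\delta}{2}$, since $R_{k,+}-\Lambda_{k,+}>\tfrac{\delta}{2}$, Proposition~\ref{P5.14} with $\varepsilon=\tfrac{\delta}{2}$ gives $\sup_{t\ge\delta/2}|V_{k,+}^{n_j}(t)-\kappa_L|\to0$. Combining the two ranges, $\limsup_{j\to\infty}\sup_{t\ge0}|V_{k,+}^{n_j}(t)-\kappa_L|\le\eta$; letting $\eta$ run through a sequence decreasing to zero (shrinking $\delta$ accordingly) yields $\sup_{t\ge0}|V_{k,+}^{n_j}(t)-\kappa_L|\to0$ almost surely along $\{n_j\}$, completing the subsequence argument.

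The step I expect to be the main obstacle is the bookkeeping forced by the random excursion endpoints: Proposition~\ref{P5.14} can only be invoked once $\varepsilon$ is known to lie below the random length $R_{k,+}-\Lambda_{k,+}$, and Lemma~\ref{L8.4a} controls a window around the \emph{limit} endpoint $\Lambda_{k,+}$, so transferring both controls to the pre-limit excursion-time variable $t$ requires a common probability space and the almost-sure convergences $\Lambda_{k,+}^{n_j}\to\Lambda_{k,+}$, $R_{k,+}^{n_j}\to R_{k,+}$, along with a pathwise choice of $\delta$ below $R_{k,+}-\Lambda_{k,+}$ before sending it to zero. The remainder is a routine gluing argument.
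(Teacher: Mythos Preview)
Your argument is correct and takes a slightly different, cleaner route than the paper's. Both proofs split $[0,\infty)$ into an initial segment near $t=0$ handled via Lemma~\ref{L8.4a} and a tail $[\varepsilon,\infty)$ handled by Proposition~\ref{P5.14}, then pass through a subsequence/sub-subsequence argument. The difference lies in how the initial segment is controlled. The paper uses Lemma~\ref{L8.4a} only to conclude $V_{k,+}^{n_j}(0)\to\kappa_L$, combines this with $V_{k,+}^{n_j}(\varepsilon)\to\kappa_L$ from Proposition~\ref{P5.14}, and then invokes the oscillation bound~(\ref{8.14}) of Proposition~\ref{P8.4} together with Lemma~\ref{L5.8} to trap $V_{k,+}^{n_j}$ on $[0,\varepsilon]$ near the interval with these endpoints (inequality~(\ref{5.72})), finally letting $\varepsilon\downarrow 0$ through the modulus of continuity $w_T(C_V^*,\varepsilon)$. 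You instead exploit the full strength of Lemma~\ref{L8.4a}---uniform control of $\sVhat^{n_j}$ on the entire window $[(\Lambda_{k,+}-\delta)^+,\Lambda_{k,+}+\delta]$---and observe that for large $j$ this window contains $\{\Lambda_{k,+}^{n_j}+t:0\le t\le\delta/2\}$, so no second appeal to Proposition~\ref{P8.4} is needed. Your route is shorter; the paper's route has the minor virtue that Proposition~\ref{P8.4} is the common engine behind Lemma~\ref{L8.4a}, Corollary~\ref{C8.6a}, and Theorem~\ref{T9.4}, so reusing it here keeps all the endpoint arguments stylistically uniform. One small point of hygiene: when you diagonalize so that Proposition~\ref{P5.14} holds almost surely for every rational $\varepsilon$, the cleanest formulation is to extract a.s.\ convergence of $\sup_{t\ge\varepsilon}|V_{k,+}^{n_j}(t)-\kappa_L|\cdot\ind_{\{R_{k,+}-\Lambda_{k,+}>\varepsilon\}}$ to zero for each rational $\varepsilon$, which is exactly what your pathwise choice of $\delta$ with $2\delta<R_{k,+}-\Lambda_{k,+}$ then consumes.
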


\begin{proof}
Fix $k\geq 1$ and let $\ve>0$ be given.
Set $T=\Lambda_{k,+}^n+2\ve$,
$t_1=\Lambda_{k,+}^n$,
$t_3=t_1+\ve$ and $\delta=\ve$.
Proposition \ref{P8.4} and Lemma \ref{L5.8} imply
\begin{align}
4w_T(C_V^n,\ve)
&\geq
\sup_{\Lambda_{k,+}^n\leq t\leq \Lambda_{k,+}^n+\ve}
\big[\sVhatn(t)-\big[\sVhatn(\Lambda_{k,+}^n),
\sVhatn_{k,+}(\Lambda_{k,+}^n+\ve)
\big]\big|\nonumber\\
&=
\sup_{0\leq s\leq\ve}\big|V_{k,+}^n(s)-
\big[V_{k,+}^n(0),V_{k,+}^n(\ve)\big]\big|.
\label{5.72}
\end{align}

According to Lemma \ref{L8.4a}, from
every subsequence of $\{V_{k,+}^n\}_{n=1}^{\infty}$
we may choose a
sub-subsequence $\{V_{k,+}^{n_j}\}_{j=1}^{\infty}$
such that $Y(\delta)$ defined by (\ref{5.71})
converges almost surely to zero 
as $\delta\downarrow 0$.
But almost sure convergence of $\Lambda_{k,+}^{n_j}$
to $\Lambda_{k,+}$ implies
$$
\limsup_{n_j\rightarrow\infty}
\big|V^{n_j}_{k,+}(0)-\kappa_L\big]
=\limsup_{n_j\rightarrow\infty}
\big|\sV^{n_j}(\Lambda_{k,+}^{n_j})-\kappa_L\big|
\leq Y(\delta)
$$
for every $\delta>0$.  In other words,
$V^{n_j}_{k,+}(0)\rightarrow\kappa_L$ almost surely.
Proposition \ref{P5.14} permits us to choose
a further sub-subsequence, also 
denoted $\{V_{k,+}^{n_j}\}_{j=1}^{\infty}$, along which
$V_{k,+}^{n_j}(\ve)\rightarrow\kappa_L$ almost surely.
It follows now from (\ref{5.72}) that
$$
\limsup_{n_j\rightarrow\infty}\sup_{0\leq s\leq\ve}
\big|V_{k,+}^{n_j}(s)-\kappa_L\big|\leq
\limsup_{{n_j}\rightarrow\infty}4w_T(C_V^{n_j},\ve).
$$
Choosing yet a further sub-subsequence, also denoted
$\{V_{k,+}^{n_j}\}_{j=1}^{\infty}$, along which $C_V^{n_j}$ has
a continuous limit $C_V^*$, we obtain from Fatou's Lemma that
$$
\limsup_{n_j\rightarrow\infty}
\E\Big[\sup_{0\leq s\leq\ve}
\big|V_{k,+}^{n_j}(s)-\kappa_L\big|\wedge 1\Big]
\leq
\E\left[4w_T(C_V^*,\ve)\wedge 1\right].
$$
Proposition \ref{P5.14} implies 
$$
\limsup_{n_j\rightarrow\infty}
\E\Big[\sup_{s\geq \varepsilon}
\big|V_{k,+}^{n_j}(s)-\kappa_L\big|\wedge 1\Big]=0.
$$
Therefore,
\begin{align*}
\lefteqn{\limsup_{n_j\rightarrow\infty}
\E\Big[\sup_{s\geq 0}\big|\sV_{k,+}^{n_j}(s)
-\kappa_L\big|\wedge 1
\Big]}\\
&\leq
\limsup_{n_j\rightarrow\infty}
\E\Big[\sup_{0\leq s\leq\ve}\big|\sV_{k,+}^{n_j}(s)
-\kappa_L\big|\wedge 1
\Big]
+\limsup_{n_j\rightarrow\infty}
\E\Big[\sup_{s\geq \ve}\big|\sV_{k,+}^{n_j}(s)
-\kappa_L\big|\wedge 1
\Big]\\
&\leq
\E\big[4w_T(C_V^*,\ve)\wedge 1\big].
\end{align*}
Letting $\ve\downarrow 0$, we obtain
$\sup_{s\geq 0}|\sV_{k,+}^{n_j}(s)
-\kappa_L|\stackrel{\P}{\rightarrow}0$.
Because every subsequence of $\{V_{k,+}^n\}_{n=1}^{\infty}$
has a sub-subsequence with this property,
the full sequence has the property.
\end{proof}

\begin{corollary}\label{C8.6a}
For $k\geq 1$, every subsequence of
$\{\sVhatn\}_{n=1}^{\infty}$ has a sub-subsequence
$\{\sVhat^{n_j}\}_{j=1}^{\infty}$ for which
$\limsup_{n_j\rightarrow\infty}
\sup_{s\in[(R_{k,+}-\delta)^+,R_{k,+}+\delta]}
\big|\sVhat^{n_j}(s)-\kappa_L\big|
\rightarrow0$ almost surely
as $\delta\downarrow 0$.
\end{corollary}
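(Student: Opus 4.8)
The plan is to follow the template of the proof of Lemma~\ref{L8.4a}, but the argument is in fact simpler because we do not need to exhibit excursions accumulating at an endpoint: the right endpoint $R_{k,+}$ of a positive excursion is \emph{not} a left accumulation point of excursions of $G^*$ --- the excursion $E_{k,+}$ fills the whole interval $(\Lambda_{k,+},R_{k,+})$ to its left --- so the device of Lemma~\ref{L8.4a} does not transfer verbatim. Instead, Theorem~\ref{T8.5} already pins $\sVhatn$ near $\kappa_L$ on $[R_{k,+}-\delta,R_{k,+}]$, an interval that for $n$ large is a sub-interval of the excursion $E_{k,+}^n$, and Proposition~\ref{P8.4}, applied purely pathwise to the representation of Lemma~\ref{L5.8}, is used only to propagate control a short distance past $R_{k,+}$ --- where $G^*$ may well be on a negative excursion, but where $\sVhatn$ has not had time to drift away.

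First I would invoke Lemma~\ref{L5.8} to write $\sVhatn$ in the form~(\ref{5.35y}), so that $\sVhatn$, the continuous nondecreasing process $\Pbarn_{NE}$, and the process $C_V^n$ satisfy the hypotheses of Proposition~\ref{P8.4} for every $n$ (with $\kappa_L$ as in~(\ref{kappa})). Fix $k\ge 1$, a deterministic sequence $\delta_i\downarrow 0$, and set $T:=R_{k,+}+1<\infty$ almost surely. For each $\omega$ and all large $i$ one has $\delta_i<R_{k,+}-\Lambda_{k,+}$; then, since $\Lambda_{k,+}^n\rightarrow\Lambda_{k,+}$ and $R_{k,+}^n\rightarrow R_{k,+}$ by~(\ref{5.54}), for all large $n$ we have $\Lambda_{k,+}^n<R_{k,+}-\delta_i<R_{k,+}^n$, whence by~(\ref{5.60}) $\sVhatn(R_{k,+}-\delta_i)=V_{k,+}^n\big(R_{k,+}-\delta_i-\Lambda_{k,+}^n\big)$. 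Applying inequality~(\ref{8.15}) of Proposition~\ref{P8.4} with $v=\sVhatn$, $c=C_V^n$, $t_1=R_{k,+}-\delta_i$, $t_3=R_{k,+}+\delta_i$, and the parameter $\delta$ there taken to be $2\delta_i$, I obtain, for all $s\in[(R_{k,+}-\delta_i)^+,R_{k,+}+\delta_i]$,
\[
\big|\sVhatn(s)-\kappa_L\big|\ \le\ \big|\sVhatn(R_{k,+}-\delta_i)-\kappa_L\big|\ +\ 2\,w_T\big(C_V^n,2\delta_i\big).
\]

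Next, given an arbitrary subsequence of $\{\sVhatn\}_{n=1}^\infty$, I would use Lemma~\ref{L5.8} to pass to a sub-subsequence along which $C_V^{n_j}$ converges weakly-$J_1$ to a continuous limit $C_V^*$, jointly with the convergences of Theorem~\ref{T4.16} and Corollaries~\ref{C4.15} and~\ref{C4.16}; applying the Skorohod representation theorem to this joint law (as in Remark~\ref{R5.10x}) I may assume $C_V^{n_j}\rightarrow C_V^*$ almost surely, uniformly on compact time intervals. Refining once more --- Theorem~\ref{T8.5} asserts only convergence in probability --- I may also assume $\sup_{t\ge 0}|V_{k,+}^{n_j}(t)-\kappa_L|\rightarrow 0$ almost surely. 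On the almost sure event on which all this holds, the identification $\sVhat^{n_j}(R_{k,+}-\delta_i)=V_{k,+}^{n_j}(R_{k,+}-\delta_i-\Lambda_{k,+}^{n_j})$ (valid for large $j$ by~(\ref{5.54})) gives $\limsup_{j\rightarrow\infty}|\sVhat^{n_j}(R_{k,+}-\delta_i)-\kappa_L|=0$ for every such $i$, while $w_T(C_V^{n_j},2\delta_i)\rightarrow w_T(C_V^*,2\delta_i)$ by the uniform convergence on $[0,T]$. Hence the displayed bound yields, almost surely and for every large $i$,
\[
Y(\delta_i):=\limsup_{j\rightarrow\infty}\ \sup_{s\in[(R_{k,+}-\delta_i)^+,R_{k,+}+\delta_i]}\big|\sVhat^{n_j}(s)-\kappa_L\big|\ \le\ 2\,w_T\big(C_V^*,2\delta_i\big).
\]
Continuity of $C_V^*$ forces $w_T(C_V^*,2\delta_i)\rightarrow 0$ almost surely as $i\rightarrow\infty$, so $Y(\delta_i)\rightarrow 0$ almost surely; since $Y(\cdot)$ is nondecreasing in its argument, $\lim_{\delta\downarrow 0}Y(\delta)=0$ almost surely, which is the assertion.

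The only genuine difficulty is the interchange of the limits $n\rightarrow\infty$ and $\delta\downarrow 0$: one must arrange the extractions --- the Skorohod coupling for $C_V^{n_j}$, and the passage from convergence in probability to almost sure convergence for $\sup_{t\ge 0}|V_{k,+}^{n_j}(\cdot)-\kappa_L|$ --- so that a single sub-subsequence serves for all $\delta_i$ simultaneously, and one must keep track of the fact that $[R_{k,+}-\delta,R_{k,+}]$ is, for $n$ large, a sub-interval of the excursion $E_{k,+}^n$, so that Theorem~\ref{T8.5} applies there. Everything else is a pathwise consequence of Proposition~\ref{P8.4}.
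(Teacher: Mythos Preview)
Your argument is correct and follows essentially the same strategy as the paper: pin $\sVhatn$ near $\kappa_L$ at a point just before the end of the excursion $E_{k,+}$ via Theorem~\ref{T8.5}, then use inequality~(\ref{8.15}) of Proposition~\ref{P8.4} together with the representation of Lemma~\ref{L5.8} to propagate that control across a short window containing $R_{k,+}$, extracting a sub-subsequence along which $C_V^{n_j}$ has a continuous limit.

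The only cosmetic difference is the choice of anchor: the paper takes $t_1=R_{k,+}^n$ (the $n$-dependent right endpoint), applies (\ref{8.15}) to cover $[R_{k,+}^n,R_{k,+}^n+\delta]$, and invokes Theorem~\ref{T8.5} separately on $[\Lambda_{k,+}^n,R_{k,+}^n]$ to cover the left side of $R_{k,+}$; you instead anchor at the fixed point $t_1=R_{k,+}-\delta_i$ and cover the whole interval $[R_{k,+}-\delta_i,R_{k,+}+\delta_i]$ with a single application of (\ref{8.15}). Your variant is marginally cleaner, avoiding the need to track the moving endpoint $R_{k,+}^n$, at the cost of requiring the a.s.\ extraction from Theorem~\ref{T8.5} slightly earlier. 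Either way the content is the same.
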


\begin{proof}
Fix $k\geq 1$ and let $\ve\in(0,1)$ be given.
We choose $T$
so large that $\P\{R_{k,+}+1<T\}>1-\ve/2$.
According to Theorem \ref{T8.5}
and the definition (\ref{5.60}) of $V_{k,+}^n$,
we may choose $N$ so large that
$$
\P\Big\{\big|\sVhatn(R_{k,+}^n)-\kappa_L\big|
\leq \ve\Big\}>1-\ve/2
$$
for $n\geq N$. 
Now combine Lemma \ref{L5.8} with Proposition
\ref{P8.4}, setting $t_1=R_{k,+}^n$
and $\delta>0$
in inequality (\ref{8.15}) to conclude
$$
\big|\sVhatn(t_2)-\kappa_L\big|
\leq\ve+2w_T(C_V^n,\delta),\quad R_{k,+}^n\leq
t_2\leq R_{k,+}^n+\delta
$$
for $n\geq N$ on a set with probability
at least $1-\ve$.  We now use
the argument from (\ref{8.24g}) to (\ref{5.71}) in the
proof of Lemma \ref{L8.4a} to conclude that
along every subsequence we can choose a sub-subsequence
such that
\be\label{8.28c}
\limsup_{n_j\rightarrow\infty}
\sup_{s\in[R_{k,+}^{n_j},R_{k,+}^{n_j}+\delta]}
\big|\sVhat^{n_j}(s)-\kappa_L\big|
\stackrel{\P}{\rightarrow}0
\ee
as $\delta\downarrow 0$.
On the other hand, Theorem \ref{T8.5}
implies
\be\label{8.29c}
\sup_{t_2\in[\Lambda_{k,+}^n,R_{k,+}^n]}
\big|\sVhatn(t_2)-\kappa_L\big|
\stackrel{\P}{\rightarrow}0
\ee
as $n\rightarrow 0$.
Putting (\ref{8.28c}) and (\ref{8.29c})
together and using the facts that
$\Lambda_{k,+}^n\rightarrow \Lambda_{k,+}
<R_{k,+}$
and $R_{k,+}^n\rightarrow R_{k,+}$,
we obtain the desired result.
\end{proof}

\begin{remark}\label{R8.7}
{\rm
If $V^*(0)=\kappa_L$, then
every subsequence of $\{\sVhatn\}_{n=1}^{\infty}$
has a sub-subsequence $\{\sVhat^{n_j}\}_{j=1}^{\infty}$
such that
$\limsup_{n_j\rightarrow\infty}
\sup_{s\in[0,\delta]}
|\sVhat^{n_j}(s)-\kappa_L|\rightarrow0$ almost surely
as $\delta\downarrow 0$.  To see this,
replace $R_{k,+}^n$ by $0$ in the proof
of Corollary \ref{C8.6a}.
}
\end{remark}

\subsection{Assembling the pieces}\label{Assembling}

To assemble the pieces whose limits
have been identified in Sections
\ref{NegExc}--\ref{Positive},
we need to characterize convergence
in the $M_1$ topology in $D[0-,\infty)$.
Recall the definition of $D[0-,\infty)$
in Section \ref{Notation}.
The following theorem is a slight extension
\cite[Theorem~12.5.1(v)]{Whitt}.   
 The details
of this extension are in \cite[Appendix~A.2]{Almost}.

\begin{theorem}\label{T8.3}
Let $x\in D[0-,\infty)$ be given and let
$x^n\in D[0,\infty)$ be embedded in $D[0-,\infty)$
by defining $x^n(0-):=x^n(0)$.  The following
are equivalent.
\begin{description}
\item[(i)]
$x^n\rightarrow x$ in $(D[0-,\infty),M_1)$.
\item[(ii)]
$x^n(0-)\rightarrow x(0-)$, for each $t\geq0$
that is a continuity point of $x$,
\be\label{8.11}
\lim_{\delta\downarrow 0}
\limsup_{n\rightarrow\infty}
\sup_{t_1,t_2\in[(t-\delta)^+,t+\delta]}
\big|x^n(t_1)-x(t_2)\big|=0,
\ee
and for each $t\geq 0$ that is a discontinuity
point of $x$,
\be\label{8.12}
\lim_{\delta\downarrow 0}
\limsup_{n\rightarrow\infty}
\sup_{(t-\delta)^+\leq t_1<t_2<t_3\leq t+\delta}
\big|x^n(t_2)-\big[x^n(t_1),x^n(t_3)\big]\big|=0.
\ee
\end{description}
\end{theorem}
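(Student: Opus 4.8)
The plan is to transfer the statement to the Skorohod space $D_0[-1,\infty)$ of c\`adl\`ag functions on $[-1,\infty)$ that are constant on $[-1,0)$, where the $M_1$ topology is the classical one treated in \cite{Whitt}, and then to read off the claimed characterization from Whitt's description of $M_1$ convergence in terms of pointwise convergence on a dense set together with control of the oscillation function. Under the identification of $D[0-,\infty)$ with $D_0[-1,\infty)$ recalled in Section \ref{Notation}, the limit $x$ corresponds to the path equal to $x(0-)$ on $[-1,0)$ and to $x(t)$ for $t\geq 0$, and the embedded $x^n$ (for which $x^n(0-):=x^n(0)$) corresponds to the path equal to $x^n(0)$ on $[-1,0]$ and to $x^n(t)$ for $t\geq 0$. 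Thus each embedded $x^n$ is continuous at $0$, whereas the embedded $x$ has a jump at $0$ precisely when $x(0-)\neq x(0)$, i.e.\ exactly at the points where (\ref{8.11}) is replaced by (\ref{8.12}). By definition, (i) is $M_1$ convergence of the embedded paths in $D_0[-1,\infty)$; and since $M_1$ convergence on $[-1,\infty)$ is equivalent to $M_1$ convergence of the restrictions to $[-1,T]$ for every $T$ that is a continuity point of the limit, while the conditions in (ii) over $[0,\infty)$ hold if and only if they hold over $[0,T]$ for every such $T$, it suffices to prove the equivalence with $[0,\infty)$ replaced by a fixed compact interval $[0,T]$. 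Throughout, write $\overline{w}(f,t,\delta)$ for the localized Whitt oscillation $\sup\{|f(t_2)-[f(t_1),f(t_3)]|:\ t_1\leq t_2\leq t_3,\ t_1,t_3\in[(t-\delta)\vee(-1),(t+\delta)\wedge T]\}$, which is monotone in the interval.

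For the direction (i)$\Rightarrow$(ii), assume the embedded $x^n$ converge to $x$ in $(D([-1,T]),M_1)$. Convergence at the left endpoint is automatic for $M_1$ convergence, so $x^n(-1)\to x(-1)$, that is $x^n(0-)=x^n(0)\to x(0-)$, the first clause of (ii). Next, $M_1$ convergence gives $\lim_{\delta\downarrow 0}\limsup_{n}\overline{w}(x^n,t,\delta)=0$ for every $t$; translating back to $[0,T]$ (the embedded $x^n$ being constant on $[-1,0)$) this is exactly (\ref{8.12}) at discontinuity points of $x$. At a continuity point $t$ of $x\in D[0-,\infty)$ one has $x(t_1),x(t_3)\to x(t)$ as $t_1,t_3\to t$, and $M_1$ convergence implies $x^n(s)\to x(s)$ for $s$ in the dense set of continuity points of $x$ near $t$; combining this with the vanishing of $\overline{w}(x^n,t,\delta)$ forces $\limsup_n\sup_{|t_i-t|\leq\delta}|x^n(t_i)-x(t)|\to 0$ as $\delta\downarrow 0$, which is (\ref{8.11}). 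The case $t=0$, a continuity point of $x\in D[0-,\infty)$ precisely when $x(0-)=x(0)$, is covered by the same argument applied to the embedded path.

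For (ii)$\Rightarrow$(i), the first clause gives $x^n(-1)\to x(-1)$, and (\ref{8.11}) at the (dense) set of continuity points of $x$ gives $x^n(t)\to x(t)$ there, so pointwise convergence of the embedded paths holds on a dense subset of $[-1,T]$ containing the endpoints; by Whitt's criterion \cite[Theorem~12.5.1]{Whitt} it then remains only to produce the uniform estimate $\lim_{\delta\downarrow 0}\limsup_n\sup_{-1\leq t\leq T}\overline{w}(x^n,t,\delta)=0$. To this end fix $\eta>0$: at each continuity point $t$ of $x$, (\ref{8.11}) furnishes $\delta_t>0$ with $\limsup_n\overline{w}(x^n,t,\delta_t)<\eta$ (using $|x^n(t_2)-[x^n(t_1),x^n(t_3)]|\leq 2\sup_{|t_i-t|\leq\delta_t}|x^n(t_i)-x(t)|$), and at each discontinuity point $t$ of $x$, (\ref{8.12}) furnishes such a $\delta_t$ directly; covering the compact $[-1,T]$ by the intervals $(t-\delta_t/2,t+\delta_t/2)$, extracting a finite subcover indexed by $t_1,\dots,t_m$, and using monotonicity of $\overline{w}$ in the interval, one gets $\overline{w}(x^n,t,\delta)\leq\max_j\overline{w}(x^n,t_j,\delta_{t_j})$ for every $t\in[-1,T]$ whenever $\delta<\tfrac12\min_j\delta_{t_j}$; hence $\limsup_n\sup_t\overline{w}(x^n,t,\delta)\leq\eta$ for all such $\delta$, and $\eta$ was arbitrary. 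This finite-cover assembly of the pointwise conditions in (ii) into the single uniform oscillation estimate required by Whitt's theorem is the only delicate point and is precisely where the ``slight extension'' resides; it is carried out in full in \cite[Appendix~A.2]{Almost}. Combining the two directions on $[-1,T]$ with the reduction of the first paragraph yields the equivalence of (i) and (ii).
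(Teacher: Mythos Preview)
The paper does not supply its own proof of this theorem; it simply records that the result is a slight extension of \cite[Theorem~12.5.1(v)]{Whitt} and defers the details to \cite[Appendix~A.2]{Almost}. Your proposal follows exactly this route---identify $D[0-,\infty)$ with $D_0[-1,\infty)$, reduce to compact intervals, and read off the characterization from Whitt's oscillation criterion, with the compactness/covering step for (ii)$\Rightarrow$(i) being precisely the ``slight extension'' handled in the cited appendix---so your approach and the paper's are essentially identical.
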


We define the candidate limit of
the sequence $\{\sVhatn\}_{n=1}^{\infty}$.
First recall from Assumption \ref{Assumption2}
that $\sVhatn(0)$ has a limit $V^*(0)$.
We define
\be\label{9.3}
\sV^*(0-):=V^*(0)
\ee
and for $t\geq 0$,
\be\label{9.4}
\sV^*(t):=
\left\{\begin{array}{ll}
\kappa_L+C_{k,-}(t-\Lambda_{k,-})
-\frac{\rho\sigma_+}{\sigma_-}
 E_{k,-}(t-\Lambda_{k,-})&\mbox{if }
t\in[\Lambda_{k,-},R_{k,-})\mbox{ for some }k\geq 1,\\
\kappa_L&\mbox{if }
t\in[\Lambda_{k,+},R_{k,+})\mbox{ for some }k\geq 1,\\
\kappa_L&\mbox{otherwise},
\end{array}\right.
\ee
where $C_{k,-}$ is defined by (\ref{5.57}).

\begin{proposition}\label{P9.2}
The process $\sV^*(t)$, $t\geq 0$, defined
by (\ref{9.4}), is c\`adl\`ag.
\end{proposition}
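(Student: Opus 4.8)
The plan is to exhibit, path by path on the common probability space of Remark \ref{R5.10x}, a decomposition of $\sV^*$ as a constant plus a continuous process minus the composition of a continuous process with a nondecreasing c\`adl\`ag process; from such a decomposition the c\`adl\`ag property is immediate. The only genuinely new observation needed is that the separate negative--excursion fragments $C_{k,-}$ of (\ref{5.57}) are increments, over disjoint time intervals, of a single time-changed Brownian motion, so that they glue together into one continuous process.

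First I would package the negative-excursion pieces. Put $W:=\Theta_0^*-\Delta_-$, a Brownian motion, and define the continuous process $\bar V:=W\circ P^{G^*}_-$, using that $P^{G^*}_-$ is continuous and nondecreasing. From the definitions of $\widetilde C_{k,-}$ and $C_{k,-}$ (the line preceding (\ref{5.57}) and (\ref{5.57}) itself), for $0\le u\le R_{k,-}-\Lambda_{k,-}$ one has $C_{k,-}(u)=W\big(P^{G^*}_-(\Lambda_{k,-})+u\big)-W\big(P^{G^*}_-(\Lambda_{k,-})\big)$; and since $G^*<0$ on $(\Lambda_{k,-},R_{k,-})$, the occupation time $P^{G^*}_-$ increases there at unit rate, so $P^{G^*}_-(\Lambda_{k,-})+u=P^{G^*}_-(\Lambda_{k,-}+u)$ for such $u$. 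Hence for $s\in[\Lambda_{k,-},R_{k,-})$,
$$
C_{k,-}(s-\Lambda_{k,-})=\bar V(s)-\bar V(\Lambda_{k,-}),\qquad
E_{k,-}(s-\Lambda_{k,-})=G^*(s)=\sX^*(s),
$$
the last equality because $G^*(s)\le0$ there. Substituting into (\ref{9.4}), using $-\rho\sigma_+/\sigma_-=\alpha_-$ from (\ref{5.44a}) and $\sX^*(\Lambda_{k,-})=0$, I get for $s\in[\Lambda_{k,-},R_{k,-})$
$$
\sV^*(s)=\kappa_L+g(s)-g(\Lambda_{k,-}),\qquad g:=\bar V+\alpha_-\sX^*,
$$
where $g$ is continuous, being the sum of $\bar V$ and $\alpha_-\min\{G^*,0\}$.

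Next I would extend this formula to the whole path. Let $L^-(s):=\sup\{u\le s:G^*(u)=0\}$; since the zero set of $G^*$ is closed and contains $0$, $L^-$ is well defined and nondecreasing, and a short argument using closedness of the zero set shows it is right-continuous, hence c\`adl\`ag. I claim $\sV^*(s)=\kappa_L+g(s)-g\big(L^-(s)\big)$ for every $s\ge0$. On $[\Lambda_{k,-},R_{k,-})$ this is the preceding display, since $L^-\equiv\Lambda_{k,-}$ there. If $s$ lies in a positive excursion interval $[\Lambda_{k,+},R_{k,+})$, then both $P^{G^*}_-$ and $\sX^*$ are constant on $[\Lambda_{k,+},R_{k,+}]$ (as $G^*>0$ in the interior), so $g$ is constant there and $g(s)=g\big(L^-(s)\big)$, giving $\sV^*(s)=\kappa_L$ as in (\ref{9.4}); the endpoints $\Lambda_{k,+}$, $R_{k,+}$ are covered by this. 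Finally, if $G^*(s)=0$ then $L^-(s)=s$ and the formula returns $\kappa_L$, matching the ``otherwise'' clause; the fact quoted in Section \ref{Enumerating} that no time is simultaneously a left and a right excursion endpoint makes these three cases exhaustive and mutually consistent.

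It remains to conclude. Since $L^-$ is nondecreasing and c\`adl\`ag and $g$ is continuous, $g\circ L^-$ is c\`adl\`ag: if $t\downarrow s$ then $L^-(t)\to L^-(s)$ so $g(L^-(t))\to g(L^-(s))$, while if $t\uparrow s$ then $L^-(t)$ increases to a limit $L^-(s-)$ so $g(L^-(t))\to g(L^-(s-))$. Therefore $\sV^*=\kappa_L+g-g\circ L^-$ is a constant plus a continuous process plus a c\`adl\`ag process, hence c\`adl\`ag. The heart of the argument --- and the only step that is not bookkeeping --- is the identification in the second paragraph of $C_{k,-}$ as an increment of $\bar V=W\circ P^{G^*}_-$, together with the observation that $\bar V$ and $\sX^*$ are flat on positive excursions; everything else then follows mechanically.
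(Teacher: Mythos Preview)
Your argument is correct and takes a genuinely different route from the paper's. The paper argues case by case: at each point $t$ not in the interior of an excursion interval it controls the oscillations of $\sV^*$ on the (infinitely many) short negative excursions accumulating at $t$ via the estimate $\E[X_j^2]\le K\ell_j$ and Borel--Cantelli. You instead produce the closed-form representation $\sV^*=\kappa_L+g-g\circ L^-$ with $g=\big(W\circ P^{G^*}_-\big)+\alpha_-\sX^*$ continuous and $L^-$ the last-zero functional of $G^*$, and then read off the c\`adl\`ag property from the composition of a continuous function with a nondecreasing c\`adl\`ag one. The key step that makes this work---recognising that the $C_{k,-}$ of (\ref{5.57}) are increments of a \emph{single} time-changed Brownian motion $W\circ P^{G^*}_-$---is exactly the structural fact the paper's Borel--Cantelli argument is implicitly exploiting but never states. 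Your approach is shorter, yields a formula that immediately pins down the discontinuity set (jumps of $L^-$ across negative excursions, i.e.\ $\{R_{k,-}\}$, matching Corollary~\ref{C9.3}), and would survive unchanged if one wanted a stronger regularity statement. The paper's approach, by contrast, is more robust: it would still go through in models where the fragments $C_{k,-}$ are independent but not literally increments of one process, so the gluing trick fails.
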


\begin{proof}
By definition, $\sV^*$ is continuous
in the interior of each excursion interval
of $G^*$  Fix 
$t\notin\cup_{k=1}^{\infty}\big((\Lambda_{k,+},R_{k,+})
\cup(\Lambda_{k_+},R_{k,+})\big)$.
We show that $\sV^*$ is right-continuous
with a left limit at $t$.  There are three cases.

\noi
\underline{Case I}: $t=\Lambda_{k,\pm}$ for some
$k\geq 1$.

Being the left endpoint of an excursion
of $G^*$, $t$ must be strictly positive.
By definition, $\sV^*$ is right-continuous
at $t$.  We show that in fact $\sV^*$
is continuous at $t$ by showing that
\be\label{9.5}
\lim_{s\uparrow t}\sV^*(s)=\kappa_L.
\ee
Immediately to the left of $t$, there
are infinitely many positive and negative
excursions of $G^*$.  On the positive
excursions, $\sV^*=\kappa_L$, and this is also
the case when $G^*=0$.  It remains to control
$\sV^*$ on the negative excursions of $G^*$.
On each of these excursions, $\sV^*=\kappa_L$
at the left endpoint and then diffuses.
Let $\delta\in(0,t)$ be given, and
choose a subsequence $\{k_j\}_{j=1}^{\infty}$
of the positive integers so that
$\{E_{k_j,-}\}_{j=1}^{\infty}$ is an enumeration
of the countably many negative excursions
of $G^*$ that begin in the interval $[t-\delta,t)$.
Denote by $X_j$ the maximum value of
$|\sV^*-\kappa_L|$ on the $k_j$-th negative
excursion, and denote its length by 
$\ell_j:=R_{k_j,-}-\Lambda_{k_j,-}$,
so that
\be\label{Xj}
X_j=\max_{0\leq s\leq \ell_j}\left|
C_{k_j,-}(s)
-\frac{\rho\sigma_+}{\sigma_-} 
E_{k_j,-}(s)\right|,\quad j=1,2,\dots.
\ee
The quadratic variation of $C_{k_j,-}$
is
$(1-\rho^2)\sigma_+^2\id$
(see (\ref{5.59}))
and the quadratic variation of
$E_{k_j,-}$ is $\langle Z_-,Z_-\rangle=\sigma_-^2\id$
(see (\ref{rho}), (\ref{5.44a}), and Remark \ref{R5.12}).
Set
$\beta:=
\sqrt{1-\rho^2}\,\sigma_+$.
Then
$B(u):=\frac{1}{\beta\sqrt{\ell_j}}C_{k_j,-}
\big(\ell_ju\big)$ 
is a standard Brownian motion and
$E(u):=\frac{1}{\sigma_-\sqrt{\ell_j}}
E_{k_j,-}(\ell_ju)$
is an excursion of length one of a standard
Brownian motion.  We compute
\begin{align*}
\E\big[X_j^2\big]
&=
\E\Big[\max_{0\leq s\leq \ell_j}
\big(C_{k_j,-}(s)-\frac{\rho\sigma_+}{\sigma_-}
 E_{k_j,-}(s)\big)^2\Big]\\
&\leq
2\E\Big[\max_{0\leq s\leq\ell_j}C_{k_j,-}^2(s)\Big]
+2\E\Big[\max_{0\leq s\leq\ell_j}
\Big(\frac{\rho\sigma_+}{\sigma_-}
 E_{k_j,-}(s)\Big)^2\Big]\\
&\leq
2\beta^2\ell_j
\E\Big[\max_{0\leq u\leq 1}B_j^2(u)\Big]
+2\rho^2\sigma_+^2\ell_j
\E\Big[\max_{0\leq u\leq 1}E^2(u)\Big]\\
&=
K\ell_j
\end{align*}
for a constant $K$ independent of $j$.
For $\ve>0$, we have
$$
\sum_{j=1}^{\infty}\P\{X_j>\ve\}
\leq
\frac{1}{\ve^2}\sum_{j=1}^{\infty}\E\big[X_j^2\big]
\leq \frac{K}{\ve^2}\sum_{j=1}^{\infty}\ell_j
\leq \frac{Kt}{\ve ^2}<\infty.
$$
By the first Borel-Cantelli Lemma,
$\P\{X_j>\ve\mbox{ infinitely often}\}=0$.
Thus, there is an interval of the form
$(t-\nu,t)$ for some positive $\nu$
so that $|\sV^*(s)-\kappa_L|\leq\ve$
for all $s\in(t-\nu,t)$.  Since $\ve>0$
is arbitrary, (\ref{9.5}) holds.

\noi
\underline{Case II}: $t=R_{k,\pm}$ for some
$k\geq 1$.

In this case $\sV^*$ has a limit from
the left at $t$.  We must prove right continuity.
Since no right endpoint of an excursion of $G^*$ is
the left endpoint of an excursion, 
$\sV^*(t)=\kappa_L$ by definition.  Immediately
to the right of $t$, there are infinitely
many positive and negative excursions of
$G^*$.  On the positive excursions, $\sV^*=\kappa_L$.
We only need to control $\sV^*$ on the negative
excursions, and for this we use the argument
of Case I.

\noi
\underline{Case III}:
$t\notin\cup_{k=1}^{\infty}([\Lambda_{k,+},R_{k,+}]
\cup[\Lambda_{k,-},R_{k,-}])$.

In thus case, $\sV^*(t)=\kappa_L$ by definition.
There are infinitely many excursions of $G^*$
immediately to the right of $t$, and if $t>0$,
there are also infinitely many excursions
immediately to the left of $t$.
We use the argument of Case I to prove continuity
of $\sV^*$ at $t$.
\end{proof}

The proof of Theorem \ref{P9.2} and
(\ref{9.3}) and (\ref{9.4})
establish the following corollary.

\begin{corollary}\label{C9.3}
The process $\sV^*$ defined by (\ref{9.4})
is continuous on $(0,\infty)$ except on the set
$\cup_{k=1}^{\infty}\{R_{k,-}\}$.  At each
of the points in this set, $\sV^*$
is discontinuous almost surely.
The process $\sV^*$ is continuous at $0$
if and only if $V^*(0)=\kappa_L$.
\end{corollary}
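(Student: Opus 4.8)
The plan is to deduce everything from the proof of Proposition~\ref{P9.2} and the defining formulas (\ref{9.3})--(\ref{9.4}), adding only one short distributional computation at the points $R_{k,-}$. First I would settle continuity of $\sV^*$ on $(0,\infty)$ away from $\cup_{k\geq 1}\{R_{k,-}\}$. On the interior of any excursion interval of $G^*$, formula (\ref{9.4}) displays $\sV^*$ either as the constant $\kappa_L$ (on a positive excursion) or as $\kappa_L+C_{k,-}(\cdot-\Lambda_{k,-})-\frac{\rho\sigma_+}{\sigma_-}E_{k,-}(\cdot-\Lambda_{k,-})$ (on the $k$-th negative excursion), both continuous; hence the only candidate discontinuities in $(0,\infty)$ are the excursion endpoints $\Lambda_{k,\pm}$ and $R_{k,\pm}$. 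Case~I of the proof of Proposition~\ref{P9.2} gives continuity at every $\Lambda_{k,\pm}$, and Case~III gives continuity at every $t\in(0,\infty)$ lying outside all closed excursion intervals. At a right endpoint $R_{k,+}$ of a positive excursion we have $\sV^*\equiv\kappa_L$ on $[\Lambda_{k,+},R_{k,+})$, so $\lim_{s\uparrow R_{k,+}}\sV^*(s)=\kappa_L$, while $\sV^*(R_{k,+})=\kappa_L$ because a right endpoint is never a left endpoint of an excursion (so (\ref{9.4}) puts us in its ``otherwise'' case), and right-continuity at $R_{k,+}$ is Case~II; thus $\sV^*$ is continuous there as well. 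This leaves only the points $R_{k,-}$, which establishes the first assertion modulo the discontinuity claim.

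Next I would analyze $R_{k,-}$ for fixed $k$. Since $C_{k,-}$ is continuous and $E_{k,-}$ is continuous with $E_{k,-}(\ell_{k,-})=0$ at the right endpoint $\ell_{k,-}:=R_{k,-}-\Lambda_{k,-}$ of its own excursion interval, (\ref{9.4}) yields
\be\label{jumpR}
\lim_{s\uparrow R_{k,-}}\sV^*(s)=\kappa_L+C_{k,-}(\ell_{k,-}),\qquad
\sV^*(R_{k,-})=\kappa_L,
\ee
the second equality again because $R_{k,-}$ is not a left endpoint. Using right-continuity at $R_{k,-}$ from Case~II of Proposition~\ref{P9.2}, $\sV^*$ fails to be continuous at $R_{k,-}$ precisely on the event $\{C_{k,-}(\ell_{k,-})\neq0\}$. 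By (\ref{5.57}) and (\ref{5.59}), $C_{k,-}=\widetilde C_{k,-}(\cdot\wedge\ell_{k,-})$ with $\widetilde C_{k,-}$ a Brownian motion of quadratic variation $(1-\rho^2)\sigma_+^2\,\id$, independent of $G^*$ and hence of the strictly positive length $\ell_{k,-}$; since $|\rho|<1$ by Assumption~\ref{Assumption1}, conditioning on $\ell_{k,-}$ makes $C_{k,-}(\ell_{k,-})=\widetilde C_{k,-}(\ell_{k,-})$ a nondegenerate Gaussian, so $\P\{C_{k,-}(\ell_{k,-})=0\}=\E\big[\P\{\widetilde C_{k,-}(\ell_{k,-})=0\mid\ell_{k,-}\}\big]=0$. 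This gives the second assertion and, with the preceding paragraph, the first.

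Finally, continuity at $0$: almost surely $G^*$ has no excursion beginning at time zero, so $0$ lies in none of the half-open intervals $[\Lambda_{k,\pm},R_{k,\pm})$, and (\ref{9.4}) gives $\sV^*(0)=\kappa_L$; moreover $\lim_{s\downarrow0}\sV^*(s)=\kappa_L$ by the right-continuity part of Proposition~\ref{P9.2} (Case~III there). Since $\sV^*(0-)=V^*(0)$ by (\ref{9.3}), and an element of $D[0-,\infty)$ is continuous at $0$ exactly when its values at $0-$ and at $0$ coincide, $\sV^*$ is continuous at $0$ if and only if $V^*(0)=\kappa_L$.

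I do not expect a genuine obstacle: the only step requiring any care is the identification (\ref{jumpR}) of the left limit at $R_{k,-}$ (using $E_{k,-}(\ell_{k,-})=0$), after which the nondegeneracy of the jump is elementary; the one point worth pausing on is confirming $|\rho|<1$ from the relations in Assumption~\ref{Assumption1}, which is forced in any case by the fact that $(1-\rho^2)\sigma_+^2\,\id$ is a legitimate quadratic variation in (\ref{5.59}).
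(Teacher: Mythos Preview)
Your argument is correct and follows exactly the route the paper indicates: the paper's own proof is merely the one-line remark that the corollary follows from the proof of Proposition~\ref{P9.2} together with (\ref{9.3}) and (\ref{9.4}), and you have filled in precisely those details, including the explicit identification (\ref{jumpR}) of the jump at $R_{k,-}$ and the nondegenerate-Gaussian argument for $\P\{C_{k,-}(\ell_{k,-})=0\}=0$.
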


\begin{theorem}\label{T9.4}
We embed the paths of $\sV^n$ into
$D[0-,\infty)$ by the device used
in Theorem \ref{T8.3}.  Then under
the Skorohod Representation assumption
of Remark \ref{R5.10x} invoked
prior to Proposition \ref{P5.12}, we
have $\sV^n\rightarrow \sV^*$ almost surely
in the $M_1$ topology on $D[0-,\infty)$.
\end{theorem}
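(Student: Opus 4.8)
The plan is to verify, pathwise and under the Skorohod representation of Remark \ref{R5.10x}, the conditions characterizing $M_1$ convergence in Theorem \ref{T8.3}, and then to pass from a subsequence argument to the full sequence. Since a.s.\ convergence is equivalent to the statement that every subsequence admits a further subsequence converging a.s., I fix an arbitrary subsequence and, using (\ref{5.54}), (\ref{5.58}), Theorem \ref{T8.5}, Lemma \ref{L8.4a}, Corollary \ref{C8.6a}, Remark \ref{R8.7}, and the sub-subsequence property of $C_V^n$ from Lemma \ref{L5.8} --- diagonalizing over $k$ --- extract a single sub-subsequence $\{n_j\}$ along which simultaneously: $\Lambda^{n_j}_{k,\pm}\to\Lambda_{k,\pm}$ and $R^{n_j}_{k,\pm}\to R_{k,\pm}$; $V^{n_j}_{k,-}\to C_{k,-}+\alpha_-E_{k,-}$ uniformly on compacts for every $k$; $\sup_{t\ge0}|V^{n_j}_{k,+}(t)-\kappa_L|\to0$ for every $k$; the $\limsup$-statements of Lemma \ref{L8.4a}, Corollary \ref{C8.6a} (and Remark \ref{R8.7}, if $V^*(0)=\kappa_L$) hold for every $k$; and $C_V^{n_j}\to C_V^*$ uniformly on compacts for some continuous $C_V^*$. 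Condition (i) of Theorem \ref{T8.3}(ii), namely $\sV^{n_j}(0-)\to\sV^*(0-)$, is immediate from the embedding convention, Assumption \ref{Assumption2}, and (\ref{9.3}).

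For the discontinuity points of $\sV^*$ --- which by Corollary \ref{C9.3} are exactly the $R_{k,-}$, together with $0$ when $V^*(0)\neq\kappa_L$ --- I verify (\ref{8.12}). Writing $\sVhatn$ in the form (\ref{8.13}) via Lemma \ref{L5.8}, with $p=\Pbarn_{NE}$ (continuous and nondecreasing), $c=C_V^n$, and $K=\sqrt n\,\theta_b$, Proposition \ref{P8.4} applies, and inequality (\ref{8.14}) gives, for any $t\ge0$, $T:=t+1$, $\delta\in(0,1)$, and $(t-\delta)^+\le t_1<t_2<t_3\le t+\delta$ (so $t_3-t_1\le2\delta$),
$$
\big|\sVhatn(t_2)-\big[\sVhatn(t_1),\sVhatn(t_3)\big]\big|\le 4\,w_T\big(C_V^n,2\delta\big).
$$
Since $w_T(\cdot,2\delta)\wedge1$ is continuous at continuous paths, taking $\limsup$ along $\{n_j\}$ and then $\delta\downarrow0$ makes the right side tend to $4\,w_T(C_V^*,2\delta)\to0$; hence (\ref{8.12}) holds at every $t$, in particular at all discontinuity points of $\sV^*$.

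For a continuity point $t$ of $\sV^*$, continuity (Corollary \ref{C9.3}) reduces (\ref{8.11}) to showing $\limsup_{n_j}\sup_{s\in[(t-\delta)^+,t+\delta]}|\sV^{n_j}(s)-\sV^*(t)|\to0$ as $\delta\downarrow0$. If $t\in(\Lambda_{k,-},R_{k,-})$, then for small $\delta$ and large $j$ one has $[t-\delta,t+\delta]\subset(\Lambda^{n_j}_{k,-},R^{n_j}_{k,-})$, on which $\sV^{n_j}(s)=\sV^{n_j}(\Lambda^{n_j}_{k,-})+V^{n_j}_{k,-}(s-\Lambda^{n_j}_{k,-})$; since $\sV^{n_j}(\Lambda^{n_j}_{k,-})\to\kappa_L$ (Lemma \ref{L8.4a} with $\Lambda^{n_j}_{k,-}\to\Lambda_{k,-}$) and $V^{n_j}_{k,-}\to C_{k,-}+\alpha_-E_{k,-}$ with $\alpha_-=-\rho\sigma_+/\sigma_-$ (see (\ref{rho}), (\ref{5.44a})), $\sV^{n_j}\to\sV^*$ uniformly near $t$, giving (\ref{8.11}). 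Otherwise $\sV^*(t)=\kappa_L$, and: if $t$ is in the interior of a positive excursion, Theorem \ref{T8.5} applies directly; if $t=R_{k,+}$, Corollary \ref{C8.6a} applies; if $t=0$ with $V^*(0)=\kappa_L$, Remark \ref{R8.7} applies; and for any other such $t>0$ the argument of Lemma \ref{L8.4a} --- which uses only that positive excursions of $G^*$ accumulate at $t$ from the left (true for every zero $t>0$ that is not an interior excursion point, since a left-isolated zero is a left excursion endpoint) together with (\ref{8.15}) --- yields $\limsup_{n_j}\sup_{s\in[(t-\delta)^+,t+\delta]}|\sV^{n_j}(s)-\kappa_L|\to0$. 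This verifies (\ref{8.11}) at every continuity point, so Theorem \ref{T8.3} gives $\sV^{n_j}\to\sV^*$ a.s.\ in $M_1$; as the initial subsequence was arbitrary, the full sequence converges a.s.\ in $M_1$.

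The heart of the matter is the match between the $M_1$ oscillation condition (\ref{8.12}) and the pathwise estimate (\ref{8.14}) of Proposition \ref{P8.4}: the $M_1$ topology tolerates precisely the monotone-envelope overshoot created by the $\sqrt n$-fast mean reversion, so that at the jump times $R_{k,-}$ --- where $\sV^*$ genuinely jumps because $C_{k,-}$ need not vanish at the excursion's right endpoint --- the jump of $\sV^*$ is an admissible $M_1$ limit of the continuously varying $\sV^n$. The remaining work --- covering every type of continuity point and assembling the countably many excursion-level convergences into one sub-subsequence by diagonalization --- is routine bookkeeping once Proposition \ref{P8.4} and the excursion convergences of Sections \ref{NegExc}--\ref{Positive} are in hand. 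By symmetry, the same argument gives the analogous statement for $\sY^n$.
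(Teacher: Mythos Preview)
Your proof is correct and follows essentially the same route as the paper's: pass to a sub-subsequence along which $C_V^{n_j}$ has a continuous limit and the excursion-level convergences of Lemma \ref{L8.4a}, Theorem \ref{T8.5}, Corollary \ref{C8.6a} and Remark \ref{R8.7} hold, then verify the $M_1$ criterion of Theorem \ref{T8.3} using (\ref{8.14}) of Proposition \ref{P8.4} for the oscillation condition (\ref{8.12}) at discontinuity points and the excursion-level results for (\ref{8.11}) at continuity points. One slip: your opening claim that a.s.\ convergence is equivalent to ``every subsequence has a further a.s.\ convergent subsequence'' is false --- that characterizes convergence in probability --- but the paper's own proof concludes with the identical sub-subsequence argument (and in fact writes ``weakly-$M_1$'' in its final sentence), so this mismatch between the theorem's wording and what the argument delivers is already present in the paper. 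Your continuity-point case analysis is, if anything, slightly more careful than the paper's: you explicitly handle zeros of $G^*$ that are not excursion endpoints by re-running the argument of Lemma \ref{L8.4a}, whereas the paper's enumerated cases (i)--(iv) omit these points.
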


\begin{proof}
We begin with an arbitrary subsequence
of $\{\sVhatn\}_{n=1}^{\infty}$.
From this subsequence, we can choose a sub-subsequence
$\{\sVhat^{n_j}\}_{j=1}^{\infty}$
such that $\{C^{n_j}_V\}_{n=1}^{\infty}$
in Lemma \ref{L5.8} converges weakly-$J_1$
to a continuous limit $C_V^*$,
the conclusions of Lemma \ref{L8.4a},
Corollary \ref{C8.6a}
and Remark \ref{R8.7} hold,
and the convergence in Theorem \ref{T8.5}
is almost sure.  For
$k\geq 0$, we have almost surely
\begin{align}
\lim_{\delta\downarrow 0}
\limsup_{n_j\rightarrow\infty}
\sup_{s\in[(\Lambda_{k,\pm}-\delta)^+,
\Lambda_{k,\pm}+\delta]}
\big|\sVhat^{n_j}(s)-\kappa_L\big|
&=
0,\label{9.6a}\\
\lim_{n_j\rightarrow\infty}\sup_{t\geq 0}
\big|V_{k,+}^{n_j}(t)-\kappa_L\big|
&=
0,\label{9.7a}
\end{align}
\begin{align}
\lim_{\delta\downarrow 0}
\limsup_{n_j\rightarrow\infty}
\sup_{s\in[(R_{k,+}-\delta)^+,R_{k,+}+\delta]}
\big|\sVhat^{n_j}(s)-\kappa_L\big|
&=
0,\label{9.8a}\\
\lim_{\delta\downarrow 0}
\limsup_{n_j\rightarrow\infty}
\sup_{s\in[0,\delta]}
\big|\sVhat^{n_j}(s)-\kappa_L\big|
&=
0.\label{9.9a}
\end{align}

We verify that $\{\sVhat^{n_j}\}_{j=1}^{\infty}$
and $\sV^*$
satisfy the criterion (ii) of Theorem
\ref{T8.3}.
The definition (\ref{9.3}) of $\sV^*(0-)$
is chosen to guarantee convergence at $0-$,
the first part of criterion (ii).
We next consider (\ref{8.11})
at continuity points of $\sV^*$.  In particular,
we must show that if $t$ is a continuity
point of $\sV^*$, then
\be\label{9.6}
\lim_{\delta\downarrow 0}
\limsup_{n_j\rightarrow\infty}
\sup_{t_1,t_2\in[(t-\delta)^+,t+\delta]}
\big|\sVhat^{n_j}(t_1)-\sV^*(t_2)\big|=0.
\ee
Zero is a continuity point of $\sV^*$
if and only if $V^*(0-)=\kappa_L$.
In this case,
(\ref{9.6}) holds at $t=0$
by (\ref{9.9a}).
If $t$ is strictly positive, it 
is a continuity point of $\sV^*$ if and only
if one of the following cases holds:

\vspace{-12pt}
\begin{tabbing}
$\hphantom{(iii)}$\=\\
(i)\>$t=\Lambda_{k,\pm}$ for some $k\geq 1$;\\
(ii) \>$t=R_{k,+}$ for some $k\geq 1$;\\
(iii) \>$t\in(\Lambda_{k,-},R_{k,-})$ for some $k\geq 1$\\
(iv) \>$t\in(\Lambda_{k,+},R_{k,+})$ for some $k\geq 1$.
\end{tabbing}

\noi
In case (i), (\ref{9.6}) follows from
(\ref{9.6a}), the fact that
$\sV^*(\Lambda_{k,\pm})=\kappa_L$, and the continuity
of $\sV^*$ at $\Lambda_{k,\pm}$.
For case (ii), the result follows from
(\ref{9.8a}),
the fact that $\sV^*(R_{k,_-})=\kappa_L$, 
and the continuity
of $\sV^*$ at $R_{k,\pm}$.
For case (iii), the result follows from 
the consequence of 
(\ref{5.54}) and (\ref{9.6a}) that
$\sVhatn(\Lambda_{k,-}^n)\rightarrow\kappa_L$,
relations (\ref{5.55}) and (\ref{5.58}),
and the continuity of $\sV^*$ on
$(\Lambda_{k,-},R_{k,-})$.
For case (iv), (\ref{9.6}) follows from
(\ref{5.60}) and (\ref{9.7a}).

We turn our attention to discontinuity points
of $\sV^*$, i.e., points of the form 
$t=0$ (if $V^*(0)\neq\kappa_L$) 
or $t=R_{k,-}$ for some $k\geq 1$.
We must show that (cf. (\ref{8.12}))
$$
\lim_{\delta\downarrow 0}
\limsup_{n_j\rightarrow\infty}
\sup_{(t-\delta)^+\leq t_1<t_2<t_3\leq t+\delta}
\big[\sVhat^{n_j}(t_2)
-\big[\sVhat^{n_j}(t_1),\sVhat^{n_j}(t_3)\big]
\big|=0.
$$
But according to (\ref{8.14}) of Proposition \ref{P8.4}
and Lemma \ref{L5.8},
$$
\sup_{(t-\delta)^+\leq t_1<t_2<t_3\leq t+\delta}
\big[\sVhat^{n_j}(t_2)
-\big[\sVhat^{n_j}(t_1),\sVhat^{n_j}(t_3)\big]
\big|\leq 4w_T(C_V^{n_j},\delta)
$$
for every $t\geq 0$, which yields
$$
\E\Big[\limsup_{n_j\rightarrow\infty}
\sup_{(t-\delta)^+\leq t_1<t_2<t_3\leq t+\delta}
\big[\sVhat^{n_j}(t_2)
-\big[\sVhat^{n_j}(t_1),\sVhat^{n_j}(t_3)\big]
\big|\wedge 1\Big]
\leq
\E\big[4w_T(C_V^*,\delta)\wedge 1\big].
$$
The right-hand side converges to $0$
as $\delta\downarrow 0$, hence
$$
Y(\delta):=\limsup_{n_j\rightarrow\infty}
\sup_{(t-\delta)^+\leq t_1<t_2<t_3\leq t+\delta}
\big[\sVhat^{n_j}(t_2)
-\big[\sVhat^{n_j}(t_1),\sVhat^{n_j}(t_3)\big]
\big|\stackrel{\P}{\rightarrow}0
$$
as $\delta\downarrow 0$.
Thus there is a subsequence $\delta_i\downarrow 0$
along which $Y(\delta_i)$ converges almost surely
to zero.  But $Y(\delta)$ is monotone in $\delta$,
and hence $Y(\delta)\rightarrow 0$ almost surely
as $\delta\downarrow 0$.

We have shown that every subsequence of
$\{\sVhatn\}_{n=1}^{\infty}$
has a sub-subsequence that converges
weakly-$M_1$ to $\sV^*$.  It follows that
the full sequence $\{\sVhatn\}_{n=1}^{\infty}$
converges weakly-$M_1$ to $\sV^*$.
\end{proof}

\subsection{Convergence of $\sYhatn$}\label{ConvYn}

We state without proof the convergence result for
$\sYhatn$ analogous to Theorem \ref{T9.4}
proved for $\sVhatn$.
In addition to the thirty Poisson processes
introduced in Section \ref{Interior}
to govern the interior queues
and the additional nine Poisson processes
introduced in Section \ref{Governing} to
govern $\sVhatn$, we need nine additional
Poisson processes
$N_{SW,Y,+}$, $N_{SE_-,Y,-}$, $N_{SE,Y,-}$,
$N_{SE_+,Y,-}$, $N_{E,Y,-}$, $N_{E,Y,+}$, $N_{NE,Y,-}$,
$N_{NE,Y,+}$ and $N_O,Y,+$.
These forty-eight Poisson processes
are taken to be independent.  In terms of the
last nine, from Figure 4.1 we obtain the
formula (cf. (\ref{5.1}))
\begin{align}
\sY^n(t)
&=
Y^n(0)+N_{SW,Y,+}\left(\int_0^t\frac{1}{\sqrt{n}}
\theta_s\big(\sY^n(s)\big)^-\right)dP_{SW}^n(s)
-N_{SE_-,Y,-}\circ\mu_2P^n_{SE_-}(t)\nonumber\\
&\qquad
-N_{SE,Y,-}\circ\mu_2P^n_{SE}(t)
-N_{SE_+,Y,-}\circ\mu_2P^n_{SE_+}(t)
-N_{E,Y,-}\circ\mu_2P^n_E(t)\nonumber\\
&\qquad
+N_{E,Y,+}\circ\lambda_0P^n_E(t)
-N_{NE,Y,-}\circ\mu_1P^n_{NE}(t)
+N_{NE,Y,+}\circ\lambda_0P^n_{NE}(t)\nonumber\\
&\qquad
+N_{O,Y,+}\circ\lambda_0P^n_O(t).\label{5.86}
\end{align}
We define $\sYhatn(t):=\frac{1}{\sqrt{n}}\sY^n(nt)$ and
(cf.\ (\ref{9.4}))
\be\label{5.87}
\sY^*(t):=
\left\{\begin{array}{ll}
\kappa_R+C_{k,+}(t-\Lambda_{k,+})
-\frac{\rho\sigma_-}{\sigma_+}E_{k,+}(t-\Lambda_{k,+})
&\mbox{if }t\in[\Lambda_{k,+},R_{k,+})
\mbox{ for some }k\geq 1,\\
\kappa_R&\mbox{if }t\in[\Lambda_{k,-},R_{k,-})
\mbox{ for some }k\geq 1,\\
\kappa_R&\mbox{otherwise },
\end{array}\right.
\ee
where $\kappa_R$ is defined by (\ref{kappa})
and $\{C_{k,+}\}_{k=1}^{\infty}$ is an independent
sequence of processes independent of $G^*$
and $\{C_{k,-}\}_{k=1}^{\infty}$ in (\ref{9.4}).
Furthermore, each $C_{k,+}$ is a Brownian stopped
at time $R_{k,+}-\Lambda_{k,+}$,
and the quadratic variation of each $C_{k,+}$
is $(1-\rho^2)\sigma_-^2\id$.
The process
$\sY^*$ is c\`adl\`ag (cf.\ Proposition \ref{P9.2}).
We have the following analogue to
Theorem \ref{T9.4}.  

\begin{theorem}\label{T5.24}
We embed the paths of $\sYhatn$ into $D[0-,\infty)$
by the device used in Theorem \ref{T8.3}.
Then under the Skorohod Representation assumption
of Remark \ref{R5.10x} and a Skorohod
Representation analogous to the one invoked
prior to Proposition \ref{P5.12}, we have
$\sYhatn\rightarrow\sY^*$ almost surely
in the $M_1$ topology on $D[0-,\infty)$.
\end{theorem}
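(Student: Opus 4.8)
The plan is to mirror the proof of Theorem \ref{T9.4}, using the reflection symmetry of Figure 4.2 --- reflection across the anti-diagonal $w+x=0$ in the $(w,x)$-plane of Figure 4.2 combined with the interchange $\lambda_i\leftrightarrow\mu_i$, under which Assumption \ref{Assumption1} is invariant because $a\lambda_0=b\mu_0$. Under this reflection the region $NE$, where $\sV^n$ is cancelled, corresponds to the region $SW$, where $\sY^n$ is cancelled according to (\ref{5.86}); positive excursions of $G^*$ correspond to negative excursions; the operation $(\cdot)^+$ corresponds to $(\cdot)^-$; and the constants $\kappa_L$, $\theta_b$, $\sigma_+$, $Z_+$, $\Delta_+$ correspond to $\kappa_R$, $\theta_s$, $\sigma_-$, $Z_-$, $\Delta_-$ with every $\pm$ label interchanged. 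Because the sell-side queue length is recorded as a nonpositive number, all the drift terms carrying a factor $\sqrt n$ change sign, which is why the mean-reversion level $\kappa_R$ of (\ref{kappa}) is negative. Diffusion-scaling and centering (\ref{5.86}) produces the analogue of (\ref{5.2}), and the identity (\ref{5.37y}) of Remark \ref{R5.10y}, which was obtained from the $\sqrt n\,\Pbarn_{NE}$ identity precisely by this reflection, plays for $\sYhatn$ the role the $\sqrt n\,\Pbarn_{NE}$ identity played for $\sVhatn$.

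First I would establish the analogues of Theorem \ref{T5.5} and Lemma \ref{L5.8}: $\{\sYhatn\}_{n=1}^\infty$ is bounded in probability on compact time intervals, and
\be\label{YDecomp}
\sYhatn(t)=\sYhatn(0)+\sqrt n\,\theta_s\int_0^t\big(\kappa_R+(\sYhatn(s))^-\big)\,d\Pbarn_{SW}(s)+C_Y^n(t),
\ee
where $\{C_Y^n\}_{n=1}^\infty$ is bounded in probability on compact time intervals and every subsequence of it has a sub-subsequence converging weakly-$J_1$ to a continuous limit. The inputs are exactly the diffusion-scaled occupation-time limits already available --- Proposition \ref{P5.4} (yielding tightness in $C[0,\infty)$ of $\{\sqrt n\,\Pbarn_{SE}\}$ and $\{\sqrt n\,\Pbarn_O\}$), Lemma \ref{L5.3}, the $SW$-side convergence (\ref{5.21}) of Lemma \ref{L5.2}, and the identity (\ref{5.37y}) --- and the Poisson-compensation device based on $-N+\id/2$ from the proof of Lemma \ref{L5.6} transfers without change. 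Proposition \ref{P8.4} requires no reworking, since it is stated pathwise for a generic nondecreasing continuous $p$, a generic c\`adl\`ag perturbation, and a generic reversion level; I would apply it with a restriction of $\Pbarn_{SW}$ in place of $p$, with $\kappa_R$ in place of $\kappa_L$, and with $(\cdot)^-$ in place of $(\cdot)^+$.

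Next I would transcribe the excursion analysis. Equation (\ref{5.54}) already supplies convergence of both the positive and the negative excursion intervals of $\Ghatn$, so it is reused verbatim. On a positive excursion of $\Ghatn$ the process $\Pbarn_{SW}$ is constant, so $\sYhatn$ diffuses there; repeating the computation of Proposition \ref{P5.12} --- with the nine $Y$-Poisson processes of (\ref{5.86}), whose scaled centered versions converge to independent Brownian motions jointly with everything in Theorem \ref{T4.16} and Corollaries \ref{C4.15} and \ref{C4.16}, and with $\sqrt n$ times the relevant combination of the $\Pbarn_\times$ controlled by the reflected version of (\ref{5.47}) --- yields excursion fragments of $\sYhatn$ converging to $\kappa_R+C_{k,+}-\frac{\rho\sigma_-}{\sigma_+}E_{k,+}$, where $C_{k,+}$ is a Brownian motion stopped at $R_{k,+}-\Lambda_{k,+}$ with quadratic variation $(1-\rho^2)\sigma_-^2\id$; the cross-variation bookkeeping of Remark \ref{R5.12}, done with $\lambda_i$ and $\mu_i$ interchanged, produces exactly these constants ($\sigma_-^2$ replacing $\sigma_+^2$, while (\ref{rho}) is unchanged). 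On a negative excursion, where $\Pbarn_{SW}$ grows, the oscillation bound (\ref{8.15}) of Proposition \ref{P8.4} together with (\ref{YDecomp}) forces the relevant fragments of $\sYhatn$ to converge to $\kappa_R$, exactly as in Proposition \ref{P5.14}, Lemma \ref{L8.4a} and Theorem \ref{T8.5}, using that a negative excursion of $G^*$ is immediately preceded and followed by infinitely many negative excursions to seed the bound near its endpoints (the labels ``positive'' and ``negative'' in those arguments being simply swapped).

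Finally I would assemble the fragments as in the proof of Theorem \ref{T9.4}. Proposition \ref{P9.2} and Corollary \ref{C9.3} transfer with the evident changes: $\sY^*$ of (\ref{5.87}) is c\`adl\`ag, continuous on $(0,\infty)$ except at the right endpoints $R_{k,+}$ of positive excursions of $G^*$, and continuous at $0$ if and only if $Y^*(0)=\kappa_R$; the Borel--Cantelli estimate in Proposition \ref{P9.2} uses only that the excursion lengths sum to at most $t$, which is equally true for positive excursions. One then verifies the $M_1$ criterion (ii) of Theorem \ref{T8.3} for $\{\widehat{\sY}^{n_j}\}$ and $\sY^*$ along a sub-subsequence on which $C_Y^{n_j}$ has a continuous limit $C_Y^*$: at the continuity points of $\sY^*$ ($t=\Lambda_{k,\pm}$, $t=R_{k,-}$, $t$ interior to an excursion interval, and $t=0$ when $Y^*(0)=\kappa_R$) one uses the analogues of the almost-sure convergences (\ref{9.6a})--(\ref{9.9a}) from the proof of Theorem \ref{T9.4}, and at the discontinuity points ($t=R_{k,+}$, and $t=0$ when $Y^*(0)\neq\kappa_R$) one uses (\ref{8.14}) together with $4w_T(C_Y^{n_j},\delta)\to 4w_T(C_Y^*,\delta)\to 0$. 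Every subsequence of $\{\sYhatn\}$ then has a sub-subsequence converging weakly-$M_1$ to $\sY^*$, so the full sequence does; the Skorohod representation of Remark \ref{R5.10x}, extended to carry the $Y$-Poisson processes and their limits as in the Skorohod representation invoked before Proposition \ref{P5.12}, upgrades this to almost sure convergence. The main obstacle is not any of these structural steps --- which are word-for-word transcriptions once the dictionary is fixed --- but the verification that every lemma in Sections \ref{Occupation}--\ref{NegExc} proved by an explicit numerical combination (Lemmas \ref{L5.1}, \ref{L5.2}, \ref{L5.3} and the constants $\alpha_\pm$, $Z_\pm$, $\Delta_\pm$ assembled from Remark \ref{R4.17}) has a genuine mirror under the reflection; in particular one must check that the reflected linear combination of (\ref{5.23}) and (\ref{5.24}) isolating the combination of $\Pbarn_\times$ in the $\sYhatn$ governing equation reproduces precisely $Z_-$, $\Delta_-$ and the constant $-\rho\sigma_-/\sigma_+$ recorded in (\ref{5.87}). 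The $a\leftrightarrow b$ symmetry of Assumption \ref{Assumption1} makes this work, but it is the one place where a careless transcription would fail.
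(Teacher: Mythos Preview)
Your proposal is correct and is exactly the approach the paper intends: the paper states Theorem \ref{T5.24} \emph{without proof}, declaring it to be the analogue of Theorem \ref{T9.4} obtained via the symmetry of Figure 4.2 (the same symmetry already invoked in Remark \ref{R5.10y}), and your outline is a careful transcription of that analogous argument with the correct dictionary $\lambda_i\leftrightarrow\mu_i$, $NE\leftrightarrow SW$, $\kappa_L\leftrightarrow\kappa_R$, $(\cdot)^+\leftrightarrow(\cdot)^-$, and positive $\leftrightarrow$ negative excursions. Your decomposition \eqref{YDecomp}, the identification of the diffusive fragments on positive excursions with variance $(1-\rho^2)\sigma_-^2$ and coefficient $-\rho\sigma_-/\sigma_+$, and the assembly via Theorem \ref{T8.3} all match what the paper records in \eqref{5.87} and the surrounding text.
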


\subsection{Convergence of 
$(\sVhatn,\sWhatn,\sXhatn,\sYhatn)$}
\label{ConvVWXY}
By use of the Skorohod Representation Theorem,
we have established the existence of a probability
space on which $(\sWhatn, \sXhatn)$
converges almost surely in the $J_1$-topology
to the split Brownian motion $(\sW^*,\sX^*)$
of Remark \ref{R4.20x}
(see Theorem \ref{T4.16}
and Remark \ref{R5.10x}), $\sVhatn$
converges almost surely in the $M_1$-topology
to $\sV^*$ of (\ref{9.4}) (see Theorem \ref{T9.4}),
and $\sYhatn$ converges almost surely
in the $M_1$-topology to $\sY^*$
of (\ref{5.87}) (see Theorem \ref{T5.24}).
In particular, if 
$f:D[0-,\infty)\times D[0,\infty)\times D[0,\infty)
\times D[0-,\infty)\rightarrow \R$ is bounded
and continuous in the 
$M_1\times J_1\times J_1\times M_1$ topology, then
\be\label{5.88}
\lim_{n\rightarrow\infty}
\E f\big(\sVhatn,\sWhatn,\sXhatn,\sYhatn)
=\E f\big(\sV^*,\sW^*,\sX^*,\sY^*).
\ee
But the laws of $(\sVhatn,\sWhatn,\sXhatn,\sYhatn)$
and $(\sV^*,\sW^*,\sX^*,\sY^*)$
do not depend on the probability space
on which they are constructed.  Thus, we may
dispense with the use of the Skorohod
Representation Theorem and summarize the conclusion
of Sections \ref{Interior queues} and 
\ref{BracketingQueues} by the following theorem,
an equivalent way of stating (\ref{5.88}).

\begin{theorem}\label{T.VWXY}
We have 
$D_{M_1}[0-,\infty)\times D_{J_1}[0,\infty)
\times D_{J_1}[0,\infty)\times D_{M_1}[0,\infty)$-weak
convergence of $(\sVhatn,\sWhatn,\sXhatn,\sYhatn)$
to $(\sV^*,\sW^*,\sX^*,\sY^*)$.
\end{theorem}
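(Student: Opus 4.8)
The plan is to read off Theorem \ref{T.VWXY} from the three component convergences already in hand --- Theorem \ref{T4.16} for $(\sWhatn,\sXhatn)$, Theorem \ref{T9.4} for $\sVhatn$, and Theorem \ref{T5.24} for $\sYhatn$ --- by observing that they have all been obtained on a \emph{single} Skorohod probability space. First I would recall the setup of Remark \ref{R5.10x}: the Skorohod Representation Theorem is used to place $\Ghatn$, $\Hhatn$, the occupation times $\Pbarn_\times$, the centered processes $\Thetahatn_i$, and $(\sWhatn,\sXhatn)$ on a common space so that all the weak-$J_1$ convergences established in Section \ref{Interior queues} and in Lemmas \ref{L5.1}--\ref{L5.3} become almost-sure convergence, uniform on compacts for the continuous limits. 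On that same space, the arguments of Sections \ref{NegExc}--\ref{Assembling} --- which invoke only this almost-sure convergence together with the $V$-driving Poisson processes of (\ref{5.1}), and, for $\sYhatn$, the $Y$-driving Poisson processes of (\ref{5.86}) --- yield almost-sure convergence of $\sVhatn$ to $\sV^*$ and of $\sYhatn$ to $\sY^*$ in the $M_1$ topology on $D[0-,\infty)$.

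The second step is the formal one: almost-sure convergence of the four-tuple in the product space $D[0-,\infty)\times D[0,\infty)\times D[0,\infty)\times D[0-,\infty)$ under the product of the $M_1$, $J_1$, $J_1$ and $M_1$ topologies implies, by bounded convergence, that for every bounded $f$ on this product that is continuous in the product topology,
$$
\lim_{n\rightarrow\infty}\E f(\sVhatn,\sWhatn,\sXhatn,\sYhatn)
=\E f(\sV^*,\sW^*,\sX^*,\sY^*),
$$
which is exactly (\ref{5.88}). Since both sides depend only on the laws of the four-tuples, not on the space carrying them, (\ref{5.88}) holds for the original processes, and this is the assertion of Theorem \ref{T.VWXY} (recall from Section \ref{Notation} that joint weak convergence in these mixed topologies is precisely the condition (\ref{5.88})).

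The one point requiring care --- and the main obstacle --- is the jointness of the three representations. Each of Theorems \ref{T4.16}, \ref{T9.4} and \ref{T5.24} is proved through its own appeal to Skorohod, and a priori three separate almost-sure constructions need not coexist. The resolution, which I would spell out using Remark \ref{R5.1x}, is that the nine Poisson processes of (\ref{5.1}) and the nine of (\ref{5.86}) are independent of one another and of \emph{all} the processes entering Theorem \ref{T4.16} and Corollaries \ref{C4.15}--\ref{C4.16}; hence one fixes the Skorohod space for the interior-queue data once and for all, and then the bracketing-queue analysis is carried out on the extension of that space obtained by adjoining the independent $V$- and $Y$-Poisson inputs. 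Because that analysis never disturbs the already-established almost-sure convergences, all four convergences hold simultaneously on one space, and the theorem follows.
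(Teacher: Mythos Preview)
Your proposal is correct and follows essentially the same route as the paper: the paragraph preceding Theorem~\ref{T.VWXY} in Section~\ref{ConvVWXY} is the paper's proof, and it argues exactly as you do---Skorohod representation (via Remarks~\ref{R5.1x} and~\ref{R5.10x}) puts all four component convergences on one space as almost-sure convergences, bounded convergence yields (\ref{5.88}), and law-invariance transfers this back to the original processes. Your explicit discussion of why the three separate Skorohod constructions can be made to coexist (through the independence of the $V$- and $Y$-driving Poisson processes) is a welcome elaboration of what the paper leaves to the cited remarks.
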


\section{When bracketing queues vanish}\label{Vanish}

\setcounter{equation}{0}
\setcounter{theorem}{0}

\subsection{Introduction}
Consider the sequence of four-dimensional processes
$\{(V^n,W^n,X^n,Y^n)\}_{n=1}^{\infty}$
with initial conditions satisfying
Assumption \ref{Assumption2} and governed
by Figure 4.1, or equivalently,
agreeing with the sequence of four-dimensional processes
$\{(\sV^n,\sW^n,\sX^n,\sY^n)\}_{n=1}^{\infty}$
given by (\ref{3.10})--(\ref{3.12}), (\ref{5.1}),
and (\ref{5.86}) up to and including at the
stopping time $S^n$ of (\ref{3.1}).

We have scaled these processes to obtain
$$
\big(\sVhatn(t),\sWhatn(t),\sXhatn(t),\sYhatn(t)\big)
:=\left(\frac{1}{\sqrt{n}}\sV^n(nt),
\frac{1}{\sqrt{n}}\sW^n(nt),
\frac{1}{\sqrt{n}}\sX^n(nt),
\frac{1}{\sqrt{n}}\sY^n(nt)\right).
$$
Consistent with this, we scale the stopping times $S^n$
to obtain
\be\label{Shatn}
\Shatn:=\inf\big\{t\geq 0:
\sVhatn(t)=0\mbox{ or }\sYhatn(t)=0\big\}
=\frac{1}{n}S^n.
\ee
For the limiting four-dimensional processes
$(\sV^*,\sW^*,\sX^*,\sY^*)$ we define
the stopping time
\be\label{Sstar}
S^*:=\inf\big\{t\geq 0:\sV^*(t)=0\mbox{ or }
\sY^*(t)=0\big\}.
\ee

The first goal of this section is to prove
the following theorem. 

\begin{theorem}\label{T6.1}
We have 
$D_{M_1}[0-,\infty)\times D_{J_1}[0,\infty)
\times D_{J_1}[0,\infty)\times D_{M_1}[0-,\infty)$-weak
convergence of the stopped process
$(\sVhatn(\cdot\wedge \Shatn),\sWhatn(\cdot\wedge \Shatn),
\sXhatn(\cdot\wedge \Shatn),\sYhatn(\cdot\wedge \Shatn))$
to the stopped process $(\sV^*(\cdot\wedge S^*),\sW^*(\cdot\wedge S^*),
\sX^*(\cdot\wedge S^*),\sY^*(\cdot\wedge S^*))$.
\end{theorem}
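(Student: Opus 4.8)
The plan is to deduce Theorem \ref{T6.1} from Theorem \ref{T.VWXY} by showing that ``stopping at $\Shatn$'' acts continuously along the convergent paths. As in Remark \ref{R5.10x}, I would first invoke the Skorohod Representation Theorem to put all processes on a common probability space on which $(\sVhatn,\sWhatn,\sXhatn,\sYhatn)\to(\sV^*,\sW^*,\sX^*,\sY^*)$ almost surely in the $M_1\times J_1\times J_1\times M_1$ topology; since the limiting law does not depend on the representation, it suffices to prove almost sure convergence of the stopped quadruple in the product topology on this space. Everything below is a pathwise argument there.

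The main step is to prove $\Shatn\to S^*$ almost surely, and here I would exploit the structure of the limit from Sections \ref{Interior queues} and \ref{BracketingQueues}. Recall that $\sV^*$ equals $\kappa_L>0$ except on the negative excursion intervals of $G^*$, where it is a continuous semimartingale with strictly positive quadratic variation (by (\ref{5.59}) and Remark \ref{R5.12}), and $\sY^*$ equals $\kappa_R<0$ except on the positive excursion intervals of $G^*$, where it likewise diffuses. Hence, on $\{S^*<\infty\}$, exactly one of $\sV^*,\sY^*$ reaches $0$ at time $S^*$ (they cannot do so simultaneously, as the relevant excursion intervals of $G^*$ are disjoint), it does so in the interior of an excursion interval of $G^*$ and therefore continuously, and immediately afterward it is strictly on the far side of $0$; moreover $S^*$ is almost surely a continuity point of $\sV^*$ and of $\sY^*$, since the only discontinuities of $\sV^*$ (resp.\ $\sY^*$) lie at the right endpoints $R_{k,-}$ (resp.\ $R_{k,+}$) of excursions, where the values are $\kappa_L\neq0$ (resp.\ $\kappa_R\neq0$), after excluding the null set of paths on which $\sV^*(R_{k,-}-)=0$ or $\sY^*(R_{k,+}-)=0$ for some $k$. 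For $\limsup_n\Shatn\le S^*$: picking a continuity point $t'>S^*$ of $\sV^*$ and $\sY^*$ with (say, in the case $\sV^*$ hits zero) $\sV^*(t')<0$, evaluation at continuity points is $M_1$-continuous, so $\sVhatn(t')\to\sV^*(t')<0$; since $\sVhatn(0)>0$ for large $n$ and $\sVhatn$ has jumps of size $1/\sqrt n$, $\sVhatn$ vanishes somewhere on $[0,t']$, so $\Shatn\le t'$; let $t'\downarrow S^*$ along continuity points. For $\liminf_n\Shatn\ge S^*$: for a continuity point $s<S^*$ of $\sV^*$ and $\sY^*$ one has $\inf_{[0,s]}\sV^*>0$ and $\sup_{[0,s]}\sY^*<0$ almost surely, and by continuity of the infimum and supremum functionals on $(D,M_1)$ at paths continuous at the relevant time (\cite{Whitt}), $\inf_{[0,s]}\sVhatn\to\inf_{[0,s]}\sV^*$ and $\sup_{[0,s]}\sYhatn\to\sup_{[0,s]}\sY^*$, so eventually neither $\sVhatn$ nor $\sYhatn$ vanishes on $[0,s]$, i.e.\ $\Shatn>s$; such $s$ are dense in $[0,S^*)$. (When $S^*=\infty$ the same reasoning gives $\Shatn\to\infty$.)

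Granting $\Shatn\to S^*$ almost surely, the convergence of the stopped quadruple splits. For the $J_1$ coordinates, $\sWhatn\to\sW^*$ and $\sXhatn\to\sX^*$ with continuous limits, hence uniformly on compact time intervals; combined with $\Shatn\to S^*$ and the uniform continuity of $\sW^*,\sX^*$ on compacts, this gives $\sWhatn(\cdot\wedge\Shatn)\to\sW^*(\cdot\wedge S^*)$ and $\sXhatn(\cdot\wedge\Shatn)\to\sX^*(\cdot\wedge S^*)$ uniformly on compacts, a fortiori in $J_1$. For the $M_1$ coordinates I would verify criterion (ii) of Theorem \ref{T8.3} for $\sVhatn(\cdot\wedge\Shatn)\to\sV^*(\cdot\wedge S^*)$ on $D[0-,\infty)$ (and symmetrically for $\sYhatn$). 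Convergence at $0-$ is immediate from Assumption \ref{Assumption2}. At a continuity point $t$ of $\sV^*(\cdot\wedge S^*)$: if $t<S^*$, a small neighbourhood lies in $[0,S^*)$ where, since $\Shatn\to S^*$, the stopped processes agree with the unstopped ones for large $n$, so (\ref{8.11}) reduces to the already-established convergence; if $t>S^*$, the stopped processes are eventually constant near $t$, equal to $\sVhatn(\Shatn)$ and $\sV^*(S^*)$, and $\sVhatn(\Shatn)\to\sV^*(S^*)$ follows from (\ref{8.11}) at the continuity point $S^*$ (taking the two free time variables to be $\Shatn$ and $S^*$); the borderline case $t=S^*$ is covered by the same estimate, since $t_1\wedge\Shatn$ and $t_2\wedge S^*$ then range over $[(S^*-\delta)^+,S^*+\delta]$. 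At a discontinuity point of $\sV^*(\cdot\wedge S^*)$ — necessarily some $R_{k,-}<S^*$ or the origin — a small neighbourhood again lies in $[0,S^*)$, so (\ref{8.12}) reduces to the unstopped convergence. This yields $\sVhatn(\cdot\wedge\Shatn)\to\sV^*(\cdot\wedge S^*)$ in $M_1$, and likewise for $\sYhatn$; assembling the four coordinates gives almost sure convergence of the stopped quadruple in the product topology, hence the asserted weak convergence.

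I expect the delicate point to be the first step, $\Shatn\to S^*$, precisely because $\sVhatn$ and $\sYhatn$ converge only in the $M_1$ topology, so one cannot simply invoke continuity of a first-passage map. The argument instead rests on the regular-crossing structure of $\sV^*$ and $\sY^*$ at $S^*$ (a genuine crossing, not a touch, and not occurring at a jump of the limit), on the $M_1$-continuity of the sup and inf functionals at continuity points of the limit, and on discarding the null set of paths for which $\sV^*$ or $\sY^*$ has a vanishing left limit at one of its jump times.
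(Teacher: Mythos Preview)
Your proposal is correct and follows the same overall architecture as the paper: Skorohod representation, then $\Shatn\to S^*$ almost surely, then convergence of the stopped processes. The paper's Lemma~\ref{L6.3} and Lemma~\ref{L6.4} carry out the first two steps with arguments essentially equivalent to yours (the paper appeals to local uniform $M_1$-convergence at the continuity point $S^*$ via \cite[Theorem~12.5.1(v)]{Whitt}, where you use evaluation and sup/inf functionals at continuity points; both rely on the same regular-crossing structure of $\sV^*,\sY^*$ at $S^*$).

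The one genuine difference is in the final step. You verify the $M_1$ convergence of the stopped $\sVhatn$ and $\sYhatn$ by checking criterion~(ii) of Theorem~\ref{T8.3} case-by-case in $t$. The paper instead isolates a clean pathwise lemma (Lemma~\ref{L6.5}): if $x^n\to x^*$ in $M_1$ and $s^n\to s^*$ with $s^*$ a continuity point of $x^*$, then $x^n(\cdot\wedge s^n)\to x^*(\cdot\wedge s^*)$ in $M_1$, proved directly via parametric representations of the graphs. The paper's route is slightly more economical and reusable; yours stays closer to the specific structure of $\sV^*$ (discontinuities only at $R_{k,-}$, all of which lie strictly before $S^*$) and avoids introducing parametrizations. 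Both are valid.
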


Let us further define
\be\label{6.2x}
S^*_v:=
\inf\big\{t\geq 0:\sV^*(t)=0\big\},\quad
S^*_y:=
\inf\big\{t\geq 0:\sY^*(t)=0\big\},
\ee
so that $S^*=S^*_v\wedge S^*_y$.
Observe that $\P\{S^*_v=S^*_y\}=0$.
The second task of this section is to
expand the window to
consider the queue length process
$U^n$ immediately to the left of $V^n$
and the queue length process $Z^n$ immediately
to the right of $Y^n$.  
We prove the following.

\begin{theorem}\label{T6.2}
We have
\begin{align}
\lefteqn{\left(
\frac{1}{\sqrt{n}}U^n(S^n),
\frac{1}{\sqrt{n}}V^n(S^n),
\frac{1}{\sqrt{n}}W^n(S^n),
\frac{1}{\sqrt{n}}X^n(S^n),
\frac{1}{\sqrt{n}}Y^n(S^n),
\frac{1}{\sqrt{n}}Z^n(S^n)\right)}\qquad\nonumber\\
&\Longrightarrow
(\kappa_L,0,0,\sX^*(S^*),\kappa_R,0)\ind_{\{S^*_v<S^*_y\}}
+(0,\kappa_L,\sW^*(S^*),0,0,\kappa_R)
\ind_{\{S^*_y<S^*_v\}},\label{6.3y}
\end{align}
and on the set $\{S^*_v<S^*_y\}$,
we have $\sX^*(S^*)<0$, whereas on the set
$\{S^*_y<S^*_v\}$, we have $\sW^*(S^*)>0$.
The convergence is weak convergence of
probability measures on $\R^6$.
\end{theorem}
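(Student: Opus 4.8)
The plan is to partition the sample space into the events $\{S^*_v<S^*_y\}$ and $\{S^*_y<S^*_v\}$ --- a.s.\ exhaustive since $\P\{S^*_v=S^*_y\}=0$ --- and, by the left--right symmetry of the model (interchanging $U\leftrightarrow Z$, $V\leftrightarrow Y$, $W\leftrightarrow X$, $\kappa_L\leftrightarrow\kappa_R$, $\lambda_i\leftrightarrow\mu_i$, and positive with negative excursions of $G^*$), to argue only on $\{S^*_v<S^*_y\}$. On this event $S^*=S^*_v$. By (\ref{9.4}), $\sV^*$ equals $\kappa_L\neq0$ at every endpoint of a negative excursion interval of $G^*$, on every positive excursion interval, at every renewal state, and at $t=0$, while it is continuous in the interior of each negative excursion interval; hence $S^*_v$ lies strictly inside some negative excursion interval $(\Lambda_{k,-},R_{k,-})$. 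Consequently $G^*(S^*)<0$, so $\sX^*(S^*)=G^*(S^*)<0$ and $\sW^*(S^*)=0$, while $\sY^*(S^*)=\kappa_R$ by (\ref{5.87}) since $S^*\in[\Lambda_{k,-},R_{k,-})$, and $\sV^*(S^*)=0$. This establishes the strict--sign claims, identifies the target vector on $\{S^*_v<S^*_y\}$ as $(\kappa_L,0,0,\sX^*(S^*),\kappa_R,0)$, and shows that $S^*$ is a.s.\ a continuity point of each of $\sV^*,\sW^*,\sX^*,\sY^*$ (being interior to an excursion, it is not of the form $R_{k,\pm}$, the only discontinuity points of $\sV^*$ and $\sY^*$).

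\emph{The coordinates $V,W,X,Y$ at $S^n$.} Since $(V^n,W^n,X^n,Y^n)$ coincides with $(\sV^n,\sW^n,\sX^n,\sY^n)$ up to and including $S^n=\inf\{t:\sV^n(t)=0\text{ or }\sY^n(t)=0\}$, we have $\frac1{\sqrt n}(V^n,W^n,X^n,Y^n)(S^n)=(\sVhatn,\sWhatn,\sXhatn,\sYhatn)(\Shatn)$ with $\Shatn=S^n/n$. Using Theorem~\ref{T6.1} together with the attendant convergence $\Shatn\to S^*$ and the Skorohod Representation Theorem, we may assume these convergences hold almost surely in the $M_1\times J_1\times J_1\times M_1$ topology. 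Now a $J_1$--convergent sequence whose limit is continuous at the limit of the evaluation times converges pointwise there (convergence to a continuous limit is uniform on compacts), and an $M_1$--convergent sequence does likewise by criterion (ii) of Theorem~\ref{T8.3}, applying (\ref{8.11}) pathwise at the continuity point $S^*$. Hence $\sVhatn(\Shatn)\to0$, $\sWhatn(\Shatn)\to0$, $\sXhatn(\Shatn)\to\sX^*(S^*)$ and $\sYhatn(\Shatn)\to\kappa_R$.

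\emph{The coordinates $U$ and $Z$ at $S^n$ (the main work).} Set $\widehat U^n(t):=\frac1{\sqrt n}U^n(nt)$. Because $V^n>0$ on $[0,S^n]$, the essential bid never reaches the tick $p_u$, so no window shift occurs on $[0,S^n]$ and $U^n$ evolves there by the rules of Figure~1.1 applied at $p_u$. Matching those rules to the eight interior configurations of Figures~4.1--4.2 shows that on $[0,S^n]$ the queue $U^n$ receives limit--buy arrivals at rate $\lambda_2$ exactly in region $SW$ (where the essential ask is $p_w$, two ticks above $p_u$) and in the asymptotically negligible region $SE$, is cancelled at rate $\theta_b/\sqrt n$ per order exactly in $NE,E,SE_+,SE_-$ (where the essential bid is at $p_w$ or higher, so that $p_u$ is two or more ticks below it), and is constant in the remaining pre--$S^n$ regions. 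Diffusion scaling and centering of the driving Poisson processes then give, with $\Pbarn_{\mathrm c}:=\Pbarn_{NE}+\Pbarn_E+\Pbarn_{SE_+}+\Pbarn_{SE_-}$, the representation $\widehat U^n=\widehat U^n(0)+\sqrt n\,\theta_b\kappa_L\,\Pbarn_{SE_-}-\sqrt n\,\theta_b\int_0^{\cdot}(\widehat U^n(s))^+\,d\Pbarn_{\mathrm c}(s)+C_U^n$, where $C_U^n$ is bounded in probability on compact time intervals and has the sub--subsequence property of Lemma~\ref{L5.8}; the ``$\theta_b\kappa_L\Pbarn_{SE_-}$'' term appears because $\sqrt n(\lambda_2\Pbarn_{SW}-\theta_b\kappa_L\Pbarn_{SE_-})$ is $O(1)$ with continuous subsequential limits, by the occupation--time identities of Corollary~\ref{C4.16} and Remark~\ref{R5.10y} --- specifically (\ref{5.37y}) and $\lambda_2\mu_1=\theta_b\kappa_L\lambda_1$. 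This is the two--regime analogue of Lemma~\ref{L5.8}: on negative excursions of $G^*$ (where $\Pbarn_{\mathrm c}$ increases only through $\Pbarn_{SE_-}$) the infinitely strong mean reversion has target $\kappa_L$, and on positive excursions (where $\Pbarn_{SE_-}$ is constant) the cancellation term acts alone, with target $0$. I then re-run Sections~\ref{Bdd}--\ref{Assembling} for $\widehat U^n$: first boundedness in probability on compact time intervals (mimicking Theorem~\ref{T5.5} and Lemmas~\ref{L5.6}--\ref{L5.7}, splitting the cancellation term to absorb $\sqrt n\,\theta_b\kappa_L\Pbarn_{SE_-}$ and to control the martingale part); then, with negative excursions playing the role of positive ones, the estimate of Proposition~\ref{P5.14}, namely $\widehat U^n\bigl((\Lambda_{k,-}^n+\cdot)\wedge R_{k,-}^n\bigr)\to\kappa_L$ uniformly on $[\ve,\infty)$ for every $k$ and every $\ve>0$; and, on the interior of any positive excursion, $\widehat U^n\to0$ by the crushing argument of Theorem~\ref{T3.2} (there are no arrivals and the cancellation term alone pins $\widehat U^n$ to $0$). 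Since $\Shatn\to S^*$ with $S^*$ strictly interior to a negative excursion, it follows that $\widehat U^n(\Shatn)\to\kappa_L$. The analysis of $\widehat Z^n(t):=\frac1{\sqrt n}Z^n(nt)$ is the mirror image, with target $\kappa_R$ on positive excursions and $0$ on negative ones, and yields $\widehat Z^n(\Shatn)\to0$ on $\{S^*_v<S^*_y\}$.

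\emph{Assembly and main obstacle.} Multiplying the three groups by the disjoint indicators $\ind_{\{S^*_v<S^*_y\}}$ and $\ind_{\{S^*_y<S^*_v\}}$, combining with $\frac1{\sqrt n}(U^n(0),Z^n(0))\to(U^*(0),Z^*(0))$ from Assumption~\ref{Assumption2}, and using that all the convergences above hold jointly, one obtains convergence of $\frac1{\sqrt n}(U^n,V^n,W^n,X^n,Y^n,Z^n)(S^n)$ to the random vector on the right of (\ref{6.3y}); as its law does not depend on the probability space chosen for the Skorohod representation, this is the claimed weak convergence on $\R^6$. I expect the main obstacle to be the third step: re-running the boundedness and strong--mean--reversion machinery of Section~\ref{BracketingQueues} for $U^n$ and $Z^n$, where the new feature is the two--regime target, which one handles by splicing the $\sVhatn$--type estimates (Propositions~\ref{P5.14}, \ref{P8.4}) on the excursions that carry a nonzero target with the $\Hhatn$--type crushing estimate (Theorem~\ref{T3.2}) on the excursions whose target is $0$. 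A secondary, routine difficulty is the microstructure bookkeeping behind the $U^n$ governing equation, including the verification that the window does not shift on $[0,S^n]$.
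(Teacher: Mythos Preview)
Your overall strategy matches the paper's: reduce to $\{S^*_v<S^*_y\}$, locate $S^*$ strictly inside a negative excursion of $G^*$, read off the $V,W,X,Y$ coordinates from Theorem~\ref{T6.1} and continuity at $S^*$, and handle $U,Z$ by a separate excursion analysis (the paper's Lemmas~\ref{L6.6}--\ref{L6.8}). The per--excursion claims you state---$\widehat U^n\to\kappa_L$ on negative excursions and $\widehat U^n\to0$ on positive ones---are exactly Lemmas~\ref{L6.8} and~\ref{L6.7}.

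There is, however, a genuine gap in your governing equation for $\widehat U^n$ and hence in your boundedness step. First, a bookkeeping slip: in region $SE$ the bid sits at $p_w$, so $p_u$ is two ticks below the bid and $U^n$ suffers \emph{cancellation}, not $\lambda_2$ arrivals; compare (\ref{6.11y}). This is harmless because $\sqrt n\,\Pbarn_{SE}$ is tight. The real problem is your claimed global representation
\[
\widehat U^n=\widehat U^n(0)+\sqrt n\,\theta_b\kappa_L\,\Pbarn_{SE_-}-\sqrt n\,\theta_b\int_0^{\cdot}(\widehat U^n)^+d\Pbarn_{\mathrm c}+C_U^n,\qquad C_U^n=O(1).
\]
You justify the leading term by asserting that $\sqrt n(\lambda_2\Pbarn_{SW}-\theta_b\kappa_L\Pbarn_{SE_-})$ is $O(1)$ via (\ref{5.37y}). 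But (\ref{5.37y}) says
\[
\sqrt n\,\lambda_2\Pbarn_{SW}
=\sqrt n\Bigl(\theta_b\kappa_L\Pbarn_{SE_-}+\theta_b\kappa_L\Pbarn_{SE_+}
+\tfrac{\lambda_2(\mu_2-\lambda_0)}{\mu_2\lambda_1}\Pbarn_E
+\tfrac{\lambda_2\mu_1c}{\mu_2\lambda_1}\Pbarn_{NE}\Bigr)+O(1),
\]
so after subtracting $\sqrt n\,\theta_b\kappa_L\Pbarn_{SE_-}$ there remain three $O(\sqrt n)$ terms supported on positive excursions; a short computation with Assumption~\ref{Assumption1} shows their limiting linear combination against $(\Pbar_{SE_+},\Pbar_E,\Pbar_{NE})$ does \emph{not} vanish. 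Hence your $C_U^n$ is not $O(1)$, and the ``split the cancellation to absorb $\sqrt n\,\theta_b\kappa_L\Pbarn_{SE_-}$'' plan cannot establish boundedness. Without boundedness you cannot control the centered--Poisson terms $\widehat M^n_{\times,U,-}\bigl(\int\theta_b(\widehat U^n)^+d\Pbarn_\times\bigr)$, so neither the global $C_U^n$ nor the excursion--level $C_1^n$ in the analogue of Lemma~\ref{L6.8} is $O(1)$.

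The fix is precisely what the paper does in Lemma~\ref{L6.6}: keep all four ``virtual targets'' $\alpha_\times$ from (\ref{5.37y}), arriving at a representation of the form (\ref{6.16z}),
\[
\sAhatn(t)=\sUhatn(0)+\sum_{\times\in\{NE,E,SE_+,SE_-\}}\sqrt n\int_0^t\bigl(\alpha_\times-\theta_b\sAhatn\bigr)d\Pbarn_\times+O(1),
\]
and run the Lemma~\ref{L5.6}--style upper bound with $\alpha:=\max_\times\alpha_\times$ (the paper also dominates $\sUhatn$ by an auxiliary $\sAhatn$ that omits the $SE$ cancellation, which keeps $\sAhatn\ge0$). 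Once boundedness is in hand, your per--excursion statements are correct: on negative excursions only $\Pbarn_{SE_-}$ is active in $\Pbarn_{\mathrm c}$ and only the $\Pbarn_{SE_-}$ piece of (\ref{5.37y}) contributes, so the effective target is indeed $\kappa_L$ and the Proposition~\ref{P5.14} argument applies (this is Lemma~\ref{L6.8}); on positive excursions there are no arrivals at all and $\widehat U^n$ is monotone, giving Lemma~\ref{L6.7} directly. The remaining assembly you describe then goes through as in the paper's proof of Theorem~\ref{T6.2}.
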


Recall that $\kappa_L$ defined by (\ref{kappa})
is positive and $\kappa_R$ defined by
(\ref{kappa}) is negative.
In the case that $S^*_v<S^*_y$,
we say that there is a {\em price decrease}
from the initial condition of Assumption \ref{Assumption2},
and we continue after time $S^n$
by designating $U^n$ and $X^n$
the bracketing queues and $V^n$ and $W^n$
the interior queues.  In particular,
Assumption \ref{Assumption2} is satisfied
with
$(0,\frac{1}{\sqrt{n}}U^n(S^n),\frac{1}{\sqrt{n}}V^n(S^n),
\frac{1}{\sqrt{n}}W^n(S^n),\frac{1}{\sqrt{n}}X^n(S^n),
\frac{1}{\sqrt{n}}Y^n(S^n))$
replacing
$(\frac{1}{\sqrt{n}}U^n(0),
\frac{1}{\sqrt{n}}V^n(0),\frac{1}{\sqrt{n}}W^n(0),
\frac{1}{\sqrt{n}}X^n(0),\frac{1}{\sqrt{n}}Y^n(0),
\frac{1}{\sqrt{n}}Z^n(0))$
in that assumption.
On the other hand, if $S^*_y<S^*_v$,
there is a {\em price increase}, and we
continue after time $S^n$ by designating
$W^n$ and $Z^n$ the bracketing queues and
$X^n$ and $Y^n$ the interior queues.
In this case, Assumption \ref{Assumption2}
is satisfied with the
$(\frac{1}{\sqrt{n}}U^n(0),
\frac{1}{\sqrt{n}}V^n(0),\frac{1}{\sqrt{n}}W^n(0),
\frac{1}{\sqrt{n}}X^n(0),\frac{1}{\sqrt{n}}Y^n(0),
\frac{1}{\sqrt{n}}Z^n(0))$
replaced by 
$(\frac{1}{\sqrt{n}}V^n(S^n),
\frac{1}{\sqrt{n}}W^n(S^n),\frac{1}{\sqrt{n}}X^n(S^n),
\frac{1}{\sqrt{n}}Y^n(S^n),\frac{1}{\sqrt{n}}Z^n(S^n),
0)$.
In either case, the analysis of the previous
sections applies, where we replace the initial
time zero in Assumption \ref{Assumption2}
by $S^n$.  By this device of restarting
at price changes, we iteratively construct
the limiting processes for all time.

\begin{remark}
{\rm 
We do not address convergence of the full process
$\frac{1}{\sqrt{n}}U^n$, but rather only
its behavior on the interior of positive
and negative excursions of $G^n$.
In fact, Lemma \ref{L6.7} shows that
$\frac{1}{\sqrt{n}}U^n(n\id)$ converges to
zero almost surely on positive excursions of $G^n$
and Lemma \ref{L6.8} proves convergence to
$\kappa_L$ on negative excursions.  Such a
sequence of processes
cannot have a c\`adl\`ag limit.
Analogous statements can be made about
$\frac{1}{\sqrt{n}}Z^n(n\id)$.
}
\end{remark}

The remainder of this section is devoted
to the proofs of Theorem \ref{T6.1}
and \ref{T6.2}.  Of course, to consider
Theorem \ref{T6.2}, we must first define
the processes $U^n$ and $Z^n$.  We define
$U^n$ in Section \ref{SubsecT6.2} below
and appeal to analogy for the definition
of $Z^n$.

\subsection{Proof of Theorem \ref{T6.1}}\label{SubsecT6.1}

Once again, throughout this section and without further
mention, we use the Skorohod Representation
Theorem to put all processes on a common probability
space so that the convergence in Theorem \ref{T.VWXY}
is almost sure.  We then prove Theorems \ref{T6.1}
and \ref{T6.2} by establishing almost sure convergence,
or in some cases, convergence in probability.

\begin{lemma}\label{L6.3}
For every $\delta>0$,
\be\label{6.3}
\inf_{0\leq t\leq(S^*-\delta)^+}\sV^*(t)>0,\quad
\sup_{0\leq t\leq(S^*-\delta)^+}\sY^*(t)<0,
\ee
and for all sufficiently large $n$,
\be\label{6.4}
\inf_{0\leq t\leq(S^*-\delta)^+}\sVhatn(t)>0,\quad
\sup_{0\leq t\leq(S^*-\delta)^+}\sYhatn(t)<0.
\ee
\end{lemma}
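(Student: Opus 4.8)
The plan is to prove the two displayed assertions in turn, first for the limiting processes $\sV^*,\sY^*$ and then transferring them to $\sVhatn,\sYhatn$ across the almost sure $M_1$-convergence furnished by Remark~\ref{R5.10x}. As a preliminary one records that $0<S^*<\infty$ almost surely: positivity follows from right-continuity of the c\`adl\`ag processes $\sV^*$ and $\sY^*$ with $\sV^*(0)=\kappa_L>0$ and $\sY^*(0)=\kappa_R<0$; finiteness follows because $G^*$, being a two-speed Brownian motion, almost surely has negative and positive excursions of arbitrarily large length, and on each negative excursion $\sV^*$ restarts at $\kappa_L$ and then runs as $\kappa_L$ plus the $G^*$-independent Brownian motion $C_{k,-}$ of (\ref{5.57}) minus the nonnegative quantity $\tfrac{\rho\sigma_+}{\sigma_-}E_{k,-}$ (nonnegative since $\rho<0$ by (\ref{rho}) and $E_{k,-}\le0$), so the second Borel--Cantelli lemma applied conditionally on $G^*$ to the independent excursion pieces forces $\sV^*$ to reach $0$; thus $S^*_v<\infty$, symmetrically $S^*_y<\infty$, and $S^*=S^*_v\wedge S^*_y<\infty$. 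Put $T:=(S^*-\delta)^+$. If $S^*\le\delta$ then $T=0$ and both displays reduce to $\sV^*(0)=\kappa_L>0$, $\sY^*(0)=\kappa_R<0$, $\sVhatn(0)\to\kappa_L$, $\sYhatn(0)\to\kappa_R$ (Assumption~\ref{Assumption2}); so assume henceforth $0\le T=S^*-\delta<S^*$.

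To prove (\ref{6.3}): by (\ref{9.4}), $\sV^*=\kappa_L>0$ off the open negative-excursion intervals of $G^*$, on each such interval it is continuous with value $\kappa_L$ at the left endpoint, and by Corollary~\ref{C9.3} its only discontinuities are points $R_{k,-}$, where it jumps to the value $\kappa_L>0$. Hence $\sV^*$ can reach a nonpositive value only by crossing $0$ continuously inside an excursion, so $\sV^*(t)>0$ for all $t\in[0,S^*_v)\supseteq[0,T]$. Its left limits on $(0,T]$ are positive as well: at a continuity point $t$ one has $\sV^*(t-)=\sV^*(t)>0$, and at $t=R_{k,-}\le T<S^*_v$ the entire $k$-th negative excursion lies in $[0,S^*_v)$, so $\sV^*(R_{k,-}-)=\kappa_L+C_{k,-}(R_{k,-}-\Lambda_{k,-})$ is a limit of strictly positive values, hence $\ge0$, and it equals $0$ with probability zero for each fixed $k$ (since $C_{k,-}$ is independent of the excursion length and conditionally Gaussian), hence almost surely is $>0$ for every $k$ by countable subadditivity; also $\sV^*(0-)=V^*(0)>0$ by (\ref{9.3}). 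Now a c\`adl\`ag function on a compact interval whose values and left limits are all strictly positive has strictly positive infimum: if $\sV^*(t_n)\to0$ with $t_n\in[0,T]$, pass to a subsequence with $t_n\to t^*$; then right-continuity forces $\sV^*(t^*)=0$ if $t_n=t^*$ infinitely often or $t_n\downarrow t^*$, and $\sV^*(t^*-)=0$ if $t_n\uparrow t^*$, each a contradiction. Therefore $\inf_{0\le t\le T}\sV^*(t)>0$, and the symmetric argument from (\ref{5.87}) gives $\sup_{0\le t\le T}\sY^*(t)<0$.

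To prove (\ref{6.4}): since the discontinuities of $\sV^*$ lie in the countable set $\{R_{k,-}:k\ge1\}$ and those of $\sY^*$ in $\{R_{k,+}:k\ge1\}$, almost surely one can choose a common continuity point $T'$ of $\sV^*$ and $\sY^*$ with $T<T'<(S^*-\tfrac\delta2)^+$; applying (\ref{6.3}) with $\delta$ replaced by $\delta/2$, together with the positivity of the left limits just established, gives $\inf_{0\le t\le T'}\sV^*(t)>0$ and $\sup_{0\le t\le T'}\sY^*(t)<0$. Under the Skorohod-representation assumption of Remark~\ref{R5.10x} we have $\sVhatn\to\sV^*$ in $M_1$ on $D[0-,\infty)$, hence in $M_1$ on $D[0,T']$ because $T'$ is a continuity point of $\sV^*$; the infimum functional $x\mapsto\inf_{[0,T']}x$ is $M_1$-continuous (at a downward jump of the limit an $M_1$-close path passes through the intermediate values and cannot undershoot the jump interval; see Theorem~\ref{T8.3} and \cite{Whitt}), so $\inf_{0\le t\le T'}\sVhatn(t)\to\inf_{0\le t\le T'}\sV^*(t)>0$, whence $\inf_{0\le t\le T}\sVhatn(t)\ge\inf_{0\le t\le T'}\sVhatn(t)>0$ for all $n$ sufficiently large. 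The same argument for the supremum functional and $\sYhatn\to\sY^*$ yields $\sup_{0\le t\le T}\sYhatn(t)<0$ for all $n$ sufficiently large, completing (\ref{6.4}).

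The step I expect to be the main obstacle is passing, in (\ref{6.3}), from pointwise positivity of $\sV^*$ on $[0,S^*)$ to a uniform lower bound, and then carrying that bound through the $M_1$ (rather than $J_1$) convergence in (\ref{6.4}); both are overcome by leaning on the explicit formula (\ref{9.4})---$\sV^*$ equals $\kappa_L$ off the excursion intervals, is continuous within them, and jumps only back to $\kappa_L$---which pins down the left limits and makes the infimum and supremum functionals well behaved under $M_1$-convergence. The remainder is routine once (\ref{9.4}), (\ref{5.87}) and Corollary~\ref{C9.3} are in hand.
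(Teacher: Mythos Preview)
Your proof is correct. One minor slip: in the trivial case $T=0$ you write $\sVhatn(0)\to\kappa_L$, but Assumption~\ref{Assumption2} only gives $\sVhatn(0)\to V^*(0)$, which need not equal $\kappa_L$; since $V^*(0)>0$ the conclusion $\sVhatn(0)>0$ for large $n$ still holds, so nothing is damaged.

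The route for (\ref{6.3}) differs from the paper's. The paper re-runs the Borel--Cantelli estimate from the proof of Proposition~\ref{P9.2} (only finitely many negative excursions have $X_j>\kappa_L/2$, so $\sV^*\ge\kappa_L/2$ on all but finitely many excursion intervals, and on the remaining finitely many one uses continuity). You instead read off from (\ref{9.4}) and Corollary~\ref{C9.3} that every value and every left limit of $\sV^*$ on $[0,T]$ is strictly positive, and then use the general fact that a c\`adl\`ag function on a compact interval attains its infimum as a value or a left limit. This is cleaner: the Borel--Cantelli work has already been done inside Proposition~\ref{P9.2} to establish the c\`adl\`ag property, and there is no need to redo it here. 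Your preliminary that $S^*<\infty$ almost surely also makes explicit something the paper's proof uses tacitly (its Borel--Cantelli step needs $\sum_j\ell_j<\infty$, hence a bounded time interval). For (\ref{6.4}) the two arguments coincide: both amount to $M_1$ convergence of graphs on a compact interval whose right endpoint is a continuity point of the limit, which yields convergence of the infimum; your phrasing via $M_1$-continuity of the infimum functional (cf.\ \cite[Section~13.4]{Whitt}) is the standard formulation.
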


\begin{proof}
We prove the first inequalities in (\ref{6.3})
and (\ref{6.4}).  The second
are analogous.

Except on negative excursions of $G^*$,
$\sV^*=\kappa_L>0$.  
On each negative excursion of $G^*$, 
$\sV^*$ and $G^*$ behave like
non-perfectly correlated Brownian motions
because $C_{k,-}$ in (\ref{9.4}) is independent
of the excursion $E_{k,-}$ of $G^*$.  There is thus zero
probability that a negative excursion of $G^*$
ends at the time when the left limit of $\sV^*$ is
zero.  In other words, on every negative excursion
interval
of $G^*$ intersected with $[0,(S^*-\delta)^+]$, $\sV^*$ is bounded
away from zero.  This is also the case for every
finite set of negative excursion intervals of $G^*$.
If the number of excursions of $G^*$ beginning
before time $(S^*-\delta)^+$ is not finite,
choose a subsequence $\{k_j\}_{j=1}^{\infty}$
of positive integers such that $\{E_{k_j,-}\}_{j=1}^{\infty}$
is an enumeration of all the negative
excursions of $G^*$ beginning by time $S^*-\delta$.
Define $X_j$ as in (\ref{Xj}) in the proof of
Proposition \ref{P9.2}.  Following that proof, 
we conclude that
$\P\{X_j>\kappa_L/2\mbox{ infinitely often}\}=0$.
This shows that $\sV^*$ is bounded away from
zero on $[0,(S^*-\delta)^+]$.

Convergence in the $M_1$ topology is uniform convergence
of function graphs (see the proof of Lemma \ref{L6.5}
for more detail) on compact time intervals.  We have shown
that the graph of $\sV^*$ is bounded away from zero
on $[0,(S^*-\delta)^+]$, and so must the graph of
$\sVhatn$ for sufficiently large $n$ also be bounded
away from zero.  
\end{proof}

We define
$\Shatn_v:=\inf\{t\geq 0: \sVhatn(t)=0\big\}$ and
$\Shatn_y:=\inf\{t\geq 0: \sYhatn(t)=0\big\}$,
so that $\Shatn=\Shatn_v\wedge \Shatn_y$.  
Because $\sVhatn$ and $\sYhatn$ are driven
by independent Poisson processes, there
is zero probability they arrive simultaneously
at zero, i.e.,
$\P\{\Shatn_v=\Shatn_y\}=0$.

\begin{lemma}\label{L6.4}
Almost surely as $n\rightarrow\infty$,
$\Shatn_v\rightarrow S^*_v$, 
$\Shatn_y\rightarrow S^*_y$, and
$\Shatn\rightarrow S^*$.

\end{lemma}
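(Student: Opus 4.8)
The plan is to first reduce to the two one-sided statements $\Shatn_v \to S^*_v$ and $\Shatn_y \to S^*_y$ almost surely; once these hold, $\Shatn = \Shatn_v \wedge \Shatn_y \to S^*_v \wedge S^*_y = S^*$ follows at once from continuity of $\min$ (the identity $S^* = S^*_v \wedge S^*_y$ was noted after (\ref{6.2x})). I would carry out the argument for $\Shatn_v \to S^*_v$, the case of $\sYhatn$ being the mirror image with $\sY^*$, $\kappa_R$, and the positive excursions of $G^*$ in place of $\sV^*$, $\kappa_L$, and the negative excursions. Throughout I work on the common probability space of Remark \ref{R5.10x}, on which $\sVhatn \to \sV^*$ almost surely in the $M_1$ topology of $D[0-,\infty)$; as recalled in the proof of Lemma \ref{L6.3}, this amounts to uniform convergence of the completed graphs on compact time intervals. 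Two elementary observations will be used repeatedly: $\sVhatn(0) \to V^*(0) > 0$ almost surely (Assumption \ref{Assumption2}), and $\sVhatn$ changes only by jumps of size $1/\sqrt n$, so that for large $n$ the process $\sVhatn$ cannot take a strictly negative value before first hitting $0$; in particular $\Shatn_v$ is dominated by any time at which $\sVhatn$ is negative.

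For the lower bound $\liminf_n \Shatn_v \ge S^*_v$, I would observe that the proof of Lemma \ref{L6.3} applies verbatim with $S^*$ replaced by $S^*_v$: it uses only that $\sV^* \equiv \kappa_L > 0$ off the negative excursion intervals of $G^*$, that $\sV^*$ is continuous on each such interval and cannot vanish there before $S^*_v$, the probability-zero exclusion of a negative excursion of $G^*$ ending when the left limit of $\sV^*$ equals $0$, and the Borel--Cantelli bound on the oscillation of $\sV^*$ over short negative excursions. This gives, for every $\delta > 0$, $\inf_{0 \le t \le (S^*_v - \delta)^+}\sV^*(t) > 0$ almost surely, with the left limits of $\sV^*$ on that interval positive as well, so the completed graph of $\sV^*$ over $[0,(S^*_v-\delta)^+]$ is bounded away from the line $\{y=0\}$; by uniform convergence of graphs the completed graph of $\sVhatn$ is too for all large $n$, whence $\sVhatn > 0$ on $[0,(S^*_v-\delta)^+]$ and $\Shatn_v > S^*_v - \delta$. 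Letting $\delta$ decrease to $0$ along a countable sequence yields $\liminf_n \Shatn_v \ge S^*_v$; the same reasoning with an arbitrary finite horizon in place of $(S^*_v - \delta)^+$ disposes of the event $\{S^*_v = \infty\}$, on which it gives $\Shatn_v \to \infty$.

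For the upper bound $\limsup_n \Shatn_v \le S^*_v$ on $\{S^*_v < \infty\}$, the key structural step is to show that almost surely $S^*_v$ lies in the open interior of some negative excursion interval $(\Lambda_{k_0,-}, R_{k_0,-})$ of $G^*$, with $\sV^*$ continuous there and $\sV^*(S^*_v)=0$. This follows from (\ref{9.4}) and Proposition \ref{P9.2}: $\sV^* \equiv \kappa_L > 0$ off the negative excursion intervals and at their endpoints, and $\sV^*$ is continuous inside each, so it can vanish only in such an interior; the only alternative, that $\sV^*$ first reaches $0$ exactly at a right endpoint $R_{k,-}$, would force the stopped Brownian motion appearing in (\ref{9.4}) to equal $-\kappa_L$ at the excursion length $R_{k,-}-\Lambda_{k,-}$, a time independent of that Brownian motion --- an event of probability zero for each $k$, hence (summing) of probability zero. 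On $(\Lambda_{k_0,-}, R_{k_0,-})$, by Remark \ref{R5.12} and (\ref{5.59}), $\sV^*$ behaves locally like a non-degenerate Brownian motion, with rate $\sigma_+^2 > 0$ of quadratic variation, independent of the excursion length; hence at its first zero $S^*_v$ it takes strictly negative values in every right neighbourhood, i.e.\ $\inf_{S^*_v \le t \le S^*_v + \ve}\sV^*(t) < 0$ almost surely for every $\ve \in (0, R_{k_0,-} - S^*_v)$. Fixing such an $\ve$ and a continuity point $t^*$ of $\sV^*$ in $(S^*_v, S^*_v + \ve)$ with $\sV^*(t^*) < 0$ --- possible since the discontinuity set of $\sV^*$ is countable --- uniform convergence of graphs forces $\sVhatn$ to take a negative value at a time converging to $t^*$, so $\Shatn_v < S^*_v + \ve$ for all large $n$; letting $\ve \downarrow 0$ gives $\limsup_n \Shatn_v \le S^*_v$. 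Combining the two bounds yields $\Shatn_v \to S^*_v$ almost surely, and the mirror argument gives $\Shatn_y \to S^*_y$ almost surely, completing the proof. I expect the delicate point to be exactly this local analysis of $\sV^*$ at $S^*_v$ --- ruling out a jump to $0$ and establishing the immediate sign change --- since it requires handling the excursion $E_{k_0,-}$, which is a conditioned rather than a free Brownian path, together with the independent Brownian motion $C_{k_0,-}$; the control needed is precisely the independence recorded in (\ref{9.4})--(\ref{5.59}) and the non-degeneracy of the limit process on a negative excursion from Remark \ref{R5.12}.
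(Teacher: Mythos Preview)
Your proposal is correct and follows essentially the same approach as the paper: the lower bound via (the $S^*_v$-version of) Lemma~\ref{L6.3}, and the upper bound via the observation that $S^*_v$ lies strictly inside a negative excursion interval of $G^*$ where $\sV^*$ is continuous and behaves as a non-degenerate Brownian motion, hence immediately goes negative, combined with local uniform convergence under $M_1$ at continuity points. The paper is terser---it cites \cite[Theorem~12.5.1(v)]{Whitt} directly for local uniform convergence at the continuity point $S^*_v$ rather than selecting an auxiliary continuity point $t^*$, and it does not separately treat $\{S^*_v=\infty\}$---but the structure is the same.
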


\begin{proof}
In order for $\sV^*$ defined by (\ref{9.4})
to reach zero, $G^*$ must be on a negative excursion
and hence $\sW^*$ must be negative (Remark \ref{R4.20x}).
 According to (\ref{9.4}) (or see
Corollary \ref{C9.3}) and the fact that
there is zero probability a negative excursion
of $G^*$ ends at time $S^*_v$, $\sV^*$ is continuous
in a neighborhood of $S^*_v$. 
Having Brownian
motion paths,
$\sV^*$ takes negative values
in $(S^*_v,S^*_v+\delta)$ for every $\delta>0$.  According to
\cite[Theorem~12.5.1(v)]{Whitt}, convergence of $\sVhatn$
to $\sV^*$ in the $M_1$-topology implies local
uniform convergence of $\sVhatn$ to $\sV^*$
at $S^*_v$, and thus for every $\delta>0$
and $n$ sufficiently large,
$\sVhatn$ takes negative
values in the time interval $(S^*_v,S^*_v+\delta)$. 
Lemma \ref{L6.3} implies
that for sufficiently large $n$, $\sVhatn$ is
strictly positive on the time interval $[0,S^*-\delta]$.
It follows that for every $\delta>0$,
$S^*_v-\delta<\Shatn<S^*_v+\delta$ 
for all sufficiently large $n$.
This establishes the convergence 
$\Shatn_v\rightarrow S^*_v$.

The proof that $\Shatn_y\rightarrow S^*_y$ is analogous.
The first two convergences in the lemma
and the equations $\Shatn=\Shatn_v\wedge \Shatn_y$,
$S^*=S^*_v\wedge S^*_y$ establish the third one.
\end{proof}

\begin{lemma}\label{L6.5}
Let $\{x^n\}_{n=1}^{\infty}$ be a sequence of functions
in $D[0-,\infty)$ converging in the $M_1$ topology
to a function $x^*\in D[0-,\infty)$.  Let $s^n$ be a sequence
of nonnegative numbers converging to $s^*$.
If $s^*$ is a continuity point of $x^*$, then
$x^n(\cdot\wedge s^n)$ converges to $x^*(\cdot\wedge s^*)$
in the $M_1$ topology.
\end{lemma}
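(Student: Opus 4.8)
The plan is to verify the oscillation criterion (ii) of Theorem \ref{T8.3} for the pair $\big(x^n(\cdot\wedge s^n),\,x^*(\cdot\wedge s^*)\big)$, treating the test time $t$ in cases according to its position relative to $s^*$. Write $y^n:=x^n(\cdot\wedge s^n)$ and $y^*:=x^*(\cdot\wedge s^*)$; since the truncation $u\mapsto u\wedge s^n$ on $[0,\infty)$ does not affect the value at $0-$, we have $y^n(0-)=x^n(0-)\to x^*(0-)=y^*(0-)$ (the convergence $x^n(0-)\to x^*(0-)$ being part of the assumed $M_1$-convergence of $x^n$), so the first part of criterion (ii) holds.

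For a test time $t<s^*$, choose $\delta$ small enough that $t+\delta<s^*$ and then $n$ large enough that $s^n>t+\delta$; then $y^n\equiv x^n$ and $y^*\equiv x^*$ on $[(t-\delta)^+,t+\delta]$, so $t$ is a continuity (respectively, discontinuity) point of $y^*$ precisely when it is one of $x^*$, and conditions (\ref{8.11}), (\ref{8.12}) for $(y^n,y^*)$ at $t$ reduce verbatim to the same conditions for $(x^n,x^*)$, which hold by $M_1$-convergence of $x^n\to x^*$. For a test time $t\ge s^*$, the hypothesis that $s^*$ is a continuity point of $x^*$ makes $y^*$ continuous at $t$: for $t>s^*$ it is constant equal to $x^*(s^*)$ in a neighborhood of $t$, and at $t=s^*$ one has $y^*(s^*-)=x^*(s^*-)=x^*(s^*)=y^*(s^*)=y^*(s^*+)$. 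Hence only (\ref{8.11}) is needed. When $t=s^*$, for $\delta>0$ and $n$ large enough that $|s^n-s^*|<\delta$, any $y^n(t_1)=x^n(t_1\wedge s^n)$ with $t_1\in[(s^*-\delta)^+,s^*+\delta]$ equals $x^n(u)$ for some $u\in[(s^*-\delta)^+,s^*+\delta]$, and likewise $y^*(t_2)=x^*(t_2\wedge s^*)$ with $t_2\wedge s^*\in[(s^*-\delta)^+,s^*]$; therefore $\sup_{t_1,t_2}|y^n(t_1)-y^*(t_2)|\le\sup_{u_1,u_2\in[(s^*-\delta)^+,s^*+\delta]}|x^n(u_1)-x^*(u_2)|$, and the right-hand side vanishes upon taking $\limsup_n$ and then $\delta\downarrow0$ by (\ref{8.11}) for $x^n\to x^*$ at the continuity point $s^*$. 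When $t>s^*$, taking $\delta<(t-s^*)/2$ and $n$ large makes $y^n$ constant equal to $x^n(s^n)$ and $y^*$ constant equal to $x^*(s^*)$ on $[(t-\delta)^+,t+\delta]$, so (\ref{8.11}) reduces to $x^n(s^n)\to x^*(s^*)$, which again follows from (\ref{8.11}) for $x^n\to x^*$ at $s^*$ by estimating $|x^n(s^n)-x^*(s^*)|\le\sup_{u\in[(s^*-\delta')^+,s^*+\delta']}|x^n(u)-x^*(s^*)|$ for $n$ large and letting $\delta'\downarrow0$.

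The only genuinely delicate point is the case $t=s^*$, i.e.\ the behaviour exactly at the truncation time: here one needs the continuity of $x^*$ at $s^*$ twice over, first to conclude that $y^*$ is continuous at $s^*$ so that the weaker condition (\ref{8.11}) rather than (\ref{8.12}) governs, and second to bound the oscillation of $y^n$ near $s^*$ by the oscillation of $x^n$ near $s^*$, which the $M_1$-convergence of $x^n$ controls. All the remaining work is bookkeeping of how the intervals $[(t-\delta)^+,t+\delta]$ behave under the map $u\mapsto u\wedge s^n$, and the argument is entirely pathwise, so no probabilistic input is required. Having verified criterion (ii), Theorem \ref{T8.3} yields $y^n\to y^*$ in the $M_1$ topology on $D[0-,\infty)$.
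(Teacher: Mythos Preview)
Your proof is correct and takes a genuinely different route from the paper's. The paper works directly with graph parametrizations: given parametrizations $(u^n,r^n)$ of $x^n$ and $(u^*,r^*)$ of $x^*$ realizing the $M_1$ convergence, it builds parametrizations of the stopped processes by truncating at $t^n=\max\{t:r^n(t)=s^n\}$ and the unique $t^*$ with $r^*(t^*)=s^*$ (uniqueness coming from the fact that $r^*$ is strictly increasing at continuity points of $x^*$), then extending the time component linearly thereafter. The crux of that argument is showing $t^n\to t^*$, which is extracted from the strict increase of $r^*$ at $t^*$. Your argument instead invokes the oscillation characterization of Theorem~\ref{T8.3} and verifies (\ref{8.11})--(\ref{8.12}) case by case on the position of the test time relative to $s^*$; this avoids constructing parametrizations altogether and reduces everything to local estimates you can read off directly from the hypotheses. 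The parametrization approach is more constructive, while yours is arguably more transparent about exactly where the continuity of $x^*$ at $s^*$ enters. One small caveat: Theorem~\ref{T8.3} as stated in the paper assumes $x^n(0-)=x^n(0)$, so strictly speaking your appeal to it in both directions requires either that restriction on the $x^n$ (which does hold in every application of the lemma in the paper) or the routine extension of Theorem~\ref{T8.3} to general $x^n\in D[0-,\infty)$; the paper's parametrization argument covers the general case without this proviso.
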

\begin{proof}
The graph of $x^*$ is defined to be
$$
\Gamma_{x^*}:=\big\{(z,t)\in\R\times[0,\infty):
z\in[x^*(t-)\wedge x^*(t),x^*(t-)\vee x^*(t)]\big\}.
$$
A parametrization of the graph of $x^*$ is 
a pair of continuous functions $(u^*,r^*)$ such that 
$r^*(0)=0$, $u^*(0)=x^*(0-)$,
and the pair $(u^*,r^*)$ is a bijection of
$[0,\infty)$ onto $\Gamma_x$.  In particular, $r^*$
is nondecreasing and strictly increasing
at continuity points of $x^*$.
The graph of $x^n$ and a parametrization of the graph
of $x^n$ are defined in the same way.
Convergence of $x^n$ to $x$ in the $M_1$ topology
is equivalent to the existence, for each $n$,
of a parametrization
$(u^n,r^n)$ of the graph of $x^n$, and the existence
of a parametrization
$(u^*,r^*)$ of the graph of $x^*$, such that
\be\label{6.6}
\lim_{n\rightarrow \infty}
\max_{0\leq t\leq T}\big(\big|u^n(t)-u^*(t)\big|
+\big|r^n(t)-r^*(t)\big|\big)=0
\ee
for every $T\in(0,\infty)$.
Given such parametrizations, define
$t^n:=\max\big\{t\geq 0: r^n(t)=s^n\big\}$.
Because $r^*$ is strictly increasing at continuity
points of $x^*$, there is a unique $t^*$
such that $r^*(t^*)=s^*$.
Then $(r^n(\cdot\wedge t^n)+(\cdot-t^n)^+,u^n(\cdot\wedge t^n))$
is a parametrization of the graph of $x^n(\cdot\wedge s^n)$
and $(r^*(\cdot\wedge t^*)+(\cdot-t^*)^+,u^*(\cdot\wedge t^*))$
is a parametrization of the graph of $x^*(\cdot\wedge s^*)$.
It remains to show that
\begin{align}
\lim_{n\rightarrow\infty}
\max_{0\leq t\leq T}
\big|u^n(t\wedge t^n)-u^*(t\wedge t^*)\big|
&=
0,\label{6.7}\\
\lim_{n\rightarrow\infty}
\max_{0\leq t\leq T}
\big|r^n(t\wedge t^n)+(t-t^n)^+
-r^*(t\wedge t^*)-(t-t^*)^+\big|
&=
0,\label{6.8}
\end{align}
for every $T\in(0,\infty)$.  In fact,
the sequence $\{t^n\}_{n=1}^{\infty}$ is bounded
because $\{r(t^n)\}_{n=1}^{\infty}=\{s^n\}_{n=1}^{\infty}$ is bounded,
$r^*(t)\rightarrow\infty$ as $t\rightarrow\infty$,
and $r^n$ converges to $r^*$ uniformly on
compact time intervals.  Therefore, we may omit
the operator $\max_{0\leq t\leq T}$ in (\ref{6.7}) and
(\ref{6.8}).

Because
$r^*$ is strictly 
increasing at $t^*$, for every $\delta>0$,
there exists $\varepsilon>0$ such that
\be\label{6.9}
r^*(t)\leq r^*(t^*)-\varepsilon\mbox{ for }t\leq t^*-\delta,
\quad
r^*(t)\geq r^*(t^*)+\varepsilon\mbox{ for }t\geq t^*+\delta.
\ee
But $r(t^n)=s^n$ and $r(t^*)=s^*$, so
\be\label{6.10}
\big|r^*(t^n)-r(t^*)\big|
\leq
\big|r^*(t^n)-r^n(t^n)\big|
+\big|s^n-s^*\big|.
\ee
By choosing $n$ sufficiently
large, we can use (\ref{6.6}) to make
the first term on the right-hand side of
(\ref{6.10}) less than $\varepsilon/2$, and we can also make
the second term less than $\varepsilon/2$.
For such $n$, (\ref{6.9}) implies
$t^*-\delta<t^n<t^*+\delta$.  In other words,
$t^n\rightarrow t^*$ as $n\rightarrow\infty$.
This implies
$$
\big|u^n(t\wedge t^n)-u^*(t\wedge t^*)\big|
\leq\big|u^n(t\wedge t^n)-u^*(t\wedge t^n)\big|
+\big|u^*(t\wedge t^n)-u^*(t\wedge t^*)\big|,
$$
and both terms on the right-hand side have limit
zero, the first by (\ref{6.6}) and the second
by continuity of $u^*$.  This establishes
(\ref{6.7}).  The proof of (\ref{6.8}) is similar.
\end{proof}

\noi
{\em Proof of Theorem \ref{T6.1}.}
We use Lemmas \ref{L6.4} and \ref{L6.5} and
the fact that $G^*$ is almost surely non-zero
at $S^*$, and hence $S^*$ is a continuity point of
$\sV^*$ and $\sY^*$.
The processes $\sW^*$ and $\sX^*$ are continuous.
$\hfill\Box$

\subsection{Proof of Theorem \ref{T6.2}}\label{SubsecT6.2}

Theorem \ref{T6.2} requires that we broaden
the window to consider the processes $U^n$
to the left of $V^n$ and $Z^n$ to the right of $Y^n$.
We focus on $U^n$. The discussion for $Z^n$
would be analogous.  For this purpose,
we modify Figure 4.1 to show the forces
acting on $U^n$ in each of the eight possible
configurations of $(W^n,X^n)$ when $V^n$ is positive
and $Y^n$ (not shown in Figure 6.1) is negative.  

\begin{figure}[h]
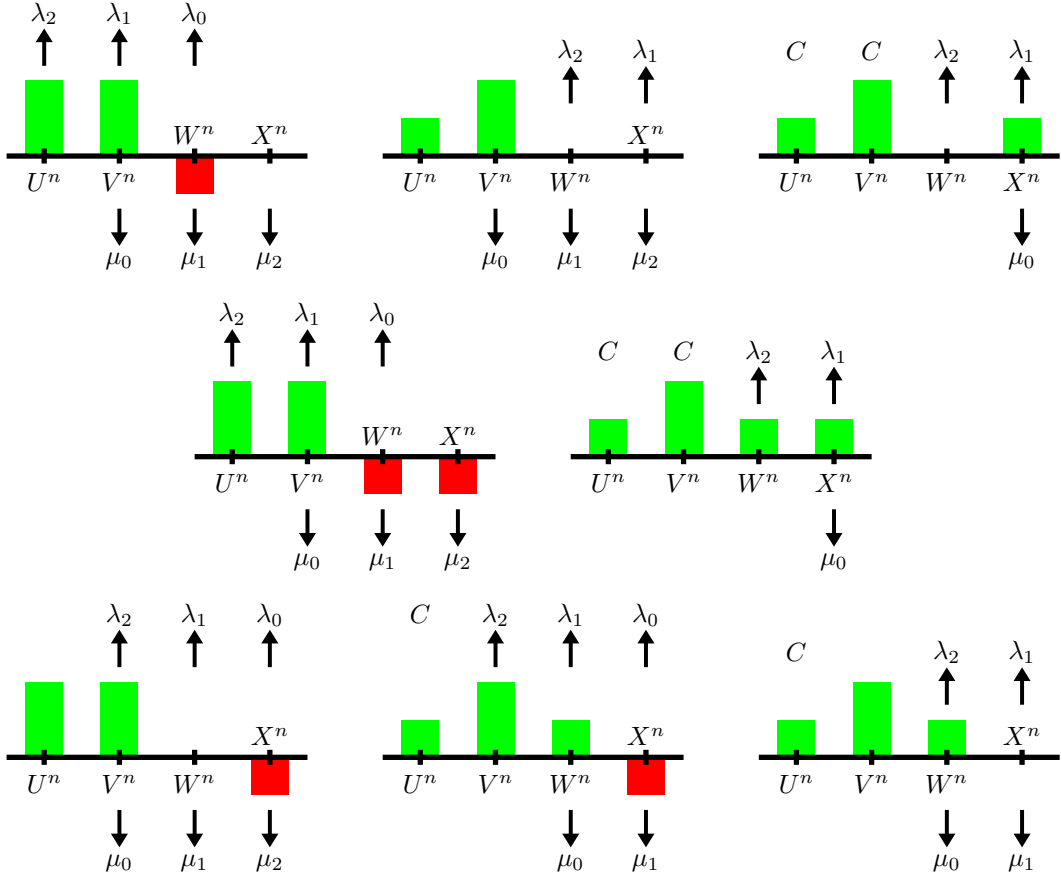
\label{F6.1}
\begin{pgfpicture}{0cm}{-0.2cm}{14cm}{11cm}
%
%
\begin{pgfscope}
\color{green}
\pgfmoveto{\pgfxy(0.25,1.00)}
\pgflineto{\pgfxy(0.25,2.00)}
\pgflineto{\pgfxy(0.75,2.00)}
\pgflineto{\pgfxy(0.75,1.00)}
\pgffill
\pgfmoveto{\pgfxy(1.25,1.00)}
\pgflineto{\pgfxy(1.25,2.00)}
\pgflineto{\pgfxy(1.75,2.00)}
\pgflineto{\pgfxy(1.75,1.00)}
\pgffill
\pgfclosestroke
\pgfmoveto{\pgfxy(5.25,1.00)}
\pgflineto{\pgfxy(5.25,1.50)}
\pgflineto{\pgfxy(5.75,1.50)}
\pgflineto{\pgfxy(5.75,1.00)}
\pgffill
\pgfmoveto{\pgfxy(6.25,1.00)}
\pgflineto{\pgfxy(6.25,2.00)}
\pgflineto{\pgfxy(6.75,2.00)}
\pgflineto{\pgfxy(6.75,1.00)}
\pgffill
\pgfclosestroke
\pgfmoveto{\pgfxy(10.25,1.00)}
\pgflineto{\pgfxy(10.25,1.50)}
\pgflineto{\pgfxy(10.75,1.50)}
\pgflineto{\pgfxy(10.75,1.00)}
\pgffill
\pgfmoveto{\pgfxy(11.25,1.00)}
\pgflineto{\pgfxy(11.25,2.00)}
\pgflineto{\pgfxy(11.75,2.00)}
\pgflineto{\pgfxy(11.75,1.00)}
\pgffill
\pgfclosestroke
\pgfmoveto{\pgfxy(7.25,1.00)}
\pgflineto{\pgfxy(7.25,1.50)}
\pgflineto{\pgfxy(7.75,1.50)}
\pgflineto{\pgfxy(7.75,1.00)}
\pgffill
\pgfclosestroke
\pgfmoveto{\pgfxy(12.25,1.00)}
\pgflineto{\pgfxy(12.25,1.50)}
\pgflineto{\pgfxy(12.75,1.50)}
\pgflineto{\pgfxy(12.75,1.00)}
\pgffill
\pgfclosestroke
\end{pgfscope}
\begin{pgfscope}
\color{red}
\pgfmoveto{\pgfxy(3.25,1.00)}
\pgflineto{\pgfxy(3.25,0.50)}
\pgflineto{\pgfxy(3.75,0.50)}
\pgflineto{\pgfxy(3.75,1.00)}
\pgffill
\pgfclosestroke
\pgfmoveto{\pgfxy(8.25,1.00)}
\pgflineto{\pgfxy(8.25,0.50)}
\pgflineto{\pgfxy(8.75,0.50)}
\pgflineto{\pgfxy(8.75,1.00)}
\pgffill
\pgfclosestroke
\end{pgfscope}
\begin{pgfscope}
\pgfsetlinewidth{1.5pt}
\pgfsetarrowsend{Triangle[scale=0.75pt]}
\pgfxyline(1.5,2.2)(1.5,2.7)
\pgfxyline(2.5,2.2)(2.5,2.7)
\pgfxyline(3.5,2.2)(3.5,2.7)
\pgfxyline(1.5,0.3)(1.5,-0.2)
\pgfxyline(2.5,0.3)(2.5,-0.2)
\pgfxyline(3.5,0.3)(3.5,-0.2)
\pgfxyline(6.5,2.2)(6.5,2.7)
\pgfxyline(7.5,2.2)(7.5,2.7)
\pgfxyline(8.5,2.2)(8.5,2.7)
\pgfxyline(7.5,0.3)(7.5,-0.2)
\pgfxyline(8.5,0.3)(8.5,-0.2)
\pgfxyline(12.5,1.7)(12.5,2.2)
\pgfxyline(13.5,1.7)(13.5,2.2)
\pgfxyline(12.5,0.3)(12.5,-0.2)
\pgfxyline(13.5,0.3)(13.5,-0.2)
\end{pgfscope}
\begin{pgfscope}
\pgfsetlinewidth{2pt}
\pgfxyline(0,1)(4,1)
\pgfxyline(5,1)(9,1)
\pgfxyline(10,1)(14,1)
\pgfxyline(0.5,0.9)(0.5,1.1)
\pgfxyline(1.5,0.9)(1.5,1.1)
\pgfxyline(2.5,0.9)(2.5,1.1)
\pgfxyline(3.5,0.9)(3.5,1.1)
\pgfxyline(5.5,0.9)(5.5,1.1)
\pgfxyline(6.5,0.9)(6.5,1.1)
\pgfxyline(7.5,0.9)(7.5,1.1)
\pgfxyline(8.5,0.9)(8.5,1.1)
\pgfxyline(10.5,0.9)(10.5,1.1)
\pgfxyline(11.5,0.9)(11.5,1.1)
\pgfxyline(12.5,0.9)(12.5,1.1)
\pgfxyline(13.5,0.9)(13.5,1.1)
\end{pgfscope}
\pgfputat{\pgfxy(1.5,2.9)}%
{\pgfbox[center,center]{$\lambda_2$}}
\pgfputat{\pgfxy(2.5,2.9)}%
{\pgfbox[center,center]{$\lambda_1$}}
\pgfputat{\pgfxy(3.5,2.9)}%
{\pgfbox[center,center]{$\lambda_0$}}
\pgfputat{\pgfxy(6.5,2.9)}%
{\pgfbox[center,center]{$\lambda_2$}}
\pgfputat{\pgfxy(7.5,2.9)}%
{\pgfbox[center,center]{$\lambda_1$}}
\pgfputat{\pgfxy(8.5,2.9)}%
{\pgfbox[center,center]{$\lambda_0$}}
\pgfputat{\pgfxy(12.5,2.4)}%
{\pgfbox[center,center]{$\lambda_2$}}
\pgfputat{\pgfxy(13.5,2.4)}%
{\pgfbox[center,center]{$\lambda_1$}}
\pgfputat{\pgfxy(1.5,-0.4)}%
{\pgfbox[center,center]{$\mu_0$}}
\pgfputat{\pgfxy(2.5,-0.4)}%
{\pgfbox[center,center]{$\mu_1$}}
\pgfputat{\pgfxy(3.5,-0.4)}%
{\pgfbox[center,center]{$\mu_2$}}
\pgfputat{\pgfxy(7.5,-0.4)}%
{\pgfbox[center,center]{$\mu_0$}}
\pgfputat{\pgfxy(8.5,-0.4)}%
{\pgfbox[center,center]{$\mu_1$}}
\pgfputat{\pgfxy(12.5,-0.4)}%
{\pgfbox[center,center]{$\mu_0$}}
\pgfputat{\pgfxy(13.5,-0.4)}%
{\pgfbox[center,center]{$\mu_1$}}
\pgfputat{\pgfxy(0.5,0.65)}%
{\pgfbox[center,center]{$U^n$}}
\pgfputat{\pgfxy(1.5,0.65)}%
{\pgfbox[center,center]{$V^n$}}
\pgfputat{\pgfxy(2.5,0.65)}%
{\pgfbox[center,center]{$W^n$}}
\pgfputat{\pgfxy(3.5,1.28)}%
{\pgfbox[center,center]{$X^n$}}
\pgfputat{\pgfxy(5.5,0.65)}%
{\pgfbox[center,center]{$U^n$}}
\pgfputat{\pgfxy(6.5,0.65)}%
{\pgfbox[center,center]{$V^n$}}
\pgfputat{\pgfxy(7.5,0.65)}%
{\pgfbox[center,center]{$W^n$}}
\pgfputat{\pgfxy(8.5,1.28)}%
{\pgfbox[center,center]{$X^n$}}
\pgfputat{\pgfxy(10.5,0.65)}%
{\pgfbox[center,center]{$U^n$}}
\pgfputat{\pgfxy(11.5,0.65)}%
{\pgfbox[center,center]{$V^n$}}
\pgfputat{\pgfxy(12.5,0.65)}%
{\pgfbox[center,center]{$W^n$}}
\pgfputat{\pgfxy(13.5,1.28)}%
{\pgfbox[center,center]{$X^n$}}
%
\begin{pgfscope}
\color{green}
\pgfmoveto{\pgfxy(2.75,5.00)}
\pgflineto{\pgfxy(2.75,6.00)}
\pgflineto{\pgfxy(3.25,6.00)}
\pgflineto{\pgfxy(3.25,5.00)}
\pgffill
\pgfclosestroke
\pgfmoveto{\pgfxy(3.75,5.00)}
\pgflineto{\pgfxy(3.75,6.00)}
\pgflineto{\pgfxy(4.25,6.00)}
\pgflineto{\pgfxy(4.25,5.00)}
\pgffill
\pgfclosestroke
\pgfmoveto{\pgfxy(7.75,5.00)}
\pgflineto{\pgfxy(7.75,5.50)}
\pgflineto{\pgfxy(8.25,5.50)}
\pgflineto{\pgfxy(8.25,5.00)}
\pgffill
\pgfclosestroke
\pgfmoveto{\pgfxy(8.75,5.00)}
\pgflineto{\pgfxy(8.75,6.00)}
\pgflineto{\pgfxy(9.25,6.00)}
\pgflineto{\pgfxy(9.25,5.00)}
\pgffill
\pgfclosestroke
\end{pgfscope}
\begin{pgfscope}
\color{green}
\pgfmoveto{\pgfxy(9.75,5.00)}
\pgflineto{\pgfxy(9.75,5.50)}
\pgflineto{\pgfxy(10.25,5.50)}
\pgflineto{\pgfxy(10.25,5.00)}
\pgffill
\pgfclosestroke
\pgfmoveto{\pgfxy(10.75,5.00)}
\pgflineto{\pgfxy(10.75,5.50)}
\pgflineto{\pgfxy(11.25,5.50)}
\pgflineto{\pgfxy(11.25,5.00)}
\pgffill
\pgfclosestroke
\end{pgfscope}
\begin{pgfscope}
\color{red}
\pgfmoveto{\pgfxy(4.75,5.00)}
\pgflineto{\pgfxy(4.75,4.50)}
\pgflineto{\pgfxy(5.25,4.50)}
\pgflineto{\pgfxy(5.25,5.00)}
\pgffill
\pgfclosestroke
\pgfmoveto{\pgfxy(5.75,5.00)}
\pgflineto{\pgfxy(5.75,4.50)}
\pgflineto{\pgfxy(6.25,4.50)}
\pgflineto{\pgfxy(6.25,5.00)}
\pgffill
\pgfclosestroke
\end{pgfscope}
\begin{pgfscope}
\pgfsetlinewidth{2pt}
\pgfxyline(2.5,5)(6.5,5)
\pgfxyline(7.5,5)(11.5,5)
\pgfxyline(3.0,4.9)(3.0,5.1)
\pgfxyline(4.0,4.9)(4.0,5.1)
\pgfxyline(5.0,4.9)(5.0,5.1)
\pgfxyline(6.0,4.9)(6.0,5.1)
\pgfxyline(8.0,4.9)(8.0,5.1)
\pgfxyline(9.0,4.9)(9.0,5.1)
\pgfxyline(10.0,4.9)(10.0,5.1)
\pgfxyline(11.0,4.9)(11.0,5.1)
\end{pgfscope}
\begin{pgfscope}
\pgfsetlinewidth{1.5pt}
\pgfsetarrowsend{Triangle[scale=0.75pt]}
\pgfxyline(3.0,6.2)(3.0,6.7)
\pgfxyline(4.0,6.2)(4.0,6.7)
\pgfxyline(5.0,6.2)(5.0,6.7)
\pgfxyline(4.0,4.3)(4.0,3.8)
\pgfxyline(5.0,4.3)(5.0,3.8)
\pgfxyline(6.0,4.3)(6.0,3.8)
\pgfxyline(10.0,5.7)(10.0,6.2)
\pgfxyline(11.0,5.7)(11.0,6.2)
\pgfxyline(11.0,4.3)(11.0,3.8)
\end{pgfscope}
\pgfputat{\pgfxy(3.0,6.9)}%
{\pgfbox[center,center]{$\lambda_2$}}
\pgfputat{\pgfxy(4.0,6.9)}%
{\pgfbox[center,center]{$\lambda_1$}}
\pgfputat{\pgfxy(5.0,6.9)}%
{\pgfbox[center,center]{$\lambda_0$}}
\pgfputat{\pgfxy(10.0,6.4)}%
{\pgfbox[center,center]{$\lambda_2$}}
\pgfputat{\pgfxy(11.0,6.4)}%
{\pgfbox[center,center]{$\lambda_1$}}
\pgfputat{\pgfxy(4.0,3.6)}%
{\pgfbox[center,center]{$\mu_0$}}
\pgfputat{\pgfxy(5.0,3.6)}%
{\pgfbox[center,center]{$\mu_1$}}
\pgfputat{\pgfxy(6.0,3.6)}%
{\pgfbox[center,center]{$\mu_2$}}
\pgfputat{\pgfxy(11.0,3.6)}%
{\pgfbox[center,center]{$\mu_0$}}
\pgfputat{\pgfxy(3.0,4.65)}%
{\pgfbox[center,center]{$U^n$}}
\pgfputat{\pgfxy(4.0,4.65)}%
{\pgfbox[center,center]{$V^n$}}
\pgfputat{\pgfxy(5.0,5.28)}%
{\pgfbox[center,center]{$W^n$}}
\pgfputat{\pgfxy(6.0,5.28)}%
{\pgfbox[center,center]{$X^n$}}
\pgfputat{\pgfxy(8.0,4.65)}%
{\pgfbox[center,center]{$U^n$}}
\pgfputat{\pgfxy(9.0,4.65)}%
{\pgfbox[center,center]{$V^n$}}
\pgfputat{\pgfxy(10.0,4.65)}%
{\pgfbox[center,center]{$W^n$}}
\pgfputat{\pgfxy(11.0,4.65)}%
{\pgfbox[center,center]{$X^n$}}
\begin{pgfscope}
\color{green}
\pgfmoveto{\pgfxy(0.25,9.00)}
\pgflineto{\pgfxy(0.25,10.00)}
\pgflineto{\pgfxy(0.75,10.00)}
\pgflineto{\pgfxy(0.75,9.00)}
\pgffill
\pgfclosestroke
\pgfmoveto{\pgfxy(1.25,9.00)}
\pgflineto{\pgfxy(1.25,10.00)}
\pgflineto{\pgfxy(1.75,10.00)}
\pgflineto{\pgfxy(1.75,9.00)}
\pgffill
\pgfclosestroke
\pgfmoveto{\pgfxy(5.25,9.00)}
\pgflineto{\pgfxy(5.25,9.50)}
\pgflineto{\pgfxy(5.75,9.50)}
\pgflineto{\pgfxy(5.75,9.00)}
\pgffill
\pgfclosestroke
\pgfmoveto{\pgfxy(6.25,9.00)}
\pgflineto{\pgfxy(6.25,10.00)}
\pgflineto{\pgfxy(6.75,10.00)}
\pgflineto{\pgfxy(6.75,9.00)}
\pgffill
\pgfclosestroke
\pgfmoveto{\pgfxy(10.25,9.00)}
\pgflineto{\pgfxy(10.25,9.50)}
\pgflineto{\pgfxy(10.75,9.50)}
\pgflineto{\pgfxy(10.75,9.00)}
\pgffill
\pgfclosestroke
\pgfmoveto{\pgfxy(11.25,9.00)}
\pgflineto{\pgfxy(11.25,10.00)}
\pgflineto{\pgfxy(11.75,10.00)}
\pgflineto{\pgfxy(11.75,9.00)}
\pgffill
\pgfclosestroke
\end{pgfscope}
\begin{pgfscope}
\color{green}
\pgfmoveto{\pgfxy(13.25,9.00)}
\pgflineto{\pgfxy(13.25,9.50)}
\pgflineto{\pgfxy(13.75,9.50)}
\pgflineto{\pgfxy(13.75,9.00)}
\pgffill
\pgfclosestroke
\end{pgfscope}
\begin{pgfscope}
\color{red}
\pgfmoveto{\pgfxy(2.25,9.00)}
\pgflineto{\pgfxy(2.25,8.50)}
\pgflineto{\pgfxy(2.75,8.50)}
\pgflineto{\pgfxy(2.75,9.00)}
\pgffill
\pgfclosestroke
\end{pgfscope}
\begin{pgfscope}
\pgfsetlinewidth{1.5pt}
\pgfsetarrowsend{Triangle[scale=0.75pt]}
\pgfxyline(0.5,10.2)(0.5,10.7)
\pgfxyline(1.5,10.2)(1.5,10.7)
\pgfxyline(2.5,10.2)(2.5,10.7)
\pgfxyline(1.5,8.3)(1.5,7.8)
\pgfxyline(2.5,8.3)(2.5,7.8)
\pgfxyline(3.5,8.3)(3.5,7.8)
\pgfxyline(7.5,9.7)(7.5,10.2)
\pgfxyline(8.5,9.7)(8.5,10.2)
\pgfxyline(6.5,8.3)(6.5,7.8)
\pgfxyline(7.5,8.3)(7.5,7.8)
\pgfxyline(8.5,8.3)(8.5,7.8)
\pgfxyline(12.5,9.7)(12.5,10.2)
\pgfxyline(13.5,9.7)(13.5,10.2)
\pgfxyline(13.5,8.3)(13.5,7.8)
\end{pgfscope}
\begin{pgfscope}
\pgfsetlinewidth{2pt}
\pgfxyline(0,9)(4,9)
\pgfxyline(5,9)(9,9)
\pgfxyline(10,9)(14,9)
\pgfxyline(0.5,8.9)(0.5,9.1)
\pgfxyline(1.5,8.9)(1.5,9.1)
\pgfxyline(2.5,8.9)(2.5,9.1)
\pgfxyline(3.5,8.9)(3.5,9.1)
\pgfxyline(5.5,8.9)(5.5,9.1)
\pgfxyline(6.5,8.9)(6.5,9.1)
\pgfxyline(7.5,8.9)(7.5,9.1)
\pgfxyline(8.5,8.9)(8.5,9.1)
\pgfxyline(10.5,8.9)(10.5,9.1)
\pgfxyline(11.5,8.9)(11.5,9.1)
\pgfxyline(12.5,8.9)(12.5,9.1)
\pgfxyline(13.5,8.9)(13.5,9.1)
\end{pgfscope}
\pgfputat{\pgfxy(0.5,10.9)}%
{\pgfbox[center,center]{$\lambda_2$}}
\pgfputat{\pgfxy(1.5,10.9)}%
{\pgfbox[center,center]{$\lambda_1$}}
\pgfputat{\pgfxy(2.5,10.9)}%
{\pgfbox[center,center]{$\lambda_0$}}
\pgfputat{\pgfxy(7.5,10.4)}%
{\pgfbox[center,center]{$\lambda_2$}}
\pgfputat{\pgfxy(8.5,10.4)}%
{\pgfbox[center,center]{$\lambda_1$}}
\pgfputat{\pgfxy(12.5,10.4)}%
{\pgfbox[center,center]{$\lambda_2$}}
\pgfputat{\pgfxy(13.5,10.4)}%
{\pgfbox[center,center]{$\lambda_1$}}
\pgfputat{\pgfxy(1.5,7.6)}%
{\pgfbox[center,center]{$\mu_0$}}
\pgfputat{\pgfxy(2.5,7.6)}%
{\pgfbox[center,center]{$\mu_1$}}
\pgfputat{\pgfxy(3.5,7.6)}%
{\pgfbox[center,center]{$\mu_2$}}
\pgfputat{\pgfxy(6.5,7.6)}%
{\pgfbox[center,center]{$\mu_0$}}
\pgfputat{\pgfxy(7.5,7.6)}%
{\pgfbox[center,center]{$\mu_1$}}
\pgfputat{\pgfxy(8.5,7.6)}%
{\pgfbox[center,center]{$\mu_2$}}
\pgfputat{\pgfxy(13.5,7.6)}%
{\pgfbox[center,center]{$\mu_0$}}
\pgfputat{\pgfxy(0.5,8.65)}%
{\pgfbox[center,center]{$U^n$}}
\pgfputat{\pgfxy(1.5,8.65)}%
{\pgfbox[center,center]{$V^n$}}
\pgfputat{\pgfxy(2.5,9.28)}%
{\pgfbox[center,center]{$W^n$}}
\pgfputat{\pgfxy(3.5,9.28)}%
{\pgfbox[center,center]{$X^n$}}
\pgfputat{\pgfxy(5.5,8.65)}%
{\pgfbox[center,center]{$U^n$}}
\pgfputat{\pgfxy(6.5,8.65)}%
{\pgfbox[center,center]{$V^n$}}
\pgfputat{\pgfxy(7.5,8.65)}%
{\pgfbox[center,center]{$W^n$}}
\pgfputat{\pgfxy(8.5,9.28)}%
{\pgfbox[center,center]{$X^n$}}
\pgfputat{\pgfxy(10.5,8.65)}%
{\pgfbox[center,center]{$U^n$}}
\pgfputat{\pgfxy(11.5,8.65)}%
{\pgfbox[center,center]{$V^n$}}
\pgfputat{\pgfxy(12.5,8.65)}%
{\pgfbox[center,center]{$W^n$}}
\pgfputat{\pgfxy(13.5,8.65)}%
{\pgfbox[center,center]{$X^n$}}
\pgfputat{\pgfxy(10.5,10.4)}%
{\pgfbox[center,center]{$C$}}
\pgfputat{\pgfxy(11.5,10.4)}%
{\pgfbox[center,center]{$C$}}
\pgfputat{\pgfxy(9.0,6.4)}%
{\pgfbox[center,center]{$C$}}
\pgfputat{\pgfxy(8.0,6.4)}%
{\pgfbox[center,center]{$C$}}
\pgfputat{\pgfxy(5.5,2.9)}%
{\pgfbox[center,center]{$C$}}
\pgfputat{\pgfxy(10.5,2.4)}%
{\pgfbox[center,center]{$C$}}
\end{pgfpicture}
\caption{Forces acting on $U^n$}
\end{figure}

Recall the processes $\sV^n$, $\sW^n$,
$\sX^n$ and $\sY^n$ introduced
in Sections \ref{Interior}
and \ref{BracketingIntro}
whose dynamics are given by Figure 
4.1 regardless of the values
of $\sV^n$ and $\sY^n$.
In the same way, we define $\sU^n$.
To precisely define $\sU^n$, we
introduce six
more intensity-one Poisson processes,
$N_{NE,U,-}$, $N_{E,U,-}$,
$N_{SE_+,U,-}$, $N_{SE,U,-}$, $N_{SE_-,U,-}$, 
$N_{SW,U,+}$,
independent
of the Poisson processes used to
define $\sV^n$, $\sW^n$,
$\sX^n$, and $\sY^n$.  In terms of these,
\begin{align}
\sU^n(t)
&=
U^n(0)-N_{NE,U,-}\left(\int_0^t\frac{1}{\sqrt{n}}
\theta_b\big(\sU^n(s)\big)^+dP_{NE}^n(s)\right)
\nonumber\\
&\quad
-N_{E,U,-}\left(\int_0^t\frac{1}{\sqrt{n}}
\theta_b\big(\sU^n(s)\big)^+dP_{E}^n(s)\right)
-N_{SE_+,U,-}\left(\int_0^t\frac{1}{\sqrt{n}}
\theta_b\big(\sU^n(s)\big)^+dP_{SE_+}^n(s)\right)\nonumber\\
&\quad
-N_{SE,U,-}\left(\int_0^t\frac{1}{\sqrt{n}}
\theta_b\big(\sU^n(s)\big)^+dP_{SE}^n(s)\right)\nonumber\\
&\quad
-N_{SE_-,U,-}\left(\int_0^t\frac{1}{\sqrt{n}}
\theta_b\big(\sU^n(s)\big)^+dP_{SE_-}^n(s)\right)
+N_{SW,U,+}\circ\lambda_2P_{SW}^n(t).\label{6.11y}
\end{align}
Scaling and centering these Poisson processes
by defining $\widehat{M}_{\times,U,\pm}^n(t)
=\frac{1}{\sqrt{n}}(N_{\times,U,\pm}(nt)-nt)$,
we may write the diffusion-scaled process
$\sUhatn(t)=\frac{1}{\sqrt{n}}\sU^n(nt)$ as
(cf.\ (\ref{5.1}) and (\ref{5.2}) for a similar
calculation)
\begin{align}
\sUhatn(t)
&=
\sUhatn(0)-
\widehat{M}_{NE,U,-}^n
\left(\int_0^t\theta_b\big(\sUhatn(s)\big)^+
d\Pbarn_{NE}(s)\right)\nonumber\\
&\quad
-\widehat{M}_{E,U,-}\left(\int_0^t
\theta_b\big(\sUhatn(s)\big)^+
d\Pbarn_E(s)\right)
-\widehat{M}_{SE_+,U,-}
\left(\int_0^t\theta_b\big(\sUhatn(s)\big)^+
d\Pbarn_{SE_+}(s)\right)\nonumber\\
&\quad
-\widehat{M}_{SE,U,-}\left(\int_0^t\theta_b
\big(\sUhatn(s)\big)^+d\Pbarn_{SE}(s)\right)
-\widehat{M}_{SE_-,U,-}
\left(\int_0^t\theta_b\big(\sUhatn(s)\big)^+
d\Pbarn_{SE_-}(s)\right)\nonumber\\
&\quad
-\sqrt{n}\int_0^t\theta_b\big(\sUhatn(s)\big)^+
\big(d\Pbarn_{NE}(s)+d\Pbarn_E(s)
+d\Pbarn_{SE_+}(s)+d\Pbarn_{SE}(s)
+d\Pbarn_{SE_-}(s)\big)\nonumber\\
&\quad
+\widehat{M}_{SW,U,+}\circ\lambda_2\Pbarn_{SW}(t)
+\sqrt{n}\,\lambda_2\Pbarn_{SW}(t).\label{6.11}
\end{align}

\begin{lemma}\label{L6.6}
The c\`adl\`ag processes $\sUhatn$ are
nonnegative, and the sequence
$\{\sUhatn\}_{n=1}^{\infty}$ is bounded
in probability on compact time intervals.
\end{lemma}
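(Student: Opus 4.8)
The proof will closely follow that of Theorem \ref{T5.5} (Lemmas \ref{L5.6} and \ref{L5.7}), with one new ingredient. Nonnegativity is immediate: in the dynamics (\ref{6.11y}) the process $\sU^n$ has jumps of size $\pm 1$, its only negative jumps come from the five cancellation terms, and each of those carries the factor $\big(\sU^n(s)\big)^+$ in its time change, so the cancellation intensity vanishes whenever $\sU^n=0$. Since $U^n(0)\geq 0$ by Assumption \ref{Assumption2}, it follows that $\sU^n\geq 0$, hence $\sUhatn\geq 0$, for every $n$. In particular $\{-\sUhatn\}_{n=1}^\infty$ is bounded above by $0$, so $\{\sUhatn\}_{n=1}^\infty$ is bounded below in probability on compact time intervals, and only boundedness above remains.

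For boundedness above I would mimic the proof of Lemma \ref{L5.6}. Write (\ref{6.11}) as $\sUhatn=\sUhatn(0)+Z_1^n+Z_2^n+Z_3^n+Z_4^n$, where $Z_1^n$ collects the five centered-Poisson cancellation terms $-\widehat{M}^n_{\times,U,-}\big(\int_0^{\cdot}\theta_b(\sUhatn)^+\,d\Pbarn_\times\big)$, $\times\in\{NE,E,SE_+,SE,SE_-\}$; $Z_2^n:=\widehat{M}^n_{SW,U,+}\circ\lambda_2\Pbarn_{SW}$; $Z_3^n(t):=-\sqrt{n}\int_0^t\theta_b\big(\sUhatn(s)\big)^+\,d\Sigma^n(s)\leq 0$, with $\Sigma^n:=\Pbarn_{NE}+\Pbarn_E+\Pbarn_{SE_+}+\Pbarn_{SE}+\Pbarn_{SE_-}$; and $Z_4^n:=\sqrt{n}\,\lambda_2\Pbarn_{SW}$. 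By Assumption \ref{Assumption2}, $\sUhatn(0)=O(1)$, and since $\Pbarn_{SW}$ has a continuous limit by Corollary \ref{C4.15}, the random time change lemma of \cite[Section~14]{Billingsley} gives $Z_2^n=O(1)$, exactly as for $Z_2^n$ in Lemma \ref{L5.6}. The new point is that $Z_4^n$ is \emph{not} $O(1)$: instead equation (\ref{5.37y}) of Remark \ref{R5.10y} writes $\sqrt{n}\,\Pbarn_{SW}$ as a fixed linear combination of $\sqrt{n}\,\Pbarn_{SE_-}$, $\sqrt{n}\,\Pbarn_{SE_+}$, $\sqrt{n}\,\Pbarn_E$ and $\sqrt{n}\,\Pbarn_{NE}$ plus a process having the sub-subsequence property of Lemma \ref{L5.8}, in particular an $O(1)$ process. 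Discarding the $\sqrt{n}\,\Pbarn_E$ term when its coefficient $(\mu_2-\lambda_0)/(\mu_2\lambda_1)$ is negative, and using $\Pbarn_\times\leq\Sigma^n$, one obtains a constant $K$ with $Z_4^n(t)\leq K\sqrt{n}\,\Sigma^n(t)+O(1)$.

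Next I would absorb $Z_1^n$ into half of the cancellation drift, region by region, as in (\ref{5.32}). Writing $A_\times^n(t):=\int_0^t\theta_b\big(\sUhatn(s)\big)^+\,d\Pbarn_\times(s)\geq 0$, so that the $\times$-summand of $Z_3^n$ is $-\sqrt{n}\,A_\times^n$, one has
\[
-\widehat{M}^n_{\times,U,-}\big(A_\times^n(t)\big)-\tfrac12\sqrt{n}\,A_\times^n(t)
=\frac{1}{\sqrt{n}}\Big[\tfrac12\,nA_\times^n(t)-N_{\times,U,-}\big(nA_\times^n(t)\big)\Big]
\leq\frac{1}{\sqrt{n}}\sup_{u\geq 0}\Big(\tfrac{u}{2}-N_{\times,U,-}(u)\Big),
\]
and the last supremum is an almost surely finite random variable because $N_{\times,U,-}(u)/u\to 1$ as $u\to\infty$ (\cite[Remark~3.10]{KaratzasShreve}). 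Summing over the five regions gives $Z_1^n+\tfrac12Z_3^n\leq\frac{1}{\sqrt{n}}O(1)$. Combining this with the previous paragraph, and noting $d\Sigma^n\leq dt$ since the five regions are disjoint,
\[
\sUhatn(t)=\sUhatn(0)+Z_2^n(t)+\big(Z_1^n(t)+\tfrac12Z_3^n(t)\big)+\tfrac12Z_3^n(t)+Z_4^n(t)
\leq\sqrt{n}\int_0^t\Big(K-\tfrac12\theta_b\big(\sUhatn(s)\big)^+\Big)d\Sigma^n(s)+O(1),
\]
which is the exact analogue of (\ref{5.33}), with the explicit coefficient there replaced by $K$ and $\Pbarn_{NE}$ replaced by $\Sigma^n$. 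The remainder of the proof of Lemma \ref{L5.6} then goes through: set $\tau^n(t):=t$ if the last displayed integral is nonpositive (in which case $\sUhatn(t)=O(1)$ directly), and otherwise let $\tau^n(t)$ be the last time before $t$ at which $\tfrac12\theta_b(\sUhatn)^+\leq K$ (a nonempty set in that case, since then $\int_0^t\theta_b(\sUhatn)^+\,d\Sigma^n\leq 2K\Sigma^n(t)\leq 2KT$ on $[0,T]$); in the latter case the arguments $A_\times^n$ are uniformly bounded so $Z_1^n=O(1)$ there, $\sUhatn\big(\tau^n(t)\big)=2K/\theta_b+1/\sqrt{n}=O(1)$ because the jumps of $\sUhatn$ have size $1/\sqrt{n}$, and then $\theta_b(\sUhatn(s))^+\geq 2K$ on $[\tau^n(t),t]$ together with (\ref{5.37y}) forces $\sUhatn(t)=O(1)$. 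Thus $\{\sUhatn\}_{n=1}^\infty$ is bounded above in probability on compact time intervals. The main obstacle is really the one step that differs from Lemma \ref{L5.6}: replacing the explicit drift identity (\ref{5.31}) for $\sVhatn$ by the bound $Z_4^n\leq K\sqrt{n}\,\Sigma^n+O(1)$ coming from (\ref{5.37y}); a minor point to check along the way is the sign of $\mu_2-\lambda_0$, which only affects whether one nonnegative term is kept or dropped in an upper bound.
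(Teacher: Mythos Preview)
Your proof is correct and follows essentially the same route as the paper: nonnegativity from the fact that all negative jumps are cancellations whose intensity vanishes at zero, then boundedness above via equation (\ref{5.37y}) to convert $\sqrt{n}\,\lambda_2\Pbarn_{SW}$ into a combination of $\sqrt{n}\,\Pbarn_\times$ for $\times\in\{NE,E,SE_+,SE_-\}$, the $-N+\id/2$ upper-bound trick applied region by region as in (\ref{5.32}), and finally the $\tau^n(t)$ last-visit argument from Lemma \ref{L5.6}. The one structural difference is that the paper first introduces an auxiliary process $\sA^n$ obtained from $\sU^n$ by deleting the $SE$ cancellation term, argues $\sA^n\geq\sU^n$, and then runs the argument with four regions $\{NE,E,SE_+,SE_-\}$ in place of your five; you instead keep $SE$ in $\Sigma^n$ and note that the extra cancellation drift only helps in every inequality. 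Both work; your version avoids the stochastic-domination step at the cost of carrying one extra (harmless) term through the estimates.
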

\begin{proof}
By Assumption \ref{Assumption2},
$\sU^n(0)\geq 0$, and since the decreases
in $\sU^n$ are all due to cancellation
(see (\ref{6.11y})),
$\sU^n$ can never become negative.
It remains to show that $\sUhatn$ is
bounded above in probability on compact time
intervals.

To simplify the notation slightly, we define
the auxiliary process
\begin{align}
\sA^n(t)
&:=
U^n(0)-N_{NE,U,-}\left(\int_0^t\frac{1}{\sqrt{n}}
\theta_b\sA(s)dP^n_{NE}(s)\right)
-N_{E,U,-}\left(\int_0^t\frac{1}{\sqrt{n}}
\theta_b\sA^n(s)dP^n_E(s)\right)\nonumber\\
&\quad
-N_{SE_+,U,-}\left(\int_0^t\frac{1}{\sqrt{n}}
\theta_b\sA^n(s)dP^n_{SE_+}(s)\right)
-N_{SE_-,U,-}\left(\int_0^t\frac{1}{\sqrt{n}}
\theta_b\sA^n(s)dP^n_{SE_-}(s)\right)\nonumber\\
&\quad
+N_{SW,U,+}\circ\lambda_2P^n_{SW}(t)\label{6.13z}
\end{align}
and its diffusion-scaled version
\begin{align}
\sAhatn(t)
&=
\sUhatn(0)-\widehat{M}_{NE,U,-}^n
\left(\int_0^t\theta_b\sAhatn(s)d\Pbarn_{NE}(s)\right)
-\widehat{M}_{E,U,-}^n
\left(\int_0^t\theta_b\sAhatn(s)d\Pbarn_{E}(s)\right)
\nonumber\\
&\quad
-\widehat{M}_{SE_+,U,-}^n
\left(\int_0^t\theta_b\sAhatn(s)d\Pbarn_{SE_+}(s)\right)
-\widehat{M}_{SE_-,U,-}^n
\left(\int_0^t\theta_b\sAhatn(s)d\Pbarn_{SE_-}(s)\right)
\nonumber\\
&\quad
-\sqrt{n}\int_0^t\theta_b\sAhatn(s)
\big(d\Pbarn_{NE}(s)+d\Pbarn_E(s)
+d\Pbarn_{SE_+}(s)+d\Pbarn_{SE_-}(s)\big)\nonumber\\
&\quad
+\widehat{M}_{SW,U,+}^n\circ\lambda_2\Pbarn_{SW}(t)
+\sqrt{n}\,\lambda_2\Pbarn_{SW}(t).\label{6.14z}
\end{align}
The process $\sA^n$ is the same as the process
$\sU^n$ except that $\sA^n$ suffers no
cancellations in the region $SE$, and hence
stochastically
dominates $\sU^n$.  Because $\sU^n$ is nonnegative,
so is $\sA^n$. Thus we may write $\sA^n(s)$ instead of
$(\sA^n(s))^+$ in (\ref{6.13z}) and $\sAhatn(s)$ instead of
$(\sAhatn(s))^+$ in (\ref{6.14z}).
It remains to show that $\sAhatn$
is bounded above in probability on compact
time intervals.

We rewrite (\ref{5.37y}) as
$$
\sqrt{n}\,\lambda_2\Pbarn_{SW}
=\sqrt{n}(\alpha_{SE_-}\Pbarn_{SE_-}
+\alpha_{SE_+}\Pbarn_{SE_+}
+\alpha_E\Pbarn_E
+\alpha_{NE}\Pbarn_{NE})+O(1)
$$
for appropriate constants $\alpha_{\times}$
and substitute this into (\ref{6.14z}),
observing that $\widehat{M}^n_{SW,U,+}\circ
\lambda_2\Pbarn_{SW}=O(1)$, to obtain
\begin{align}
\sAhatn(t)
&=
\sUhatn(0)-\widehat{M}_{NE,U,-}^n
\left(\int_0^t\theta_b\sAhatn(s)d\Pbarn_{NE}(s)\right)
-\widehat{M}_{E,U,-}^n
\left(\int_0^t\theta_b\sAhatn(s)d\Pbarn_{E}(s)\right)
\nonumber\\
&\quad
-\widehat{M}_{SE_+,U,-}^n
\left(\int_0^t\theta_b\sAhatn(s)d\Pbarn_{SE_+}(s)\right)
-\widehat{M}_{SE_-,U,-}^n
\left(\int_0^t\theta_b\sAhatn(s)d\Pbarn_{SE_-}(s)\right)
\nonumber\\
&\quad
+\sqrt{n}\int_0^t\big(\alpha_{NE}-\theta_b\sAhatn(s)\big)
d\Pbarn_{NE}(s)
+\sqrt{n}\int_0^t\big(\alpha_E-\theta_b\sAhatn(s)\big)
d\Pbarn_E(s)\nonumber\\
&\quad
+\sqrt{n}\int_0^t\big(\alpha_{SE_+}-\theta_b\sAhatn(s)\big)
d\Pbarn_{SE_+}(s)
+\sqrt{n}\int_0^t\big(\alpha_{SE_-}-\theta_b\sAhatn(s)\big)
d\Pbarn_{SE_-}(s)\nonumber\\
&\quad
+O(1).\label{6.16z}
\end{align}
There is a finite random variable $M^*$ that upper
bounds
\begin{align*}
&
-\widehat{M}_{NE,U,-}^n\left(\int_0^t\theta_b
\sAhatn(s)d\Pbarn_{NE}(s)\right)
-\frac12\int_0^t\theta_b\sAhatn(s)d\Pbarn_{NE}(s)\\
&\quad
-\widehat{M}_{E,U,-}^n\left(\int_0^t\theta_b
\sAhatn(s)d\Pbarn_{E}(s)\right)
-\frac12\int_0^t\theta_b\sAhatn(s)d\Pbarn_{E}(s)\\
&\quad
-\widehat{M}_{SE_+,U,-}^n\left(\int_0^t\theta_b
\sAhatn(s)d\Pbarn_{SE_+}(s)\right)
-\frac12\int_0^t\theta_b\sAhatn(s)d\Pbarn_{SE_+}(s)\\
&\quad
-\widehat{M}_{SE_-,U,-}^n\left(\int_0^t\theta_b
\sAhatn(s)d\Pbarn_{SE_-}(s)\right)
-\frac12\int_0^t\theta_b\sAhatn(s)d\Pbarn_{SE_-}(s)
\end{align*}
(cf.\ proof of Lemma \ref{L5.6}
just before (\ref{5.32})).
Define $\alpha:=\max\{\alpha_{SE_-},\alpha_{SE_+},
\alpha_{E},\alpha_{NE}\big\}$. From (\ref{6.16z})
we see that
\begin{align}
\sAhatn(t)
&\leq 
\sUhatn(0)+M^*
+\sqrt{n}\int_0^t\left(\alpha-\frac12\theta_b\sAhatn(s)
\right)d\Pbarn_{NE}(s)\nonumber\\
&\quad
+\sqrt{n}\int_0^t\left(\alpha-\frac12\theta_b\sAhatn(s)
\right)d\Pbarn_{E}(s)
+\sqrt{n}\int_0^t\left(\alpha-\frac12\theta_b\sAhatn(s)
\right)d\Pbarn_{SE_+}(s)\nonumber\\
&\quad
+\sqrt{n}\int_0^t\left(\alpha-\frac12\theta_b\sAhatn(s)
\right)d\Pbarn_{SE_-}(s)+O(1).\label{6.17z}
\end{align}

Fix $T>0$.  For $0\leq t\leq T$, we have either
\be\label{6.18z}
\int_0^t\left(\alpha-\frac12\theta_b\sAhatn(s)\right)
\big(d\Pbarn_{NE}(s)+d\Pbarn_{E}(s)
+d\Pbarn_{SE_+}(s)+d\Pbarn_{SE_-}(s)\big)
\leq 0,
\ee
or else
\begin{align}
\lefteqn{
\int_0^t\theta_b\sAhatn(s)
\big(d\Pbarn_{NE}(s)+d\Pbarn_E(s)
+d\Pbarn_{SE_+}+d\Pbarn_{SE_-}(s)\big)}
\qquad\qquad\qquad\nonumber\\
&< 
2\alpha
\big(\Pbarn_{NE}(t)+\Pbarn_E(t)
+\Pbarn_{SE_+}(t)+\Pbarn_{SE_-}(t)\big)\nonumber\\
&\leq 
2\alpha T.\label{6.19z}
\end{align}
If (\ref{6.18z}) holds,
(\ref{6.17z}) implies 
$\sAhatn(t)\leq \sUhatn(0)+M^*$, and we
have the desired upper bound.
If (\ref{6.19z}) holds, the set
$\{s\in[0,t]:\theta_b\sAhatn(s)\leq 2\alpha\}$
is nonempty, and we define $\tau^n(t)$
to be its supremum.  
Because the arguments of
the centered Poisson processes 
$\widehat{M}_{\times,U,-}$ in (\ref{6.16z}) are
bounded by $2\alpha T$ and both the infimum
and the supremum over $s\in[0,2\alpha T]$ of
$$
-\widehat{M}_{NE,U,-}(s)-\widehat{M}_{E,U,-}(s)
-\widehat{M}_{SE_+,U,-}-\widehat{M}_{SE_-,U,-}(s)
$$
are finite, (\ref{6.16z}) implies
\begin{align}
\sAhatn(t)
&=
\sAhatn\big(\tau^n(t)\big)
+\sqrt{n}\int_{\tau^n(t)}^t\big(\alpha_{NE}-\theta_b
\sAhatn(s)\big)d\Pbarn_{NE}(s)\nonumber\\
&\quad
+\sqrt{n}\int_{\tau^n(t)}^t\big(\alpha_{E}-\theta_b
\sAhatn(s)\big)d\Pbarn_{E}(s)
+\sqrt{n}\int_{\tau^n(t)}^t\big(\alpha_{SE_+}-\theta_b
\sAhatn(s)\big)d\Pbarn_{SE_+}(s)\nonumber\\
&\quad
+\sqrt{n}\int_{\tau^n(t)}^t\big(\alpha_{SE_-}-\theta_b
\sAhatn(s)\big)d\Pbarn_{SE_-}(s)+O(1).\label{6.20z}
\end{align}
But for
$s\in(\tau^n(t),t]$, we have $\theta_b\sAhatn(s)>2\alpha$,
and the integrands in (\ref{6.20z}) are negative.
This implies
$$
\sAhatn(t)\leq \sAhatn\big(\tau^n(t)\big)+O(1)
\leq \frac{2\alpha}{\theta_b}+\frac{1}{\sqrt{n}}
+O(1)=O(1).
$$

\vspace{-24pt}
\end{proof}

Using the excursion notation of Section
\ref{Enumerating}, we define
$U_{k,+}^n:=\sUhatn\big((\Lambda_{k,+}^n+\cdot)
\wedge R_{k,+}^n\big)$,
which is the process $\sUhatn$ on the $k$-th
positive excursion of $G^n$.

\begin{lemma}\label{L6.7}
For every $\varepsilon>0$, we have
$\sup_{t\geq\varepsilon}U_{k,+}^n(t)
\rightarrow 0$ almost surely.
\end{lemma}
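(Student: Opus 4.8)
The plan is to mimic the proof of Proposition~\ref{P5.14}, exploiting a simplification special to $\sUhatn$: \emph{on a positive excursion of $G^n$ the process $\sUhatn$ is nonincreasing}. A positive excursion of $\Ghatn$ corresponds, via the inverse transformation (\ref{4.34d})--(\ref{4.35d}), to $(\sWhatn,\sXhatn)\in NE\cup E\cup SE_+$, and in each of these three regions $\sU^n$ is subject only to cancellation and never to an arrival (see (\ref{6.11y}) and Figure~6.1); the one arrival term $N_{SW,U,+}\circ\lambda_2P^n_{SW}$ is frozen because $\Pbarn_{SW}$ is constant on the excursion interval, as are $\Pbarn_{SE}$, $\Pbarn_{SE_-}$, $\Pbarn_S$ and $\Pbarn_O$, while at every instant exactly one of $NE$, $E$, $SE_+$ holds, so that $d\Pbarn_{NE}+d\Pbarn_E+d\Pbarn_{SE_+}=ds$ there. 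Substituting this into (\ref{6.11}) and subtracting the value at $\Lambda_{k,+}^n$, every term that is constant on the excursion (in particular the $\sqrt n$-order source term $\sqrt n\,\lambda_2\Pbarn_{SW}$) drops out, and one is left, for $0\le u\le R_{k,+}^n-\Lambda_{k,+}^n$, with
\be\label{LUga}
U_{k,+}^n(u)=U_{k,+}^n(0)-\sqrt{n}\,\theta_b\int_0^u U_{k,+}^n(r)\,dr+c^n(u),
\ee
where $U_{k,+}^n(0)=\sUhatn(\Lambda_{k,+}^n)$, $c^n(0)=0$, and $c^n$ is a sum of three time-changed centered Poisson processes $\widehat M^n_{\times,U,-}$, $\times\in\{NE,E,SE_+\}$, whose time-change arguments are bounded in probability by Lemma~\ref{L6.6}. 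In particular $c^n=O(1)$ on compact time intervals and has the sub-subsequence property of Lemma~\ref{L5.8}.

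Next I would reduce the claim to showing that $U_{k,+}^n(\sigma)\to0$ almost surely for a single fixed $\sigma\in(0,R_{k,+}-\Lambda_{k,+})$. By (\ref{5.54}), $R_{k,+}^n-\Lambda_{k,+}^n\to R_{k,+}-\Lambda_{k,+}>0$ almost surely; since $U_{k,+}^n$ is nonincreasing on $[0,R_{k,+}^n-\Lambda_{k,+}^n]$ and constant afterwards, for all sufficiently large $n$ we have $\sup_{t\ge\varepsilon}U_{k,+}^n(t)=U_{k,+}^n\big(\varepsilon\wedge(R_{k,+}^n-\Lambda_{k,+}^n)\big)\le U_{k,+}^n(\sigma)$ as soon as $\sigma<\varepsilon\wedge(R_{k,+}^n-\Lambda_{k,+}^n)$.

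The estimate is then immediate from monotonicity. Because $U_{k,+}^n\ge0$ (Lemma~\ref{L6.6}) and is nonincreasing on $[0,\sigma]$, $\int_0^\sigma U_{k,+}^n(r)\,dr\ge\sigma\,U_{k,+}^n(\sigma)$, so (\ref{LUga}) gives
\be\label{LUgb}
0\le U_{k,+}^n(\sigma)\le\frac{U_{k,+}^n(0)+c^n(\sigma)}{1+\sqrt{n}\,\theta_b\sigma}.
\ee
By Lemma~\ref{L6.6} the numerator is $O(1)$, while the denominator tends to $\infty$, so $U_{k,+}^n(\sigma)\to0$ in probability. To promote this to almost sure convergence I would invoke the sub-subsequence device: from an arbitrary subsequence extract $\{n_j\}$ along which $c^{n_j}$ converges weakly-$J_1$ to a continuous limit, apply the Skorohod representation (as in Remark~\ref{R5.10x}, extended to the six $U$-Poisson processes) so that this convergence holds a.s.\ uniformly on $[0,\sigma]$ and $\sUhatn(\Lambda_{k,+}^{n_j})/\sqrt{n_j}\to0$ a.s.\ (possible since $\sUhatn(\Lambda_{k,+}^n)=O(1)$), and conclude from (\ref{LUgb}) and the monotonicity reduction that $\sup_{t\ge\varepsilon}U_{k,+}^{n_j}(t)\to0$ a.s.; as every subsequence has such a sub-subsequence, the result follows.

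I expect the main obstacle to be bookkeeping rather than conceptual: one must check carefully, using Lemma~\ref{L6.6}, that every term of (\ref{6.11}) other than the large mean-reverting drift $-\sqrt n\,\theta_b\int_0^u U_{k,+}^n$ is genuinely $O(1)$ after restriction to a positive excursion, and must reconcile the almost sure assertion of the lemma with the fact that Lemma~\ref{L6.6} gives only boundedness in probability of $\sUhatn(\Lambda_{k,+}^n)$ (hence the Skorohod-representation step above). The key structural point making the limit $0$ rather than some positive constant is that the $\sqrt n$-order source $\sqrt n\,\lambda_2\Pbarn_{SW}$ is constant on a positive excursion of $G^n$ and so contributes nothing to increments of $\sUhatn$ there, in contrast to the source $\frac{\lambda_2\mu_1}{\lambda_1}\sqrt n\,\Pbarn_{NE}$ that produced the nonzero limit $\kappa_L$ for $\sVhatn$ in Proposition~\ref{P5.14}.
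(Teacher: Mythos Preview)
Your proposal is correct and follows essentially the same route as the paper: on a positive excursion of $\Ghatn$ one has $(\sWhatn,\sXhatn)\in NE\cup E\cup SE_+$, so $\sUhatn$ is nonincreasing there; hence it suffices to bound $U_{k,+}^n$ at a single time, and the monotonicity inequality $\int_0^{\sigma}U_{k,+}^n(r)\,dr\ge\sigma\,U_{k,+}^n(\sigma)$ combined with the integral identity from (\ref{6.11}) yields $(1+\sqrt n\,\theta_b\sigma)U_{k,+}^n(\sigma)\le O(1)$. The paper's proof is only marginally more streamlined in that it works directly with $\varepsilon$ rather than introducing an auxiliary $\sigma<\varepsilon$, and it absorbs $U_{k,+}^n(0)$ and the centered-Poisson terms into a single $O(1)$ rather than naming $c^n$ separately.

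One remark on your final paragraph: the sub-subsequence device you invoke (every subsequence has a further subsequence along which the convergence is a.s.) is equivalent to convergence in probability of the full sequence, not to almost sure convergence of the full sequence. The paper's own proof has the same feature: the bound $U_{k,+}^n(\varepsilon)\le O(1)/(1+\sqrt n\,\theta_b\varepsilon)$ with $O(1)$ bounded in probability yields $U_{k,+}^n(\varepsilon)\to0$ in probability, and the paper does not supply an additional argument to upgrade this (compare the companion Lemma~\ref{L6.8}, whose conclusion is stated with $\stackrel{\P}{\rightarrow}$). So your concern is well placed, but the resolution you sketch does not close the gap; in practice convergence in probability is all that is needed for the application in the proof of Theorem~\ref{T6.2}.
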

\begin{proof}
When $\Ghatn$ is on a positive excursion,
$(\sWhatn,\sXhatn)\in NE\cup E\cup SE_+$
(see (\ref{3.17a})).  In this case, 
(\ref{6.11y}) shows $U_{k,+}^n$ is nonincreasing,
so it suffices to show that 
$U_{k,+}^n(\varepsilon)\rightarrow 0$.
But (\ref{6.11}) implies that
\begin{align*}
U_{k,+}^n(\varepsilon)
&=
U_{k,+}^n(0)
-\sqrt{n}\int_{\Lambda_{k,+}^n}^{\Lambda_{k,+}^n
+\varepsilon}
\theta_b\big(\sUhatn(s)\big)^+
\big(d\Pbarn_{NE}(s)+d\Pbarn_E(s)
+d\Pbarn_{SE_+}(s)\big)+O(1)\\
&=
-\sqrt{n}\int_{\Lambda_{k,+}^n}^{\Lambda_{k,+}^n
+\varepsilon}
\theta_b\big(\sUhatn(s)\big)^+ds+O(1)\\
&\leq -\sqrt{n}\theta_b\varepsilon
U_{k,+}^n(\varepsilon)+O(1).
\end{align*}
Because the left-hand side of this relation
is nonnegative, 
$U_{k,+}^n(\varepsilon)\rightarrow 0$
as $n\rightarrow\infty$.
\end{proof}

Similarly, we define
$U_{k,-}^n:=\sUhatn\big((\Lambda_{k,-}^n+\cdot)
\wedge R_{k,-}^n\big)$,
which is the process $\sUhatn$
on the $k$-th negative excursion of $G^n$.

\begin{lemma}\label{L6.8}
For every $\varepsilon>0$, we have
$\sup_{t\geq \varepsilon}|U_{k,-}^n(t)-\kappa_L|
\stackrel{\P}{\rightarrow} 0$ almost surely.
\end{lemma}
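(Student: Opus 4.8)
The plan is to transcribe the proof of Proposition \ref{P5.14}, which established mean reversion of $\sVhatn$ to $\kappa_L$ on \emph{positive} excursions of $\Ghatn$, to the present situation of $\sUhatn$ on a \emph{negative} excursion. The main work is the first step: for each fixed $k\ge 1$, deriving the representation
\[
U^n_{k,-}(t)=U^n_{k,-}(0)+\sqrt{n}\,\theta_b\int_0^t\big(\kappa_L-U^n_{k,-}(s)\big)\,d\Pbarn_{SE_-}\big(\Lambda^n_{k,-}+s\big)+C^n_U\big(\Lambda^n_{k,-}+t\big)-C^n_U\big(\Lambda^n_{k,-}\big),
\]
valid on $[0,R^n_{k,-}-\Lambda^n_{k,-}]$ and hence, since $U^n_{k,-}$ is constant past $R^n_{k,-}-\Lambda^n_{k,-}$, for all $t\ge 0$, where $\{C^n_U\}_{n=1}^{\infty}$ is bounded in probability on compact time intervals and has the sub-subsequence property of Lemma \ref{L5.8}. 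This is the analogue of (\ref{5.63}), with $\Pbarn_{NE}$ replaced by $\Pbarn_{SE_-}$.

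To obtain this representation I would start from (\ref{6.11}) and observe that while $\Ghatn$ is on a negative excursion we have $(\sWhatn,\sXhatn)\in SE_-\cup S\cup SW$, by (\ref{3.17a}) and the region definitions (\ref{3.2})--(\ref{3.9}); consequently the occupation times of the regions on which $\Ghatn\ge 0$, namely $\Pbarn_{NE}$, $\Pbarn_E$, $\Pbarn_{SE_+}$ and $\Pbarn_{SE}$, are frozen over the excursion. Every centered-Poisson term in (\ref{6.11}) except those driven by $\Pbarn_{SE_-}$ and $\Pbarn_{SW}$ is therefore constant on the excursion, and (using $\sUhatn\ge 0$ from Lemma \ref{L6.6}, so $(\sUhatn)^+=\sUhatn$) the only surviving nonmartingale contributions are $-\sqrt{n}\,\theta_b\int\sUhatn\,d\Pbarn_{SE_-}$ and $+\sqrt{n}\,\lambda_2\,d\Pbarn_{SW}$. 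Now invoke (\ref{5.37y}) of Remark \ref{R5.10y}: on a negative excursion its terms in $\Pbarn_{SE_+}$, $\Pbarn_E$ and $\Pbarn_{NE}$ are all frozen, so over the excursion it collapses, up to an additive constant, to $\sqrt{n}\,\Pbarn_{SW}=\frac{\mu_1}{\lambda_1}\sqrt{n}\,\Pbarn_{SE_-}+C_0^n$ with $C_0^n$ having the sub-subsequence property. Together with the identity $\theta_b\kappa_L=\lambda_2\mu_1/\lambda_1$, immediate from (\ref{kappa}), this converts $\sqrt{n}\,\lambda_2\,d\Pbarn_{SW}$ into $\sqrt{n}\,\theta_b\kappa_L\,d\Pbarn_{SE_-}$ plus a good process, and the representation follows with $C^n_U$ equal to the surviving centered-Poisson terms, which are compositions of the $\widehat{M}$'s with bounded arguments, hence $O(1)$ with continuous sub-subsequence limits, plus $\lambda_2C_0^n$.

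With the representation in hand, the remainder is a word-for-word copy of the proof of Proposition \ref{P5.14}, with $\Pbarn_{NE}$, $V^n_{k,+}$ and $C_V^n$ replaced by $\Pbarn_{SE_-}$, $U^n_{k,-}$ and $C^n_U$. For $\delta\in(0,1)$ put $\tau^n(t):=0\vee\sup\{s\in[0,t]:|\kappa_L-U^n_{k,-}(s)|<\delta\}$. Boundedness of $\sUhatn$ (Lemma \ref{L6.6}), $C^n_U=O(1)$, and the fact --- from Proposition \ref{P4.10} together with (\ref{5.54}) --- that $\Pbarn_{SE_-}(\Lambda^n_{k,-}+\cdot)-\Pbarn_{SE_-}(\Lambda^n_{k,-})\ArrowJ1\frac{\lambda_1}{\mu_0+\mu_1}\id$, a strictly increasing linear function, give $\tau^n\ArrowJ1\id$. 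The ensuing case analysis (whether $\kappa_L-U^n_{k,-}\le-\delta$ or $\ge\delta$ on $[\tau^n(t),t]$, with subcases $\tau^n(t)=0$, $0<\tau^n(t)<t$, $\tau^n(t)=t$, using that the jumps of $U^n_{k,-}$ have size $1/\sqrt{n}$) yields $|U^n_{k,-}(t)-\kappa_L|\le\delta+o(1)$ for every $t\ge\ve$ along a sub-subsequence on which $C^n_U$ converges to a continuous limit; letting $\delta\downarrow 0$ and applying the usual subsequence argument upgrades this to $\sup_{t\ge\ve}|U^n_{k,-}(t)-\kappa_L|\stackrel{\P}{\rightarrow}0$ for the full sequence, which under the standing Skorohod representation is the asserted almost sure statement.

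The main obstacle is the first step, and within it the reduction of $\sqrt{n}\big[\lambda_2\Pbarn_{SW}-\theta_b\kappa_L\Pbarn_{SE_-}\big]$ on a negative excursion to a well-behaved remainder; this is exactly where (\ref{5.37y}) does the work, once one checks that the contaminating occupation times there are frozen during a negative excursion, and it is the only place the argument departs from a mechanical copy of the $\sVhatn$ analysis. It is worth noting at the outset that, in contrast to $\sVhatn$, the process $\sUhatn$ need not be near $\kappa_L$ at the left endpoint of the excursion --- by Lemma \ref{L6.7} it is typically near $0$ just before --- which is why the statement controls $U^n_{k,-}$ only for $t\ge\ve$ and why the convergence is only in probability.
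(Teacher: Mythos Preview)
Your proposal is correct and follows essentially the same approach as the paper. You derive the representation for $U^n_{k,-}$ by freezing the non-$SE_-$/$SW$ occupation times on a negative excursion, use (\ref{5.37y}) to convert $\sqrt{n}\,\lambda_2\,d\Pbarn_{SW}$ into $\sqrt{n}\,\theta_b\kappa_L\,d\Pbarn_{SE_-}$ plus a well-behaved remainder, and then defer to the proof of Proposition~\ref{P5.14}; this is precisely the paper's argument, which packages the centered-Poisson contributions and the $\sqrt{n}[\lambda_2\Pbarn_{SW}-\frac{\lambda_2\mu_1}{\lambda_1}\Pbarn_{SE_-}]$ remainder into processes $C_1^n$ and $C_2^n$ with the sub-subsequence property and then invokes Proposition~\ref{P5.14}.
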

\begin{proof}
When $\Ghatn$ is on a negative excursion,
then $(\sWhatn,\sXhatn)\in SE_-\cup S\cup SW$ and
for $t\geq 0$,
(\ref{6.11}) becomes
\begin{align}
\sUhatn\big((\Lambda_{k,-}^n+t)\wedge R_{k,-}^n\big)
&=
\sUhatn(\Lambda_{k,-}^n)
+C_1^n(t)
-\sqrt{n}\int_{\Lambda_{k,-}^n}^{(\Lambda_{k,-}^n+t)
\wedge R_{k,-}^n}
\theta_b\big(\sUhatn(s)\big)^+
d\Pbarn_{SE_-}(s)\nonumber\\
&\quad
+\sqrt{n}\,\lambda_2\Big(\Pbarn_{SW}
\big((\Lambda_{k,-}^n+t)\wedge R_{k,-}^n\big)
-\Pbarn_{SW}(\Lambda_{k,-}^n)\Big),\label{6.13}
\end{align}
where
\begin{align*}
C_1^n(t)
&:=
-\left[\widehat{M}_{SE_-,U,-}
\left(\int_0^{(\Lambda_{k,-}^n+t)\wedge R_{k,-}^n}
\theta_b\big(\sUhatn(s)\big)^+d\Pbarn_{SE_-}(s)\right)
\right.\nonumber\\
&\hspace{1in}\left.
-\widehat{M}_{SE_-,U,-}
\left(\int_0^{\Lambda_{k,-}^n}
\theta_b\big(\sUhatn(s)\big)^+d\Pbarn_{SE_-}(s)\right)
\right]\nonumber\\
&\quad
+\left[\widehat{M}_{SW,U,+}\circ
\lambda_2\Pbarn_{SW}
\big((\Lambda_{k,-}^n+t)\wedge R_{k,-}^n\big)
-\widehat{M}_{SW,U,+}\circ\lambda_2\Pbarn_{SW}
(\Lambda_{k,-}^n)\right].\nonumber\\
\end{align*}
We rewrite (\ref{6.13}) as
\begin{align*}
U_{k,-}^n(t)
&=
U_{k,-}^n(0)
+C_1^n(t)+C_2^n(t)\nonumber\\
&\quad
+\sqrt{n}\,\theta_b
\int_0^{t\wedge (R_{k,-}^n-\Lambda_{k,-}^n)}
\big(\kappa_L-(U_{k,-}^n(u))^+\big)
d\Pbarn_{SE_-}(\Lambda_{k,-}^n+u),
\end{align*}
where
\begin{align*}
C_2^n(t)
&:=
\sqrt{n}\,\lambda_2
\left(
\Pbarn_{SW}\big((\Lambda_{k,-}^n+t)\wedge R_{k,-}^n\big)
-\Pbarn_{SW}(\Lambda_{k,-}^n)\right)\\
&\quad
-\sqrt{n}\,\frac{\lambda_2\mu_1}{\lambda_1}
\left(\Pbarn_{SE_-}
\big((\Lambda_{k,-}+t)\wedge R_{k,-}^n\big)
-\Pbarn_{SE_-}(\Lambda_{k,-}^n)
\right).
\end{align*}
Lemma \ref{L6.6} and the convergence
of $\widehat{M}_{SE_-,U,-}$ and
$\widehat{M}_{SW,U,+}$ to (continuous)
Brownian motions implies that $\{C_1^n\}_{n=1}$
has the subsequence/sub-subsequence 
continuity property specified
in Lemma \ref{L5.8}.
Equation (\ref{5.37y}) of Remark \ref{R5.10y}
shows that $C_2^n$ also has the sequence/sub-subsequence
continuity property specified in Lemma \ref{L5.8}
(recall that $\Pbarn_{SE_+}$, $\Pbarn_E$
and $\Pbarn_{NE}$ are constant on
negative excursions of $\Ghatn$).
We conclude that $U_{k,-}^n$ has
the properties set forth in Lemma \ref{L5.8}.
We can now follow the proof of Proposition
\ref{P5.14} to conclude.
\end{proof}

\vspace{6pt}
\noi
{\em Proof of Theorem \ref{T6.2}.}
By construction, the left-hand side of (\ref{6.3y})
is 
$$
\big(\sUhatn(\Shatn),\sVhatn(\Shatn),\sWhatn(\Shatn),
\sXhatn(\Shatn),\sYhatn(\Shatn),\sZhatn(\Shatn)\big).
$$
Consider the case $S^*_v<S^*_y$,
so that $\sV^*(S^*)=0$.  
Because $\sV^*$ is the constant $\kappa_L$
off negative excursions of $G^*$, $G^*$ must be
on a negative excursion at time $S^*$.
In particular, $G^*$ is on a negative
excursion in a neighborhood of $S^*$.
Because $\Shatn\rightarrow S^*$,
Lemma \ref{L6.8} implies
$\sUhatn(\Shatn)\stackrel{\P}{\rightarrow}\kappa_L$.  
Because $\sV^*$ is continuous at $S^*$,
$$
\sVhatn(\Shatn)=\big(\sVhatn(\Shatn)-\sV^*(\Shatn)\big)+
\big(\sV^*(\Shatn)-\sV^*(S^*)\big)\rightarrow 0
$$
almost surely.
Remark \ref{R4.20x} implies
$$
(\sW^*(S^*),\sX^*(S^*)\big)
=\big(\max\big\{G^*(S^*),0\big\},\min\big\{G^*(S^*),0\big\}
\big)
=\big(0,G^*(S^*)\big).
$$
This plus continuity of $\sW^*$ and $\sX^*$ imply
that $\sWhatn(\Shatn)\rightarrow 0$ almost surely
and $\sXhatn(\Shatn)$ has the almost sure
negative limit $\sX^*(S^*)$.
Theorem \ref{T5.24} and continuity
of $\sY^*$ at $S^*$ imply
$\sYhatn(S^n)\rightarrow \sY^*(S^*)=\kappa_R$.
Finally, Lemma \ref{L6.7} that $\sUhatn\rightarrow 0$
almost surely on the interior of positive
excursions of $G^*$ shows by analogy that 
$\Zhatn\rightarrow0$ almost surely on the interior of
negative excursions of $G^*$.
This establishes the convergence (\ref{6.3y})
on the set $\{S^*_v<S^*_y\}$.  The argument
for the complementary set $\{S^*_y<S^*_v\}$
is analogous.
$\hfill\Box$

\section{Statistics of the limit}\label{Statistics}
\setcounter{equation}{0}
\setcounter{theorem}{0}

This section provides two calculations
for the limiting system.
One calculation concerns the {\em time to renewal}
$S^*$ of (\ref{Sstar}). 
The
process $(\frac{1}{\sqrt{n}}V^n(nt),
\frac{1}{\sqrt{n}}W^n(nt),\frac{1}{\sqrt{n}}X^n(nt),
\frac{1}{\sqrt{n}}Y^n(nt))_{t\geq 0}$ begins with
$V^n(0)$ strictly positive and
$Y^n(0)$ strictly negative.  This process,
stopped the first time either $V^n$ or
$Y^n$ reaches zero, has limit
$(\sV^*,\sW^*,\sX^*,\sY^*)$
stopped at $S^*$, the first time 
either
$\sV^*$ or $\sY^*$ vanishes.
 According to
Assumption \ref{Assumption2},
the initial condition for the limiting
system is
$(V^*(0),0,0,Y^*(0))$, where
$V^*(0)$ is strictly positive and $Y^*(0)$
is strictly negative.
If $\sV^*(S^*)=0$, the price
has shifted downward one tick,
and if $\sY^*(S^*)=0$, the price has moved
upward.  Theorem \ref{T6.2} says that
after this price shift, we are in a new
initial state of the same form
$(+,0,0,-)$ as $(V^*(0),0,0,Y^*(0))$
but shifted left or right one tick.
We say there has been a
{\em renewal}.  In Section \ref{BetweenRenewals}
we compute the characteristic function
of the time to renewal and 
the probabilities for downward and upward
price shifts.

Section \ref{ToRenewal} deals with a related
problem.  At Lebesgue-almost-every time after the initial
time but before the first renewal, the system
$(\sV^*,\sW^*,\sX^*,\sY^*)$ satisfies
$\sV^*>0$, $\sY^*<0$, and either $\sW^*>0$, $\sX^*=0$
or else $\sW^*=0$, $\sX^*<0$.  
We consider the latter case, i.e.,
at time $t_0$, $\sV^*(t_0)=v_1>0$, $\sW^*(t_0)=0$
and $\sX^*(t_0)=x_1<0$.  In Theorems 
\ref{T7.1} and \ref{T7.2}
we provide
the probability $\sV^*$ reaches zero before
$\sX^*$ and, conditioned on this event, 
the probability density function
of the first passage time of $\sV^*$ to zero.
Similarly, we report the probability that
$\sX^*$ reaches zero before $\sV^*$,
and conditioned on this event, the probability
density function of the first passage time
of $\sX^*$ to zero.

\subsection{Time to renewal}\label{ToRenewal}

\begin{lemma}\label{L7.1a}
Let $t_0>0$, $v_1>0$
and $x_1<0$ be given.  Assume $G^*$ is on a negative
excursion $E$ at time $t_0$ and define
\be\label{7.12}
\tau_{\sV^*}^{t_0}=\inf\big\{t\geq t_0:\sV^*(t)=0\big\},
\quad
\tau_{\sX^*}^{t_0}=\inf\big\{t\geq t_0 :\sX^*(t)=0\big\}.
\ee
There exists a pair of Brownian motions 
$(\Dhat,\Ehat)$ with
covariance matrix\footnote{Here $\mbox{}^{\prime}$
denotes time derivative.}
\be\label{cov}
\left[\begin{array}{cc}
\langle \Dhat,\Dhat\rangle'&
\langle \Dhat,\Ehat\rangle'\\
\langle\Dhat,\Ehat\rangle'&
\langle\Ehat,\Ehat\rangle'
\end{array}\right]
=\left[\begin{array}{cc}
\sigma_+^2&\rho\sigma_+\sigma_-\\
\rho\sigma_+\sigma_-&\sigma_-^2
\end{array}\right],
\ee
where
$\sigma_{\pm}$ is defined by
(\ref{sigmaplus}), (\ref{sigmaminus})
and $\rho\in(-1,0)$
is defined by (\ref{rho}),
such that, with
$\tau_{\Dhat}:=\inf\{t\geq 0:\Dhat(t)=0\}$,
$\tau_{\Ehat}:=\inf\{t\geq 0:\Ehat(t)=0\}$, 
\begin{align}
\lefteqn{\P\big\{\tau_{\sV^*}^{t_0}
<\tau_{\sX^*}^{t_0}\big|
\sV^*(t_0)=v_1,\sX^*(t_0)=x_1\big\}}
\hspace{2cm}\nonumber\\
&=
\P\big\{\tau_{\Dhat}<\tau_{\Ehat}\big|
\Dhat(0)=v_1,\Ehat(0)=-x_1\big\}
\label{7.13d}\\
\lefteqn{\P\{\tau_{\sV^*}^{t_0}\in t_0+
ds\big|\tau_{\sV^*}^{t_0}<\tau_{\sX^*}^{t_0},
\sV^*(t_0)=v_1,\sX^*(t_0)=x_1\big\}},
\hspace{2cm}\nonumber\\
&=
\P\big\{\tau_{\Dhat}\in ds\big|
\tau_{\Dhat}<\tau_{\Ehat},\Dhat(0)=v_1,\Ehat(0)=-x_1
\big\},\quad s\geq 0,\label{7.14d}\\
\lefteqn{
\P\big\{\tau_{\sX^*}^{t_0}<\tau_{\sV^*}^{t_0}
\big|\sV^*(t_0)=v_1,
\sX^*(t_0)=x_1\big\}}\hspace{2cm}\nonumber\\
&=
\P\big\{\tau_{\Ehat}<\tau_{\Dhat}\big|
\Dhat(0)=v_1,\Ehat(0)=-x_1\big\},\label{7.15}\\
\lefteqn{\P\big\{\tau_{\sX^*}^{t_0}\in t_0+dt\big|
\tau_{\sX^*}^{t_0}<\tau_{\sV^*}^{t_0},
\sV^*(t_0)=v_1,\sX^*(t_0)=x_1
\big\}}\hspace{2cm}\nonumber\\
&=
\P\big\{\tau_{\Ehat}\in dt\big|
\tau_{\Ehat}<\tau_{\Dhat},\Dhat(0)=v_1,
\Ehat(0)=-x_1\big\},\quad t\geq 0.\label{7.16}
\end{align}
\end{lemma}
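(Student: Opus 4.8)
The plan is to reduce the lemma to a statement about two correlated Brownian motions observed only on a single excursion interval. Since $G^*(t_0)<0$, the relevant picture is that, during the negative excursion of $G^*$ straddling $t_0$, the process $\sX^*$ \emph{is} $G^*$ (and $\sW^*\equiv0$), while $\sV^*$ is $\kappa_L$ plus an explicit affine functional of that same excursion plus an independent Brownian noise. So $(\sV^*(t_0+\cdot\,),-\sX^*(t_0+\cdot\,))$, run until the excursion ends, will be shown to coincide with a pair of correlated Brownian motions $(\Dhat,\Ehat)$ started at $(v_1,-x_1)$ with covariance matrix (\ref{cov}), and the four probabilities in (\ref{7.13d})--(\ref{7.16}) will follow because the events $\{\tau^{t_0}_{\sV^*}<\tau^{t_0}_{\sX^*}\}$ and $\{\tau^{t_0}_{\sX^*}<\tau^{t_0}_{\sV^*}\}$, and the corresponding passage times, are determined by the restrictions of these processes to that excursion.

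In detail: let $[\Lambda,R]$ be the negative excursion interval of $G^*$ containing $t_0$, and write $E=G^*((\Lambda+\cdot\,)\wedge R)$ for the excursion itself. By Remark \ref{R4.20x}, $\sX^*=G^*<0$ on $(\Lambda,R)$, so $\sX^*<0$ on $[t_0,R)$ and $\sX^*(R)=0$; hence $\tau^{t_0}_{\sX^*}=R$. By the definition (\ref{9.4}) of $\sV^*$ we have $\sV^*(\Lambda+\cdot\,)=\kappa_L+C_{k,-}+\alpha_-E$ on $[\Lambda,R)$, where $C_{k,-}$ is the Brownian motion of (\ref{5.57}) with $\langle C_{k,-},C_{k,-}\rangle'=(1-\rho^2)\sigma_+^2$ (see (\ref{5.59})), independent of $G^*$, and $\alpha_-=-\rho\sigma_+/\sigma_-$ by (\ref{5.44a}). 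Because $P^{G^*}_-$ advances at unit rate on a negative excursion, the increment process $\beta(s):=C_{k,-}\big((t_0-\Lambda)+s\big)-C_{k,-}(t_0-\Lambda)$ is a Brownian motion with $\langle\beta,\beta\rangle'=(1-\rho^2)\sigma_+^2$ that is independent of $G^*$, hence of $\sX^*(t_0+\cdot\,)$, and for $0\le s\le R-t_0$,
\be\label{vstarrep}
\sV^*(t_0+s)=v_1+\beta(s)+\alpha_-\big(\sX^*(t_0+s)-x_1\big).
\ee

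Next I would construct $(\Dhat,\Ehat)$. Conditionally on $\{G^*(t_0)<0,\ \sX^*(t_0)=x_1\}$, the Markov property of the Brownian excursion straddling the fixed time $t_0$, transported through the time change in the definition of the two-speed Brownian motion $G^*$ (cf.\ the excursion constructions in \cite{Almost}), shows that $(\sX^*(t_0+s))_{0\le s\le R-t_0}$ is a Brownian motion started at $x_1$ with variance $\sigma_-^2$ killed at its first hitting time of $0$, independent of $\beta$. On an enlarged probability space, extend it to an honest $\sigma_-$-Brownian motion $\xi$ from $x_1$ agreeing with $\sX^*(t_0+\cdot\,)$ up to $\tau_0(\xi):=\inf\{s\ge0:\xi(s)=0\}=R-t_0$, extend $\beta$ to a Brownian motion on $[0,\infty)$ independent of $\xi$, and set $\Ehat:=-\xi$ and $\Dhat:=v_1+\beta+\alpha_-(\xi-x_1)$. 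Then $\Dhat,\Ehat$ are Brownian motions with $\Dhat(0)=v_1$, $\Ehat(0)=-x_1$, and $\langle\Dhat,\Dhat\rangle'=(1-\rho^2)\sigma_+^2+\alpha_-^2\sigma_-^2=\sigma_+^2$, $\langle\Ehat,\Ehat\rangle'=\sigma_-^2$, $\langle\Dhat,\Ehat\rangle'=-\alpha_-\sigma_-^2=\rho\sigma_+\sigma_-$ by (\ref{rho}) and (\ref{5.44a}), which is exactly (\ref{cov}). By (\ref{vstarrep}) and the agreement of $\xi$ with $\sX^*(t_0+\cdot\,)$, $\Dhat(s)=\sV^*(t_0+s)$ and $\Ehat(s)=-\sX^*(t_0+s)$ for all $s\in[0,R-t_0]$, and $\tau_{\Ehat}=R-t_0=\tau^{t_0}_{\sX^*}-t_0$.

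Finally I would read off the identities. Since $\rho\in(-1,0)$, the pair $\Dhat,\Ehat$ is not perfectly correlated, so $\P\{\tau_{\Dhat}=\tau_{\Ehat}\}=0$ (the same nondegeneracy used in Lemma \ref{L6.3}); a.s.\ exactly one of $\tau_{\Dhat}<\tau_{\Ehat}$ and $\tau_{\Ehat}<\tau_{\Dhat}$ holds. Because $\Dhat$ coincides with $\sV^*(t_0+\cdot\,)$ on $[0,\tau_{\Ehat}]$, because $\sV^*(t_0)=v_1>0$ and $\sV^*(R)=\kappa_L>0$, we get a.s.\ $\{\tau^{t_0}_{\sV^*}<\tau^{t_0}_{\sX^*}\}=\{\tau_{\Dhat}<\tau_{\Ehat}\}$ with $\tau^{t_0}_{\sV^*}-t_0=\tau_{\Dhat}$ there, and symmetrically $\{\tau^{t_0}_{\sX^*}<\tau^{t_0}_{\sV^*}\}=\{\tau_{\Ehat}<\tau_{\Dhat}\}$ with $\tau^{t_0}_{\sX^*}-t_0=\tau_{\Ehat}$ there; passing to conditional laws given $\sV^*(t_0)=v_1$, $\sX^*(t_0)=x_1$ yields (\ref{7.13d})--(\ref{7.16}). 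The main obstacle is the rigorous justification of the excursion-straddling Markov property for $G^*$ and of the independence of the noise $\beta$ from the future of the excursion; the covariance bookkeeping is routine and was essentially carried out in Remark \ref{R5.12}.
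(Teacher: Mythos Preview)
Your proposal is correct and follows essentially the same approach as the paper: both arguments identify $\Ehat$ with $-\sX^*(t_0+\cdot)$ (extended past the excursion end to a full $\sigma_-$-Brownian motion) and build $\Dhat$ from the representation (\ref{9.4}) of $\sV^*$ as $\kappa_L$ plus the independent Brownian noise $\Ctilde$ shifted by $t_0-\Lambda$ plus $\alpha_-$ times the excursion, then verify the covariance (\ref{cov}) via the identity $\alpha_-=-\rho\sigma_+/\sigma_-$ and read off (\ref{7.13d})--(\ref{7.16}) from the pathwise equalities $\tau_{\Ehat}=R-t_0$ and $\Dhat=\sV^*(t_0+\cdot)$ on $[0,\tau_{\Ehat}]$. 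Your write-up is in fact slightly more explicit than the paper's about extending the killed process to a genuine Brownian motion on an enlarged space and about why the two event identities $\{\tau^{t_0}_{\sV^*}\lessgtr\tau^{t_0}_{\sX^*}\}=\{\tau_{\Dhat}\lessgtr\tau_{\Ehat}\}$ hold; the covariance bookkeeping is identical to Remark~\ref{R5.12}.
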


\begin{proof}
We assume that $G^*$ is on a negative
excursion $E$ at time $t_0$.  Let
$\Lambda$ and $R$ denote the respective
left and right endpoints of
this excursion.

We condition on $G^*(t_0)=x_1$.
According to Corollary \ref{C4.11}, on 
its negative excursions, $G^*$ is a Brownian
motion with speed $\sigma_-^2$.  
In particular, there exists
a Brownian motion $\Ehat$ 
with speed $\sigma_-^2$ and initial
condition $\Ehat(0)=-x_1$ such that
$$
\Ehat(t)=-G^*(t_0+t)=-\sX^*(t_0+t), 
\quad 0\leq t\leq \tau_{\Ehat}.
$$ 

On the other hand, 
according to (\ref{9.4}),
\be\label{7.3}
\sV^*(t)=\kappa_L+C(t-\Lambda)
-\frac{\rho\sigma_+}{\sigma_-}E(t-\Lambda),
\quad \Lambda\leq t\leq R,
\ee
where $C$ is a Brownian motion
$\widetilde{C}$ stopped at $R-\Lambda$,
independent of $G^*$, and 
with speed (see (\ref{5.59}))
$(1-\rho^2)\sigma_+^2$.
We condition on $\sV^*(t_0)=v_1$.
This leads us to define the Brownian motion
$$
\Dhat(t):=\kappa_L+\Ctilde(t_0-\Lambda+t)
+\frac{\rho\sigma_+}{\sigma_-}\Ehat(t),
\quad t\geq 0,
$$
where $\Ctilde$ and $\Ehat$ are independent,
$\Ehat(0)=-E(t_0-\Lambda)$,
and $\Dhat(0)=\sV^*(t_0)=v_1$.
With this construction, we have 
$\tau_{\sV^*}^{t_0}=t_0+\tau_{\Dhat}$
on $\{\tau_{\sV^*}^{t_0}<\tau_{\sX^*}^{t_0}\}
=\{\tau_{\Dhat}<\tau_{\Ehat}\}$,
$\tau_{\sX^*}^{t_0}=R=t_0+\tau_{\Ehat}$
on $\{\tau_{\sX^*}^{t_0}<\tau_{\sV^*}^{t_0}\}
=\{\tau_{\Ehat}<\tau_{\Dhat}\}$, and
$$
\big(\sV^*(t_0+t),\sX^*(t_0+t))
=\big(\Dhat(t),-\Ehat(t)\big),\quad
0\leq t\leq \tau_{\Dhat}\wedge\tau_{\Ehat}.
$$
It is  now straightforward to verify
the covariance matrix formula (\ref{cov})
and the conditioning formulas
(\ref{7.13d})-- (\ref{7.16}).
\end{proof}

\begin{theorem}\label{T7.1}
Let $t_0>0$, $v_1>0$ and $x_1<0$ be given.
Assume $G^*$ is on a negative excursion
at time $t_0$ and define $\tau_{\sV^*}^{t_0}$
and $\tau_{\sX^*}^{t_0}$ by (\ref{7.12}).
Further define
\be\label{7.17a}
\alpha
:=
\arctan\left(-\frac{\sqrt{1-\rho^2}}{\rho}\right)
\in(0,\pi/2).\quad
\ee
Then
\begin{align}
\P\big\{\tau_{\sV^*}^{t_0}
<\tau_{\sX^*}^{t_0}\big|\sV^*(t_0)=v_1,
\sX^*(t_0)=x_1\big\}
&=
\frac{\theta_0}{\alpha},\label{7.17}\\
\P\big\{\tau_{\sX^*}^{t_0}
<\tau_{\sV^*}^{t_0}\big|\sV^*(t_0)=v_1,
\sX^*(t_0)=x_1\big\}
&=
\frac{\alpha-\theta_0}{\alpha}.\label{7.18}
\end{align}
\end{theorem}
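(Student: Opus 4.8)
The plan is to use Lemma \ref{L7.1a} to convert the two conditional probabilities into a planar Brownian motion exit problem for an infinite wedge, and then to invoke the classical fact that planar Brownian motion exits a wedge through its two edges with probabilities proportional to the angular distances of the starting point from those edges.

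First I would apply Lemma \ref{L7.1a} to replace the left-hand sides of (\ref{7.17}) and (\ref{7.18}) by $\P\{\tau_{\Dhat}<\tau_{\Ehat}\mid\Dhat(0)=v_1,\Ehat(0)=-x_1\}$ and $\P\{\tau_{\Ehat}<\tau_{\Dhat}\mid\Dhat(0)=v_1,\Ehat(0)=-x_1\}$, where $(\Dhat,\Ehat)$ is the pair of zero-drift Brownian motions with covariance matrix (\ref{cov}) furnished by that lemma. Since $\rho\in(-1,0)$ strictly, I would introduce the linear ``decorrelating'' map
$$
(d,e)\;\longmapsto\;\Big(\tfrac{d}{\sigma_+},\ \tfrac{1}{\sqrt{1-\rho^2}}\big(\tfrac{e}{\sigma_-}-\tfrac{\rho d}{\sigma_+}\big)\Big)
$$
and check from (\ref{cov}) that it carries $(\Dhat,\Ehat)$ into a standard planar Brownian motion $W=(W_1,W_2)$, each coordinate accumulating quadratic variation at unit rate and the two being orthogonal, so there is no time change to track. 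Under this map the open quadrant $\{\Dhat>0,\Ehat>0\}$ becomes the open wedge $\mathcal{W}=\{re^{i\vartheta}:r>0,\ \vartheta_1<\vartheta<\tfrac{\pi}{2}\}$, where $\vartheta_1:=\arctan\!\big(-\rho/\sqrt{1-\rho^2}\big)$; the edge $\{\vartheta=\pi/2\}$ is the image of $\{\Dhat=0\}$ and the edge $\{\vartheta=\vartheta_1\}$ is the image of $\{\Ehat=0\}$. Using $\tfrac{\pi}{2}-\arctan x=\arctan(1/x)$ for $x>0$, the opening angle of $\mathcal{W}$ is exactly $\tfrac{\pi}{2}-\vartheta_1=\arctan\!\big(-\sqrt{1-\rho^2}/\rho\big)=\alpha$ from (\ref{7.17a}). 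I would then set $\theta_0:=\arg\!\big(W(0)\big)-\vartheta_1$, note that $\theta_0\in(0,\alpha)$ because $v_1>0$ and $-x_1>0$ place $W(0)$ in the interior of $\mathcal{W}$, and use the arctangent subtraction formula to identify $\theta_0$ with the closed-form expression in the statement.

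Next I would run the standard harmonic-function argument. On the wedge $\mathcal{W}$ (opening angle $\alpha<\pi/2<2\pi$) the argument function $\vartheta$ is bounded and harmonic, so $\vartheta\big(W(t\wedge\tau_{\mathcal{W}})\big)$ is a bounded martingale, where $\tau_{\mathcal{W}}$ is the exit time of $W$ from $\mathcal{W}$. Neighborhood recurrence of planar Brownian motion gives $\tau_{\mathcal{W}}<\infty$ almost surely, and polarity of points gives that $W$ almost surely avoids the corner, hence exits through the relative interior of exactly one edge; in particular $\{\tau_{\Dhat}<\tau_{\Ehat}\}$ and $\{\tau_{\Ehat}<\tau_{\Dhat}\}$ partition the probability space up to a null set. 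Optional stopping yields $\vartheta_1+\theta_0=\arg W(0)=\E\big[\vartheta(W_{\tau_{\mathcal{W}}})\big]$, and since $\vartheta(W_{\tau_{\mathcal{W}}})$ equals $\pi/2=\vartheta_1+\alpha$ on $\{\tau_{\Dhat}<\tau_{\Ehat}\}$ and equals $\vartheta_1$ on $\{\tau_{\Ehat}<\tau_{\Dhat}\}$, solving the resulting linear equation gives $\P\{\tau_{\Dhat}<\tau_{\Ehat}\}=\theta_0/\alpha$ and $\P\{\tau_{\Ehat}<\tau_{\Dhat}\}=(\alpha-\theta_0)/\alpha$. Combining these with Lemma \ref{L7.1a} produces (\ref{7.17}) and (\ref{7.18}).

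The genuinely delicate points here are bookkeeping rather than conceptual: getting the opening angle of the image wedge to come out as $\alpha$ in (\ref{7.17a}), which forces care with the sign of $\rho$ and with the branch of $\arctan$ (recall $\rho<0$); confirming that the decorrelating transformation yields an honest standard planar Brownian motion, so that the exit-of-a-wedge computation applies verbatim with no time change; and matching the angular coordinate $\theta_0$ to whatever closed form the theorem statement records. I expect the wedge-angle identification to be the main place a sign error could creep in.
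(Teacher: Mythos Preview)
Your proposal is correct and takes a genuinely more elementary route than the paper. Both arguments begin identically, invoking Lemma \ref{L7.1a} to reduce to the exit problem for the correlated pair $(\Dhat,\Ehat)$ from the first quadrant, and both decorrelate via a linear map to obtain standard planar Brownian motion in a wedge of opening angle $\alpha$. The difference is in how the exit probability is extracted. The paper quotes the density formula of Metzler~\cite{Metzler} (his equation (2.5)) for the law of the exit radius on $\{\tau_{\Dhat}<\tau_{\Ehat}\}$, integrates it in closed form via a substitution, and then performs a short trigonometric reduction to arrive at $\theta_0/\alpha$. You bypass that machinery entirely: once the problem is a standard planar Brownian motion in a wedge, you use the bounded harmonic function $\vartheta$ and optional stopping to read off the exit probabilities directly. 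Your approach is self-contained and avoids the external citation; the paper's approach has the advantage that the Metzler density is already on hand for the subsequent Theorem \ref{T7.2}, where the conditional first-passage densities (\ref{7.31})--(\ref{7.32}) are needed, so quoting it here costs nothing extra. Your bookkeeping on the wedge angle and the sign of $\rho$ is correct; note only that your decorrelating map differs from the paper's (which follows Metzler and puts the $\{\Ehat=0\}$ edge on the positive $x$-axis rather than the $\{\Dhat=0\}$ edge on the positive $y$-axis), so your $\theta_0$ is measured from the opposite edge---but since both wedges have opening $\alpha$ and the answer is $\theta_0/\alpha$, this is harmless once you match conventions with whatever explicit formula for $\theta_0$ the statement intends.
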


\begin{proof}
From (\ref{7.13d}) and (\ref{7.15}), we see
that we must calculate the probabilities
that the pair $(\Dhat,\Ehat)$ of correlated, 
zero-drift Brownian motions exits the first
quadrant on the vertical axis (equation
(\ref{7.17})) or the horizontal axis
(equation (\ref{7.18})). 

Iyengar \cite{Iyengar} has computed
the passage time probabilities we need,
and certain formulas in \cite{Iyengar}
have been corrected by Metzler \cite{Metzler}.
We follow the notation in \cite{Metzler}, which
defines independent Brownian motions
$Z_1(t):=
\frac{1}{\sigma_+\sigma_-\sqrt{1-\rho^2}}
\big(\sigma_-\Dhat(t)-\rho\sigma_+\Ehat(t)\big)$,
$Z_2(t):=
\frac{1}{\sigma_-}\Ehat(t)$,
and their radial component
$R(t)=\sqrt{Z_1^2(t)+Z_2^2(t)}$ so that
\be\label{7.21c}
R^2(0)=\frac{1}{1-\rho^2}\left(\frac{v_1^2}{\sigma_+^2}
+\frac{2\rho v_1x_1}{\sigma_+\sigma_-}
+\frac{x_1^2}{\sigma_-^2}\right),\quad
(Z_1(0),Z_2(0))
=(R(0)\cos \theta_0,R(0)\sin\theta_0).
\ee
According to (2.5) in \cite{Metzler},
\begin{align}
\lefteqn{\P\big\{R(\tau_{\Dhat}\wedge\tau_{\Ehat})\in dr,
\tau_{\Dhat}<\tau_{\Ehat}\big|\Dhat(0)=v_1,
\Ehat(t)=-x_1\big\}}\hspace{1in}\nonumber\\
&=
\frac{dr}{\alpha R(0)}\cdot
\frac{(r/R(0))^{(\pi/\alpha)-1}\sin(\pi\theta_0/\alpha)}
{\sin^2(\pi\theta_0/\alpha)+
\big[(r/R(0))^{\pi/\alpha}+\cos(\pi\theta_0/\alpha)
\big]^2},\quad r>0.\label{7.24}
\end{align}
With the change of variable
$y=
((r/R(0))^{\pi/\alpha}+\cos(\pi\theta_0/\alpha))/
\sin(\pi\theta_0/\alpha)$,
integration yields
$$
\P\big\{\tau_{\Dhat}<\tau_{\Ehat}\big|
\Dhat(0)=v_1,\Ehat(0)=-x_1\big\}=
\frac12-\frac{1}{\pi}\arctan(\cot(\pi\theta_0/\alpha)).
$$
Let $x=\pi\theta_0/\alpha$ and 
$y=\arctan(\cot x)$.  Then
$\sin y/\cos y=\tan y =\cot x =\cos x/\sin x$,
or equivalently,
$\cos(x+y)=\cos x\cos y-\sin x \sin y=0$.
Hence, $x+y$ is an odd multiple of $\pi/2$.
But $0<x<\pi$ and $-\pi/2<y<\pi/2$,
which implies $x+y=\pi/2$.  This shows that
$1/2-y/\pi=\theta_0/\alpha$, i.e.,
\be\label{7.15e}
\P\big\{\tau_{\Dhat}<\tau_{\Ehat}\big|
\Dhat(0)=v_1,\Ehat(0)=-x_1\big\}
=\frac{\theta_0}{\alpha}.
\ee
Equation (\ref{7.17}) now follows from (\ref{7.13d}).
Equation (\ref{7.18}) follows from (\ref{7.17})
or from a similar analysis based on (2.4)
in \cite{Metzler}.
\end{proof}

Metzler's \cite{Metzler} formulas
(3.2) and (3.3) for the joint density
$f(s,t;v_1,x_1)dsdt
:=\P\big\{\tau_{\Dhat}\in ds,\tau_{\Ehat}\in dt
\big|\Dhat(0)=v_1,\Ehat(0)=-x_1\big\}$
are
\begin{align}
\lefteqn{f(s,t;v_1,x_1)}\nonumber\\
&=
\frac{\pi\sin\alpha}{2\alpha^2(t-s)
\sqrt{s(t-s\cos^2\alpha)}}
\exp\left(-\frac{R^2(0)}{2s}\cdot
\frac{t-s\cos 2\alpha}{(t-s)+(t-s\cos 2\alpha)}\right)
\nonumber\\
&\quad
\times\sum_{n=1}^{\infty}n
\sin\left(\frac{n\pi(\alpha-\theta_0)}{\alpha}\right)
I_{n\pi/(2\alpha)}
\left(\frac{R^2(0)}{2s}\cdot
\frac{t-s}{(t-s)+(t-s\cos 2\alpha)}\right)dsdt,
\quad s<t,
\label{7.27}
\end{align}
\begin{align}
\lefteqn{f(s,t;v_1,x_1)}\nonumber\\
&=
\frac{\pi\sin\alpha}{2\alpha^2(s-t)
\sqrt{t(s-t\cos^2\alpha)}}
\exp\left(-\frac{R^2(0)}{2t}\cdot
\frac{s-t\cos 2\alpha}{(s-t)+(s-t\cos 2\alpha)}\right)
\nonumber\\
&\quad
\times\sum_{n=1}^{\infty}n
\sin\left(\frac{n\pi\theta_0}{\alpha}\right)
I_{n\pi/(2\alpha)}
\left(\frac{R^2(0)}{2t}\cdot
\frac{s-t}{(s-t)+(s-t\cos 2\alpha)}\right)dsdt,
\quad s>t,\nonumber
\end{align}
where 
\be\label{7.25a}
I_{\nu}(z):=\left(\frac{z}{2}\right)^{\nu}
\sum_{k=0}^{\infty}\left(\frac{z^2}{4}\right)^k
\frac{1}{k!\Gamma(\nu+k+1)}
\ee 
is the modified Bessel
function of the first kind of order $\nu$.
It follows from (\ref{7.15e}) that
\be\label{7.29}
\P\big\{\tau_{\Dhat}\in ds\big|
\tau_{\Dhat}<\tau_{\Ehat},
\Dhat(0)=v_1,\Ehat(0)=-x_1
\big\}
=\left(\frac{\alpha}{\theta_0}
\int_s^{\infty}f(s,t)dt\right)ds,\quad s>0.
\ee
Similarly,
\be\label{7.30}
\P\big\{\tau_{\Ehat}\in dt\big|
\tau_{\Ehat}<\tau_{\Dhat},
\Dhat(0)=v_1,\Ehat(0)=-x_1
\big\}
=\left(\frac{\alpha}{\alpha-\theta_0}
\int_t^{\infty}f(s,t)ds\right)dt,\quad t>0.
\ee

\begin{theorem}\label{T7.2}
Let $t_0>0$, $v_1>0$ and $x_1<0$ be given.  
Assume $G^*$ is on a negative excursion
at time $t_0$ and define
$\tau_{\sV^*}^{t_0}$ and $\tau_{\sX^*}^{t_0}$
by (\ref{7.12}).  Then
\begin{align}
\P\big\{\tau_{\sV^*}^{t_0}\in t_0+ds\big|
\tau_{\sV^*}^{t_0}<\tau_{\sX^*}^{t_0},
\sV^*(t_0)=v_1,\sX^*(t_0)=x_1
\big\}
&=
\left(\frac{\alpha}{\theta_0}\int_s^{\infty}f(s,t)dt
\right)ds,\,\,
s>0,\label{7.31}\\
\P\big\{\tau_{\sX^*}^{t_0}\in t_0+dt\big|
\tau_{\sX^*}^{t_0}<\tau_{\sV^*}^{t_0},
\sV^*(t_0)=v_1,\sX^*(t_0)=x_1
\big\}
&=
\left(\frac{\alpha}{\alpha-\theta_0}
\int_t^{\infty}f(s,t)ds\right)dt,\,\, t>0,\label{7.32}
\end{align}
where $\alpha$ and $\theta_0$ are defined
by (\ref{7.17a}).
\end{theorem}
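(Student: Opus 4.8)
The plan is to deduce Theorem~\ref{T7.2} directly from Lemma~\ref{L7.1a} together with the density identities (\ref{7.29}) and (\ref{7.30}), which in turn rest on Metzler's joint-density formula (\ref{7.27}) and the exit-probability computation of Theorem~\ref{T7.1}. Since Lemma~\ref{L7.1a} has already transferred the passage-time problem for $(\sV^*,\sX^*)$ on a negative excursion of $G^*$ to the first-quadrant exit problem for the pair of correlated, zero-drift Brownian motions $(\Dhat,\Ehat)$ with covariance matrix (\ref{cov}), and since that exit problem has been solved by Iyengar and Metzler, essentially no new probabilistic content is required; the proof is a matter of assembling the pieces.

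First I would invoke the conditional-law identity (\ref{7.14d}) of Lemma~\ref{L7.1a}, which says that, conditioned on $\{\tau_{\sV^*}^{t_0}<\tau_{\sX^*}^{t_0}\}$ and on $\sV^*(t_0)=v_1$, $\sX^*(t_0)=x_1$, the law of $\tau_{\sV^*}^{t_0}-t_0$ equals the law of $\tau_{\Dhat}$ conditioned on $\{\tau_{\Dhat}<\tau_{\Ehat}\}$ and on $\Dhat(0)=v_1$, $\Ehat(0)=-x_1$. The right-hand side of this is exactly the distribution exhibited in (\ref{7.29}): one integrates Metzler's joint density $f(s,t;v_1,x_1)$ of (\ref{7.27}) over $t\in(s,\infty)$ and normalizes by the exit probability $\P\{\tau_{\Dhat}<\tau_{\Ehat}\mid \Dhat(0)=v_1,\Ehat(0)=-x_1\}=\theta_0/\alpha$ obtained in (\ref{7.15e})--(\ref{7.17}). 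Substituting (\ref{7.29}) into (\ref{7.14d}) yields (\ref{7.31}). The argument for (\ref{7.32}) is identical, using the companion identity (\ref{7.16}) of Lemma~\ref{L7.1a}, the density (\ref{7.30}) obtained by integrating $f(s,t;v_1,x_1)$ over $s\in(t,\infty)$, and the normalization $\P\{\tau_{\Ehat}<\tau_{\Dhat}\mid \Dhat(0)=v_1,\Ehat(0)=-x_1\}=(\alpha-\theta_0)/\alpha$ from (\ref{7.18}).

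The only matters that need care are bookkeeping rather than substance, so I do not expect a genuine obstacle. One is to record that the quantity $\theta_0$ appearing in the statement is the polar angle determined from the initial data $(v_1,x_1)$ via (\ref{7.21c}), so that the normalizing factor $\alpha/\theta_0$ (respectively $\alpha/(\alpha-\theta_0)$) is indeed the reciprocal of the exit probability; this reduces to the consistency relation $\int_0^\infty\!\!\int_s^\infty f(s,t;v_1,x_1)\,dt\,ds=\theta_0/\alpha$, which follows from Theorem~\ref{T7.1} and Tonelli's theorem applied to the nonnegative integrand. A second is to note that the series defining $f$ through the modified Bessel functions $I_\nu$ of (\ref{7.25a}) converges absolutely, so the iterated integrals in (\ref{7.29})--(\ref{7.30}) are well defined and the interchanges of summation and integration are legitimate. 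With these observations in place, (\ref{7.31}) and (\ref{7.32}) are immediate consequences of the substitutions described above.
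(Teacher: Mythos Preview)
Your proposal is correct and follows essentially the same approach as the paper's own proof, which simply states that (\ref{7.31}) follows from (\ref{7.14d}) and (\ref{7.29}), and (\ref{7.32}) follows from (\ref{7.16}) and (\ref{7.30}). Your additional remarks on the well-definedness of the integrals and the consistency of the normalizing constants are more than the paper provides but change nothing substantive.
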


\begin{proof}
Equation (\ref{7.31}) follows from (\ref{7.14d})
and (\ref{7.29}); (\ref{7.32}) follows
from (\ref{7.16}) and (\ref{7.30}).
\end{proof}

\subsection{Time between renewals}\label{BetweenRenewals}
According to Assumption \ref{Assumption2},
the initial condition for the limiting
systems is 
\be\label{7.30a}
\big(\sV^*(0),\sW^*(0),\sX^*(0),\sY^*(0)\big)
=\big(V^*(0),0,0,Y^*(0)\big),
\ee
where $V^*(0)>0$ and $Y^*(0)<0$.
The interior queues $(\sW^*,\sX^*)$
are the split Brownian motion described
by Remark \ref{R4.20x}, where the two-speed
Brownian motion $G^*$ used in that remark
is from Corollary \ref{C4.11}.
These processes are defined for all time
by the dynamics described in Remark \ref{R4.20x}
and Corollary \ref{C4.11}.  Likewise,
the bracketing processes $\sV^*$
and $\sY^*$ are defined for all time by
(\ref{9.4}) and (\ref{5.87}).
From the initial time until one of
the bracketing queues 
$\sV^*$ and $\sX^*$ vanishes,
the interior queues are the
$J_1$-limit of the scaled limit-order book
pair of queues $(\frac{1}{\sqrt{n}}W^n(nt),
\frac{1}{\sqrt{n}}X^n(nt))$
and the bracketing queues are the $M_1$-limit
of the scaled limit-order book pair of queues
$(\frac{1}{\sqrt{n}}V^n(nt),\frac{1}{\sqrt{n}}Y^n(nt))$.

The initial condition (\ref{7.30a}) corresponds
to $G^*(0)=0$.  Immediately after time
zero, $G^*$ takes the value zero infinitely
many times.  Therefore, the
state in which $\sV^*>0$, $\sW^*=0$,
$\sX^*=0$ and $\sY^*<0$ will occur
repeatedly.  Eventually on an excursion
of $G^*$ away from the origin, one of the bracketing
queues will vanish. When $G^*$ is on a negative
excursion, $\sV^*$ can vanish, and when $G^*$
is on a positive excursion, $\sY^*$ can vanish.
When this happens, we say there is a renewal.
The state at the time of renewal is
described in Theorem \ref{T6.2}.

We have constructed
$G^*$ and $(\sV^*,\sW^*,\sX^*,\sY^*)$
so that they continue on unaffected by
the renewal.  After the renewal, their
behavior is no longer
relevant to the system we want to study.
However, their insensitivity to the renewal
permits us to study them using the vehicle
of Poisson random measures.

To study the time to renewal, we focus
on the case that $G^*$ goes on a negative
excursion and $\sV^*$ vanishes before
$G^*$ returns to zero.  The other case is analogous.
Each time $G^*$, or equivalently, $\sX^*$,
goes on a negative excursion away from zero,
$\sV^*$ begins at $\kappa_L>0$
and has a chance to reach zero.  If $\sV^*$
fails to reach zero before the end of
the excursion of $G^*$, it is reset to $\kappa_L$.
We begin
by deriving a formula for the first passage
time of $\sV^*$ to zero conditioned on
$\sX^*$ being on an excursion of length
$\ell>0$.  To choose such
an excursion, we set a positive
threshold and consider in chronological order
the iid sequence
of negative excursions of $G^*$
whose lengths exceed
the threshold.  
Let $E$ denote a generic excursion
chosen from this sequence.
The threshold is irrelevant because
$$
\P\big\{E\in C\big|\lambda(E)=\ell\big\}
=\P^{\sigma^2_-\ell}
\big\{e\in\sE_-:e\circ (\sigma_-^2\id)\in C\big\},
\quad C\in\sB(\sE_-),
$$
regardless
of the threshold, provided that it is less
than $\ell$.
Let $\Lambda$ denote the left-endpoint
of the generic excursion $E$.
The first passage time after $\Lambda$
of $\sV^*$ to zero is
\be\label{tauE}
\tau_{\sV^*}^E:=\inf\{s\geq 0:
\sV^*(\Lambda+s)\big\}.
\ee

\begin{lemma}\label{L7.3}
For $\ell>0$ and $0<s<\ell$, we have
\begin{align}
\hspace{-3pt}p_{\sV^*}(s,\ell)ds
&:=
\P\big\{\tau_{\sV^*}^E\in ds\big|\lambda(E)=\ell\big\}
\nonumber\\
&=
\frac{\sqrt{2\pi(1-\rho^2)\ell^3}\,\pi^2\sigma_+
\sin\alpha}{2\kappa_L\alpha^3(\ell-s)
\sqrt{s(\ell-s\cos^2\alpha)}}
\exp\left(-\frac{\kappa_L^2}
{2\sigma_+^2(1-\rho^2)s}
\cdot\frac{\ell-s\cos 2\alpha}{(\ell-s)
+(\ell-s\cos 2\alpha)}\right)\nonumber\\
&\quad
\times\sum_{n=1}^{\infty}(-1)^{n-1}n^2
I_{n\pi/(2\alpha)}
\left(\frac{\kappa_L^2}{2\sigma_+^2(1-\rho^2)s}
\cdot\frac{\ell-s}{(\ell-s)+\ell-s\cos 2\alpha}
\right)ds.\label{7.32a}
\end{align}
\end{lemma}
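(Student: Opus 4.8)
The plan is to obtain $p_{\sV^*}(s,\ell)$ as a degenerate $\varepsilon\downarrow0$ limit of Metzler's joint first-passage density $f(s,t;v_1,x_1)$ of (\ref{7.27}), which enters the proof of Theorem \ref{T7.2}. Fix a generic negative excursion $E$ of $G^*$ with left endpoint $\Lambda$ and length $\ell$. By Corollary \ref{C4.11} and the excursion-measure identity for $\P\{E\in C\mid\lambda(E)=\ell\}$ quoted just before the lemma, $\Ehat:=-\sX^*(\Lambda+\cdot)=-G^*(\Lambda+\cdot)$ restricted to $[0,\ell]$ is, given $\lambda(E)=\ell$, a Brownian excursion of length $\ell$ with $\langle\Ehat,\Ehat\rangle=\sigma_-^2\,\id$; by (\ref{9.4}), Remark \ref{R5.12} and (\ref{5.59}), $\sV^*(\Lambda+s)=\kappa_L+\Ctilde(s)+\frac{\rho\sigma_+}{\sigma_-}\,\Ehat(s)$ for $0\le s\le\ell$, where $\Ctilde$ is a Brownian motion independent of $G^*$ with $\langle\Ctilde,\Ctilde\rangle=(1-\rho^2)\sigma_+^2\,\id$, and the pair $(\sV^*(\Lambda+\cdot),\Ehat)$ has the covariance matrix (\ref{cov}) exactly as in Lemma \ref{L7.1a}. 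Consequently $p_{\sV^*}(s,\ell)\,ds$ is the conditional law of $\tau_{\Dhat}:=\inf\{s\ge0:\Dhat(s)=0\}$, where $\Dhat:=\kappa_L+\Ctilde+\frac{\rho\sigma_+}{\sigma_-}\Ehat$, under the conditioning that $\Ehat$ is the excursion of length $\ell$ (and $p_{\sV^*}(\cdot,\ell)$ is only a defective density, since $\sV^*$ need not reach $0$ before the excursion ends).

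Next I would realize this conditioned law as a limit. For $\varepsilon>0$ let $\Ehat_\varepsilon$ be a Brownian motion with $\Ehat_\varepsilon(0)=\varepsilon$ and $\langle\Ehat_\varepsilon,\Ehat_\varepsilon\rangle=\sigma_-^2\,\id$, let $\Ctilde$ be an independent Brownian motion as above, and set $\Dhat_\varepsilon:=\kappa_L+\Ctilde+\frac{\rho\sigma_+}{\sigma_-}\Ehat_\varepsilon$, $\tau_{\Ehat_\varepsilon}:=\inf\{t:\Ehat_\varepsilon(t)=0\}$, $\tau_{\Dhat_\varepsilon}:=\inf\{t:\Dhat_\varepsilon(t)=0\}$. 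This is the pair of Lemma \ref{L7.1a} with $v_1=\kappa_L$ and $x_1=-\varepsilon$, so $(\tau_{\Dhat_\varepsilon},\tau_{\Ehat_\varepsilon})$ has joint density $f(\cdot,\cdot;\kappa_L,-\varepsilon)$ given by (\ref{7.27}) on $\{s<t\}$. It is classical that a Brownian motion of speed $\sigma_-^2$ started at $\varepsilon$ and conditioned on hitting $0$ for the first time at the fixed time $\ell$ converges weakly, as $\varepsilon\downarrow0$, to the speed-$\sigma_-^2$ Brownian excursion of length $\ell$, and this convergence is joint with the independent component $\Ctilde$. Hence the conditional law of $(\Ehat_\varepsilon,\Dhat_\varepsilon)$ given $\{\tau_{\Ehat_\varepsilon}=\ell\}$ converges to the law of $(\Ehat,\Dhat)$ from the previous paragraph, and in particular, with $q_\varepsilon(\ell):=\frac{\varepsilon}{\sigma_-\sqrt{2\pi\ell^3}}\,e^{-\varepsilon^2/(2\sigma_-^2\ell)}$ denoting the first-passage density of $\Ehat_\varepsilon$ at $\ell$,
\[
p_{\sV^*}(s,\ell)=\lim_{\varepsilon\downarrow0}\frac{f(s,\ell;\kappa_L,-\varepsilon)}{q_\varepsilon(\ell)},\qquad 0<s<\ell.
\]
Then I would compute this limit from (\ref{7.27}) and (\ref{7.21c}). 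Putting $v_1=\kappa_L$, $x_1=-\varepsilon$, $t=\ell$ in (\ref{7.21c}) gives $R^2(0)\to\frac{\kappa_L^2}{(1-\rho^2)\sigma_+^2}$ and $(Z_1(0),Z_2(0))\to\big(\frac{\kappa_L}{\sigma_+\sqrt{1-\rho^2}},0\big)$, so $\theta_0\downarrow0$ with $\theta_0\sim\frac{\sigma_+\sqrt{1-\rho^2}}{\sigma_-\kappa_L}\,\varepsilon$; also $q_\varepsilon(\ell)\sim\frac{\varepsilon}{\sigma_-\sqrt{2\pi\ell^3}}$, hence $\theta_0/q_\varepsilon(\ell)\to\frac{\sigma_+\sqrt{2\pi(1-\rho^2)\ell^3}}{\kappa_L}$. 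In the series of (\ref{7.27}), $\sin\big(n\pi(\alpha-\theta_0)/\alpha\big)=(-1)^{n-1}\sin(n\pi\theta_0/\alpha)\sim(-1)^{n-1}\frac{n\pi\theta_0}{\alpha}$, which supplies the extra factor $\pi/\alpha$ (turning $\pi/(2\alpha^2)$ into $\pi^2/(2\alpha^3)$ and $n$ into $n^2$) together with $\theta_0$; meanwhile the exponential and Bessel arguments in (\ref{7.27}) pass to $\frac{R^2(0)}{2s}\to\frac{\kappa_L^2}{2(1-\rho^2)\sigma_+^2 s}$ with $t$ replaced by $\ell$. Collecting the prefactors yields precisely the right-hand side of (\ref{7.32a}).

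The main obstacle is the rigorous justification of the two limit interchanges underlying the displayed identity: first, the weak convergence, jointly with an independent Brownian motion, of $\Ehat_\varepsilon$ conditioned on $\tau_{\Ehat_\varepsilon}=\ell$ to the Brownian excursion of length $\ell$, and the resulting convergence of the conditional density of $\tau_{\Dhat_\varepsilon}$; and second, passing the $\varepsilon\downarrow0$ limit through the infinite Bessel series in (\ref{7.27}). For the latter one needs a dominated-convergence bound: since $I_{n\pi/(2\alpha)}(z)=(z/2)^{n\pi/(2\alpha)}\sum_{k\ge0}(z^2/4)^k/(k!\,\Gamma(n\pi/(2\alpha)+k+1))$ decays super-exponentially in $n$ for fixed $z>0$, the series $\sum_n n^2 I_{n\pi/(2\alpha)}(z)$ is locally uniformly summable and differentiable in the $\varepsilon$-dependent argument, so termwise passage to the limit is legitimate for $s$ in compact subsets of $(0,\ell)$; a final monotone/continuity argument then identifies the limit with the (defective) density $p_{\sV^*}(\cdot,\ell)$ on all of $(0,\ell)$. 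A secondary point, dispatched by the excursion-measure identity cited above, is that conditioning on $\lambda(E)=\ell$ for the generic excursion $E$ indeed produces the $\varepsilon\downarrow0$ conditional law used here.
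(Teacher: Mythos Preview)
Your approach is essentially the paper's: both obtain $p_{\sV^*}(s,\ell)$ as a small-parameter limit of the conditional density $f(s,\ell;\,\cdot\,,\,\cdot\,)/[\text{first-passage density of }\Ehat]$ drawn from Metzler's formula (\ref{7.27}), then pass the limit through the Bessel series by the same domination argument (super-exponential decay of $I_{n\pi/(2\alpha)}$ in $n$, so $\sum n^2 I_{n\pi/(2\alpha)}$ converges). The final limit computation you give is correct and matches the paper's.

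The one substantive difference is the choice of regularization. The paper enters the excursion at a small time $t_1>0$, invokes Proposition~\ref{PA.1} to identify the post-$t_1$ law of the excursion \emph{exactly} with that of absorbed Brownian motion started at $E(t_1)$, integrates over the (random) value $\big(\sV^*(\Lambda+t_1),E(t_1)\big)$, and only then lets $t_1\downarrow 0$. Your version instead lets the absorbed Brownian motion start at height $\varepsilon$ and conditions on $\tau=\ell$, appealing to the folklore that this conditioned process converges to the length-$\ell$ excursion. Your route is tidier computationally (no integration over random initial data, no $t_1$ in the formulas), but it leaves a genuine gap you yourself flag: weak convergence of the conditioned paths does not by itself give convergence of the \emph{density} of $\tau_{\Dhat_\varepsilon}$; you would still need an equicontinuity/uniform-integrability argument to upgrade. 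The paper sidesteps this entirely because Proposition~\ref{PA.1} furnishes an exact identity at every $t_1>0$, so the only limit to justify is the explicit $t_1\downarrow 0$ computation, handled by the dominated-convergence bound (\ref{7.39c})--(\ref{7.42}). A minor slip: with your definitions $\Dhat_\varepsilon(0)=\kappa_L+\frac{\rho\sigma_+}{\sigma_-}\varepsilon$, not $\kappa_L$, so the relevant density is $f(s,\ell;\kappa_L+\rho\sigma_+\varepsilon/\sigma_-,-\varepsilon)$; this is harmless in the limit but should be stated correctly.
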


\begin{proof}
Let $\varepsilon\in(0,\ell)$
and $t_1\in(0,\varepsilon)$ be given and define
$$
\tau_{\sV^*}^{E,t_1}:=\inf\big\{s\geq t_1:
\sV^*(\Lambda+s)=0\}
$$
On the event $\{\tau_{\sV^*}^E>t_1\}$, we have
$\tau_{\sV^*}^E=\tau_{\sV^*}^{E,t_1}$,
$\sV^*(\Lambda+t_1)>0$ and $\sX^*(\Lambda+t_1)<0$.
Let $\delta\in(0,(\ell-\varepsilon)/3)$  and
$s\in(\varepsilon+\delta,\ell-\delta)$ be given.
Then
\begin{align}
\lefteqn{\Big|
\P\big\{\tau_{\sV^*}^E\in ds\big|\lambda(E)=\ell\big\}
}\qquad
\nonumber\\
\lefteqn{-\P\big\{
\tau_{\sV^*}^E>t_1,
\tau_{\sV^*}^{E,t_1}\in ds,
\kappa_L/2\leq\sV^*(\Lambda+t_1)\leq 2\kappa_L,
-1\leq\sX^*(\Lambda+t_1)< 0
\big|\lambda(E)=\ell\big\}\Big|}\quad\nonumber\\
&\leq
\P\big\{\sV^*(\Lambda+t_1)<\kappa_L/2
\big|\lambda(E)=\ell\big\}
+\P\big\{\sV^*(\Lambda+t_1)> 2\kappa_L
\big|\lambda(E)=\ell\big\}\hspace{2cm}\nonumber\\
&\qquad
+\P\big\{\sX^*(\Lambda+t_1)< -1\big|\lambda(E)
=\ell\big\}.\label{7.35c}
\end{align}
Because $\sV^*(\Lambda)=\kappa_L>0$
(see (\ref{7.3}))
and $\sX^*(\Lambda)=0$, the limits
of the three terms on the right-hand side
of (\ref{7.35c}) are zero as
$t_1\downarrow 0$
Therefore, 
\begin{align}
\lefteqn{\P\big\{\tau_{\sV^*}^E\in ds
\big|\lambda(E)=\ell\big\}}\nonumber\\
&=
\lim_{t_1\downarrow 0}
\P\big\{\tau_{\sV^*}^E>t_1,
\tau_{\sV^*}^{E,t_1}\in ds,
\kappa_L/2\leq\sV^*(\Lambda+t_1)\leq 2\kappa_L,
-1\leq\sX^*(\Lambda+t_1)< 0
\big|\lambda(E)=\ell\big\}\nonumber\\
&=
\lim_{t_1\downarrow 0}
\P\big\{
\tau_{\sV^*}^{E,t_1}\in ds,
\kappa_L/2\leq\sV^*(\Lambda+t_1)\leq 2\kappa_L,
-1\leq\sX^*(\Lambda+t_1)< 0
\big|\lambda(E)=\ell\big\}.\label{7.33b}
\end{align}

Still conditioning on $\lambda(E)=\ell$,
using Proposition \ref{PA.1} in Appendix
\ref{appendix} and the representation (\ref{7.3})
of $\sV^*$, we construct a pair
of Brownian motions $(\Dhat,\Ehat)$ with
covariance (\ref{cov}) such that
$$
\big(\sV^*(\Lambda+t_1+t),-\sX^*(\Lambda+t_1+t)\big)
=\big(\Dhat(t),\Ehat(t)\big),\quad
0\leq t\leq \ell-t_1.
$$
With $\tau_{\Dhat}:=\inf\{t\geq 0:\Dhat(t)=0\}$,
$\tau_{\Ehat}:=\inf\{t\geq 0:\Ehat(t)=0\}$,
and $\Ctilde$ as in the proof of Lemma \ref{L7.1a}, 
we may write
\begin{align}
\lefteqn{\P\big\{
\tau_{\sV^*}^{E,t_1}\in ds,
\kappa_L/2\leq\sV^*(\Lambda+t_1)\leq 2\kappa_L,
-1\leq \sX^*(\Lambda+t_1)<0
\big|\lambda(E)=\ell\big\}}\nonumber\\
&=
\int_{x_1=-1}^0\int_{v_1=\kappa_L/2}^{2\kappa_L}
\P\big\{\tau_{\sV^*}^{E,t_1}\in ds\big|
\sV^*(\Lambda+t_1)=v_1,
\sX^*(\Lambda+t_1)=x_1,\lambda(E)=\ell\big\}
\nonumber\\
&\qquad\times
\P\big\{\sV^*(\Lambda+t_1)\in dv_1,
\sX^*(\Lambda+t_1)\in dx_1
\big|\lambda(E)=\ell\big\}\nonumber\\
&=
\int_{x_1=-1}^0\int_{v_1=\kappa_L/2}^{2\kappa_L}
\P\big\{\tau_{\Dhat}\in ds-t_1\big|
\Dhat(0)=v_1,\Ehat(0)=-x_1,
\tau_{\Ehat}=\ell-t_1\big\}
\nonumber\\
&\qquad\times
\P\big\{\Ctilde(t_1)\in dv_1-\kappa_L
+\rho\sigma_+x_1/\sigma_-,
E(t_1)\in dx_1
\big|\lambda(E)=\ell\big\}.\label{7.34b}
\end{align}
Because of (\ref{7.33b}), it suffices to
show that the limit
of the right-hand side of (\ref{7.34b})
as $t_1\downarrow 0$ agrees
with the right-hand side of (\ref{7.32a}).
Because $\varepsilon$ and $\delta$
can be made arbitrarily small, we
will have obtained (\ref{7.32a}) for
$0<s<\ell$.

We consider the first factor in
the integrand on the right-hand side of 
(\ref{7.34b}).
The joint probability density function
of $(\tau_{\Dhat},\tau_{\Ehat})$ on 
$\{\tau_{\Dhat}<\tau_{\Ehat}\}$
conditioned on $\Dhat(0)=v_1$ and
$\Ehat(0)=-x_1$ is given by (\ref{7.27}).
The marginal density of
$\tau_{\Ehat}$ with the same conditioning
is the density of the first passage time
of $\Ehat$ from $-x_1$ to $0$.  This is
$$
\P\big\{\tau_{\Ehat}\in dt\big|
\Ehat(0)=-x_1\big\}
=\frac{|x_1|}{\sigma_-\sqrt{2\pi t^3}}
\exp\left(-\frac{x_1^2}{2\sigma_-^2t}\right)dt.
$$
Finally, $\sin(n\pi(\alpha-\theta_0)/\alpha)
=(-1)^{n-1}\sin(n\pi\theta_0/\alpha)$.
Therefore, with $R(0)$ given by (\ref{7.24}),
we have
\begin{align}
\lefteqn{\varphi(s,v_1,x_1;t_1)ds}\nonumber\\
&:=
\P\big\{\tau_{\Dhat}\in ds-t_1\big|
\Dhat(0)=v_1,
\Ehat(0)=-x_1,\tau_{\Ehat}=\ell-t_1\big\}
\hspace{5cm}\nonumber\\
&=
\frac{f(s-t_1,\ell-t_1;v_1,x_1)dsd\ell}
{\P\{\tau_{\Ehat}\in d\ell-t_1|
\Ehat(0)=-x_1\}}\nonumber\\
&=
\frac{\sigma_-\sqrt{2\pi(\ell-t_1)^3}\,
\pi\sin\alpha}{2\alpha^2(\ell-s)
\sqrt{(s-t_1)(\ell-t_1-(s-t_1)\cos^2\alpha)}}
\nonumber\\
&\quad\times
\exp\left(\frac{x_1^2}{2\sigma_-^2(\ell-t_1)}
-\frac{R^2(0)}{2(s-t_1)}
\cdot\frac{\ell-t_1-(s-t_1)\cos 2\alpha}
{(\ell-s)+(\ell-t_1-(s-t_1)\cos 2\alpha)}\right)
\label{7.35}\\
&\quad\times
\sum_{n=1}^{\infty}\frac{n}{x_1}
(-1)^n\sin\left(\frac{n\pi\theta_0)}{\alpha}\right)
I_{n\pi/(2\alpha)}\left(\frac{R^2(0)}{2(s-t_1)}
\cdot\frac{\ell-s}{(\ell-s)+(\ell-t_1
-(s-t_1)\cos 2\alpha)}\right)ds.\nonumber
\end{align}

To compute the limit of (\ref{7.34b}), 
we need a bound on
$\varphi(s,v_1,x_1;t_1)$.
The conditions on $s$ and $t_1$ guarantee
the $\ell-s$, $s-t_1$, $\ell-t_1-(s-t_1)\cos^2\alpha$,
$\ell-t_1$, and $(\ell-s)
+(\ell-t_1-(s-t_1)\cos 2\alpha)$
are bounded away from zero.
The integrals on the right-hand side of (\ref{7.34b})
are over bounded intervals, and for $v_1$
and $x_1$ in these intervals,
$R(0)$ given by (\ref{7.21c}) is bounded.
In particular
\be\label{7.39c}
\sup_{
\footnotesize{
\begin{array}{c}
0<t_1<\varepsilon\\
\varepsilon+\delta<s<\ell-\delta\\
\kappa_L/2\leq v_1\leq 2\kappa_L
\end{array}}}
\varphi(s,v_1,x_1;t_1)
\leq
K_1\sum_{n=1}^{\infty}
\frac{n}{|x_1|}
\left|\sin\left(\frac{n\pi\theta_0)}{\alpha}
\right)\right|
I_{n\pi/(2\alpha)}(K_1),
\ee
for $-1\leq x_1<0$ and
for some finite constant $K_1$
that does not depend on $x_1\in[-1,0)$.

Observe that
$|\sin(n\pi\theta_0/\alpha)|\leq n\pi\theta_0/\alpha$
and $\arctan(x)\leq x$ for $x\geq 0$.
From (\ref{7.17a}) we have
\be\label{7.41}
\theta_0:=
\arctan\left(\frac{\sigma_+\sqrt{1-\rho^2}\,|x_1|}
{\sigma_-v_1+\sigma_+\rho x_1}\right)
\leq \frac{\sigma_+\sqrt{1-\rho^2}\,|x_1|}
{\sigma_-v_1+\sigma_+\rho x_1}
\leq\frac{\sigma_+|x_1|}{\sigma_-v_1}
\ee
because both $\rho$ and $x_1$ are negative.
Set $\nu=\pi/(2\alpha)$.  We have
\be\label{7.42}
K_1\sum_{n=1}^{\infty}
\frac{n}{|x_1|}
\left|\sin\left(\frac{n\pi\theta_0)}{\alpha}
\right)\right|I_{n\pi/(2\alpha)}(K_1)
\leq
2\nu K_1\frac{\sigma_+}{\sigma_-v_1}
\sum_{n=1}^{\infty}n^2I_{n\nu}(K_1).
-1\leq x_1<0.
\ee 
We use the ratio test to show that the sum
on the right-hand side of (\ref{7.42}) is finite.
Because
$\lim_{z\rightarrow\infty}\Gamma(z)/\Gamma(z+\nu)=0$,
for all large $n$ and all $k\geq 0$, we have
$\left(\frac{n+1}{n}\right)^2
\frac{(K_1/2)^\nu\Gamma(n\nu+k+1)}
{\Gamma\big((n+1)\nu+k+1\big)}
\leq\frac12$.
For such $n$,
\begin{align*}
\lefteqn{(n+1)^2I_{(n+1)\nu}(K_1)}\nonumber\\
&=
(n+1)^2
\left(\frac{K_1}{2}\right)^{(n+1)\nu}
\sum_{k=0}^{\infty}\left(\frac{K_1^2}{4}\right)^k
\frac{1}{k!\Gamma\big((n+1)\nu+k+1)}\nonumber\\
&=
n^2
\left(\frac{K_1}{2}\right)^{n\nu}
\sum_{k=0}^{\infty}\left(\frac{K_1^2}{4}\right)^k
\frac{1}{k!\Gamma(n\nu+k+1)}
\cdot\left(\frac{n+1}{n}\right)^2
\frac{(K_1/2)^{\nu}\Gamma(n\nu+k+1)}{\Gamma\big((n+1)
\nu+k+1)}\nonumber\\
&\leq
\frac12n^2I_{n\nu}(K_1).
\end{align*}
The conclusion we draw from (\ref{7.39c}),
(\ref{7.42}), and the convergence of the sum
on the right-hand side of (\ref{7.42}) is that
$$
\sup\big\{\varphi(s,v_1,x_1;t_1):
0<t_1<\varepsilon, \varepsilon+\delta<\ell-\delta,
\kappa_L/2\leq v_1\leq 2\kappa_L,
-1\leq x_1<0\big\}<\infty
$$

We rewrite (\ref{7.34b}) as
\begin{align}
\lefteqn{\P\big\{\tau_{\sV^*}^{E,t_1}\in ds,
\kappa_L/2\leq\sV^*(\Lambda+t_1)\leq 2\kappa_L,
-1\leq\sX^*(\Lambda+t_1)< 0\big|\lambda(E)=\ell\}}
\hspace{2cm}\nonumber\\
&=
\E\big[\varphi\big(s,\kappa_L+\Ctilde(t_1)
-\rho\sigma_+E(t_1)/\sigma_-,E(t_1);t_1\big)
\nonumber\\
&\qquad\times
\ind_{\{\kappa_L/2\leq\kappa_L+\Ctilde(t_1)
-\rho\sigma_+E(t_1)/\sigma_-\leq 2\kappa_L,
-1\leq E(t_1)< 0\}}\big|\Lambda(E)=\ell\big].\label{7.46}
\end{align}
As $t_1\downarrow 0$,
$\Ctilde(t_1)\rightarrow 0$ and
$E(t_1)\rightarrow 0$ with $E(t_1)<0$
for $t_1\in(0,\varepsilon)$, so
the indicator function in (\ref{7.46})
converges to $1$.  Furthermore, $\varphi$
is bounded, which implies
\begin{align}
\lefteqn{\lim_{t_1\downarrow 0}
\P\big\{\tau_{\sV^*}^{E,t_1}\in ds,
\kappa_L/2\leq\sV^*(\Lambda+t_1)\leq 2\kappa_L,
-1\leq\sX^*(\Lambda+t_1)< 0\big|\lambda(E)=\ell\}}
\hspace{6cm}
\nonumber\\
&=
\lim_{
\footnotesize{\begin{array}{cc}
v_1\rightarrow\kappa_L\\
x_1<0,x_1\rightarrow 0\\
t_1\downarrow 0\\
\end{array}}}
\varphi(s,v_1,x_1;t_1)ds.\label{7.47}
\end{align}

It remains to compute the right-hand side
of (\ref{7.47}).  The only issue
with the right-hand side of (\ref{7.35})
is the indeterminate form
$\frac{1}{x_1}\sin(n\pi\theta_0/\alpha)$.
(Recall from (\ref{7.41}) 
the dependence
of $\theta_0$ on $v_1$ and $x_1$.)
According to L'H\^opital's Rule,
\be\label{7.48}
\lim_{
\footnotesize{
\begin{array}{cc}
v_1\rightarrow \kappa_L\\
x_1<0,x_1\rightarrow 0
\end{array}
}}
\frac{1}{x_1}
\sin\left(\frac{n\pi\theta_0}{\alpha}\right)
=-\frac{n\pi\sigma_+\sqrt{1-\rho^2}}
{\alpha\kappa_L\sigma_-}.
\ee
Because of the domination (\ref{7.42}), we may take
the limit (\ref{7.48}) inside the infinite
sum in (\ref{7.35}).  Putting (\ref{7.33b}) and
(\ref{7.47}) together, we obtain (\ref{7.32a}).
\end{proof}

\begin{remark}\label{R7.5}
{\rm 
Each time $G^*$, or equivalently, $\sW^*$,
goes on a positive excursion away from zero,
$\sY^*$ begins at $\kappa_R<0$
and has a chance to reach zero.
If $\sY^*$ fails to reach zero before
the end of the excursion of $G^*$, it is
reset to $\kappa_R$.
Let $E$ be such an excursion, let $\Lambda$
be its left endpoint, and define
$$
\tau_{\sY^*}^E:=\inf\{s\geq 0:
\sY^*(\Lambda+s)=0\}.
$$
Completely analogously to the proof of Lemma \ref{L7.3},
we can compute the probability density
function of $\sY^*$ conditioned on
$\lambda(E)=\ell$.  Indeed, we can replace
$(\sV^*,\sX^*)$ in the proof of Lemma \ref{L7.3}
by $(-\sY^*,-\sW^*)$ and use (\ref{5.87}) in
place of (\ref{9.4}) to arrive
at the formula
\begin{align*}
\lefteqn{p_{\sY^*}(s,\ell))}\nonumber\\
&:=
\P\big\{\tau_{\sY^*}^E\in ds\big|\lambda(E)=\ell\big\}
\nonumber\\
&=
\frac{\sqrt{2\pi(1-\rho^2)\ell^3}\,\pi^2\sigma_-
\sin\alpha}{2|\kappa_R|\alpha^3(\ell-s)
\sqrt{s(\ell-s\cos^2\alpha)}}
\exp\left(-\frac{\kappa_R^2}
{2\sigma_-^2(1-\rho^2)s}
\cdot\frac{\ell-s\cos 2\alpha}{(\ell-s)
+(\ell-s\cos 2\alpha)}\right)\nonumber\\
&\quad
\times\sum_{n=1}^{\infty}(-1)^{n-1}n^2
I_{n\pi/(2\alpha)}
\left(\frac{\kappa_R^2}{2\sigma_-^2(1-\rho^2)s}
\cdot\frac{\ell-s}{(\ell-s)+\ell-s\cos 2\alpha}
\right),\quad0<s<\ell.
\end{align*}  
}
\end{remark}

\begin{remark}\label{R7.6}
{\rm
In the context of Lemma \ref{L7.3}
and Remark \ref{R7.5}, we define
\begin{align*}
p_{\sV^*}(\ell)
&
:=\P\big\{\tau_{\sV^*}^E<\ell\big|
\lambda(E)=\ell\}
=\int_0^{\ell}p_{\sV^*}(s,\ell)ds,\\
p_{\sY^*}(\ell)
&:=
\P\big\{\tau_{\sY^*}^E<\ell\big|
\lambda(E)=\ell\}
=\int_0^{\ell}p_{\sY^*}(s,\ell)ds.
\end{align*}
Conditioned on $G^*$ being on a negative
(respectively, positive) excursion of
length $\ell$, these are the probabilities
$\sV^*$ (respectively, $\sY^*$) reaches zero
before the excursion ends.
}
\end{remark}

\begin{lemma}\label{L7.8}
The numbers
\be\label{7.43b}
\lambda_-:=
\frac{1}{\sigma_-}\int_0^{\infty}
\frac{p_{\sV^*}(\ell)d\ell}{\sqrt{2\pi\ell^3}},
\quad
\lambda_{+}:=
\frac{1}{\sigma_+}\int_0^{\infty}
\frac{p_{\sY^*}(\ell)d\ell}{\sqrt{2\pi\ell^3}}
\ee
are positive and finite.
\end{lemma}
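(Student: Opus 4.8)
The plan is to treat the two integrals symmetrically and prove positivity and finiteness separately, focusing on $\lambda_-$ (the argument for $\lambda_+$ being identical after swapping the roles of $(\sV^*,\sX^*,\kappa_L,\sigma_+)$ with $(-\sY^*,-\sW^*,\kappa_R,\sigma_-)$, as in Remark \ref{R7.5}). Positivity is immediate: by Lemma \ref{L7.3}, for each $\ell>0$ the density $p_{\sV^*}(s,\ell)$ is a genuine (sub-)probability density in $s$ on $(0,\ell)$, hence $p_{\sV^*}(\ell)=\int_0^\ell p_{\sV^*}(s,\ell)\,ds\in[0,1]$; moreover $p_{\sV^*}(\ell)>0$ for every $\ell>0$ because, conditioned on a negative excursion of $G^*$ of length $\ell$, the process $\sV^*$ starts at $\kappa_L$, evolves as a Brownian motion (representation (\ref{7.3})), and with strictly positive probability hits zero before time $\ell$. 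Since $1/\sqrt{2\pi\ell^3}>0$ on $(0,\infty)$, the integrand defining $\lambda_-$ is positive on a set of full Lebesgue measure, so $\lambda_->0$.

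For finiteness I would split the integral at some fixed $\ell_0>0$ into $\int_0^{\ell_0}$ and $\int_{\ell_0}^\infty$ and bound each piece. For the tail $\int_{\ell_0}^\infty$, use the crude bound $p_{\sV^*}(\ell)\le 1$ together with $\int_{\ell_0}^\infty \ell^{-3/2}\,d\ell=2\ell_0^{-1/2}<\infty$; this handles large $\ell$ with room to spare. The substantive estimate is near $\ell=0$: I need to show $p_{\sV^*}(\ell)\to 0$ fast enough as $\ell\downarrow 0$ to beat the $\ell^{-3/2}$ singularity. The natural route is a first-passage bound: conditioned on the excursion having length $\ell$, the process $\sV^*$ on $[\Lambda,\Lambda+\ell]$ is $\kappa_L$ plus (a time-changed version of) $\widetilde C-\frac{\rho\sigma_+}{\sigma_-}E$ run for time at most $\ell$, so
\begin{equation}
p_{\sV^*}(\ell)\le \P\Big\{\sup_{0\le s\le \ell}\big|\widetilde C(s)-\tfrac{\rho\sigma_+}{\sigma_-}E(s)\big|\ge \kappa_L\Big\}.
\end{equation}
Here $\widetilde C$ is a Brownian motion with variance rate $(1-\rho^2)\sigma_+^2$ (see (\ref{5.59})) and $E$, conditioned on its length $\ell$, is a normalized Brownian excursion scaled by $\sigma_-\sqrt{\ell}$ in space; both suprema over $[0,\ell]$ are of order $\sqrt{\ell}$ in an exponentially concentrated way. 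Concretely, $\sup_{[0,\ell]}|\widetilde C|$ has Gaussian-type tails with scale $\sqrt{\ell}$ (reflection principle), and $\sup_{[0,1]}$ of a normalized Brownian excursion has finite exponential moments (it has a known distribution with Gaussian decay), so a union bound gives $p_{\sV^*}(\ell)\le C_1\exp(-C_2/\ell)$ for constants $C_1,C_2>0$ depending only on $\kappa_L,\sigma_\pm,\rho$. Then $\int_0^{\ell_0}\ell^{-3/2}e^{-C_2/\ell}\,d\ell<\infty$ because the exponential kills the polynomial singularity at $0$ (substitute $u=1/\ell$ to see $\int^\infty u^{-1/2}e^{-C_2 u}\,du<\infty$).

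The main obstacle is making the near-zero estimate rigorous while keeping the conditioning on $\lambda(E)=\ell$ honest: one must work with the conditional law of the excursion (a normalized Brownian excursion rescaled), invoke the independence of $\widetilde C$ from $E$ established in the proof of Lemma \ref{L7.3}, and cite or establish the exponential tail of the supremum of the normalized excursion. An alternative that sidesteps the excursion-supremum bound entirely is to integrate the explicit series formula (\ref{7.32a}) directly: for small $\ell$ the exponential factor $\exp\big(-\kappa_L^2/(2\sigma_+^2(1-\rho^2)s)\cdot(\cdots)\big)$ forces the dominant contribution toward $s$ comparable to $\ell$, and one extracts an $e^{-C/\ell}$ prefactor after bounding the Bessel series uniformly (the series is already shown to converge by the ratio-test argument inside the proof of Lemma \ref{L7.3}, and the bound there can be made uniform in the relevant parameter ranges). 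Either way the polynomial $\ell^{-3/2}$ is dominated, and combining the two pieces yields $0<\lambda_-<\infty$; the identical argument gives $0<\lambda_+<\infty$.
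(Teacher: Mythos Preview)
Your argument is correct and follows the same overall strategy as the paper: positivity is immediate; for finiteness you split the integral, control the tail $\int_{\ell_0}^\infty$ via $p_{\sV^*}(\ell)\le 1$, and for small $\ell$ you show $p_{\sV^*}(\ell)\le C_1 e^{-C_2/\ell}$ so that the $\ell^{-3/2}$ singularity is integrable.

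The one substantive difference is in the small-$\ell$ estimate. The paper drops the excursion contribution altogether and bounds
\[
p_{\sV^*}(\ell)\le \P\Big\{\min_{0\le t\le\ell}\Ctilde(t)\le -\kappa_L\Big\},
\]
then applies the reflection principle and a Gaussian tail bound to $\Ctilde$ alone. You instead keep both pieces and use a union bound on $\sup|\Ctilde|$ and $\sup|E|$, invoking the Gaussian tail of the normalized Brownian excursion maximum. Your version is the more careful one: since $\rho<0$ and $E\le 0$ on a negative excursion, the term $-\tfrac{\rho\sigma_+}{\sigma_-}E=\alpha_-E$ is \emph{nonpositive}, so $\kappa_L+\Ctilde+\alpha_-E\le \kappa_L+\Ctilde$ pointwise and the paper's displayed inequality is in fact reversed. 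Your union-bound route sidesteps this issue and yields the needed $e^{-C/\ell}$ decay cleanly; the cost is that you need the (standard) sub-Gaussian tail of the Brownian-excursion supremum, whereas the paper's intended shortcut would have used only the reflection principle for $\Ctilde$.
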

\begin{proof}
It is obvious that $\lambda_+$
and $\lambda_-$ are positive.
We show that $\lambda_-<\infty$;
the proof that $\lambda_+<\infty$ is analogous.
Because $p_{\sV^*}(\ell)$ is a probability
and hence less than or equal to $1$,
$\int_1^{\infty}\frac{p_{\sV^*}(\ell)d\ell}
{\sqrt{2\pi\ell^3}}<\infty$.
We show
\be\label{7.44e}
\int_0^1\frac{p_{\sV^*}(\ell)d\ell}
{\sqrt{2\pi\ell^3}}<\infty.
\ee
Let $E$ be a negative excursion of $G^*$
of length $\ell$ and recall the representation
(\ref{7.3}) of $\sV^*$ during the excursion
$E$.  The probability that $\sV^*$ reaches
zero during this excursion is
\begin{align*}
p_{\sV^*}(\ell)
&=
\P\left\{\left.\min_{0\leq t\leq\ell}
\left(\kappa_L+\Ctilde(t)-\frac{\rho\sigma_+}{\sigma_-}
E(t-\Lambda)\right)\leq 0\right|\lambda(E)=\ell
\right\}\nonumber\\
&\leq
\P\big\{\min_{0\leq t\leq \ell}\Ctilde(t)\leq
-\kappa_L\big\}\nonumber\\
&\leq
2\P\big\{\Ctilde(\ell)\leq -\kappa_L\big\}
\end{align*}
by the reflection principle.
According to 
\cite[Problem~9.22,~p.~122~with~solution~on~p.~125]{KaratzasShreve},
$$
\P\big\{\Ctilde(\ell)\leq -\kappa_L\big\}
=
\frac{1}{\sqrt{2\pi}}
\int_{-\infty}^{-\kappa_L/\sqrt{(1-\rho^2)\sigma_+^2
\ell}}
e^{-u^2/2}du
\leq
\frac{1}{\kappa_L}
\sqrt{\frac{(1-\rho^2)\sigma_+^2\ell}{2\pi}}
e^{-\kappa_L^2/(2(1-\rho^2)\sigma_+^2\ell)}.
$$
It is now clear that the integral
(\ref{7.44e}) is convergent.
\end{proof}

We begin from time zero and 
consider $\sS_v^*$, $\sS_y^*$
and $\sS^*=\sS_v^*\wedge\sS_y^*$
of (\ref{Sstar}) and (\ref{6.2x}).
We have a downward price shift
at the time of the first renewal
if and only if $\sS_v^*<\sS_y^*$.

\begin{theorem}\label{T7.7}
We have
$
\P\big\{S_v^*<S_y^*\big\}
=\frac{\lambda_-}{\lambda_++\lambda_-}$
and
$\P\big\{S_v^*>S_y^*\big\}
=\frac{\lambda_+}{\lambda_++\lambda_-}.
$
\end{theorem}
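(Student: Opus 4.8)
The plan is to read the result off from It\^o excursion theory applied to the two-speed Brownian motion $G^*$, exploiting that the Brownian motions driving $\sV^*$ and $\sY^*$ on the excursions of $G^*$ are independent of $G^*$ and have independent increments over disjoint excursion intervals (Proposition \ref{P5.10}, Remark \ref{R5.12}, the discussion following \eqref{5.58}, and their analogues used in Remark \ref{R7.5}). By Theorem \ref{T4.14} and Corollary \ref{C4.11}, $G^*$ is a two-speed Brownian motion started at $0$, hence regular at $0$; it therefore carries a local time $L$ at $0$, and its excursions away from $0$ form a Poisson point process on $(0,\infty)\times\sE$ (local-time level against excursion path) governed by an excursion measure $\nu=\nu_-+\nu_+$, with $\nu_-$ (resp.\ $\nu_+$) carried by the negative (resp.\ positive) excursions. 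Since $G^*=B\circ(\sigma_+^2P^{G^*}_++\sigma_-^2P^{G^*}_-)$, a negative excursion of $G^*$ of real-time length $\ell$ is the time-change $t\mapsto\sigma_-^2t$ of a standard Brownian negative excursion of length $\sigma_-^2\ell$ --- this is exactly the identity displayed just before Lemma \ref{L7.3} --- and symmetrically on the positive side with $\sigma_+$. Consequently there is a single constant $c_0>0$, depending only on the normalization of $L$, with $\nu_-(\lambda(e)\in d\ell)=\frac{c_0}{\sigma_-\sqrt{2\pi\ell^3}}\,d\ell$ and $\nu_+(\lambda(e)\in d\ell)=\frac{c_0}{\sigma_+\sqrt{2\pi\ell^3}}\,d\ell$, the equality of the two constants reflecting the $x\mapsto-x$ symmetry of the standard Brownian excursion measure.

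Next I would mark each excursion by the independent Brownian motion driving the corresponding bracketing queue over it: a negative excursion $e$ by the motion $\widetilde C$ of Lemma \ref{L7.3}, so that $\sV^*$ on $e$ is given by \eqref{7.3} with $C_{k,-}$ of \eqref{5.57} its stopped version, and a positive excursion $e$ by the analogous motion for $\sY^*$ from Remark \ref{R7.5}. By \eqref{5.57}, \eqref{5.58} and Remark \ref{R5.12} these marks are i.i.d.\ across excursions and independent of the excursion point process of $G^*$. Call a negative excursion \emph{$v$-active} if $\sV^*$ reaches $0$ during it, and a positive excursion \emph{$y$-active} if $\sY^*$ reaches $0$ during it. By Lemma \ref{L7.3}, Remark \ref{R7.5}, and the definitions in Remark \ref{R7.6}, conditionally on an excursion having length $\ell$ the probability it is active is $p_{\sV^*}(\ell)$ if it is negative and $p_{\sY^*}(\ell)$ if it is positive, independently of everything else. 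By the marking and thinning theorems for Poisson point processes, the $v$-active negative excursions form a Poisson point process in local time of rate $\mu_-:=\int_0^\infty p_{\sV^*}(\ell)\,\nu_-(\lambda(e)\in d\ell)$, the $y$-active positive excursions form an \emph{independent} Poisson point process of rate $\mu_+:=\int_0^\infty p_{\sY^*}(\ell)\,\nu_+(\lambda(e)\in d\ell)$, and the length-measure formulas above give $\mu_-=c_0\lambda_-$ and $\mu_+=c_0\lambda_+$ with $\lambda_\pm$ as in Lemma \ref{L7.8}; both rates are finite and strictly positive.

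Finally I would translate back to real time. Off the negative excursion intervals of $G^*$ one has $\sV^*\equiv\kappa_L>0$, and off the positive excursion intervals $\sY^*\equiv\kappa_R<0$ (see \eqref{9.4} and \eqref{5.87}); hence $S^*_v$ occurs strictly inside the first $v$-active negative excursion and $S^*_y$ strictly inside the first $y$-active positive excursion, and both are a.s.\ finite since $\mu_\pm>0$ and the local time of a recurrent two-speed Brownian motion is unbounded. Because $L$ is nondecreasing and constant on excursion intervals, the real-time order of excursions coincides with the local-time order of their left endpoints; letting $L_-$ and $L_+$ be the local-time levels of the left endpoints of the first $v$-active and first $y$-active excursions, $\{S^*_v<S^*_y\}=\{L_-<L_+\}$ outside the null event $\{L_-=L_+\}$ (ruled out by the independence of the two thinned processes) and outside the null event $\{S^*_v=S^*_y\}$ noted after \eqref{6.2x}. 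Since $L_-$ and $L_+$ are independent with $L_-\sim\mathrm{Exp}(\mu_-)$ and $L_+\sim\mathrm{Exp}(\mu_+)$, we get $\P\{S^*_v<S^*_y\}=\P\{L_-<L_+\}=\frac{\mu_-}{\mu_-+\mu_+}=\frac{\lambda_-}{\lambda_-+\lambda_+}$, and, using $\P\{S^*_v=S^*_y\}=0$ once more, the complementary probability is $\frac{\lambda_+}{\lambda_-+\lambda_+}$.

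The main obstacle is the first paragraph: one must carefully install It\^o excursion theory for the time-changed ``two-speed'' process $G^*$ --- verifying regularity at $0$ and the existence of a serviceable local time, and identifying $\nu_\pm$ as the asserted time-changes of the standard Brownian excursion measure with a common normalization of the positive and negative parts --- and then check that the excursion point process of $G^*$, enriched by the independent driving motions $\widetilde C$ of \eqref{5.57} and of Remark \ref{R7.5}, really is a marked Poisson point process to which the thinning theorem applies. Establishing that the per-excursion activity probability equals exactly $p_{\sV^*}(\ell)$ (resp.\ $p_{\sY^*}(\ell)$), uniformly in the excursion, and that activity is conditionally independent across excursions, is the remaining delicate point; once these are secured, the conclusion is the one-line computation with competing exponential clocks above.
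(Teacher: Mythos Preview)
Your proposal is correct and follows essentially the same route as the paper: the paper invokes the excursion representation of the two-speed Brownian motion $G^*$ (established in Proposition~\ref{TS.C.10}, which resolves your ``main obstacle'' and fixes your normalization constant to $c_0=1$), thins the excursion Poisson random measures by whether the relevant bracketing queue reaches zero to obtain independent Poisson processes of rates $\lambda_\pm$, and concludes by comparing the first arrival times $\sL_\pm$ as independent exponentials. Your marking/thinning formulation is exactly the paper's decomposition $N_\pm^*=N_\pm^0+N_\pm^\times$ with characteristic measures $n_\pm^0$, $n_\pm^\times$.
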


\begin{proof}
The process $G^*$ is a two-speed Brownian
motion with the excursion representation
of Proposition \ref{TS.C.10}:
$$
G^*(\theta)=\int_{(0,L^Z(\theta)]}
\int_{\sE}e\big(\theta-A^Z(s-)\big)N^Z(ds\,de),
$$
where
$A^Z(s)=\int_{(0,s]}\lambda(e)N^Z(du\,de)$,
$L^Z(\theta)=\inf\{s\geq 0:A^Z(s)>\theta\}$,
and $N^Z$ is given by (\ref{N1.15}) and (\ref{N1.6}).
Consider the independent Poisson
random measures $N^Z_{\pm}$ for the
positive and negative excursions of $G^*$  given by
(\ref{N1.15}).  
We create new Poisson random
measures for the lengths of excursions
counted by $N^Z_{\pm}$ by defining
\begin{align*}
N_{\pm}^*\big((s,t]\times D\big)
&=
N_{\pm}^Z\big((s,t]
\times\{e\in\sE_{\pm}
:\lambda(e)\in D\}\big),
\quad 0\leq s\leq t,\,\,D\in\sB(0,\infty).
\end{align*}
According to (\ref{N1.15}) and (\ref{nBpm}),
the characteristic measures of $N_{\pm}^*$
are given by
\begin{align*}
n_{\pm}^*(D)
&=
n_{\pm}^B\big\{e\in\sE_{\pm}:
\lambda(\psi_{\pm}e)\in D\big\}
=
n_{\pm}^B\big\{e\in\sE_\pm:
\lambda(e)\in\sigma_{\pm}^2D\big\}\\
&=
\int_{\sigma_{\pm}^2D}\frac{d\ell}{\sqrt{2\pi\ell^3}}
=
\frac{1}{\sigma_{\pm}}\int_D
\frac{d\ell}{\sqrt{2\pi\ell^3}},\quad
D\in\sB(0,\infty).
\end{align*}
Because $\sV^*$ and $\sY^*$ are independent
of $G^*$, and hence independent
of $N_{\pm}^*$, we can decompose
$N_{\pm}^*$ into the independent Poisson random measures,
defined for $D\in\sB(0,\infty)$, by
\begin{align*}
N_-^0\big((s,t]\times D\big)
&:=
N^Z_-\big((s,t]\times
\{e\in\sE_-\colon\lambda(e)\in D\mbox{ and }
\sV^*\mbox{ reaches zero during }e\}\big),\\
N_-^{\times}\big((s,t]\times D\big)
&:=
N^Z_-\big((s,t]\times
\{e\in\sE_-\colon\lambda(e)\in D\mbox{ and }
\sV^*\mbox{ does not reach zero during }e\}\big),\\
N_+^0\big((s,t]\times D\big)
&:=
N^Z_+\big((s,t]\times
\{e\in\sE_+\colon\lambda(e)\in D\mbox{ and }
\sY^*\mbox{ reaches zero during }e\}\big),\\
N_+^{\times}\big((s,t]\times D\big)
&:=
N^Z_+\big((s,t]\times
\{e\in\sE_+\colon\lambda(e)\in D\mbox{ and }
\sY^*\mbox{ does not reach zero during }e\}\big).
\end{align*}
The respective characteristic measures 
of these Poisson random measures are
\begin{align*}
n_{-}^0(D)
&:=
\frac{1}{\sigma_-}
\int_D\frac{p_{\sV^*}(\ell)d\ell}{\sqrt{2\pi\ell^3}},
\qquad
n_-^{\times}(D)
:=
\frac{1}{\sigma_-}\int_D
\frac{(1-p_{\sV^*}(\ell))d\ell}
{\sqrt{2\pi\ell^3}},\\
n_{+}^0(D)
&:=
\frac{1}{\sigma_+}
\int_D\frac{p_{\sY^*}(\ell)d\ell}{\sqrt{2\pi\ell^3}},
\qquad
n_+^{\times}(D)
:=
\frac{1}{\sigma_+}\int_D
\frac{(1-p_{\sY^*}(\ell))d\ell}
{\sqrt{2\pi\ell^3}},\quad D\in\sB(0,\infty).
\end{align*}

The Poisson processes
$N_{\pm}^0((0,t]\times(0,\infty))$,
$t\geq 0$, are independent with intensities
$\lambda_{\pm}$ given by (\ref{7.43b}).
We define
$\sL_{\pm}=\inf\big\{t\geq 0:
N_{\pm}^0((0,t]\times(0,\infty))=1\big\}$.
Starting from time zero,
$\sV^*$ vanishes before $\sY^*$
if and only if
$\sL_-<\sL_+$.  These are independent exponential
random variables with means $1/\lambda_{\pm}$, i.e.,
$
\P\{\sL_-\in dt_1,\sL_+\in dt_2\}
=\lambda_-\lambda_+e^{-\lambda_-t_1-\lambda_+t_2},
\quad t_1>0,t_2>0.
$
Straightforward calculation shows
$\P\{\sL_-<\sL_+\}
=\lambda_-/(\lambda_++\lambda_-)$.
\end{proof}

\begin{theorem}\label{T7.8}
We have the characteristic function formulas,
defined for $\alpha\in\R$,
\begin{align}
\lefteqn{\E\big[e^{i\alpha\sS^*}\big|\sS_v^*<\sS_y^*\big]}
\nonumber\\
&=
\frac{\frac{\lambda_-+\lambda_+}{\lambda_-\sigma_-}
\int_{\ell=0}^{\infty}\frac{1}{\sqrt{2\pi\ell^3}}
\int_{s=0}^{\ell}e^{i\alpha s}p_{\sV^*}(s,\ell)dsd\ell}
{\frac{1}{\sigma_-}\int_0^{\infty}
e^{i\alpha\ell}
\frac{p_{\sV^*}(\ell)}{\sqrt{2\pi\ell^3}}d\ell
+\frac{1}{\sigma_+}\int_0^{\infty}
e^{i\alpha\ell}
\frac{p_{\sY^*}(\ell)}{\sqrt{2\pi\ell^3}}d\ell
+\left(\frac{1}{\sigma_-}+\frac{1}{\sigma_+}\right)
\sqrt{|\alpha|}\big(1-\mbox{sign}(\alpha)i\big)},
\label{cf1}\\
\lefteqn{\E\big[e^{i\alpha\sS^*}\big|\sS_v^*>\sS_y^*\big]}
\nonumber\\
&=
\frac{\frac{\lambda_-+\lambda_+}{\lambda_+\sigma_+}
\int_{\ell=0}^{\infty}\frac{1}{\sqrt{2\pi\ell^3}}
\int_{s=0}^{\ell}e^{i\alpha s}p_{\sY^*}(s,\ell)dsd\ell}
{\frac{1}{\sigma_-}\int_0^{\infty}
e^{i\alpha\ell}
\frac{p_{\sV^*}(\ell)}{\sqrt{2\pi\ell^3}}d\ell
+\frac{1}{\sigma_+}\int_0^{\infty}
e^{i\alpha\ell}
\frac{p_{\sY^*}(\ell)}{\sqrt{2\pi\ell^3}}d\ell
+\left(\frac{1}{\sigma_-}+\frac{1}{\sigma_+}\right)
\sqrt{|\alpha|}\big(1-\mbox{sign}(\alpha)i\big)},
\label{cf2}
\end{align}
\begin{align}
\lefteqn{\E\big[e^{i\alpha\sS^*}\big]}\nonumber\\
&=
\frac{\frac{1}{\sigma_-}
\int_{\ell=0}^{\infty}\frac{1}{\sqrt{2\pi\ell^3}}
\int_{s=0}^{\ell}e^{i\alpha s}p_{\sV^*}(s,\ell)dsd\ell
+\frac{1}{\sigma_+}
\int_{\ell=0}^{\infty}\frac{1}{\sqrt{2\pi\ell^3}}
\int_{s=0}^{\ell}e^{i\alpha s}p_{\sY^*}(s,\ell)dsd\ell}
{\frac{1}{\sigma_-}\int_0^{\infty}
e^{i\alpha\ell}
\frac{p_{\sV^*}(\ell)}{\sqrt{2\pi\ell^3}}d\ell
+\frac{1}{\sigma_+}\int_0^{\infty}
e^{i\alpha\ell}
\frac{p_{\sY^*}(\ell)}{\sqrt{2\pi\ell^3}}d\ell
+\left(\frac{1}{\sigma_-}+\frac{1}{\sigma_+}\right)
\sqrt{|\alpha|}\big(1-\mbox{sign}(\alpha)i\big)}.
\label{cf3}
\end{align}
\end{theorem}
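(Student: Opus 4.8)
The plan is to build on the excursion-theoretic decomposition of $G^*$ established in Theorem \ref{T7.7} and to combine it with the conditional first-passage densities of Lemma \ref{L7.3} and Remark \ref{R7.5}. Recall from the proof of Theorem \ref{T7.7} that the independent Poisson random measures $N_-^0$ and $N_+^0$ record, respectively, the negative excursions of $G^*$ during which $\sV^*$ reaches zero and the positive excursions during which $\sY^*$ reaches zero; their characteristic measures are $n_-^0(d\ell)=\frac{1}{\sigma_-}p_{\sV^*}(\ell)\,d\ell/\sqrt{2\pi\ell^3}$ and $n_+^0(d\ell)=\frac{1}{\sigma_+}p_{\sY^*}(\ell)\,d\ell/\sqrt{2\pi\ell^3}$. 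The renewal time $\sS^*$ is the sum of the time $\sL:=\sL_-\wedge\sL_+$ spent before the ``successful'' excursion begins (measured in local-time units, this is exponential; measured in clock time it is obtained by the time change $A^Z$) plus the first-passage time $s$ within that final excursion. The event $\{\sS_v^*<\sS_y^*\}$ is exactly $\{\sL_-<\sL_+\}$, which has probability $\lambda_-/(\lambda_-+\lambda_+)$ by Theorem \ref{T7.7}.

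First I would compute the characteristic function of the clock time consumed before the successful excursion. Up to the local time $\sL$, the process $A^Z$ accumulates the lengths of all excursions, both positive and negative, that do not cause a renewal: the negative ones counted by $N_-^{\times}$ with characteristic measure $n_-^{\times}(d\ell)=\frac{1}{\sigma_-}(1-p_{\sV^*}(\ell))\,d\ell/\sqrt{2\pi\ell^3}$, the positive ones counted by $N_+^{\times}$ with measure $n_+^{\times}$, together with the ``tiny'' excursions that are invisible to the thresholding but contribute the $\sqrt{|\alpha|}$ term. By the exponential-formula (L\'evy--Khinchine) for the Laplace/characteristic functional of a subordinator driven by a Poisson random measure, the joint characteristic function of (accumulated clock time before $\sL$, length $s$ of the final excursion) factors, and after integrating against the exponential holding time one gets a ratio whose denominator is
$$
\int_0^\infty\big(1-e^{i\alpha\ell}\big)\,n_-^{\times}(d\ell)
+\int_0^\infty\big(1-e^{i\alpha\ell}\big)\,n_+^{\times}(d\ell)
+\Big(\tfrac{1}{\sigma_-}+\tfrac{1}{\sigma_+}\Big)c(\alpha),
$$
where $c(\alpha)=\sqrt{|\alpha|}(1-\mathrm{sign}(\alpha)i)$ is the contribution of the small excursions, this last identity being the classical computation $\int_0^\infty(1-e^{i\alpha\ell})\,d\ell/\sqrt{2\pi\ell^3}=\sqrt{|\alpha|}(1-\mathrm{sign}(\alpha)i)$. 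Using $n_-^0+n_-^{\times}$ has density $\frac{1}{\sigma_-}/\sqrt{2\pi\ell^3}$ and similarly for the plus side, the $1$'s and the $e^{i\alpha\ell}$ terms reorganize so that the denominator becomes exactly the expression displayed in (\ref{cf1})--(\ref{cf3}): the $\lambda_\pm$ absorb the $n_\pm^0$ ``mass'' pieces and what survives multiplying $e^{i\alpha\ell}$ is $\frac{1}{\sigma_-}p_{\sV^*}(\ell)$ and $\frac{1}{\sigma_+}p_{\sY^*}(\ell)$.

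For the numerator, conditioning on $\{\sS_v^*<\sS_y^*\}$ restricts attention to the first point of $N_-^0$; its mark $\ell$ is distributed according to $n_-^0/\lambda_-$, and given $\ell$ the first-passage time $s$ within that excursion has density $p_{\sV^*}(s,\ell)/p_{\sV^*}(\ell)$ by Lemma \ref{L7.3} (together with $p_{\sV^*}(\ell)=\int_0^\ell p_{\sV^*}(s,\ell)\,ds$ from Remark \ref{R7.6}). The exponential holding time and the final excursion being jointly characterized by the Poisson structure, one multiplies the characteristic function of the pre-renewal clock time (the reciprocal of the denominator above, up to normalization) by $\E[e^{i\alpha s}\mid \ell]$ averaged over $\ell\sim n_-^0/\lambda_-$; the factors $p_{\sV^*}(\ell)$ cancel and one is left with $\frac{1}{\lambda_-\sigma_-}\int_0^\infty\frac{1}{\sqrt{2\pi\ell^3}}\int_0^\ell e^{i\alpha s}p_{\sV^*}(s,\ell)\,ds\,d\ell$ in the numerator, multiplied by $(\lambda_-+\lambda_+)$ because the exponential holding time has rate $\lambda_-+\lambda_+$ rather than $\lambda_-$. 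This yields (\ref{cf1}); (\ref{cf2}) is the mirror statement with $v\leftrightarrow y$, $-\leftrightarrow+$; and (\ref{cf3}) follows by the total-probability identity $\E[e^{i\alpha\sS^*}]=\tfrac{\lambda_-}{\lambda_-+\lambda_+}\E[e^{i\alpha\sS^*}\mid \sS_v^*<\sS_y^*]+\tfrac{\lambda_+}{\lambda_-+\lambda_+}\E[e^{i\alpha\sS^*}\mid \sS_v^*>\sS_y^*]$, the two numerators adding to the numerator of (\ref{cf3}).

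The main obstacle I expect is making rigorous the decomposition of $\sS^*$ into ``local time before the successful excursion, time-changed'' plus ``first-passage time inside it,'' and justifying that the small excursions excluded by the thresholding contribute precisely the $\sqrt{|\alpha|}(1-\mathrm{sign}(\alpha)i)$ term to the characteristic exponent with no further corrections — this requires the dominated-convergence argument of Lemma \ref{L7.8} (finiteness of $\lambda_\pm$) plus a careful passage to the limit in the thresholds, invoking the excursion representation of Proposition \ref{TS.C.10} and the independence of $(\sV^*,\sY^*)$ from $G^*$ to split $N_\pm^Z$ into the four measures $N_\pm^0,N_\pm^{\times}$. Once the Poisson-random-measure bookkeeping is set up correctly, the actual characteristic-function computation is the routine exponential-formula manipulation sketched above.
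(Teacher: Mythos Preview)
Your approach is essentially the paper's: decompose $\sS^*$ as (clock time consumed by unsuccessful excursions up to local time $\sL=\sL_-\wedge\sL_+$) plus (first-passage time inside the successful excursion), apply the L\'evy--Hin\v{c}in formula to $H_\pm(t)=\int_{(0,t)}\int\ell\,N_\pm^\times(du\,d\ell)$, integrate against the exponential law of $\sL$, and supply the numerator via the conditional density $p_{\sV^*}(s,\ell)$ of Lemma~\ref{L7.3}.

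Two points of confusion are worth flagging. First, there is \emph{no} thresholding anywhere in this computation: the split $N_\pm^Z=N_\pm^0+N_\pm^\times$ is exact, with $N_\pm^\times$ counting \emph{all} excursions (of every length) that fail to trigger a renewal. There are no additional ``tiny'' excursions to be accounted for, so the obstacle anticipated in your last paragraph is a phantom. Second, and relatedly, your displayed denominator is not right as written. The correct expression before simplification is
\[
\lambda_-+\lambda_++\int_0^\infty(1-e^{i\alpha\ell})\,n_-^\times(d\ell)
+\int_0^\infty(1-e^{i\alpha\ell})\,n_+^\times(d\ell),
\]
the $\lambda_-+\lambda_+$ arising from the exponential holding time. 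The term $\big(\tfrac{1}{\sigma_-}+\tfrac{1}{\sigma_+}\big)\sqrt{|\alpha|}(1-\mathrm{sign}(\alpha)i)$ is not added separately; it emerges when you write $n_\pm^\times=n_\pm^*-n_\pm^0$ with $n_\pm^*(d\ell)=\tfrac{1}{\sigma_\pm}\,d\ell/\sqrt{2\pi\ell^3}$ and invoke the identity $\int_0^\infty(1-e^{i\alpha\ell})\,d\ell/\sqrt{2\pi\ell^3}=\sqrt{|\alpha|}(1-\mathrm{sign}(\alpha)i)$. The $\lambda_\pm$ then cancel against the $n_\pm^0$ mass pieces, exactly as you say, leaving the stated denominator. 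With these corrections your outline goes through.
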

\begin{proof}
There are three parts to $\sS^*$, which
are (1) the sum of the lengths of the negative
excursions of $G^*$ that conclude before $\sS^*$,
(2) the sum of the lengths of the positive
excursions of $G^*$ that conclude before $\sS^*$,
and (3) the time elapsed on the excursion
of $G^*$ that is in progress at time $\sS^*$. 
To capture time spent on positive and negative
excursions in which zero is not reached
by the appropriate bracketing process, we define
$$
H_{\pm}(s):=
\int_{u\in (0,s)}\int_{\ell\in(0,\infty)}
\ell N_{\pm}^{\times}(du\,d\ell).
$$
To capture the time elapsed on the excursion
of $G^*$ that is in progress at time $\sS^*$, we
define
\begin{align*}
R_{\sV^*}
&=
\mbox{Time elapsed
on the excursion beginning at local
time }\sL_-\mbox{ before }\sV^*\mbox{ reaches zero},\\
R_{\sY^*}
&=
\mbox{Time elapsed
on the excursion beginning at local
time }\sL_+\mbox{ before }\sY^*\mbox{ reaches zero},
\end{align*}
Then
$$
\sS^*=
\left\{\begin{array}{ll}
H_-(\sL_-)+H_+(\sL_-)+R_{\sV^*}&\mbox{if }
\sL_-<\sL_+,\\
H_-(\sL_+)+H_+(\sL_+)+R_{\sY^*}&\mbox{if }
\sL_+<\sL_-.
\end{array}\right.
$$

The random variable $R_{\sV^*}$
is independent
of $N_{\pm}^{\times}$ and consequently
independent of $H_{\pm}$.  It is
also independent of $\sL_{\pm}$.
To see this, note from the finiteness of
$\lambda_-=n_-^0(0,\infty)$ that
the Poisson random measure $N_-^0$ charges
only finitely many excursions in finite time,
and these excursions form 
an iid sequence with
$\P\{\lambda(E)\in d\ell\}=n_-^0(d\ell)/\lambda_-$.
In particular, this is the distribution
of the length of the first excursion 
in this sequence, which is the excursion beginning
at local time $\sL_-$.  This distribution
does not depend on $\sL_-$ nor $\sL_+$.
Let $E$ denote this excursion.  
Then $R_{\sV^*}$ is $\tau_{\sV^*}^E$
of (\ref{tauE}), but the choice of $E$
dictates that $\tau_{\sV^*}^E<\lambda(E)$.
The distribution of $R_{\sV^*}$ is the distribution
of $\tau_{\sV^*}^E$ under this condition, i.e.,
$$
\P\big\{R_{\sV^*}\in ds\big|\lambda(E)=\ell\big\}
=\frac{p_{\sV^*}(s,\ell)}{p_{\sV^*}(\ell)}ds,
\quad 0<s<\ell,
$$
which again does not depend on
$\sL_-$ nor $\sL_+$.  Therefore,
$$
\P\big\{R_{\sV^*}\in ds\big\}
=\frac{1}{\lambda_-}\int_{\ell=s}^{\infty}
\frac{p_{\sV^*}(s,\ell)}{p_{\sV^*}(\ell)}
n_-^0(d\ell)ds
=\frac{1}{\lambda_-\sigma_-}\int_{\ell=s}^{\infty}
\frac{p_{\sV^*}(s,\ell)}{\sqrt{2\pi\ell^3}}
d\ell ds,\quad s>0,
$$
which does not depend on $\sL_-$ nor $\sL_+$.

We begin the computation of (\ref{cf1}) with
the observation
\begin{align}
\E\big[e^{i\alpha \sS^*}|\sS_v^*<\sS_y^*\big]
&=
\E\big[e^{i\alpha(H_-(\sL_-)+H_+(\sL_-)+R_{\sV^*}
)}
\big|\sL_-<\sL_+\big]\nonumber\\
&=
\E\big[e^{i\alpha R_{\sV^*}}\big]\cdot
\E\big[e^{i\alpha(H_-(\sL_-)+H_+(\sL_-))}
\big|\sL_-<\sL_+\big],\label{7.55}
\end{align}
and
\be\label{7.56}
\E\big[e^{i\alpha R_{\sV^*}}\big]=
\frac{1}{\lambda_-\sigma_-}
\int_{s=0}^{\infty}e^{i\alpha s}\int_{\ell=s}^{\infty}
\frac{p_{\sV^*}(s,\ell)}{\sqrt{2\pi\ell^3}}
d\ell ds
\frac{1}{\lambda_-\sigma_-}
\int_{\ell=0}^{\infty}\frac{1}{\sqrt{2\pi\ell^3}}
\int_{s=0}^{\ell}e^{i\alpha s}p_{\sV^*}(s,\ell)dsd\ell.
\ee
The last term on the right-hand side of (\ref{7.55}) is
\begin{align}
\lefteqn{\E\big[e^{i\alpha(H_-(\sL_-)+H_+(\sL_-))}
\big|\sL_-<\sL_+\big]}\nonumber\\
&=
\frac{\lambda_-+\lambda_+}{\lambda_-}
\int_{t_1=0}^{\infty}\int_{t_2=t_1}^{\infty}
\E\big[e^{i\alpha(H_-(t_1)+H_+(t_2))}\big]
\lambda_-\lambda_+e^{-\lambda_-t_1-\lambda_+t_2}
dt_2 dt_1\nonumber\\
&=
(\lambda_-+\lambda_+)
\int_0^{\infty}\E\big[e^{i\alpha H_-(t)}\big]
\E\big[e^{i\alpha H_+(t)}\big]
e^{-(\lambda_1+\lambda_2)t}dt.\label{7.58}
\end{align}
According to the L\'evy-Hin\v{c}in formula,
\begin{align}
\E\big[e^{i\alpha H_-(t)}\big]
&=
\exp\left[-t\int_0^{\infty}
(1-e^{i\alpha\ell})n_-^{\times}(d\ell)
\right]
=
\exp\left[-t\int_0^{\infty}
(1-e^{i\alpha \ell})\frac{(1-p_{\sV^*}(\ell))}
{\sigma_-\sqrt{2\pi\ell^3}}d\ell\right],
\label{7.59}\\
\E\big[e^{i\alpha H_+(t)}\big]
&=
\exp\left[-t\int_0^{\infty}
(1-e^{i\alpha\ell})n_+^{\times}(d\ell)
\right]
=
\exp\left[-t\int_0^{\infty}
(1-e^{i\alpha\ell})
\frac{(1-p_{\sY^*}(\ell))}{\sigma_+
\sqrt{2\pi\ell^3}}d\ell\right].\label{7.60}
\end{align}
Substitution of (\ref{7.59}) and (\ref{7.60})
into (\ref{7.58}) results in
\begin{align*}
\lefteqn{\E\big[e^{i\alpha(H_-(\sL_-)+H_+(\sL_-))}
\big|\sL_-<\sL_+\big]}\nonumber\\
&=
\frac{\lambda_-+\lambda_+}
{\lambda_-+\lambda_+
+\int_0^{\infty}(1-e^{i\alpha\ell})
\frac{1-p_{\sV^*}(\ell)}{\sigma_-\sqrt{2\pi\ell^3}}
d\ell
+\int_0^{\infty}(1-e^{i\alpha\ell})
\frac{1-p_{\sY^*}(\ell)}{\sigma_+\sqrt{2\pi\ell^3}}
d\ell}.
\end{align*}
Using the formula
\be\label{7.62}
\int_0^{\infty}(1-e^{i\alpha\ell})
\frac{d\ell}{\sqrt{2\pi\ell^3}}
=\sqrt{|\alpha|}\big(1-\mbox{sign}(\alpha)i\big),\quad
\alpha\in\R,
\ee
proved in Appendix \ref{AppendixC}
and (\ref{7.43b}), we can rewrite this as
\begin{align}
\lefteqn{\E\big[e^{i\alpha(H_-(\sL_-)+H_+(\sL_-))}
\big|\sL_-<\sL_+\big]}\nonumber\\
&=
\frac{\lambda_-+\lambda_+}
{\frac{1}{\sigma_-}\int_0^{\infty}
e^{i\alpha\ell}
\frac{p_{\sV^*}(\ell)}{\sqrt{2\pi\ell^3}}d\ell
+\frac{1}{\sigma_+}\int_0^{\infty}
e^{i\alpha\ell}
\frac{p_{\sY^*}(\ell)}{\sqrt{2\pi\ell^3}}d\ell
+\left(\frac{1}{\sigma_-}+\frac{1}{\sigma_+}\right)
\sqrt{|\alpha|}\big(1-\mbox{sign}(\alpha)i\big)}.
\label{7.63}
\end{align}
Putting (\ref{7.55}), (\ref{7.56})
and (\ref{7.63}) together, we obtain (\ref{cf1}).

An analogous argument establishes (\ref{cf2}).
Curiously,
$$
\E\big[e^{i\alpha(H_-(\sL_-)+H_+(\sL_-))}
\big|\sL_-<\sL_+\big]
=\E\big[e^{i\alpha (H_-(\sL_+)+H_+(\sL_+))}
\big|\sL_->\sL_+\big].
$$
Equation (\ref{cf3}) follows from
Theorem \ref{T7.7} and the identity
$$
\E\big[e^{i\alpha \sS^*}\big]
=\E\big[e^{i\alpha\sS^*}\big|\sS_v^*<\sS_y^*\big]
\P\big\{S_v^*<S_y^*\big\}
+\E\big[e^{i\alpha \sS^*}\big|\sS_v^*>\sS_y^*\big]
\P\big\{\sS_v^*>\sS_y^*\big\}.
$$
\end{proof}

\appendix\section{Two-speed Brownian motion}\label{TwoSpeedBrMot}
\numberwithin{equation}{section}

In this section we make precise the notion
of two-speed Brownian motion alluded to
in Section \ref{SecIntro}, and we develop 
properties of this process needed for
subsequent sections.

\subsection{Definition and basic properties}\label{Definition}

\setcounter{theorem}{0}
\setcounter{equation}{0}
\setcounter{figure}{0}

\begin{definition}\label{TS.D.1}
{\rm Let $B$ be a standard Brownian motion, and
define
$P^B_{\pm}(t):=\int_0^t\ind_{\{\pm B(s)>0\}}ds$,
$t\geq 0$,
to be the occupation times of the positive
and negative half-lines.  Let
$\sigma_{\pm}$ be positive numbers,
and define
\be\label{TS.E.3a}
\Theta:=\frac{1}{\sigma_+^2}P^B_+
+\frac{1}{\sigma_-^2}P^B_-.
\ee
We call the process
\be\label{TS.E.1}
Z=B\circ \Theta^{-1}
\ee
a {\em two-speed Brownian motion}
with speed $\sigma_+^2$
when positive and speed
$\sigma_-^2$ when negative.}
\end{definition}

\begin{remark}\label{TS.R.1}
{\rm Two-speed Brownian motion is closely
related to {\em skew Brownian motion}, which
is defined as follows.
Begin with a reflected Brownian motion,
all of whose excursions away from zero are
positive, and then independently for each
excursion, with probability
$1-\alpha$, where $0<\alpha<1$, flip the excursion
so that it becomes negative.
With probability $\alpha$, let the excursion
remain positive.  It\^o \& McKean 
\cite[Section~4.2,~Problem~1]{ItoMcKean}
introduce this construction and
call the resulting process {\em skew
Brownian motion}.  Skew Brownian motion
was further developed by Walsh \cite{Walsh}
and Harrison \& Shepp \cite{HarrisonShepp},
and has since found application in models
of diffusion through semi-permeable membranes
and limits of queueing systems.

Using its scale function and speed measure,
Harrison \& Shepp \cite{HarrisonShepp}
provide a different construction
of skew Brownian motion as
$X_\alpha=r_{\alpha}\big(B\circ \Theta_{\alpha}^{-1}\big)$,
where
$$
r_{\alpha}(x)=
\left\{\begin{array}{ll}
x/(1-\alpha)&\mbox{if }x\geq 0,\\
x/\alpha&\mbox{if }x<0,
\end{array}\right.
$$
and 
$\Theta_{\alpha}(t)=\frac{1}{(1-\alpha)^2}P^B_+(t)
+\frac{1}{\alpha^2} P^B_-(t)$.
Setting $\alpha=\sigma_-/(\sigma_++\sigma_-)$,
we have
$\Theta_{\alpha}=(\sigma_++\sigma_-)^2\Theta$
and hence
$Z(\theta)=r_{\alpha}^{-1}
(X_\alpha((\sigma_++\sigma_-)^2\theta))$.
This may be rewritten as
\be\label{3.4h}
Z(\theta)=\frac{1}{\sigma_++\sigma_-}
\varphi\big(X_{\alpha}\big((\sigma_++\sigma_-
\big)^2\theta)\big),\quad \theta\geq 0,
\ee
where $\varphi:\R\rightarrow\R$ is defined by
\be\label{f}
\varphi(x):=\left\{\begin{array}{ll}
\sigma_+x&\mbox{if }x\geq 0,\\
\sigma_-x&\mbox{if }x<0.
\end{array}\right.
\ee

Using the Poisson point process excursion
representation of Brownian motion, we
see in Theorem \ref{T3.11} below that
two-speed Brownian motion 
$Z$ has the same law as the function
$\varphi$ applied to skew Brownian motion
constructed by excursion flipping as
in \cite{ItoMcKean}.
When $\sigma_++\sigma_-=1$,
we also have from (\ref{3.4h})
that $Z=\varphi(X_{\alpha})$.
Inverting these two formulas for $Z$,
we have an alternative proof that $X_{\alpha}$
has the law of the skew Brownian motion
in \cite{ItoMcKean}.
}
\end{remark}

\begin{proposition}\label{TS}
Let $Z$ be a two-speed Brownian motion as in
Definition \ref{TS.D.1}.  Then
\be\label{TS.E.2}
Z=B\circ\big(\sigma_+^2P^Z_++\sigma_-^2P^Z_-),
\ee
where
$P^Z_{\pm}(\theta)
=\int_0^\theta\ind_{\{\pm Z(\tau)>0\}}d\tau$.
Moreover,
\be\label{TS.E.3}
P^Z_++P^Z_-=\id.
\ee
Conversely, suppose $B$ is a standard
Brownian motion and $Z$ is a process satisfying
(\ref{TS.E.2}) and (\ref{TS.E.3}).  Then
$Z$ is a two-speed Brownian motion.
\end{proposition}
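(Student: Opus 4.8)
The plan is to identify, in each direction, the explicit path-by-path time change linking $B$ and $Z$ and to check that it has the form required by Definition \ref{TS.D.1}. All identities below hold almost surely, and I will use three elementary facts: (a) a standard Brownian motion spends zero Lebesgue time at $0$, so $P^B_++P^B_-=\id$; (b) any function $\alpha P^{\cdot}_++\beta P^{\cdot}_-$ built with constants $\alpha,\beta>0$ from the occupation times $P^{\cdot}_\pm$ of a process spending zero time at $0$ is a bi-Lipschitz increasing bijection of $[0,\infty)$ onto itself with bi-Lipschitz inverse, since the $P^{\cdot}_\pm$ are $1$-Lipschitz and $P^{\cdot}_++P^{\cdot}_-=\id$; (c) for such a function $\phi$ and nonnegative measurable $g$, the change-of-variables identity $\int_0^{\phi(b)}g\,ds=\int_0^b g(\phi(\tau))\phi'(\tau)\,d\tau$, together with the Lebesgue-differentiation facts $(P^B_\pm)'=\ind_{\{\pm B>0\}}$ a.e.\ and $(\sigma_+^2P^Z_++\sigma_-^2P^Z_-)'=\sigma_+^2\ind_{\{Z>0\}}+\sigma_-^2\ind_{\{Z<0\}}$ a.e.

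For the direct statement, put $A:=\Theta^{-1}$, which by (a) and (b) is a bi-Lipschitz increasing bijection of $[0,\infty)$, so $Z=B\circ A$ and $B\circ A\circ\Theta=B$. In the integral defining $P^Z_+(\theta)=\int_0^\theta\ind_{\{B(A(\tau))>0\}}\,d\tau$ I would substitute $\tau=\Theta(s)$; since $\ind_{\{B>0\}}\Theta'=\sigma_+^{-2}\ind_{\{B>0\}}$ a.e.\ (the cross term drops), this gives $P^Z_+(\theta)=\int_0^{A(\theta)}\ind_{\{B(s)>0\}}\Theta'(s)\,ds=\sigma_+^{-2}P^B_+(A(\theta))$, and symmetrically $P^Z_-(\theta)=\sigma_-^{-2}P^B_-(A(\theta))$. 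Summing these yields $P^Z_+(\theta)+P^Z_-(\theta)=\Theta(A(\theta))=\theta$, which is (\ref{TS.E.3}); multiplying by $\sigma_\pm^2$ before summing, and using (a) on the finite interval $[0,A(\theta)]$, yields $\sigma_+^2P^Z_+(\theta)+\sigma_-^2P^Z_-(\theta)=P^B_+(A(\theta))+P^B_-(A(\theta))=A(\theta)$, so $Z=B\circ A=B\circ(\sigma_+^2P^Z_++\sigma_-^2P^Z_-)$, which is (\ref{TS.E.2}).

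For the converse, set $A:=\sigma_+^2P^Z_++\sigma_-^2P^Z_-$, so $Z=B\circ A$ by (\ref{TS.E.2}); by (\ref{TS.E.3}) and (b), $A$ is a bi-Lipschitz increasing bijection of $[0,\infty)$ with $A'=\sigma_+^2\ind_{\{Z>0\}}+\sigma_-^2\ind_{\{Z<0\}}$ a.e. Put $\Theta:=A^{-1}$, so $B=Z\circ\Theta$. Substituting $s=A(\tau)$ in $P^B_+(t)=\int_0^t\ind_{\{Z(\Theta(s))>0\}}\,ds$ and using $\ind_{\{Z>0\}}A'=\sigma_+^2\ind_{\{Z>0\}}$ a.e.\ gives $P^B_+(t)=\int_0^{\Theta(t)}\ind_{\{Z(\tau)>0\}}A'(\tau)\,d\tau=\sigma_+^2P^Z_+(\Theta(t))$, and symmetrically $P^B_-(t)=\sigma_-^2P^Z_-(\Theta(t))$. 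Hence $\sigma_+^{-2}P^B_+(t)+\sigma_-^{-2}P^B_-(t)=P^Z_+(\Theta(t))+P^Z_-(\Theta(t))=\Theta(t)$, i.e.\ $\Theta=\sigma_+^{-2}P^B_++\sigma_-^{-2}P^B_-$, which is precisely the function appearing in (\ref{TS.E.3a}); since $B$ is a standard Brownian motion and $Z=B\circ A=B\circ\Theta^{-1}$, $Z$ is a two-speed Brownian motion by Definition \ref{TS.D.1}.

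Once the time changes are under control the argument is routine bookkeeping; the one place that needs care is obtaining that control — checking that $A$ and $\Theta$ are genuinely bi-Lipschitz (this is what licenses passing to inverses, applying the change-of-variables formula, and reading off $(P^B_\pm)'$, $A'$, $\Theta'$ as indicator combinations almost everywhere), and noting that under the substitution $\tau=\Theta(s)$ or $s=A(\tau)$ one may harmlessly discard the Lebesgue-null sets $\{B=0\}$, $\{Z=0\}$ and the exceptional sets of the a.e.\ derivative identities, since Lipschitz maps send Lebesgue-null sets to Lebesgue-null sets. No probabilistic input enters beyond fact (a).
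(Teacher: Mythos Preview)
Your proof is correct and follows essentially the same approach as the paper: both arguments identify the inverse time change $A=\Theta^{-1}$ (the paper calls it $T$), verify that $A=\sigma_+^2P^Z_++\sigma_-^2P^Z_-$, and then reverse the roles for the converse. The only cosmetic difference is that the paper computes $T'(\theta)=1/\Theta'(T(\theta))$ directly and integrates, whereas you perform the equivalent change of variables inside the occupation-time integrals; your derivation of (\ref{TS.E.3}) by summing $P^Z_+ + P^Z_-=\Theta\circ A=\id$ is a minor variant of the paper's argument that $\{Z=0\}=\Theta(\{B=0\})$ is Lebesgue-null because $\Theta$ is Lipschitz.
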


\begin{proof}
The paths of $\Theta$ 
of (\ref{TS.E.3a}) are strictly increasing,
Lipschitz continuous, and
$$
\Theta'(t)=\frac{1}{\sigma_+^2}\ind_{\{B(t)>0\}}
+\frac{1}{\sigma_-^2}\ind_{\{B(t)<0\}},\quad
t\geq 0.
$$
We denote by $T$ the inverse of $\Theta$.
Like $\Theta$, $T$ is strictly
increasing, Lipschitz continuous, and
\be\label{dAinv}
T'(\theta)
=
\frac{1}{\Theta'(T(\theta))}
=
\sigma_+^2\ind_{\{B\circ T(\theta)>0\}}
+\sigma_-^2\ind_{\{B\circ T(\theta)<0\}}
=
\sigma_+^2\ind_{\{Z(\theta)>0\}}
+\sigma_-^2\ind_{\{Z(\theta)<0\}}.
\ee
Integrating this equation, we obtain
\be\label{TS.E.4}
T=\sigma_+^2P^Z_++\sigma_-^2P^Z_-.
\ee
We rewrite (\ref{TS.E.1}) as
$Z=B\circ T$ and use (\ref{TS.E.4})
to convert this to (\ref{TS.E.2}).  Finally,
$$
\{\theta\geq 0:Z(\theta)=0\}
=\Theta\big(\{t\geq 0:B(t)=0\}\big)
$$
has Lebesgue measure zero because
$\{t\geq 0:B(t)=0\}$ has Lebesgue measure
zero and $\Theta$ is Lipschitz.
Equation (\ref{TS.E.3}) follows.

For the converse, we define $T$ by
(\ref{TS.E.4}), which is strictly
increasing by (\ref{TS.E.3}).  
Let $\Theta$ denote the inverse of $T$.  According to
(\ref{TS.E.2}), $Z\circ \Theta=B$.
Therefore,
$$
\frac{d\Theta(t)}{dt}=\frac{1}{T'(\Theta(t))}
=\frac{1}{\sigma_+^2}\ind_{\{Z\circ \Theta(t)>0\}}
+\frac{1}{\sigma_-^2}\ind_{\{Z\circ \Theta(t)<0\}}
=\frac{1}{\sigma_+^2}\ind_{\{B(t)>0\}}
+\frac{1}{\sigma_-^2}\ind_{\{B(t)<0\}}.
$$
Integration yields (\ref{TS.E.3a}), implying
$Z\circ\left(\frac{1}{\sigma_+^2}P^B_+
+\frac{1}{\sigma_-^2}P^B_-\right)=B$,
and (\ref{TS.E.1}) follows.  
\end{proof}

\subsection{Pathwise mappings}\label{Pp}

To develop the pathwise properties of two-speed
Brownian motion, we need to investigate
some maps on continuous paths.
Let $C_0[0,\infty)$ be the set of continuous
functions from $[0,\infty)$ to $\R$
with zero initial condition.  We equip $C_0[0,\infty)$
with the metric
$$
d(x,y)=\sum_{n=1}^{\infty}\frac{1}{2^n}
\left(1\wedge\sup_{0\leq \theta\leq n}
\big|x(\theta)-y(\theta)\big|\right),\,\,
x,y\in C_0[0,\infty).
$$
Convergence under this metric is uniform
on compact sets.
Let $\sB$ be the Borel $\sigma$-algebra
generated by this topology, and let $\sB\otimes\sB$
be the product $\sigma$-algebra on
$C_0[0,\infty)\times C_0[0,\infty)$.
We define the $\sB\otimes\sB$-measurable set
\be\label{2.22}
\sD=\big\{(z_+,z_-)\in C_0[0,\infty)\times C_0[0,\infty):
\liminf_{\theta\rightarrow \infty}z_+(\theta)
=\liminf_{\theta\rightarrow \infty}z_-(\theta)
=-\infty\big\}.
\ee

We recall the {\em Skorohod map} $\Gamma$
from $D[0,\infty)$ to itself defined by
\be\label{2.22b}
\Gamma(z)(\theta):=0\vee\max_{0\leq\nu\leq\theta}
\big({-z}(\nu)\big),\,\,\theta\geq 0,
\ee
for $z\in D[0,\infty)$.
For $z\in D[0,\infty)$, $\Gamma(z)$
is the unique nondecreasing function
starting at $0\vee(-z(0))$ for which $z+\Gamma(z)$
is nonnegative and $\Gamma(z)$ is constant on
intervals where $z+\Gamma(z)$ is strictly
positive.
We define $\Phi_{\pm}$ on $\sD$ by
\begin{align}
\Phi_+(z_+,z_-)(\theta)
&:=
\max\big\{\nu\in[0,\theta]:
\Gamma(z_+)(\nu)
=\Gamma(z_-)(\theta-\nu)\big\},\label{2.24}\\
\Phi_-(z_+,z_-)(\theta)
&:=
\min\big\{\nu\in[0,\theta]:\Gamma(z_-)(\nu)
=\Gamma(z_+)(\theta-\nu)\big\},\label{2.25}
\end{align}
for $\theta\geq 0$.
We show in the following Lemma \ref{TS.L.3} that
$\Phi_{\pm}$ maps $\sD$ into $C_0[0,\infty)$.
Finally, we define $\Psi:\sD\rightarrow C_0[0,\infty)$
by
\be\label{2.24b}
\Psi(z_+,z_-):=z_+\circ\Phi_+(z_+,z_-)
-z_-\circ\Phi_-(z_+,z_-).
\ee
The measurability of these functions
is proved in Yu \cite{Yu}, Appendix A.

To simplify notation, for the remainder
of this section we fix $(z_+,z_-)\in\sD$ and
denote
\be\label{2.26}
\lambda_{\pm}=\Gamma(z_{\pm}),\quad
p_{\pm}=\Phi_{\pm}(z_+,z_-),\quad
z=z_+\circ p_+-z_-\circ p_-.
\ee
Because $(z_+,z_-)\in\sD$, we have immediately that
\be\label{2.23}
\lim_{\theta\rightarrow\infty}\lambda_+(\theta)
=\lim_{\theta\rightarrow\infty}\lambda_-(\theta)
=\infty.
\ee
We rewrite (\ref{2.24}) and (\ref{2.25}) as
\begin{align}
p_+(\theta)
&=
\max\big\{\nu\in[0,\theta]:
\lambda_+(\nu)=\lambda_-(\theta-\nu)\big\},
\label{2.26a}\\
p_-(\theta)
&=
\min\big\{\nu\in[0,\theta]:
\lambda_-(\nu)=\lambda_+(\theta-\nu)\big\}.\nonumber
\end{align}

\begin{lemma}\label{TS.L.3}
The functions $p_{\pm}$ 
of (\ref{2.26}) are continuous, nondecreasing,
and satisfy
\begin{align}
p_{\pm}(0)
&=
0,\label{2.27}\\
\lambda_+\circ p_+
&=
\lambda_-\circ p_-,\label{2.28}\\
p_++p_-
&=
\id.\label{2.29}
\end{align}
\end{lemma}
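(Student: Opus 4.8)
The plan is to exploit the monotonicity built into the Skorohod map (\ref{2.22b}). With $\lambda_{\pm}=\Gamma(z_{\pm})$ as in (\ref{2.26}), these functions are continuous and nondecreasing with $\lambda_{\pm}(0)=0\vee(-z_{\pm}(0))=0$ because $z_{\pm}\in C_0[0,\infty)$. Consequently the functions $g_{\theta}(\nu):=\lambda_+(\nu)-\lambda_-(\theta-\nu)$ and $h_{\theta}(\nu):=\lambda_-(\nu)-\lambda_+(\theta-\nu)$, defined for $\nu\in[0,\theta]$, are continuous and \emph{nondecreasing} in $\nu$. First I would record that $g_{\theta}(0)=-\lambda_-(\theta)\le 0$ and $g_{\theta}(\theta)=\lambda_+(\theta)\ge 0$, so by continuity and monotonicity the zero set $S_{\theta}:=\{\nu\in[0,\theta]:g_{\theta}(\nu)=0\}$ is a nonempty compact subinterval of $[0,\theta]$, and likewise $T_{\theta}:=\{\nu\in[0,\theta]:h_{\theta}(\nu)=0\}$. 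In the notation (\ref{2.26a}) this says $p_+(\theta)=\max S_{\theta}$ and $p_-(\theta)=\min T_{\theta}$ are well defined; taking $\theta=0$ forces $S_0=T_0=\{0\}$, which is (\ref{2.27}).

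Next I would derive (\ref{2.29}) and (\ref{2.28}) by a reflection. The map $\nu\mapsto\theta-\nu$ is a bijection of $[0,\theta]$ onto itself carrying $T_{\theta}$ onto $S_{\theta}$, since the condition $h_{\theta}(\nu)=0$ is the same as $g_{\theta}(\theta-\nu)=0$. Hence $T_{\theta}=\theta-S_{\theta}$, so $p_-(\theta)=\min T_{\theta}=\theta-\max S_{\theta}=\theta-p_+(\theta)$, which is (\ref{2.29}). Since $p_+(\theta)\in S_{\theta}$, we get $\lambda_+\big(p_+(\theta)\big)=\lambda_-\big(\theta-p_+(\theta)\big)=\lambda_-\big(p_-(\theta)\big)$, which is (\ref{2.28}).

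For monotonicity I would compare the zero sets at two times $\theta_1\le\theta_2$. Because $\lambda_-$ is nondecreasing, $g_{\theta_2}(\nu)\le g_{\theta_1}(\nu)$ for every $\nu\in[0,\theta_1]$; combined with the monotonicity of $g_{\theta}$ in $\nu$ this identifies $p_+(\theta)=\sup\{\nu\in[0,\theta]:g_{\theta}(\nu)\le 0\}$ and shows $\{\nu\le\theta_1:g_{\theta_1}(\nu)\le 0\}\subseteq\{\nu\le\theta_2:g_{\theta_2}(\nu)\le 0\}$, whence $p_+(\theta_1)\le p_+(\theta_2)$. The same inequality with $h$ in place of $g$ gives $h_{\theta_2}(\nu)\le h_{\theta_1}(\nu)$ for $\nu\le\theta_1$, so any $\nu<p_-(\theta_1)$ satisfies $h_{\theta_2}(\nu)\le h_{\theta_1}(\nu)<0$ and thus lies strictly below $p_-(\theta_2)=\inf\{\nu\in[0,\theta_2]:h_{\theta_2}(\nu)\ge 0\}$; hence $p_-$ is nondecreasing as well. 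Continuity is then free: $p_+$ and $p_-$ are nondecreasing and $p_++p_-=\id$ by (\ref{2.29}), so at any point the nonnegative jumps of $p_+$ and $p_-$ must add up to the jump of $\id$, namely $0$, forcing both to vanish, and right-continuity at $0$ follows the same way from $p_{\pm}(0)=0$ and $p_{\pm}\ge 0$.

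There is no real analytic obstacle beyond the intermediate value theorem; the only point requiring care is that $S_{\theta}$ and $T_{\theta}$ are genuine intervals, which is exactly why I would foreground the monotonicity of $g_{\theta}$ and $h_{\theta}$ in $\nu$ before invoking $\max$ and $\min$ — without it the comparison of the extremal selections at $\theta_1$ and $\theta_2$ would not go through cleanly. I would also note in passing that (\ref{2.23}) is not actually used in this lemma; only the continuity, nonnegativity, and normalization $\lambda_{\pm}(0)=0$ of the $\lambda_{\pm}$ are needed.
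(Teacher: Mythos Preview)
Your proof is correct and follows essentially the same approach as the paper: exploit the monotonicity and continuity of $\lambda_{\pm}$, use the reflection $\nu\mapsto\theta-\nu$ to get (\ref{2.29}), and deduce (\ref{2.28}) from the defining equation. Your continuity argument---that two nondecreasing functions summing to $\id$ cannot jump---is a clean shortcut compared with the paper's separate right- and left-continuity verifications, and your explicit observation that $g_\theta$ and $h_\theta$ are monotone in $\nu$ (so the zero sets are intervals) makes the monotonicity of $p_{\pm}$ more transparent than the paper's case analysis.
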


\begin{proof}
Equation (\ref{2.27}) follows immediately
from the fact that $\lambda_{\pm}(0)=0$.

Because $\lambda_{\pm}$ is nondecreasing,
continuous, and $\lambda_{\pm}(0)=0$,
for every $\theta\geq 0$ there exists
$\nu_1\in[0,\theta]$ such that
$\lambda_+(\nu_1)=\lambda_-(\theta-\nu_1)$
and there exists $\nu_2\in[0,\theta]$ (in fact, we
can take $\nu_2=\theta-\nu_1$) such that
$\lambda_-(\nu_2)=\lambda_+(\theta-\nu_2)$.
Therefore $p_{\pm}$ takes values
in $[0,\theta]$. It is apparent that
the maximum $\nu_1$ for which
$\lambda_+(\nu_1)=\lambda_-(\theta-\nu_1)$
corresponds to the minimum $\nu_2=\theta-\nu_1$
for which $\lambda_-(\nu_2)=\lambda_+(\theta-\nu_2)$.
Therefore, (\ref{2.29}) holds.

By construction, 
$\lambda_+(p_+(t))=\lambda_-(t-p_+(t))$,
and (\ref{2.29}) gives (\ref{2.28}).

To see that $p_+$ is nondecreasing, let
$0\leq \theta_1<\theta_2$ be given.
By construction we have $\lambda_+(p_+(\theta_1))
=\lambda_-(\theta_1-p_+(\theta_1))$.
If, in addition,
$\lambda_+(p_+(\theta_1))
=\lambda_-(\theta_2-p_+(\theta_1))$,
then because $p_+(\theta_2)$ is the maximum of all
numbers satisfying
 $\lambda_+(\nu)=\lambda_-(\theta_2-\nu)$,
we have $p_+(\theta_2)\geq p_+(\theta_1)$.
On the other hand, if 
$\lambda_+(p_+(\theta_1))
<\lambda_-(\theta_2-p_+(\theta_1))$,
then $\lambda_+(p_+(\theta_2))
=\lambda_-(\theta_2-p_+(\theta_2))$
implies $p_+(\theta_2)>p_-(\theta_1)$.

We use a similar argument to show that
$p_-$ is nondecreasing.  Again let
$0\leq\theta_1<\theta_2$ be given.
Then $\lambda_-(p_-(\theta_2))
=\lambda_+(\theta_2-p_-(\theta_2))$.
If $\lambda_-(p_-(\theta_2))=
\lambda_+(\theta_1-p_-(\theta_2))$,
then $p_-(\theta_2)\geq p_-(\theta_1)$.
On the other hand, if $\lambda_-(p_-(\theta_2))
>\lambda_+(\theta_1-p_-(\theta_2))$,
then $p_-(\theta_2)>p_-(\theta_1)$.

We now establish right continuity.
Suppose $\theta_n\downarrow\theta$.  Then
$\lambda_+(p_+(\theta_n))
=\lambda_-(\theta_n-p_+(\theta_n))$.
Letting $n\rightarrow\infty$, we obtain
$\lambda_+(\lim_{n\rightarrow\infty}p_+(\theta_n))
=\lambda_-(\theta-\lim_{n\rightarrow\infty}p_+(\theta_n))$.
The definition of $p_+$ implies
$p_+(\theta)\geq\lim_{n\rightarrow\infty}p_+(\theta_n)$.
The reverse inequality holds because
$p_+$ is nondecreasing, and hence
$p_+$ is right continuous.
The right continuity of $p_-$ follows
from (\ref{2.29}). 

Finally, we prove left continuity.
Suppose $\theta_n\uparrow\theta$.  Then
$\lambda_-(p_-(\theta_n))
=\lambda_+(\theta_n-p_-(\theta_n))$.
Letting $n\rightarrow\infty$, we obtain
$\lambda_-(\lim_{n\rightarrow\infty}p_-(\theta_n))
=\lambda_+(\theta-\lim_{n\rightarrow\infty}
p_-(\theta_n))$.  This implies
$p_-(\theta)\leq\lim_{n\rightarrow\infty}p_-(\theta_n)$. 
The non-decrease of $p_-$ implies the reverse
inequality.  The left-continuity of $p_+$
follows from (\ref{2.29}).
\end{proof}

\begin{lemma}\label{TS.L.5}
The functions of (\ref{2.26}) satisfy
\begin{align}
&
|z|
=
z_+\circ p_++z_-\circ p_-
+\Gamma(z_+\circ p_++z_-\circ p_-),\label{2.32}\\
&
\Gamma(z_+\circ p_++z_-\circ p_-)
=
\lambda_+\circ p_++\lambda_-\circ p_-
=2\lambda_{\pm}\circ p_{\pm},\label{2.33}\\
&
\int_0^\theta\ind_{\{\pm z(\nu)>0\}}d\nu
\leq
p_{\pm}(\theta)
\leq
\int_0^{\theta}\ind_{\{\pm z(\nu)\geq 0\}}d\nu,
\,\,\theta\geq 0,\label{2.34}\\
&
(z_++\lambda_+)\circ p_+
\cdot 
(z_-+\lambda_-)\circ p_-
=0.\label{2.35}
\end{align}
\end{lemma}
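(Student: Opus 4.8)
The plan is to establish the four identities \eqref{2.32}--\eqref{2.35} in order, exploiting the defining relations \eqref{2.27}--\eqref{2.29} from Lemma \ref{TS.L.3} together with the basic properties of the Skorohod map $\Gamma$ recalled after \eqref{2.22b}. I will keep all processes fixed as in \eqref{2.26} throughout, writing $z = z_+\circ p_+ - z_-\circ p_-$, $\lambda_\pm = \Gamma(z_\pm)$ and $p_\pm = \Phi_\pm(z_+,z_-)$.

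First I would prove \eqref{2.33}. The key observation is that $\lambda_+\circ p_+ = \lambda_-\circ p_-$ by \eqref{2.28}, so the two displayed sums in \eqref{2.33} coincide; it remains to identify $\lambda_+\circ p_+ + \lambda_-\circ p_-$ with $\Gamma(z_+\circ p_+ + z_-\circ p_-)$. For this I would use the characterization of the Skorohod map as the unique nondecreasing function starting at $0\vee(-(\text{initial value}))$ that makes $z + \Gamma(z)$ nonnegative and is flat where $z+\Gamma(z)>0$. Since $p_\pm(0)=0$ by \eqref{2.27} and $z_\pm(0)=0$, the candidate $\lambda_+\circ p_+ + \lambda_-\circ p_-$ starts at $0$. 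Monotonicity follows because $p_\pm$ are nondecreasing (Lemma \ref{TS.L.3}) and $\lambda_\pm$ are nondecreasing. To check the flatness and nonnegativity conditions, I would argue on the increasing sets of $\lambda_\pm$: at a point $\theta$ where $\lambda_+\circ p_+$ is increasing, $z_+\circ p_+$ is at its running minimum, and similarly for the minus side; combined with the fact that at any $\theta$ at least one of $p_+,p_-$ is "active" (this is where \eqref{2.29} and the maximality/minimality in \eqref{2.24}, \eqref{2.25} enter), one sees that $z_+\circ p_+ + z_-\circ p_- + \lambda_+\circ p_+ + \lambda_-\circ p_- \geq 0$ with equality precisely on the support of $d(\lambda_+\circ p_+ + \lambda_-\circ p_-)$. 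Then \eqref{2.32} is immediate: $z + \Gamma(z_+\circ p_+ + z_-\circ p_-) = z_+\circ p_+ - z_-\circ p_- + \lambda_+\circ p_+ + \lambda_-\circ p_-$, and since $|z| = z$ or $-z$ according to the sign of $z$, one checks both cases using \eqref{2.33} and $\lambda_+\circ p_+ = \lambda_-\circ p_-$.

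Next I would prove \eqref{2.35}: on the range of $p_+$, the increments of $p_+$ are supported where $z_+\circ p_+ + \lambda_+\circ p_+$ hits zero (by the flatness property of $\Gamma(z_+)$ pulled back through $p_+$), so $(z_++\lambda_+)\circ p_+$ vanishes on a set whose complement carries no $dp_+$ mass; symmetrically for the minus side. The product is zero because on any interval at least one factor is identically zero — here again the decomposition $[0,\infty)$ into regions where $p_+$ is flat versus $p_-$ is flat, forced by \eqref{2.29}, is what makes the argument work. Finally, \eqref{2.34} follows by identifying $\{z>0\}$ with the set where $z_+\circ p_+ > 0$ (hence $\Gamma(z_+)\circ p_+$ is flat, so $dp_+ = d\theta$ locally) and $\{z<0\}$ with the set where $z_-\circ p_- > 0$ (so $dp_- = d\theta$ locally, i.e. $dp_+ = 0$); integrating these local statements and using $p_++p_-=\id$ gives the two-sided bound, with the possible discrepancy on $\{z=0\}$ absorbed into the inequalities.

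The main obstacle I expect is the careful bookkeeping in \eqref{2.33} and \eqref{2.35}: one must show that at Lebesgue-a.e.\ $\theta$ exactly one of the two "clocks" $p_+,p_-$ is advancing, and that when $p_+$ advances the minus process is in its flat regime and conversely. This requires a clean argument that $\Gamma(z_\pm)$ is flat on the open set where $z_+\circ p_+ + z_-\circ p_- + \lambda_+\circ p_+ + \lambda_-\circ p_- > 0$, interlocking the two Skorohod problems through the time changes $p_\pm$. Once that interlocking lemma is in hand, all four identities drop out by the uniqueness characterization of $\Gamma$ and elementary sign analysis; the rest is routine.
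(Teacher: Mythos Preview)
Your plan correctly identifies the crux --- the ``interlocking lemma'' that at each time one of the two clocks is inactive --- but the sketch does not actually prove it, and the earlier steps lean on it before it is available. Concretely: your derivation of \eqref{2.32} from \eqref{2.33} alone does not work. Writing $z=(z_++\lambda_+)\circ p_+-(z_-+\lambda_-)\circ p_-$ as a difference of two nonnegative functions, you have $|z|$ equal to their sum (which is what \eqref{2.32} asserts, once \eqref{2.33} is known) \emph{only if} their product vanishes --- i.e.\ only if \eqref{2.35} already holds. So \eqref{2.32} cannot come before \eqref{2.35}. Likewise, in your argument for \eqref{2.34} the step ``$\Gamma(z_+)\circ p_+$ is flat, so $dp_+=d\theta$ locally'' is a non-sequitur: flatness of $\lambda_+\circ p_+$ says nothing about the slope of $p_+$. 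And your proposed proof of \eqref{2.35} via a decomposition of $[0,\infty)$ into regions where $p_+$ is flat versus $p_-$ is flat begs the question: $p_++p_-=\id$ does not by itself rule out both clocks advancing simultaneously with fractional speed.

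The paper proceeds in the opposite order and supplies exactly the missing piece. It proves \eqref{2.35} first, by a direct pointwise argument that exploits the \emph{maximum} in the definition of $p_+$: assuming $(z_++\lambda_+)\circ p_+(\theta)>0$, one locates the surrounding interval $(a,b)$ on which this holds, observes that $\lambda_+$ is constant on $[p_+(a),p_+(b))$, and then uses the maximality in $p_+(\theta)=\max\{\nu:\lambda_+(\nu)=\lambda_-(\theta-\nu)\}$ to show $p_+(s)=p_+(a)+s-a$ for $s\in[a,b)$. This linearity of $p_+$ (slope exactly $1$) then forces $\lambda_-$ to be strictly increasing at $p_-(\theta)$, hence $(z_-+\lambda_-)\circ p_-(\theta)=0$. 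With \eqref{2.35} in hand, \eqref{2.32} and \eqref{2.33} follow immediately from the Skorohod characterization (your argument for \eqref{2.33} is fine), and the same linearity result is reused to get the lower bound in \eqref{2.34}. So the key idea you are missing is this explicit use of the extremal definition of $p_+$ to pin down its slope on excursion intervals.
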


\begin{proof}
We first prove (\ref{2.35}).
Assume for some $\theta$ that
$(z_++\lambda_+)\circ p_+(\theta)>0$.
We must show that under this assumption,
\be\label{2.36}
(z_-+\lambda_-)\circ p_-(\theta)=0.
\ee
Define
$$
a=\max\big\{\nu\in[0,\theta]:
(z_++\lambda_+)\circ p_+(\nu)=0\big\},\,\,
b=\inf\big\{\nu\in[\theta,\infty):
(z_++\lambda_+)\circ p_+(\nu)=0\big\}.
$$
We have $a\in[0,\theta)$ and
$(z_++\lambda_+)\circ p_+(a)=0$.
We have $b\in(\theta,\infty]$,
and because $p_+$ is nondecreasing,
$p_+(b)$ is defined in $[0,\infty]$.
On the interval or half-line $(a,b)$,
$(z_++\lambda_+)\circ p_+$
is strictly positive.  According
to the properties of the Skorohod map,
$\lambda_+\circ p_+$ is, consequently, constant
and equal to $\lambda_+(p_+(a))$
on $(a,b)$.  Note that $p_+(a)<p_+(\theta)
\leq p_+(b)$.

We show that
$p_+$ is linear on $[a,b)$, i.e.,
\be\label{2.41}
p_+(s)=p_+(a)+s-a,\,\,\forall s\in[a,b).
\ee
For $u\in(p_+(a),p_+(b))$ we have
$\lambda_+(u)=\lambda_+(p_+(a))=\lambda_-(a-p_+(a))$.
We must also have
$\lambda_-(a-p_+(a))>\lambda_-(a-u)$,
or else $u$ would satisfy the equation
$\lambda_+(u)=\lambda_-(a-u)$, a contradiction
to the definition of $p_+(a)$.  We conclude that
\be\label{2.37}
\lambda_+\big(p_+(a)\big)
=\lambda_-\big(a-p_+(a)\big)
>\lambda_-(a-u),\,\forall u\in\big(p_+(a),p_+(b)\big).
\ee

Now consider $s\in[a,b)$.  Because
$p_+(a)\leq p_+(s)\leq p_+(b)$ and
$\lambda_+$ is constant on the interval
or half-line $[p_+(a),p_+(b))$, we have
\begin{align}
p_+(s)
&=
\max\big\{\nu\in[0,s]:
\lambda_+(\nu)=\lambda_-(s-\nu)\big\}\nonumber\\
&=
\sup\big\{\nu\in[p_+(a),s\wedge p_+(b)]:
\lambda_+(\nu)=\lambda_-(s-\nu)\big\}\nonumber\\
&=
\sup\big\{\nu\in[p_+(a),s\wedge p_+(b)]:
\lambda_+(p_+(a))=\lambda_-(s-\nu)\big\}.
\label{2.38} 
\end{align}
Relation (\ref{2.37}) shows that if $\nu$
were not constrained from above, the supremum
in (\ref{2.38}) would be attained when
$s-\nu=a-p_+(a)$, i.e., at
$\nu=p_+(a)+s-a$.  However, because of
the constraint, the supremum is attained
instead at $\nu=(p_+(a)+s-a)\wedge p_+(b)$, i.e.,
\be\label{2.39}
p_+(s)=\big(p_+(a)+s-a)\wedge p_+(b),\,\,
\forall s\in[a,b).
\ee

We wish to remove the term $\wedge p_+(b)$
in (\ref{2.39}), thereby obtaining
(\ref{2.41}).  If $p_+(b)=\infty$,
this is trivial.  If $p_+(b)<\infty$
and $b=\infty$, when we choose a sequence
$b_n$ approaching $\infty$, and we
have $\lim_{n\rightarrow\infty}p_+(b_n)=p_+(b)<\infty$.
But (\ref{2.26a})
implies $\lambda_+(p_+(b_n))=\lambda_-(b_n-p_+(b_n))$, 
and the left-hand side converges to
$\lambda_+(p_+(b))<\infty$, whereas
the second equation in (\ref{2.23}) implies
the right-hand side converges to $\infty$.
Because of this contradiction, we conclude
that if $p_+(b)<\infty$, then also $b<\infty$.
We continue under the assumption
that $b<\infty$ and $p_+(b)<\infty$.
To show that 
\be\label{2.40}
p_+(b)\geq p_+(a)+b-a,
\ee
and consequently the term
$\wedge p_+(b)$ in (\ref{2.39}) may be removed,
we assume this is not the case and choose
$s\in(p_+(b)-p_+(a)+a,b)$, a nonempty subset of $(a,b)$.  
From (\ref{2.39}) we have
$p_+(s)=p_+(b)$, hence
$(z_++\lambda_+)\circ p_+(s)
=(z_++\lambda_+)\circ p_+(b)=0$.
This contradicts the definition of $b$,
and (\ref{2.40}), hence (\ref{2.41}), are proved.

For $\ve\in(0,\theta-a)$, we have from
(\ref{2.41}) that 
\be\label{2.42}
p_+(\theta-\ve)=p_+(\theta)-\ve,
\ee
and from the definition of $p_+$ that
$\lambda_+(p_+(\theta))=\lambda_-(\theta-p_+(\theta))$.
We must also have
\be\label{2.43}
\lambda_+(p_+(\theta))>
\lambda_-(\theta-\ve-p_+(\theta)),
\ee
or else $u=p_+(\theta)$ would satisfy
the equation $\lambda_+(u)=\lambda_-(\theta-\ve-u)$,
implying $p_+(\theta-\ve)\geq p_+(\theta)$,
a contradiction to (\ref{2.42}).
We use (\ref{2.28}) and (\ref{2.29}) to
rewrite (\ref{2.43}) as
$\lambda_-(p_-(\theta))>\lambda_-(p_-(\theta)-\ve)$
and conclude that $\lambda_-$
is not constant on any open interval
containing $p_-(\theta)$.  
It follows from the Skorohod map
property that $\lambda_-$
is constant on intervals where
$z_-+\lambda_-$ is positive that $z_-+\lambda_-$
cannot be positive at $p_-(\theta)$, i.e.,
(\ref{2.36}) holds.

We now turn our attention to (\ref{2.32}).
Because of (\ref{2.28}) and the third
equation in (\ref{2.26}), we can write
$z$ as the difference of the nonnegative
functions $(z_++\lambda_+)\circ p_+$
and $(z_-+\lambda_-)\circ p_-$.
Not both $(z_++\lambda_+)\circ p_+$
and $(z_-+\lambda_-)\circ p_-$
can be positive, and this implies
\be\label{2.44}
|z|=(z_++\lambda_+)\circ p_+
+(z_-+\lambda_-)\circ p_-
=z_+\circ p_++z_-\circ p_-
+2\lambda_{\pm}\circ p_{\pm}.
\ee
On intervals where $z$ is strictly positive,
$(z_++\lambda_+)\circ p_+$ is strictly
positive, and a Skorohod map property
implies $\lambda_+\circ p_+$ is constant.
Similarly, $\lambda_-\circ p_-$
is constant on intervals where $z$
is strictly negative.  Therefore,
$2\lambda_{\pm}\circ p_{\pm}$
is a nondecreasing
process starting at zero that is constant
on intervals where $|z|$ is positive
and for which the right-hand side of 
(\ref{2.44}) is nonnegative.
This implies (\ref{2.33}) and also (\ref{2.32}).

It remains to prove (\ref{2.34}).
Being open, the set
$$
\{\theta>0:z(\theta)>0\big\}=
\big\{\theta>0:(z_++\lambda_+)\circ p_+(\theta)>0
\big\}=\bigcup_{i\in I}(a_i,b_i)
$$
is, as indicated, the union of disjoint open
intervals, where the index set $I$ is either
finite or countably infinite and one of these
intervals may be an open half-line.  
Relation (\ref{2.41}) implies
$$
p_+(\theta)=p_+(a_i)+\int_{a_i}^{\theta}
\ind_{\{z(s)>0\}}ds,\,\,\forall \theta\in[a_i,b_i),
\,\,i\in I.
$$
Because $p_+$ is nondecreasing,
$$
p_+(\theta)\geq
\sum_{i\in I}\int_{a_i\wedge\theta}^{b_i\wedge\theta}
\ind_{\{z(s)>0\}}ds
=\int_0^\theta\ind_{\{z(s)>0\}}\,ds,\,\,\theta\geq 0.
$$
A symmetric argument shows that
$p_-(\theta)\geq \int_0^{\theta}\ind_{\{z(s)<0\}}ds$,
$\theta \geq 0$.
Therefore,
$$
p_{\pm}(\theta)=\theta-p_{\mp}(\theta)
\leq\int_0^{\theta}\big(1-\ind_{\{\mp z(s)>0\}}\big)ds
=\int_0^{\theta}\ind_{\{\pm z(s)\geq 0\}}ds,
\,\,\theta\geq 0,
$$
and (\ref{2.34}) is established.
\end{proof}

\subsection{Decomposition}\label{Disintegration}

We apply the mappings $\Phi_{\pm}$
and $\Psi$ of (\ref{2.24})--(\ref{2.24b})
to decompose two-speed Brownian motion
into independent standard Brownian motions.
Let $Z$ be a two-speed Brownian motion
as in Definition \ref{TS.D.1} and let
$\Theta$ be given by (\ref{TS.E.3a}). 
For $\theta\geq 0$, $T(\theta):=\Theta^{-1}(\theta)$
is a bounded stopping time for $\{\sF(t)\}_{t\geq 0}$,
the filtration generated by $B$.
By the Optional Sampling Theorem,
$Z=B\circ T$ is a martingale relative to the
filtration $\{\sF(T(\theta))\}_{\theta\geq 0}$.

We define
\begin{align}
M_{\pm}(\theta)
&:=
\pm\int_0^{\theta}\ind_{\{\pm Z(\tau)>0\}}dZ(\tau),
\label{2.45}\\
P^Z_{\pm}(\theta)
&:=
\int_0^{\theta}\ind_{\pm\{Z(\tau)>0\}}\,d\tau,
\label{2.46}\\
(P^Z_{\pm})^{-1}(t)
&:=
\min\big\{\theta\geq 0: P^Z_{\pm}(\theta)>t\big\},
\nonumber\\
Z_{\pm}
&:=
M_{\pm}\circ (P^Z_{\pm})^{-1},\label{2.48}\\
L^Z_{\pm}
&=
\Gamma(Z_{\pm}).\label{2.49}
\end{align}
Because $Z$ spends Lebesgue-measure zero time at the origin
(see (\ref{TS.E.3})),
$$
Z=M_+-M_-=Z_+\circ P^Z_+-Z_-\circ P^Z_-.
$$

\begin{lemma}\label{TS.L.6}
The processes $Z_+$ and $Z_-$ are independent
Brownian motions (relative to their own
filtrations) with variances $\sigma_+^2$
and $\sigma_-^2$, i.e., there exist two
independent standard Brownian motions $B_+$
and $B_-$ such that
$Z_{\pm}(\theta)=B_{\pm}( \sigma_{\pm}^2\theta)$,
$\theta\geq 0$.
\end{lemma}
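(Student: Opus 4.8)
The plan is to recognize the two processes $Z_+$ and $Z_-$ as time-changes of a single continuous martingale $Z=B\circ T$ over complementary (disjoint) portions of the time line, and then apply the Dambis--Dubins--Schwarz theorem together with a standard independence criterion for martingales driven by orthogonal sets. First I would record the basic facts already assembled in the excerpt: $Z=B\circ T$ is a continuous martingale relative to $\{\sF(T(\theta))\}_{\theta\ge 0}$ by optional sampling, and its quadratic variation is $\langle Z,Z\rangle(\theta)=T(\theta)=\sigma_+^2 P^Z_+(\theta)+\sigma_-^2 P^Z_-(\theta)$ by (\ref{TS.E.4}) and (\ref{dAinv}); moreover $Z$ spends zero Lebesgue time at the origin by (\ref{TS.E.3}). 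From (\ref{2.45}) the processes $M_{\pm}$ are continuous martingales with $\langle M_+,M_-\rangle=0$ (their integrands $\ind_{\{Z>0\}}$ and $\ind_{\{Z<0\}}$ have disjoint support), and $\langle M_{\pm},M_{\pm}\rangle(\theta)=\int_0^\theta \ind_{\{\pm Z(\tau)>0\}}\,d\langle Z,Z\rangle(\tau)=\sigma_{\pm}^2 P^Z_{\pm}(\theta)$, again by (\ref{dAinv}) and the fact that $\ind_{\{\pm Z>0\}}$ annihilates the portion of $dT$ coming from the other sign.

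Next I would pass to the processes $Z_{\pm}=M_{\pm}\circ (P^Z_{\pm})^{-1}$ of (\ref{2.48}). Since $(P^Z_{\pm})^{-1}$ is the right-continuous inverse of the additive functional $P^Z_{\pm}$, a direct computation gives $\langle Z_{\pm},Z_{\pm}\rangle(\theta)=\langle M_{\pm},M_{\pm}\rangle\big((P^Z_{\pm})^{-1}(\theta)\big)=\sigma_{\pm}^2 P^Z_{\pm}\big((P^Z_{\pm})^{-1}(\theta)\big)=\sigma_{\pm}^2\theta$, using that $P^Z_{\pm}$ increases exactly on the set where $M_{\pm}$ is active and is flat elsewhere (so composing with its inverse exactly ``undoes'' the time change). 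Hence each $Z_{\pm}$ is a continuous martingale (relative to the time-changed filtration $\{\sF(T((P^Z_{\pm})^{-1}(\theta)))\}_{\theta\ge0}$) with deterministic quadratic variation $\sigma_{\pm}^2\theta$, so by the L\'evy characterization $B_{\pm}(t):=\tfrac{1}{\sigma_{\pm}}Z_{\pm}(t/\sigma_{\pm}^2)$ is a standard Brownian motion; equivalently $Z_{\pm}(\theta)=B_{\pm}(\sigma_{\pm}^2\theta)$.

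For independence I would invoke the classical result (see e.g.\ Ikeda--Watanabe, or Knight's theorem on orthogonal continuous martingales) that if $M_+$ and $M_-$ are continuous local martingales with respect to a common filtration satisfying $\langle M_+,M_-\rangle\equiv 0$, then the Dambis--Dubins--Schwarz Brownian motions obtained from them by their respective quadratic-variation time changes are independent. This applies here because $M_+$ and $M_-$ are adapted to the single filtration $\{\sF(T(\theta))\}$, are continuous, and are orthogonal; the time-changes producing $Z_{\pm}$ from $M_{\pm}$ are precisely the inverses of $\langle M_{\pm},M_{\pm}\rangle/\sigma_{\pm}^2$ up to the constant $\sigma_{\pm}^2$, so the resulting $B_+,B_-$ are independent standard Brownian motions. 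The main obstacle is the careful bookkeeping around the time changes: one must be sure that the filtrations are set up so that Knight's theorem genuinely applies (in particular that $(P^Z_{\pm})^{-1}(\theta)$ are stopping times for $\{\sF(T(\cdot))\}$, which follows because $P^Z_{\pm}$ is adapted and continuous), and that the flatness of $P^Z_{\pm}$ off the support of $M_{\pm}$ together with $\mathrm{Leb}\{Z=0\}=0$ is used correctly so that no quadratic variation ``leaks'' between $Z_+$ and $Z_-$; once that is in place the identification of variances and the independence are immediate from the cited theorems.
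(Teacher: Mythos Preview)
Your proposal is correct and follows essentially the same route as the paper: compute $\langle M_{\pm}\rangle=\sigma_{\pm}^2 P^Z_{\pm}$ and $\langle M_+,M_-\rangle=0$, then apply Knight's theorem (the paper cites \cite[Theorem~3.4.13]{KaratzasShreve}) to the DDS time-changes. One small slip: your formula $B_{\pm}(t)=\tfrac{1}{\sigma_{\pm}}Z_{\pm}(t/\sigma_{\pm}^2)$ double-corrects the variance and is inconsistent with $Z_{\pm}(\theta)=B_{\pm}(\sigma_{\pm}^2\theta)$; the paper (and the correct relation) uses $B_{\pm}(t)=Z_{\pm}(t/\sigma_{\pm}^2)=M_{\pm}\circ\langle M_{\pm}\rangle^{-1}(t)$.
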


\begin{proof}
We observe from (\ref{TS.E.1}) that
$\langle Z\rangle=\Theta^{-1}=T$, and 
(\ref{2.45}) and (\ref{dAinv})
imply
$$
\langle M_{\pm}\rangle(\theta)
=\int_0^{\theta}\ind_{\{\pm Z(\tau)>0\}}
d\langle Z\rangle(\tau)
=\int_0^{\theta}\ind_{\{\pm Z(\tau)>0\}}
\frac{dT(\tau)}{d\tau}\,d\tau
=\sigma_{\pm}^2P^Z_{\pm}(\theta),\,\,\theta\geq 0.
$$
Therefore,
$$
\langle M_{\pm}\rangle^{-1}(t)
:=
\inf\big\{\theta\geq 0: \langle M_{\pm}
\rangle(\theta)>t\big\}
=
\inf\big\{\theta\geq 0: P^Z_{\pm}(\theta)>t/\sigma_{\pm}^2
\big\}
=
(P^Z_{\pm})^{-1}(t/\sigma_{\pm}^2),
$$
which approaches $\infty$ as $t\rightarrow\infty$,
because $P^Z_{\pm}$ has bounded growth.
We define 
$$
B_{\pm}(t):=Z_{\pm}(t/\sigma_{\pm}^2)
=M_{\pm}\circ (P^Z_{\pm})^{-1}(t/\sigma_{\pm}^2)
=M_{\pm}\circ \langle M_{\pm}\rangle^{-1}(t),\,\,t\geq 0,
$$
We also observe from (\ref{2.45})
that $\langle M_+,M_-\rangle=0$.
Knight's Theorem
\cite[Theorem~3.4.13]{KaratzasShreve} 
implies that
$B_+$ and $B_-$ are independent Brownian motions.
\end{proof}

\subsection{Reconstruction}
\label{Reconstruction}

For the next lemma, it is not necessary
for $Z_{\pm}$ to be defined by (\ref{2.48}),
only that there are independent Brownian 
motions $Z_{\pm}$ related to a process
$Z$ by the formula
\be\label{2.38x}
Z=Z_+\circ P_+^Z-Z_-\circ P_-^Z.
\ee

\begin{lemma}\label{TS.T.7}
Let $Z_+$ and $Z_-$ be independent
Brownian motions with variances
$\sigma_+^2$ and $\sigma_-^2$, related to a process
$Z$ via (\ref{2.38x}), where
$P^Z_{\pm}$
and $L^Z_{\pm}$ are defined by 
(\ref{2.46}) and (\ref{2.49}).  Then
\begin{align}
L^Z_+\circ P^Z_+
&=
L^Z_-\circ P^Z_-,\label{2.54}\\
P^Z_{\pm}
&=
\Phi_{\pm}(Z_+,Z_-),\label{2.55}\\
Z
&=
\Psi(Z_+,Z_-),\label{2.56}
\end{align}
where $\Phi_{\pm}$ and $\Psi$ are defined
by (\ref{2.24})--(\ref{2.24b}).
\end{lemma}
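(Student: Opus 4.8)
The plan is to reduce the statement to the pathwise identities already recorded in Lemmas \ref{TS.L.3} and \ref{TS.L.5} and then to combine them with one genuinely probabilistic input. First I would note that almost surely $(Z_+,Z_-)\in\sD$, since each $Z_\pm$, being Brownian, has $\liminf_{\theta\to\infty}Z_\pm(\theta)=-\infty$; thus $\lambda_\pm:=\Gamma(Z_\pm)=L^Z_\pm$, $p_\pm:=\Phi_\pm(Z_+,Z_-)$ and $\zeta:=\Psi(Z_+,Z_-)=Z_+\circ p_+-Z_-\circ p_-$ are defined and inherit, for this particular pair $(Z_+,Z_-)$, every conclusion of Lemmas \ref{TS.L.3} and \ref{TS.L.5}. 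In particular $p_+ +p_-=\id$ and $\lambda_+\circ p_+=\lambda_-\circ p_-$ by (\ref{2.28}); the occupation-time sandwich (\ref{2.34}) gives $\int_0^\theta\ind_{\{\pm\zeta>0\}}\le p_\pm(\theta)\le\int_0^\theta\ind_{\{\pm\zeta\ge0\}}$; and the complementarity (\ref{2.35}) together with (\ref{2.28}) exhibits $\zeta$ as the difference of the nonnegative functions $(Z_++\lambda_+)\circ p_+$ and $(Z_-+\lambda_-)\circ p_-$, which vanish at disjoint times, so $\{\zeta>0\}=\{(Z_++\lambda_+)\circ p_+>0\}$ and likewise for the other sign.

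The one nontrivial probabilistic fact needed is that any process obeying (\ref{2.38x}) spends zero Lebesgue time at the origin; this is where I expect the real work to be. Granting it for $\zeta$, the sandwich collapses to $p_\pm=\int_0^\cdot\ind_{\{\pm\zeta>0\}}=P^\zeta_\pm$, so $\zeta$ itself is a solution of (\ref{2.38x}): $\zeta=Z_+\circ P^\zeta_+-Z_-\circ P^\zeta_-$ with occupation times $P^\zeta_\pm=\Phi_\pm(Z_+,Z_-)$. It then remains to see that the given $Z$ must coincide with $\zeta$, i.e.\ that (\ref{2.38x}) determines $Z$ from $(Z_+,Z_-)$. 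This I would obtain from a Skorohod-type uniqueness argument: if $(Z,q_+,q_-)$ and $(\zeta,p_+,p_-)$ are two solutions (each with $q_\pm$, resp.\ $p_\pm$, the occupation-time functionals, and each spending zero time at $0$ so $q_+ +q_-=p_+ +p_-=\id$), then comparing $q_+$ and $p_+$ from the first time $\tau$ at which they differ, and using that $\dot q_+=\ind_{\{(Z_++\lambda_+)(q_+)>0\}}$, $\dot q_-=\ind_{\{(Z_-+\lambda_-)(q_-)>0\}}$ with $\dot q_+ +\dot q_-=1$, together with $\lambda_+(q_+(\tau))=\lambda_-(q_-(\tau))$, forces $q_+\equiv p_+$ beyond $\tau$ as well. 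With $Z=\zeta$ in hand, (\ref{2.56}) is immediate, (\ref{2.55}) reads $P^Z_\pm=P^\zeta_\pm=\Phi_\pm(Z_+,Z_-)$, and (\ref{2.54}) is $\lambda_+\circ p_+=\lambda_-\circ p_-$ from (\ref{2.28}).

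For the zero-time-at-the-origin claim I would argue from a reflected decomposition. For $\zeta$ this is Lemma \ref{TS.L.5} itself: $|\zeta|=(Z_++\lambda_+)\circ p_++(Z_-+\lambda_-)\circ p_-$ with the two summands complementary. On an interval where $\zeta\equiv0$ both summands vanish, so $Z_+\circ p_+\equiv-\lambda_+\circ p_+$ is nonincreasing and $Z_-\circ p_-\equiv-\lambda_-\circ p_-$ is nonincreasing; hence $Z_+$ is monotone on the range of $p_+$ and $Z_-$ on the range of $p_-$, and since Brownian paths are a.s.\ monotone on no nondegenerate interval, both ranges are points, $p_\pm$ are constant there, and $p_+ +p_-=\id$ forces the interval to be degenerate. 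The remaining, Lebesgue-null but possibly uncountable, part of $\{\zeta=0\}$ is to be ruled out by noting that there $(Z_++\lambda_+)\circ p_+=0$, so $p_+$ maps $\{\zeta=0\}$ into the Lebesgue-null zero set of $Z_++\Gamma(Z_+)$, while $p_+$ is Lipschitz with constant $1$ (by (\ref{2.34})) and grows at unit rate precisely off $\{\zeta=0\}$; combined with $p_+ +p_-=\id$, this should force $\int_0^\theta\ind_{\{\zeta=0\}}\,ds=0$. The main obstacle is exactly this last passage, from ``the zero set of reflected Brownian motion is small'' to ``$\zeta$ charges the origin only on a null set''; making it fully rigorous may be cleaner via the excursion representation of two-speed Brownian motion (Proposition \ref{TS.C.10}), once the decomposition above identifies $\zeta$ as assembled from the Brownian excursions of $Z_+$ and $Z_-$.
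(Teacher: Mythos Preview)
Your plan and the paper's proof diverge at the outset. The paper does \emph{not} build the candidate $\zeta=\Psi(Z_+,Z_-)$ and then argue uniqueness of solutions to (\ref{2.38x}); instead it works directly with the given $Z$. First it proves (\ref{2.54}) for $Z$ by a pathwise argument: introducing $U_\pm(\theta)=\max\{\nu\le\theta:-Z_\pm(\nu)=L^Z_\pm(\theta)\}$ and $\Delta_\pm(t)=\max\{u\le t:P^Z_\pm(u)=U_\pm\circ P^Z_\pm(t)\}$, a short case analysis based on (\ref{2.38x}) yields $L^Z_+\circ P^Z_+\le L^Z_-\circ P^Z_-$ and, by symmetry, equality. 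From (\ref{2.54}) and $P^Z_++P^Z_-=\id$ one sees that $P^Z_+(t)$ is a point in $\{\nu:\lambda_+(\nu)=\lambda_-(t-\nu)\}$, hence $P^Z_+(t)\le Q_+(t):=\Phi_+(Z_+,Z_-)(t)$. The paper then closes the gap probabilistically: if $P^Z_+(t)<Q_+(t)$ then $L^Z_+$ and $L^Z_-$ are simultaneously flat at the same level $L^Z_+(P^Z_+(t))$, equivalently $(L^Z_+)^{-1}$ and $(L^Z_-)^{-1}$ jump at the same point; since $Z_+,Z_-$ are independent and these inverse local times have non-atomic jump distributions, this has probability zero. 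That is the crucial probabilistic input, and it is absent from your sketch.

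Your uniqueness route has two concrete problems. First, the assertion that ``any process obeying (\ref{2.38x}) spends zero Lebesgue time at the origin'' is false: $Z\equiv 0$ satisfies (\ref{2.38x}) trivially, since then $P^Z_\pm\equiv 0$ and $Z_+(0)-Z_-(0)=0$. So (\ref{2.38x}) alone does not determine $Z$, and the zero-time property for the \emph{given} $Z$ must be taken as an additional hypothesis (the paper invokes (\ref{TS.E.3}) at exactly this step). Second, even granting $q_++q_-=\id$, your Skorohod-type comparison writes $\dot q_+=\ind_{\{(Z_++\lambda_+)(q_+)>0\}}$ and uses $\lambda_+(q_+(\tau))=\lambda_-(q_-(\tau))$; but for times beyond $\tau$ you only know $\dot q_+=\ind_{\{Z>0\}}$ with $Z=Z_+\circ q_+-Z_-\circ q_-$, and rewriting $\{Z>0\}$ as $\{(Z_++\lambda_+)(q_+)>0\}$ needs both $\lambda_+\circ q_+=\lambda_-\circ q_-$ (which is (\ref{2.54}) for $Z$, not yet available in your order of argument) and the complementarity (\ref{2.35}) for $(Z,q_+,q_-)$, which Lemma~\ref{TS.L.5} only supplies for $(\zeta,p_+,p_-)$. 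Pathwise, a branching at a zero of $Z$ is possible precisely when both $(Z_++\lambda_+)$ and $(Z_-+\lambda_-)$ start excursions at $q_+(\tau)$ and $q_-(\tau)$, i.e., when $L^Z_+$ and $L^Z_-$ share a flat spot; ruling this out is exactly the independence argument the paper uses. In short, your uniqueness step needs, and currently omits, the same ``no simultaneous flat spots'' input that the paper's direct identification of $P^Z_\pm$ with $\Phi_\pm$ relies on.
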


\begin{proof}
We first verify (\ref{2.54}).
Define
$$
U_{\pm}(\theta):=
\max\big\{\nu\in[0,\theta]:
-Z_{\pm}(\nu)=L_{\pm}^Z(\theta)\big\},\,\,\theta \geq 0,
$$
and note from the definition (\ref{2.22b})
of the Skorohod map that this maximum
is attained.  Then 
\be\label{2.57}
-Z_{\pm}\circ U_{\pm}=L_{\pm}^Z.
\ee
Define also
$\Delta_{\pm}(t)
:=\max\big\{u\in[0,t]:
P^Z_{\pm}(u)=U_{\pm}\circ P^Z_{\pm}(t)\big\}$,
$t\geq 0$,
so that 
$P^Z_{\pm}\circ\Delta_{\pm}=U_{\pm}\circ P^Z_{\pm}$,
and hence, in light of (\ref{2.57}),
\be\label{2.58}
Z_{\pm}\circ P^Z_{\pm}\circ\Delta_{\pm}
=Z_{\pm}\circ U_{\pm}\circ P^Z_{\pm}
=-L^Z_{\pm}\circ P^Z_{\pm}.
\ee
Let $t\geq 0$ be given.  At time $\Delta_+(t)$,
either $Z$ is on a negative excursion or
else $Z(\Delta_+(t))\geq 0$.  In the latter case,
we use (\ref{2.38x}) and (\ref{2.58}) to write
\begin{align*}
0
&\leq 
Z\big(\Delta_+(t)\big)\\
&=
Z_+\circ P^Z_+\circ\Delta_+(t)
-Z_-\circ P^Z_-\circ\Delta_+(t)\\
&=
-L^Z_+\circ P^Z_+(t)-(Z_-+L^Z_-)\circ P^Z_-\circ
\Delta_+(t)+L^Z_-\circ P^Z_-\circ \Delta_+(t)\\
&\leq
-L^Z_+\circ P^Z_+(t)+L^Z_-\circ P^Z_-\circ\Delta_+(t)\\
&\leq
-L^Z_+\circ P^Z_+(t)+L^Z_-\circ P^Z_-(t).
\end{align*}
In the event that $Z$ is on a negative excursion
at time $\Delta_+(t)$ that began at time
$\ell(t)<\Delta_+(t)$, we have
$P^Z_+(\ell(t))=P^Z_+(\Delta_+(t))$, and
we use (\ref{2.38x}) and (\ref{2.58}) to write
\begin{align*}
0
&=
Z(\ell(t))\\
&=
Z_+\circ P^Z_+\big(\ell(t)\big)
-Z_-\circ P^Z_-\big(\ell(t)\big)\\
&=
Z_+\circ P^Z_+\circ\Delta_+(t)
-Z_-\circ P^Z_-\big(\ell(t)\big)\\
&=
-L^Z_+\circ P^Z_+(t)
-(Z_-+L^Z_-)\circ P^Z_-\big(\ell(t)\big)
+L^Z_-\circ P^Z_-\big(\ell(t)\big)\\
&\leq
-L^Z_+\circ P^Z_+(t)+L^Z_-\circ P^Z_-\big(\ell(t)\big)\\
&\leq
-L^Z_+\circ P^Z_+(t)+L^Z_-\circ P^Z_-(t).
\end{align*}
We conclude that
$L^Z_+\circ P^Z_+\leq L^Z_-\circ P^Z_-$.
Reversing the roles of $Z_+$ and $Z_-$
in this argument, we obtain the opposite
inequality.  Equation (\ref{2.54}) is established.

We next prove (\ref{2.55}).  Define
$Q_{\pm}=\Phi_{\pm}(Z_+,Z_-)$, so that
by (\ref{2.24}), (\ref{2.25}), and (\ref{2.49}),
$$
L^Z_+\big(Q_+(t)\big)=L^Z_-\big(t-Q_+(t)\big),\quad
L^Z_-\big(Q_-(t)\big)=L^Z_+\big(t-Q_-(t)\big),
\quad t\geq 0.
$$
Because of (\ref{TS.E.3}), (\ref{2.54}) implies
$$
L^Z_+\big(P^Z_+(t)\big)=L^Z_-\big(t-P^Z_+(t)\big),\quad
L^Z_-\big(P^Z_-(t)\big)=L_+\big(t-P^Z_-(t)\big),\quad
t\geq 0.
$$
The definition of $\Phi_+$ implies
that $P^Z_{+}(t)\leq Q_{+}(t)$.
Because $L^Z_+(u)$ is increasing
in $u$ and $L^Z_-(t-u)$
is decreasing in $u$,
we have for $u\in[P^Z_+(t),Q_+(t)]$ that
\begin{align*}
L^Z_+\big(P_+(t)\big)
&\leq 
L^Z_+(u)
\leq 
L^Z_+\big(Q_+(t)\big)
=
L^Z_-\big(t-Q_+(t)\big)\\
&\leq 
L^Z_-(t-u)
\leq 
L^Z_-\big(t-P^Z_+(t)\big)
=
L^Z_+\big(P^Z_+(t)\big),
\end{align*}
and hence
$L_+^Z(u)$ and $L^Z_-(t-u)$ are equal
to $L^Z_+(P_+(t))$
for $u\in[P^Z_+(t),Q_+(t)]$.
We recall from (\ref{2.22b})
that $L^Z_{\pm}$ is the maximum-to-date
of the 
Brownian motion $-Z_{\pm}$.
If $P^Z_+(t)<Q_+(t)$,
then both $L^Z_+$ and $L^Z_-$ have 
``flat spots'' at level $L^Z_+(P_+(t))$.
These ``flat spots'' correspond to jumps
in 
$(L^Z_+)^{-1}(\theta)
:=\min\big\{u\geq 0:L^Z_+(u)>\theta\big\}$ 
and 
$(L^Z_-)^{-1}(\theta)
:=\min\big\{u\geq 0:L^Z_-(u)>\theta\big\}$
at $\theta:=L^Z_+(P_+(t))$, i.e.,
$(L^Z_{\pm})^{-1}(\theta-)<(L^Z_{\pm})^{-1}(\theta)$.
But  the distribution of jumps
of $(L^Z_+)^{-1}$ and $(L^Z_-)^{-1}$
are non-atomic, these processes are independent
because $Z_+$ and $Z_-$ are independent,
and consequently the probability that they
have simultaneous jumps is zero.  We conclude
that $P^Z_+(t)=Q_+(t)=\Phi_+(Z_+,Z_-)$ almost surely.
This conclusion holds for each fixed $t$, and hence
for countably many $t$, but
both $P^Z_+$ and $Q_+$ are continuous, so
their entire paths must agree almost surely.
The proof that $P^Z_-=Q_-=\Phi_-(Z_+,Z_-)$
is analogous.
Equation (\ref{2.56}) is now just a restatement
of (\ref{2.38x}); see (\ref{2.24b}).
\end{proof}

We conclude this section with a representation
of two-speed Brownian motion.

\begin{theorem}\label{TS.T.8}
Let $Z_+$ and $Z_-$ be independent
Brownian motions with variances
$\sigma_+^2$ and $\sigma_-^2$.  Then
$Z:=\Psi(Z_+,Z_-)$ is a two-speed Brownian
motion.  In particular, $Z$ satisfies
(\ref{2.38x}), where $P^Z_{\pm}$ are
defined by (\ref{2.46}).
\end{theorem}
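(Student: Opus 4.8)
The plan is not to argue directly that the process obtained by interleaving the excursions of $Z_+$ and $Z_-$ is a standard Brownian motion, but instead to transfer the conclusion from a reference two-speed Brownian motion by an equality-in-law argument, using the Decomposition (Lemma \ref{TS.L.6}) and Reconstruction (Lemma \ref{TS.T.7}) results already established. To begin, each of $Z_\pm$ is a constant time change of a standard Brownian motion, so $\liminf_{\theta\to\infty}Z_\pm(\theta)=-\infty$ almost surely, whence $(Z_+,Z_-)\in\sD$ a.s.\ and $Z:=\Psi(Z_+,Z_-)$ is well defined and lies in $C_0[0,\infty)$ by Lemma \ref{TS.L.3}.

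Next I would build the reference process and decompose it. Fix any standard Brownian motion $B_0$ and set $\tilde Z:=B_0\circ\Theta_0^{-1}$ with $\Theta_0:=\sigma_+^{-2}P^{B_0}_++\sigma_-^{-2}P^{B_0}_-$, a two-speed Brownian motion by Definition \ref{TS.D.1}. Applying the construction (\ref{2.45})--(\ref{2.49}) and Lemma \ref{TS.L.6} to $\tilde Z$ produces independent Brownian motions $\tilde Z_+,\tilde Z_-$ of variances $\sigma_+^2,\sigma_-^2$ with $\tilde Z=\tilde Z_+\circ P^{\tilde Z}_+-\tilde Z_-\circ P^{\tilde Z}_-$, i.e.\ (\ref{2.38x}) holds for $\tilde Z$; Lemma \ref{TS.T.7} then gives $\tilde Z=\Psi(\tilde Z_+,\tilde Z_-)$. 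By Proposition \ref{TS}, moreover, $\tilde Z=B_0\circ T_{\tilde Z}$ with $T_{\tilde Z}:=\sigma_+^2P^{\tilde Z}_++\sigma_-^2P^{\tilde Z}_-$, so $B_0=\tilde Z\circ T_{\tilde Z}^{-1}$ is a measurable functional of $\tilde Z$ alone.

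The final step is the transfer. The pairs $(Z_+,Z_-)$ and $(\tilde Z_+,\tilde Z_-)$ have the same law, being pairs of independent Brownian motions with variances $\sigma_+^2$ and $\sigma_-^2$; since $(z_+,z_-)\mapsto\Psi(z_+,z_-)$ is measurable (measurability is proved in \cite{Yu}, Appendix A) and $\tilde Z=\Psi(\tilde Z_+,\tilde Z_-)$, the triples $(Z_+,Z_-,Z)$ and $(\tilde Z_+,\tilde Z_-,\tilde Z)$ have the same law. Equation (\ref{2.38x}) is given by measurable operations on such triples and holds a.s.\ on the reference side, so it holds a.s.\ for $(Z_+,Z_-,Z)$, which is the ``in particular'' assertion. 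Likewise the relation ``$Z$ spends zero Lebesgue time at the origin, and $y=x\circ(\sigma_+^{-2}P^y_++\sigma_-^{-2}P^y_-)^{-1}$ for the standard Brownian motion $x$ recovered from $y$ as above'' is preserved under equality in law and holds a.s.\ for $\tilde Z$; transferring it, $Z=B\circ(\sigma_+^{-2}P^B_++\sigma_-^{-2}P^B_-)^{-1}$ with $B:=Z\circ(\sigma_+^2P^Z_++\sigma_-^2P^Z_-)^{-1}$ a standard Brownian motion, which is exactly the form in Definition \ref{TS.D.1}, proving that $Z$ is a two-speed Brownian motion.

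The point I expect to require care is exactly this last transfer: one cannot conclude that $Z$ is a two-speed Brownian motion merely from knowing its law unless the driving standard Brownian motion is exhibited, and exhibiting it directly — i.e.\ proving pathwise that the excursion-glued process $Z\circ(\sigma_+^2P^Z_++\sigma_-^2P^Z_-)^{-1}$ is Brownian — is precisely the difficulty the detour through $\tilde Z$ is designed to circumvent. The only subsidiary fact that must be in place along the way, namely that $Z$ spends zero Lebesgue time at the origin (so that the time change $\sigma_+^2P^Z_++\sigma_-^2P^Z_-$ is strictly increasing and invertible), is inherited from $\tilde Z$ by the same law-transfer, using Proposition \ref{TS}.
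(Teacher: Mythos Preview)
Your proof is correct and follows essentially the same approach as the paper: both build a reference two-speed Brownian motion, decompose it via Lemma \ref{TS.L.6}, reconstruct it as $\Psi$ of its pieces via Lemma \ref{TS.T.7}, and then use equality in law of the input pairs to conclude that $Z=\Psi(Z_+,Z_-)$ has the law of a two-speed Brownian motion. The only minor difference is in the ``in particular'' part: you obtain (\ref{2.38x}) by transferring it as a measurable relation on the triple $(Z_+,Z_-,Z)$, whereas the paper, having established that $Z$ is two-speed and hence spends zero Lebesgue time at the origin, invokes the pathwise bounds (\ref{2.34}) to pin down $\Phi_\pm(Z_+,Z_-)=P^Z_\pm$ directly; you are also more explicit than the paper in exhibiting the driving standard Brownian motion $B$ so as to match Definition \ref{TS.D.1} literally.
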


\begin{proof}
Let $\W^{\sigma_{\pm}^2}$ denote Wiener
measure on $C[0,\infty)$ for Brownian
motion with variance $\sigma_{\pm}^2$,
and let $\W^{\sigma_+^2}\otimes\W^{\sigma_-^2}$
denote the product measure induced
by $(Z_+,Z_-)$.  The set $\sD$ of (\ref{2.22})
has measure one under
$\W^{\sigma_+^2}\otimes\W^{\sigma_-^2}$,
and hence $\Psi(Z_+,Z_-)$ is defined almost surely.
To see that $Z:=\Psi(Z_+,Z_-)$ is a two-speed Brownian
motion, it suffices to show it induces
the same measure on $C[0,\infty)$ as a
two-speed Brownian motion.  This follows
from Lemmas \ref{TS.L.6} and \ref{TS.T.7},
which show that a two-speed
Brownian motion can be written as $\Psi$
of independent Brownian motions with
variances $\sigma_+^2$ and $\sigma_-^2$,
and hence the distribution of a two-speed
Brownian motion is
$(\W^{\sigma_+^2}\otimes\W^{\sigma_-^2})\circ \Psi^{-1}$.
Thus, $Z$ is a two-speed Brownian motion.

The last part of the proof of Lemma
\ref{TS.T.7} uses the fact that $Z$
is a two-speed Brownian motion to justify
(\ref{TS.E.3}), but does not use
(\ref{2.38x}).  That argument can
be used here to show that
$\Phi_{\pm}(Z_+,Z_-)=P^Z_{\pm}$.
Equation (\ref{2.38x}) now follows from 
the definition (\ref{2.24b}) of $\Psi$.
\end{proof}

\subsection{Excursion representation}\label{Excursions}
To conclude the study of two-speed
Brownian motion, we recall the Brownian
excursion theory
of L\'evy \cite{Levy} and It\^o
\cite{Ito}
as presented in Ikeda \& Watanabe
\cite[pp.~113--129]{IkedaWatanabe}.
Let $\sE_+$ (respectively $\sE_-$) be the set of
continuous functions from $[0,\infty)$ to
$[0,\infty)$ (respectively $(-\infty,0])$
with the property that for every 
$e\in\sE:=\sE_+\cup\sE_-$, we have
$e_0=0$,
$$
\lambda(e):=\min\{t>0:e(t)=0\}
$$
is finite, and $e(t)=0$ for all $t\geq \lambda(t)$.
We call $e\in\sE$ an {\em excursion} and call
$\lambda(e)$ the {\em length of the excursion}.
For $\ell>0$, let $\sE^{\ell}_{\pm}$ denote the set
of positive, respectively negative, excursions
of length $\ell$.
We define $\sB(\sE_{\pm})$ and $\sB(\sE)$
to be the $\sigma$-algebras
on $\sE_{\pm}$ and $\sE$ 
generated by the finite-dimensional
cylinder sets and $\sB(\sE_{\pm}^{\ell})$
to be the trace $\sigma$-algebra on
$\sE_{\pm}^{\ell}$.  We construct finite
measures on $\sE_{\pm}^{\ell}$ as follows.
(We need the following detailed
formulas in order to observe consequences
of scaling in Section \ref{Excursions2}.)
Define
\begin{align}
K_{\pm}(t,x)
&=
\sqrt{\frac{2}{\pi t^3}}
|x|\exp\left(-\frac{x^2}{2t}\right)\!,\,\,
t>0,x\geq 0\mbox{ or }x\leq 0,\label{K}\\
p_0(t,x,y)
&=
\frac{1}{\sqrt{2\pi t}}
\left[\exp\left(-\frac{(x-y)^2}{2t}\right)
-\exp\left(-\frac{(x+y)^2}{2t}\right)\right]\!,
\,\,t>0,x,y\in\R,\label{p}\\
h^{\ell}_{\pm}(s,a;t,b)
&=
\left\{\begin{array}{ll}
\displaystyle\frac{K_{\pm}(\ell-t,b)}{K_{\pm}(\ell-s,a)}
p_0(t-s,a,b),\,\,0<s<t<\ell,a,b>0
\mbox{ or }a,b<0,\\
\sqrt{\displaystyle\frac{\pi}{2}\ell^3}\,
K_{\pm}(t,b)K_{\pm}(\ell-t,b),\,\,
s=0,0<t<\ell,a=0,b>0\mbox{ or }b<0,
\end{array}\right.\label{h}
\end{align}
where the ``second cases'' such as 
``or $x\leq 0$'' correspond to the
$-$ subscript.
Let $\ell>0$ and $0<t_1<t_2<\cdots<t_n<\ell$ be given.
On $(\sE_{\pm}^{\ell},\sB(\sE_{\pm}^{\ell}))$ we define
the probability 
measures $\P_{\pm}^{\ell}$
(see Appendix \ref{AppendixB}) by specifying that
\begin{align}
\lefteqn{\P_{\pm}^{\ell}\big\{e(t_1)\in dx_1,
e(t_2)\in dx_2,\dots,
e(t_n)\in dx_n\big\}}\nonumber\\
&=
h^{\ell}_{\pm}(0,0;t_1,x_1)
h^{\ell}_{\pm}(t_1,x_1;t_2,x_2)\cdots
h^{\ell}_{\pm}(t_{n-1},x_{n-1};
t_n,x_n)dx_1dx_2\cdots dx_n.\label{Pl}
\end{align}
Finally, we define $\sigma$-finite measures
$n^B_{\pm}$ on $(\sE_{\pm},\sB(\sE_{\pm}))$ by
\be\label{nBpm}
n^B_{\pm}(C)=\int_0^{\infty}
\P_{\pm}^{\ell}\big(C\cap\sE^{\ell}_{\pm}\big)
\frac{d\ell}{\sqrt{2\pi \ell^3}},\quad
C\in\sB(\sE_{\pm}).
\ee
These are the characteristic measures for the
positive and negative excursions of Brownian motion.
Associated with each of these characteristic measures
there is a Poisson point process with counting
measure $N_{\pm}^B$, i.e., a Poisson random
measure on $(0,\infty)\times\sE_{\pm}$ with
the property that
$$
\E N_{\pm}^B\big((s,t]\times C\big)
=(t-s)n^B_{\pm}(C),\,\,0\leq s\leq t,\,
C\in\sB(\sE_{\pm}).
$$

We have constructed separate characteristic measures
and corresponding Poisson random measures
on the spaces of positive
and negative excursions, but
we can combine them to obtain a single characteristic
measure $n^B$ and a corresponding Poisson
random measure $N^B$ on the space of all excursions,
defined 
for $0\leq s\leq t$ and $C\in\sB(\sE)$ by
\begin{align*}
n^B(C)
&=n_+(C\cap\sE_+)+n_-(C\cap\sE_-),\\
N^B((s,t]\times C)
&=
N_+^B\big((s,t]\times (C\cap\sE_+)\big)
+N_-^B((s,t]\times (C\cap\sE_-)\big).
\end{align*}
At this stage, the time axis is in units of
Brownian local time, and we map this local
time into chronological time by defining
$$
A^B(s)=\int_{(0,s]}\int_{\sE}\lambda(e)N^B(du\,de),
\,\,s\geq 0.
$$
This is the sum of the lengths of all excursions
of the Brownian motion constructed below that begin
by the time the local time at zero of the Brownian
motion reaches $s$.  The process $A^B$
is right-continuous and pure jump,
and it is strictly increasing because 
$N^B((s,t]\times\sE)$ is strictly positive
when $0\leq s<t$.  
We invert this process to move from chronological
time to local time, defining
$$
L^B(t)=\inf\{s\geq 0:A^B(s)>t\},\quad t\geq 0.
$$

The paths of the local time $L^B$ are continuous,
nondecreasing, and have flat spots corresponding
to the jumps of $A^B$, which correspond to
excursions.  While we have
$L^B\circ A^B=\id$, because of the flat spots
in $L^B$, $A^B\circ L^B\neq \id$.
Given a time $t$, we have
either that $A^B(L^B(t)-)<A^B( L^B(t))$,
in which case $N^B(\{L^B(t)\}\times \sE)=1$,
i.e, there is an excursion at local time $L^B(t)$
that begins at chronological time
$A^B(L^B(t)-)$,
or else $A^B(L^B(t)-)=A^B(L^B(t))$, in which
case $N^B(\{L^B(t)\}\times \sE)=0$,
i.e, there is no excursion beginning at local time $L^B(t)$
and hence no excursion beginning at chronological time
$A^B(L^B(t)-)=A^B(L^B(t))=t$.
We define
\be\label{ExcBM}
B(t)=\left\{\begin{array}{ll}
e\big(t-A^B(L^B(t)-)\big)&\mbox{if }
A^B\big(L^B(t)-\big)<A^B\big(L^B(t)\big),\\
0&\mbox{if }A^B\big(L^B(t)-\big)=A^B\big(L^B(t)\big),
\end{array}\right.
\ee
where in the first line of this definition
$e$ is the excursion that begins at local
time $L^B(t)$.
Then $B$ is a standard Brownian motion.
Equation (\ref{ExcBM}) can be written more succinctly
as
\begin{align}
B(t)
&=
\int_{(0,L^B(t)]}\int_{\sE}e\big(t-A^B(s-)\big) 
N^B(ds\,de)\nonumber\\
&=
\int_{(0,L^B(t)]}\int_{\sE_+}e\big(t-A^B(s-)\big) 
N_+^B(ds\,de)\nonumber\\
&\qquad
+\int_{(0,L^B(t)]}\int_{\sE_-}e\big(t-A^B(s-)\big) 
N_-^B(ds\,de),\,\,t\geq 0,\label{ExcBM'}
\end{align}
because excursions are zero after they end
and so at most one excursion appears
in the integrals.
The occupation times by $B$ of
the positive and negative half-lines
satisfy
\be\label{2.55a}
A^B_{\pm}(s):=
\int_{(0,s]}\int_{\sE_{\pm}}
\lambda(e)N_{\pm}^B(du\,de)
=P^B_{\pm}\big(A^B(s)\big),\quad s\geq 0.
\ee

We return to the two-speed Brownian
motion of Definition \ref{TS.D.1}.
Recall the strictly increasing,
Lipschitz-continuous time change
$\Theta=\frac{1}{\sigma_+^2}\P_+^B
+\frac{1}{\sigma_-^2}P^B_-$
of (\ref{TS.E.3a}) with inverse $T$ of (\ref{TS.E.4}).
We have the following excursion
representation.

\begin{lemma}\label{TS.P.2}
Let $Z$ be a two-speed Brownian motion
as in Definition \ref{TS.D.1}.  For
$\theta\geq 0$,
\begin{align}
Z(\theta)
&=
\int_{(0,L^Z(\theta)]}
\int_{\sE_+}\!\!e\big(\sigma_+^2(\theta-A^Z(s-)\big)
N_+^B(ds\,de)\nonumber\\
&\qquad
+\int_{(0,L^Z(\theta)]}
\int_{\sE_-}\!\!e\big(\sigma_-^2(\theta-A^Z(s-)\big)
N_-^B(ds\,de),\label{2.17a}
\end{align}
where $L^Z:=L^B\circ T$ and
\be\label{2.18a}
A^Z(s):=
\inf\big\{\theta\geq 0: L^Z(\theta)>s\big\}.
\ee
\end{lemma}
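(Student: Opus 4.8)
The plan is to reduce the statement to the excursion representation (\ref{ExcBM'}) of the standard Brownian motion $B$ underlying $Z$ via the time change. Recall from Definition \ref{TS.D.1} and Proposition \ref{TS} that $Z = B\circ T$, where $T=\Theta^{-1}$ is the strictly increasing, Lipschitz bijection of $[0,\infty)$ given by (\ref{TS.E.4}), with inverse $\Theta$ of (\ref{TS.E.3a}). First I would verify that $L^Z:=L^B\circ T$ and $A^Z$ of (\ref{2.18a}) satisfy $A^Z=\Theta\circ A^B$. Since $L^B$ is continuous, nondecreasing and tends to infinity, one has $\{t:L^B(t)>s\}=(A^B(s),\infty)$, so $L^Z(\theta)=L^B(T(\theta))>s$ iff $T(\theta)>A^B(s)$ iff $\theta>\Theta(A^B(s))$; taking the infimum over such $\theta$ gives $A^Z(s)=\Theta(A^B(s))$, and by continuity of $\Theta$ also $A^Z(s-)=\Theta(A^B(s-))$.

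The heart of the argument is the identification of the arguments of the excursion functions. Fix $\theta\geq 0$ and set $t:=T(\theta)$. I claim that on the support of $N_+^B$ (that is, for local times $s$ at which a positive excursion $e$ occurs) one has
$$e\big(T(\theta)-A^B(s-)\big)=e\big(\sigma_+^2(\theta-A^Z(s-))\big),$$
and symmetrically on the support of $N_-^B$ with $\sigma_+$ replaced by $\sigma_-$. To see this, note that $e$ occupies the interval $[A^B(s-),A^B(s))$, on whose closure $B$ is nonnegative, so $P_-^B$ is constant there while $P_+^B(u)-P_+^B(A^B(s-))=u-A^B(s-)$ for every $u$ in that closure; hence $\Theta(u)-\Theta(A^B(s-))=\tfrac{1}{\sigma_+^2}\big(u-A^B(s-)\big)$ for such $u$. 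There are two cases. If $t$ lies in this excursion interval (which forces $s=L^B(t)=L^Z(\theta)$), then taking $u=t$ gives $\sigma_+^2(\theta-A^Z(s-))=\sigma_+^2\big(\Theta(t)-\Theta(A^B(s-))\big)=t-A^B(s-)$, yielding the claimed equality. If instead $t\geq A^B(s)$, then $T(\theta)-A^B(s-)\geq\lambda(e)$, so the left side is $0$; and $\sigma_+^2(\theta-A^Z(s-))\geq\sigma_+^2\big(\Theta(A^B(s))-\Theta(A^B(s-))\big)=A^B(s)-A^B(s-)=\lambda(e)$, so the right side is $e$ evaluated at a time $\geq\lambda(e)$, hence also $0$.

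With these facts I would substitute $t=T(\theta)$ into (\ref{ExcBM'}): the outer limit of integration $L^B(t)$ becomes $L^Z(\theta)$, and the inner integrands $e(t-A^B(s-))$ over $\sE_+$ and $\sE_-$ become, term by term, $e\big(\sigma_+^2(\theta-A^Z(s-))\big)$ and $e\big(\sigma_-^2(\theta-A^Z(s-))\big)$ respectively, which is exactly (\ref{2.17a}). I expect the main obstacle to be purely organizational: keeping straight the distinction between the (at most one) excursion in progress at chronological time $t$ and the already-completed excursions, and checking that the ``completed'' terms vanish in both representations, which is precisely the case analysis above. Everything else---the time-change identities, the behavior of $P^B_{\pm}$ on an excursion interval, and the measurability needed to make the integrals well defined---is routine and already available from Proposition \ref{TS} and the excursion construction preceding the lemma.
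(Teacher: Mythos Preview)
Your proposal is correct and follows essentially the same route as the paper: substitute $t=T(\theta)$ into the excursion representation (\ref{ExcBM'}) of $B$, establish $A^Z=\Theta\circ A^B$, and use the fact that $\Theta$ is affine with slope $1/\sigma_\pm^2$ on each excursion interval to rewrite $T(\theta)-A^B(s-)$ as $\sigma_\pm^2(\theta-A^Z(s-))$. Your derivation of $A^Z=\Theta\circ A^B$ via $\{t:L^B(t)>s\}=(A^B(s),\infty)$ is in fact a bit more direct than the paper's two-inequality argument, and your explicit case analysis (excursion in progress versus already completed) makes precise what the paper handles by the remark that at most one excursion contributes to the integral.
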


\begin{proof}
Proposition \ref{TS} implies $Z=B\circ T$,
and substitution into (\ref{ExcBM'}) yields
\begin{align}
Z(\theta)
&=
B\big(T(\theta)\big)\nonumber\\
&=
\int_{(0,L^B\circ T(\theta)]}\int_{\sE_+}
e\big(T(\theta)-A^B(s-)\big)
N_+^B(ds\,de)\nonumber\\
&\qquad
+\int_{(0,L^B\circ T(\theta)]}\int_{\sE_-}
e\big(T(\theta)-A^B(s-)\big)
N_-^B(ds\,de).\label{2.20a}
\end{align}
To rewrite this, we first prove that
\be\label{2.21a}
A^B=T\circ A^Z,
\ee
or equivalently,
$\Theta\circ A^B=A^Z$.
Observe that
$L^Z\circ\Theta\circ A^B
=L^B\circ T\circ \Theta\circ A^B
=L^B\circ A^B=\id$,
which implies from (\ref{2.18a}) that
$\Theta\circ A^B \leq A^Z$.  However,
for $s'>s$,
$L^Z\circ\Theta\circ A^B(s')
=s'>s$,
which implies 
$\Theta\circ A^B(s')\geq A^Z(s)$
for every $s'>s$,
and because $A^B$ is right-continuous
and $\Theta$ is continuous,
$\Theta\circ A^B(s)\geq A^Z(s)$.
Equation (\ref{2.21a}) is established.

In (\ref{2.20a}), $T(\theta)$
and $A^B(s-)=T(A^Z(s-))$
are on the same excursion, either positive
or negative, and $T$ grows at constant
rate $\sigma_{\pm}^2$ on positive (negative)
excursions.  Therefore,
$T(\theta)-A^B(s-)
=\sigma_{\pm}^2\big(\theta-A^Z(s-)\big)$,
$\sigma_+^2$ or $\sigma_-^2$ appearing
depending whether the excursion is positive
or negative.
Substitution into (\ref{2.20a}) results
in (\ref{2.17a}).
\end{proof}

Let us define functions
$\psi_{\pm}:\sE_{\pm}\rightarrow\sE_{\pm}$ by
$(\psi_{\pm}e)(\theta)=e(\sigma_{\pm}^2\theta)$,
$\theta\geq 0$.
We define new Poisson random measures
$N_{\pm}^Z$ on $(0,\infty)\times\sE_{\pm}$ by
\be\label{N1.15}
N_{\pm}^Z\big((s,t]\times C\big)
=N_{\pm}^B\big((s,t]\times \psi_{\pm}^{-1}(C)\big),\,\,
0\leq s\leq t,\,\,C\in\sB(\sE_{\pm}),
\ee
and rewrite (\ref{2.17a}) as
$$
Z(\theta)
=\int_{(0,L^Z(\theta)]}\int_{\sE_+}
e\big(\theta-A^Z(s-)\big)N_+^Z(ds\,de)
+\int_{(0,L^Z(\theta)]}\int_{\sE_-}
e\big(\theta-A^Z(s-)\big)N_-^Z(ds\,de).
$$
We combine these two Poisson random measures
to obtain $N^Z$ defined by
\be\label{N1.6}
N^Z\big((s,t]\times C\big)
=N_+^Z\big((s,t]\times(C\cap\sE_+)\big)
+N_-^Z\big((s,t]\times(C\cap\sE_-)\big),\,\,
0\leq s\leq t,\,\,C\in\sB(\sE).
\ee
Because $\lambda(\psi_{\pm}(e))=
\lambda(e)/\sigma_{\pm}^2$, we have
$$
\int_{(0,\theta]}\int_{\sE_{\pm}}
\lambda(e)N_{\pm}^Z(ds\,de)=
\frac{1}{\sigma_{\pm}^2}
\int_{(0,\theta]}\int_{\sE_{\pm}}
\lambda(e)N_{\pm}^B(ds\,de)
=\frac{1}{\sigma_{\pm}^2}
A_{\pm}^B(\theta)
=\frac{1}{\sigma_{\pm}^2}(P_{\pm}^B\circ A^B)(\theta),
$$
where we have used (\ref{2.55a}) in the last step.
Therefore,
$$
\int_{(0,\theta]}\int_{\sE}
\lambda(e)N^Z(ds\,de)
=\left(\frac{1}{\sigma_+^2}P^B_+
+\frac{1}{\sigma_-^2}P^B_-\right)
\circ A^B(\theta)=\Theta\circ A^B(\theta)=A^Z(\theta).
$$
We have proved the following proposition, 
which represents
two-speed Brownian motion completely
analogously to the representation
(\ref{ExcBM'}) for standard Brownian motion.

\begin{proposition}\label{TS.C.10}
The two-speed Brownian motion $Z$ has
the excursion representation
$$
Z(\theta)=\int_{(0,L^Z(\theta)]}\int_{\sE}
e\big(\theta-A^Z(s-)\big)N^Z(ds\,de),
$$
where 
$A^Z(s)=\int_{(0,s]}\lambda(e)N^Z(du\,de)$
and $L^Z(\theta)=\inf\{s\geq 0:A^Z(s)>\theta\}$.
\end{proposition}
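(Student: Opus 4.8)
The plan is to obtain the claim as a rewriting of the excursion representation already established in Lemma~\ref{TS.P.2}, using the change of variables that defines the Poisson random measures $N^Z_{\pm}$. First I would start from Lemma~\ref{TS.P.2}, which gives, for $\theta\geq 0$,
\begin{align*}
Z(\theta)
&=\int_{(0,L^Z(\theta)]}\int_{\sE_+}e\big(\sigma_+^2(\theta-A^Z(s-))\big)N_+^B(ds\,de)\\
&\qquad{}+\int_{(0,L^Z(\theta)]}\int_{\sE_-}e\big(\sigma_-^2(\theta-A^Z(s-))\big)N_-^B(ds\,de),
\end{align*}
where at this point $L^Z=L^B\circ T$ and $A^Z(s)=\inf\{\theta\geq 0:L^Z(\theta)>s\}$.

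Next, recalling $\psi_{\pm}:\sE_{\pm}\to\sE_{\pm}$, $(\psi_{\pm}e)(\theta)=e(\sigma_{\pm}^2\theta)$, and the definition $N^Z_{\pm}((s,t]\times C)=N^B_{\pm}((s,t]\times\psi_{\pm}^{-1}(C))$ of (\ref{N1.15}), I would use the fact that for every bounded measurable $g$ on $\sE_{\pm}$ one has $\int g(e)\,N^Z_{\pm}(du\,de)=\int g(\psi_{\pm}e)\,N^B_{\pm}(du\,de)$. Applying this with $g(e)=e\big(\theta-A^Z(s-)\big)$ turns each of the two $N^B_{\pm}$-integrals above into $\int e\big(\theta-A^Z(s-)\big)N^Z_{\pm}(ds\,de)$, and since $\sE_+$ and $\sE_-$ are disjoint, combining $N^Z_+$ and $N^Z_-$ into $N^Z$ via (\ref{N1.6}) merges the two into the single integral $\int_{(0,L^Z(\theta)]}\int_{\sE}e\big(\theta-A^Z(s-)\big)N^Z(ds\,de)$, since (exactly as in the standard-Brownian formula (\ref{ExcBM'})) at most one excursion falls in the domain of integration because every excursion vanishes after its length.

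The only substantive point left is to verify that the $A^Z$ occurring in this formula — so far the inverse local time $\inf\{\theta\geq 0:L^Z(\theta)>s\}$ — equals the pure-jump additive functional $A^Z(s)=\int_{(0,s]}\lambda(e)N^Z(du\,de)$ named in the statement, and correspondingly that $L^Z(\theta)=\inf\{s\geq 0:A^Z(s)>\theta\}$. For this I would use $\lambda(\psi_{\pm}e)=\lambda(e)/\sigma_{\pm}^2$ together with (\ref{2.55a}) to compute $\int_{(0,\theta]}\int_{\sE_{\pm}}\lambda(e)N^Z_{\pm}(ds\,de)=\sigma_{\pm}^{-2}(P^B_{\pm}\circ A^B)(\theta)$, add the two signs to get $\int_{(0,\theta]}\int_{\sE}\lambda(e)N^Z(ds\,de)=\Theta\circ A^B(\theta)$, and then invoke the identity $A^B=T\circ A^Z$, i.e. $\Theta\circ A^B=A^Z$, proved within the proof of Lemma~\ref{TS.P.2}; inverting this additive functional recovers $L^Z$. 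The main obstacle is thus entirely bookkeeping: keeping the two a priori different definitions of $A^Z$ and $L^Z$ straight and confirming they coincide. No new probabilistic ingredient beyond Lemma~\ref{TS.P.2} and the elementary scaling relations $\lambda(\psi_{\pm}e)=\lambda(e)/\sigma_{\pm}^2$ is required.
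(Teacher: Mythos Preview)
Your proposal is correct and follows essentially the same route as the paper: the paper likewise starts from Lemma~\ref{TS.P.2}, pushes forward under $\psi_{\pm}$ to pass from $N^B_{\pm}$ to $N^Z_{\pm}$, combines these into $N^Z$ via (\ref{N1.6}), and then uses $\lambda(\psi_{\pm}e)=\lambda(e)/\sigma_{\pm}^2$ together with (\ref{2.55a}) and the identity $\Theta\circ A^B=A^Z$ from the proof of Lemma~\ref{TS.P.2} to reconcile the two definitions of $A^Z$. Your identification of the only nontrivial bookkeeping point---matching the inverse-local-time $A^Z$ with the additive functional $\int_{(0,s]}\lambda(e)N^Z(du\,de)$---is exactly what the paper does in the paragraph preceding the proposition.
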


\subsection{Skew Brownian motion}\label{Excursions2}
To relate the two-variance Brownian
motion $Z$ of Definition \ref{TS.D.1}
and Proposition \ref{TS.C.10} to the flipped excursion
definition of skew Brownian motion
given by It\^o \& McKean \cite{ItoMcKean}
and discussed in Remark \ref{TS.R.1},
we proceed through an analysis of
characteristic measures.
The characteristic measures for the 
positive and negative excursions
of two-speed Brownian motion are given,
with $0\leq s<t$, by the equation
$$
n^Z_{\pm}(C)=
\frac{1}{t-s}\E N^Z_{\pm}\big((s,t]\times C\big)=
\frac{1}{t-s}\E N^B_{\pm}\big((s,t]
\times \psi_{\pm}^{-1}(C)\big)=
n_{\pm}^B\big(\psi_{\pm}^{-1}(C)\big),
\quad C\in\sB(\sE_\pm).
$$
Let us consider a set in $\sB(\sE_{\pm})$ of the form
\be\label{N1.21}
C=\big\{f\in\sE_{\pm}:
\big(f(t_1),\dots,f(t_n)\big)\in
A_1\times\dots\times A_n,\lambda(f)>t_n\big\},
\ee
where $0<t_1<\dots<t_n$ and
$A_1,\dots,A_n$ are sets in $\sB[0,\infty)$,
respectively, $\sB(-\infty,0]$.
Then
$$
\psi_{\pm}^{-1}(C)
=\big\{e\in\sE_{\pm}:
\big(e(\sigma_{\pm}^2t_1),\dots,e(\sigma_{\pm}^2t_n)\big)
\in A_1\times\dots\times A_n,\lambda(e)>
\sigma_{\pm}^2t_n\big\}.
$$
From (\ref{K})--(\ref{h}), we have for
$\sigma>0$, $0<s<t<\ell$,
and $a,b>0$ or $a,b<0$, that
\be\label{N1.22}
h^{\ell}_{\pm}(0,0;\sigma^2t,b)
=\frac{1}{\sigma}h^{\ell/\sigma^2}_{\pm}
\left(0,0;t,\frac{b}{\sigma}\right),\quad
h^{\ell}_{\pm}(\sigma^2s,a;\sigma^2t,b)
=\frac{1}{\sigma}
h^{\ell/\sigma^2}_{\pm}\left(s,\frac{a}{\sigma};
t,\frac{b}{\sigma}\right).
\ee
With $C$ given by (\ref{N1.21}), we now use (\ref{Pl}),
(\ref{nBpm}), and (\ref{N1.22}) to compute
\begin{align}
n_{\pm}^Z(C)
&=
n_{\pm}^B(\psi_{\pm}^{-1}C)\nonumber\\
&=
\int_{\sigma_{\pm}^2t_n}^{\infty}
\P^{\ell}_{\pm}\big\{e\in\sE_{\pm}:
\big(e(\sigma_{\pm}^2t_1),\dots,
e(\sigma_{\pm}^2t_n))\in A_1\times\dots A_n\big\}
\frac{d\ell}{\sqrt{2\pi\ell^3}}\nonumber\\
&=
\int_{\sigma_{\pm}^2t_n}^{\infty}
\int_{A_n}\dots\int_{A_2}\int_{A_1}
h_{\pm}^{\ell}(0,0;\sigma_\pm^2t_1,x_1)
h_{\pm}^{\ell}(\sigma_{\pm}^2t_1,x_1;\sigma_{\pm}^2t_2,x_2)
\nonumber\\
&\qquad\qquad\cdots
h_{\pm}^{\ell}(\sigma_{\pm}^2t_{n-1},x_{n-1};
\sigma_{\pm}^2t_n,x_n)dx_1dx_2\dots dx_n
\frac{d\ell}{\sqrt{2\pi\ell^3}}\nonumber\\
&=
\int_{\sigma_{\pm}^2t_n}
\int_{A_n}\dots\int_{A_2}\int_{A_1}\frac{1}{\sigma_{\pm}^n}
h_{\pm}^{\ell/\sigma_\pm^2}
\left(0,0;t_1,\frac{x_1}{\sigma_{\pm}}\right)
h_{\pm}^{\ell/\sigma_\pm^2}
\left(t_1,\frac{x_1}{\sigma_{\pm}};t_2,\frac{x_2}{\sigma_\pm}
\right)\nonumber\\
&\qquad\qquad
\cdots h_{\pm}^{\ell/\sigma_\pm^2}
\left(t_{n-1},\frac{x_{n-1}}{\sigma_{\pm}};
t_n,\frac{x_n}{\sigma_\pm}\right)dx_1dx_2\dots dx_n
\frac{d\ell}{\sqrt{2\pi\ell^3}}\nonumber\\
&=
\frac{1}{\sigma_\pm}\int_{t_n}^{\infty}
\int_{A_n/\sigma_\pm}\dots\int_{A_2/\sigma_{\pm}}
\int_{A_1/\sigma_{\pm}}
h_{\pm}^{T}(0,0;t_1,y_1)
h_{\pm}^{T}(t_1,y_1;t_2,y_2)\nonumber\\
&\qquad\qquad
\cdots h_{\pm}^{T}(t_{n-1},y_{n-1};t_n,y_n)
dy_1dy_2\dots dy_n\frac{dT}{\sqrt{2\pi T^3}}\nonumber\\
&=
\frac{1}{\sigma_{\pm}}n_{\pm}^B\left(\frac{1}{\sigma_\pm}C\right).
\label{N1.27}
\end{align} 
Equation (\ref{N1.27}) holds for every $C$ of the form
(\ref{N1.21}), and since this collection of sets $C$
is closed under pairwise intersection and generates
$\sB(\sE_\pm)$, equation (\ref{N1.27}) holds for every
$C\in\sB(\sE_\pm)$.  We extend the
definition (\ref{f}), defining
$\varphi:\sE_{\pm}\rightarrow\sE_{\pm}$ by
\be\label{Extendedvarphi}
\varphi(e):=\left\{\begin{array}{ll}
\sigma_+e&\mbox{if }e\in\sE_+,\\
\sigma_-e&\mbox{if }e\in\sE_-.
\end{array}\right.
\ee
We may now rewrite (\ref{N1.27})
as
\be\label{nZ}
n_{\pm}^Z(C)=\frac{1}{\sigma_{\pm}}
n_{\pm}^B\big(\varphi^{-1}(C)\big),\quad 
C\in\sB(\sE_{\pm}),
\ee
where $\varphi$ is defined by (\ref{Extendedvarphi}).

We next construct the characteristic measure
of skew Brownian motion as defined by \cite{ItoMcKean}.
The characteristic measure
of reflected Brownian motion $|B|$ is given by
$$
n^{|B|}(C)=n_+^B(C)+n_-^B(-C)
=2n_+^B(C)=2n_-^B(-C),\quad C\in\sB(\sE_+).
$$
Consider the skew Brownian motion
$X$ that takes the excursions
of $|B|$ and assigns them positive or negative
signs independently with probabilities
$\sigma_-/(\sigma_++\sigma_-)$ and 
$\sigma_+/(\sigma_++\sigma_-)$,
respectively.  A characteristic measure of $X$ is thus
$$
n^X(C)=
\left\{\begin{array}{ll}
\displaystyle
\frac{\sigma_-}{\sigma_++\sigma_-}n^{|B|}(C)
=\frac{2\sigma_-}{\sigma_++\sigma_-}n_+^B(C)&\mbox{if }
C\in\sB(\sE_+),\\
\displaystyle
\frac{\sigma_+}{\sigma_++\sigma_-}n^{|B|}(-C)
=\frac{2\sigma_+}{\sigma_++\sigma_-}
n_-(C)&\mbox{if }
C\in\sB(\sE_-).
\end{array}\right.
$$
Let $N^X$ be the associated Poisson random
measure, i.e., $N^X$ is a Poisson random
measure on $(0,\infty)\times\sE$ such that
$\E N^X((s,t]\times C)=(t-s)n^X(C)$
for $0\leq s<t$ and $C\in\sB(\sE)$. Then
$$
X(t)=\int_{(0,L^X(t)]}\int_{\sE}
e\big(t-A^X(u-)\big)N^X(du\,de),
$$
where
$A^X(s)=\int_{(0,s]}\int_{\sE}\lambda(e)
N^X(dv\,de)$
and $L^X(t)=\inf\{s\geq 0:A^X(s)>t\}$.

\begin{theorem}\label{T3.11}
Two-speed Brownian motion $Z$ of
Definition \ref{TS.D.1} has the same
law as $\varphi(X)$.
\end{theorem}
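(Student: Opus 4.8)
The plan is to match the two processes through their It\^o excursion (Poisson point process) representations: I will exhibit $\varphi(X)$ and $Z$ as the image of Poisson random measures under one and the same deterministic excursion-reconstruction functional, show that the two driving Poisson measures have intensity measures that are proportional with a single global constant, and then invoke the fact that this reconstruction functional is unchanged by a global rescaling of the local-time coordinate.

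First I would record that $\varphi(X)$ is itself an excursion reconstruction. Write $\Xi$ for the functional carrying a point measure $N$ on $(0,\infty)\times\sE$ to the process $\theta\mapsto\int_{(0,L^{N}(\theta)]}\int_{\sE}e\big(\theta-A^{N}(s-)\big)N(ds\,de)$, where $A^{N}(s)=\int_{(0,s]}\lambda(e)N(du\,de)$ and $L^{N}(\theta)=\inf\{s\ge0:A^{N}(s)>\theta\}$; this is exactly the functional used in the construction of $X$ and, by Proposition \ref{TS.C.10}, in the representation of $Z$. Since the extended map $\varphi$ of (\ref{Extendedvarphi}) acts by vertical scaling we have $(\varphi(e))(s)=\varphi(e(s))$ and $\lambda(\varphi(e))=\lambda(e)$; consequently applying $\varphi$ pointwise to $X$ replaces each excursion $e$ by $\varphi(e)$ without altering $A^{X}$ or $L^{X}$, so that $\varphi(X)=\Xi(\widetilde N^{X})$, where $\widetilde N^{X}:=N^{X}\circ(\id\times\varphi)^{-1}$ is the push-forward of the excursion point measure of $X$ under $\varphi$ in the mark variable. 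In particular $\widetilde N^{X}$ is Poisson with intensity $ds\otimes(n^{X}\circ\varphi^{-1})$.

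Next I would compute $n^{X}\circ\varphi^{-1}$. Because $\varphi$ restricts to the bijection $e\mapsto\sigma_{\pm}e$ of $\sE_{\pm}$, for $C\in\sB(\sE_{\pm})$ one has $\varphi^{-1}(C)=\sigma_{\pm}^{-1}C$; combining this with the formulas $n^{X}=\tfrac{2\sigma_{-}}{\sigma_{+}+\sigma_{-}}n^{B}_{+}$ on $\sE_{+}$ and $n^{X}=\tfrac{2\sigma_{+}}{\sigma_{+}+\sigma_{-}}n^{B}_{-}$ on $\sE_{-}$, together with (\ref{nZ}) in the form $n^{Z}_{\pm}(C)=\sigma_{\pm}^{-1}n^{B}_{\pm}(\varphi^{-1}(C))$, a brief calculation yields
$$
n^{X}\big(\varphi^{-1}(C)\big)=\frac{2\sigma_{+}\sigma_{-}}{\sigma_{+}+\sigma_{-}}\,n^{Z}_{\pm}(C),\qquad C\in\sB(\sE_{\pm}),
$$
with the same constant $c:=2\sigma_{+}\sigma_{-}/(\sigma_{+}+\sigma_{-})$ on both pieces, i.e. $n^{X}\circ\varphi^{-1}=c\,n^{Z}$. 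Finally, $\Xi$ is invariant under the local-time rescaling $N\mapsto\bar N$, $\bar N((s,t]\times C):=N\big((s/c,t/c]\times C\big)$: this rescaling sends $A^{N}$ to $A^{N}(\cdot/c)$ and $L^{N}$ to $c\,L^{N}$, and the substitution $v=s/c$ in the reconstruction integral leaves $\Xi$ unchanged. Applying this to $\widetilde N^{X}$ produces a Poisson measure $\bar N^{X}$ with intensity $ds\otimes n^{Z}$, which therefore has the same law as the $N^{Z}$ of Proposition \ref{TS.C.10}. Hence $\varphi(X)=\Xi(\widetilde N^{X})=\Xi(\bar N^{X})$ has the same law as $\Xi(N^{Z})=Z$, which is the claim.

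The work here is organized bookkeeping rather than any single hard estimate: the three facts that must be checked with care are (i) that applying $\varphi$ excursion by excursion really coincides with the push-forward $\widetilde N^{X}$ and leaves both time clocks intact, which is exactly where $\lambda(\varphi(e))=\lambda(e)$ enters; (ii) that the two normalization constants produced by the excursion-measure computation agree on $\sE_{+}$ and on $\sE_{-}$, so that a single global rescaling suffices rather than a sign-dependent one; and (iii) the invariance of the reconstruction functional $\Xi$ under that rescaling. With these in hand the conclusion follows from the uniqueness of the law of a process given its excursion representation.
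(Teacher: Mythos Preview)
Your proof is correct and follows essentially the same route as the paper's: both arguments identify the two processes through their It\^o excursion representations, compute that the relevant characteristic measures agree up to a single global constant (your $c=2\sigma_+\sigma_-/(\sigma_++\sigma_-)$ is exactly $1/\gamma$ in the paper's notation), and then use invariance of the reconstruction functional under rescaling of the local-time axis. The only cosmetic difference is the order of operations---the paper rescales local time first (showing the rescaled process $Y$ equals $X$) and then observes that $n^Z$ is the $\varphi$-pushforward of the rescaled characteristic measure $n^Y$, whereas you push forward by $\varphi$ first and rescale afterward---but the ingredients and the verifications (equality of the constant on $\sE_+$ and $\sE_-$, invariance of $\Xi$) are identical.
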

\begin{proof}
Define
$\gamma=\frac{1}{2\sigma_+}+\frac{1}{2\sigma_-}
=\frac{\sigma_-+\sigma_+}{2\sigma_+\sigma_-}$,
so that
$\gamma\frac{2\sigma_{\mp}}{\sigma_++\sigma_-}
=\frac{1}{\sigma_\pm}$.
Define a Poisson random measure $N^Y$ by
$N^Y((s,t]\times C):=N^X((\gamma s,\gamma t]\times C)$
for $0\leq s <t$ and $C\in\sE$.
The associated characteristic measure is
\be\label{cmX}
n^Y(C)=\frac{1}{t-s}\E N^Y\big((s,t]\times C\big)
=\gamma n^X(C)=\left\{\begin{array}{ll}
\displaystyle
\frac{1}{\sigma_+}n_+^B(C)&\mbox{if }C\in\sB(\sE_+),\\
\displaystyle
\frac{1}{\sigma_-}n_-^B(C)&\mbox{if }C\in\sB(\sE_-).
\end{array}\right.
\ee
We define
$$
A^Y(s):=\int_{(0,s]}\int_{\sE}\lambda(e)
N^Y(du\,de)
=\int_{(0,\gamma s]}\int_{\sE}\lambda(e)
N^X(dv\,de)=A^X(\gamma s),
$$
so
$$
L^Y(t):=\inf\big\{s\geq 0: A^Y(s)>t\big\}
=\frac{1}{\gamma}\inf\big\{u\geq 0:A^X(u)>t\big\}
=\frac{1}{\gamma}L^X(t).
$$
Then
\begin{align*}
Y(t)
&:=
\int_{(0,L^Y(t)]}\int_{\sE}
e\big(t-A^Y(s-)\big)N^Y(ds\,de)\\
&=
\int_{(0,\gamma L^Y(t)]}\int_{\sE}
e\left(t-A^Y\left(\frac{u}{\gamma}-\right)\right)N^X(du\,de)\\
&=
\int_{(0,L^X(t)]}\int_{\sE}
e\big(t-A^X(u-)\big)N^X(du\,de)\\
&=
X(t),\quad t\geq 0.
\end{align*}
In other words, another characteristic
measure for the excursions of $X$ is
given by (\ref{cmX})\footnote{Characteristic
measure of excursions are determined only
up to a multiplicative constant}.

Finally, observe 
from (\ref{nZ}) and (\ref{cmX})
that a characteristic measure of $Z$
is $\varphi^{-1}(n^Y)$.
In other words, $Z$ is obtained from $X$
by scaling (in space, not time)
the positive excursions of $X$
by $\sigma_+$ and the negative excursions
by $\sigma_-$. In particular, $Z$ is equal in law to
$\varphi(X)$.
\end{proof}

\section{Brownian excursion and absorbed
Brownian motion}\label{appendix}
\numberwithin{equation}{section}
\setcounter{equation}{0}
\setcounter{theorem}{0}

In this appendix we establish the relationship
between absorbed Brownian motion
and Brownian excursions used in Section
\ref{ToRenewal}.  Let $W$ be a standard Brownian
motion with initial condition $W(0)=x_0>0$.
Define $\tau_0:=\inf\{t\geq 0: W(t)=0\}$
to be the first passage time of this Brownian
motion to zero, and define
$W_0(t)=W(t\wedge\tau_0)$, $t\geq 0$, to be the Brownian
motion $W$ absorbed at $\tau_0$.  According
to the reflection principle,
for $t_1>0$, $x_1>0$,
\be\label{A.1}
\P\big\{\tau_0\in d\ell\big|\tau_0>t_1,W_0(t_1)=x_1\big\}
=\frac12K_+(\ell-t_1,x_1)d\ell,\quad
\ell>t_1,
\ee
where we have used the notation (\ref{K}).
(The condition $\tau_0>t_1$ on the left-hand
side of (\ref{A.1}) is redundant, but is included
here to maintain parallelism in formulas
such as (\ref{A.7}) below). 

Now let $t_1>0$ be given,
and let $\{E_{k,+}\}_{k=1}^{\infty}$ be an
enumeration of the positive excursions
away from zero of a standard Brownian whose
lengths $\lambda(E_{k,+})$ exceed
$t_1$.
This is a sequence of independent,
identically distributed excursion,
and because the characteristic measure
for positive excursions of Brownian motion
is given by (\ref{nBpm}), the law of these
excursions is
\be\label{A.2}
\mu:=\left.\int_{t_1}^{\infty}
\P^{\ell}_+\frac{d\ell}{\sqrt{2\pi\ell^3}}\right/
\int_{t_1}^{\infty}\frac{d\ell}{\sqrt{2\pi\ell^3}}
=\frac12\int_{t_1}^{\infty}\P_+^{\ell}
\sqrt{\frac{t_1}{\ell^3}}d\ell,
\ee
which is a probability measure on
$(\sE_+,\sB(\sE_+))$.
In particular,
\be\label{A.3}
\mu\big\{\lambda(E_{k,+})\in d\ell\big|
\lambda(E_{k,+})>t_1\big\}
=\frac12\sqrt{\frac{t_1}{\ell^3}}d\ell,\quad \ell>t_1.
\ee
(The condition on the left-hand side
of (\ref{A.3}) is redundant, but is included
here and elsewhere
to emphasize that we are considering
only excursions whose length exceeds $t_1$.)

\begin{proposition}\label{PA.1}
Let $x_1$ be positive.
Conditioned on $\lambda(E_{k,+})>t_1$
and $E_{k,+}(t_1)=x_1$, the law of
$(E_{k,+})_{t\geq t_1}$ agrees with the law of
$(W_0)_{t\geq t_1}$ conditioned on $\tau_0>t_1$
and $W_0(t_1)=x_1$.
\end{proposition}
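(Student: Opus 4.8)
The plan is to identify the two conditioned processes by matching their finite-dimensional distributions on $[t_1,\infty)$, exploiting that both are Markov after time $t_1$ and that the excursion kernel $h^{\ell}_+$ of (\ref{h}) is, on the time interval $(t_1,\ell)$, exactly a Doob $h$-transform of Brownian motion absorbed at $0$. I would split the argument into two sub-claims: (a) the conditional law of the ``length'' variable is the same for both processes, namely the first-passage density $\tfrac12 K_+(\ell-t_1,x_1)\,d\ell$ appearing in (\ref{A.1}); and (b) conditioned additionally on that length being equal to $\ell$, both processes have the same transition kernel on $(t_1,\ell)$, hence the same post-$t_1$ finite-dimensional distributions for each fixed $\ell$. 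Integrating (b) against the common length-law from (a) then gives the proposition, since both processes are continuous and are frozen at $0$ after the length variable is reached.

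For (a), I would compute the conditional law of $\lambda(E_{k,+})$ given $\lambda(E_{k,+})>t_1$ and $E_{k,+}(t_1)=x_1$ by Bayes' rule. The $n=1$ case of (\ref{Pl}) gives $\P^{\ell}_+\{e(t_1)\in dx_1\}=h^{\ell}_+(0,0;t_1,x_1)\,dx_1$, and the second case of (\ref{h}) identifies $h^{\ell}_+(0,0;t_1,x_1)=\sqrt{\tfrac{\pi}{2}\ell^3}\,K_+(t_1,x_1)K_+(\ell-t_1,x_1)$; combining this with the prior length density $\tfrac12\sqrt{t_1/\ell^3}\,d\ell$ from (\ref{A.3}) (equivalently, with (\ref{A.2}) and (\ref{nBpm})), the joint density of $(\lambda(E_{k,+}),E_{k,+}(t_1))=(\ell,x_1)$ is proportional to $K_+(t_1,x_1)K_+(\ell-t_1,x_1)$. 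Dividing by the $x_1$-marginal and using $\int_0^{\infty}K_+(u,x_1)\,du=2$ — the reflection-principle fact that $\tfrac12 K_+(\cdot,x_1)$ is a probability density, namely the first-passage density of Brownian motion from $x_1$ to $0$ — yields the conditional length density $\tfrac12 K_+(\ell-t_1,x_1)\,d\ell$, which is precisely (\ref{A.1}). The only concrete computation here is this normalization integral.

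For (b), on the excursion side the finite-dimensional distributions (\ref{Pl}) are a product of kernels $h^{\ell}_+$, so under $\P^{\ell}_+(\,\cdot\mid e(t_1)=x_1)$ the process $(e(t))_{t\ge t_1}$ is Markov with transition kernel $h^{\ell}_+(s,\cdot\,;t,\cdot)$ for $t_1\le s<t<\ell$. On the absorbed side, conditioning $W_0$ on $\tau_0=\ell$ is a Doob $h$-transform of Brownian motion killed at $0$ by the space-time harmonic function $h(t,x)=\tfrac12 K_+(\ell-t,x)$, the density of $\tau_0=\ell$ given position $x$ at time $t<\ell$; its transition density from $(s,a)$ to $(t,b)$ is $\dfrac{h(t,b)}{h(s,a)}\,p_0(t-s,a,b)=\dfrac{K_+(\ell-t,b)}{K_+(\ell-s,a)}\,p_0(t-s,a,b)$, which is exactly the first case of (\ref{h}). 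Thus both conditioned processes have post-$t_1$ finite-dimensional densities $\prod_i h^{\ell}_+(t_i,x_i;t_{i+1},x_{i+1})$ with $t_1,x_1$ held fixed; they therefore agree in law on $[t_1,\infty)$ for each fixed $\ell$, and after the unconditional integration over $\ell$ against the common density from (a) the full laws agree (both paths being continuous and identically $0$ on $[\ell,\infty)$).

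I expect the main obstacle to be making the $h$-transform step in (b) rigorous at the degenerate endpoint: $\{\tau_0=\ell\}$ and $\{\lambda(E_{k,+})=\ell\}$ are null events, so the conditional laws must be introduced via densities (regular conditional probabilities), and one must verify that the $h$-transform description is the correct version of this conditional law and that the resulting bridge is continuous at $\ell$ with value $0$. Because we only ever evaluate finite-dimensional distributions at times strictly less than $\ell$ and then integrate $\ell$ out against an honest density, this is a standard but slightly delicate point; everything else reduces to the formulas already recorded in (\ref{K})--(\ref{h}), (\ref{Pl}), (\ref{nBpm}), (\ref{A.1}) and (\ref{A.3}), together with the elementary identity $\int_0^{\infty}K_+(u,x)\,du=2$.
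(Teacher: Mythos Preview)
Your proposal is correct and follows essentially the same two-step structure as the paper's proof: first match the conditional law of the length variable (your step (a), the paper's equations (\ref{A.5})--(\ref{A.7})), then match the finite-dimensional distributions conditioned on a fixed length $\ell$ (your step (b), the paper's equation (\ref{A.8})). The only substantive difference is in the execution of (b): where you invoke the Doob $h$-transform to identify the transition kernel of $W_0$ conditioned on $\tau_0=\ell$ as $\tfrac{K_+(\ell-t,b)}{K_+(\ell-s,a)}\,p_0(t-s,a,b)$, the paper derives this formula by a direct computation (equations (\ref{A.12})--(\ref{A.14})), computing $\P\{W_0(t_2)\in dx_2,\dots,W_0(t_n)\in dx_n,\ t_{n+1}<\tau_0\le t_{n+2}\mid W_0(t_1)=x_1\}$ explicitly and then reading off the regular conditional density in $\ell$. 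Your concern about rigor at the null event $\{\tau_0=\ell\}$ is exactly what the paper's explicit computation addresses; otherwise the two arguments are the same.
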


\begin{proof}
Define
$$
f(t,x):=\sqrt{\frac{2}{\pi t}}
\int_x^{\infty}\exp\left(-\frac{x^2}{2t}\right)dx
=\sqrt{\frac{2}{\pi}}
\int_{x/\sqrt{t}}^{\infty}\exp\left(-\frac{y^2}{2}\right)dy,
\quad t>0, x>0.
$$
Then
$f(0,x):=\lim_{t\downarrow 0}f(t,x)=0$,
$f(\infty,x):=\lim_{t\rightarrow \infty}f(t,x)=1$,
and $\frac{\partial }{\partial t}f(t,x)
=\frac12 K_+(t,x)$,
where $K_+(t,x)$ is given by (\ref{K}).
According to (\ref{A.2}) and (\ref{Pl}),
\begin{align}
\lefteqn{\mu\big\{e\in\sE_+:\lambda(e)\in d\ell,
e(t_1)\in dx_1\big|\lambda(e)>t_1\}}
\hspace{1in}\nonumber\\
&=
\sqrt{\frac{\pi}{2}\ell^3}K_+(t_1,x_1)
K_+(\ell-t_1,x_1)\frac12\sqrt{\frac{t_1}{\ell^3}}d\ell
dx_1\nonumber\\
&=
\sqrt{\frac{\pi t_1}{2}}
K_+(t_1,x_1)\frac12K_+(\ell-t_1,x_1)d\ell dx_1,
\quad \ell>t_1,x_1>0.
\label{A.5}
\end{align}
Hence,
\begin{align}
\mu\big\{e\in\sE_+:e(t_1)\in dx_1\big|\lambda(e)>t_1\big\}
&=
\sqrt{\frac{\pi t_1}{2}}K_+(t_1,x_1)
f(\ell-t_1,x_1)\Big|_{\ell=t_1}^{\ell=\infty}\nonumber\\
&=
\sqrt{\frac{\pi t_1}{2}}K_+(t_1,x_1)dx_1,
\quad x_1>0.\label{A.6}
\end{align}
Dividing (\ref{A.5}) by (\ref{A.6}), we obtain
\begin{align}
\mu\big\{e\in\sE_+:\lambda(e)\in d\ell
\big|\lambda(e)>t_1,e(t_1)=x_1\big\}
&=
\frac12K_+(\ell-t_1,x_1)d\ell\nonumber\\
&=
\P\big\{\tau_0\in d\ell\big|\tau_0>t_1,W_0(t_1)=x_1\big\}.
\label{A.7}
\end{align}

We next show that for $\ell>t_1$
and $t_2,\dots,t_n$ satisfying
$t_1<t_2<\dots<t_n<\ell$, we have
\begin{align}
\lefteqn{\mu\big\{e\in\sE_+:
e(t_2)\in dx_2,\dots,e(t_n)\in dx_n\big|
e(t_1)=x_1,\lambda(e)=\ell\big\}}\hspace{1in}
\nonumber\\
&=
\P\big\{W_0(t_2)\in dx_2,\dots, W_0(t_n)\in dx_n
\big|W_0(t_1)=x_1,\tau_0=\ell\big\}.\label{A.8}
\end{align}
We can then multiply the respective
left- and right-hand sides of (\ref{A.7})
by (\ref{A.8}) to complete the proof.
We begin with the left-hand side of (\ref{A.8}).
It is clear from (\ref{A.2}) and (\ref{Pl}) that
for all $\ell>t_n$,
\begin{align}
\lefteqn{
\mu\big\{e\in\sE_+:e(t_2)\in dx_2,\dots
e(t_n)\in dx_n\big|e(t_1)=x_1,\lambda(e)=\ell\big\}}
\nonumber\\
&=
\P^{\ell}\big\{e(t_2)\in dx_2,\dots,e(t_n)\in dx_n
\big|e(t_1)=x_1\big\}\nonumber\\
&=
h_+^{\ell}(t_1,x_1;t_2,x_2)\cdots
h_+^{\ell}(t_{n-1},x_{n-1};t_n,x_n)dx_n\cdots dx_1
\nonumber\\
&=
\frac{K_+(\ell-t_n,x_n)}{K_+(\ell-t_1,x_1)}
p_0(t_2-t_1,x_1,x_2)\cdots p_0(t_n-t_{n-1},
x_{n-1},x_n)dx_n\cdots dx_1.\label{A.9}
\end{align}

We turn to the right-hand side of
(\ref{A.8}).
According to the reflection principle,
the transition density for $W_0$ is
$p_0$ given by (\ref{p}) for $x>0$
and $y>0$.  This is a defective probability
density function, with
\be\label{A.10}
\P\big\{W_0(t+s)=0\big|W_0(s)=x\big\}=
1-\int_0^{\infty}p_0(t,x,y)dy=
f(t,x).
\ee
Although defective, this transition density
has the semigroup property
\be\label{A.11}
\int_0^{\infty}p_0(s,x,y)p_0(t,y,z)dy
=p_0(s+t,x,z),\quad s>0,t>0,x>0,z>0,
\ee
as can be verified by direct computation
(see Appendix B of \cite{Yu}).
For $t_n<t_{n+1}<t_{n+2}$, we have
\begin{align}
\lefteqn{\P\big\{W_0(t_2)\in dx_2,\dots,
W_0(t_n)\in dx_n, t_{n+1}<\tau_0\leq t_{n+2}\big|
W_0(t_1)=x_1\big\}}\hspace{0.5in}\nonumber\\
&=
\P\big\{W_0(t_2)\in dx_2,\dots,W_0(t_n)\in dx_n,
W_0(t_{n+1})>0,W_0(t_{n+2})=0\big|W_0(t_1)=x_1\big\}
\nonumber\\
&=
p_0(t_2-t_1,x_1,x_2)\cdots p_0(t_n-t_{n-1},x_{n-1},x_n)
\int_0^{\infty}p_0(t_{n+1}-t_n,x_n,x_{n+1})
\nonumber\\
&\qquad\times
\left(1-\int_0^{\infty}p_0(t_{n+2}-t_{n+1},x_{n+1},
x_{n+2})dx_{n+2}\right)dx_{n+1}dx_n\dots dx_2.
\label{A.12}
\end{align}
We deal first with the integrals,
using the semigroup property (\ref{A.11})
and the second equation in (\ref{A.10}), to obtain
\begin{align}
\lefteqn{\int_0^{\infty}
p_0(t_{n+1}-t_n,x_n,x_{n+1})
\left(1-\int_0^{\infty}p_0(t_{n+2}-t_{n+1},x_{n+1},
x_{n+2})dx_{n+2}\right)dx_{n+1}}
\nonumber\\
&=
\int_0^{\infty}p_0(t_{n+1}-t_n,x_n,x_{n+1})dx_{n+1}
-\int_0^{\infty}p_0(t_{n+2}-t_n,x_n,x_{n+2})dx_{n+2}
\nonumber\\
&=
f(t_{n+2}-t_n,x_n)-f(t_{n+1}-t_n,x_n)\nonumber\\
&=\int_{t_{n+1}}^{t_{n+2}}\frac{\partial}{\partial \ell}
f(\ell-t_n,x_n)d\ell\nonumber\\
&=
\frac12
\int_{t_{n+1}}^{t_{n+2}}K_+(\ell-t_n,x_n)d\ell\nonumber\\
&=
\int_{t_{n+1}}^{t_{n+2}}\frac{K_+(\ell-t_n,x_n)}
{K_+(\ell-t_1,x_1)}
\P\big\{\tau_0\in d\ell\big|W_0(t_1)=x_1\big\}.
\label{A.13}
\end{align}
Putting (\ref{A.12}) and (\ref{A.13})
together, we have
\begin{align*}
\lefteqn{
\P\big\{W_0(t_2)\in dx_2,\dots,W_0(t_n)\in dx_n,
t_{n+1}<\tau_0\leq t_{n+1}\big|W_0(t_1)=x_1\big\}}
\nonumber\\
&=
p_0(t_2-t_1,x_1,x_2)\cdots p_0(t_n-t_{n-1},x_{n-1},x_n)
\nonumber\\
&\hspace{1in}
\times\int_{t_{n+1}}^{t_{n+2}}
\frac{K_+(\ell-t_n,x_n)}{K_+(\ell-t_1,x_1)}
\P\big\{\tau_0\in d\ell\big|W_0(t_1)=x_1\big\}
dx_n\dots dx_2,
\end{align*}
which implies that
\begin{align}
\lefteqn{
\P\big\{W_0(t_2)\in dx_2,\dots,W_0(t_n)\in dx_n
\big|W_0(t_1)=x_1,\tau_0=\ell\big\}}\nonumber\\
&=
p_0(t_2-t_1,x_1,x_2)\cdots p_0(t_n-t_{n-1},x_{n-1},x_n)
\frac{K_+(\ell-t_n,x_n)}{K_+(\ell-t_1,x_1)}
dx_n\dots dx_2.\label{A.14}
\end{align}
This equation holds for all $\ell\in(t_{n+1},t_{n+2})$.
But $t_{n+1}$ can be chosen arbitrarily close to
$t_n$ and $t_{n+2}$ can be chosen arbitrarily
large.  Therefore, (\ref{A.14}) holds for all
$\ell>t_n$.
The right-hand side of (\ref{A.14})
agrees with the right-hand side
of (\ref{A.9}), which establishes
(\ref{A.8}).
\end{proof}

\section{$\P_{\pm}^{\ell}$ is a probability measure}\label{AppendixB}
\setcounter{equation}{0}
\setcounter{theorem}{0}

For this calculation, we consider only
$\P_+^{\ell}$ and show that it is a probability
measure on $(\sE_+^{\ell},\sB(\E_+^{\ell}))$.
To simplify the notation, we suppress the subscript $+$.
We need to verify that
\begin{align}
\lefteqn{
\int_0^{\infty}\cdots\int_0^{\infty}\int_0^{\infty}
\P^{\ell}\big\{e(t_1)\in dx_1,e(t_2)\in dx_2,\dots,
e(t_n)\in dx_n\big\}dx_n\cdots dx_2dx_1}\nonumber\\
&=
\int_0^{\infty}\cdots\int_0^{\infty}\int_0^{\infty}
h^{\ell}(0,0;t_1,x_1)h^{\ell}(t_1,x_1;t_2,x_2)
\cdots h^{\ell}(t_{n-1},x_{n-1};t_n,x_n)
dx_n\cdots dx_2 dx_1\nonumber\\
&=
1.\label{B.1}
\end{align}

\begin{lemma}\label{LB.1}
For $0<s<t$ and $a>0$,
$\int_0^{\infty}h^{\ell}(s,a;t,b)db=1$.
\end{lemma}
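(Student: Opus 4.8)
The plan is to reduce the claim to a Chapman--Kolmogorov identity for Brownian motion absorbed at the origin and then verify that identity by a direct Gaussian computation. Fix $0<s<t<\ell$ and $a>0$, and suppress the subscript $+$ as in Appendix \ref{AppendixB}. By the first case of the definition (\ref{h}) of $h^\ell_+$ we have $h^\ell(s,a;t,b)=\frac{K_+(\ell-t,b)}{K_+(\ell-s,a)}p_0(t-s,a,b)$ for $b>0$, so the assertion $\int_0^\infty h^\ell(s,a;t,b)\,db=1$ is equivalent to
\begin{equation}\label{B.key}
\int_0^\infty p_0(t-s,a,b)\,K_+(\ell-t,b)\,db=K_+(\ell-s,a).
\end{equation}
Writing $u:=t-s>0$ and $v:=\ell-t>0$, so $u+v=\ell-s$, equation (\ref{B.key}) says that applying the sub-Markov transition kernel $p_0(u,a,\cdot)$ of absorbed Brownian motion to the first-passage density $b\mapsto K_+(v,b)$ reproduces $K_+(u+v,a)$; probabilistically this is just the Markov property of absorbed Brownian motion at the deterministic time $t$ (decompose the event of absorption at time $\ell$, starting from $a$ at time $s$, according to the still-positive position $b$ at time $t$). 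I would mention this interpretation as motivation but prove (\ref{B.key}) by computation so the argument is self-contained.

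For the computation, substitute (\ref{K}) and (\ref{p}) into the left side of (\ref{B.key}): the integrand is $\sqrt{\tfrac{2}{\pi v^3}}\,\tfrac{1}{\sqrt{2\pi u}}\,b\,e^{-b^2/(2v)}\bigl(e^{-(a-b)^2/(2u)}-e^{-(a+b)^2/(2u)}\bigr)$. Since $b\,e^{-b^2/(2v)}$ and $e^{-(a-b)^2/(2u)}-e^{-(a+b)^2/(2u)}$ are each odd in $b$, the integrand is even, so $\int_0^\infty(\cdots)\,db=\tfrac12\int_{-\infty}^\infty(\cdots)\,db$; moreover the substitution $b\mapsto-b$ shows the $(a+b)^2$ piece equals minus the $(a-b)^2$ piece over $\mathbb{R}$, so the whole integral reduces to $\sqrt{\tfrac{2}{\pi v^3}}\,\tfrac{1}{\sqrt{2\pi u}}\int_{-\infty}^\infty b\,e^{-b^2/(2v)}e^{-(b-a)^2/(2u)}\,db$. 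Completing the square in $b$ with variance $\sigma^2:=uv/(u+v)$ and mean $m:=av/(u+v)$ pulls out the factor $e^{-a^2/(2(u+v))}$ and leaves the Gaussian first moment $\int_{-\infty}^\infty b\,e^{-(b-m)^2/(2\sigma^2)}\,db=m\sqrt{2\pi\sigma^2}$. Collecting the constants $\sqrt{2/(\pi v^3)}$, $1/\sqrt{2\pi u}$, $\sqrt{2\pi\sigma^2}$ and $m$, the powers of $u$ and $v$ should collapse exactly to $a\sqrt{2/(\pi(u+v)^3)}\,e^{-a^2/(2(u+v))}=K_+(u+v,a)$, which is (\ref{B.key}), and hence $\int_0^\infty h^\ell(s,a;t,b)\,db=K_+(\ell-s,a)/K_+(\ell-s,a)=1$.

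The conceptual content here is minimal, so the main (if routine) obstacle is simply the bookkeeping in the last step: verifying the auxiliary identities $u-\sigma^2=u^2/(u+v)$ (needed to see that the residual exponent is exactly $a^2/(2(u+v))$) and $\sqrt{2\pi\sigma^2}/\sqrt{2\pi u}=\sqrt{v/(u+v)}$, and then checking that the remaining constants assemble into precisely the normalization of $K_+(u+v,a)$. No convergence issue is delicate since every integrand decays at Gaussian rate, so the interchanges above are unproblematic. This establishes Lemma \ref{LB.1} and thereby the step (\ref{B.1}) in the verification that $\P_\pm^\ell$ is a probability measure.
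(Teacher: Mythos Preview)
Your proof is correct. Both your argument and the paper's are direct Gaussian computations, but they are organized differently. The paper works directly with the ratio $h^\ell(s,a;t,b)=\frac{K_+(\ell-t,b)}{K_+(\ell-s,a)}p_0(t-s,a,b)$, splits the integral into two pieces $I_\pm$ coming from the two exponentials in $p_0$, and for each performs the change of variable $y=\sqrt{\frac{\ell-s}{\ell-t}}\,b\pm\sqrt{\frac{\ell-t}{\ell-s}}\,a$; the cross terms then cancel and the remaining Gaussian tails combine to give $1$. You instead first rewrite the claim as the Chapman--Kolmogorov identity $\int_0^\infty p_0(u,a,b)K_+(v,b)\,db=K_+(u+v,a)$ for absorbed Brownian motion, use the parity of the integrand to collapse the two $p_0$ terms into a single full-line Gaussian moment, and then complete the square. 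Your route has the advantage of a clean probabilistic interpretation and slightly less bookkeeping; the paper's route is more hands-on but avoids the appeal to parity and extension to the full line. Either way the content is the same Gaussian algebra, and your verification of the constants is correct.
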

\begin{proof}
According to (\ref{K})--(\ref{h}),
\begin{align*}
\int_0^{\infty}h^{\ell}(s,a;t,b)db
&=
\int_0^{\infty}\frac{K(\ell-t,b)}{K(\ell-s,a)}
p_0(t-s,a,b)db\\
&=
\int_0^{\infty}\frac{b}{a}\sqrt{\frac{(\ell-s)^3}{(\ell-t)^3}}
\exp\left(-\frac{b^2}{2(\ell-t)}
+\frac{a^2}{2(\ell-s)}\right)
p_0(t-s,a,b)db\\
&=
-I_++I_-,
\end{align*}
where
\begin{align*}
I_{\pm}
&=
\int_0^{\infty}\frac{b}{a}\sqrt{\frac{(\ell-s)^3}{(\ell-t)^3}}
\exp\left(-\frac{b^2}{2(\ell-t)}
+\frac{a^2}{2(\ell-s)}\right)
\frac{1}{\sqrt{2\pi(t-s)}}
\exp\left(-\frac{(a\pm b)^2}{2(t-s)}\right)db\\
&=
\frac{1}{\sqrt{2\pi(t-s)}}\int_0^{\infty}
\frac{b}{a}
\sqrt{\frac{(\ell-s)^3}{(\ell-t)^3}}
\exp\left(-\frac{1}{2(t-s)}
\left(\sqrt{\frac{\ell-s}{\ell-t}}b
\pm\sqrt{\frac{\ell-t}{\ell-s}}a\right)^2\right)db.
\end{align*}
We make the change of variable
$
y=\sqrt{\frac{\ell-s}{\ell-t}}b\pm
\sqrt{\frac{\ell-t}{\ell-s}}a,
$
and rewrite these integrals as
\begin{align*}
I_{\pm}
&=
\frac{1}{\sqrt{2\pi(t-s)}}
\int_{\pm\sqrt{\frac{\ell-t}{\ell-s}}a}^{\infty}
\left(\frac{y}{a}\mp\sqrt{\frac{\ell-t}{\ell-s}}\right)
\sqrt{\frac{\ell-s}{\ell-t}}
\exp\left(-\frac{y^2}{2(t-s)}\right)dy\\
&=
-\frac{1}{a}\sqrt{\frac{t-s}{2\pi}}\left.
\exp\left(-\frac{y^2}{2(t-s)}\right)
\right|_{y=\sqrt{\frac{\ell-t}{\ell-s}}a}^{\infty}
\mp\frac{1}{\sqrt{2\pi(t-s)}}
\int_{\pm\sqrt{\frac{\ell-t}{\ell-s}}a}^{\infty}
\exp\left(-\frac{y^2}{2(t-s)}\right)dy\\
&=
\frac{1}{a}\sqrt{\frac{t-s}{2\pi}}
\exp\left(-\frac{(\ell-t)a^2}{(t-s)(\ell-s)}\right)
\mp\frac{1}{\sqrt{2\pi(t-s)}}
\int_{\pm\sqrt{\frac{\ell-t}{\ell-s}}a}^{\infty}
\exp\left(-\frac{y^2}{2(t-s)}\right)dy.
\end{align*}
It follows that
\begin{align*}
-I_++I_-
&=
\frac{1}{\sqrt{2\pi(t-s)}}
\int_{\sqrt{\frac{\ell-t}{\ell-s}}a}^{\infty}
\exp\left(-\frac{y^2}{2(t-s)}\right)dy
+\frac{1}{\sqrt{2\pi(t-s)}}
\int_{-\sqrt{\frac{\ell-t}{\ell-s}}a}^{\infty}
\exp\left(-\frac{y^2}{2(t-s)}\right)dy\\
&=
\frac{1}{\sqrt{2\pi(t-s)}}
\int_{-\infty}^{\infty}
\exp\left(-\frac{y^2}{2(t-s)}\right)dy\\
&=
1.
\end{align*}

\vspace{-24pt}
\end{proof}

Repeated applications of Lemma \ref{LB.1}
establish that
\begin{align*}
\lefteqn{\int_0^{\infty}\cdots\int_0^{\infty}\int_0^{\infty}
h^{\ell}(0,0;t_1,x_1)h^{\ell}(t_1,x_1;t_2,x_2)
\cdots h^{\ell}(t_{n-1},x_{n-1};t_n,x_n)
dx_n\cdots dx_2 dx_1}\hspace{3.5in}\\
&=
\int_0^{\infty}h^{\ell}(0,0;t_1,x_1)dx_1.
\end{align*}
The next lemma completes the proof of (\ref{B.1}).

\begin{lemma}\label{LB.2}
For $t>0$,
$$
\int_0^th^{\ell}(0,0;t,b)db=1.
$$
\end{lemma}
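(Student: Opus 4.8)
The plan is to prove Lemma~\ref{LB.2} by a direct evaluation of a one–variable Gaussian integral, in the same spirit as the proof of Lemma~\ref{LB.1}; here the integral is understood to be over the half–line $b>0$ (the range of values of a positive excursion at an interior time $t\in(0,\ell)$), consistent with the convention used in Lemma~\ref{LB.1} and in the reduction step preceding Lemma~\ref{LB.2}. First I would substitute the ``second case'' of (\ref{h}), namely $h^{\ell}(0,0;t,b)=\sqrt{\tfrac{\pi}{2}\ell^{3}}\,K_{+}(t,b)K_{+}(\ell-t,b)$, together with the definition (\ref{K}) of $K_{+}$. Multiplying the two factors gives $K_{+}(t,b)K_{+}(\ell-t,b)=\tfrac{2}{\pi}\,\bigl(t(\ell-t)\bigr)^{-3/2}b^{2}\exp\!\bigl(-\tfrac{b^{2}}{2}(\tfrac1t+\tfrac1{\ell-t})\bigr)$, and since $\tfrac1t+\tfrac1{\ell-t}=\tfrac{\ell}{t(\ell-t)}$, the integrand takes the form $C(t,\ell)\,b^{2}e^{-\alpha b^{2}}$ with $\alpha=\tfrac{\ell}{2t(\ell-t)}$ and $C(t,\ell)=\sqrt{\tfrac{\pi}{2}\ell^{3}}\cdot\tfrac{2}{\pi}\bigl(t(\ell-t)\bigr)^{-3/2}$.

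Next I would apply the elementary moment identity $\int_{0}^{\infty}b^{2}e^{-\alpha b^{2}}\,db=\tfrac{\sqrt{\pi}}{4}\,\alpha^{-3/2}$, so that $\int_{0}^{\infty}h^{\ell}(0,0;t,b)\,db=C(t,\ell)\cdot\tfrac{\sqrt{\pi}}{4}\cdot\bigl(\tfrac{2t(\ell-t)}{\ell}\bigr)^{3/2}$. Then I would collect the powers of $\ell$ and of $t(\ell-t)$: the factor $\sqrt{\ell^{3}}$ cancels against $\ell^{-3/2}$, the factor $\bigl(t(\ell-t)\bigr)^{-3/2}$ cancels against $\bigl(t(\ell-t)\bigr)^{3/2}$, and one is left with the bare constant $\sqrt{\tfrac{\pi}{2}}\cdot\tfrac{2}{\pi}\cdot\tfrac{\sqrt{\pi}}{4}\cdot 2^{3/2}$; comparing exponents of $\pi$ and of $2$ separately shows this equals $1$. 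Combining this with Lemma~\ref{LB.1}, the chain of integrals displayed just before Lemma~\ref{LB.2} collapses to $\int_{0}^{\infty}h^{\ell}(0,0;t_{1},x_{1})\,dx_{1}=1$, which is exactly (\ref{B.1}) and completes the verification that $\P_{+}^{\ell}$, and by the symmetric argument $\P_{-}^{\ell}$, is a probability measure.

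There is essentially no deep obstacle here; the computation is routine. The only points requiring minor care are (i) bookkeeping the multiplicative constants $\sqrt{\pi}$, $2$, and $2^{3/2}$ so that they genuinely cancel, and (ii) reading the range of integration as $(0,\infty)$ rather than $(0,t)$. (A more conceptual alternative would be to obtain Lemma~\ref{LB.2} as the $s\downarrow0$, $a\downarrow0$ limit of Lemma~\ref{LB.1}, using that $p_{0}(t-s,a,b)/K_{+}(\ell-s,a)\to\sqrt{\tfrac{\pi}{2}\ell^{3}}\,K_{+}(t,b)$; but that route incurs the extra cost of justifying an interchange of limit and integral, so the direct calculation is preferable.)
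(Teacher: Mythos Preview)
Your proposal is correct and follows essentially the same direct-computation route as the paper: substitute the second case of (\ref{h}), combine the two $K_+$ factors into $b^2$ times a centered Gaussian in $b$, evaluate $\int_0^\infty b^2 e^{-\alpha b^2}\,db=\tfrac{\sqrt{\pi}}{4}\alpha^{-3/2}$, and verify the constants cancel. You are also right that the upper limit in the displayed statement should be $\infty$ rather than $t$; the paper's own proof integrates over $(0,\infty)$, consistent with the reduction step and with Lemma~\ref{LB.1}.
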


\begin{proof}
From (\ref{K}) and (\ref{h}), we have
\begin{align*}
\int_0^{\infty}h^{\ell}(0,0;t,b)
&=
\sqrt{\frac{\pi}{2}\ell^3}\int_0^{\infty}
K(t,b)K(\ell-t,b)db\\
&=
\sqrt{\frac{\pi}{2}\ell^3}
\int_0^{\infty}
\sqrt{\frac{2}{\pi t^3}}\sqrt{\frac{2}{\pi(\ell-t)^3}}
b^2\exp\left(-\frac{b^2}{2t}-\frac{b^2}{2(\ell-t)}\right)db\\
&=
\frac{\ell}{t(\ell-t)}\cdot 2\sqrt{\frac{\ell}{2\pi t(\ell-t)}}
\int_0^{\infty}b^2\exp\left(-\frac{\ell b^2}{t(\ell-t)}\right)
db\\
&
=1.
\end{align*}

\vspace{-24pt}
\end{proof}

\section{Proof of (\ref{7.62})}\label{AppendixC}
\numberwithin{equation}{section}
\setcounter{equation}{0}
\setcounter{theorem}{0}

Let $B$ be a standard Brownian motion.
For $b>0$ define
$S_b:=\inf\big\{t\geq 0: B(t)>b\big\}$.
According to \cite[p.~411]{KaratzasShreve}, 
there exists a Poisson
random measure $\nu$
on $(0,\infty)^2$ such that
$S_b=\int_0^{\infty}\ell\nu((0,b]\times d\ell)$
and the characteristic measure of
$\nu$ is $\mu(d\ell)=d\ell/\sqrt{2\pi\ell^3}$.
According to the L\'evy-Hin\v{c}in formula,
for $\alpha\in\R$,
\be\label{C.1}
\E\big[e^{i\alpha S_b}\big]
=\exp\left[-b\int_0^{\infty}(1-e^{i\alpha\ell})
\mu(d\ell)\right],\quad b>0.
\ee
Define $\theta=\frac12\sqrt{|\alpha|}
(1-\mbox{sign}(\alpha)i)$.
Then
$M(t):=e^{2\theta B(t)-2\theta^2t}
=e^{2\theta B(t)+i\alpha t}$
is a martingale and $M(t\wedge T_b)$ is bounded.
Therefore,
$$
1=\lim_{t\rightarrow \infty}\E M(t\wedge S_b)
=\E\big[\lim_{t\rightarrow\infty} M(t\wedge S_b)\big]
=\E\big[e^{2\theta b+i\alpha S_b}\big].
$$
This implies
\be\label{C.2}
\E\big[e^{i\alpha S_b}\big]=e^{-2\theta b}
=\exp\big(-b\sqrt{|\alpha|}(1-\mbox{sign}(\alpha)i
)\big),\quad
b>0.
\ee
Comparison of (\ref{C.1}) and (\ref{C.2})
establishes (\ref{7.62}).

\end{document}